\titleformat{\subsubsection}[runin]% runin puts it in the same paragraph
       {\normalfont\itshape}% formatting commands to apply to the whole heading
       {\thesubsubsection.}% the label and number
       {0.5em}% space between label/number and subsection title
       {}% formatting commands applied just to subsection title
       []% punctuation or other commands following subsection title
\newtheorem{introthm}{Theorem}
\newtheorem{thm}[subsubsection]{Theorem} %[subsection]
\newtheorem{prop}[subsubsection]{Proposition}
\newtheorem{lemma}[subsubsection]{Lemma}
\newtheorem{cor}[subsubsection]{Corollary}
\theoremstyle{definition}
\theoremstyle{remark}
\newtheorem{rmk}[subsubsection]{Remark}
\newtheorem{exmp}[subsubsection]{Example}
\newtheorem{defn}[subsubsection]{Definition}
\newcommand{\fp}{\mathfrak{p}}
\DeclareMathOperator{\Hom}{Hom}
\newcommand{\colim}{\operatornamewithlimits{colim}}
\DeclareMathOperator{\Pro}{Pro}
\DeclareMathOperator{\Ind}{Ind}
\DeclareMathOperator{\Ext}{Ext}
\DeclareMathOperator{\Coh}{Coh}
\DeclareMathOperator{\dmod}{-mod}
\DeclareMathOperator{\dcomod}{-comod}
\DeclareMathOperator{\Ch}{Ch}
\DeclareMathOperator{\Perv}{Perv}
\DeclareMathOperator{\Spec}{Spec}
\DeclareMathOperator{\Sym}{Sym}
\DeclareMathOperator{\shHom}{\mathscr{H}\mathnormal{om}}
\DeclareMathOperator{\Mod}{Mod}
\renewcommand{\mod}{\operatorname{-mod}}
\newcommand{\APerf}{{\mathrm{APerf}}}
\DeclareMathOperator{\pt}{pt}
\DeclareMathOperator{\coker}{coker}
\DeclareMathOperator{\oppp}{op}
\DeclareMathOperator{\End}{End}
\DeclareMathOperator{\QCoh}{QC}
\DeclareMathOperator{\Alg}{Alg}
\DeclareMathOperator{\Perf}{Perf}
\DeclareMathOperator{\Frac}{Frac}
\DeclareMathOperator{\IndCoh}{QC^{!}}
\DeclareMathOperator{\Tot}{Tot}
\DeclareMathOperator{\Map}{Map}
\DeclareMathOperator{\Fun}{Fun}
\newcommand{\bZ}{\mathbb{Z}}
\newcommand{\bA}{\mathbb{A}}
\newcommand{\bG}{\mathbb{G}}
\newcommand{\bD}{\mathbb{D}}
\newcommand{\bL}{\mathbb{L}}
\newcommand{\bF}{\mathbb{F}}
\newcommand{\bT}{\mathbb{T}}
\newcommand{\cO}{\mathscr{O}}
\newcommand{\cK}{\mathscr{K}}
\newcommand{\cE}{\mathscr{E}}
\newcommand{\cN}{\mathscr{N}}
\newcommand{\cG}{\mathscr{G}}
\newcommand{\cF}{\mathscr{F}}
\newcommand{\cV}{\mathscr{V}}
\newcommand{\cZ}{\mathscr{Z}}
\newcommand{\BL}{B_{\chL}}
\newcommand{\cS}{\mathscr{S}}
\newcommand{\cW}{\mathscr{W}}
\newcommand{\Lag}{\on{Lag}}
\newcommand{\on}{\operatorname}
\newcommand{\hCoh}{\widehat{\Coh}}
\newcommand{\Av}{\on{Av}}
\newcommand{\Sing}{\bT^{*\,[\mh 1]}}
\newcommand{\opp}{{\oppp}}
\newcommand{\ra}{\rightarrow}
\newcommand{\mf}{\mathfrak}
\newcommand{\bs}{\backslash}
\newcommand{\cat}{\mathbf}
\newcommand{\wh}{\widehat}
\newcommand{\wt}{\widetilde}
\newcommand{\fg}{\mathfrak{g}}
\newcommand{\Vect}{\mathrm{Vect}}
\newcommand{\mon}{\on{-mon}}
\newcommand{\id}{\mathrm{id}}
\newcommand{\tens}[1]{%
  \mathbin{\mathop{\otimes}\limits_{#1}}%
}
\newcommand{\utimes}[1]{%
  \mathbin{\mathop{\times}\limits_{#1}}%
}
\newcommand{\rihgtarrow}{\rightarrow}
\newcommand{\St}{\on{St}}
\mathchardef\md="2D
\mathchardef\mh="2D
\newcommand{\sss}{\subsubsection{}}
\newcommand{\IC}{\IndCoh}
\newcommand{\mr}{\mathring}
\newcommand{\ft}{\check{\mathfrak{t}}}
\newcommand{\chT}{\check{T}}
\newcommand{\sC}{\mathbf{C}}
\newcommand{\sD}{\mathbf{D}}
\newcommand{\chP}{\check{P}}
\newcommand{\chG}{\check{G}}
\newcommand{\chB}{\check{B}}
\newcommand{\sM}{\mathscr{M}}
\newcommand{\chL}{\check{L}}
\newcommand{\chN}{\check{N}}
\newcommand{\chQ}{\check{Q}}
\newcommand{\chR}{\check{R}}
\newcommand{\DGCat}{\on{DGCat}_{cont}}
\newcommand{\fchh}{\check{\mathfrak{h}}}
\newcommand{\hj}{\hspace{.07cm} \widehat{\hspace{-.07cm} j}}
\newcommand{\chI}{I}
\newcommand{\chH}{\check{H}}
\newcommand{\halpha}{\check{\alpha}}
\newcommand{\fsl}{\mathfrak{sl}}
\newcommand{\Oblv}{\operatorname{Oblv}}
\newcommand{\PN}{{{}_{\chP}\check{N}}}
\newcommand{\RN}{{{}_{\chR}\check{N}}}
\newcommand{\QN}{{{}_{\chQ}\check{N}}}
\newcommand{\Dmod}{\operatorname{D-mod}}
\renewcommand{\varinjlim}{\colim}
\renewcommand{\varprojlim}{\lim}
\newcommand{\mmod}{\on{-mod}}
\newcommand{\na}[1]{\wt{\cN}_{#1}}
\newcommand{\nb}[1]{\wt{\cN}_{#1}'}
\newcommand{\nc}[1]{\wt{\mf{g}}_{#1}}
\newcommand{\ga}[1]{\wt{\mf{g}}_{#1}}
\newcommand{\QC}{\QCoh}
\newcommand{\Lamda}{\Lambda}
\newcommand{\fH}{\mathbf{H}}
\newcommand{\hCohst}{\hCoh{\vphantom{\Coh}}^*}
\newcommand{\hCohsh}{\hCoh{\vphantom{\Coh}}^!}
\begin{document}

\title{A Langlands dual realization of coherent sheaves on the nilpotent cone}
\date{\today}
\author{Harrison Chen and Gurbir Dhillon}
\begin{abstract}
Let $G$ and $\chG$ be Langlands dual connected reductive groups. We establish a monoidal equivalence of $\infty$-categories between equivariant quasicoherent sheaves on the formal neighborhood of the nilpotent cone in $G$ and Steinberg--Whittaker D-modules on the loop group of $\chG$, as conjectured by Bezrukavnikov.  More generally, we establish equivalences between various spectral and automorphic realizations of affine Hecke categories and their modules, confirming conjectures of Bezrukavnikov.
\end{abstract}

\maketitle

\setcounter{tocdepth}{2}
\tableofcontents
\begingroup

\section{Introduction}

\subsubsection{} 
%\indent 
In the influential work \cite{roma hecke}, Bezrukavnikov established an equivalence between spectral and automorphic versions of the affine Hecke category. This result fits into a wider class of conjectures, the local geometric Langlands correspondence, and establishes them for tamely ramified local systems with unipotent monodromy.

The present work has two main goals. First, we record proofs of several conjectures introduced by Bezrukavnikov and his collaborators in their work, including an automorphic realization of coherent sheaves on the nilpotent cone.

Second, we provide some analysis of issues of homological algebra around Bezrukavnikov's equivalence, including various $\infty$-categorically enhanced versions of his result which are required elsewhere in the local Langlands program and the representation theory of affine Lie algebras. 

We now describe both goals in slightly more detail; the reader may safely skip this motivational material and  find a precise statement of our main results in Theorem \ref{t:thma} below.

\subsubsection{} 
We begin with a discussion of our first goal.  
In {\em loc. cit.} Bezrukavnikov conjectured several further compatibilities of the equivalence. These match automorphic and spectral realizations of certain standard modules for the affine Hecke category, as well as categories of intertwining operators between such modules. The first goal of the present work is to confirm those conjectures, one of which was previously obtained by Bezrukavnikov--Losev \cite{BL}. 

Relatedly, we prove a further compatibility, namely we provide a monoidal equivalence between the category of coherent sheaves on the formal neighborhood of the nilpotent cone of a connected reductive group $G$ with the category of bi-Steinberg--Whittaker sheaves on the loop group of its Langlands dual $\chG$, confirming a conjecture of Bezrukavnikov (unpublished).  For further discussion of this result, including precise definitions and an expected application to giving an alternative to the argument of \cite{roma hecke}, we refer the reader to Section \ref{biwhittaker sec} below.

\begin{rmk} 
We note in passing some arithmetic consequences of the equivalences that we prove. To state a representative example, fix a parahoric subgroup of the loop group $\chG_F$ lying in the arc subgroup $\chG_O$. Then, by applying categorical trace of Frobenius, the corresponding Hecke algebra may be identified with the Hochschild homology of a parabolic version of the Steinberg stack, where one replaces the full Springer resolution by a corresponding partial Springer resolution. The resulting Langlands parametrization of irreducible representations of a $p$-adic group with nontrivial fixed vectors for the parahoric is a celebrated result of Kazhdan--Lusztig in the case of an Iwahori subgroup, and for $\chG_O$ is the parametrization of unramified representations due to Satake. We are unsure if  the remaining cases are in the literature. 
\end{rmk}

%\subsubsection{} For the second goal, we highlight that in performing this analysis, we encounter on the automorphic side the category of \emph{nearly compact} objects generated by \emph{cofree-monodromic sheaves}, and on the spectral side the corresponding category of {\em continuous ind-coherent sheaves} on a formal scheme, which may not have appeared elsewhere in the literature. 

%On the one hand, we match the spectral and automorphic realizations of the partial spherical and anti-spherical modules for the affine Hecke category, and in particular the categories of intertwining operators between such modules. These results, which may be predicted from the expected compatibility of local geometric Langlands and parabolic induction, 
 %confirm a conjecture of {\em loc. cit.}

 \subsubsection{} 
Let us now discuss our second goal, namely to establish various technical and structural results about Bezrukavnikov's equivalence itself.  In particular, in line with the expected formulation of local geometric Langlands as an equivalence of certain $(\infty, 2)$-categories, we produce cocompleted $\infty$-categorical enhancements of the appearing categories and equivalences. Moreover, we in fact provide two such enhancements for the equivalences, corresponding to two natural full subcategories of small objects on either side. 

In performing this analysis, we encounter on the automorphic side a natural derived local system on any algebraic variety, its {\em cofree monodromic sheaf}, and an associated category of \emph{nearly compact} objects in the monodromic Hecke category. On the spectral side, we dually study the category of {\em continuous ind-coherent sheaves} on a formal scheme. These notions, while closely related to previous work, may not have appeared elsewhere in the literature.

\subsection{Statement of results}\label{s:intromainresults}

\subsubsection{} To state our main result, we need to first introduce notation, which is admittedly somewhat involved, albeit mostly standard. The reader may therefore wish to briefly skim the contents of Sections \ref{sss:monday} and \ref{sss:friday} before reading the statement of Theorem \ref{t:thma}, and referring back only as needed.

\subsubsection{}\label{sss:monday} Let $\chG$ denote a pinned connected reductive algebraic group over an algebraically closed field $k$ of characteristic zero, with corresponding Borel subgroup $\chB$. 
Let $G$ denote the corresponding Langlands dual group, with corresponding Borel subgroup $B$. We denote the field of Laurent series and its ring of integers by $$F = k(\!(t)\!) \quad \quad O = k[\![t]\!],$$respectively.  We consider the loop group $\chG_F$ as an ind-scheme over $k$ on the automorphic side.  In this setting, by sheaves we mean the category of D-modules, or equivalently since all our stacks are ind-finite-orbit the category of constructible sheaves.

\subsubsection{}\label{main thm sec} On the automorphic side, we will encounter various categories of D-modules on $\chG_F$, where we vary the equivariance conditions on the left and right. As a primary example, we write $I \subset \chG_F$ for the Iwahori subgroup corresponding to the Borel $\chB$, and we might impose $I$-equivariance for sheaves on  $\chG_F$ with respect to left translations or right translations.

Associated to each equivariance condition on the automorphic side we consider will be a formal stack equipped with a map to the formal neighborhood of the nilpotent cone $$\fg^\wedge_\cN / G \hookrightarrow \fg/G$$ on the spectral side. Continuing the above example, the stack corresponding to $I$-equivariance is the (stacky) Springer resolution 
$$\wt{\cN}/G := (G \times^B \mf{n})/G \simeq \mf{n}/B \rightarrow \fg/G.$$

The basic assertion, as conjectured by Bezrukavnikov, is that the automorphic category one obtains by choosing two equivariance conditions for left and right translations is equivalent to the spectral category one obtains by taking coherent sheaves on the corresponding fiber product over $\mf{g}^\wedge_{\cN}/G$. %or equivalently, $\mf{g}/G$ (see Definition \ref{steinberg notation}). 
For example, if we impose $I$-equivariance on both sides, we arrive at the statement
$$\Dmod(I \bs \chG_F / I) \simeq \QC^!_{\cN[1]}(\wt{\cN}/G  \underset{\fg/G} \times \wt{\cN}/G),$$
i.e., Bezrukavnikov's original equivalence of affine Hecke categories (see Section \ref{def sheaves} and Section \ref{app ss} for a discussion of the spectral categories of sheaves and singular support conditions).

\subsubsection{}\label{sss:morning} As we just stated, $I$-equivariance is exchanged with the Springer resolution. Let us denote this relationship by
\begin{equation} \label{e:EYE} \chG_F/I \overset{\mathbb{L}}{\longleftrightarrow} \wt{\cN}/G.\end{equation}

A technical, but important, variant is the following. One can relax the condition of $I$-equivariance to $I$-monodromicity, i.e., one takes the full subcategory of sheaves generated by strictly equivariant objects (see Section \ref{s:defmon} for discussion). This allows sheaves to have unipotent, rather than only trivial, monodromy along Iwahori orbits. 

On the spectral side, the corresponding thickening is obtained by passing to the full subcategory of coherent sheaves on the Grothendieck--Springer resolution $\wt{\fg}/G$ generated by pushforwards from $\wt{\cN}/G$.  Equivalently, these are coherent sheaves on $\wt{\mf{g}}/G$ set-theoretically supported on $\wt{\cN}/G$, or coherent sheaves on the formal completion $\wt{\mf{g}}^\wedge_{\wt{\cN}}$.  That is, we have 
\begin{equation} \label{e:EYE2}\chG_F/I\mon \overset{\mathbb{L}}{\longleftrightarrow} \wt{\fg}^\wedge_{\wt{\cN}}/G.\end{equation}

The equivalences arising from \eqref{e:EYE} and \eqref{e:EYE2} are the main results of \cite{roma hecke}. The new equivalences we are after arise from two further classes of equivariance conditions. Namely, as we explain next, we will either pass from \eqref{e:EYE} to equivariance for larger parahorics than the Iwahori, or turn on a nontrivial character sheaf on the pro-unipotent radical of $I$, i.e., impose a possibly degenerate Whittaker condition.

\subsubsection{} We begin by discussing the parahoric case.  Fix a standard parabolic subgroup $\chP \supset \chB$ of $\chG$. On the one hand, associated to $\chP$ is a standard parahoric subgroup  $I_{\chP} \subset \chG_F$, namely the preimage of $\chP$ under the evaluation map $\chG_O \rightarrow \chG.$   On the automorphic side, we may therefore consider strict equivariance for the parahoric $I_{\chP}$.

On the spectral side, associated to $\chP$ is a standard parabolic subgroup $P \supset B$ of $G$. Associated to $P$ is a parabolic variant of the Springer resolution. Namely, if we write $\mf{n}_P \subset \mf{p}$ for the nilradical of the Lie algebra of $P$, we have the partial Springer resolution $$\wt{\cN}_P/G := (G \times^P \mf{n}_P)/G \simeq \mf{n}_P/P \ra \mf{g}^\wedge_{\cN}/G.$$
In \cite{roma hecke}, Bezrukavnikov proposed the relationship
$$\chG/I_{\chP} \overset{\bL}{\longleftrightarrow} \wt{\cN}_P/G.$$
If one pairs with the conditions \eqref{e:EYE2}, this in particular implies an equivalence
$$ \Dmod(I\mon \bs \chG_F / I_{\chP}) \simeq \QC^!_{\cN[1]}(\wt{\fg}^\wedge_{\wt{\cN}}/G \underset{\fg/G} \times \wt{\cN}_P/G).$$
This important case was established previously by Bezrukavnikov--Losev \cite{BL}. 
\subsubsection{} As in the case of Iwahori, Bezruvanikov proposed a monodromic variant of the above parahoric correspondence.  Namely, letting $I'_{\chP} \subset I_{\chP}$ denote the derived subgroup of $I_{\chP}$, we may consider on the automorphic side $I'_{\chP}$-equivariant sheaves which are monodromic for the quotient torus $$I_{\chP}/I'_{\chP} \simeq \chP/[\chP, \chP].$$We denote this equivariance condition by $(I_{\chP}, I'_{\chP})\mon$ below.

On the spectral side, one forms the corresponding thickening of the partial Springer resolution as follows. If we denote the center of the Lie algebra of $P$ by $\mf{z}(\mf{p})$, consider the stack 
$$\wt{\cN}_P'/G := (G \times^P (\mf{n}_P + \mf{z}(\mf{p})))/G \simeq (\mf{n}_P + \mf{z}(\mf{p}))/P \ra \mf{g}/G.$$
The formal stack of interest is again its formal completion along nilpotent elements
$$(\wt{\cN}_P')^{\wedge}_{\cN}/G \ra \mf{g}^\wedge_{\cN}/G.$$
The desired monodromic variant of the parahoric correspondence, as proposed by Bezrukavnikov, reads as follows
$$\chG_F/(I_{\chP}, I'_{\chP})\mon \overset{\bL}{\longleftrightarrow} (\wt{\cN}_P')^\wedge_{\cN}/G.$$

\subsubsection{} The remaining equivariance condition to discuss is the case of Whittaker invariants. 
Let us begin with the automorphic side. Fix a parabolic $\chP \subset \chG$. If we write $\chL$ for the Levi quotient of $\chP$, the pinning for $\chG$ affords us a a distinguished maximal unipotent subgroup $N^-_{\chL}$ in general position to the image of $\chB$, along with a nondegenerate character sheaf $\psi$ of $N^-_{\chL}$. Let us write $_{\chP}\mathring{I}$ for the preimage of $N^-_{\chL}$ under the quotient map 
$$I_{\chP} \rightarrow \chL,$$
and let us denote its induced character sheaf again by $\psi$. With this, the equivariance condition of interest is almost
$$\chG_F/ _{\chP}\mathring{I}, \psi.$$
However, this is too large, as, somewhat informally speaking, it receives contributions from all the unramified principal series, and not only the trivial representation of the torus. Therefore, we pass to its full subcategory of {\em Steinberg--Whittaker} sheaves, 
$$\chG_F/ _{\chP}\mathring{I}, \psi, \St \hookrightarrow \chG_F/ _{\chP}\mathring{I}, \psi,$$
as studied by Campbell and the first named author in \cite{campbelldhillon}, as well as by Arinkin--Bezrukavnikov (unpublished); see Definition \ref{define steinberg whit} for a precise definition.

On the spectral side, we may consider a parabolic version of the Grothendieck--Springer resolution
$$\wt{\mf{g}}_P/G  := (G \times^P \mf{p})/G \simeq \mf{p}/P \ra \mf{g}/G,$$
and again the formal stack of interest is obtained by  completing along the nilpotent elements
$$(\wt{\mf{g}}_P)^\wedge_{\cN}/G \ra \mf{g}^\wedge_{\cN}/G.$$
This gives rise to a correpsondence
$$\chG/{}_{\chP}\mr{I}, \psi, \St \overset{\bL}{\longleftrightarrow} (\wt{\mf{g}}_P)^\wedge_{\cN}/G.$$

\label{sss:evening}

\subsubsection{} Having listed the various equivariance conditions and the corresponding formal stacks, let us briefly comment on the relationship to local geometric Langlands. This is supposed to be, following Arinkin--Gaitsgory, an equivalence of 2-categories between the 2-category of categories acted on by $\Dmod(\chG_F)$ and ind-coherent sheaves of categories with nilpotent singular support on the moduli space of $G$-connections on the formal punctured disk $\mathscr{D}^\times$
\begin{equation} \label{e:loclang}\mathbb{L}: \Dmod(\chG_F)\mod \simeq \on{2-QC^!}_{\cN}(\on{LocSys}_{G}(\mathscr{D}^\times)).\end{equation}
For a torus $\chT$, the equivalence of 2-categories \eqref{e:loclang} follows from the monoidal equivalence 
$$\Dmod(\chT_F) \simeq \QCoh(\on{LocSys}_{T}(\mathscr{D}^\times)),$$
which exchanges, writing $\mathscr{D}$ for the formal non-punctured disk, the modules 
$$\mathbb{L} (\Dmod(\chT_F/\chT_O)) \simeq \QCoh(\on{LocSys}_T(\mathscr{D})).$$
By parabolically inducing to $\chG$ and $G$, one arrives at the prediction 
$$\mathbb{L}(\Dmod(\chG_F / \chN_F \chT_O)) \simeq \QCoh( \on{LocSys}_T(\mathscr{D}) \underset{\on{LocSys}_T(\mathscr{D}^\times)} \times \on{LocSys}_B(\mathscr{D}^\times) ) \simeq \QCoh(\wt{\cN}/ G).$$
As one has a canonical identification, due to Raskin,
$$\Dmod(\chG_F / \chN_F \chT_O) \simeq \Dmod(\chG_F / I),$$
by taking endomorphisms one arrives at the prediction of a monoidal equivalence
\begin{equation} \label{e:matchends}\mathbf{End}_{\Dmod(\chG_F)\mod}(\Dmod(\chG_F / I)) \simeq \mathbf{End}_{\on{2-QC^!}_{\cN}(\on{LocSys}_G(\mathscr{D}^\times))}( \on{QCoh}(\wt{\cN}/G)).\end{equation}
The left-hand side is given by the affine Hecke category, i.e., $\Dmod(I \bs \chG_F / I)$. To compute the right-hand side, note that one has a natural map $$\fg/G \rightarrow \on{LocSys}_G(\mathscr{D}^\times), \quad \quad X \mapsto d + X \frac{dt}{t}.$$
After further restricting to the formal neighborhood of the nilpotent cone, the resulting map 
\begin{equation} \label{e:tameunip} \fg^\wedge_{\cN}/G \rightarrow \on{LocSys}_{G}(\mathscr{D}^\times)\end{equation}
is radicial and formally \'etale, so that one has an inclusion 
$$\on{2-QC^!}_{\cN}(\fg^\wedge_{\cN}/G) \hookrightarrow \on{2-QC^!}_{\cN}(\on{LocSys}_{G}(\mathscr{D}^\times)).$$
Using this, one may therefore compute $$\mathbf{End}_{\on{2-QC^!}_{\cN}(\on{LocSys}_G(\mathscr{D}^\times))}( \on{QCoh}(\wt{\cN}/G)) \simeq  \QC^!_{\cN[1]}( \wt{\cN}/G \underset{\fg^\wedge_{\cN}/G} \times \wt{\cN}/G).$$
Therefore, \eqref{e:matchends} amounts to a monoidal equivalence
$$\Dmod(I \bs \chG_F / I) \simeq \QC^!_{\cN[1]}( \wt{\cN}/G \underset{\fg^\wedge_{\cN}/G} \times \wt{\cN}/G).$$
This is the formulation of Bezrukavnikov's equivalence required for local geometric Langlands, cf. \cite{Arinkinnotes}. To obtain this from \cite{roma hecke} requires some work, particularly the analysis of singular support on the spectral side, and this is among the things provided by Theorem \ref{main thm} below. 

On the automorphic side of local Langlands, one can consider the full sub-2-category of categorical representations generated by their Iwahori equivariant objects$$\Dmod(\chG_F)\mod^{I\on{-gen}} \hookrightarrow \Dmod(\chG_F)\mod,$$and one has an equivalence
$$\mathbf{Fun}_{\Dmod(\chG_F)\mod}( \Dmod(\chG_F/I), -): \Dmod(\chG_F)\mod^{I\on{-gen}} \simeq \Dmod(I \bs \chG_F / I)\mod.$$

On the spectral side of \eqref{e:loclang}, one can consider sheaves of categories supported on the tame unipotent locus
\eqref{e:tameunip}, i.e., 
$$\on{2-QC^!}_{\cN}(\fg^\wedge_{\cN}/G) \hookrightarrow \on{2-QC^!}_{\cN}(\on{LocSys}_{G}(\mathscr{D}^\times)),$$and again one has an equivalence
$$\mathbf{Fun}_{\on{2-QC}^!_{\cN}(\fg^\wedge_{\cN}/G)}( \QCoh(\wt{\cN}/G), - ): \on{2-QC^!}_{\cN}(\fg^\wedge_{\cN}/G)  \simeq \QC^!_{\cN \cap \cN[1]}( \wt{\cN}/G \underset{\fg^\wedge_{\cN}/G} \times \wt{\cN}/G)\mod.$$
Therefore, Bezrukavnikov's equivalence implies the equivalence of 2-categories
\begin{equation}\mathbb{L}: \Dmod(\chG_F)\mod^{I\on{-gen}} \simeq \on{2-QC^!}_{\cN}(\fg^\wedge_{\cN}/G),\label{e:loclangtame}\end{equation}
i.e., local geometric Langlands over the tame, unipotent locus.

Having explained this, let us revisit the matching of equivariance conditions and formal stacks over $\fg^{\wedge}_{\cN}/G$ described in Sections \ref{sss:morning}-\ref{sss:evening}. To fix ideas, let us focus on the cases
$$\chG_F/I_{\chP} \overset{\mathbb{L}}{\longleftrightarrow}  \wt{\cN}_P / G \quad \quad \text{and} \quad \quad \chG_F/ _{\chP}\mathring{I}, \psi, \St \overset{\mathbb{L}}{\longleftrightarrow} (\wt{\fg}_P)^\wedge_\cN / G,$$
the others being technical variations thereof. These matchings, which were stated somewhat informally before, in the setup  of \eqref{e:loclangtame} are meant to arise from 
$$\mathbb{L}(\Dmod(\chG_F/I_{\chP})) \simeq \QC(\wt{\cN}_P / G) \quad \quad \text{and} \quad \quad \mathbb{L}(\Dmod(\chG_F/ _{\chP}\mathring{I}, \psi, \St)) \simeq \QC((\wt{\fg}_P)^\wedge_\cN / G).$$
These predictions, in turn, arise from applying parabolic induction to two general expectations in local geometric Langlands. Namely, recalling that we write $\mathscr{D}$ for the non-punctured formal disk, one is meant to exchange the standard generators of the unramified parts
$$\mathbb{L}(\Dmod(\chG_F/\chG_O)) \simeq \QC( \on{LocSys}_G(\mathscr{D})).$$
In addition, one is meant to have tempered geometric Langlands, i.e., that Whittaker invariants are exchanged with global sections of sheaves of categories
$$\mathbb{L}(\Dmod(\chG_F/\chN_F, \psi)) \simeq \QC(\on{LocSys}_G(\mathscr{D}^\times)),$$which after restricting to tame parts simplifies to
$$\mathbb{L}(\Dmod(\chG_F/ \mathring{I}, \psi, \St)) \simeq \QC(\fg^\wedge_\cN / G).$$

\subsubsection{} 
We now state our main theorem, a precise statement of the above discussion, which appears in the body as Theorem \ref{thm conjectures} (with the $\infty$-categorical lift of Bezrukavnikov's theorem appearing as Theorem \ref{roman theorem}).  We note that in the statements below, we rewrite the formal completions along $\cN$ in the above discussions as support conditions on categories of sheaves.

\begin{introthm} \label{t:thma}\label{main thm}
There are $\infty$-categorical equivalences between the automorphic and spectral affine Hecke categories
$$\begin{tikzcd}[row sep=0ex, column sep=tiny]
\Dmod(I_{\chP} \bs \chG_F / I_{\chQ}) \arrow[r, "\simeq"] & \IndCoh_{\cN[1]}(\wt{\cN}_P/G \utimes{\mf{g}/G} \wt{\cN}_Q/G) & \text{(strict-strict)}  \\
\Dmod((I_{\chP}, I'_{\chP})\mon\!\bs \chG_F / I_{\chQ}) \arrow[r, "\simeq"]& \IndCoh_{\cN[1]}(\wt{\cN}'_P/G \utimes{\mf{g}/G} \wt{\cN}_Q/G) & \text{(monodromic-strict)} \\
\Dmod(\mathring{I}, \psi_P \bs \chG_F / I_{\chQ}) \arrow[r, "\simeq"] & \IndCoh(\wt{\mf{g}}_P/G \utimes{\mf{g}/G} \wt{\cN}_Q/G)  &  \text{(Whittaker-strict)} \\
\Dmod((I_{\chP},I'_{\chP})\mon\! \bs \chG_F / (I_{\chQ}, I'_{\chQ})\mon) \arrow[r, "\simeq"]& \IndCoh_{\cN \cap \cN[1]}(\wt{\cN}'_P/G \utimes{\mf{g}/G} \wt{\cN}'_Q/G) & \text{(monodromic-monodromic)} \\
\Dmod(\mathring{I},\psi_P \bs \chG_F / (I_{\chQ}, I'_{\chQ})\mon) \arrow[r, "\simeq"]& \IndCoh_{\cN}(\wt{\mf{g}}_P/G \utimes{\mf{g}/G} \wt{\cN}'_Q/G) & \text{(Whittaker-monodromic)} \\
\Dmod(\mathring{I}, \psi_P, \St \bs \chG_F / \mathring{I}, \psi_Q, \St) \arrow[r, "\simeq"] & \IndCoh_{\cN}(\wt{\mf{g}}_P/G \utimes{\mf{g}/G} \wt{\mf{g}}_Q/G) & \text{(bi-Whittaker)}
 \end{tikzcd}$$
 compatible with monoidal and module structures, and the functorialities discussed in Section \ref{s:mainresults}.
\end{introthm}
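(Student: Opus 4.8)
The plan is to deduce all six equivalences from the single cocompleted $\infty$-categorical monoidal equivalence of affine Hecke categories, by passing to module $2$-categories and then pinning down the relevant standard modules via parabolic induction on both sides. The first, and most important, step is Theorem~\ref{roman theorem}: the cocompleted $\infty$-categorical lift of Bezrukavnikov's equivalences \eqref{e:EYE}--\eqref{e:EYE2}. Granting this as a \emph{monoidal} equivalence $\Dmod(I\bs\chG_F/I)\simeq\IndCoh_{\cN[1]}(\wt{\cN}/G\utimes{\fg/G}\wt{\cN}/G)$, the Morita formalism --- modules over the monoidal $\End$ of the respective generators $\Dmod(\chG_F/I)$ and $\QC(\wt{\cN}/G)$ --- yields the equivalence of $2$-categories \eqref{e:loclangtame}. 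Under it, every automorphic category in Theorem~\ref{t:thma} is $\mathbf{Fun}$ or $\mathbf{End}$ of two of the module categories $\Dmod(\chG_F/I_{\chP})$, $\Dmod(\chG_F/(I_{\chP},I'_{\chP})\mon)$, $\Dmod(\chG_F/{}_{\chP}\mr{I},\psi,\St)$, and every spectral category is, up to the singular-support condition, $\IndCoh$ of the corresponding fibre product, which is likewise $\mathbf{Fun}$ or $\mathbf{End}$ of $\QC(\wt{\cN}_P/G)$, $\QC(\wt{\cN}'_P/G)$, $\QC((\wt{\fg}_P)^\wedge_{\cN}/G)$. So the theorem reduces to matching these standard modules across \eqref{e:loclangtame}; once that is done, compatibility with convolution, with the forgetful/averaging adjunctions relating different equivariance conditions, and with duality --- i.e.\ the functorialities of Section~\ref{s:mainresults} --- is formal, and the monodromic-strict, Whittaker-strict and Whittaker-monodromic rows come out by feeding pairs of modules of mixed type into $\mathbf{Fun}$.

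For Theorem~\ref{roman theorem} itself, I would use that \cite{roma hecke} produces an explicit functor matching compact generators (standard, costandard and tilting objects, with Wakimoto filtrations controlling the morphism complexes), so that it Ind-completes to an equivalence of presentable $\infty$-categories. The genuinely new point is on the spectral side: one must identify the Ind-completion of Bezrukavnikov's category of coherent complexes set-theoretically supported along $\wt{\cN}$ with $\IndCoh$ carrying the microlocal condition $\cN[1]$ --- using the radicial, formally \'etale map $\fg^\wedge_{\cN}/G\to\on{LocSys}_G(\mathscr{D}^\times)$ of \eqref{e:tameunip} together with the theory of singular support on quotient stacks --- and then propagate this identification through all the fibre products occurring in the theorem, keeping track of the different conditions $\cN[1]$, $\cN\cap\cN[1]$ and $\cN$ in the six rows. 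I expect this singular-support analysis, reconciling uniformly the finiteness inherited from the automorphic side with the correct microlocal support, to be one of the two main obstacles; it is precisely the issue flagged after \eqref{e:loclangtame}.

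It remains to match the standard modules, which I would do by parabolic induction. On the automorphic side, $\Dmod(\chG_F/I_{\chP})$ is cut out of $\Dmod(\chG_F/I)$ as the objects equivariant for right convolution by $I_{\chP}/I\simeq\chP/\chB$ --- equivalently, as a relative tensor product over the finite Hecke category of the Levi $\chL$, the case $\chP=\chB$ being Raskin's identification $\Dmod(\chG_F/\chN_F\chT_O)\simeq\Dmod(\chG_F/I)$ --- while the monodromic, resp.\ Whittaker, variant replaces this equivariance by monodromy for the quotient torus $\chP/[\chP,\chP]$, resp.\ by $\psi$-equivariance along the preimage of $N^-_{\chL}$. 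The spectral side admits an exactly parallel description: $\QC(\wt{\cN}_P/G)$ is built from $\QC(\wt{\cN}/G)$ along the correspondence between $\wt{\cN}/G$ and $\wt{\cN}_P/G$, while $\QC(\wt{\cN}'_P/G)$ and $\QC((\wt{\fg}_P)^\wedge_{\cN}/G)$ arise by first enlarging $\mf{n}_P$ to $\mf{n}_P+\mf{z}(\mf{p})$, resp.\ to $\mf{p}$, and then completing along $\cN$. The key lemma is that the comparison functors realizing these two pictures correspond under Theorem~\ref{roman theorem}; since every functor in sight is $\Dmod(I\bs\chG_F/I)$-linear, this reduces to a check on compact generators, which via the Hecke action and the Wakimoto filtration amounts to a categorified affine-Hecke-algebra computation. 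For the Whittaker and bi-Whittaker rows the essential input is instead the theory of Steinberg--Whittaker sheaves of \cite{campbelldhillon} (and of Arinkin--Bezrukavnikov), whose output is exactly that $\Dmod(\chG_F/\mr{I},\psi,\St)$ is the tempered module matching $\QC(\fg^\wedge_{\cN}/G)$; here the delicate point is that the Steinberg truncation must be compatible with convolution, so that the bi-Whittaker Hecke category comes out as $\IndCoh_{\cN}(\wt{\fg}_P/G\utimes{\fg/G}\wt{\fg}_Q/G)$ rather than a larger category of ind-coherent sheaves. These two points --- the singular-support analysis and the interaction of the Steinberg condition with convolution --- are where I expect the real difficulty to lie; everything else is formal $2$-categorical manipulation or a finite check on generators.
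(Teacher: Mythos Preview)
Your overall shape is right: start from the cocompleted Bezrukavnikov equivalence, establish module-category equivalences for each equivariance condition, and then tensor these over the affine Hecke category to obtain all six rows. But the paper's concrete mechanism differs from yours in ways that matter.

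First, the paper works throughout with the \emph{monodromic} Hecke category $\Dmod(\mathring{I}\mon\!\bs\chG_F/\mathring{I}\mon)\simeq\IndCoh(\wh{\cZ}/G)$ as the base monoidal category, not the strict one; the strict version and its $\cN[1]$ condition are \emph{derived} from it by killing the monodromy via $-\otimes_{\cO(\wh{\mf{h}})}k$ on each side (Theorem~\ref{roman theorem}(a)). So the $2$-categorical equivalence \eqref{e:loclangtame} is a consequence, not an input, and the paper never invokes it.

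Second, the heart of the Whittaker case is not a check on generators via Wakimoto filtrations but a \emph{monadicity} argument. The adjunction $(\on{Av}_{!*}^\psi,\on{Av}_{!*})$ is monadic, and the monad is convolution with an explicit algebra object $\wh{\Xi}_P$ in the monodromic Hecke category (Corollary~\ref{cor banana peel}); on the spectral side the monad is convolution with $\iota_*\omega_{(\wh{\cZ_P})_{\cN}/G}$ (Corollary~\ref{completed spectral coalgebra}). The decisive step is that both algebra objects lie in the \emph{heart} of the $t$-structure --- this uses Grauert--Riemenschneider and the Calabi--Yau property on the spectral side --- and are therefore determined by $1$-categorical data (Appendix~\ref{app infcat}); they are then matched by a uniqueness characterization: each is the unique flat sub/quotient of the big tilting (resp.\ dualizing) object whose restriction to the regular semisimple locus is the sum of graphs of $w\in W_P$ (Propositions~\ref{spectral whittaker characterization} and~\ref{autom algebra}). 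The parahoric and centrally-monodromic cases are handled separately, by citing \cite{BL} and by a Koszul duality with $\Sym\,\mf{z}_{L_P}[-2]$ respectively. This algebra-object-in-the-heart mechanism is the technical engine your proposal lacks.

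Third, the singular-support conditions do not come from the map to $\on{LocSys}_G(\mathscr{D}^\times)$; that is motivational only. They arise from an explicit computation of the \emph{convolution of singular supports} (Theorem~\ref{convolution composition} and Proposition~\ref{reduction using monoidal}): tensoring over the strict Hecke category produces the classical nilpotent condition $\cN$, tensoring over the full (Whittaker) Hecke category produces the odd nilpotent condition $\cN[1]$, and both appear when tensoring over the monodromic one. Once Theorems~\ref{thm monodromic strict}, \ref{thm monodromic whittaker}, \ref{thm monodromic central} are in hand, all six rows of Theorem~\ref{thm conjectures} come out by tensoring the relevant pair over $\IndCoh(\wh{\cZ}/G)$ and reading off the resulting support from this computation.
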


We note that in the equivalences above, the spectral side categories may all be given the support and singular support condition $\cN \cap \cN[1]$.  In cases where one of these conditions is redundant, we have simplified the expression by omitting it (see Section \ref{app ss} for a discussion of singular support, and Section \ref{section ssupp cond food} for a discussion of how singular and classical supports arise in the above categories).

Our results also hold in the setting as formulated in \cite{roma hecke}, i.e. taking $F = \overline{\mathbb{F}}_q(\!(t)\!)$ and the category of $\ell$-adic sheaves on the automorphic side, and defining varieties over $\overline{\mathbb{Q}}_\ell$ on the spectral side.  Here, pullback along geometric Frobenius is exchanged with pullback along scaling by $q$, which may be deduced $\infty$-categorically using our methods from Appendix A.

\subsubsection{}  \label{sss:friday} Finally, we discuss a finer aspect of the equivalences of Theorem \ref{t:thma}, namely how they match certain small full subcategories of interest.  The nature of subcategories depends on whether one is imposing a strict or monodromic equivariance condition on the automorphic side, or dually working with stack or a formal stack on the spectral side. Let us begin with a discussion of the strict case.

\subsubsection{} On the automorphic side, the categories of D-modules on the quotient stacks which appear are all compactly generated. Therefore, one natural class of objects to consider are the compact objects themselves, i.e., {\em safe} D-modules. In what follows, for a stack $X$, we denote by $\Dmod_s(X)$ its category of safe D-modules.

While the safe objects are a very natural subcategory to consider, they fail to contain many basic objects of interest. For example, the monoidal unit in the strict affine Hecke category, i.e. the dualizing sheaf on $I \bs I / I$
$$\omega_{I \bs I / I} \in \Dmod(I \bs \chG_F / I),$$
fails to be compact, essentially due to the infinite cohomological amplitude of its endomorphisms. However, it does generate the larger category of {\em coherent} D-modules on this stratum. Explicitly, letting $C_\bullet(\chT)$ denote the de Rham homology of $\chT$, one has an identification
$$\Dmod(I \bs I / I) \overset{\simeq}{\longrightarrow} C_\bullet(\chT)\mod,$$
where the equivalence takes a D-module $\sM$ to a $C_\bullet(\chT)$-module whose underlying vector space is the $!$-restriction of $\sM$ to a point.  In particular, the dualizing sheaf corresponds to the augmentation module for $C_\bullet(\chT)$.  Under this identification, compact objects correspond to perfect $C_\bullet(\chT)$-modules, whereas coherent objects correspond to $C_\bullet(\chT)$-modules with perfect underlying vector spaces. In general, for a stack $X$ we may consider its category of coherent D-modules, which we denote by $\Dmod_c(X)$. 

Spectrally, for a Steinberg stack $\cZ/G$, i.e., one of the fiber products appearing on the spectral side of Theorem \ref{t:thma}, recall that the category associated to it under consideration is$$\IndCoh_{\cN \cap \cN[1]}(\cZ/G),$$i.e., ind-coherent with both a nilpotent singular support and a classical support condition.  As before, to pass to small categories, we can take the compact objects in this category, i.e. the category $\Coh_{\cN \cap \cN[1]}(\cZ/G)$ of coherent sheaves with the same singular and classical support conditions.  

To see how to find an enlargement of it, corresponding to passing from safe to coherent D-modules, let us again consider the strict spectral affine Hecke category.  Under the convolution monoidal structure, the monoidal unit is the pushforward of the dualizing sheaf of the stacky Springer resolution along the diagonal $\Delta$, i.e.
$$\Delta_* \omega_{\wt{\cN}/G} \in \IndCoh_{\cN[1]}(\wt{\cN}/G \utimes{\mf{g}/G} \wt{\cN}/G).$$
This object does not have nilpotent singular support, so is not a compact object, matching expectations from the automorphic side.  It is, however, a coherent sheaf.

As this example suggests, we show that the automorphic enlargement from safe D-modules to coherent D-modules corresponds spectrally to the enlargement from coherent sheaves with nilpotent singular support to all coherent sheaves in all the equivalences of Theorem \ref{t:thma}.  Furthermore, these enlargements have abstract characterizations in terms of natural $t$-structures.  Namely, given a stable $\infty$-category with a reasonable $t$-structure, one can consider what we call the {\em pseudo-compact objects}, i.e., objects $c$ which are $t$-bounded and for which $\Hom(c, -)$ commutes with uniformly bounded below filtered colimits.\footnote{Pseudo-compact objects are also sometimes called coherent in the algebro-geometric literature, but we avoid this term to avoid overloading it. In addition, the condition for $\Hom(c, -)$ without the assumption of $t$-boundedness is termed {\em almost compactness}.}   The enlarged categories of coherent sheaves in both the automorphic and spectral affine Hecke categories are precisely the pseudo-compact objects for their natural $t$-structures.\footnote{Note the equivalences of Theorem \ref{t:thma} are not $t$-exact. However, they may be shown to be $t$-bounded, which does imply the agreement of pseudo-compact objects, although we employ a different argument in Theorem \ref{thm conjectures}.}

\subsubsection{} If instead of strict equivariance we impose monodromic equivariance, the  difference between compact and pseudo-compact objects disappears. Nonetheless, one again encounters two natural small subcategories, in a manner essentially Koszul dual to the previous discussion. 

As before, we may start with the compact objects. To  identify an interesting enlargement of them, we again contemplate the fact that many basic objects of interest fail to be compact. Once again, the prototypical example is the monoidal unit of the monodromic Hecke category 
$$\delta^\wedge_e \in \Dmod(I\mon \!\bs \chG_F / I\mon).$$
This fails to be compact as it is a unipotent local system on $I$ of infinite rank. However, its maximal constant, as opposed to merely unipotent, subsystem {\em is} finite rank, and this holds after pulling back along any finite cover of $I$. It is this form of finiteness which leads to the definition of the larger category of {\em nearly compact} objects, i.e., monodromic objects whose equivariant averages become compact, cf. Definitions \ref{d:defnc1}, \ref{d:ncnear2}.

Explicitly, letting $C^\bullet(\chT)$ denote the de Rham cohomology of $\chT$, one has an identification 
$$\Dmod(I\mon \bs I / I\mon) \simeq C^\bullet(\chT)\mod,$$
where the equivalence takes a D-module $\sM$ to its global sections. This identification matches the subcategory of compact D-modules with the subcategory of perfect $C^\bullet(\chT)$-modules. The monoidal unit $\delta^\wedge_e$ is exchanged with the augmentation module for $C^\bullet(\chT)\mod$, and again the {nearly compact} objects in this category, i.e., $$\Dmod_{n.c.}(I\mon \bs I / I\mon),$$are exchanged with the $C^\bullet(\chT)$-modules whose underlying vector spaces are perfect.

Spectrally, recall that the monodromic Hecke categories can be realized as either coherent sheaves on a Steinberg stack with a nilpotent classical support condition, or as coherent sheaves on the completion along nilpotent elements.  The monoidal unit is the pushforward of the dualizing sheaf on the formal completion along the diagonal, i.e.
$$\Delta_* \omega_{\wt{\mf{g}}^\wedge_{\cN}/G} \in \IndCoh_{\cN}(\wt{\mf{g}}/G \utimes{\mf{g}/G} \wt{\mf{g}}/G) \simeq \IndCoh(\wt{\mf{g}}^\wedge_{\cN} \utimes{\mf{g}^\wedge_{\cN}/G} \wt{\mf{g}}^\wedge_{\cN}).$$
Explicitly, we can compute $\omega_{\wt{\mf{g}}^\wedge_{\cN}/G}$ by taking the sheafy local cohomology of the dualizing sheaf of the stack $\wt{\mf{g}}/G$ along nilpotent elements.  This is not a coherent sheaf but an ind-coherent sheaf, given by the colimit of dualizing sheaves over infinitesimal thickenings of the nilpotent substack.

The monoidal unit is not compact, but it belongs to a natural (still small) enlargement of the subcategory of compact objects which we call the category of {\em ind-continuous sheaves} on the formal completion (see Section \ref{sec completions}), which we denote by $\wh{\on{Coh}}(-)$:
$$\hCoh_{\cN}(\wt{\mf{g}}/G \utimes{\mf{g}/G} \wt{\mf{g}}/G) \simeq \hCoh(\wt{\mf{g}}^\wedge_{\cN}/G \utimes{\mf{g}^\wedge_{\cN}/G} \wt{\mf{g}}^\wedge_{\cN}/G).$$
These are defined in terms of the $t$-structure as well; they are $t$-bounded ind-coherent sheaves whose $!$-restrictions to any finite infinitesimal thickening of the nilpotent elements are almost compact, and by this definition one immediately sees that the monoidal unit belongs to this subcategory.

Having introduced the categories of nearly compact objects on the automorphic side and continuous ind-coherent sheaves on the spectral side, one of our basic results in this paper is that the monodromic equivalences in Theorem \ref{t:thma} exchanges the two, cf. \ref{thm conjectures}.

\subsubsection{} We formally summarize the above discussion in the following.
\begin{introthm}\label{thm small obj}
The equivalences in Theorem \ref{t:thma} match the following small subcategories.
$$\Dmod_s \leftrightarrow \Coh_{\cN \cap \cN[1]}, \;\;\;\;\;\;\;\;\;\; \Dmod_c \leftrightarrow \Coh_{\cN},$$
$${\Dmod}_{s,n.c.} \leftrightarrow \hCoh_{\cN \cap \cN[1]}, \;\;\;\;\;\;\;\;\;\; \Dmod_{c,n.c.} \leftrightarrow \hCoh_{\cN}.$$
\end{introthm}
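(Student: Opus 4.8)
The plan is to leverage Theorem~\ref{t:thma} (equivalently Theorem~\ref{thm conjectures}) together with abstract characterizations of the four small subcategories in terms of the natural $t$-structures on both sides, and then to match the characterizations. The four matchings come in two flavors --- the strict ones, involving $\Dmod_s$, $\Dmod_c$, $\Coh_{\cN \cap \cN[1]}$, $\Coh_{\cN}$, and the monodromic ones, involving $\Dmod_{s,n.c.}$, $\Dmod_{c,n.c.}$, $\hCoh_{\cN \cap \cN[1]}$, $\hCoh_{\cN}$ --- so I would organize the proof accordingly.

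First, for the strict case: on the automorphic side, $\Dmod_s(X) = \Dmod(X)^c$ are the compact objects, since $\Dmod$ of these quotient stacks is compactly generated; spectrally, the compact objects of $\IndCoh_{\cN \cap \cN[1]}(\cZ/G)$ are exactly $\Coh_{\cN \cap \cN[1]}(\cZ/G)$. An equivalence of presentable stable $\infty$-categories automatically preserves compact objects, so the first matching $\Dmod_s \leftrightarrow \Coh_{\cN \cap \cN[1]}$ is immediate from Theorem~\ref{t:thma}. For the second, $\Dmod_c \leftrightarrow \Coh_{\cN}$, I would invoke the abstract notion of \emph{pseudo-compact} object: in both the automorphic and spectral affine Hecke categories, the enlarged categories $\Dmod_c$ and $\Coh_{\cN}$ are precisely the pseudo-compact objects for the natural $t$-structures (this is asserted in Section~\ref{sss:friday} and proved in the body). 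Since pseudo-compactness is defined purely in terms of the $t$-structure, and the equivalences of Theorem~\ref{t:thma}, while not $t$-exact, are $t$-bounded --- which suffices to preserve pseudo-compact objects --- the matching follows. One technical point: the pseudo-compactness characterization is stated for the Hecke category itself (the $I_\chP = I_\chQ$ case with convolution), so to propagate it to the module categories $\IndCoh_{\cN \cap \cN[1]}(\wt{\cN}_P/G \times_{\mf g/G} \wt{\cN}_Q/G)$ one should either give a direct $t$-structure argument there or use compatibility with module structures to reduce to the unit object and generation arguments.

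Second, for the monodromic case: here compact and pseudo-compact coincide (no infinite cohomological amplitude issue with the $t$-structure on the monodromic side), so the two subcategories $\Dmod_{s,n.c.}$ and $\Dmod_{c,n.c.}$ both sit inside the monodromic Hecke category but are distinguished by a \emph{finiteness-of-invariants} condition rather than by compactness alone. On the automorphic side, $\Dmod_{n.c.}$ (nearly compact objects) is defined via the equivariant averaging functors $\Av_*$: a monodromic object is nearly compact when its averages to the strict side become compact (Definitions~\ref{d:defnc1}, \ref{d:ncnear2}). Spectrally, $\hCoh$ (ind-continuous sheaves on the formal completion) is defined via $!$-restriction to finite infinitesimal thickenings of $\cN$: a $t$-bounded ind-coherent sheaf lies in $\hCoh$ when all such restrictions are almost compact. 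The key is that the equivalences of Theorem~\ref{t:thma} intertwine the automorphic averaging/de-equivariantization functors relating monodromic and strict categories with the spectral functors of $!$-pullback/pushforward between the formal completion and its thickenings (this compatibility with functorialities is part of the statement of Theorem~\ref{t:thma}, cf. Section~\ref{s:mainresults}). Given this intertwining, ``average becomes compact'' translates directly into ``restriction to each thickening is (almost) compact,'' yielding $\Dmod_{s,n.c.} \leftrightarrow \hCoh_{\cN \cap \cN[1]}$ and $\Dmod_{c,n.c.} \leftrightarrow \hCoh_{\cN}$, with the decoration $\cN \cap \cN[1]$ versus $\cN$ tracking the strict-versus-coherent nuance on the thickening side exactly as in the strict case. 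As a sanity check one verifies the monoidal units correspond: $\delta^\wedge_e \leftrightarrow \Delta_* \omega_{\wt{\mf g}^\wedge_\cN/G}$, and both are manifestly nearly compact / ind-continuous by the respective definitions.

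The main obstacle, I expect, is \textbf{the monodromic functoriality step} --- i.e., establishing that the equivalences of Theorem~\ref{t:thma} are genuinely compatible with the averaging functors $\Av_*$ (automorphic side) and with $!$-pullback along thickenings $\cN_k \hookrightarrow \cN_{k+1}$ and the associated completion functors (spectral side) in a way fine enough to detect these small subcategories. The subtlety is that ``nearly compact'' and ``ind-continuous'' are each defined by a \emph{family} of conditions indexed by covers of $\chT$ / thickenings of $\cN$, so one needs the compatibility to hold compatibly across the whole pro-system, not just at a single level; this likely requires a careful induction on the filtration by thickenings together with a limit/colimit interchange. A secondary but real difficulty is that the equivalence is not $t$-exact, so one must track its $t$-amplitude bounds carefully to know that $t$-boundedness (hence the boundedness clauses in the definitions of pseudo-compact, nearly compact, and ind-continuous objects) is preserved on the nose. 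Both of these are structural issues that plausibly reduce, via base change and the monoidal/module compatibilities, to the rank-one torus computations $\Dmod(I\bs I/I) \simeq C_\bullet(\chT)\mmod$ and $\Dmod(I\mon\bs I/I\mon) \simeq C^\bullet(\chT)\mmod$ already recorded in the excerpt.
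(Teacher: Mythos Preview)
Your proposal is essentially correct in outline and matches the paper's approach in spirit, but differs in execution on two points worth noting.

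For the strict enlargement $\Dmod_c \leftrightarrow \Coh_{\cN}$, you propose to use the abstract characterization as pseudo-compact objects together with $t$-boundedness of the equivalence. The paper explicitly acknowledges (in a footnote in Section~\ref{sss:friday}) that this route works, but instead uses a functorial argument: both $\Dmod_c$ and $\Coh_{\cN}$ are characterized as the full subcategories of objects whose image under a specific functor (automorphically $\pi^*$ to a smooth cover with prounipotent stabilizers, spectrally $i_* p^*$ along a correspondence) becomes compact. Since the equivalence intertwines these functors by construction, the subcategories match without ever invoking $t$-boundedness of the equivalence. Your route is cleaner conceptually; the paper's is more direct given the functorialities already established.

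For the monodromic case, your identification of the key mechanism---intertwining averaging with $!$-restriction---is exactly right and is what the paper does. However, the obstacle you flag (needing compatibility across an entire pro-system of thickenings) is a non-issue, and this is precisely the content of Theorem~\ref{wcoh prop qs}: membership in $\hCoh$ is detected by $!$-restriction to a \emph{single} quasi-smooth closed substack with the correct underlying reduced locus. On the spectral side the paper takes the quasi-smooth inclusion $\wt{\cN}_P \hookrightarrow \wt{\cN}'_P$ (or a suitable correspondence in the Whittaker case) and applies this criterion once; automorphically, near compactness is likewise detected by a single averaging functor. So no induction over thickenings or limit/colimit interchange is needed---the hard work was done in proving Theorem~\ref{wcoh prop qs}, which you should cite rather than reprove.
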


\begin{rmk}
It is expected that the Koszul duality described in \cite{BezYun} and the linear Koszul duality in \cite{MR} should be exchanged under the equivalences in Theorems \ref{t:thma} and \ref{thm small obj}, which requires a discussion of mixed structures on the automorphic side and a $\bG_m$-equivariance on the spectral side (announced in \cite{ho li}).  Since these structures are absent in our setting, this will result in a mismatch: support in the singular direction is naturally $\bG_m$-graded by the cohomological grading, but the classical support is not.
\end{rmk}

\iffalse
\begin{rmk} We should mention that the discussed renormalization issues for the two versions of Bezrukavnikov's equivalence are in fact Koszul dual. For instance, one can show that  Koszul duality as in \cite{BezYun} exchanges compact (i.e., safe and bounded with coherent cohomology) D-modules on $I \backslash \chG / I$ with compact (i.e., bounded with coherent cohomology) D-modules on $I\mon\!\bs \chG / I\mon$, and coherent D-modules with the pretriangulated envelope of the cofree-monodromic sheaves.  On the spectral side, this Koszul duality corresponds to the linear Koszul duality of Mirkovi\'c--Riche \cite{MR}, and similarly exchanges the corresponding small categories.
\end{rmk}
\fi

\subsubsection{} To orient the reader, let us highlight a few cases of Theorem \ref{t:thma} which were already known and play a basic role in this subject. 

First, there is the identification of the spherical and anti-spherical modules for the affine Hecke category (and similarly for the monodromic affine Hecke category)  \cite{AB}
$$\Dmod(I \bs \chG_F / \chG_O) \simeq \IndCoh_{\cN[1]}(\wt{\cN}/G \times_{\mf{g}/G} \{0\}/G), \;\;\;\;\;\;\;\;\;\; \Dmod(I \bs \chG_F / \mathring{I}, \psi) \simeq \IndCoh(\wt{\cN}/G).$$
In addition, there is the following derived geometric Satake equivalence and its ind-coherent version \cite{BF}
$$\Dmod(\chG_O \bs \chG_F / \chG_O) \simeq \IndCoh_{\cN[1]}(\{0\}/G \times_{\mf{g}/G} \{0\}/G), \;\;\; \Ind\Dmod_c(\chG_O \bs \chG_F / \chG_O) \simeq \IndCoh(\{0\}/G \times_{\mf{g}/G} \{0\}/G).$$Finally, we highlight the equivalence of Arkhipov--Bezrukavnikov--Ginzburg \cite{ABG} and its equivariant version
$$\Dmod(\chG_O \bs \chG_F/\chI\mon) \simeq \QC^!( \{0\}/G \times_{\mf{g}/G} \wt{\mf{g}}/G) \quad \quad \Dmod(\chG_O \bs \chG_F/\chI) \simeq \QC^!( \{0\}/G \times_{\mf{g}/G} \wt{\cN}/G)  ;$$
the latter is stated for example in \cite{GaIC}.

\subsubsection{}\label{biwhittaker sec} Among the new equivalences obtained in Theorem \ref{t:thma}, we would like to draw attention to one in particular. Namely, we obtain an automorphic realization of coherent sheaves on the (formal completion of the) nilpotent cone in terms of a certain full subcategory of bi-Whittaker sheaves on the loop group. A closely related result appears in \cite{BezNilCone}, and the result itself was conjectured by Bezrukavnikov (unpublished).
\begin{introthm}
There is an equivalence of monoidal categories \label{t:thmb}
\begin{equation} \label{e:bistwhit}
\begin{tikzcd}\Dmod(\mathring{I}, \psi, \St \bs \chG_F / \mathring{I}, \psi, \St) \arrow[r, "\simeq"] & \on{QCoh}(\wh{\cN}/G).\end{tikzcd}
\end{equation}
\end{introthm}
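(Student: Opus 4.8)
The plan is to deduce this statement as the special case $P = Q = G$ (equivalently, $\chP = \chQ = \chG$, so that the parahoric $I_{\chG}$ is all of $\chG_O$ and the degenerate Whittaker condition $\psi_{\chG}$ becomes the nondegenerate one) of the bi-Whittaker equivalence in the last row of Theorem \ref{t:thma}. Indeed, in that case the parabolic Grothendieck--Springer resolution $\wt{\mf{g}}_{\chG}/G = (G \times^G \mf{g})/G \simeq \mf{g}/G$ degenerates to $\mf{g}/G$ itself, so the fiber product $\wt{\mf{g}}_P/G \times_{\mf{g}/G} \wt{\mf{g}}_Q/G$ becomes $\mf{g}/G \times_{\mf{g}/G} \mf{g}/G \simeq \mf{g}/G$, and the support condition $\cN$ cuts this down to the formal neighborhood $\fg^\wedge_\cN/G = \wh{\cN}/G$. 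On this formal stack the singular support condition $\cN[1]$ is automatically satisfied (there is no room in the $[-1]$-direction once one completes along $\cN$), so $\IndCoh_{\cN}(\mf{g}/G \times_{\mf{g}/G} \mf{g}/G) \simeq \IndCoh(\wh\cN/G)$, and passing to the heart/pseudo-compact objects (or rather, observing that $\wh\cN/G$ is classical and quasi-smooth so that $\IndCoh$ and $\QCoh$ agree on the relevant small categories after the support condition) yields $\QCoh(\wh\cN/G)$ on the nose. One must check that under this specialization the convolution monoidal structure on the bi-Whittaker category matches the symmetric (pointwise tensor) monoidal structure on $\QCoh(\wh\cN/G)$, which is exactly the content of the monoidal compatibility asserted in Theorem \ref{t:thma}.

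The key steps, in order, are: (i) verify that the general bi-Whittaker equivalence of Theorem \ref{t:thma} is indeed stated and proved for arbitrary standard parabolics $\chP, \chQ$ including the improper one $\chP = \chQ = \chG$, so that specializing is legitimate; (ii) identify the automorphic side in this case, namely $\Dmod(\mr I_{\chG}, \psi, \St \bs \chG_F / \mr I_{\chG}, \psi, \St)$, with $\Dmod(\mr{I},\psi,\St\bs\chG_F/\mr{I},\psi,\St)$ as written in the theorem statement --- here $\mr{I}_{\chG}$ is the preimage in $\chG_O$ of the maximal unipotent $N^-_{\chG}$, which is the standard (nondegenerate) Whittaker datum, matching the notation $\mr I$; (iii) compute the spectral fiber product and the effect of the support conditions as above, obtaining $\QCoh(\wh\cN/G)$; (iv) trace through the monoidal structures. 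Steps (i)--(iii) should be essentially bookkeeping given the earlier results; step (iv) requires knowing that the convolution structure on bi-Whittaker sheaves, when both Whittaker data are "maximal," collapses to the trivial (fusion-free) one, which is a standard feature of Whittaker averaging --- this is presumably already recorded in the discussion of \cite{campbelldhillon} and the Steinberg--Whittaker formalism referenced in the excerpt.

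The main obstacle I anticipate is not any of the formal specialization steps but rather the identification, on the spectral side, of $\QCoh(\wh\cN/G)$ (coherent sheaves, or their ind-completion) with $\IndCoh_{\cN}(\wt{\mf g}_{\chG}/G \times_{\mf g/G}\wt{\mf g}_{\chG}/G)$ as a monoidal category: one needs that $\IndCoh$ and $\QCoh$ of $\wh\cN/G$ carry compatible symmetric monoidal structures and that the $\cN[1]$ singular-support direction genuinely disappears after completing along $\cN$ --- i.e. that a priori derived self-intersection $\mf g/G \times_{\mf g/G}\mf g/G$ is just $\mf g/G$ with no derived thickening, which is clear, but one must confirm the support/singular-support conditions interact as claimed (this is flagged as requiring "analysis of singular support" in the excerpt, and is handled in Theorem \ref{main thm}). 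Modulo invoking that analysis, the theorem follows. A cleaner alternative, if one prefers to avoid threading through the improper parabolic, would be to reprove \eqref{e:bistwhit} directly by the same strategy used for the general case: start from tempered geometric Langlands at tame level, $\mathbb{L}(\Dmod(\chG_F/\mr I,\psi,\St)) \simeq \QC(\fg^\wedge_\cN/G)$, and take endomorphisms of this generator as a $\Dmod(\chG_F)$-module, using that the self-$\mathbf{End}$ of $\QC(\fg^\wedge_\cN/G)$ over $\on{2-QC}^!_\cN(\fg^\wedge_\cN/G)$ is $\QC(\fg^\wedge_\cN/G)$ again since $\fg^\wedge_\cN/G \to \fg^\wedge_\cN/G$ is the identity; this reproduces the monoidal equivalence and makes the symmetric monoidal structure manifest.
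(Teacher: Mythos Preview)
Your proposal is correct and is precisely the paper's approach: Theorem~\ref{t:thmb} is not given a separate proof but is the $P=Q=G$ case of the bi-Whittaker row of Theorem~\ref{t:thma} (equivalently Theorem~\ref{thm conjectures}), which in turn is obtained by the monadicity/tensoring argument you describe. One small correction: your justification for $\IndCoh_{\cN}(\mf g/G)\simeq\QCoh(\wh\cN/G)$ is slightly off---$\wh\cN/G$ is a formal stack, not classical; the right reason is that $\mf g/G$ is \emph{smooth}, so $\IndCoh(\mf g/G)\simeq\QCoh(\mf g/G)$, and restricting support to $\cN$ gives $\QCoh_{\cN}(\mf g/G)\simeq\QCoh(\wh\cN/G)$.
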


We emphasize that Theorem \ref{t:thmb} is Koszul dual to the derived Satake equivalence of Bezrukavnikov--Finkelberg \cite{BF}. In particular, the appearance of the nilpotent cone as a singular support condition in the latter equivalence is transformed into its appearance as a naive support condition in Theorem \ref{t:thmb}.

For us, part of the appeal of Theorem B is the following. While we obtain Theorem B essentially by a monadicity argument from the main result of \cite{roma hecke}, we expect the logic may be reversed. Namely, as in \cite{AB}, one can use geometric Satake and fusion to directly obtain the desired monoidal functor 

$$\QCoh(\wh{\cN}/G) \rightarrow  \cat{End}_{\Dmod(\chG_F)}(\Dmod(\chG_F/\mathring{I}, \psi, \St)) \simeq \Dmod(\mathring{I}, \psi, \St \bs \chG_F / \mathring{I}, \psi, \St).$$
We expect this can be directly checked to be an equivalence which moreover matches the modules 
$$\QCoh(\wt{\cN} / G) \simeq \Dmod(\mathring{I}, \psi \backslash \chG_F / I),$$
i.e., the equivalence of \cite{AB}; we note that a very similar idea already appears in the work of Bezrukavnikov--Riche--Rider \cite{BezRicheRider}.  From here, one may directly take endomorphisms to obtain 
$$\cat{End}_{\QCoh(\wh{\cN}/G)}(\QCoh(\wt{\cN}/G)) \simeq \QCoh(\wt{\cN}/G  \underset{\wh{\cN}/G}{\times} \wt{\cN}/G),$$
and similarly on the automorphic side. That is, up to renormalizing, one can recover the identification of affine Hecke categories simply by taking endomorphisms.

A detailed proof along these lines, in the case of the universal Betti affine Hecke category, i.e., with monodromy in all of $G/G$ rather than the unipotent cone, is the subject of ongoing work joint with Jeremy Taylor.

\subsection{Organization of the paper} 

\subsubsection{} The overall organization of the paper is as follows. In Sections \ref{app dag} and \ref{spectral hecke sec}, we provide some analysis purely on the spectral side, and in Sections \ref{sec small ob} and \ref{sec autom hecke} some complementary analysis purely on the automorphic side. Finally, in Section \ref{s:mainresults} we obtain our main results. Let us now spell this out in more detail.

\subsubsection{} In Section \ref{app dag} we establish foundational definitions and results which are relevant for discussing spectral side affine Hecke categories.  Section \ref{dag sec} contains a review of some  standard definitions in derived algebraic geometry, namely of variants of derived categories of quasi-coherent sheaves, as well as a review of Grothendieck duality and formal completions.  In Section \ref{sec completions} we introduce in Definition \ref{wcoh defn} an enlargement of the category of coherent sheaves on a formal completion, the \emph{continuous ind-coherent sheaves}. We establish the main technical criterion for checking membership in this category in Theorem \ref{wcoh prop qs} as well as a Grothendieck existence theorem.  In Section \ref{app ss} we review the notion of singular support.

\subsubsection{}In Section \ref{spectral hecke sec} we  establish basic results regarding convolution formalism in the algebro-geometric context, and then apply these results to spectral affine Hecke categories.  In Section \ref{convolution sec}, we discuss the convolution monoidal structure on categories of coherent sheaves and establish a  basic property, namely a relative tensor product formula in Theorem \ref{convolution composition} involving the convolution of singular support conditions.  We then apply this formalism to spectral affine Hecke categories in \ref{convolution hecke}, in particular explaining the appearance of nilpotent classical and singular support conditions in Proposition \ref{reduction using monoidal}.  In Section \ref{sec algebra obj} we discuss how certain monads can be expressed via algebra objects in monoidal categories of coherent sheaves under convolution, and in Section \ref{spectral whittaker sec} this formalism is applied to affine Hecke categories to describe the category of partial Whittaker affine Hecke categories in terms of monodromic affine Hecke categories.  Finally, in Section \ref{sec spectral monodrom}, we discuss a method for relating strict affine Hecke categories and centrally monodromic affine Hecke categories via Koszul duality.

\subsubsection{}In Section \ref{sec small ob}, we introduce various small subcategories of categories of categories of D-modules on ind-schemes and stacks of possibly infinite type which appear on the automorphic side of Langlands duality.  In Section \ref{dmod placid} we review D-modules on \emph{placid ind-schemes}, i.e. ind-schemes which may be written as a limit under smooth projections, followed by a colimit under closed embeddings, and prove the equivalence of two notions of smallness, {\em safety} and {\em coherence}, that may be imposed on objects in this category.  In Section \ref{dmod placid equiv}, we consider the same in the stacky setting, where the two notions of smallness now diverge.  In Section \ref{sec cofree} we explore the Koszul dual phenomenon in the setting of monodromic D-modules which we call \emph{near compactness}. In particular, we relate near compact objects to a certain derived local system which exists on any algebraic variety, namely its {\em cofree monodromic sheaf}. We establish a criterion for membership in the category of nearly compact objects in Proposition \ref{p:oneorbit} in the case of a stack with a single orbit, and in the case of multiple strata and introduce certain natural generators of such categories called \emph{cofree-monodromic} standard and tilting sheaves in Corollary \ref{c:whoisncshv} and Proposition \ref{p:classtilting}.

\subsubsection{}In Section \ref{sec autom hecke}, we establish techniques for expressing automorphic partial affine Hecke categories in terms of affine Iwahori-Hecke categories.  In Section \ref{group def}, we discuss a Koszul duality which allows us to compute strict and monodromic invariants in terms of each other by killing operators in Hochschild cohomology via categorical tensor products.  In Section \ref{whit monad}, we discuss how certain monads in automorphic monodromic affine Hecke categories can be expressed via algebra structures on various cofree-monodromic tilting sheaves, thus giving a description of partial Whittaker affine Hecke categories in terms of the monodromic affine Hecke category.  In Section \ref{autom convolve sec}, we record a useful formula for the relative tensor product of certain modules for affine Hecke categories.

\subsubsection{}Finally, in Section \ref{s:mainresults}, we put it all together and prove the main Theorem \ref{thm conjectures}.  Appendix \ref{app infcat} contains results we use to lift 1-categorical statements to $\infty$-categorical statements.

\vspace{.5cm}
	
	\noindent {\bf Conventions and notation.}  We let $k$ denote an algebraically closed field of characteristic zero.  See Appendix \ref{app infcat} for $\infty$-categorical notions.  Let $\Vect_k$ denote the $\infty$-category of $k$-modules, i.e. the dg nerve of the category of chain complexes over $k$, which is monoidal with a $t$-structure.  A dg ring is an algebra object in $\Vect_k^{\leq 0}$, and for a dg ring $R$ the category $D(R)$ is the $\infty$-category of $R$-modules in $\Vect_k$.  Let $\DGCat$ denote the $\infty$-category of presentable $k$-linear (in particular, cocomplete and stable) $\infty$-categories and colimit-preserving functors.

\label{sec conventions}

\vspace{.5cm}
	
	\noindent {\bf Acknowledgments.}  It is a pleasure to thank Pramod Achar, Roman Bezrukavnikov, Justin Campbell, Tsao-Hsien Chen, Stefan Dawydiak, Joakim F\ae rgeman,  Dennis Gaitsgory, Daniel Halpern-Leistner, Yau-Wing Li, Ivan Losev, Sergey Lysenko, David Mehrle, Sam Raskin, Simon Riche, Cheng-Chiang Tsai, Zhiwei Yun, and Xinwen Zhu for helpful related discussions.  G.D. was supported by an NSF Postdoctoral Fellowship under grant No. 2103387.

\section{Renormalized categories of sheaves on formal completions}\label{app dag}

\subsection{Overview}

\subsubsection{} Our main goal in this section is to define and study the category of {\em continuous ind-coherent sheaves} on a formal stack. The precise contents are as follows.

\sss
After collecting some preliminaries in derived algebraic geometry in Section \ref{dag sec}, we introduce the continuous ind-coherent sheaves in Section \ref{sec completions}. The main result here is a useful criterion for testing whether an ind-coherent sheaf on a formal stack is continuous, which is provided in Theorem \ref{wcoh prop qs}. In the final Section \ref{app ss}, we review some basic facts about singular support for ind-coherent sheaves in preparation for Section \ref{spectral hecke sec}.

In particular, the reader may wish to skim Section \ref{sec completions} before continuing to Section \ref{spectral hecke sec}, and refer back only as needed.

\subsection{Derived algebraic geometry}\label{dag sec}

\subsubsection{}
Let us recall some basic definitions from derived algebraic geometry for the reader's convenience, and to establish terminology.  We refer the reader to \cite{DAG, HLP} for details.  Every stack of interest to us will be a stack  quotient of a finite type derived scheme by a reductive algebraic group.  However, some results hold in greater generality, and we state them as such.

\subsubsection{}

We introduce the following terminology conventions (recall Section \ref{sec conventions}).  Following \cite{DAG}, recall that a \emph{derived ring} is an algebra object in $\Vect_k^{\leq 0}$, and for a derived ring $R \in \cat{DRng}_k$ we define $D(R)$ to be the category of module objects for $R$ in the $\Vect_k^{\leq 0}$-module category $\Vect_k$.  A connective dg algebra defines a derived ring $R$, and the dg nerve of the dg derived category of complexes of $R$-modules is equivalent to $D(R)$.  We define the category of \emph{affine derived schemes} by $\cat{DAff}_k := \cat{DRng}_k^{\opp}$.  

We say an affine derived scheme $S = \Spec(A)$ is \emph{almost finite type} over a classical Noetherian $k$-algebra $R$ if $\pi_0(A)$ is finitely generated over $R$ and each $\pi_i(A)$ is a finitely generated $\pi_0(A)$-module, and we say $S$ is \emph{Noetherian} if $\pi_0(A)$ is Noetherian and $\pi_i(A)$ is finitely generated as a $\pi_0(A)$-module.  Every almost finite type ring is automatically Noetherian, but not conversely.  We say $S$ is \emph{reduced} if $A \simeq \pi_0(A)$ and $\pi_0(A)$ is reduced.

A \emph{prestack} is a functor $X: \cat{DAff}^{\opp}_k \rightarrow \cat{S}$ where $\cat{S}$ is the $\infty$-category of spaces; by the $\infty$-Yoneda lemma, there is a fully faithful embedding $\Spec: \cat{DAff}_k \hookrightarrow \cat{PreSt}_k$.  A \emph{stack} $X \in \cat{St}_k$ is a prestack which is a sheaf in the \'{e}tale topology (see Chapter 2, Remark 4.1.4 in \cite{DAG}).   There is a sheafification functor $L: \cat{PreSt}_k \rightarrow \cat{St}_k$, which is left adjoint to the forgetful functor, and therefore commutes with colimits.

Let $R$ be a classical Noetherian $k$-algebra. A \emph{locally almost of finite type} or \emph{laft} prestack $X$ over a classical ring $R$ is a convergent prestack $X$ such that the $n$-coconnective truncations $\tau^{\leq n} X$ are left Kan extended from their restrictions to finite type over $R$ affine $n$-coconnective affine schemes.  See Chapter 2, Section 1.7 of \cite{DAG} for details.    A stack is \emph{reduced} if it is equivalent via the natural map to its reduced stack, i.e. the sheafification of the left Kan extension of its restriction to reduced affine schemes.  A stack is \emph{locally Noetherian} if it is equivalent via the natural map to its Noetherianization, i.e. the sheafification of the left Kan extension of its restriction to Noetherian affine schemes.\footnote{This is a somewhat non-standard notion, and we will not use it outside of this definition.  We note that the Noetherianization of an affine scheme need not be an affine scheme: for example, the Noetherianization of $\Spec k[x_1, x_2, \ldots]$ is the ind-scheme $\mathbb{A}^\infty$.  On the other hand, the reduced scheme of an affine scheme is always affine.}

An \emph{Artin stack} will simply mean a derived 1-Artin stack (see Chapter 2, Section 4 of \cite{DAG}).  We refer the reader to Definition 1.1.8 of \cite{QCA} for the notion of a \emph{QCA stack}, and to Definition 3.1 of \cite{BFN} for the notion of a \emph{perfect} stack.  We say a stack is \emph{geometric} if it is an 1-Artin stack with affine diagonal.

%We note that given an atlas $U$ for an Artin stack $X$, the Artin stack $X$ is the sheafification of the colimit of the corresponding Cech diagram (see Chapter 2, Corollary 4.3.3 of \cite{DAG}).  %In particular, an Artin stack is locally Noetherian (resp. reduced) if and only if it has a Noetherian (resp. reduced) atlas whose corresponding Cech diagram is Noetherian (resp. reduced).  %It is necessary to require the entire Cech diagram to be Noetherian; for example, if $G$ is an infinite product of affine group schemes, then $BG$ is not locally Noetherian.

\subsubsection{}\label{sec formal stack} 
We now discuss formal completions and formal stacks, following \cite{dgind}.  Let $i: Z \rightarrow X$ be a map of prestacks, and denote by $\wh{X}_Z$ the formal completion of $X$ along $Z$, i.e. the prestack with $S$-points:
$$\wh{X}_Z(S) = \{\eta \in X(S) \mid \eta \text{ extends a point } \eta' \in Z(\pi_0(S)^{\mathrm{red}})\}.$$
We let $\wh{i}: \wh{X}_Z \rightarrow X$ denote the natural map.  The notion of a formal completion makes sense for any map of prestacks, but we specialize to the case where $X$ is a stack, and $Z \subset X$ a closed substack.

We say a prestack $X$ is a \emph{formal scheme} (resp. \emph{stack}) if its underlying reduced prestack is a scheme (resp. 1-Artin stack) and if the map
$$\colim_{\substack{Z_\alpha \subset X \text{ closed} \\ Z_\alpha^{\mathrm{red}} = X^{\mathrm{red}}}} Z_\alpha \overset{\simeq}{\longrightarrow} X$$
is an equivalence.

\subsubsection{}\label{def sheaves}
We consider various categories of sheaves on prestacks, which will be defined via right Kan extension.  That is, we first define our category of sheaves, along with a pullback functor, on some fixed full subcategory $\cat{C} \subset \cat{DAff}_k$, i.e., a functor
$$\cS{h}: \cat{C}^{\opp} \rightarrow \cat{Cat}_\infty.$$ 
We then extend it to the category of prestacks by the formula\footnote{This makes sense whether or not $X$ itself is a left Kan extension off its restriction to $\cat{C}$.}
$$\cS{h}(X) := \lim_{\substack{S \in \cat{C} \\ \eta \in X(S)}} \cS{h}(S),$$
see Appendix A of \cite{HLP}, Chapter 3 of \cite{DAG}, and Section 10 of \cite{indcoh} for details.  %We note that if $X$ is locally Noetherian, then we may restrict the indexing diagram in the limit to Noetherian affine schemes $S$.  
The specific sheaf theories of interest for us are as follows.

\subsubsection{} We begin with quasi-coherent sheaves. 

\begin{defn} The category of \emph{quasi-coherent sheaves} on an affine derived scheme $S$ is $$\QCoh(S) := D(\cO_S),$$with pullback along $f: S' \rightarrow S$ given by the derived tensor product $$f^*(-) := - \underset{\cO_S}{\otimes} \cO_{S'}.$$
The functor $f_*$ is the right adjoint to $f^*$.  The category $\QCoh(X)$ of quasicoherent sheaves on any prestack $X$ is defined via the above  via right Kan extension.\end{defn}

We recall there is a canonical $t$-structure on $\QCoh(X)$ which is the usual $t$-structure when $X$ is affine, and for general $X$ characterized by the property that $f^*$ is right $t$-exact. Using this, within all quasi-coherent sheaves on a prestack we may isolate the following non-cocomplete full subcategory.

\begin{defn} The category of \emph{almost perfect complexes} on an affine derived scheme $S$ is the full subcategory $$\APerf(S) \subset \QCoh(S)$$consisting of complexes $\cF$ such that the truncations $\tau^{\geq n} \cF$ are compact in $\QCoh(S)^{\geq n}$ for all $n$.  Almost perfect complexes are preserved by $*$-pullback, thus defining $\APerf(X)$ for any prestack $X$.  Explicitly, $\APerf(X) \subset \QCoh(X)$ is the full subcategory of objects whose $*$-pullback to any affine $S$ is almost perfect.\end{defn}  %Thus if $X$ is locally Noetherian, we have $\APerf(X) = \displaystyle\lim_{S \ra X} D^-_{\mathrm{coh}}(\cO(S))$.

If $S$ is a Noetherian affine derived scheme, we have explicitly that these are the eventually connective complexes with coherent cohomology, i.e., 
$$\APerf(S) = D^-_{\mathrm{coh}}(\cO_S) \subset \QCoh(S)^-.$$
More generally, by considering the $t$-exact smooth $*$-pullback to an atlas, if $X$ is an Artin stack then $\APerf(X) \subset \QCoh(X)^-$ is the full subcategory of right $t$-bounded objects with coherent cohomology.

\subsubsection{} We next review the definition of ind-coherent sheaves.  Here, we restrict ourselves to the Noetherian case for simplicity and refer the reader to \cite{indcoh} for generalizations.

\begin{defn}
Fix a classical Noetherian $k$-algebra $R$.  For an almost finite type affine $S$ over $R$, we denote the bounded derived category of $\cO_S$-modules with coherent cohomology by 
$$\Coh(S) := D^b_{\mathrm{coh}}(\cO_S).$$
The category of \emph{ind-coherent sheaves} on $S$ is the ind-completion of $\Coh(S)$, i.e., 
$$\IndCoh(S) := \Ind( \Coh(S)).$$

For any map $f: S' \rightarrow S$ of affine schemes, the functor $f_*$ on quasi-coherent sheaves restricts to a functor $f_*: \Coh(S') \ra \QCoh(S)^+ = \IndCoh(S)^+$, thus defines a functor $f_*: \IndCoh(S') \rightarrow \IndCoh(S)$ by ind-completion.  

If $f$ is a map of laft of (possibly non-affine) schemes over $R$, there is a $!$-pullback 
$$f^!: \IndCoh(S) \rightarrow \IndCoh(S'),$$
which is defined by the property that when $f$ is an open immersion $f^! = f^*$, and when $f$ is proper $f^!$ is the right adjoint to $f_*$, see Theorem 5.2.2 in \cite{indcoh} and Chapter 5, Section 3 in \cite{DAG}.  When $f: S' \ra S$ is smooth, the functor $f_*$ has a left adjoint $f^*$.  

For any laft prestack $X$ over $R$, the category $\IndCoh(X)$ is then defined via right Kan extension; see Sections 10.1 and 10.2 of \cite{indcoh} for details. 
\end{defn}

%\begin{rmk}
%    We note that that condition that maps are schematic in the above definition is somewhat restrictive, but suffices for our purposes.  A more general definition may be possible, but we do not address it here.
    
 %   We take this more restrictive notion since defining $\APerf^!(X)$ via right Kan extension from the category of Noetherian affine schemes and not necessarily finite-type morphisms results in the pathology that for a Noetherian affine scheme $X$, the left Kan extension of this functor to a prestack is not simply $\Map(-, X)$.  For example, taking $X = \Spec k$ to be affine, $X$ itself is not terminal in the category of Noetherian affine schemes and finite-type morphisms over $k$ since there is no factorization
%    $$\begin{tikzcd}
%    \Spec k[[x]] \arrow[r] \arrow[d, dotted] & \Spec k \\
%    \Spec k \arrow[ur] &  
%    \end{tikzcd}$$
%    by a finite-type arrow.
%\end{rmk}

\sss When $X$ is an Artin stack, following \cite{QCA} and Appendix A of \cite{BNP convolution}, the small subcategory of \emph{coherent complexes} $$\Coh(X) \subset \IndCoh(X)$$ is defined to be the full subcategory consisting of objects  $\cF$ such that $p^*\cF \in \Coh(U)$ for every smooth map $p: U \rightarrow X$, or equivalently for any smooth atlas $p$.   This definition has an analogue within quasi-coherent sheaves $\Coh(X) \subset \QCoh(X)$ which is a canonically equivalent category by Lemma 3.2.2 in \cite{QCA}.  We note that it is generally not true that the canonical map 
$$\Ind(\Coh(X)) \longrightarrow \IndCoh(X)$$
is an equivalence, but this holds when $X$ is a QCA stack by \emph{op. cit.}, Theorem 3.3.5.

\sss Let us now discuss $t$-structures on categories of ind-coherent sheaves on formal stacks. As bounded coherent complexes are closed under truncation,  for $S$ affine the category $\IndCoh(S)$ inherits a $t$-structure from $\QCoh(S)$ by definition. For an Artin stack $X$, we consider the unique $t$-structure on $\IndCoh(X)$  such that for any smooth  map $p: U \rightarrow X$ from $U$ affine the pullback $$p^*: \IndCoh(X) \rightarrow \IndCoh(U)$$is $t$-exact.   For a locally Noetherian\footnote{This guarantees that the closed embeddings are finite type.} formal stack
$$X = \colim X_\alpha,$$
there is a unique $t$-structure such that the pushforward functors $$i_{\alpha*}: \IndCoh(X_\alpha) \ra \IndCoh(X)$$ are $t$-exact, see Section 2.5 of \cite{dgind}.

%\begin{rmk}Let us briefly comment on the appearance of finiteness hypotheses in the above definition. Namely, the category $\IndCoh(X)$ is defined in Chapter 6, Section 3.3 of \emph{op. cit.} and one might be tempted to define it similarly for any (possibly non-laft) prestack $X$ via the same formula. However, a pathology which arises in such an attempt is that many prestacks $X$ have no $S$-points for almost finite type $S$, e.g. when $X$ is the spectrum of a transcendental field extension of $k$, so the resulting limit is empty and its putative category of ind-coherent sheaves is the zero category.\end{rmk}

\subsubsection{} In the the case of laft prestacks over a classical Noetherian $k$-algebra $R$, using the above $t$-structure, we may consider the following subcategories of ind-coherent sheaves.  In the affine case, they may also be realized as subcategories of quasi-coherent sheaves, since ind-coherent and quasi-coherent sheaves agree on eventually coconnective objects.

%For laft $X$, our category $\APerf^!(X)$ is analogous to the category $\mathbb{D}\APerf(X)$ defined in the discussion following Definition A.0.1 in \cite{HLP}.  Namely, since every laft $X$ admits a dualizing complex by $!$-pullback from a point, as we will see in Section \ref{shriek pullback}, 

\begin{defn} 
Fix a classical Noetherian $k$-algebra $R$.  The \emph{$!$-almost perfect complexes} on a Noetherian affine derived scheme $S$ over $R$ are the full subcategory of ind-coherent sheaves consisting of eventually coconnective complexes with coherent cohomology 
$$\APerf^!(S) := D^+_{\mathrm{coh}}(\cO_S) \subset  \IndCoh(S)^+ = \QCoh(S)^+.$$
By Lemma 7.1.7 of \cite{indcoh}, $!$-pullback $f^!$ along an almost finite-type map $f$ is left $t$-exact up to a shift and preserves coherence, and thus defines a functor $$f^!: \APerf^!(S) \rightarrow \APerf^!(S').$$
For any $X$ laft over $R$, the category $\APerf^!(X) \subset \IndCoh(X)$ is then defined via right Kan extension. % from the category of Noetherian affine schemes which are schematic and almost finite-type over $X$ and almost finite-type morphisms between them.  
It is not a full subcategory of $\QCoh(X)$ since the $!$-pullback is not well-defined on quasi-coherent sheaves.
\end{defn}

The above definition is somewhat abstract.  However, we have the following  explicit identification, which applies in our cases of interest.
\begin{prop}
Let $R$ be a classical Noetherian $k$-algebra.  If $X$ is a laft Artin stack over $R$, then $\APerf^!(X) \subset \QCoh^!(X)$ is the full subcategory of eventually coconnective complexes with coherent cohomology.  If $X$ is a laft formal stack over $R$, then $\APerf^!(X) \subset \IndCoh(X)$ is the full subcategory of eventually coconnective complexes whose $!$-restriction to any closed Artin substack has coherent cohomology.
\end{prop}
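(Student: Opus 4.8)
The plan is to prove the Artin-stack assertion first and then bootstrap to the formal case.

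\emph{Artin case.} The key observation is that $\APerf^!$ satisfies smooth descent, so that the defining right Kan extension is computed on any smooth surjection $q\colon V\to X$ from an affine scheme (obtained by composing a smooth atlas with an affine cover of it): concretely, $M\in\IndCoh(X)$ lies in $\APerf^!(X)$ iff $q^!M\in D^+_{\mathrm{coh}}(\cO_V)$. The nontrivial inclusion is that the latter condition propagates to every affine test $T\to X$: the base change $T'=T\times_X V\to T$ is a smooth, hence faithfully flat, cover by an affine scheme (using the affine diagonal), the composite $T'\to V$ is almost of finite type so $!$-pullback preserves $D^+_{\mathrm{coh}}$ (Lemma 7.1.7 of \cite{indcoh}), and both boundedness below and finite generation of cohomology descend along the faithfully flat $T'\to T$. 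Since $q$ is smooth, $q^!M\in D^+_{\mathrm{coh}}$ iff $q^*M\in D^+_{\mathrm{coh}}$; and because $q^*$ is $t$-exact, conservative, and detects coherence of the cohomology sheaves (coherence being smooth-local, cf.\ Lemma 3.2.2 of \cite{QCA}), this is exactly the condition that $M$ be eventually coconnective with coherent cohomology.

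\emph{Formal case: reduction.} Write $X=\colim_\alpha X_\alpha$ as in Section \ref{sec formal stack}; then $\IndCoh(X)=\lim_\alpha\IndCoh(X_\alpha)$ along the $!$-pullbacks $i_\alpha^!$, each $X_\alpha$ is a laft Artin stack, and $X^{\mathrm{red}}=X_\alpha^{\mathrm{red}}$ is a closed Artin substack of every $X_\alpha$, cut out by a nilpotent ideal of bounded order. Because $X$ is the \emph{filtered} colimit of the $X_\alpha$ as prestacks, every affine test $S\to X$ factors through some $X_\alpha$, so unwinding the Kan extension gives $\APerf^!(X)=\{M\in\IndCoh(X): i_\alpha^!M\in\APerf^!(X_\alpha)\ \text{for all }\alpha\}$; by the Artin case this says $i_\alpha^!M$ is eventually coconnective with coherent cohomology for every $\alpha$. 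Any quasi-compact closed Artin substack $j\colon Y\hookrightarrow X$ factors through some $X_\alpha$, and $!$-pullback along a closed immersion of Artin stacks preserves ``eventually coconnective with coherent cohomology'' (check on an atlas, again using Lemma 7.1.7 of \cite{indcoh}); so, granting that any $M\in\APerf^!(X)$ is globally eventually coconnective, the condition ``$i_\alpha^!M$ coherent for all $\alpha$'' matches ``$j^!M$ has coherent cohomology for all closed Artin substacks $j$'', and the reverse inclusion (an eventually coconnective $M$ with all $j^!M$ coherent lies in $\APerf^!(X)$) is immediate from the factoring of affine tests.

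\emph{Formal case: $t$-boundedness, and the main obstacle.} What remains is the claim that $M\in\APerf^!(X)$ is eventually coconnective with a \emph{uniform} bound. I would isolate this as a lemma: for a nilpotent thickening $\iota\colon Y\hookrightarrow Y'$ of laft Artin stacks and $M'\in\IndCoh(Y')$ bounded below, $M'\in\IndCoh(Y')^{\geq-N}$ iff $\iota^!M'\in\IndCoh(Y)^{\geq-N}$. For the nontrivial direction, let $m$ be the least degree with $H^m(M')\neq0$; the triangle $H^m(M')[-m]\to M'\to\tau^{>m}M'$ and left $t$-exactness of $\iota^!$ give $H^m(\iota^!M')\cong H^0(\iota^!H^m(M'))=\shHom_{\cO_{Y'}}(\iota_*\cO_Y,H^m(M'))$, the subsheaf of $H^m(M')$ annihilated by the ideal $\mathcal I$ of $Y$, which is nonzero because $\mathcal I$ is nilpotent (the top nonvanishing power $\mathcal I^jH^m(M')$ is $\mathcal I$-torsion). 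Granting the lemma: $X^{\mathrm{red}}\hookrightarrow X$ is among the $X_\alpha$, so $M\in\APerf^!(X)$ has $i_{X^{\mathrm{red}}}^!M\in\APerf^!(X^{\mathrm{red}})$, hence bounded below by some $-N_0$; applying the lemma to $X^{\mathrm{red}}\hookrightarrow X_\alpha$ forces $i_\alpha^!M\in\IndCoh(X_\alpha)^{\geq-N_0}$ for all $\alpha$, whence $M\in\IndCoh(X)^{\geq-N_0}$ since all $i_{\alpha*}$ are $t$-exact (Section 2.5 of \cite{dgind}). I expect this $t$-boundedness lemma --- that the relevant nilpotent thickenings detect the bottom of the $t$-structure --- together with the descent-and-cofinality bookkeeping needed to move freely between the various test objects (all affines, the $X_\alpha$, all closed Artin substacks), to be the main obstacle; the rest is unwinding of definitions and the cited $t$-exactness and coherence-preservation facts.
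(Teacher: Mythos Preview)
The paper states this proposition without proof, so there is no argument to compare against. Your approach is sound, and you have correctly located the one substantive step.

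In the Artin case, the parenthetical ``(using the affine diagonal)'' assumes $X$ is geometric, which is not among the hypotheses; this is cosmetic, since $T'=T\times_X V$ is in any case an algebraic space smooth over $T$, and one may pass to an affine \'etale cover of $T'$ to run the same faithfully-flat descent.

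In the formal case, the $t$-boundedness lemma you isolate --- that $\iota^!$ for a nilthickening reflects left $t$-boundedness --- is essentially the first assertion of the paper's later Lemma~\ref{reflect t boundedness} (stated there for formal completions, and proved for a different purpose). The paper's argument proceeds by testing against compact $\cG\in\Coh_Z(X)^{\leq-1}$, reducing by d\'evissage to $\cG=i_*\cG'$, and invoking the $(i_*,i^!)$ adjunction; your direct torsion argument on the bottom cohomology sheaf is a valid and more elementary alternative in the nilthickening setting. One minor bookkeeping point worth a word: the claim that every affine test $S\to X$ factors through some $X_\alpha$ uses that the filtered colimit presentation holds at the level of prestacks (not merely after sheafification); for a formal completion $\wh{Y}_Z$ this follows from the functor-of-points description, but for an abstract formal stack in the paper's sense it is not entirely tautological.
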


%Furthermore, adapting the argument in Lemma 2.5.3 of \cite{dgind} to the setting of locally Noetherian Artin stacks, we may characterize $\APerf^!(\wh{X}_Z) \subset \IndCoh(\wh{X}_Z)^+$ as the full subcategory of eventually connective objects $\cF$ such that the $!$-pullback $i^!\cF$ along any closed embedding $i: Z' \hookrightarrow \wh{X}_Z$ has coherent cohomology.

%\begin{rmk}
%    In \cite{DAG}, the category $\IndCoh(S)$ is only defined for almost finite type affine schemes, but in \cite{indcoh} it is defined more generally for Noetherian schemes.  Thus, the categories $\IndCoh(X)$ and $\APerf^!(X)$ may be compatibly defined in some settings where $X$ is not locally almost finite type over $k$. For example, when $X$ is a locally Noetherian, but not almost finite type, prestack which can be written as a colimit either in $\cat{PreSt}_k$ or $\cat{St}_k$ of locally almost finite type derived stacks whose transition maps are almost of finite type,\footnote{A map of affine derived schemes $f: S' \rightarrow S$ is almost of finite type if $\pi_0(\cO_{S'})$ is finite type over $\pi_0(\cO_S)$ and $\pi_i(\cO_{S'}))$ is finitely generated over $\pi_0(\cO_{S'})$.} e.g. when $X$ is affine and Noetherian but not almost finite type, or more generally if $X$ is a locally Noetherian Artin but not locally almost finite type stack such that the covering map is almost finite type, such as the stacks appearing in Example \ref{complete stack}.
%\end{rmk}

\subsubsection{}\label{shriek pullback} We again fix a classical Noetherian ring $R$.  Given an almost finite-type map $f: X \rightarrow Y$ of left prestacks over $R$, we have a $*$-pullback on quasi-coherent sheaves 
$$f^*: \APerf^*(Y) \rightarrow \APerf^*(X)$$ and a $!$-pullback on ind-coherent sheaves
$$f^!: \APerf^!(Y) \rightarrow \APerf^!(X).$$
We would like to next explain that Grothendieck-Serre duality allows for passage between these two operations. To do so, we first review its formulation  over an arbitrary classical Noetherian affine base scheme $K = \Spec R$.

The definition of Grothendieck-Serre duality involves the dualizing complex.  Typically, one defines the dualizing complex on a laft stack $p: X \ra \Spec k$ over $k$ by taking $\omega_X = p^!k$.  However, when $X$ is not laft over $k$, this formula does not apply.  We will require this in the following example.
\begin{exmp}\label{complete stack}
Let $S = \Spec(A)$ be a finite type affine scheme over $k$, and $S_0$ a closed subscheme with defining ideal $I$.  Let $$\wh{A} := \lim_n A/I^n, \;\;\;\;\;\;\; \wh{S} := \Spec \wh{A}$$
be the completion of the ring $A$ with respect to $I$ and its corresponding affine scheme.

Further suppose we have a reductive algebraic group $G$ acting on $S$, such that $S_0 \subset S$ is $G$-closed and generated by $G$-invariants,\footnote{This assumption means the scheme $\wh{X}$ has an induced $G$-action.  For example, for $A = k[x]$, $I = (x)$, and $G = \bG_m = \Spec k[z,z^{-1}]$ acting on $x$ by weight -1, the completed ring $k[\![x]\!]$ does not inherit a $k[z,z^{-1}]$-comodule structure.} i.e. so that $$S_0 = \pi_0(S \times_{S/\!/G} S_0/\!/G).$$
Moreover, letting $\wh{S/\!/G} = \Spec \wh{A^G}$ where $\wh{A^G}$ is the completion of $A^G$ along $I^G$, we have $$\wh{S} = \pi_0(S \times_{S/\!/G} \wh{S/\!/G}).$$  This fiber product diagram is equivariant for the trivial $G$-action on $S/\!/G$, thus we have an induced $G$-action on $\wh{S}$, and by descent, the following squares are classically Cartesian in $\cat{St}_k$:
$$\begin{tikzcd}
S_0/G \arrow[d] \arrow[r] & \wh{S}/G \arrow[d] \arrow[r] & S/G \arrow[d] \\
S_0/\!/G \arrow[r] & \wh{S/\!/G} \arrow[r] & S/\!/G.
\end{tikzcd}$$
The resulting stack $\wh{S}/G$ is QCA and geometric, and also perfect by Proposition 3.21 of \cite{BFN}.  It is not finite type over $k$, but it is finite type over $\wh{A^G}$.
\end{exmp}

To overcome this in more general formulations of Grothendieck-Serre duality, cf. Section 6.5 of \cite{BNP convolution}, we fix a \emph{choice} of dualizing complex defined as follows.  
\begin{defn} An object $\omega_K \in \QCoh(K)$ is called a {\em dualizing complex} if it satisfies the following conditions.

\begin{enumerate}
    \item The object $\omega_K$ is $!$-almost perfect, i.e., lies in $\APerf^!(K)$,
    \item the object $\omega_K$ has finite injective dimension, and %\footnote{See \cite{shaul} for the notion of an injective resolution in the category of dg modules for a ring $R$, and characterizations when $R$ is commutative and almost finite type.} and
    \item the canonical map $\cO_K \rightarrow \shHom_K(\omega_K, \omega_K)$ is an equivalence.
\end{enumerate}
\end{defn}
In general $\omega_K$ is not uniquely determined by these conditions, and in particular may be twisted by any cohomologically shifted line bundle.  If a choice of $\omega_K$ exists, we then have a compatible assignment of dualizing complexes for any $X$ almost finite type over $K$, i.e., 
$$p: X \rightarrow K,$$as we may take $\omega_X = p_X^! \omega_K$. 

The basic statement of Grothendieck--Serre duality then reads as follows.
\begin{prop}\label{serre}
Let $K = \Spec(R)$ denote a classical Noetherian affine scheme which admits a dualizing complex $\omega_K$, and let $p: X \rightarrow K$ be a geometric stack almost of finite type over $K$. Then $p^!\omega_K$ is a dualizing complex on $X$ and we have a Grothendieck-Serre duality equivalence
$$\bD_X = \shHom_X(-, p^!\omega_K): \APerf^*(X)^{\opp} \overset{\simeq}{\rightarrow} \APerf^!(X)$$
satisfying the following compatibilities:
\begin{enumerate}[(1)]
\item Duality $\bD$ commutes with proper pushforward, i.e. for $f: X \ra Y$ proper, $\bD_Y \circ f_*^{\opp} \simeq f_* \circ \bD_X$.
\item If $f: X \rightarrow Y$ is a schematic finite-type map of finite-type geometric stacks over $K$, then $\bD$ intertwines $f^*$ and $f^!$, i.e. $f^! \circ \mathbb{D}_Y  \simeq \bD_X \circ f^{*,\opp}$.
\item The functor $\bD_X$ is $t$-bounded, i.e. it is both left $t$-exact up to a finite shift and right $t$-exact up to a finite shift, or equivalently takes $t$-bounded objects to $t$-bounded objects.\footnote{Note that taking the opposite category exchanges the notions of left and right boundedness.}
\end{enumerate}
\end{prop}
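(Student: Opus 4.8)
The statement to prove is Proposition \ref{serre} (Grothendieck--Serre duality), establishing the duality equivalence $\bD_X = \shHom_X(-, p^!\omega_K)\colon \APerf^*(X)^{\opp} \xrightarrow{\simeq} \APerf^!(X)$ together with its three compatibilities.

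\medskip

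\textit{Plan of proof.} The strategy is to reduce everything from the geometric stack $X$ to the affine base $K$ via a smooth atlas, using descent, after first recording the affine case, which is essentially classical Grothendieck--Serre duality over a Noetherian base admitting a dualizing complex. So the first step is the affine case: if $S = \Spec A$ is almost of finite type over $K$, set $\omega_S = q^!\omega_K$ for $q\colon S \to K$; the properties of $!$-pullback recalled in the excerpt (left $t$-exact up to a shift, preserves coherence) show $\omega_S \in \APerf^!(S)$, and one checks it inherits finite injective dimension and the biduality property $\cO_S \xrightarrow{\simeq} \shHom_S(\omega_S,\omega_S)$ from $\omega_K$ — this is the content of, e.g., \cite[Ch.~V]{DAG} or Hartshorne's residues-and-duality in the underived setting, lifted to the derived setting. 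On $\APerf^*(S) = D^-_{\mathrm{coh}}(\cO_S)$ the functor $\shHom_S(-,\omega_S)$ then lands in $\APerf^!(S) = D^+_{\mathrm{coh}}(\cO_S)$, is $t$-bounded by the injective-dimension and coherence bounds, and is an involutive anti-equivalence by biduality. Functoriality for proper $f$ (compatibility (1)) is duality-adjoint to the fact that $f^!$ is right adjoint to $f_*$ for $f$ proper; functoriality for schematic finite-type $f$ (compatibility (2)) follows by writing $f$ as a composition of a closed immersion into a projective bundle and a smooth map, on each of which the compatibility is standard (for the smooth part $f^! = f^*(-)\otimes \omega_{X/Y}[\dim]$ and $\bD$ obviously commutes with a smooth pullback up to the relative dualizing line bundle).

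\medskip

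The second step globalizes to a geometric stack $p\colon X \to K$. Choose a smooth affine atlas $u\colon U \to X$; since $X$ has affine diagonal, the \v{C}ech nerve $U^{\bullet/X}$ is a simplicial affine scheme, each term almost of finite type over $K$, and both $\APerf^*$ and $\APerf^!$ satisfy smooth descent — for $\APerf^*$ via $u^*$ (recorded in the excerpt's discussion of $\Coh(X) \subset \QCoh(X)$ for Artin $X$) and for $\APerf^!$ via $u^!$, using that $u^!$ differs from $u^*$ by twisting by the relative dualizing line bundle along the smooth map. The affine-case duality equivalences on the $U^{\bullet/X}$ are compatible with the smooth (co)restriction functors by compatibility (2) applied on the affine nerve, so they assemble to an equivalence $\bD_X\colon \APerf^*(X)^{\opp} \xrightarrow{\simeq} \APerf^!(X)$; concretely this is still computed by $\shHom_X(-,\omega_X)$ with $\omega_X = p^!\omega_K$, since $\shHom$ commutes with smooth pullback and $\omega_U = u^!\omega_X$. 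The three compatibilities for $X$ now follow from those on the affine nerve: $t$-boundedness (3) is a local statement and is inherited; (1) and (2) are checked after smooth base change to atlases of source and target and reduce to the affine statements.

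\medskip

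\textit{Main obstacle.} The routine parts are the affine case and the formal descent bookkeeping; the genuinely delicate point is that $X$ is \textit{not} assumed of finite type over $k$ — only almost of finite type over the possibly-large Noetherian base $K$, with $\omega_K$ merely \textit{chosen} rather than canonical (cf.\ Example \ref{complete stack}, where $K = \wh{S/\!\!/G}$). One must therefore be careful that all invocations of classical Grothendieck--Serre duality are available over a general Noetherian base admitting a dualizing complex (they are, by \cite[\S6.5]{BNP convolution} and the references therein), and that the identifications $\omega_S = q^!\omega_K$ are compatible under composition — i.e.\ that $(q_2 q_1)^! \simeq q_1^! q_2^!$ on the relevant subcategories — so that the atlas construction does not depend on choices. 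A secondary subtlety is checking finite injective dimension of $\omega_X$ globally: this follows from finite injective dimension of $\omega_K$ together with the fact that $p$ is of finite \textit{Tor}-dimension relative to $K$ after smooth localization, but it should be spelled out. Once these base-change compatibilities are in hand, the proof is a formal assembly.
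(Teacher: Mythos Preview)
Your approach is correct and broadly similar in spirit to the paper's (both reduce to schemes via smooth locality/descent), but the execution differs in a couple of places. First, the paper simply cites \cite[Prop.~6.5.3]{BNP convolution} for the existence of the duality equivalence and for the $t$-boundedness (3), rather than reconstructing it from the affine case via \v{C}ech descent as you do; your route is more self-contained but longer, and the descent bookkeeping you flag as routine is exactly what the citation absorbs. Second, for compatibility (2) the paper uses a different factorization: after reducing to schemes it writes a schematic finite-type $f$ as an open immersion followed by a proper map (this being how $f^!$ is built in \cite[\S6.3]{indcoh}), handles the open-immersion case by smooth locality of $\bD$, and deduces the proper case directly from (1) by passing to right adjoints. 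Your factorization ``closed immersion into a projective bundle followed by smooth'' only makes sense after you have already reduced to the affine case (where closed immersion into affine space works), so it is fine in your setup but less canonical; note also that a closed immersion is proper, so your closed-immersion case is really an instance of (1) as well, and you could streamline by saying so explicitly. Finally, the paper's proof of (1) records the precise identity $\shHom_Y(f_*\cF,\omega_Y) \simeq f_*\shHom_X(\cF,f^!\omega_Y)$ rather than the softer ``duality-adjoint'' phrasing you give; worth making that formula explicit.
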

\begin{proof}
The main statement and third property are Proposition 6.5.3 of \cite{BNP convolution}. For the remaining statements, since all functors including $\bD$ are smooth local and all stacks are geometric, we may assume that $X$ and $Y$ are derived schemes.  

For the first compatibility, note that $f_*$ preserves complexes with coherent cohomology if $f$ is proper, and that $f_*$ is $t$-bounded up to a shift when $f$ is finite type; thus, $f_*$ restricts to functors on $*$ and $!$ almost-perfect complexes, and the commuting relation follows via the formula $$\shHom_Y(f_*\cF, \omega_Y) \simeq f_*\shHom_X(\cF, f^!\omega_Y).$$

For the second compatibility, note that $f^*$ and $f^!$ preserve $*$- and $!$-almost perfect complexes respectively by construction.  By the construction of $f$ for schematic maps (see Section 6.3 of \cite{indcoh}), it suffices to check the claim when $f$ an open immersion and when $f$ is proper.  When $f$ is an open embedding the claim is clear since the functor $\bD$ is smooth local; when $f$ is proper, the statement follows from the first compatibility by passing to right adjoints.  
\end{proof}

\begin{exmp}\label{gs duality ex}
Consider the embedding
$$j: K = \Spec k[\![x_1, \ldots, x_n]\!] \hookrightarrow \bA^n_k = \Spec k[x_1, \ldots, x_n].$$
We can take $\omega_K := j^*\omega_{\bA^n_k}$.  The map $i: \Spec k \hookrightarrow K$ is finite-type, and we have a canonical identification
$$i^!\omega_K \simeq \Hom_{k[\![x_1, \ldots, x_n]\!]}(k, \omega_K) \simeq \Hom_{k[x_1, \ldots, x_n]}(k, \omega_{\bA^1_k}) \simeq (i \circ j)^! \omega_{\bA^n_k} \simeq k,$$
i.e. this Grothendieck-Serre duality for stacks over $K$ defined by $\omega_K$ is compatible with the usual duality over the closed point of $K$ (see also \cite[\href{https://stacks.math.columbia.edu/tag/0AU3}{Section 0AU3}]{stacks-project}).
\end{exmp}

\subsubsection{} Recall the notion of a formal completion and formal stack from Section \ref{sec formal stack}.  In \cite{dgind} it is established that, in the case where $X$ is a scheme, any such formal completion of a scheme along a closed subscheme is in fact a formal scheme.  We now establish the analogous fact for stacks.
\begin{prop}\label{dgind stack}
Let $X$ be an Artin stack, and $Z \subset X$ a closed substack.  We have that 
$$\wh{X}_Z = \colim_{|Z_\alpha| \subset |Z|} Z_\alpha$$
where $|Z_\alpha|$ ranges over all closed substacks $Z_\alpha \subset X$ such that $\pi_0(Z_\alpha)^{\mathrm{red}} \subset \pi_0(Z)^{\mathrm{red}}$.  In particular, $\wh{X}_Z$ is a formal stack.
\end{prop}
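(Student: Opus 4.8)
The plan is to reduce the statement to the case of schemes, which is \cite{dgind}, by smooth descent along an atlas. There are two elementary inputs. First, the description of the formal completion on $S$-points shows at once that it is compatible with arbitrary base change: for any map $f \colon X' \to X$ one has a canonical identification $\wh{X}_Z \times_X X' \simeq \wh{X'}_{Z \times_X X'}$, since a point $\eta' \in X'(S)$ extends a point of $(Z \times_X X')(\pi_0(S)^{\mathrm{red}})$ exactly when its image in $X(S)$ extends a point of $Z(\pi_0(S)^{\mathrm{red}})$. Second, evaluating on a reduced classical affine $S$ gives $\wh{X}_Z(S) = Z(S)$ (using that $Z \hookrightarrow X$ is a monomorphism), so the underlying reduced prestack of $\wh{X}_Z$ coincides with that of $Z$, namely the classical reduced closed substack $Z^{\mathrm{red}} \subset X$; as $Z$ is a closed substack of the Artin stack $X$, this is a classical $1$-Artin stack, so the first clause of the definition of a formal stack is automatic. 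It remains to establish the colimit presentation.

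Next I would fix a smooth atlas $p \colon U \to X$ with $U$ a disjoint union of affine derived schemes and set $V := Z \times_X U$, a closed subscheme of $U$. Each closed substack $Z_\alpha \subset X$ with $|Z_\alpha| \subseteq |Z|$ factors through $\wh{X}_Z$, which provides a canonical map $\colim_\alpha Z_\alpha \to \wh{X}_Z$ of stacks over $X$; since $p$ is a cover it suffices to check that this is an equivalence after $- \times_X U$. By the base-change compatibility above $\wh{X}_Z \times_X U \simeq \wh{U}_V$, and since pullback of stacks preserves colimits, $(\colim_\alpha Z_\alpha) \times_X U \simeq \colim_\alpha (Z_\alpha \times_X U)$, a filtered colimit of closed subschemes of $U$ each set-theoretically supported on $|V|$. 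The scheme case \cite{dgind} identifies $\wh{U}_V \simeq \colim_n V^{(n)}$, a formal scheme, where the $V^{(n)}$ are the infinitesimal neighborhoods of $V$ in $U$, and these are cofinal among all closed subschemes of $U$ supported on $|V|$. It is then enough to observe that, $p$ being flat, the pullbacks $Z^{(n)} \times_X U$ of the infinitesimal neighborhoods of $Z$ in $X$ are the $V^{(n)}$, and that by Noetherianity (checked on the affine atlas) the $Z^{(n)}$ are cofinal in the poset of all $Z_\alpha$ with $|Z_\alpha| \subseteq |Z|$; then $\colim_\alpha (Z_\alpha \times_X U) \simeq \colim_n V^{(n)} \simeq \wh{U}_V$, as desired. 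Combined with the computation of the reduced prestack, and using that any $Z_\alpha$ with $|Z_\alpha| \subsetneq |Z|$ lies inside one with $|Z_\alpha| = |Z|$ (for instance its scheme-theoretic union with $Z^{\mathrm{red}}$), this exhibits $\wh{X}_Z$ as a formal stack.

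The formal steps here --- base change, descent, and the reduced-prestack computation --- are routine, so I expect the main obstacle to be the cofinality bookkeeping in the last step: verifying that the derived infinitesimal neighborhoods of \cite{dgind} pull back along the flat atlas $p$ from those of $Z$ in $X$ to those of $V$ in $U$, and that every closed substack of $X$ set-theoretically contained in $Z$ is contained in a finite infinitesimal neighborhood $Z^{(n)}$, so that the two colimit presentations can be matched. This is where the (mild) content of the extension from schemes to stacks sits.
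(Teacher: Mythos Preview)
Your overall strategy---reducing to the scheme case of \cite{dgind} via a smooth atlas and descent---matches the paper's, and your framework (base-change compatibility of the formal completion, universality of colimits in the topos of stacks, checking the comparison map after pullback along the cover $\wh{U}_V \to \wh{X}_Z$) is sound. The divergence, and the genuine gap, lies exactly where you flag it: the cofinality step. You want to exhibit a specific family $\{Z^{(n)}\}$ of infinitesimal neighborhoods of $Z$ in the stack $X$ whose pullbacks to $U$ recover the cofinal family from \cite{dgind}. Making this precise would require verifying that the inductive construction of derived neighborhoods in \cite{dgind} is compatible with smooth descent, so that it can be performed directly on $X$; this is plausible but not immediate. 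If instead you mean the \emph{classical} neighborhoods cut out by powers of the ideal sheaf, these are not cofinal once $X$ is genuinely derived: for $U = \Spec k[x,\epsilon]$ with $|\epsilon|=1$ and $V=\{x=0\}$, the closed subscheme $\Spec k[\epsilon]$ is supported on $|V|$ but factors through no classical $\Spec k[x]/(x^n)$, since any map from a discrete ring into $k[\epsilon]$ lands in $\pi_0 = k$ and so must kill $\epsilon$.

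The paper bypasses this entirely by running the construction in the opposite direction. Rather than producing a canonical cofinal family on $X$ and pulling it back, it starts from an arbitrary closed subscheme $Y \hookrightarrow \wh{U}^{(n)}_Z$ and \emph{descends} it: one takes the scheme-theoretic image of the preimage of $Y$ at every level $U^{(i)}$ of the \v{C}ech nerve, observes that these images assemble into a simplicial closed subscheme, and thereby obtains a closed substack $Z_\alpha \subset X$ with $Y \subset U_\alpha^{(n)}$. This replaces the anticipated bookkeeping about derived infinitesimal neighborhoods with a single descent-by-images step, and needs no analysis of how the \cite{dgind} construction interacts with the atlas. Note that the full \v{C}ech nerve is used here not merely to check the equivalence (your reduction to level $U$ alone is valid for that), but to manufacture the descent datum that produces $Z_\alpha$.
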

\begin{proof}
We first write $\wh{X}_Z$ as a colimit.  Fix an atlas $U$ for $X$, let $U^{(n)}$ denote the $n$th term in the corresponding Cech complex $\mathrm{Cech}(U/X)$, and set $$\wh{U}^{(n)}_Z := U^{(n)} \times_X \wh{X}_Z.$$Similarly,  for a closed embedding $Z_\alpha \hookrightarrow \wh{X}_Z$, we set $$U_\alpha^{(n)} :=Z_\alpha \times_X U^{(n)}.$$We claim that $\colim U_\alpha^{(n)} \simeq \wh{U}^{(n)}_Z$; given this, by descent (Chapter 2, Corollary 4.3.3 of \cite{DAG}) we have
$$\colim_\alpha Z_\alpha \simeq \colim_\alpha |\mathrm{Cech}(U_\alpha/Z_\alpha)| \simeq |\colim_\alpha  \mathrm{Cech}(\wh{U}_\alpha/Z_\alpha)| \simeq |\mathrm{Cech}(U/X)| \simeq \wh{X}_Z.$$
For the claim, we show as in Proposition 6.5.5 of \cite{dgind} that any closed embedding $Y \hookrightarrow \wh{U}^{(n)}_Z$ factors through some $U_\alpha^{(n)}$.  Consider the Cartesian square
$$\begin{tikzcd}
 U^{(n+i)} \arrow[r, "q"] \arrow[d, "p"'] & U^{(n)} \arrow[d] \\
U^{(i)} \arrow[r] & X
\end{tikzcd}$$
and let $Y^{(i)}$ be the scheme-theoretic image of $q^{-1}(Y)$ along $p$.  The $Y^{(i)}$ assemble into a simplicial diagram of closed subschemes of the $U^{(i)}$ descending to a closed substack $Z_\alpha \subset X$, and we can take these to be our $U_\alpha^{(n)}$.
\end{proof}

%\begin{cor}
%If $X$ is locally Notherian and $Z \subset X$ is a closed substack, then $\wh{X}_Z$ is locally Noetherian.
%\end{cor}
%\begin{proof}
%A formal stack is locally Noetherian if each of its nil-isomorphic closed substacks is locally Noetherian, and any closed substack of a locally Noetherian stack is locally Noetherian. 
%\end{proof}

\sss

As we now explain, the above proposition implies that the categories of quasi-coherent and ind-coherent sheaves on a formal completion have familiar presentations via support conditions on the ambient stack.

Namely, let $X$ be a laft Artin stack over an classical Noetherian ring $R$. Let $Z$ be a closed substack with open complement $U$, and denote their inclusions by $$i: Z \hookrightarrow X \quad \quad \text{and} \quad \quad j: U \hookrightarrow X, $$respectively. Consider the restriction functors $$j^*: \QCoh(X) \rightarrow \QCoh(U) \quad \quad \text{and} \quad \quad  j^!: \IndCoh(X) \rightarrow \IndCoh(U).$$The categories of \emph{quasi-coherent sheaves supported on $Z$} and  \emph{ind-coherent sheaves supported on $Z$} are defined to be the kernels of these functors, and are denoted by $\QCoh_Z(X)$ and $\IndCoh_Z(X)$, respectively. 

We then have the following.
\begin{cor}
We have equivalences of categories
$$\QCoh(\wh{X}_Z) = \lim_{|Z_\alpha| \subset |Z|} \QCoh(Z_\alpha), \;\;\;\;\;\;\;\;\;\;\;\; \IndCoh(\wh{X}_Z) = \lim_{|Z_\alpha| \subset |Z|} \IndCoh(Z_\alpha)$$
and similarly for the full subcategories $\APerf$ and $\APerf^!$, where the transition functors are given by $*$- and $!$-pullback respectively.  We also have
$$\IndCoh(\wh{X}_Z) = \colim_{|Z_\alpha| \subset |Z|} \IndCoh(Z_\alpha)$$
where the transition functors are given by $*$-pushforward.  %Furthermore, assuming that $X$ is locally Noetherian, 
Furthermore, we have the following functors and identifications.
\begin{enumerate}[(1)]
    \item The continuous functor $\wh{i}^{\,!}: \IndCoh(X) \rightarrow \IndCoh(\wh{X}_Z)$ admits a fully faithful left adjoint which we denote $$\wh{i}_*: \IndCoh(\wh{X}_Z) \rightarrow \IndCoh(X),$$with essential image $\IndCoh_Z(X)$.
    \item The continuous functor $\wh{i}^*: \QCoh(X) \rightarrow \QCoh(\wh{X}_Z)$ admits a (non-continuous) right adjoint which we denote $$\wh{i}_*: \QCoh(\wh{X}_Z) \rightarrow \QCoh(X)$$ and a fully faithful left adjoint which we denote $$\wh{i}_+: \QCoh(\wh{X}_Z) \rightarrow \QCoh(X),$$with essential image $\QCoh_Z(X)$.
\end{enumerate}
\end{cor}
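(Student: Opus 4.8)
The plan is to deduce the entire statement from Proposition \ref{dgind stack}, which presents $\wh X_Z$ as the colimit $\colim_\alpha Z_\alpha$ over its closed substacks with $\pi_0(Z_\alpha)^{\mathrm{red}} \subset \pi_0(Z)^{\mathrm{red}}$, together with the fact that $\QCoh$, $\IndCoh$, $\APerf$ and $\APerf^!$ are all defined by right Kan extension from affine schemes, and hence send colimits of prestacks to limits of categories. This immediately gives the limit presentations
$$\QCoh(\wh X_Z) \simeq \lim_\alpha \QCoh(Z_\alpha), \qquad \IndCoh(\wh X_Z) \simeq \lim_\alpha \IndCoh(Z_\alpha)$$
with transition functors the $*$- resp. $!$-pullbacks, and likewise for $\APerf$ and $\APerf^!$; for the latter two one only needs that $*$- (resp. $!$-) pullback preserves the defining pointwise, smooth-local conditions, which was recorded in Section \ref{dag sec}.

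For the colimit presentation of $\IndCoh$, I would use that each closed embedding $i_{\alpha\beta}\colon Z_\alpha \hookrightarrow Z_\beta$ is proper, so on $\IndCoh$ the pushforward $i_{\alpha\beta*}$ is continuous (it is ind-extended from $\Coh$) and right adjoint to $i_{\alpha\beta}^!$. Invoking the standard identification of a colimit in $\DGCat$ with the limit of the diagram of right adjoints, the limit presentation along $!$-pullbacks turns into the asserted colimit presentation along $*$-pushforwards, and $\wh i_* := \colim_\alpha i_{\alpha*}$ becomes the promised left adjoint of $\wh i^{\,!} = \lim_\alpha i_\alpha^!$. The remaining point, that $\wh i_*$ is fully faithful with essential image $\IndCoh_Z(X)$, is the stacky analogue of the corresponding theorem for schemes in \cite{dgind}; one can either rerun that argument or reduce to it by choosing a smooth atlas $U \to X$ and descending along the Cech simplicial object of $U/X$ — exactly as in the proof of Proposition \ref{dgind stack} — using that $\IndCoh$, the support condition $\IndCoh_Z(-)$, and all the functors involved are smooth-local.

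For item (2), I would argue in parallel but more carefully. The functor $\wh i^* = \lim_\alpha i_\alpha^*$ preserves colimits, since colimits in $\lim_\alpha \QCoh(Z_\alpha)$ are computed componentwise and each $i_\alpha^*$ is a left adjoint; hence it admits a right adjoint, and chasing through the limit presentation one computes $\wh i_*(G_\bullet) \simeq \lim_\alpha i_{\alpha*} G_\alpha$, which is in general non-continuous precisely because of this limit. For the left adjoint $\wh i_+$ one must show in addition that $\wh i^*$ preserves limits; granting this it exists by the adjoint functor theorem, and the full faithfulness of $\wh i_+$ together with the identification of its essential image with $\QCoh_Z(X)$ is exactly the derived completion-versus-torsion equivalence. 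I would import the limit-preservation of $\wh i^*$ and this equivalence from the schematic Noetherian case treated in \cite{dgind}, and extend them to our QCA geometric stacks by the same smooth-descent reduction as above, using that all the $Z_\alpha$ and $X$ are QCA.

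The main obstacle is this last point: unlike the $\IndCoh$ statements, which are essentially formal given Proposition \ref{dgind stack}, the $\QCoh$ assertion in (2) is genuinely non-formal — closed embeddings are not fully faithful on $\QCoh$, so one really needs the completion/torsion equivalence (and the limit-preservation of the completion functor), and its passage from affine schemes to QCA stacks is where the care lies.
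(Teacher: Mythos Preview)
Your proposal is correct and follows essentially the same route as the paper: deduce the limit presentations from Proposition \ref{dgind stack} together with the fact that $\QCoh$ and $\IndCoh$ send colimits of prestacks to limits, obtain the colimit presentation by passing to right adjoints, and then reduce the non-formal assertions (existence of $\wh i_+$, full faithfulness, identification of essential images) to the schematic results of \cite{dgind} via smooth descent. The paper's proof is terser---it simply cites the relevant sections of \cite{dgind} and notes that the arguments go through smooth-locally---but the underlying logic is the same as yours.
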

\begin{proof}
The first statements follow by Lemma 5.1.5.5 of \cite{HTT}, i.e. since the functors $\QCoh$ and $\IndCoh$ take colimits in $\cat{PreSt}_k$ to limits, and by passing to right adjoints as in Chapter 1, Section 2.4 of \cite{DAG}.  
The functors $\wh{i}^{\,!}$ and $\wh{i}^*$ are automatically well-defined and continuous, thus guaranteeing existence of right adjoints.  The left adjoint of $\wh{i}^{\,!}$ is by the same argument as in Section 7.4.3 in \cite{dgind}, and the left adjoint of $\wh{i}^*$ is by Section 7.1.6 in \emph{op. cit.}  Finally, the fully faithfulness of the functors $\wh{i}_*$ are by the same arguments as in Propositions 7.1.3 and 7.4.5 in \cite{dgind}, noting that the assertions are smooth local and using a smooth atlas instead of a Zariski atlas.
\end{proof}

In view of this result, we will often implicitly identify the categories of ind-coherent sheaves $\IndCoh(\wh{X}_Z)$ and $\IndCoh_Z(X)$.  For quasi-coherent sheaves, we implicitly identify $\QCoh(\wh{X}_Z)$ and $\QCoh_Z(X)$ via the left adjoint $\wh{i}_+$ and not the non-continuous right adjoint $\wh{i}_*$. %However, when dealing with quasi-coherent sheaves we will take a bit more care, since there are two embeddings of $\QCoh(\wh{X}_Z)$ into $\QCoh(X)$ (given by the left and right adjoints of $\wh{i}^*$).

\subsubsection{} We end this subsection with two remarks.  
\begin{rmk} First, by the above presentation, we see that the compact objects of $\IndCoh(\wh{X}_Z)$ are just $\Coh(\wh{X}_Z)$, the category of coherent complexes on $X$ set-theoretically supported on $Z$.  This category need not include the dualizing complex $\omega_{\wh{X}_Z}$ which is in general not finitely generated. 
\end{rmk}

\begin{rmk}
Second, for any prestack, in particular on a formal stack $\mf{X}$, the categories $\QCoh(\mf{X})$ and $\IndCoh(\mf{X})$ are equipped with canonical $t$-structures with the property that for any nilthickening $i: Z \hookrightarrow X$, the functor $i_*$ is $t$-exact (for both $\QCoh$ and $\IndCoh$).  When $\wh{i}: \mf{X} = \wh{X}_Z \hookrightarrow X$ is a formal completion, the functor $\wh{i}^*: \QCoh(X) \ra \QCoh(\wh{X}_Z)$ is right $t$-exact and its right adjoint $\wh{i}_*$ is $t$-exact (again for both).  For ind-coherent sheaves, the functor $\wh{i}^!: \IndCoh(X) \ra \IndCoh(\wh{X}_Z)$ is left $t$-exact and its left adjoint $\wh{i}_*$ is $t$-exact. \end{rmk}
%Alternatively, we also have the \emph{inductive $t$-structure}, for which $\wh{i}^*$ is left $t$-exact, and which is uniquely defined by the property that $\wh{i}_+$, the left adjoint to the functor $\wh{i}^*$ is $t$-exact.  Note that this implies that each $t$-structure is compatible with the $t$-structure on $\QCoh(X)$ for the given embedding.
%On the other hand, we consider only one $t$-structure on $\IndCoh(\wh{X}_Z)$, called the \emph{canonical} $t$-structure.  It is defined such that $\wh{i}^{\,!}$ is left $t$-exact, i.e. $\cF \in \IndCoh(\wh{X}_Z)^{\geq 0}$ if and only if $i^!\cF \in \IndCoh(Z')^{\geq 0}$ for every closed embedding $i: Z' \hookrightarrow \wh{X}_Z$.  The pushforward functor $\wh{i}_*: \IndCoh(\wh{X}_Z) \rightarrow \IndCoh(X)$ is $t$-exact, thus is compatible with the $t$-structure on $\IndCoh_X(Z) \subset \IndCoh(X)$.

\subsection{Continuous ind-coherent sheaves}\label{sec completions}

\subsubsection{}
We now define the categories of continuous ind-coherent and pro-coherent sheaves on formal stacks.
\begin{defn}\label{wcoh defn}
Let $\mf{X}$ be a formal scheme which is laft over a classical Noetherian ring $R$.  We define the categories of \emph{continuous ind-coherent sheaves} and \emph{continuous pro-coherent sheaves} to be the respective full subcategories
$$\hCohsh(\mf{X}) \subset \APerf^!(\mf{X}), \;\;\;\;\;\;\;\;\; \hCohst(\mf{X}) \subset \APerf^*(\mf{X})$$
consisting of $t$-bounded objects for the canonical $t$-structures.  When omitting the superscript, we default to the $!$-version.  When $\mf{X} = \wh{X}_Z$ is a formal completion, we sometimes abusively write $\hCoh_Z(X) := \hCoh(\wh{X}_Z)$.
\end{defn}

These categories may be viewed as Grothendieck dual versions of the derived category of coherent sheaves on a formal completion in the sense of \cite[\href{https://stacks.math.columbia.edu/tag/0EHN}{Tag 0EHN}]{stacks-project}.   Note that our notation is chosen to be compatible with that in \cite{DAG}; the category $\Coh(\wh{X}_Z)$ in \cite{HLP} corresponds to our category $\hCohst$.

\begin{rmk}\label{rmk t structures}
We can realize the above categories as full subcategories:
$$\hCohst(\wh{X}_Z) \subset \APerf^*(\wh{X}_Z) \subset \QCoh(\wh{X}_Z), \;\;\;\;\;\;\;\;\;\; \hCohsh(\wh{X}_Z) \subset \APerf^!(\wh{X}_Z) \subset \IndCoh(\wh{X}_Z).$$ 
%For $X$ locally Noetherian, 
These subcategories are defined by a boundedness condition on the $t$-structure and coherence of cohomology.  Since $\wh{i}_*:$ is $t$-exact for the canonical $t$-structure in either setting, we can easily interpret the $t$-boundedness condition within $\IndCoh_Z(X)$ and $\QCoh_Z(X)$.  For the almost perfect condition, we require that $\cF \in \IndCoh_Z(X)$ (resp. $\QCoh_Z(X)$) has the property that for any closed immersion $i': Z' \hookrightarrow X$ with set-theoretic image contained in $|Z|$, we have that $i'^!\cF \in \IndCoh(Z')$ (resp. $i'^*\cF \in \QCoh(Z')$) has coherent (but not necessarily bounded) cohomology.
\end{rmk}

\medskip

We next explain an instructive example of the category $\hCoh(\wh{X}_Z)$ and its relatives.
\begin{exmp}
Let $A = k[x]$ and $I = (x)$, so that $\wh{A} = k[\![x]\!]$.  Take $X = \Spec(A)$, $Z = \Spec(A/I)$.
\begin{enumerate}[(1)]
    \item The category of quasi-coherent sheaves $\QCoh(\wh{X}_Z)$ has two embeddings into $\QCoh(X)$: the essential image of $\wh{i}_*$ is the category of derived $x$-complete $k[x]$-complexes, i.e., objects $M$ of $\QC(X)$ such that $$M \simeq \wh{M} := \wh{i}_*\wh{i}^*M,$$while the essential image of $\wh{i}_+$ is $\QCoh_Z(X)$, the category of locally $x$-nilpotent $k[x]$-complexes.

    The category of ind-coherent sheaves $\IndCoh(\wh{X}_Z)$ identifies with $\IndCoh_Z(X)$, the derived category of $k[x]$-modules which are locally $x$-nilpotent.
    \item The category of coherent sheaves $\Coh_Z(X)$ identifies with the bounded derived category of finitely generated locally $x$-nilpotent $k[x]$-modules, and compactly generates $\IndCoh(\wh{X}_Z)$.  It contains the augmentation module $k$, but does not contain the dualizing sheaf $$\omega_{\wh{X}_Z} \simeq k(\!(x)\!)dx/k[\![x]\!]dx$$nor the completed structure sheaf $\cO_{\wh{X}_Z} \simeq k[\![x]\!]$.
    \item The category of continuous pro-coherent sheaves $\hCohst(\wh{X}_Z)$ identifies via $\wh{i}_*$ with the pretriangulated hull of the completed structure sheaf, i.e.,  derived $x$-complete $k[x]$-modules which are perfect as $k[\![x]\!]$-modules.  This category includes the completed structure sheaf, but not the dualizing sheaf, nor infinite sums of the completed structure sheaf.
    \item The category of continuous ind-coherent sheaves $\hCohsh(\wh{X}_Z)$ identifies via $\wh{i}_*$ with the $k[x]$-modules which are locally $x$-nilpotent such that the kernel of the $x$-action is perfect over $A/I = k$.  This includes the dualizing sheaf, but not the structure sheaf, nor infinite sums of the dualizing sheaf.
\end{enumerate}
Informally, one can think of objects of $\hCohsh(\wh{X}_Z)$ as ind-finite with respect to the $I$-adic topology, while objects of $\hCohst(\wh{X}_Z)$ may be thought of as being pro-finite.
\end{exmp}

Finally, the extension of Grothendieck-Serre duality (Proposition \ref{serre}) to almost-perfect complexes on formal completions is immediate given the above description and definitions (see also Section 2.6 of \cite{dgind}).
\begin{cor}\label{inf serre}
Assume that $X$ is a geometric Artin stack, almost of finite type over a Noetherian affine scheme $K = \Spec R$ admitting a dualizing complex, and $Z \subset X$ a closed substack over $K$.  There is a $t$-bounded (for the canonical $t$-structures) Grothendieck duality equivalence
$$\bD_{\wh{X}_Z}: \begin{tikzcd}\APerf^*(\wh{X}_Z)^{\opp} \arrow[r, "\simeq"] & \APerf^!(\wh{X}_Z) \end{tikzcd} $$
defined compatibly with the functors (where $i_\alpha: Z_\alpha \hookrightarrow \wh{X}_Z$ is as in Proposition \ref{dgind stack}):
$$\begin{tikzcd}
\APerf^*(Z_\alpha)^{\opp} \arrow[r, shift left, "i_{\alpha*}^{\opp}"] \arrow[d, "\bD_{Z_\alpha}"'] & \APerf^*(\wh{X}_Z)^{\opp} \arrow[l, shift left, "i_\alpha^{*,\opp}"] \arrow[r, hook, "\wh{i}_*^{\opp}", shift left] \arrow[d, "\bD_{\wh{X}_Z}"'] & \APerf^*(X)^{\opp} \arrow[d, "\bD_X"] \arrow[l, shift left, "\wh{i}^{*, \opp}"] \\
\APerf^!(Z_\alpha)  \arrow[r, shift left, "i_{\alpha*}"] & \APerf^!(\wh{X}_Z) \arrow[r, hook, "\wh{i}_*", shift left] \arrow[l, shift left, "i_\alpha^{!}"]  & \APerf^!(X). \arrow[l, shift left, "\wh{i}^{\,!}"]
\end{tikzcd}$$
In particular, $\bD_{\wh{X}_Z}$ restricts to an equivalence $\hCohst(\wh{X}_Z) \simeq \hCohsh(\wh{X}_Z)$, and $\bD_{\wh{X}_Z}(\cO_{\wh{X}_Z}) \simeq \omega_{\wh{X}_Z}$.
\end{cor}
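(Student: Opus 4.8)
The plan is to deduce Corollary \ref{inf serre} directly from Proposition \ref{serre} together with the colimit/limit presentations of sheaves on formal completions established in the preceding corollary, so that the content is purely formal once the correct compatibilities are in place. First I would set up the target duality functor. Write $\wh{X}_Z = \colim_\alpha Z_\alpha$ as in Proposition \ref{dgind stack}, where each $Z_\alpha \hookrightarrow X$ is a closed Artin substack almost of finite type over $K$; by the preceding corollary we have
$$\APerf^*(\wh{X}_Z) = \lim_\alpha \APerf^*(Z_\alpha), \qquad \APerf^!(\wh{X}_Z) = \lim_\alpha \APerf^!(Z_\alpha),$$
where in the first limit the transition maps are $i_{\alpha\beta}^*$ and in the second they are $i_{\alpha\beta}^!$ (the structure maps of closed nilimmersions between the $Z_\alpha$). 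Each $Z_\alpha$ is a geometric stack almost of finite type over $K$, so Proposition \ref{serre} gives a $t$-bounded equivalence $\bD_{Z_\alpha}\colon \APerf^*(Z_\alpha)^{\opp}\xrightarrow{\ \simeq\ }\APerf^!(Z_\alpha)$, and by compatibility (2) of that proposition, applied to the schematic finite-type closed immersions $i_{\alpha\beta}\colon Z_\alpha\hookrightarrow Z_\beta$, these intertwine $i_{\alpha\beta}^*$ with $i_{\alpha\beta}^!$, i.e. the squares
$$\begin{tikzcd}
\APerf^*(Z_\beta)^{\opp} \arrow[r,"\bD_{Z_\beta}"] \arrow[d,"i_{\alpha\beta}^{*,\opp}"'] & \APerf^!(Z_\beta) \arrow[d,"i_{\alpha\beta}^!"] \\
\APerf^*(Z_\alpha)^{\opp} \arrow[r,"\bD_{Z_\alpha}"] & \APerf^!(Z_\alpha)
\end{tikzcd}$$
commute coherently. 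Hence the $\bD_{Z_\alpha}$ assemble to an equivalence of the limit diagrams, yielding $\bD_{\wh{X}_Z}\colon \APerf^*(\wh{X}_Z)^{\opp}\xrightarrow{\simeq}\APerf^!(\wh{X}_Z)$ which by construction is compatible with $i_\alpha^*$ and $i_\alpha^!$, i.e. makes the left-hand squares of the displayed diagram commute.

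Next I would check compatibility with the ambient stack, i.e. the right-hand squares involving $\wh{i}_*$. Here I would use compatibility (1) of Proposition \ref{serre}: the closed immersions $i_\alpha\colon Z_\alpha\hookrightarrow X$ are proper, so $\bD_X\circ i_{\alpha*}^{\opp}\simeq i_{\alpha*}\circ\bD_{Z_\alpha}$. Since $\wh{i}_*$ is, up to the identification $\IndCoh(\wh{X}_Z)\simeq\IndCoh_Z(X)$ (resp. $\QCoh_Z(X)$ via $\wh{i}_+$), just the colimit of the $i_{\alpha*}$ — concretely, $\wh{i}_*$ restricted to $\APerf$ sends an object of $\lim_\alpha\APerf(Z_\alpha)$ to its value appropriately pushed forward, and on almost-perfect complexes these pushforwards stabilize — passing to the colimit of the relations $\bD_X\circ i_{\alpha*}^{\opp}\simeq i_{\alpha*}\circ\bD_{Z_\alpha}$ gives $\bD_X\circ\wh{i}_*^{\opp}\simeq\wh{i}_*\circ\bD_{\wh{X}_Z}$, which is the commutativity of the right-hand squares. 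Throughout, $t$-boundedness of each $\bD_{Z_\alpha}$ (with shifts bounded independently of $\alpha$, since the dualizing complexes are pulled back from a fixed $\omega_K$) and $t$-exactness of the $i_{\alpha*}$ and $\wh{i}_*$ show that $\bD_{\wh{X}_Z}$ is $t$-bounded for the canonical $t$-structures on $\APerf^*(\wh{X}_Z)$ and $\APerf^!(\wh{X}_Z)$.

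Finally I would read off the two asserted consequences. Since $\hCohst(\wh{X}_Z)\subset\APerf^*(\wh{X}_Z)$ and $\hCohsh(\wh{X}_Z)\subset\APerf^!(\wh{X}_Z)$ are, by Definition \ref{wcoh defn}, precisely the $t$-bounded objects for the respective canonical $t$-structures, and $\bD_{\wh{X}_Z}$ is a $t$-bounded equivalence, it must carry $t$-bounded objects to $t$-bounded objects in both directions; hence it restricts to an equivalence $\hCohst(\wh{X}_Z)\xrightarrow{\simeq}\hCohsh(\wh{X}_Z)$. For the last identification, note $\cO_{\wh{X}_Z}=\wh{i}^*\cO_X\in\APerf^*(\wh{X}_Z)$, and by compatibility with $\wh{i}^*,\wh{i}_*$ and the definition $\omega_X=p_X^!\omega_K$ we get $\bD_{\wh{X}_Z}(\cO_{\wh{X}_Z})\simeq\wh{i}^!\bD_X(\cO_X)\simeq\wh{i}^!\omega_X=\omega_{\wh{X}_Z}$, using that $\bD_X(\cO_X)\simeq\shHom_X(\cO_X,\omega_X)\simeq\omega_X$ and that $\wh{i}^!$ is compatible with the duality by the left-hand square already established. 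The main obstacle I anticipate is bookkeeping the colimit identification of $\wh{i}_*$ on almost-perfect (rather than ind-coherent) complexes and verifying that the stabilization of pushforwards is compatible with Grothendieck duality coherently in $\alpha$; the individual ingredients are all in Proposition \ref{serre} and the preceding corollary, so this is a matter of assembling a homotopy-coherent diagram rather than proving anything genuinely new.
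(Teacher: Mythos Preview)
Your proposal is correct and follows essentially the same approach as the paper: the paper's proof simply says ``use the compatibility of Grothendieck--Serre duality with pullbacks in Proposition \ref{serre} to conclude the existence of the functor,'' which is exactly your limit construction via the $\bD_{Z_\alpha}$, and then invokes Remark \ref{rmk t structures} for $t$-boundedness, which amounts to your observation that $\wh{i}_*$ is $t$-exact so $t$-boundedness can be read off from that of $\bD_X$. Your write-up is considerably more detailed than the paper's two-sentence proof, but the content is the same; the only point to tighten is that your parenthetical about ``shifts bounded independently of $\alpha$'' is not quite the right justification---the cleaner route, which you also indicate, is to use the $t$-exact fully faithful $\wh{i}_*$ together with $t$-boundedness of $\bD_X$.
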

\begin{proof}
We use the compatibility of Grothendieck-Serre duality with pullbacks in Proposition \ref{serre} to conclude the existence of the functor.  The $t$-boundedness claims follow via Remark \ref{rmk t structures}.
\end{proof}

\subsubsection{} 
We now establish a few basic properties of the category of continuous ind-coherent sheaves.
\begin{lemma}\label{wcoh prop}
In the set-up of Corollary \ref{inf serre}, we have the following:
\begin{enumerate}[(a)]
\item The functor $\bD_{\wh{X}_Z}$ defines an equivalence $\hCohst(\wh{X}_Z)^{\opp} \simeq \hCohsh(\wh{X}_Z)$.
\item Assume that $X - Z$ is quasi-compact.  If $\cF \in \Coh(X)$, then $\wh{i}^{\,!}\cF \in \hCoh(\wh{X}_Z)$.
\item We have $\Coh_Z(X) \simeq \Coh(\wh{X}_Z) \subset \hCoh(\wh{X}_Z)$.
\item The category $\hCoh(\wh{X}_Z)$ is small.
\end{enumerate}
\end{lemma}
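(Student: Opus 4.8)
The plan is to handle parts (a) and (c) formally from what precedes, dispose of (d) by an essential-smallness count, and spend the real effort on (b). For (a): the functor $\bD_{\wh{X}_Z}\colon \APerf^*(\wh{X}_Z)^{\opp} \to \APerf^!(\wh{X}_Z)$ of Corollary \ref{inf serre} is an equivalence, and it together with its inverse (again a Grothendieck duality) is $t$-bounded; since passing to the opposite category does not change which objects are $t$-bounded, $\bD_{\wh{X}_Z}$ restricts to an equivalence between the full subcategories of $t$-bounded objects, which by Definition \ref{wcoh defn} are $\hCohst(\wh{X}_Z)^{\opp}$ and $\hCohsh(\wh{X}_Z)$. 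For (c): the identification $\Coh(\wh{X}_Z) \simeq \Coh_Z(X)$ is exactly the content of the remark following the corollary to Proposition \ref{dgind stack} (the compact objects of $\IndCoh(\wh{X}_Z)$ are the coherent complexes on $X$ set-theoretically supported on $Z$); and any such $\cF$ is $t$-bounded and, for every closed substack $i_\alpha\colon Z_\alpha \hookrightarrow X$ with $|Z_\alpha|\subseteq|Z|$, has $i_\alpha^!\cF \in \Coh(Z_\alpha)$, since $!$-pullback along the almost-finite-type map $i_\alpha$ preserves coherence (Lemma 7.1.7 of \cite{indcoh}); by Remark \ref{rmk t structures} this places $\cF$ in $\hCoh(\wh{X}_Z)$.

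For (d): I would exhibit $\hCoh(\wh{X}_Z)$ inside an essentially small category. By the corollary to Proposition \ref{dgind stack} we have $\APerf^!(\wh{X}_Z) = \lim_{|Z_\alpha|\subseteq|Z|} \APerf^!(Z_\alpha)$, a limit over the (essentially small) poset of closed substacks of $X$ supported in $|Z|$. Each $Z_\alpha$ is a geometric — in fact QCA — stack almost of finite type over the Noetherian ring $R$, so $\APerf^!(Z_\alpha) \simeq D^+_{\mathrm{coh}}(Z_\alpha)$ is essentially small; and a limit of essentially small $\infty$-categories over an essentially small diagram is essentially small (e.g. it is a full subcategory of $\Fun$ of two essentially small $\infty$-categories). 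Hence $\APerf^!(\wh{X}_Z)$, and with it its full subcategory $\hCoh(\wh{X}_Z)$, is small. One could equally invoke (a) and run the same count with $\hCohst \subseteq \APerf^*(\wh{X}_Z)$.

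The substance lies in (b), and this is where I expect the real work — and where the quasi-compactness hypothesis is used. Write $j\colon U := X \setminus Z \hookrightarrow X$. The almost-perfect half of the statement is formal: $\wh{i}^{\,!}$ is left $t$-exact (the last remark of Section \ref{dag sec}), so $\wh{i}^{\,!}\cF$ is eventually coconnective, and for any closed substack $i_\alpha\colon Z_\alpha \hookrightarrow \wh{X}_Z$ the composite $\wh{i}\circ i_\alpha$ is a closed immersion into $X$, whence $i_\alpha^!\,\wh{i}^{\,!}\cF = (\wh{i}\circ i_\alpha)^!\cF$ has coherent cohomology by coherence-preservation of $!$-pullback. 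It remains to see that $\wh{i}^{\,!}\cF$ is bounded above. For this I would invoke the localization fiber sequence in $\IndCoh(X)$
\[ \wh{i}_*\,\wh{i}^{\,!}\cF \longrightarrow \cF \longrightarrow j_*\, j^{\,!}\cF, \]
arising from the counit of the adjunction $\wh{i}_* \dashv \wh{i}^{\,!}$ together with the recollement $\IndCoh_Z(X)\hookrightarrow \IndCoh(X) \xrightarrow{j^!} \IndCoh(U)$. Since $\wh{i}_*$ is $t$-exact and fully faithful, boundedness of $\wh{i}^{\,!}\cF$ is equivalent to that of $\wh{i}_*\,\wh{i}^{\,!}\cF$, and since $\cF \in \Coh(X)$ is $t$-bounded this reduces to boundedness of $j_*\, j^{\,!}\cF$. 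Now $j^{\,!}\cF = j^*\cF \in \Coh(U)$ is $t$-bounded, and $j_*$ has finite cohomological amplitude precisely because $U$ is quasi-compact: after smooth base change to an atlas and then to a finite affine cover of the pullback of $U$, this is the \v{C}ech computation of $j_*$. Hence $j_*\, j^{\,!}\cF$, and therefore $\wh{i}^{\,!}\cF$, is $t$-bounded; the stacky case reduces to schemes since all functors and conditions in sight are smooth-local. The one genuinely load-bearing input is the finite cohomological amplitude of $j_*$, which fails for non-quasi-compact open complements — and that is exactly what the hypothesis on $X - Z$ buys us.
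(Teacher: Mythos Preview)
Your proof is correct and follows essentially the same approach as the paper's: part (a) from the $t$-boundedness of $\bD_{\wh{X}_Z}$ in Corollary \ref{inf serre}, part (b) via the localization triangle $\wh{i}_*\wh{i}^{\,!}\cF \to \cF \to j_*j^*\cF$ and finite cohomological amplitude of $j_*$ from a bounded \v{C}ech complex on the quasi-compact open $X\setminus Z$, part (c) as immediate, and part (d) from smallness of each $\APerf^!(Z_\alpha)$ together with smallness of small limits of small categories. You supply more detail for (c) and (d) than the paper does, but the substance is identical.
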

\begin{proof}
Statement (a) follows since $\bD_{\wh{X}_Z}$ is $t$-bounded by Corollary \ref{inf serre}.   
For (b), note that $\wh{i}^{\,!}\cF \in \APerf^!(\wh{X}_Z)$  by construction. To see its $t$-boundedness, consider the open embedding $$j: X - Z \hookrightarrow X,$$and the associated distinguished triangle$$\wh{i}_*\wh{i}^{\,!}\cF  \rightarrow \cF \rightarrow j_*j^* \cF \xrightarrow{+1}.$$ To see the $t$-boundedness of $i^! \cF$, it is therefore enough to see the same holds for $j_* j^* \cF$, but the latter may be computed via a bounded Cech complex since $X -Z$ is finite type over affine by assumption.  Statement (c) is clear. To see statement (d), note that the category of almost perfect complexes on a derived scheme is small, and a countable limit, or more generally any small limit, of small categories is small.
\end{proof}

\subsubsection{}  Given an ind-coherent sheaf on a formal completion $\wh{X}_Z$, it is not immediately obvious from the definition how to effectively determine whether it is continuous or not, as {\em a priori} one has to check a coherence condition on every nil-thickening of $Z$, as well as the $t$-boundedness of the original object.   

As we will justify below,  it in fact suffices to check the coherence condition for the $!$-restriction to  $Z$ itself, and no other nil-thickening, provided $Z \hookrightarrow X$ is a quasi-smooth closed embedding. Moreover, under the same assumption on $Z$, one can verify the  $t$-boundedness condition after $!$-restriction to $Z$ as well.  We emphasize that the appearance of quasi-smoothness does not result in any loss of generality, as any closed substack has a quasi-smooth approximation with the same underlying subset.

That is, we will show the following. 

\begin{thm}\label{wcoh prop qs}
Let $X$ be a laft Artin stack over a classical Noetherian ring $R$, and $i: Z \hookrightarrow X$ a closed substack.  If $i$ is quasi-smooth, then for any $\cF \in \IndCoh(\wh{X}_Z)$, we have that $\cF \in \hCoh(\wh{X}_Z)$ if and only if $i^! \cF \in \Coh(Z)$.
\end{thm}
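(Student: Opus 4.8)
The plan is to reduce the statement, via a smooth atlas, to the affine case, and then to exploit the Koszul-resolution description of the structure sheaf of a quasi-smooth closed subscheme. Concretely, the ``only if'' direction is immediate from the definitions (Lemma \ref{wcoh prop}(c) and the fact that $\Coh(\wh{X}_Z) \subset \hCoh(\wh{X}_Z)$), so the content is the ``if'' direction: given $\cF \in \IndCoh(\wh{X}_Z)$ with $i^!\cF \in \Coh(Z)$, one must produce $t$-boundedness of $\cF$ and coherence of $i_\alpha^!\cF$ for \emph{every} nilthickening $Z_\alpha$ of $Z$ inside $X$. Since all the relevant conditions ($t$-boundedness, almost-perfectness, the property of being $i^!$-coherent) are smooth-local and each of $X$, $Z$, and the $Z_\alpha$ admits a compatible smooth atlas, I would first reduce to the case $X = \Spec A$ a Noetherian affine derived scheme and $Z = \Spec(A/\!/ f) \hookrightarrow X$ the derived (Koszul) vanishing locus of a regular-up-to-homotopy sequence $f = (f_1,\dots,f_c)$, which is the local model of a quasi-smooth closed embedding.

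The key step is the following local input: because $i$ is quasi-smooth of virtual codimension $c$, the $!$-restriction $i^!$ along $i$ differs from the $*$-restriction $i^*$ by a shift and a twist by a line bundle (the relative dualizing line bundle of the Koszul complex, placed in cohomological degree $c$), i.e. $i^! \simeq i^*(-)\otimes \det(\mathbb{L}_{Z/X}[-1])[-c]$ in the affine model, where $\mathbb{L}_{Z/X}$ is the (locally free, rank $c$) conormal complex. Consequently $i^!\cF \in \Coh(Z)$ \emph{if and only if} $i^*\cF \in \Coh(Z)$, and in particular this gives a finite cohomological amplitude bound on $i^*\cF$. Now one invokes a Nakayama/derived-completeness argument: $\cF$, viewed in $\IndCoh_Z(X)=\QCoh_Z(X)$ on the completion, is derived $f$-complete, so the vanishing of $i^*\cF$ in degrees outside a finite window $[a,b]$ forces $\cF$ itself to lie in $\QCoh(\wh X_Z)^{[a,b]}$ — this is the standard statement that a derived-complete module with bounded (classical) reduction is bounded, applied one $f_i$ at a time by induction on $c$. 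This yields the $t$-boundedness of $\cF$.

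For the coherence half — that $i_\alpha^!\cF$ is coherent for every nilthickening — I would again reduce to the affine Koszul model and argue by induction on the length of the filtration of $\cO_{Z_\alpha}$ by $\cO_Z$-modules: since $Z_\alpha$ is a finite-order thickening of $Z$ inside the quasi-smooth $Z \hookrightarrow X$, the structure sheaf $\cO_{Z_\alpha}$ admits a finite filtration with associated graded a finite sum of shifts of coherent $\cO_Z$-modules (indeed of $\Sym$-powers of the conormal bundle), and $i_\alpha^!\cF$ is computed from $i^!\cF$ by a finite iterated extension using these pieces together with the already-established $t$-boundedness of $\cF$; coherence is preserved under finite extensions and shifts, so $i_\alpha^!\cF \in \Coh(Z_\alpha)$. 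Combined with Lemma \ref{wcoh prop}, and the fact that almost-perfectness of $\cF$ on $\wh X_Z$ is exactly the coherence of all the $i_\alpha^!\cF$ plus $t$-boundedness, this gives $\cF \in \hCoh(\wh X_Z)$.

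The main obstacle I expect is making the Nakayama/derived-completeness step airtight in the \emph{derived, stacky} setting: one is detecting boundedness of a derived-complete object on a formal completion from a single ``classical'' fiber, and while this is a known phenomenon (e.g. the interplay of completion and the $t$-structure in \cite{dgind}, Section 2.5, and the behaviour of $i^*$ versus $i^!$ for quasi-smooth $i$), the care is in controlling the shift introduced by $i^! \simeq i^*[-c]\otimes(\cdots)$, checking that derived $f$-completeness genuinely propagates the bound, and verifying that all of this descends correctly along the smooth atlas (in particular that the chosen $f_i$ on the atlas are compatible with the stack structure). Everything else is bookkeeping with Grothendieck duality (Corollary \ref{inf serre}) and finite filtrations.
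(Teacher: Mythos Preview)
Your overall shape is right---reduce to the affine Koszul model, split into $t$-boundedness and coherence-on-all-thickenings, and exploit that $i^! \simeq i^*\otimes L[-c]$ for $i$ quasi-smooth---but the justification you give for the $t$-boundedness step does not go through as written. You assert that $\cF$, viewed in $\IndCoh_Z(X)$, is ``derived $f$-complete'' and then invoke derived Nakayama. This is a category error: objects of $\IndCoh_Z(X)\simeq\IndCoh(\wh{X}_Z)$ are $f$-\emph{torsion} (locally nilpotent), not $f$-complete; the equality $\IndCoh_Z(X)=\QCoh_Z(X)$ you write is also false outside the bounded-below range. Even granting a Matlis-type passage to the complete side, derived Nakayama gives vanishing from vanishing of the reduction, not boundedness from boundedness, and truncation does not preserve derived completeness, so the ``standard statement'' you cite is not actually standard. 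The paper establishes this step differently: for left $t$-boundedness it uses the $(i_*,i^!)$ adjunction together with the fact that $\Coh_Z(X)^{\leq -1}$ is generated by $i_*\Coh(Z)^{\leq -1}$; for right $t$-boundedness it runs the long exact sequence of the fiber $i^!\cF=\mathrm{fib}(f\colon\cF\to\cF)$ and uses local $f$-nilpotence of $H^p(\cF)$ to force $\ker(f\mid H^p(\cF))\neq 0$ whenever $H^p(\cF)\neq 0$, so unbounded $\cF$ produces unbounded $i^!\cF$. That argument lives entirely on the torsion side and never mentions completeness.

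Your coherence step also has a gap. The nilthickenings $Z_\alpha$ in the definition of $\APerf^!(\wh{X}_Z)$ need only satisfy $Z_\alpha^{\mathrm{red}}=Z^{\mathrm{red}}$; they need not contain $Z$, so $\cO_{Z_\alpha}$ need not admit a filtration by $\cO_Z$-modules, and your ``$i_\alpha^!\cF$ is a finite iterated extension of pieces built from $i^!\cF$'' does not parse for arbitrary $Z_\alpha$. The paper fixes this by first passing to a common thickening $Z''\supset Z,Z_\alpha$ and then proving a separate lemma that for any nilthickening $Z\hookrightarrow Z''$ the functor $i^!$ preserves \emph{and reflects} coherence of cohomology on left-bounded objects, arguing by induction on the cohomological degree of $\cF$ rather than by filtering $\cO_{Z_\alpha}$. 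Finally, a minor point: the ``only if'' direction is not quite immediate---one still needs that $i^!$ preserves right $t$-boundedness, which is exactly where the finite Koszul amplitude from quasi-smoothness enters.
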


\sss
We now record the main ingredients for the proof 
 in the following two lemmas.

\begin{lemma}\label{reflect t boundedness}
Assume the set-up of Theorem \ref{wcoh prop qs}.  Then the functor $i^!: \IndCoh(\wh{X}_Z) \rightarrow \IndCoh(Z)$ preserves and reflects left $t$-boundedness.  Furthermore, assuming that the closed embedding $i: Z \hookrightarrow X$ is quasi-smooth, the functor $i^!$ preserves and reflects $t$-boundedness.
\end{lemma}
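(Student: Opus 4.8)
Since the functors, $t$-structures and boundedness conditions in question are all smooth-local, I would first reduce to the case where $X$ is a Noetherian affine derived scheme by $*$-pulling back along a smooth atlas (which restricts to an atlas for $Z$ and for each infinitesimal thickening). Preservation is then quick: $i^!$ is the right adjoint of the $t$-exact pushforward $i_*\colon\IndCoh(Z)\to\IndCoh(\wh X_Z)$, hence is left $t$-exact, which gives preservation of left $t$-boundedness in general; and when $i$ is quasi-smooth of virtual codimension $c$ one has $i^!(-)\simeq i^*(-)\otimes_{\cO_Z}\det(\mathcal{N}_{Z/X})[c]$, where $\mathcal{N}_{Z/X}$ is the (perfect) normal bundle and $i^*$ the ind-coherent $*$-pullback, so since $i$ has finite Tor-amplitude the functor $i^*$ is $t$-bounded and, tensoring by a line bundle being $t$-exact, $i^!$ is $t$-bounded, hence preserves $t$-boundedness.

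For reflection of left $t$-boundedness, write $\wh X_Z=\colim_\alpha Z_\alpha$ as in Proposition \ref{dgind stack}, replacing the index system by the cofinal one of thickenings $\iota_\alpha\colon Z\hookrightarrow Z_\alpha$ with $|Z_\alpha|=|Z|$, and write $i_\alpha\colon Z_\alpha\hookrightarrow\wh X_Z$, so that $i^!\simeq\iota_\alpha^!\circ i_\alpha^!$. Since the $i_{\alpha*}$ are $t$-exact, an $\cF\in\IndCoh(\wh X_Z)$ lies in $\IndCoh(\wh X_Z)^{\geq a}$ if and only if $i_\alpha^!\cF\in\IndCoh(Z_\alpha)^{\geq a}$ for all $\alpha$; so it suffices to show that for a nilpotent thickening $\iota\colon Z\hookrightarrow Z'$ of Noetherian stacks one has $\cG\in\IndCoh(Z')^{\geq a}$ if and only if $\iota^!\cG\in\IndCoh(Z)^{\geq a}$, with the \emph{same} bound $a$ (this uniformity in $\alpha$ being the point). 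The forward direction is again left $t$-exactness of $\iota^!$. For the converse I would prove, by induction on cohomological amplitude, that for \emph{bounded} $\cG$ the lowest nonvanishing cohomology degree of $\iota^!\cG$ equals that of $\cG$: the amplitude-$0$ case holds because $\iota^!$ is left $t$-exact and its heart-level value on a coherent sheaf $N$ is the submodule of $N$ annihilated by the ideal of $Z$ in $\cO_{Z'}$, which is nonzero when $N\neq0$ since that ideal is nilpotent ($|Z|=|Z'|$, $\cO_{Z'}$ Noetherian); the inductive step is a diagram chase in the cohomology long exact sequence of the truncation triangle $\tau^{<m}\cG\to\cG\to H^m(\cG)[-m]\xrightarrow{+1}$. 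Applying this to $\tau^{<a}\cG$ yields the general statement.

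For reflection of $t$-boundedness under quasi-smoothness, it remains (by the above) to show $i^!\cF$ bounded above implies $\cF$ bounded above. Using that $\wh i_*$ is $t$-exact and fully faithful with image $\IndCoh_Z(X)$, and that $i_X^!\,\wh i_*\simeq i^!$ for the closed embedding $i_X\colon Z\hookrightarrow X$ (as $\wh i^!\wh i_*\simeq\id$), I would reduce to: if $\cF\in\IndCoh_Z(X)$ and $i_X^!\cF$ is bounded above then $\cF$ is bounded above. By the quasi-smooth identification, $i_X^!\cF$ is bounded above exactly when $i_X^*\cF$ is. Now work locally, present $Z$ as the derived zero locus of $f_1,\dots,f_c\in\cO_X$, and factor $i_X$ through the tower $X=X_0\supset X_1\supset\dots\supset X_c=Z$ with $X_j$ the derived zero locus of $(f_1,\dots,f_j)$; let $\cF_j$ be the $*$-pullback of $\cF$ to $X_j$, so $\cF_c=i_X^*\cF$ is bounded above and there are triangles $\cF_{j-1}\xrightarrow{f_j}\cF_{j-1}\to\cF_j\xrightarrow{+1}$. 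Descending from $j=c$ to $j=1$: if $\cF_j$ is bounded above, the long exact cohomology sequence forces $f_j$ to act invertibly on $H^n(\cF_{j-1})$ for all large $n$; but $\cF_{j-1}$ is set-theoretically supported on $Z\subseteq V(f_j)$, so $f_j$ acts locally nilpotently on each $H^n(\cF_{j-1})$, and a locally nilpotent invertible endomorphism vanishes, whence $H^n(\cF_{j-1})=0$ for large $n$. Iterating down to $\cF_0=\cF$ completes the argument.

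I expect the real content to sit in this last step. Quasi-smoothness enters twice in an essential way — to convert the hypothesis on $i^!$ into one on $i^*$, and to present $Z$ by a finite regular sequence, which is what powers the finite Koszul descent — and the support condition $\cF\in\IndCoh_Z(X)$ is equally essential, since it is precisely what makes the operators $f_j$ locally nilpotent. One should also verify that the normal-bundle identification and the regular-sequence presentation are compatible with the reduction to an affine atlas, but this is routine.
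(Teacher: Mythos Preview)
Your argument for reflection of left $t$-boundedness has a genuine gap. The induction on cohomological amplitude correctly shows that for a \emph{bounded} $\cG$, the lowest nonvanishing cohomology degree of $\iota^!\cG$ equals that of $\cG$. But the step ``applying this to $\tau^{<a}\cG$'' does not go through: $\tau^{<a}\cG$ is bounded \emph{above}, not below, so it need not be bounded, and your inductive result does not apply. Nor is there an easy repair along these lines, because $\iota^!$ for a nilpotent thickening typically has infinite right $t$-amplitude (e.g.\ $\Spec k \hookrightarrow \Spec k[x]/x^2$), so the low-degree cohomology of $\iota^!\cG$ depends on \emph{all} of the (possibly infinitely many) cohomologies of $\cG$ below degree $a$; one cannot isolate a ``lowest'' one.

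The paper sidesteps this by arguing via the adjunction rather than by analyzing $\iota^!$ directly. One tests $\cF \in \IndCoh_Z(X)^{\geq 0}$ by showing $\pi_0\Hom(\cG,\cF)=0$ for every $\cG \in \IndCoh_Z(X)^{\leq -1}$. A spectral sequence for $\Hom(\colim \cG_\alpha,\cF)$ reduces to compact $\cG \in \Coh_Z(X)^{\leq -1}$; any such $\cG$ is a finite iterated extension of objects of the form $i_*\cG'$ with $\cG' \in \Coh(Z)^{\leq -1}$; and there the adjunction $\Hom(i_*\cG',\cF)\simeq\Hom(\cG',i^!\cF)$ together with the hypothesis $i^!\cF \in \IndCoh(Z)^{\geq 0}$ gives the vanishing. (The same argument transplants to your nilpotent-thickening setup and would repair your reduction.)

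Your treatment of the quasi-smooth reflection step is essentially the paper's: you run the Koszul descent with $i^*$ and the cofiber triangle $\cF_{j-1}\xrightarrow{f_j}\cF_{j-1}\to\cF_j$, while the paper uses $i^!$ and the fiber triangle; the two are interchangeable via the line-bundle twist you already noted.
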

\begin{proof}
It is standard that $i^!$ preserves left $t$-boundedness for general $i$.  When $i$ is quasi-smooth, $i^!$ preserves $t$-boundedness by a standard local calculation using Koszul complexes, which are bounded.

We now show that $i^!$ reflects left $t$-boundedness, i.e. that if $i^! \cF \in \IndCoh(Z)^{\geq 0}$, then $\cF \in \IndCoh_Z(X)^{\geq 0}$. Equivalently, we must show for any $\cG \in \IndCoh_Z(X)^{\leq -1}$ that \begin{equation} \label{e:bronto}\pi_0(\Hom(\cG, \cF)) \simeq 0.\end{equation} We first reduce to the case where $\cG$ is compact.  Since the $t$-structure is compatible with filtered colimits by construction, cf. Section 1.2.1 of \cite{indcoh}, we may write $\cG = \colim_\alpha \cG_\alpha$ for $\cG_\alpha \in \Coh_Z(X)^{\leq -1}$, and note that $$\Hom(\cG, \cF) \simeq \lim_\alpha \Hom(\cG_\alpha, \cF).$$There is a spectral sequence with $E_2$ page $R^p \lim_\alpha \Ext_X^{q}(\cG_\alpha, \cF)$ converging to the cohomology of this complex. In particular, on the zeroth diagonal, 
we encounter the terms $R^p\lim_\alpha \Ext^{-p}(\cG_\alpha, \cF)$. As we have assumed the vanishing of $\Ext_X^q(\cG_\alpha, \cF)$ for all $q \leq 0$, these graded pieces all vanish and we conclude that \eqref{e:bronto} holds, as desired.

Thus, we may assume that $\cG$ is compact.  Noting that any  object in $\Coh_Z(X)^{\leq -1}$ may be written as a finite extension of objects in the essential image of $i_*: \Coh(Z)^{\leq -1} \rightarrow \Coh_Z(X)^{\leq -1}$,   by devissage we may reduce to checking vanishing for objects of the form $i_* \cG$ for $\cG \in \Coh(Z)^{\leq -1}$.  But our assumption that $i^! \cF \in \IndCoh(Z)^{\geq 0}$ implies, via adjunction, the desired vanishing $$\pi_0 \Hom(i_* \cG, \cF) = \pi_0 \Hom(\cG, i^! \cF) \simeq 0.$$Hence, we conclude that $i^!$ reflects left $t$-boundedness, as desired.

Next, we show that $i^!$ reflects $t$-boundedness under the assumption of quasi-smoothness.  % i.e. that if $i^! \cF$ is $t$-bounded, then $\cF$ was $t$-bounded.  
  As the assertion is local, we may assume that $Z$ is the derived vanishing locus of a sequence of functions $f_1, \cdots, f_r$ on $X$.  For each $0 \leq i \leq r$, let $Z_i$ denote the derived vanishing locus of $f_1, \cdots, f_i$, so that $Z_0 = X$ and $Z_r = Z$.  We therefore obtain a sequence of quasi-smooth closed immersions
$$Z_r = Z \hookrightarrow Z_{r-1} \hookrightarrow \cdots \hookrightarrow Z_1 \hookrightarrow Z_0 = X.$$
By induction on the codimension $k$, it is enough to show that if the $!$-restriction of $\cF$ to $Z_k$ is $t$-bounded for some $k, 1 \leqslant k \leqslant r$, then so is the $!$-restriction of $\cF$ to $Z_{k-1}$.

We induct on this sequence; suppose we know that the $!$-restriction to $Z_k$ is $t$-bounded, and consider the (abusively denoted) $!$-restriction functor $i^!: \IndCoh_Z(X_{k-1}) \rightarrow \IndCoh_Z(X_k)$. Assume that $i^! \cF \in \IndCoh_Z(X_k)$ is $t$-bounded.  Since we have already shown that $!$-pullbacks reflect left $t$-boundedness, we may assume that $\cF \in \IndCoh_Z(X_{k-1})^+$.  Ind-coherent sheaves and quasi-coherent sheaves are equivalent\footnote{Under the inductive $t$-structure, which is compatible with the canonical $t$-structure on $\IndCoh_Z(X_{k-1})$, and where we identify $\QCoh(\wh{X_{k-1}}_Z) \simeq \QCoh_Z(X_{k-1})$ via the $t$-exact functor $\wh{i}_+$.} on eventually coconnective objects, so we may take $\cF \in \QCoh_Z(X_{k-1})^+$.  In the setting of quasi-coherent sheaves, we have $i^!\cF = \mathrm{fib}(f: M \rightarrow M)$, thus there is a spectral sequence converging to $i^! \cF$ degenerating at the page $$E_2^{p,0} = \ker(f: H^p(\cF) \rightarrow H^p(\cF)) \quad \text{and} \quad E_2^{p,1} = \coker(f: H^p(\cF) \rightarrow H^p(\cF)).$$Note that since $i^!$ is conservative (i.e. since $Z \subset X_{k-1}$), we have that $E_2^{p,0} = E_2^{p,1} = 0$ if and only if $H^p(\cF) = 0$.  Thus $i^!\cF$ can only be right $t$-bounded if $H^p(\cF) = 0$ for $p \gg 0$, i.e. if $\cF$ is right $t$-bounded, establishing the inductive step, and the claim.
\end{proof}

\begin{rmk}
One can understand Lemma \ref{reflect t boundedness} instead via Koszul duality.  Let us assume $Z$ is smooth, so that $\IndCoh(Z) = \QCoh(Z)$.  In this case, $i^!: \IndCoh(\wh{X}_Z) \rightarrow \IndCoh(Z)$ naturally upgrades via Barr-Beck-Lurie to a functor 
$$\wt{i}^!: \IndCoh(\wh{X}_Z) \rightarrow i^!i_*\dmod_{\IndCoh(Z)} \simeq R\End_X(i_*\cO_Z)\dmod_{\IndCoh(Z)}.$$ 
Here, we use that the monad $i^!i_*$ is monoidal, therefore is determined by its value on the monoidal unit $\cO_Z$.  By the quasi-smoothness assumption, this sheaf of dg algebras can be computed locally by a Koszul resolution, thus is a semi-free dg algebra (over $\cO_Z$) with generators in cohomological degree $1$.  This algebra is bounded with coherent cohomology, so $i^! \cF$ is bounded with coherent cohomology if and only if $H^\bullet(\wt{i}^!\cF)$ is finitely generated as a $H^\bullet(R\End_X(i_*\cO_Z))$-module.

A result similar appears as Theorem 3.2 in \cite{raskin dg lie} for the case of a ``quasi-quasi-smooth closed embedding'' of a point $\{x\} \hookrightarrow X$, i.e. such that the relative cotangent complex is perfect in degrees $[-2, \infty)$ rather than $[-1, \infty)$.  It is interesting to ponder generalizations of Lemma \ref{reflect t boundedness} where one removes the quasi-smoothness assumption.
\end{rmk}

\begin{lemma}\label{technical lemma coh com}
Let $X$ be a locally Noetherian Artin stack and let $i: Z \hookrightarrow X$ be a derived nilthickening, i.e. $\pi_0(i): \pi_0(Z) \hookrightarrow \pi_0(X)$ is defined by a nilpotent ideal sheaf.  Assume that $\cF \in \IndCoh(X)$ is left $t$-bounded, i.e. $\cF \in \IndCoh(X)^+ = \QCoh(X)^+$.  Then, $\cF$ has coherent cohomology if and only if $i^!\cF$ has coherent cohomology.
\end{lemma}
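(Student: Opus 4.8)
The plan is to prove both implications, with the forward one essentially formal. For ``only if'': if $\cF$ has coherent cohomology then $\cF \in \Coh(X) \subseteq \APerf^!(X)$, and since $i$ is an almost finite-type closed embedding, $i^!$ preserves coherence by Lemma 7.1.7 of \cite{indcoh}, so $i^!\cF$ has coherent cohomology. The content is the converse. Here I would first reduce to the affine situation: coherence of cohomology sheaves and left $t$-boundedness are both smooth-local, and $!$-pullback commutes with smooth $*$-pullback up to the usual cohomological shift, so after pulling back along a smooth map $U = \Spec A \to X$ with $A$ a connective Noetherian derived ring (these exist and collectively form an atlas as $X$ is locally Noetherian) we may replace $X$ by $\Spec A$ and $Z$ by the closed affine $Z_U = \Spec B$, where $A \to B$ induces a surjection $\pi_0(A) =: R \twoheadrightarrow \pi_0(B)$ with nilpotent kernel $J$, say $J^n = 0$. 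Shifting $\cF$, I may take it to correspond to a complex $M \in D(A)^{\geq 0}$; then $i^!\cF$ corresponds to $R\Hom_A(B, M) \in D(B)^{\geq 0}$, since $i_*$ is $t$-exact restriction of scalars along $A \to B$ whose right adjoint on eventually coconnective objects is $R\Hom_A(B, -)$. The hypothesis becomes: $R\Hom_A(B, M)$ has cohomology finitely generated over $\pi_0(B)$, equivalently over the Noetherian ring $R$; the goal is that each $H^q(M)$ is finitely generated over $R$.

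The crux is a purely commutative-algebraic input, which I would isolate as a lemma: \emph{if $N$ is an $R$-module with $\Hom_R(R/J, N) = (0 :_N J)$ finitely generated and $J$ nilpotent, then $N$ is finitely generated}. I would prove this by induction on the nilpotence index $n$, the case $n=1$ being trivial. For $n > 1$, note $J^{n-1}N \subseteq (0 :_N J)$, hence finitely generated over the Noetherian ring $R$; setting $\bar N := N/J^{n-1}N$, the short exact sequence $0 \to J^{n-1}N \to N \to \bar N \to 0$ together with left-exactness of $\Hom_R(R/J, -)$ exhibits $(0 :_{\bar N} J)$ as an extension of a submodule of $\Ext^1_R(R/J, J^{n-1}N)$ by a quotient of $(0 :_N J)$; both are finitely generated (the former because $R/J$ is finitely presented over $R$ and $J^{n-1}N$ is finitely generated), so $(0 :_{\bar N} J)$ is finitely generated. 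Since $J^{n-1}\bar N = 0$, the inductive hypothesis applied over $R/J^{n-1}$ gives $\bar N$ finitely generated, and then $N$ is finitely generated from the sequence.

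With the lemma in hand I would conclude by inducting on the cohomological degree $q \geq 0$. The key homological computation — a short diagram chase with the $t$-structure, using $M \in D^{\geq 0}$ and $B$ connective — is that $H^0(R\Hom_A(B, M)) \simeq \Hom_R(\pi_0(B), H^0(M)) = (0 :_{H^0(M)} J)$, which is finitely generated by hypothesis, so $H^0(M)$ is finitely generated by the lemma. For the inductive step, assuming $H^{<q}(M)$ finitely generated, the truncation $\tau^{\leq q-1}M$ lies in $\Coh(X)$, so $R\Hom_A(B, \tau^{\leq q-1}M)$ has coherent cohomology by the forward direction; the triangle $\tau^{\leq q-1}M \to M \to \tau^{\geq q}M$ then forces $R\Hom_A(B, \tau^{\geq q}M)$ to have coherent cohomology, and the same $t$-structure computation (shifted) identifies $(0 :_{H^q(M)} J) \simeq H^q(R\Hom_A(B, \tau^{\geq q}M))$ as finitely generated, whence the lemma gives $H^q(M)$ finitely generated. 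I expect the main obstacle to be precisely this reverse implication as a whole: passing from a finiteness statement about the single complex $i^!\cF$ to finiteness of every individual cohomology module $H^q(\cF)$ requires both the nilpotence-based commutative-algebra lemma and the degree induction that peels off truncations and recycles them through the easy forward direction, with the identification of $H^0 \circ i^!$ on coconnective sheaves with the $J$-socle functor serving as the linchpin connecting the homological and commutative-algebraic parts.
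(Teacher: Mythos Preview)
Your proof is correct and follows essentially the same strategy as the paper's: reduce to the affine Noetherian case, establish the key commutative-algebra fact that finiteness of the $J$-socle forces finiteness of the module when $J$ is nilpotent, and then climb the cohomological degrees by peeling off truncations and feeding them back through the forward direction. The paper organizes the nilpotence argument as a filtration $0=M_0\subset\cdots\subset M_r=M$ with $I$ acting trivially on subquotients, whereas you phrase it as an induction on the nilpotence index; these are equivalent. The one genuine organizational difference is that the paper first treats classical $Z,X$ and then separately reduces the derived case to it by factoring through $\pi_0(Z)\hookrightarrow Z$ and $\pi_0(X)\hookrightarrow X$, while your identification $H^0(R\Hom_A(B,M))\simeq (0:_{H^0(M)}J)$ via connectivity of $B$ handles the derived situation in one stroke---this is a little cleaner. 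One harmless slip: in the forward direction you write $\cF\in\Coh(X)$, but $\cF$ is only left $t$-bounded, so it lies in $\APerf^!(X)$ rather than $\Coh(X)$; your citation of Lemma 7.1.7 of \cite{indcoh} is already the correct one for $\APerf^!$, so the conclusion is unaffected.
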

\begin{proof}
We may assume that $X = \Spec A$ is a Noetherian affine derived scheme for a dg ring $A$, and let $Z = \Spec A/I$.  That $i^!$ preserves coherence of cohomology follows since for an $A$-module $M$, we have $i^! M = \Hom_A(A/I, M)$, and one can take a semi-free $A$-resolution of $A/I$ which is finite rank in each degree by the Noetherian assumption.  

We now show that $i^!$ reflects coherence of cohomology.  Let us first consider the case where $Z, X$ are classical, and $M$ is an $A$-module in degree 0 such that $R^ki^!M$ is coherent for every $k$.  In fact, we will only need the weaker assumption that $R^0 i^! M$ is coherent.  In this case, there is a finite filtration $$0 = M_0 \subset M_1 \subset \cdots \subset M_r = M$$such that $I$ acts trivially on the successive quotients $M_i/M_{i-1}, 1 \leqslant i \leqslant r$.  Thus, $R^0i^!(M_i/M_{i-1}) = M_i/M_{i-1}$, and by the long exact sequence obtained by applying $Ri^!$ to $$0 \rightarrow M_1 \rightarrow M \rightarrow M/M_1 \rightarrow 0,$$we find that $M_1$ is coherent since it is a submodule of $R^0i^!M$, which is coherent by assumption.  Since $R^\bullet i^!$ preserves coherence, this in turn implies that $R^1i^! M_1$ is coherent, thus $R^0i^!(M/M_i)$ is coherent, and we may induct on $i$, i.e. consider $M_2/M_1 \subset M/M_1$, et cetera.  We conclude that each subquotient is coherent, thus $M$ is.

Now, we assume that $M$ is a left $t$-bounded complex, say in degrees $\geq 0$.  In this case we have an exact triangle $$H^0(M) \rightarrow M \rightarrow M^{\geq 1} \xrightarrow{+1},$$ thus an exact triangle $i^! H^0(M) \rightarrow i^! M \rightarrow i^! M^{\geq 1} \xrightarrow{+1}$.  For degree reasons, we have $R^0i^! H^0(M) = H^0(i^!M)$, which is coherent by assumption.  Thus, $H^0(M)$ is coherent, by the above argument.  Since $i^!$ preserves coherence, $i^! H^0(M)$ has coherent cohomology.  By the long exact sequence, $i^!M^{\geq 1}$ has coherent cohomology.  We may be continue inducting on the degree, i.e. consider the exact triangle $$H^1(M) \rightarrow i^! M^{\geq 1} \rightarrow i^! M^{\geq 2} \xrightarrow{+1},$$ to conclude that $H^i(M)$ is coherent for all $i$, thus $M$ has coherent cohomology.

Now, we remove the assumption that $Z, X$ are classical.  In this case, we have a closed immersion $i_0: \pi_0(Z) \hookrightarrow Z$.  Let $M \in \IndCoh(X)^+$ such that $i^!M \in \IndCoh(Z)$ has coherent cohomology.  Then, $$i_0^! i^! M \in \IndCoh(\pi_0(Z))$$has coherent cohomology, so we have reduced to the case where $Z$ is classical.  Furthermore, via the factorization $$\begin{tikzcd}
\pi_0(Z) \arrow[r, "z_0", hook] &  \pi_0(X) \arrow[r, "z"] & X
\end{tikzcd}$$and using the fact that $!$-restriction reflects coherence for classical nilthickenings, we may conclude that $z^! \cF$ has coherent cohomology, thus reducing showing that $!$-restriction along $i: Z = \pi_0(X) \hookrightarrow X$ reflects coherence.  Without loss of generality, we may take $M \in \IndCoh(X)^{\geq 0}$; there is an exact triangle $$H^0(M) = i_* M'\rightarrow M \rightarrow M^{\geq 1} \xrightarrow{+1}$$for some $M' \in \IndCoh(X)^\heartsuit = \IndCoh(\pi_0(X))^\heartsuit$.  This gives rise to an exact triangle $$i^! H^0(M) = i^! i_* M' \rightarrow i^! M \rightarrow i^! M^{\geq 1} \xrightarrow{+1}.$$ Note that we have $R^0i^!(i_* M') = i_*M'$, and since $H^0(i^! M)$ is coherent by assumption, this implies that $i_* M' = H^0(M)$ is coherent by the associated long exact sequence.  Since $i^!$ preserves coherence, we have that $i^!H^0(M)$ has has coherent cohomology, and we may induct as before on the cohomological degree to conclude that $H^i(M)$ is coherent for all $i$.  This completes the proof.
\end{proof}

\begin{proof}[Proof of Theorem \ref{wcoh prop qs}]
Suppose $\cF \in \hCoh(\wh{X}_Z)$; then $i^!\cF \in \Coh(Z)$ is $t$-bounded by Lemma \ref{wcoh prop} and has coherent cohomology by Lemma \ref{technical lemma coh com}.  Now, suppose $i^! \cF \in \Coh(Z)$.  Then, $\cF$ is $t$-bounded by Proposition \ref{wcoh prop}.  Next, for any closed embedding $i': Z' \hookrightarrow \wh{X}_Z$, we can locally choose a nilthickening $i'': Z''\hookrightarrow \wh{X}_Z$ containing both $Z$ and $Z'$.  We apply Lemma \ref{technical lemma coh com} to conclude that $i''^!\cF$ has coherent cohomology, and then again to conclude that $i'^!\cF$ has coherent cohomology.
\end{proof}

\begin{rmk}\label{star version}
The proof and statement of Lemma \ref{technical lemma coh com} works in the setting of $*$-pullbacks, and replacing left $t$-boundedness with right $t$-boundedness, i.e. taking $\cF \in \QCoh(X)^-$.  Lemma \ref{reflect t boundedness} holds with the exception that $*$-restriction does not reflect right $t$-boundedness.  For example, take $$i: Z = \Spec k \hookrightarrow X = \Spec k[t]/t^2$$and let $M = \colim_{n \in \bZ^{\geq 0}} k[n] \in \IndCoh(X)$ denote the usual acyclic injective complex.  This complex is not right $t$-bounded, but one can verify that $i^* M = 0$.
\end{rmk}

\subsubsection{} Finally, we relate the category $\hCoh(\wh{X}_Z)$ to more familiar categories, e.g. coherent sheaves on the non-finite type stacks in Example \ref{complete stack} which appear in \cite{roma hecke}, via a derived variant of the Grothendieck existence theorem due to Halpern-Leistner--Preygel.
\begin{thm}[Grothendieck existence, \cite{HLP}]\label{groth exist}
Let $G$ be a reductive group acting on a Noetherian derived scheme $X$, let $i: S_0 \hookrightarrow S$ be a closed immersion of Noetherian schemes, and let $f: X/G \rightarrow S$ be a map of stacks.  Consider the Cartesian diagram
$$\begin{tikzcd}
X_0/G \arrow[r, "i'"] \arrow[d] & X/G \arrow[d, "f"] \\
S_0 \arrow[r, "i", hook] & S.
\end{tikzcd}$$
Suppose that
\begin{enumerate}[(a)]
\setlength{\itemindent}{2em}
\item the induced map $X \rightarrow S$ is (relatively) finite type projective-over-affine,
\item the Noetherian scheme $S$ admits a dualizing complex (and fix one such dualizing complex $\omega_S$),
\item $X$ admits a $G$-equivariant line bundle which is ample relative to $S$,
\item $H^0(f_*\cO_{X/G}) \in \QCoh(S)^\heartsuit$ is finitely generated as a module,
\item the functor $\wh{i}^*: \APerf^*(S) \rightarrow \APerf^*(\wh{S}_{S_0})$ is an equivalence.
\end{enumerate}
Then, we have compatible equivalences
$$\wh{i'}^*: \APerf(X/G) \overset{\simeq}{\longrightarrow} \APerf(\wh{X/G}_{X_0/G}), \;\;\;\;\;\;\;\;\;\;  \wh{i'}^*: \Coh(X/G) \overset{\simeq}{\longrightarrow} \hCohst(\wh{X/G}_{X_0/G}).$$
If we are given a choice of dualizing complex $\omega_S$ and the map $f$ is laft, then under the resulting Grothendieck duality we have equivalences 
$$\wh{i'}^!: \APerf^!(X/G) \overset{\simeq}{\longrightarrow} \APerf^!(\wh{X/G}_{X_0/G}), \;\;\;\;\;\;\;\;\;\; \wh{i'}^!: \Coh(X/G) \overset{\simeq}{\longrightarrow} \hCohsh(\wh{X/G}_{X_0/G}).$$
\end{thm}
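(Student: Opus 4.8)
The plan is to deduce the two $*$-equivalences directly from the derived formal GAGA (Grothendieck existence) theorem of Halpern-Leistner--Preygel \cite{HLP}, and then to obtain the two $!$-equivalences by transporting everything through Grothendieck--Serre duality as formulated in Proposition \ref{serre} and Corollary \ref{inf serre}. The only real work is in matching hypotheses for the former; the rest is bookkeeping with $t$-structures.

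First I would check that hypotheses (a)--(e) are exactly the hypotheses of \emph{loc. cit.}: by (a)--(c) the stack $X/G$ is geometric, relatively projective-over-affine and almost of finite type over the Noetherian base $S$, and carries a relatively ample $G$-equivariant line bundle; (d) is the cohomological-properness input (finite generation of $H^0(f_*\cO_{X/G})$); and (e) is the formal GAGA hypothesis for the base $S$ itself. Feeding these into \cite{HLP} yields the equivalence $\wh{i'}^*\colon\APerf^*(X/G)\xrightarrow{\simeq}\APerf^*(\wh{X/G}_{X_0/G})$. (If one prefers a self-contained argument one reduces to the case of an affine base using the smooth-local nature of both sides and the projective-over-affine structure, and then to the classical Grothendieck existence theorem by a connectivity and spectral-sequence argument as in \emph{loc. cit.}) The statement on bounded coherent complexes, $\wh{i'}^*\colon\Coh(X/G)\xrightarrow{\simeq}\hCohst(\wh{X/G}_{X_0/G})$, is then the restriction of this equivalence to $t$-bounded objects: by Definition \ref{wcoh defn} the category $\hCohst(\wh{X/G}_{X_0/G})$ is precisely the $t$-bounded part of $\APerf^*(\wh{X/G}_{X_0/G})$, while $\Coh(X/G)$ is the $t$-bounded part of $\APerf^*(X/G)$, and $\wh{i'}^*$ matches these, since on bounded coherent complexes it is computed by $I$-adic completion, which is $t$-exact over the Noetherian rings in play (and $\wh{i'}_*$ is $t$-exact), while conversely one checks, e.g. by reduction to the generators $\cO_{\wh{X/G}_{X_0/G}}$ and $\Coh_{X_0/G}(X/G)$, that nothing outside $\Coh(X/G)$ maps into $\hCohst(\wh{X/G}_{X_0/G})$.

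Next I would deduce the $!$-versions. Assume in addition that $f$ is laft and fix a dualizing complex $\omega_S$. Then $X/G$ and $\wh{X/G}_{X_0/G}$ carry compatible dualizing complexes $f^!\omega_S$ and its $!$-restriction, so Proposition \ref{serre} and Corollary \ref{inf serre} supply Grothendieck duality equivalences $\bD_{X/G}\colon\APerf^*(X/G)^{\opp}\simeq\APerf^!(X/G)$ and $\bD_{\wh{X/G}_{X_0/G}}\colon\APerf^*(\wh{X/G}_{X_0/G})^{\opp}\simeq\APerf^!(\wh{X/G}_{X_0/G})$, satisfying $\bD_{\wh{X/G}_{X_0/G}}\circ(\wh{i'}^{*})^{\opp}\simeq\wh{i'}^{\,!}\circ\bD_{X/G}$ by the compatibility square of Corollary \ref{inf serre}. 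Composing the $*$-equivalence above with these two duality equivalences gives $\wh{i'}^{\,!}\colon\APerf^!(X/G)\xrightarrow{\simeq}\APerf^!(\wh{X/G}_{X_0/G})$, compatibly with duality by construction. Finally, $\bD$ is $t$-bounded (Proposition \ref{serre}(3), Corollary \ref{inf serre}), so it carries $\Coh(X/G)$ — the $t$-bounded part of $\APerf^*(X/G)$, which agrees with the $t$-bounded part of $\APerf^!(X/G)$ since quasi-coherent and ind-coherent sheaves coincide on eventually coconnective objects — onto the $t$-bounded part of $\APerf^!(\wh{X/G}_{X_0/G})$, namely $\hCohsh(\wh{X/G}_{X_0/G})$; combined with the bounded $*$-statement this yields $\wh{i'}^{\,!}\colon\Coh(X/G)\xrightarrow{\simeq}\hCohsh(\wh{X/G}_{X_0/G})$.

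The main obstacle is the first step: checking that (a)--(e) are precisely the inputs of the HLP theorem — in particular that the cohomological-properness condition (d) and the base-level formal GAGA condition (e) are the correct relative and absolute hypotheses — and, if one wishes to avoid citing \cite{HLP} as a black box, carrying out the reduction of the relative statement over a general Noetherian base (rather than over a field) to the classical Grothendieck existence theorem. Everything downstream, namely the passage to $t$-bounded subcategories and the duality argument, is formal and uses only results already established in this section.
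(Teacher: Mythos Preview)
Your proposal is correct and follows essentially the same route as the paper: invoke \cite{HLP} (specifically Theorem 4.2.1 together with the cohomological-properness criterion of Proposition 4.2.3 there) for the $\APerf^*$ equivalence, restrict to $t$-bounded objects for the $\Coh\simeq\hCohst$ statement, and then apply Corollary \ref{inf serre} to transport everything to the $!$-side. The paper phrases the $t$-bounded restriction more tersely---simply noting that $\wh{i}_*$ is $t$-exact so the adjoint equivalences restrict---whereas your argument about $I$-adic completion and generators is slightly more elaborate than necessary, but the substance is the same.
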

\begin{proof}
That $\wh{i'}^*$ is an equivalence is a special case of Theorem 4.2.1 of \cite{HLP}, applying the criterion for a stack to be cohomologically projection in Proposition 4.2.3 of \emph{op. cit}.  The fact that the restriction to $t$-bounded objects is an equivalence follows from Lemma \ref{wcoh prop} and $t$-exactness of $\wh{i}_*$ (i.e. the adjoint equivalences restrict to $t$-bounded objects).  We apply Corollary \ref{inf serre} to deduce the final claim.
\end{proof}

In our cases of interest, we often take $S$ to be the spectrum of a completed ring, i.e. 
$$S_0 := \Spec A/I \subset  S := \Spec \lim A/I^n$$
where $A$ is a finite-type $k$-algebra and $I \subset A$ an ideal.  In this setting, it is known that the condition (e) above is satisfied (see Example 1.1.2 and Proposition 2.1.4 of \cite{HLP}), i.e. we have that $\hCoh(\wh{S}_{S_0}) \simeq \Coh(S)$.

\subsection{Singular support and ind-coherent sheaves}\label{app ss}

\subsubsection{}
We briefly recall the notion of singular support of coherent sheaves on quasismooth Artin stacks from \cite{AG}, and refer the reader to \emph{op. cit.} for details.
\begin{defn}\label{def ss}
We say a derived Artin stack $X$ is \emph{quasi-smooth} (or derived lci) if its cotangent complex $\bL_X$ is perfect in degrees $[-1, \infty)$ (see Lemma 8.1.2 in \emph{op. cit.}).  Any Artin stack with a quasi-smooth atlas is quasi-smooth.  More generally, we say a map $f: X \rightarrow Y$ of (derived) Artin stacks is quasi-smooth if the above is true for its relative cotangent complex $\bL_f$.  Let $X$ be a quasi-smooth (derived) Artin stack.  
\begin{enumerate}[(1)]
    \item We define the (classical) \emph{odd cotangent space} by\footnote{Note that $\bL_X^\vee$ is, by assumption, is perfect in degrees $(-\infty, 1]$, thus $H^1(\bL_X^\vee)$ is a quotient of $\bL_X^\vee$.}
$$\Sing_X := \pi_0(\Spec_X(\Sym_X \bL_X^\vee[1])) = \Spec_X \Sym_X H^1(\bL_X^\vee).$$
Given a fixed presentation of $\bL_X$ as a complex of locally free sheaves whose $(-1)$-term is $\cE$, we obtain a closed embedding $\Sing_X \subset \mathbb{E}_X := \Spec_X \Sym_X \cE^\vee$.  
\item To any ind-coherent sheaf $\cF \in \IndCoh(X)$, we have a closed conical subvariety called the \emph{singular support} $\on{SS}(\cF) \subset \Sing_X$.  For any closed conical subset $\Lambda \subset \Sing_X$, we have a full subcategory $\IndCoh_\Lambda(X) \subset \IndCoh(X)$ of ind-coherent sheaves with singular support inside $\Lambda$, which is compactly generated by $\Coh_\Lambda(X) \subset \Coh(X)$, the full subcategory of coherent sheaves with same restriction on singular support.
\item If $f: X \rightarrow Y$ is a schematic map between quasi-smooth derived Artin stacks, then we have the following functorial properties of singular support (see Section 7.1 of \cite{AG}).  Consider the correspondence
$$\begin{tikzcd}
\Sing_X & \Sing_Y \times_Y X \arrow[l, "df"'] \arrow[r, "\pi_f"] & \Sing_Y.
\end{tikzcd}$$
Then, for $\Lambda_X \subset \Sing_X$ and $\Lambda_Y \subset \Sing_Y$, we define
$$f(\Lambda_X) := \overline{\pi_f(df^{-1}(\Lambda_X))}, \;\;\;\;\;\;\; f^{-1}(\Lambda_Y) := \overline{df(\pi_f^{-1}(\Lambda_Y))}.$$
By Proposition 7.1.3 of \emph{op. cit.}, these singular support conditions are preserved by pushforward and pullback functors.
\end{enumerate}
\end{defn}

The following is Corollary 4.5.10 of \cite{AG}, which will allow us to pass between categories with different support conditions.
\begin{prop}\label{nil to coh}
Let $X$ be a quasi-smooth stack, and let $\Lambda \subset \Sing_X$ be a singular support condition containing the zero section.  Then, there are natural adjoint functors 
$$\iota_\Lambda: \begin{tikzcd} \IndCoh_\Lambda(X) \arrow[r, shift left] & \IndCoh(X) \arrow[l ,shift left] \end{tikzcd} :\Gamma_\Lambda$$
such that $\iota_\Lambda$ is fully faithful and $t$-exact, and $\Gamma_\Lambda \circ \iota_\Lambda = \mathrm{id}_{\IndCoh_\Lambda(X)}$. 
\end{prop}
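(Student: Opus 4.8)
The plan is to separate the statement into a formal part and one substantive point. The formal part consists of the full faithfulness of $\iota_\Lambda$, the existence of the right adjoint $\Gamma_\Lambda$, and the identity $\Gamma_\Lambda\circ\iota_\Lambda\simeq\id$; the substantive point is the $t$-exactness of $\iota_\Lambda$, and it is the only place where the hypothesis that $\Lambda$ contains the zero section enters. For the formal part, recall from Definition \ref{def ss} that $\IndCoh_\Lambda(X)$ is by construction a full subcategory of $\IndCoh(X)$ which is compactly generated by $\Coh_\Lambda(X)\subset\Coh(X)$. Since $\Lambda$ is a closed conical subset, $\IndCoh_\Lambda(X)$ is moreover closed under all colimits in $\IndCoh(X)$, by the behavior of singular support under colimits and cofibers. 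Hence $\iota_\Lambda$ is fully faithful and colimit-preserving, so by the adjoint functor theorem it admits a right adjoint $\Gamma_\Lambda$ (which is continuous, as $\iota_\Lambda$ carries the generators $\Coh_\Lambda(X)$ to compact objects of $\IndCoh(X)$). Finally, since $\iota_\Lambda$ is fully faithful, the unit of the adjunction $\iota_\Lambda\dashv\Gamma_\Lambda$ is an equivalence, whence $\Gamma_\Lambda\circ\iota_\Lambda\simeq\id_{\IndCoh_\Lambda(X)}$.

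It remains to prove that $\iota_\Lambda$ is $t$-exact. I would first reformulate this as the assertion that the full subcategory $\IndCoh_\Lambda(X)\subset\IndCoh(X)$ is stable under the truncation functors $\tau^{\leq n},\tau^{\geq n}$ of the canonical $t$-structure: granting this, $\IndCoh_\Lambda(X)$ inherits a $t$-structure by restriction, for which $\iota_\Lambda$ is then tautologically $t$-exact. To prove stability under truncation I would reduce it, in two steps, to a local computation. First, since the canonical $t$-structure on $\IndCoh(X)$ is compatible with filtered colimits and $\IndCoh_\Lambda(X)$ is closed under colimits, it suffices to verify it on the compact generators, i.e.\ for $\cF\in\Coh_\Lambda(X)$. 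Second, since both the $t$-structure and the notion of singular support, together with its compatibility with pullback (Definition \ref{def ss}), are local for the smooth topology, one reduces to a quasi-smooth affine atlas $\Spec A\to X$.

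On $\Spec A$ the problem becomes concrete: $\Sing_{\Spec A}$ is $\Spec$ over $\Spec A$ of a graded polynomial algebra on $H^1(\bL^\vee)$, and the singular support of a coherent complex is the support of its finitely generated graded self-$\Ext$ module over that algebra, which one computes from a local Koszul presentation of the relevant endomorphism dg algebra. What one must then check — and the step I expect to be the main obstacle — is that applying a truncation functor to such an $\cF$ does not move its singular support outside $\Lambda$, precisely under the assumption that $\Lambda$ contains the zero section; this amounts to a careful bookkeeping of how truncation interacts with the Koszul-dual polynomial generators, and of exactly where the zero-section hypothesis is needed. Everything outside this local verification is soft and formal.
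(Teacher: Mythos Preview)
The paper does not prove this proposition; it simply records it as a citation of \cite[Corollary~4.5.10]{AG}. So there is no ``paper's own proof'' to compare against, and your proposal amounts to an attempt to reprove that result from scratch.

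Your formal part is fine: full faithfulness is definitional, the right adjoint exists by presentability and preservation of compact objects, and $\Gamma_\Lambda\circ\iota_\Lambda\simeq\id$ follows. For $t$-exactness, your reduction (pass to compact generators via compatibility with filtered colimits, then localize to an affine quasi-smooth chart) is correct, but the ``careful bookkeeping'' you leave as the main obstacle is in fact the entire content of the result, and it is not merely bookkeeping. The statement you need is that for $\cF\in\Coh(X)$ one has $\on{SS}(\cF)=\overline{\bigcup_i \on{SS}(H^i(\cF))}$; this is \cite[Theorem~4.2.6]{AG}, and once you have it the stability of $\IndCoh_\Lambda(X)$ under truncation is immediate (each $H^i(\cF)$ has singular support in $\Lambda$, hence so do the truncations). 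The zero-section hypothesis is what ensures $\Lambda$ is nonempty on every component of support so that the statement is not vacuous in the wrong way, but the real work is in that theorem. As written, your proposal correctly isolates where the difficulty lies but does not resolve it; you should either cite \cite[Theorem~4.2.6]{AG} at that point or supply the Koszul-dual argument that proves it.
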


If $\Lambda_1 \subset \Lambda_2$ are nested singular support conditions, we can also use the above functors to pass between $\IndCoh_{\Lambda_1}(X)$ and $\IndCoh_{\Lambda_2}(X)$.  Furthermore, we often abusively denote the endofunctor $\iota_{\Lambda_1}^{\Lambda_2} \Gamma_{\Lambda_1}^{\Lambda_2}: \IndCoh_{\Lambda_2}(X) \rightarrow \IndCoh_{\Lambda_2}(X)$ by simply $\Gamma_{\Lambda_1}$. 

\begin{rmk}\label{renormalize indcoh}
We have the following analogue of the criterion established in Theorem \ref{wcoh prop qs}, now in the setting of singular supports rather than classical supports.  Let $i: Z \hookrightarrow X$ be a quasi-smooth closed immersion of quasi-smooth QCA stacks, and let $\Lambda_{X} := i^!\Sing_{X}$.  In particular, $\{0\}_Z \subset \Lambda_{X}$, so the fully faithful functor $\Perf(Z) \hookrightarrow \Coh_{\Lambda_{X}}(Z)$ induces a fully faithful functor $\QCoh(Z) \hookrightarrow \IndCoh_{\Lambda_{X}}(Z)$, thus realizing $\Coh(Z)$ as a full subcategory of $\IndCoh_{\Lambda_{X}}(Z)$ (of possibly non-compact objects).  Consider the adjoint functors
$$\begin{tikzcd}
\IndCoh_{\Lambda_{X}}(Z) \arrow[r, "i_*", shift left] & \IndCoh(X) \arrow[l, "i^!", shift left].
\end{tikzcd}$$
We have the following characterizations of certain small subcategories in terms of these functors (see Proposition 7.6.4 of \cite{AG}).
\begin{enumerate}[(1)]
    \item The subcategory $\Coh(Z) \subset \IndCoh_{\Lambda_{X}}(Z)$ of possibly non-compact objects is characterized as the full subcategory consisting of objects whose image under $i_*$ lands in $\Coh(X) \subset \IndCoh(X)$.
    \item The subcategory of compact objects $\Coh_{\Lambda_{X}}(Z) \subset \IndCoh_{\Lambda_{X}}(Z)$ is characterized as the full subcategory generated by the essential image of $i^!$ applied to compact objects, i.e. $i^! \Coh(X) \subset \IndCoh(Z)$.
\end{enumerate}

Koszul dually, still assuming that $i: Z \hookrightarrow X$ is quasi-smooth, we now consider the classical support condition defined by $Z$.  Then, we have adjoint functors:
$$\begin{tikzcd}
\IndCoh(Z) \arrow[r, "i_*", shift left] & \arrow[l, "i^!", shift left] \IndCoh_Z(X)
\end{tikzcd}$$
which pick out the following small subcategories.
\begin{enumerate}[(1)]
    \item By Theorem \ref{wcoh prop qs} the subcategory $\hCoh_Z(X) \subset \IndCoh_Z(X)$ of possibly non-compact objects is characterized as the full subcategory consisting of objects whose image under $i^!$ lands in $\Coh(Z)$.
    \item The subcategory of compact objects $\Coh_Z(X) \subset \IndCoh_Z(X)$ is characterized as the full subcategory generated by the essential image of $i_*$ applied to compact objects, i.e. $i_* \Coh(Z) \subset \IndCoh(\wh{X}_Z)$.
\end{enumerate}
\end{rmk}

\section{Convolution categories and spectral affine Hecke categories}\label{spectral hecke sec}

\subsection{Overview}

\sss
In this section we continue our discussion of the spectral side of the equivalences.

In the previous section, we reviewed and established relevant results about individual categories of ind-coherent sheaves. Here, we study the intertwining operators between such categories in good situations, where they identify with categories of ind-coherent sheaves on fiber products, perhaps with some prescribed singular support condition. Explicitly, these act via integral transforms, and the composition is given by  convolution.

In Section \ref{convolution sec} we prove basic structural results on these categories. The main new theorem here is Theorem \ref{convolution composition}, which identifies the relative tensor product of two composable convolution categories with another convolution category with a singular support condition.   In Section \ref{convolution hecke} we apply this formalism to spectral affine Hecke categories, i.e. categories of ind-coherent sheaves on variants of Steinberg stacks.  In particular, in Proposition \ref{reduction using monoidal} we explain how nilpotent singular and classical support conditions naturally arise when taking relative tensor products over monodromic and strict affine Hecke categories respectively.

In Section \ref{sec algebra obj} we identify certain natural monads on convolution categories and the algebra objects they correspond to.  In Section \ref{spectral hecke algebra} we apply this to characterize partial spectral affine Hecke categories as modules for certain algebra objects in the full affine Hecke category, cf. Corollary \ref{completed spectral coalgebra}.   Finally,  in Section \ref{sec spectral monodrom} we describe a cohomological Koszul duality that allows us to pass between ind-coherent sheaves on certain Steinberg stacks and their formal thickenings.

\subsection{Algebro-geometric convolution structures}\label{convolution sec}

\sss

Our starting point is  the following special case of Theorem 1.1.3 or Theorem 3.0.2 of \cite{BNP convolution}, which we will refer to repeatedly and state for convenience.
\begin{thm}[Ben-Zvi, Nadler, Preygel]\label{thm bnp}
Let $S$ be a smooth Artin stack, and assume that $X, Y$ are smooth and perfect, with $X, Y$ proper over $S$.  The $!$-integral transform and $*$-integral transform induce equivalences of monoidal categories
$$\begin{tikzcd} \Coh(X \times_S Y) \arrow[r, "\simeq"', "\bF"] & \cat{Fun}_{\Perf(S)}(\Coh(X), \Coh(Y)),\end{tikzcd}.$$
Furthermore, these equivalences are natural in the sense that for any proper map $S \rightarrow S'$, we have an induced\footnote{I.e. since $S \rightarrow S'$ is proper it is separated, thus the diagonal is a closed immersion.} closed immersion $\iota: X \times_{S'} Y \rightarrow X \times_S Y$ and a commuting square
$$\begin{tikzcd}
\Coh(X \times_S Y) \arrow[r, "\bF"] \arrow[d, "\iota_*"] & \cat{Fun}_{\Perf(S)}(\Coh(X), \Coh(Y)) \arrow[d] \\
\Coh(X \times_{S'} Y) \arrow[r, "\bF"] & \cat{Fun}_{\Perf(S')}(\Coh(X), \Coh(Y)).
\end{tikzcd}$$
In particular, $\iota_*$ is monoidal.
\end{thm}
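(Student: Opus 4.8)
The plan is to take the core equivalence $\bF$ for granted — in both its $!$- and $*$-integral-transform forms it is a special case of Theorems 1.1.3 and 3.0.2 of \cite{BNP convolution}, which apply here since $X$ and $Y$ are smooth, perfect, and proper over the smooth base $S$ — and to put the actual work into the naturality square and the monoidality of $\iota_*$. Throughout I will use the $!$-model, in which $\bF$ sends a kernel $\cK \in \Coh(X \times_S Y)$ to the integral transform $\Phi^S_\cK \colon \cF \mapsto q_{Y,*}(q_X^!\cF \otimes \cK)$, with $q_X, q_Y$ the two projections out of $X \times_S Y$; when $X = Y$ the monoidal structure is convolution and $\bF$ is monoidal for composition of endofunctors.

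First I would record that $\iota$ is genuinely a closed immersion: since $S \to S'$ is proper, hence separated, the relative diagonal $\Delta \colon S \to S \times_{S'} S$ is a closed immersion, and $X \times_S Y$ is its base change along $X \times_{S'} Y \to S \times_{S'} S$, so $\iota$ is too. Moreover the projections out of $X \times_{S'} Y$ restrict along $\iota$ to those out of $X \times_S Y$, whence $q'_X \circ \iota = q_X$, $q'_Y \circ \iota = q_Y$, and $\iota^!(q'_X)^! \simeq q_X^!$.

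Next, to see the square commutes I would compute with kernels directly. For $\cK \in \Coh(X\times_S Y)$, the projection formula for the closed immersion $\iota$ yields
$$\Phi^{S'}_{\iota_*\cK}(\cF) = q'_{Y,*}\big((q'_X)^!\cF \otimes \iota_*\cK\big) \simeq q'_{Y,*}\iota_*\big(\iota^!(q'_X)^!\cF \otimes \cK\big) \simeq q_{Y,*}\big(q_X^!\cF \otimes \cK\big) = \Phi^S_\cK(\cF),$$
naturally in $\cF$; promoting this to an equivalence of $\Perf(S')$-linear functors is routine once one notes that the $\Perf(S')$-actions on $\Coh(X)$ and $\Coh(Y)$ factor through their $\Perf(S)$-actions along the monoidal functor $\Perf(S') \to \Perf(S)$, so the right-hand vertical arrow of the square is simply the induced ``restriction of linearity'' functor $\cat{Fun}_{\Perf(S)}(\Coh(X),\Coh(Y)) \to \cat{Fun}_{\Perf(S')}(\Coh(X),\Coh(Y))$. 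Hence $\bF^{S'}\circ \iota_*$ and the composite of $\bF^S$ with that functor agree, which is the asserted commutativity.

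Finally, monoidality of $\iota_*$ (when $X=Y$) is then formal: the right-hand vertical functor is monoidal for composition — composing two $\Perf(S)$-linear endofunctors and then regarding the result over $S'$ is the same as composing them over $S'$ — and since both horizontal arrows are monoidal equivalences, $\iota_* = (\bF^{S'})^{-1}\circ(\text{restriction})\circ\bF^S$ inherits a monoidal structure. The step I expect to need the most care is the bookkeeping of the $!$- versus $*$-conventions in the definition of $\bF$: these differ by Serre-duality twists along $X$ and $Y$, and one must confirm that the identity $\Phi^{S'}_{\iota_*\cK}\simeq\Phi^S_\cK$ above survives whichever convention is used and that the twists match under $\iota_*$. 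This should be harmless, since $\iota$ is a closed immersion and $\bF^S,\bF^{S'}$ are set up identically, interacting only through the standard ind-coherent base-change and projection formulas — but it is the one place where the argument is not purely formal.
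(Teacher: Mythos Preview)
The paper does not give a proof of this theorem at all: it is stated as a direct citation of Theorems 1.1.3 and 3.0.2 of \cite{BNP convolution}, treated as a black box to be ``referred to repeatedly.'' Your proposal is correct and in fact supplies more than the paper does: you cite the same external source for the core equivalence $\bF$, and then explicitly verify the naturality square via the projection formula for the closed immersion $\iota$ and deduce the monoidality of $\iota_*$ formally from the commuting square. This is exactly the kind of unpacking one would do if asked to justify why the naturality and monoidality claims follow from the BNP statement, and your argument is sound; your caveat about the $!$- versus $*$-conventions is appropriate caution but not a gap, since as you note the two models differ by Serre twists along $X$ and $Y$ that are compatible with $\iota_*$.
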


\subsubsection{}
The categories of integral transforms in the above theorem are examples of convolution categories; we now establish notations and conventions.
\begin{defn}\label{convolution setup}
Let $S$ be a smooth Artin stack.  Fix an indexing set $I$, and let $X_i$ ($i \in I$) be smooth and perfect stacks, such that the structure maps $p_i: X_i \rightarrow S$ are proper.  To any pair $(i,j)$ (possibly non-distinct) we define the corresponding \emph{convolution space} by $Z_{ij} := X_i \times_S X_j$, and for any triple $(i,j,k)$ (possibly non-distinct) we have a correspondence:
$$\begin{tikzcd}
Z_{ij} \times Z_{jk} = X_i \times_S X_j \times X_j \times_S X_k & \arrow[l, "\delta"'] \arrow[r, "\pi"] Z_{ijk} := X_i \times_S X_j \times_S X_k & Z_{ik} = X_i \times_S X_k.
\end{tikzcd}$$
For singular support conditions $\Lambda_{ij} \subset \Sing_{Z_{ij}}$, $\Lambda_{jk} \subset \Sing_{Z_{jk}}$.  We define the \emph{convolution of singular supports} by (see Section \ref{app ss}):
$$\Lambda_{ij} \star \Lambda_{jk} := \pi_*\delta^!(\Lambda_{ij} \boxtimes \Lambda_{jk}).$$
We note that the singular support condition includes a classical support condition.
\end{defn}

\begin{exmp}[Convolution of supports]
Consider the convolution spaces $Z_{12} = X_1 \times_S X_2$ and $Z_{23} = X_2 \times_S X_3$.  We will compute the convolution of their odd cotangent bundles $\Sing_{Z_{12}} \star \Sing_{Z_{23}}$.  Fix a $k$-point $\eta = (x_1,x_2,x_3) \in (Z_{123})(k)$ mapping to $s \in S(k)$, and let $dp_{x_i}^*: \bT^*_{S,s} \rightarrow \bT^*_{X_i,x_i}$ denote the map on cotangent spaces.  The presentation of the $Z_{ij}$ as fiber products give identifications
$$\Sing_{Z_{12} \times Z_{23}}|_{\delta(\eta)} = \{(\omega_{12}, \omega_{23}) \in (\bT^*_{S,s})^2 \mid dp_{x_1}^* \omega_{12} = dp_{x_2}^* \omega_{12} = 0, dp_{x_2}^*\omega_{23} = dp_{x_3}^*\omega_{23} = 0\},$$ 
$$\Sing_{Z_{123}}|_\eta = \{(\omega_{12}, \omega_{23}) \in (\bT^*_{S,s})^2 \mid dp_{x_1}^* \omega_{12} = 0, dp_{x_2}^*\omega_{23} = 0, dp_{x_2}^* \omega_{12} = dp_{x_2}^*\omega_{23}\},$$
$$\Sing_{Z_{13}}|_{\pi(\eta)} = \{\omega \in \bT^*_{S,s} \mid dp_{x_1}^* \omega = dp_{x_3}^*\omega = 0\}.$$
The map $\delta^!$ sends $(\omega_{12}, \omega_{23}) \mapsto (\omega_{12}, \omega_{23})$, and the map $\pi_*$ sends $\omega \mapsto (\omega, \omega)$.  Thus, over a $k$-point $\pi(\eta) = (x_1, x_3) \in (Z_{13})(k)$, we have
$$(\Sing_{Z_{12}} \star \Sing_{Z_{23}})|_{\pi(\eta)} = \bigcup_{x_2 \in p_2^{-1}(s)} \{\omega \in \bT^*_{S, s} \mid dp_{x_1}^* \omega = dp_{x_2}^*\omega = dp_{x_3}^* \omega = 0\} \subseteq  \Sing_{Z_{13}, \pi(\eta)}.$$
Note that for points not in the image of $\pi$, the singular support locus is empty.
\end{exmp}

\medskip
While we state a few basic results in greater generality, in practice all of our singular support conditions on the various convolution spaces arise via a single singular support condition on the base $S$ in the following way.
\begin{defn}\label{def support on base}
Let $\Lambda \subset \bT^*_S$ be a conical closed subset of the classical cotangent bundle of $S$.  Then for any $X_i, X_j$ over $S$ in the above set-up, we have a singular support condition $\Lambda_S$ on $Z_{ij} = X_i \times_S X_j$ defined by embedding $\Sing_{Z_{ij}} \subset \bT^*_S \times_S Z_{ij}$ and taking $\Lambda_S := (\Lambda \times_S Z_{ij}) \cap \Sing_{Z_{ij}}$. 
\end{defn}

\subsubsection{}
We establish the following basic properties of convolution categories.  Recall that a compactly generated category is \emph{semi-rigid} if the left and right duals of compact objects are compact \cite{bn ctcg}, and \emph{rigid} if in addition, the monoidal unit is compact \cite{1affine}.
\begin{prop}\label{convolution first prop}
Fix (closed and conical) singular support conditions $\Lambda_{ij}$ as above, and denote the corresponding closed classical support condition by $\Lambda_{ij}^{\mathrm{cl}} \subset Z_{ij}$.  We have the following.
\begin{enumerate}[(a)]
\item Suppose that $\Lambda_{ij} \star \Lambda_{jk} \subset \Lambda_{ik}$.  We have a \emph{convolution functor} $\star: \IndCoh_{\Lambda_{ij}}(Z_{ij}) \otimes \IndCoh_{\Lambda_{jk}}(Z_{jk}) \longrightarrow \IndCoh_{\Lambda_{jk}}(Z_{jk})$ preserving compact objects, defined by the correspondence
$$\begin{tikzcd}
X_i \times_S X_j \times X_j \times_S X_k & \arrow[l, "\delta"'] \arrow[r, "\pi"] X_i \times_S X_j \times_S X_k & X_i \times_S X_j,
\end{tikzcd}\;\;\;\;\;\;\;\;\;\;\;\;\;\;\; \cK_1 \star^! \cK_2 := \pi_*\delta^!(\cK_1 \boxtimes \cK_2).$$
\item Assume that $\Lambda_{ii}  \star \Lambda_{ii} \subset \Lambda_{ii}$ and that $\Lambda_{ii}^{\mathrm{cl}} \subset Z_{ii}$ is square, i.e. of the form $Y \times_S Y$ for some $Y \subset X$.  Then, convolution makes $\IndCoh_{\Lambda_{ii}}(Z_{ii})$ into a semi-rigid associative (i.e. $E_1$) monoidal category.  If $\on{SS}(\Delta_* \omega_X) \subset \Lambda_{ii}$, then the category is rigid.  In particular, $\IndCoh(Z_{ii})$ is rigid.
\item In the set-up of (b), consider the adjoint functors $(\iota_{\Lambda_{ii}}, \Gamma_{\Lambda_{ii}})$ between $\IndCoh(Z_{ii})$ and $\IndCoh_{\Lambda_{ii}}(Z_{ii})$ of Proposition \ref{nil to coh}.  The functor $\iota_{\Lambda_{ii}}$ is oplax monoidal, and the functor $\Gamma_{\Lambda_{ii}}$ is lax monoidal.
\item Assume that $\Lambda_{ij} \star \Lambda_{jj} \subset \Lambda_{ij}$ and $\Lambda_{ii} \star \Lambda_{ij} \subset \Lambda_{ii}$.  Then the category $\IndCoh_{\Lambda_{ij}}(Z_{ij})$ is a $\IndCoh_{\Lambda_{ii}}(Z_{ii})-\IndCoh_{\Lambda_{jj}}(Z_{jj})$-bimodule category under convolution.
\item Assume that $\Lambda_{ij} \star \Lambda_{jj} \subset \Lambda_{ij}$, $\Lambda_{jj} \star \Lambda_{jk} \subset \Lambda_{jk}$, and $\Lambda_{ij} \star \Lambda_{jk} \subset \Lambda_{ik}$.  Then the convolution product descends to the relative tensor product, i.e. to a functor
$$\IndCoh_{\Lambda_{ij}}(Z_{ij}) \tens{\IndCoh_{\Lambda_{jj}}(Z_{jj})} \IndCoh_{\Lambda_{jk}}(Z_{jk}) \longrightarrow \IndCoh_{\Lambda_{ik}}(Z_{ik}).$$
\end{enumerate}
\end{prop}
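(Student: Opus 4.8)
The plan is to deduce the whole proposition from one principle: the assignment $X\mapsto\IndCoh(X)$ together with singular support conditions is functorial for the category of correspondences of (quasi-smooth, perfect) stacks, with $*$-pushforward along proper maps and $!$-pullback, and this functor is symmetric monoidal (\cite{DAG}; \cite{AG}, Proposition 7.1.3, for the compatibility with singular support). Every convolution functor in the statement, and every associativity, unit, module, or balancing constraint among them, is obtained by applying this to an explicit diagram of correspondences among the iterated fiber products $Z_{i_0\cdots i_n}:=X_{i_0}\utimes{S}\cdots\utimes{S}X_{i_n}$. In each such diagram the map $\delta$ is a base change of a diagonal of some $X_m$, hence -- $X_m$ being smooth -- a quasi-smooth closed immersion, so $\delta^!$ preserves coherence; and the map $\pi$ is a base change of some proper $X_m\to S$, so $\pi_*$ preserves coherence. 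Thus every $\pi_*\delta^!$ preserves compacts, and in each part the only thing to verify is that the hypothesized containments of convolved singular supports place the output in the prescribed subcategory.

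Given this, I would treat (a)--(d) as bookkeeping. In (a), $\star=\pi_*\delta^!$ lands in $\IndCoh_{\Lambda_{ik}}(Z_{ik})$ by $\Lambda_{ij}\star\Lambda_{jk}\subset\Lambda_{ik}$ and preserves compacts as above. Associativity of $\star$ and, in (b), unitality follow from base change along the Cartesian squares relating the $Z_{i_0\cdots i_n}$; semi-rigidity reduces to the fact that transposing a kernel exchanges its left and right duals and, when $\Lambda_{ii}^{\mathrm{cl}}$ is square, preserves the support condition, while $\Delta_*\omega_{X_i}$ is a compact monoidal unit once $\on{SS}(\Delta_*\omega_{X_i})\subset\Lambda_{ii}$, giving rigidity (automatically for $\IndCoh(Z_{ii})$). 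Part (c) is the formal compatibility of the colocalization $(\iota_{\Lambda_{ii}},\Gamma_{\Lambda_{ii}})$ of Proposition \ref{nil to coh} with the convolution product, which makes $\iota_{\Lambda_{ii}}$ oplax and $\Gamma_{\Lambda_{ii}}$ lax monoidal. Part (d) applies the same correspondence argument to $Z_{ii}\times Z_{ij}\leftarrow X_i\utimes{S}X_i\utimes{S}X_j\to Z_{ij}$ and its mirror, the containments $\Lambda_{ii}\star\Lambda_{ij}\subset\Lambda_{ij}$ and $\Lambda_{ij}\star\Lambda_{jj}\subset\Lambda_{ij}$ ensuring the two actions preserve $\IndCoh_{\Lambda_{ij}}(Z_{ij})$.

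For (e), set $B:=\IndCoh_{\Lambda_{jj}}(Z_{jj})$, $M:=\IndCoh_{\Lambda_{ij}}(Z_{ij})$, $N:=\IndCoh_{\Lambda_{jk}}(Z_{jk})$, $P:=\IndCoh_{\Lambda_{ik}}(Z_{ik})$, so that $M$ is a right and $N$ a left $B$-module by (d). Since $M\tens{B}N$ is the geometric realization of the two-sided bar construction $\mathrm{Bar}_\bullet(M,B,N)$ with $n$-simplices $M\otimes B^{\otimes n}\otimes N$, its universal property says that a functor $M\tens{B}N\to P$ refining the convolution functor $\star\colon M\otimes N\to P$ of (a) is the same as a lift of $\star$ to an augmented simplicial functor $\mathrm{Bar}_\bullet(M,B,N)\to P$ (with $P$ constant). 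I would build this geometrically: form the simplicial diagram of correspondences on the stacks $X_i\utimes{S}X_j\utimes{S}\cdots\utimes{S}X_j\utimes{S}X_k$ ($n{+}1$ intermediate copies of $X_j$) whose faces merge two adjacent copies of $X_j$ along the quasi-smooth diagonal of $X_j$ and then project, augmented to $Z_{ik}$ by contracting all copies of $X_j$. Applying $\IndCoh$ with singular supports, using the Künneth equivalence $\IndCoh(Z_{ij})\otimes\IndCoh(Z_{jj})^{\otimes n}\otimes\IndCoh(Z_{jk})\simeq\IndCoh(Z_{ij}\times Z_{jj}^{\times n}\times Z_{jk})$ (valid since the $X_i$ are perfect, cf. Theorem \ref{thm bnp}) and the monoidality of $\IndCoh$ on correspondences, reproduces exactly $\mathrm{Bar}_\bullet(M,B,N)\to P$ with all its coherences; the three hypotheses $\Lambda_{ij}\star\Lambda_{jj}\subset\Lambda_{ij}$, $\Lambda_{jj}\star\Lambda_{jk}\subset\Lambda_{jk}$, $\Lambda_{ij}\star\Lambda_{jk}\subset\Lambda_{ik}$, together with the associativity of the correspondence-induced convolution of supports, guarantee each term lands in the stated subcategory. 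The colimit over $\Delta^{\mathrm{op}}$ is then the desired functor.

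The step I expect to be the main obstacle is exactly this last one: producing the $B$-balanced structure on $\star$ with all its higher coherences, not just the single equivalence $(m\cdot b)\star n\simeq m\star(b\cdot n)$, which alone is an easy base change. Doing it by hand would require a contractible space of compatibilities indexed by $\Delta$; realizing the entire augmented simplicial object at once as the image of one simplicial object in correspondences is what lets these coherences be inherited for free. The genuinely geometric inputs to isolate are therefore the Künneth formula for $\IndCoh$ with singular support on products of the $Z_{ij}$, and the compatibility of $\delta^!$ and $\pi_*$ with composition of correspondences and with the singular-support conventions of Definition \ref{convolution setup}; both hold for smooth, perfect stacks proper over a smooth base.
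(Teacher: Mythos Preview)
Your treatment of (a) and (e) is essentially correct and matches the paper's. For (a), both you and the paper invoke Proposition 7.1.3 of \cite{AG} for the singular support bound, together with the observation that $\pi$ is proper and $\delta$ is quasi-smooth and affine. For (e), the paper also produces the augmented simplicial diagram of small categories and passes to the colimit; your phrasing via a simplicial object in correspondences is a clean way to package the same argument.

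The gap is in (b), and by dependence also (c) and (d). You write that ``associativity of $\star$ and, in (b), unitality follow from base change,'' treating these as bookkeeping. But the correspondence formalism hands you $\Delta_*\omega_{X_i}$ as the monoidal unit of $\IndCoh(Z_{ii})$, and this object need not lie in $\IndCoh_{\Lambda_{ii}}(Z_{ii})$: its classical support is the diagonal, which has no reason to be contained in $\Lambda_{ii}^{\mathrm{cl}}$, and its singular support need not be in $\Lambda_{ii}$ either. So $\IndCoh_{\Lambda_{ii}}(Z_{ii})$ is \emph{not} a monoidal subcategory of $\IndCoh(Z_{ii})$ in general --- it is closed under $\star$ by the hypothesis $\Lambda_{ii}\star\Lambda_{ii}\subset\Lambda_{ii}$, but may lack the unit. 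Your proposal does not say what the unit is in this case, and the correspondence formalism alone does not supply one.

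The paper resolves this by a different route: it invokes Theorem~\ref{thm bnp} (Ben-Zvi--Nadler--Preygel) to identify $\IndCoh(Z_{ii})^+$ with left $t$-exact $\QCoh(S)$-linear endofunctors of $\QCoh(X_i)$, and then shows that under this identification the subcategory $\IndCoh_{Y^2}(Z_{ii})^+$ (for $Y=\Lambda_{ii}^{\mathrm{cl}}$) corresponds to endofunctors of $\QCoh_Y(X_i)$. The latter is monoidal under composition, with unit the identity functor, and this transports back to a monoidal structure on $\IndCoh_{Y^2}(Z_{ii})$ whose unit is $\Gamma_Y(\Delta_*\omega_{X_i})$ rather than $\Delta_*\omega_{X_i}$ itself. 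One then restricts further to $\IndCoh_{\Lambda_{ii}}(Z_{ii})$, which now does contain this modified unit. The oplax monoidality of $\iota_{\Lambda_{ii}}$ in (c) is then read off directly from the construction (the inclusion $\cat{Fun}(\QCoh_Y(X),\QCoh_Y(X))\hookrightarrow\cat{Fun}(\QCoh(X),\QCoh(X))$ via $F\mapsto\iota_Y\circ F\circ\Gamma_Y$ is visibly oplax monoidal), and (d) is handled by the same endofunctor identification. None of this is visible from the correspondence formalism; the square hypothesis on $\Lambda_{ii}^{\mathrm{cl}}$ is precisely what makes the endofunctor argument go through.
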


Note that all conditions in the following statements are satisfied if we fix $\Lambda_S$ as in Definition \ref{def support on base} and take $\Lambda_{ij} := \Lambda_S$ for varying $i, j$, with the exception that in (b), the condition $SS(\Delta_*\omega_{X_i}) \subset \Lambda_S$ may fail (i.e. $\IndCoh_{\Lambda_S}(Z_{ii})$ is always semi-rigid but may not be rigid).

\begin{exmp}
By Proposition \ref{convolution first prop}, we have that $\IndCoh_{\cN}(\wt{\mf{g}} \times_{\mf{g}} \wt{\mf{g}})$ and $\IndCoh_{\cN[1]}(\wt{\cN} \times_{\mf{g}} \wt{\cN})$ are semi-rigid (but not rigid) monoidal categories.  For $\IndCoh_{\cN}(\wt{\mf{g}} \times_{\mf{g}} \wt{\mf{g}})$, the object $\Delta_* \omega_{\wt{\mf{g}}}$ is not supported over nilpotent elements, thus the monoidal unit $\Delta_* \Gamma_{\cN} \omega_{\wt{\mf{g}}}$ is obtained by applying a local cohomology functor which does not preserve compactness.  For $\IndCoh_{\cN[1]}(\wt{\cN} \times_{\mf{g}} \wt{\cN})$ the singular support of $\Delta_* \omega_{\wt{\cN}}$ is over a given Borel $B$ consists of the singular codirections $\mf{n}^\perp \subset \mf{g}^*$, which may be non-nilpotent.
\end{exmp}

\begin{proof}
Statement (a) follows by Proposition 7.1.3 in \cite{AG}, and preservation of compact objects follows since $\pi$ is proper (since each $p_i: X_i \rightarrow S$ is proper) and $\delta$ is quasi-smooth (since each $X_i$ is smooth) and affine (since each $X_i$ has affine diagonal).

For statement (b), we omit the index $i$ for notational convenience.  Let us first take the maximal singular support locus, i.e. empty condition, in the setting of small categories.  Here, the convolution category $\Coh(Z)$ attains the structure of an associative (i.e. $E_1$) monoidal $\infty$-category by Section 5.2 of \cite{DAG} or by transporting the monoidal structure on endofunctor categories constructed in Section 4.7.1 of \cite{HA} across the equivalence in Theorem \ref{thm bnp}.  Its ind-completion $\IndCoh(Z)$ then automatically attains the structure of a monoidal category by Proposition 4.4 of \cite{BFN}.

Any full subcategory of a monoidal category which is closed under the monoidal product and contains the unit is a monoidal category.  The condition $\Lambda \star \Lambda \subset \Lambda$ guarantees it is closed under the product.  However, it is possible that the monoidal unit $\Delta_* \omega_X$ does not belong to the category $\IndCoh_\Lambda(Z)$ if the classical support condition imposed by $\Lambda$ does not contain the diagonal $\Delta(X) \subset Z$, and we need to address this issue.

For now, let $Y \subset X$ be any closed reduced substack defining a classical support condition, and $Y^2 := Y \times_S Y \subset Z$ the corresponding square classical support condition on $Z$.  We let $\iota_Y: \IndCoh_{Y}(X) \ra \IndCoh(X)$ denote the $t$-exact inclusion of sheaves supported on $Y$, and $\Gamma_Y$ its left $t$-exact right adjoint local cohomology functor.  By Theorem 1.3.1 in \cite{BNP convolution}, the ind-completion of the equivalence in Theorem \ref{thm bnp} restricts to an equivalence
$$\IndCoh(X \times_S X)^+ \overset{\simeq}{\longrightarrow} \cat{Fun}^{L, +}_{\QC(S)}(\QCoh(X), \QCoh(X))$$
where $\cat{Fun}^{L, +}$ indicates continuous functors which are left $t$-exact up to a shift.  We first argue that the equivalence identifies the full subcategories
$$\begin{tikzcd}
\IndCoh(X \times_S X)^+ \arrow[r, "\simeq"] & \cat{Fun}^{L,+}_{\QCoh(S)}(\QCoh(X), \QCoh(X)) \\ 
\IndCoh_{Y^2}(X \times_S X)^+ \arrow[u, hook, "\iota_{Y^2}"] \arrow[r, dotted, "\simeq"] & \cat{Fun}^{L,+}_{\QCoh(S)}(\QCoh_{Y}(X), \QCoh_{Y}(X)). \arrow[u, hook]
\end{tikzcd}$$
where the arrow on the right sends a functor $F: \QCoh_{Y}(X) \ra \QCoh_{Y}(X)$ to the composition $\iota_{Y} \circ F \circ \Gamma_{Y}$, and that the vertical functors are oplax monoidal.  In particular, the category $\IndCoh_{Y^2}(X \times_S X)^+$ on the left inherits a convolution monoidal structure.

We first need to show that the right vertical arrow is fully faithful.  It has a right adjoint given by the assignment $G \mapsto \Gamma_Y \circ G \circ \iota_Y$, and the unit of the adjunction
$$F \rightarrow \Gamma_Y \circ \iota_Y \circ F \circ \Gamma_Y \circ \iota_Y$$
is an equivalence since $\Gamma_Y \circ \iota_Y \simeq \mathrm{id}_{\QCoh_Y(X)}$ via the unit for the adjunction $(\iota_Y, \Gamma_Y)$.

Next, we show that the two subcategories coincide.  Let $\cK \in \IndCoh_{Y^2}(X \times_S X)^+$ be an integral kernel.  We need to show that the functor $F_{\cK}$ it defines can be written in the form $F_{\cK} \simeq \iota_Y \circ F' \circ \Gamma_Y$ for some $F'$.  However, as the essential image of $F_{\cK}$ is a subcategory of $\QCoh_Y(X)$, we may simply take $F'$ to be the restriction of $F_{\cK}$ to $\QCoh_Y(X)$.

Conversely, consider a functor of the form $F = \iota_Y \circ F' \circ \Gamma_Y$; it corresponds to some integral kernel $\cK \in \IndCoh(X \times_S X)^+$.  Let $j: U = X - Y \hookrightarrow X$ be the complement.  The restriction of the functor $F: \QCoh(X) \rightarrow \QCoh(X)$ to $\QCoh(U)$ is given by the integral kernel obtained by restricting $\cF$ to $U \times_S X$.  Since $\Gamma_Y$ is zero on $\QCoh(U)$, this means $\cK|_{U \times_S X} = 0$.  Now, consider $j^* \circ F: \QCoh(X) \rightarrow \QCoh(U)$; since $j^* \circ \iota_Y = 0$, this functor is zero, and it is given by the integral kernel obtained by restricting $\cF$ to $X \times_S U$, so $\cK|_{X \times_S U} = 0$.  In particular, $\cK$ is supported on $Y^2$ as desired.

Note that by construction, the functor
$$ \cat{Fun}^{L,+}_{\QCoh(S)}(\QCoh_{Y}(X), \QCoh_{Y}(X)) \hookrightarrow \cat{Fun}^{L,+}_{\QCoh(S)}(\QCoh(X), \QCoh(X))
$$
is oplax monoidal, i.e. using the unit for the adjunction $(\iota_Y, \Gamma_{Y})$.  It then follows that $\iota_{Y^2}: \IndCoh_{Y^2}(X \times_S X)^+ \hookrightarrow \IndCoh(X \times_S X)^+$ is an oplax monoidal functor of monoidal categories.  We need to show that $\IndCoh_{Y^2}(X \times_S X)$ is monoidal, and that the functor
$$\iota_{Y^2}: \IndCoh_{Y^2}(X \times_S X) \hookrightarrow \IndCoh(X \times_S X)$$
is oplax monoidal.  The structure maps for the monoidal structure of $\IndCoh_{Y^2}(X \times_S X)^+$ are $t$-bounded; thus, they restrict to functors $\Coh_{Y^2}(X \times_S X)^{\times n} \ra \IndCoh_{Y^2}(X \times_S X)^{\times m}$ for various $n, m$.  For example, the monoidal unit is given by a functor $\Vect_k = \Coh_{Y^2}(X \times_S X)^{\times 0} \ra \IndCoh_{Y^2}(X \times_S X)^+$ and the product is given by a functor $\Coh_{Y^2}(X \times_S X)^{\times 2} \ra \IndCoh(X \times_S X)^+$.  Then, ind-completion defines a monoidal structure on $\IndCoh_{Y^2}(X \times_S X)$, and the induced functor $\iota_{Y^2}: \IndCoh_{Y^2}(X \times_S X) \hookrightarrow \IndCoh(X \times_S X)$ is lax monoidal.

Now take $Y = \Lambda^{\mathrm{cl}}$, and consider the full subcategory of the monoidal category
$$\IndCoh_{\Lambda}(X \times_S X) \hookrightarrow \IndCoh_{(\Lambda^{\mathrm{cl}})^2}(X \times_S X).$$ 
Since $\Lambda \star \Lambda \subset \Lambda$, the monoidal product preserves the full subcategory.  Since $\Lambda^{\mathrm{cl}}$ is square, the subcategory contains the monoidal unit, since $\Coh_{\Lambda^{\mathrm{cl}}}(X \times_S X) \subset \IndCoh_\Lambda(Z)$.  Thus, $\IndCoh_{\Lambda}(X \times_S X)$ inherits the structure of a monoidal category.  Moreover, the inclusion $\iota_\Lambda: \IndCoh_\Lambda(X \times_S X) \hookrightarrow \IndCoh(X \times_S X)$ is the composition of an oplax monoidal functor with a monoidal functor, thus is oplax monoidal.  Its right adjoint is then lax monoidal, establishing (c).

The semi-rigidity claim in (b) follows from Theorem 3.25 in \cite{BCHN}.  Namely, via the proof of the theorem, we find that for the $!$-transform the right dual of $\cK$ is $\bD_{Z}(\cK) \otimes \omega_{Z/X}$, and the left dual is $\bD_Z(\cK)$.  Since tensoring by shifted line bundles and applying the Grothendieck-Serre duality functor do not change singular support (Proposition 4.7.3 in \cite{AG}), we see that $\Coh_\Lambda(Z) \subset \IndCoh(Z)$ is closed under taking left duals and right duals, thus is semi-rigid following (b).  If in addition the monoidal unit is compact, then the category is rigid.

Statement (d) follows by a similar argument as statement (b), i.e. via the identification on compact objects $\Coh(X_i \times_S X_j) \simeq \cat{Fun}_{\Perf(S)}(\Coh(X_i), \Coh(X_j))$ realizing $\IndCoh(X_i \times_S X_j)$ as a left module category for $\IndCoh(X_i \times_S X_i)$ under convolution in a similar way as above, and noting that the conditions on singular support guarantee that the functors restrict to the given subcategories.  

For statement (e), note that  the relative tensor product $\cat{M} \otimes_{\cat{A}} \cat{N}$ for (large) presentable $\infty$-categories is defined to be the colimit of the cosimplicial relative bar complex:
$$\begin{tikzcd}
\cdots \arrow[r, shift left=0.25ex] \arrow[r, shift right=0.25ex] \arrow[r, shift left=0.75ex] \arrow[r, shift right=0.75ex] & \cat{M} \otimes \cat{A} \otimes \cat{A} \otimes \cat{N} \arrow[r] \arrow[r, shift left] \arrow[r, shift right] & \cat{M} \otimes \cat{A} \otimes \cat{N} \arrow[r, shift left] \arrow[r, shift right] & \cat{M} \otimes \cat{N} \arrow[r] & \cat{M} \otimes_{\cat{A}} \cat{N}.
\end{tikzcd}$$
The identification of convolution categories with endofunctor categories in Theorem \ref{thm bnp} is for small categories, where such colimits are not guaranteed to exist.  However, the resulting augmented cosimplicial diagram of small categories
$$\begin{tikzcd}[column sep=small]
\cdots \arrow[r] \arrow[r, shift left] \arrow[r, shift right] &  \Coh_{\Lamda_{ij}}(Z_{ij}) \otimes \Coh_{\Lamda_{jj}}(Z_{jj}) \otimes \Coh_{\Lamda_{jk}}(Z_{jk}) \arrow[r, shift left] \arrow[r, shift right] & \Coh_{\Lamda_{ij}}(Z_{ij}) \otimes \Coh_{\Lamda_{jk}}(Z_{jk}) \arrow[r] & \Coh_{\Lamda_{ik}}(Z_{ik})
\end{tikzcd}$$
induces an augmented cosimplicial diagram of large categories, thus establishing the required factorization through the colimit:
$$\begin{tikzcd}
%\cdots \arrow[r] \arrow[r, shift left] \arrow[r, shift right] &  \IndCoh_{\Lambda_{ij}}(Z_{ij}) \otimes \IndCoh_{\Lambda_{jj}}(Z_{jj}) \otimes \IndCoh_{\Lambda_{jk}}(Z_{jk}) 
\arrow[r, shift left] \arrow[r, shift right]  & \IndCoh_{\Lambda_{ij}}(Z_{ij}) \otimes \IndCoh_{\Lambda_{jk}}(Z_{jk}) \arrow[r] & \IndCoh_{\Lambda_{ij}}(Z_{ij}) \tens{\IndCoh_{\Lambda_{jj}}(Z_{jj})} \IndCoh_{\Lambda_{jk}}(Z_{jk}) \arrow[r, dotted] & \IndCoh_{\Lambda_{ik}}(Z_{ik}).
\end{tikzcd}$$
\end{proof}

\begin{rmk}[$!$-convolution vs. $*$-convolution]\label{convolution duality}
The convolution above is defined using $!$-pullbacks; sometimes we denote it by $\star^!$.  On the other hand, one can define convolution using $*$-pullbacks:
$$\cK_1 \star^* \cK_2 := p_*\Delta^*(\cK_1 \boxtimes \cK_2)$$
with monoidal unit $\Delta_* \cO_X$.  We have an equivalence of monoidal categories
$$(\IndCoh(Z), \star^*) \longrightarrow (\IndCoh(Z), \star^!), \;\;\;\;\;\;\;\;\;\; \cK \mapsto \cK \otimes_{\cO_Z} \omega_Z.$$
Note that since $Z$ is quasi-smooth, $\omega_Z$ is a shifted line bundle.  Thus Proposition \ref{convolution first prop} applies just as well to the $*$-convolution and the difference is largely a matter of convenience.  Note that this difference is \emph{not} given by a Grothendieck-Serre duality equivalence $\Coh(Z)^{\opp} \simeq \Coh(Z)$, which would induce an equivalence of large categories $\Pro(\Coh(Z))^{\opp} \simeq \IndCoh(Z)$ due to contravariance.
\end{rmk}

\subsubsection{}\label{completed renorm}
We will also be interested in the following two set-ups, which can be gainfully interpreted as sub-cases of the above.
\begin{enumerate}[(1)]
    \item Let $f: X \rightarrow S$ be as above, $Z = X \times_S X$, and let $S_0 \subset S$ be a closed substack.  We denote by $X_0, Z_0$ the corresponding fibers in $X, Z$, and the formal completion of $X, Z$ along these closed substacks by $\wh{X}_0, \wh{Z}_0$.  These are not smooth geometric stacks.  However, there is a canonical equivalence $\IndCoh(\wh{Z}_0) \simeq \IndCoh_{Z_0}(Z)$ (see Proposition \ref{dgind stack}), and thus the former is monoidal under convolution.   The monoidal product restricts to compact objects  $\Coh_{Z_0}(Z) = \IndCoh_{Z_0}(Z) \cap \Coh(Z)$, but the monoidal unit is not compact.
    \item Take $S$ and $\wh{S}$ to be from the set-up in Example \ref{complete stack}, let $X$ and $Z$ be as above, and denote $\wh{X} = X \times_S \wh{S}$ and $\wh{Z} = Z \times_S \wh{S}$.  These stacks are not finite type, thus they do not fall within our set-up for convolution.  However, passing through the Grothendieck existence theorem (Theorem \ref{groth exist}) we may realize $\Coh(\wh{Z})$ as a small monoidal subcategory of possibly non-compact objects $\hCoh(\wh{Z}_0) \subset \IndCoh_{Z_0}(Z)$ (see Definition \ref{wcoh defn}) containing the monoidal unit.
\end{enumerate}

\medskip

\subsubsection{}
We now study the convolution functor
$$\star: \IC(X_1 \times_S X_2) \tens{\IC(X_2 \times_S X_2)} \IC(X_2 \times_S X_3) \rightarrow \IC(X_1 \times_S X_3).$$
It will turn out to always be fully faithful, and the essential image can be characterized in terms of the following singular support condition.  We note that a related albeit slightly less general result appears as Proposition 3.30 in \cite{BCHN} using methods from \cite{BNP affine}.

\begin{thm}\label{convolution composition}
Assume that $\Lambda_{12} \star \Lambda_{22} \subset \Lambda_{12}$ and $\Lambda_{22} \star \Lambda_{23} \subset \Lambda_{23}$.  Convolution induces an equivalence
$$\Phi: \IndCoh_{\Lambda_{12}}(Z_{12}) \tens{\IndCoh_{\Lambda_{22}}(Z_{22})} \IndCoh_{\Lambda_{23}}(Z_{23}) \overset{\simeq}{\longrightarrow} \IndCoh_{\Lambda_{12} \star \Lambda_{23}}(Z_{13}).$$
\end{thm}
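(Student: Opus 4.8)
The plan is to adapt the argument for the affine Grassmannian in \cite{BNP affine}, which also underlies the closely related Proposition 3.30 of \cite{BCHN}. The functor $\Phi$ is well-defined by Proposition~\ref{convolution first prop}(e), and the task is to show it is an equivalence. Since every functor in sight is smooth-local and every stack is a quotient of a quasi-smooth derived scheme by $G$, one may work smooth-locally on $S$ and assume the $X_i$ and $S$ are derived affine. Write
$$Z_{12}\times Z_{23}\ \xleftarrow{\ \delta\ }\ Z_{123}\ \xrightarrow{\ \pi\ }\ Z_{13}$$
for the defining correspondence of Definition~\ref{convolution setup}; here $\delta$ is a quasi-smooth closed embedding, being a base change of the diagonal of $X_2$ (quasi-smooth as $X_2$ is smooth), and $\pi$ is proper, since each $p_i$ is.

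For essential surjectivity, observe that the convolution functor
$$\IndCoh_{\Lambda_{12}}(Z_{12})\otimes\IndCoh_{\Lambda_{23}}(Z_{23})\ \longrightarrow\ \IndCoh_{\Lambda_{12}\star\Lambda_{23}}(Z_{13}),\qquad \cK_1\otimes\cK_2\mapsto\pi_*\delta^!(\cK_1\boxtimes\cK_2),$$
factors as $\Phi$ composed with the (essentially surjective) colimit projection onto the relative tensor product, so it suffices to see this functor is essentially surjective. Its source is $\IndCoh_{\Lambda_{12}\boxtimes\Lambda_{23}}(Z_{12}\times Z_{23})$ by the K\"unneth formula for ind-coherent sheaves over a point and the compatibility of singular support with exterior products. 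Since $\delta$ is a quasi-smooth closed embedding, $\delta^!\cO$ is a shifted line bundle, so $\cO_{Z_{123}}$ lies in the essential image of $\delta^!$; together with the generation results for singular support in \cite{AG}, this gives that $\delta^!$ sends generators of $\IndCoh_{\Lambda_{12}\boxtimes\Lambda_{23}}(Z_{12}\times Z_{23})$ to generators of $\IndCoh_{\delta^!(\Lambda_{12}\boxtimes\Lambda_{23})}(Z_{123})$, and then, as $\pi$ is proper, $\pi_*$ of these generate $\IndCoh_{\pi_*\delta^!(\Lambda_{12}\boxtimes\Lambda_{23})}(Z_{13})=\IndCoh_{\Lambda_{12}\star\Lambda_{23}}(Z_{13})$. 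Hence $\Phi$ is essentially surjective.

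For fully faithfulness, both sides are compactly generated and are module categories over $\IndCoh_{\Lambda_{11}}(Z_{11})$ and $\IndCoh_{\Lambda_{33}}(Z_{33})$ by Proposition~\ref{convolution first prop}(d), so it is enough to check that $\Phi$ is fully faithful on a set of compact generators of the source, which one takes to be the images of $\cK_1\otimes\cK_2$ with $\cK_i\in\Coh_{\Lambda_{1,1+i}}(Z_{1,1+i})$. Mapping spaces in a relative tensor product over a monoidal category are computed by the relative internal $\Hom$: using that the categories $\IndCoh_{\Lambda}(Z_{ij})$ are semi-rigid with right duals computed by Grothendieck--Serre duality up to a shifted line bundle (Proposition~\ref{convolution first prop}(b), together with Proposition 4.7.3 of \cite{AG} to see the singular support is unchanged), the internal mapping object $\underline{\Hom}(\cK_1,\cK_1')$ for the $\IndCoh_{\Lambda_{22}}(Z_{22})$-action is computed by a convolution and lies in $\IndCoh_{\Lambda_{22}}(Z_{22})$, and one obtains
$$\Map\big(\cK_1\otimes\cK_2,\ \cK_1'\otimes\cK_2'\big)\ \simeq\ \Map_{\IndCoh_{\Lambda_{23}}(Z_{23})}\big(\underline{\Hom}(\cK_1,\cK_1')\star\cK_2,\ \cK_2'\big),$$
the left-hand side being the mapping space in $\IndCoh_{\Lambda_{12}}(Z_{12})\tens{\IndCoh_{\Lambda_{22}}(Z_{22})}\IndCoh_{\Lambda_{23}}(Z_{23})$. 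On the other hand, base change along the correspondence $Z_{123}$ together with the $(\pi_*,\pi^!)$- and $(\delta_*,\delta^!)$-adjunctions identify the right-hand side with $\Map_{\IndCoh_{\Lambda_{12}\star\Lambda_{23}}(Z_{13})}(\cK_1\star\cK_2,\ \cK_1'\star\cK_2')$, and a diagram chase shows the resulting identification is the one induced by $\Phi$.

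The main obstacle I anticipate is the fully faithfulness step, for two reasons. First, $\IndCoh_{\Lambda_{22}}(Z_{22})$ is in general only semi-rigid rather than rigid --- its monoidal unit need not be compact --- so one cannot simply invoke the clean identity $\cat{M}\tens{\cat{A}}\cat{N}\simeq\cat{Fun}_{\cat{A}}(\cat{M}^\vee,\cat{N})$ available for rigid $\cat{A}$; I would either verify that semi-rigidity together with the properness of the $p_i$ suffices to run the base-change and adjunction computations above, or first establish the case of maximal singular supports, where $\IndCoh(Z_{ii})$ is rigid by Proposition~\ref{convolution first prop}(b), and then descend to arbitrary $\Lambda$ using the adjunctions $(\iota_\Lambda,\Gamma_\Lambda)$ of Proposition~\ref{nil to coh} and the hypotheses $\Lambda_{12}\star\Lambda_{22}\subset\Lambda_{12}$ and $\Lambda_{22}\star\Lambda_{23}\subset\Lambda_{23}$. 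Second, one must match the singular support of the relative tensor product with $\Lambda_{12}\star\Lambda_{23}$ exactly, not merely up to the various closure operations, which is precisely what the functoriality of singular support under the proper map $\pi$ and the quasi-smooth map $\delta$ (Proposition 7.1.3 of \cite{AG}) is designed to deliver.
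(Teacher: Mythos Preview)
Your essential surjectivity argument is essentially the same as the paper's: both invoke the functoriality results of \cite{AG} (the paper cites Theorem 1.3.13 and Proposition 7.6.4 there) for proper pushforward along $\pi$ and quasi-smooth affine pullback along $\delta$ to see that the essential image of convolution generates $\IndCoh_{\Lambda_{12}\star\Lambda_{23}}(Z_{13})$ under colimits.

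For fully faithfulness, your approach diverges from the paper's, and the step you flag as ``the main obstacle'' is indeed where the content lies. You assert that mapping spaces in $\cat{M}\otimes_{\cat{A}}\cat{N}$ are computed by an internal $\Hom$ formula, but this is precisely what needs proof: for a general (even semi-rigid) $\cat{A}$, there is no off-the-shelf description of $\Map_{\cat{M}\otimes_{\cat{A}}\cat{N}}(m\otimes n,\,m'\otimes n')$ of the shape you write, and the identity $\cat{M}\otimes_{\cat{A}}\cat{N}\simeq\cat{Fun}_{\cat{A}}(\cat{M}^\vee,\cat{N})$ you implicitly invoke requires rigidity. Your proposed workaround---prove it first for maximal singular support where $\IndCoh(Z_{22})$ is rigid, then descend via $(\iota_\Lambda,\Gamma_\Lambda)$---could be made to work, but you would still have to control how the relative tensor product interacts with those adjunctions, which is not automatic.

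The paper instead bypasses the mapping-space computation entirely via a monadicity argument. Letting $i:\cat{M}\otimes\cat{N}\to\cat{M}\otimes_{\cat{A}}\cat{N}$ be the insertion, Barr--Beck--Lurie identifies the target with $(i^Ri)\text{-mod}_{\cat{M}\otimes\cat{N}}$; to show $\Phi$ is fully faithful one then only needs the unit $i^Ri\to i^R\Phi^R\Phi i\simeq\Delta_*p^!p_*\Delta^*$ to be an equivalence of endofunctors of $\cat{M}\otimes\cat{N}$. The key input is Proposition C.2.3 of \cite{1affine}, a base-change formula for $i^Ri$ valid for \emph{semi-rigid} $\cat{A}$, which the paper unwinds into an explicit geometric base-change computation matching $i^Ri\simeq\iota_*\iota^*(-\otimes\omega_{p_1})$ with $\Delta_*p^!p_*\Delta^*\simeq\iota_*(\omega_{q_1}\otimes\iota^*(-))$. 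This route handles the semi-rigid case directly, with no reduction to the rigid situation.
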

\begin{proof}
Recall that $Z_{123} = X_1 \times_S X_2 \times_S X_3$.  To determine the essential image of the functor, we apply Theorem 1.3.13 and Proposition 7.6.4 of \cite{AG}, which characterize the essential image of proper pushforwards and affine quasi-smooth pullbacks via the functoriality of singular support defined in Definition \ref{def ss}.  In our setting, $\pi: Z_{123} \rightarrow Z_{13}$ is proper since $X_2 \rightarrow S$ is proper, $\delta: Z_{123} \rightarrow Z_{12} \times Z_{23}$ is affine since $X_2$ is a geometric stack and $\delta$ is quasi-smooth since $X_2$ is smooth.  Applying \emph{loc. cit.} we deduce that $\Phi$ generates $\IndCoh_{\Lambda_{12} \star \Lambda_{23}}(Z_{13})$ under colimits.

We now argue that the convolution functor $\Phi$ is fully faithful.  For convenience, let us  take the $*$-convolution.  We establish some shorthand; let $\cat{A} = \IndCoh_{\Lambda_{22}}(Z_{22})$ be the monoidal category and consider the module categories $\cat{M} = \IndCoh_{\Lambda_{12}}(Z_{12})$ and $\cat{N}  = \IndCoh_{\Lambda_{23}}(Z_{23})$.  Consider the insertion functor from Proposition \ref{convolution first prop}
$$i: \cat{M} \otimes \cat{N} \longrightarrow \cat{M} \otimes_{\cat{A}} \cat{N}.$$
By Chapter 1, Proposition 8.7.2 in \cite{DAG}, this functor admits a (continuous) right adjoint $i^R$, which is conservative since $i$ generates under colimits (here we use the singular support conditions and Proposition 7.6.4. and Theorem 7.8.2 of \cite{AG}).  Thus, by Barr-Beck-Lurie, insertion defines a natural equivalence 
$$\begin{tikzcd} (i^R \circ i)\dmod_{\cat{M} \otimes \cat{N}} \arrow[r, "\simeq"', "i"] & \cat{M} \otimes_{\cat{A}} \cat{N}.\end{tikzcd}$$ 
Let $\Phi^R$ be the right adjoint to the convolution functor; under this equivalence the endofunctor $\Phi^R \Phi \in \End(\cat{M} \otimes_{\cat{A}} \cat{N})$ is taken to the endofunctor $i^R \Phi^R \Phi i \simeq \Delta_* p^! p_* \Delta^* \in \End(\cat{M} \otimes \cat{N})$.  Since the forgetful functor $(i^R \circ i)\dmod_{\cat{M} \otimes \cat{N}} \rightarrow \cat{M} \otimes \cat{N}$ is conservative, to show that the unit for the adjunction $(\Phi, \Phi^R)$ is an equivalence (and thus $\Phi$ is fully faithful), we need to show that the unit map $i^R \circ i \rightarrow \Delta_* p^! p_* \Delta^*$ is an equivalence.

By Proposition C.2.3 in \cite{1affine} (the conditions of which are satisfied by any semi-rigid category by the arguments in Proposition D.2.2 and Corollary D.2.4 in \emph{op. cit.}), given a semi-rigid monoidal category $\cat{A}$, $\cat{A}$-module categories $\cat{M}, \cat{N}$, and letting $a$ denote the action functor (with right adjoint $a^R$), we have a ``base change'' natural commuting diagram of functors 
$$\begin{tikzcd}
\cat{M} \otimes \cat{A} \otimes \cat{N} \arrow[d, "{\id \otimes a}"'] & \cat{M} \otimes \cat{N} \arrow[d, "i"] \arrow[l, "{a^R \otimes \mathrm{id}}"'] \\
\cat{M} \otimes \cat{N} & \cat{M} \otimes_{\cat{A}} \cat{N}. \arrow[l, "i^R"']
\end{tikzcd}$$
Let $\iota: Z_{1223} = X_1 \times_S X_2 \times_S X_2 \times_S X_3 \rightarrow Z_{12} \times Z_{23} = X_1 \times_S X_2 \times X_2 \times_S Y_3$ denote the inclusion.  Applying this to our setting, we realize $i^R \circ i$ via the base change diagram
$$\begin{tikzcd}[column sep=-12ex]
& & X_1 \times_S X_2 \times_S X_2 \times_S X_3 \arrow[dr, "\Delta_1"] \arrow[dl, "\Delta_2"'] & & \\
& X_1 \times_S X_2 \times_S X_2 \times X_2 \times_S X_3 \arrow[dr, "\Delta_1"] \arrow[dl, "p_1"'] & & X_1 \times_S X_2 \times X_2 \times_S X_2 \times_S X_3 \arrow[dr, "p_3"] \arrow[dl, "\Delta_3"'] \\
X_1 \times_S X_2 \times X_2 \times_S X_3 & & X_1 \times_S X_2 \times X_2 \times_S X_2 \times X_2 \times_S X_3 & & X_1 \times_S X_2 \times X_2 \times_S X_3
\end{tikzcd}$$
where $\Delta_k$ means the diagonal map from the $k$th factor of $X$ to the $k$th and $(k+1)$th factor, and $p_i$ projects out the $i$th factor of $X$.  That is, 
$$i^R \circ i \simeq (\mathrm{id} \otimes a) \circ (a^R \otimes \mathrm{id}) = p_{3*} \Delta_3^* \Delta_{1*} p_1^! \simeq p_{3,*}\Delta_{1*}\Delta_2^* p_1^! \simeq \iota_* \iota^*(- \otimes \omega_{p_1}) \simeq \iota_*(\Delta_2^*\omega_{p_1} \otimes \iota^*(-)).$$
On the other hand, we perform a base change for the functor $\Delta_* p^! p_* \Delta^*$:
$$\begin{tikzcd}[column sep=-6ex]
& & X_1 \times_S X_2 \times_S X_2 \times_S X_3 \arrow[dr, "q_2"] \arrow[dl, "q_1"'] & & \\
& X_1 \times_S X_2 \times_S X_3 \arrow[dr] \arrow[dl, "\Delta"'] & & X_1 \times_S X_2 \times_S X_3 \arrow[dr, "\Delta"] \arrow[dl] \\
X_1 \times_S X_2 \times X_2 \times_S X_3 & & Z_1 \times_S X_3 & & X_1 \times_S X_2 \times X_2 \times_S X_3
\end{tikzcd}$$
where $q_i$ projects out the $i$th factor of $X$.  That is,
$$\Delta_* p^! p_* \Delta^* \simeq \Delta_* q_{2,*} q_1^! \Delta^* \simeq \iota_*(\omega_{q_1} \otimes \iota^*(-)).$$
Now, note that $q_1$ is the base change of $p_1$ across $\Delta_2$, thus $\Delta_2^*\omega_{p_1} \simeq \omega_{q_1}$, establishing the claim.
\end{proof}

In particular, the tensor product of the full categories of ind-coherent sheaves is as follows.
\begin{cor}
Let $\Lambda = \Sing_{Z_{12}} \star \Sing_{Z_{23}}$.  Convolution induces an equivalence
$$\IndCoh(Z_{12}) \tens{\IndCoh(Z_{22})} \IndCoh(Z_{23}) \overset{\simeq}{\longrightarrow} \IndCoh_{\Lambda}(Z_{13}).$$
\end{cor}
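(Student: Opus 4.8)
The plan is to specialize Theorem \ref{convolution composition} to the maximal singular support conditions. Take $\Lambda_{12} = \Sing_{Z_{12}}$, $\Lambda_{22} = \Sing_{Z_{22}}$, and $\Lambda_{23} = \Sing_{Z_{23}}$, so that by definition of the singular support filtration $\IndCoh_{\Lambda_{ij}}(Z_{ij}) = \IndCoh(Z_{ij})$ for each of the three pairs.

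First I would verify the hypotheses of Theorem \ref{convolution composition}, namely $\Lambda_{12} \star \Lambda_{22} \subset \Lambda_{12}$ and $\Lambda_{22} \star \Lambda_{23} \subset \Lambda_{23}$. These hold automatically for the maximal conditions: by Definition \ref{convolution setup} the convolution $\Lambda_{ij} \star \Lambda_{jk} := \pi_*\delta^!(\Lambda_{ij} \boxtimes \Lambda_{jk})$ is, by construction, a closed conical subset of $\Sing_{Z_{ik}}$, hence trivially contained in the maximal condition $\Sing_{Z_{ik}}$. This is exactly the remark following Proposition \ref{convolution first prop} that all of the convolution-compatibility conditions are satisfied once one takes the largest singular support conditions on the convolution spaces (the case $\Lambda_S = \bT^*_S$ of Definition \ref{def support on base}).

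With the hypotheses in hand, Theorem \ref{convolution composition} yields an equivalence
$$\Phi: \IndCoh(Z_{12}) \tens{\IndCoh(Z_{22})} \IndCoh(Z_{23}) \overset{\simeq}{\longrightarrow} \IndCoh_{\Lambda_{12} \star \Lambda_{23}}(Z_{13}),$$
and since $\Lambda_{12} \star \Lambda_{23} = \Sing_{Z_{12}} \star \Sing_{Z_{23}} = \Lambda$ by the definition of $\Lambda$ in the statement, this is precisely the asserted equivalence.

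There is no real obstacle here: all the content sits in Theorem \ref{convolution composition}, whose proof combines the Ben-Zvi--Nadler--Preygel identification of convolution categories with endofunctor categories (Theorem \ref{thm bnp}) with the functoriality of singular support under proper pushforward and quasi-smooth affine pullback from \cite{AG}, plus the base-change computation identifying the monad $i^R i$ with $\Delta_* p^! p_* \Delta^*$. The only point worth flagging is that $\Sing_{Z_{13}}$ is in general strictly larger than $\Sing_{Z_{12}} \star \Sing_{Z_{23}}$ --- as the fiberwise computation of $\Sing_{Z_{12}} \star \Sing_{Z_{23}}$ in the example above already illustrates --- so the target genuinely retains the nontrivial support condition $\Lambda$ and is not all of $\IndCoh(Z_{13})$.
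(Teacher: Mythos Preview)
Your proof is correct and is exactly the paper's approach: the corollary is stated with the preamble ``In particular,'' and is the specialization of Theorem \ref{convolution composition} to the maximal singular support conditions $\Lambda_{ij} = \Sing_{Z_{ij}}$, for which the compatibility hypotheses are automatic. Your additional remark that $\Lambda$ is generally a proper subset of $\Sing_{Z_{13}}$ is a nice clarification not made explicit in the paper at this point.
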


\subsection{Convolution of spectral affine Hecke categories}\label{convolution hecke}

\sss
We now apply the formalism from the previous section to our setting of spectral affine Hecke categories.  We first define a few variants of the Steinberg stack.
\begin{defn}\label{steinberg notation}
We consider the following variants of the Springer and Grothendieck-Springer resolutions, which are all smooth varieties (or stacks) proper over $\mf{g}$ (or $\mf{g}/G$) and vector bundles over a partial flag variety $G/P$.  Let $P \supset B$ be a parabolic subgroup with Lie algebra $\mf{p}$, Levi quotient $\nu_P: \mf{p} \rightarrow \mf{l}_P$ and unipotent radical $\mf{n}_P := \nu^{-1}(\{0\}) \subset \mf{p}$.  We denote by $\mf{z}_L := \mf{z}(\mf{l})$ the center of Levi quotient, and define $\mf{z}_P := \nu^{-1}(\mf{z}_L) \subset \mf{p}$; we emphasize that $\mf{z}_P$ is not the center of $\mf{p}$.  We define the following vector bundles over $G/P$:
$$\wt{\cN}_P := G \times^P \mf{n}_P, \;\;\;\;\;\;\;\; \wt{\cN}'_P := G \times^P \mf{z}_P, \;\;\;\;\;\;\; \wt{\mf{g}}_P := G \times^P \mf{p},$$
$$\wt{\cN}_P/G \simeq \mf{n}_P/P, \;\;\;\;\;\;\; \wt{\cN}'_P/G \simeq \mf{z}_P/P, \;\;\;\;\;\;\; \wt{\mf{g}}_P/G \simeq \mf{p}/P.$$
We may view the above stacks as $G$-equivariant vector subbundles $\cV_P \subset G/P \times \mf{g}$ over $G/P$, or equivalently as $P$-subrepresentations $V_P \subset \mf{g}$.  For another parabolic $Q$ with Lie algebra $\mf{q}$ and a $\cV_Q$ as above, we will consider the corresponding variants of the (derived) \emph{Steinberg schemes} (resp. \emph{Steinberg stack}):
$$\cZ_{V_P, W_Q} := \cV_P \utimes{\mf{g}} \cV_Q, \;\;\;\;\;\;\;\;\;\; \cZ_{V_P, W_Q}/G \simeq V_P/P \utimes{\mf{g}/G} V_Q/Q$$
and the formal completion over the nilpotent cone by
$$\wh{\cZ}_{V_P, W_Q} = \cZ_{V_P, W_Q} \times_{\mf{h}/\!/W} \wh{\mf{h}/\!/W}_{\{0\}}.$$
\end{defn}

We define the singular support conditions (using Definition \ref{def support on base}) which arise naturally from convolution in this set-up.
\begin{defn}\label{def nilss}
%we have an identification of $\Sing_Z$ as a closed subscheme 
%$$\Sing_Z \subset \pi_0(Z) \times_{BG} \mf{g}^*/G \subset (X \times Y \times \mf{g} \times \mf{g}^*)/G.$$
Note that $\bT^*(\mf{g}/G) \subset (\mf{g} \times \mf{g}^*)/G$.    We let $\cN \subset \mf{g}$ or $\cN \subset \mf{g}^*$ denote the (reduced) nilpotent cone.  We define the \emph{classical nilpotent cone} $\cN$ and the \emph{odd nilpotent cone} $\cN[1]$ to be the singular support conditions defined via Defintion \ref{def support on base} by the $G$-equivariant subsets of $\mf{g} \times \mf{g}^*$:
$$\cN := \cN \times \mf{g}^*, \;\;\;\;\;\;\;\;\;\; \cN[1] := \mf{g} \times \cN.$$
In particular, $\cN$ imposes only a support condition in the classical support directions, while $\cN[1]$ imposes only a support condition in the singular support directions.
\end{defn}

\subsubsection{}\label{section ssupp cond food}

We now show how these (singular) support conditions arise in nature.  We begin by computing the odd cotangent bundle on these Steinberg stacks.
\begin{lemma}\label{convolve ss description}\label{auto nilpotent}
Suppose $V_P, V_Q \subset \mf{g}$ are $P$-closed and $Q$-closed subspaces respectively.  Let $\cZ = \cZ_{V_P, W_Q}$, and consider the projection $p: \Sing_{\cZ} \rightarrow \cZ \rightarrow G/P \times G/Q$.  Its fiber over $(gP, hQ) \in G/P \times G/Q$ is:
$$p^{-1}(gP, hQ) = \left\{(x, y) \in  \mf{g} \times \mf{g}^* \left| \begin{array}{c} y([x, -]) = 0 \\ x \in gV_Pg^{-1} \cap hW_Qh^{-1}, \\ y \in gV_P^\perp g^{-1} \cap hW_Q^\perp h^{-1} \end{array}\right. \right\}.$$
\end{lemma}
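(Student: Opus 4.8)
The plan is to present $\cZ = \cZ_{V_P,W_Q}$ as a derived complete intersection and read its odd cotangent space directly off the cotangent complex. Since $\cV_P = G\times^P V_P$ and $\cV_Q = G\times^Q W_Q$ are vector bundles over the smooth proper varieties $G/P$ and $G/Q$, they are smooth, and $\cZ = \cV_P\times_{\mf g}\cV_Q$ is the derived zero locus inside $\cV_P\times\cV_Q$ of the difference map $\phi\colon \cV_P\times\cV_Q\to\mf g$, $(a,b)\mapsto f_P(a)-f_Q(b)$, where $f_P\colon\cV_P\to\mf g$ and $f_Q\colon\cV_Q\to\mf g$ are the tautological projections (restrictions of the second projection from $G/P\times\mf g$ and $G/Q\times\mf g$). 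In particular $\cZ$ is quasi-smooth, and base change along $\{0\}\hookrightarrow\mf g$ identifies $\bL_\cZ$ with the two-term complex $[\,\mf g^*\otimes\cO_\cZ \xrightarrow{(d\phi)^\vee}\Omega^1_{\cV_P\times\cV_Q}|_\cZ\,]$ concentrated in cohomological degrees $[-1,0]$.

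Dualizing, $\bL_\cZ^\vee$ is the complex $[\,\bT_{\cV_P\times\cV_Q}|_\cZ\xrightarrow{d\phi}\mf g\otimes\cO_\cZ\,]$ in degrees $[0,1]$, so that $H^1(\bL_\cZ^\vee) = \coker(d\phi)$ and hence, by the definition of $\Sing$, one has $\Sing_\cZ = \Spec_\cZ\Sym_\cZ\coker(d\phi)$. Consequently the fibre of $\Sing_\cZ$ over a closed point $z\in\cZ$ is the annihilator $(\im\, d\phi_z)^\perp\subseteq\mf g^*$ of the image of the differential. (Since $\cZ\to\cZ/G$ is smooth and $H^1$ of the dual cotangent complex is compatible with smooth pullback, the same computation identifies $\Sing_{\cZ/G}$ after restriction to the atlas $\cZ$, which is how the statement is used downstream.) It remains to compute $\im\, d\phi_z$ pointwise and take perpendiculars.

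Fix $z = (a,b)\in\cZ$ over $(gP,hQ)\in G/P\times G/Q$, and let $x\in\mf g$ be its common image under $f_P$ and $f_Q$; then $x\in gV_Pg^{-1}\cap hW_Qh^{-1}$, and indeed this condition exactly cuts out the fibre of $\cZ\to G/P\times G/Q$ over $(gP,hQ)$ (a point of that fibre is determined by $x$, with $V_P$-coordinate $g^{-1}xg$ and $W_Q$-coordinate $h^{-1}xh$), which accounts for the stated constraint on $x$. Precomposing $f_P$ with the smooth surjection $G\times V_P\to\cV_P$ and differentiating $(g',v)\mapsto g'v(g')^{-1}$ at a lift of $a$ gives $\im\, df_{P,a} = [\mf g,x] + gV_Pg^{-1}$, and likewise $\im\, df_{Q,b} = [\mf g,x] + hW_Qh^{-1}$; since $d\phi_z$ is $(\zeta_1,\zeta_2)\mapsto df_{P,a}(\zeta_1)-df_{Q,b}(\zeta_2)$ on $T_a\cV_P\oplus T_b\cV_Q$, its image is the sum $[\mf g,x] + gV_Pg^{-1} + hW_Qh^{-1}$. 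Taking perpendiculars inside $\mf g^*$ and using $[\mf g,x]^\perp = \{y : y([x,-])=0\}$, $(gV_Pg^{-1})^\perp = gV_P^\perp g^{-1}$, and $(hW_Qh^{-1})^\perp = hW_Q^\perp h^{-1}$, we obtain exactly the claimed description of $p^{-1}(gP,hQ)$.

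The argument is essentially bookkeeping, and the points that require care are: first, that the two-term presentation of $\bL_\cZ$ really is concentrated in degrees $[-1,0]$, so that $H^1(\bL_\cZ^\vee)$ is literally $\coker(d\phi)$ and $\Sing_\cZ$ is its relative spectrum rather than some truncation thereof; and second, that the pointwise computation of $\im\, df_{P,a}$ correctly records both the fibre directions $gV_Pg^{-1}$ and the $G$-orbit directions $[\mf g,x]$ coming from the induction $G\times^P(-)$. I do not anticipate any genuine obstacle beyond these. One may alternatively organize the whole computation through the moment-map description $\bT^*(\mf g/G)\subseteq(\mf g\times\mf g^*)/G$ and base change of $\Sing$ along the maps $\cZ/G\to V_P/P$, $V_Q/Q$, $\mf g/G$, which comes down to the same linear algebra.
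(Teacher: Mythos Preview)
Your proof is correct and follows essentially the same approach as the paper: both present $\cZ$ via the derived fiber product over $\mf g$, compute the differential of the adjoint-action map $G\times V_P\to\mf g$ at a point over $(gP,x)$ to obtain image $[\mf g,x]+gV_Pg^{-1}$ (the paper writes this as $(v,p)\mapsto g[v,x]g^{-1}+gpg^{-1}$), and then identify the fiber of $\Sing_\cZ$ with the annihilator of the sum of these images in $\mf g^*$. Your write-up is more explicit about the cotangent-complex bookkeeping, but the underlying computation is identical.
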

\begin{proof}
Let us fix a point $$\eta = (gP, hQ, x) \in \cZ(k) = (\cV_P \times_{\mf{g}} \cV_Q)(k) \subset (G/P \times G/Q \times \mf{g})(k),$$where $x \in \mf{g}(k)$ and $g, h \in G(k)$ subject to the condition $x \in gV_Pg^{-1} \cap hV_Qh^{-1}.$  
The map $G \times^P V_P \ra \mf{g}$ descends from the adjoint action map $G \times V_P \ra \mf{g}$, whose differential on tangent spaces takes the form $$\mf{g} \oplus V_P \ra \mf{g}, \quad \quad (v, p) \mapsto g[v, x]g^{-1} + gpg^{-1}.$$  
The vanishing locus for the pullback is $(\mf{g}^*)^x \cap gV_P^\perp g$.   A similar formula holds for the pullback to $\cW_Q$, and the formula follows.
\end{proof}

We observe that if either $V_P = \mf{n}_P$ or $W_Q = \mf{n}_Q$ (i.e. if either factor is ``strictly nilpotent''), then the \emph{classical} support of $\cZ_{V_P, W_Q}$ must be nilpotent, while if either $V_P = \mf{p}$ or $W_Q = \mf{q}$ (i.e. if either factor is ``Whittaker''), then it is the \emph{singular} support of $\cZ_{V_P, W_Q}$ that must be nilpotent.

\subsubsection{}

We now characterize the above support conditions as those which arise naturally via Theorem \ref{convolution composition}.  Namely, the classical nilpotence condition $\cN$ arises when convolving over the strictly nilpotent spectral affine Hecke category $\IndCoh(\wt{\cN}/G \times_{\mf{g}/G} \wt{\cN}/G)$, while the singular nilpotence condition $\cN[1]$ arises when convolving over the ``completed'' or Whittaker spectral affine Hecke category $\IndCoh(\wt{\mf{g}}/G \times_{\mf{g}/G} \wt{\mf{g}}/G)$.
\begin{prop}\label{reduction using monoidal}
Consider $\cZ = \cZ_{V_P, W_Q}$ as the convolution of $\cZ_1 = \cZ_{V_P, \mf{n}}$ and $\cZ_2 = \cZ_{\mf{n}, W_Q}$.  We have $\Sing_{\cZ_1/G} \star \Sing_{\cZ_2/G} = \cN \cap \Sing_{\cZ/G}.$  Thus, convolution induces an equivalence of categories:
$$\begin{tikzcd}
\IndCoh(\cV_P/G \times_{\mf{g}/G} \wt{\cN}/G) \tens{\IndCoh(\na{}/G \times_{\mf{g}/G} \na{}/G)} \IndCoh(\wt{\cN}/G \times_{\mf{g}/G} \cW_Q/G) \arrow[rr, "\simeq"] & & \IndCoh_{\cN}(\cV_P/G \times_{\mf{g}/G} \cW_Q/G).
\end{tikzcd}$$

Consider $\cZ = \cZ_{V_P, W_Q}$ as the convolution of $\cZ_1 = \cZ_{V_P, \mf{b}}$ and $\cZ_2 = \cZ_{\mf{b}, W_Q}$.  We have $\Sing_{\cZ_1/G} \star \Sing_{\cZ_2/G} = \cN[1] \cap \Sing_{\cZ/G}$. Thus, convolution induces an equivalence of categories:
$$\begin{tikzcd}
\IndCoh(\cV_P/G \times_{\mf{g}/G} \nc{}/G) \tens{\IndCoh(\nc{}/G \times_{\mf{g}/G} \nc{}/G)} \IndCoh(\nc{}/G \times_{\mf{g}/G} \cW_Q/G) \arrow[rr, "\simeq"] & & \IndCoh_{\cN[1]}(\cV_P/G \times_{\mf{g}/G} \cW_Q/G).
\end{tikzcd}$$
\end{prop}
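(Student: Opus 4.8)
The plan is to derive both equivalences from Theorem \ref{convolution composition}, applied with all three singular support conditions taken to be maximal; the only genuine content is then the identification of the convolved singular support with $\cN\cap\Sing_{\cZ/G}$ in the first case and with $\cN[1]\cap\Sing_{\cZ/G}$ in the second.

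For the first equivalence, I would take $X_1=\cV_P/G$, $X_2=\wt{\cN}/G=\na{}/G$, $X_3=\cW_Q/G$ over $S=\mf{g}/G$ in the setup of Definition \ref{convolution setup}; these stacks are smooth, perfect, and proper over $S$, each being the quotient by the reductive group $G$ of a vector bundle over a partial flag variety that maps projectively to $\mf{g}$. Taking $\Lambda_{12},\Lambda_{22},\Lambda_{23}$ to be the full odd cotangent bundles, the hypotheses $\Lambda_{12}\star\Lambda_{22}\subseteq\Lambda_{12}$ and $\Lambda_{22}\star\Lambda_{23}\subseteq\Lambda_{23}$ of Theorem \ref{convolution composition} hold vacuously, so convolution is an equivalence onto $\IndCoh_{\Sing_{\cZ_1/G}\star\Sing_{\cZ_2/G}}(\cZ/G)$, where $Z_{12}=X_1\utimes{\mf{g}/G}X_2$, $Z_{22}$, $Z_{23}$, $Z_{13}$ recover $\cZ_1/G$, $\na{}/G\utimes{\mf{g}/G}\na{}/G$, $\cZ_2/G$, and $\cZ/G$ respectively. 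The second equivalence is obtained identically with $X_2=\nc{}/G=\wt{\mf{g}}/G$ and $\cZ_1=\cZ_{V_P,\mf{b}}$, $\cZ_2=\cZ_{\mf{b},W_Q}$.

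It remains to compute the convolved singular support. I would combine the fibrewise description of the odd cotangent bundle of a Steinberg stack from Lemma \ref{convolve ss description} with the computation of the convolution of full odd cotangent bundles over a smooth base carried out in the example following Definition \ref{convolution setup}. After fixing an invariant nondegenerate form to identify $\mf{g}\simeq\mf{g}^*$, so that $\mf{b}^\perp\simeq\mf{n}_{\mf{b}}$ and $\mf{n}_{\mf{b}}^\perp\simeq\mf{b}$, that computation shows that over a point $(gP,hQ,x)$ of $\cZ/G$ the fibre of $\Sing_{\cZ_1/G}\star\Sing_{\cZ_2/G}$ is the union, over all Borels $\mf{b}'=\on{Ad}(g')\mf{b}$ with $x\in\mf{n}_{\mf{b}'}$ (in the first case) or $x\in\mf{b}'$ (in the second), of $(\mf{g}^*)^x\cap\on{Ad}(g)V_P^\perp\cap\on{Ad}(h)W_Q^\perp\cap(\mf{b}')^\perp$, where $(\mf{b}')^\perp$ is $\on{Ad}(g')\mf{b}$ in the first case and $\on{Ad}(g')\mf{n}$ in the second. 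The inclusion $\subseteq$ into $\cN\cap\Sing_{\cZ/G}$ (resp.\ $\cN[1]\cap\Sing_{\cZ/G}$) is then immediate: in the first case the middle factor $\wt{\cN}/G$ forces $x\in\mf{n}_{\mf{b}'}\subseteq\cN$, and in the second the middle factor $\wt{\mf{g}}/G$ forces the singular codirection to lie in $\on{Ad}(g')\mf{n}\subseteq\cN$; moreover the convolution always lies in $\Sing_{\cZ/G}$, being a pushforward along the proper projection $Z_{123}\to\cZ/G$.

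The step I expect to require the most care is the reverse inclusion, i.e.\ surjectivity of the convolution correspondence onto $\cN\cap\Sing_{\cZ/G}$ (resp.\ $\cN[1]\cap\Sing_{\cZ/G}$). One is given $(x,y)$ with $y\in(\mf{g}^*)^x\cap\on{Ad}(g)V_P^\perp\cap\on{Ad}(h)W_Q^\perp$ (equivalently $[x,y]=0$ under the identification above) and $x$ nilpotent (resp.\ $y$ nilpotent), and must produce a single Borel $\mf{b}'$ with $x\in\mf{n}_{\mf{b}'}$, $y\in\mf{b}'$ (resp.\ $x\in\mf{b}'$, $y\in\mf{n}_{\mf{b}'}$). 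I would argue by taking the Jordan decomposition $y=y_s+y_n$ in the first case (resp.\ $x=x_s+x_n$ in the second): the centralizer $\mf{l}:=\mf{z}_{\mf{g}}(y_s)$ (resp.\ $\mf{z}_{\mf{g}}(x_s)$) is a Levi subalgebra containing the two commuting nilpotents $x$ and $y_n$ (resp.\ $x_n$ and $y$); since the abelian subalgebra they span consists of ad-nilpotent elements, by a standard application of Engel's theorem it lies in the nilradical $\mf{n}_{\mf{b}_{\mf{l}}}$ of some Borel $\mf{b}_{\mf{l}}$ of $\mf{l}$, and then $\mf{b}':=\mf{b}_{\mf{l}}\oplus\mf{n}_{\mf{p}}$, for $\mf{p}$ a parabolic of $\mf{g}$ with Levi $\mf{l}$, is a Borel of $\mf{g}$ for which one checks directly that $x\in\mf{n}_{\mf{b}'}$ and $y\in\mf{b}'$ (resp.\ $x\in\mf{b}'$ and $y\in\mf{n}_{\mf{b}'}$), using $y_s\in\mf{z}(\mf{l})\subseteq\mf{b}_{\mf{l}}\subseteq\mf{b}'$ (resp.\ $x_s\in\mf{z}(\mf{l})\subseteq\mf{b}_{\mf{l}}\subseteq\mf{b}'$). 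This $\mf{b}'$ visibly realizes the prescribed codirection in the fibrewise formula, and the zero codirection over any such $(gP,hQ,x)$ with $x$ (resp.\ $y$) nilpotent is always in the union, so the classical supports also agree exactly; this completes the identification of singular supports and hence the proposition.
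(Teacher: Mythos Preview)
Your proposal is correct and follows essentially the same route as the paper: reduce to Theorem \ref{convolution composition} with maximal singular support conditions, then compute the convolved singular support fibrewise using Lemma \ref{convolve ss description} and the example after Definition \ref{convolution setup}, with the key input being that a commuting pair $(x,y)$ lies in a common Borel (so that a nilpotent member automatically lies in its nilradical). The only difference is cosmetic: the paper invokes this Borel-containment fact as known, while you unpack it via Jordan decomposition and Engel's theorem; your argument is sound, though the one-line version---the span $kx+ky$ is abelian, hence solvable, hence contained in a Borel---would suffice.
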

\begin{proof}
Let us first consider the case $\cZ_1 = \cZ_{V_P, \mf{n}}$ and $\cZ_2 = \cZ_{\mf{n}, V_Q}$.  Fix a $k$-point $\eta = (gP, hQ, x) \in (G/P \times G/Q \times \mf{g})(k)$.  The fiber over this point in the convolution of supports is
$$(\Sing_{\cZ_1} \star \Sing_{\cZ_2})_\eta = \left(\bigcup_{\substack{\mf{b}' \in G/B \\ x \in \mf{n}'}}  (\mf{g}^*)^x \cap gV_P^\perp g^{-1} \cap hV_Q^\perp h^{-1} \cap (\mf{n}')^\perp \right)$$
$$\simeq \left(\bigcup_{\substack{\mf{b}' \in G/B \\ x \in \mf{n}'}}  \mf{g}^x \cap  gV_P^\perp g^{-1} \cap hV_Q^\perp h^{-1} \cap \mf{b}' \right)$$
where the equivalence is obtained by passing through the Killing form.  First, note that unless $x$ is nilpotent, this set is empty, thus we have a nilpotent classical support condition.  For the singular support, we wish to show that for fixed $\eta$ with $x$ nilpotent, the above set is equal to the full odd cotangent fibers from Lemma \ref{convolve ss description}.  This follows from the fact that for any commuting elements $[x, y] = 0$, there is a Borel containing them.

This completes the proof of the first statement.  The second statement is analogous, except that we replace the condition that $x \in \mf{n}'$ with the condition that $x \in \mf{b}'$, removing the nilpotence condition for $x$, and we require that $y \in (\mf{b}')^\perp \simeq \mf{n}'$, imposing a nilpotence condition on the singular support instead.
\end{proof}

\subsection{Algebra objects in convolution categories}\label{sec algebra obj}

\sss Recall the set-up of Definition \ref{convolution setup}.  Let $X, Y, S$ be smooth geometric stacks, and suppose we have a sequence
$$\begin{tikzcd}
X \arrow[r, "\phi"'] \arrow[rr, bend left, "f_X"] & Y \arrow[r, "f_Y"'] & S
\end{tikzcd}$$
where $\phi$, $f_Y$ and $f_X$ are proper.  Letting ${}^r\phi := \mathrm{id} \times \phi: X \times_S X \ra X \times_S Y$, this defines convolution spaces related by the Cartesian square
$$\begin{tikzcd}
Z_Y := X \times_Y X \arrow[r, "\iota", hook] \arrow[d] & Z_S := X \times_S X \arrow[d, "{}^r\phi"] \\
X \arrow[r, hook] & X \times_S Y
\end{tikzcd}$$  
where we note that $\iota$ is a closed immersion since $Y$ is separated over $S$.  We fix a closed substack $S_0 \subset S$, and denote by $X_0, Y_0, Z_{S,0}, Z_{Y,0}$ the corresponding fibers.  Equivalently, we can take the singular support condition $\Lambda_0 = \bT^*_S \times_S S_0$ (as in Definition \ref{def support on base}).  For brevity, we denote the local cohomology functor with respect to any of these supports by $\Gamma_{\Lambda_0}$.  Note that by base change the functor $\Gamma_{\Lambda_0}$ commutes with all pullback and pushforward functors for maps over $S$.

\medskip

\subsubsection{}
We now define certain canonical (co)algebra objects in the convolution categories $\IndCoh(Z_S)$ and $\IndCoh(Z_Y)$ one can assign to the above set-up.
\begin{prop}\label{def coalgebra O}
Under the integral kernel $\mathbb{F}$ correspondence of Theorem \ref{thm bnp}, we have natural equivalences
$$\bF({\omega_{Z_Y}}) \simeq \phi^!\phi_* \in \cat{End}_{\Perf(Y)}(\Coh(X)), \;\;\;\;\;\;\;\;\;\; \bF({\iota_* \omega_{Z_Y}}) \simeq \phi^! \phi_* \in \cat{End}_{\Perf(S)}(\Coh(X))$$
as monads or algebra objects in a monoidal category. (resp. $\bF({\cO_{Z_Y}}) \simeq \phi^* \phi_*$ and $\bF({\iota_* \cO_{\omega_{Z_Y}}}) \simeq \phi^* \phi_*$ as comonads or coalgebra objects).  Furthermore, the possibly non-compact object $\Gamma_{\Lambda_0} \omega_{Z_Y}$ is an algebra object in $\IndCoh_{Z_{Y,0}}(Z_Y)$.%, and likewise $\Gamma_{\Lambda_0} \cO_{Z_Y}$ is a coalgebra object under $*$-convolution.
\end{prop}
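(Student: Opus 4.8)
The plan is to obtain this formally from the first part of the Proposition together with Proposition \ref{convolution first prop}(c). The first part already exhibits $\omega_{Z_Y}$ as an $E_1$-algebra object of the convolution monoidal category $(\IndCoh(Z_Y), \star^!)$, since under $\bF$ it corresponds to the composition monad $\phi^!\phi_* \in \cat{End}_{\Perf(Y)}(\Coh(X))$; here $Z_Y = X\times_Y X$ is the convolution space obtained by instantiating Definition \ref{convolution setup} with base $Y$ and single object $X$ (smooth, geometric, and proper over $Y$). Since a lax monoidal functor carries $E_1$-algebra objects to $E_1$-algebra objects, it then suffices to promote the local cohomology functor to a lax monoidal functor $\Gamma_{\Lambda_0}\colon (\IndCoh(Z_Y), \star^!) \to (\IndCoh_{Z_{Y,0}}(Z_Y), \star^!)$.

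This is exactly what Proposition \ref{convolution first prop}(c) provides, once we check its hypotheses for the support condition $\Lambda_0$ on $Z_Y$. First, $\Lambda_0 \star \Lambda_0 \subset \Lambda_0$: the classical support $Z_{Y,0}$ is the preimage in $Z_Y$ of the closed substack $S_0 \subset S$, and the correspondence $Z_Y \times Z_Y \leftarrow X\times_Y X\times_Y X \to Z_Y$ computing $\star^!$ lies over $S$, so the convolution of two objects set-theoretically supported on $Z_{Y,0}$ is again supported there; as $\Lambda_0$ imposes no constraint in the singular directions (in the sense of Definition \ref{def support on base}, it is all of $\bT^*$ over $Z_{Y,0}$), this is all that is required. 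Second, $Z_{Y,0}$ is ``square over $Y$'': writing $X_0 = X\times_S S_0 = X\times_Y Y_0$ and using that $Y_0 \times_Y Y_0 \simeq Y_0$ since $Y_0 \hookrightarrow Y$ is a closed immersion, one gets $X_0 \times_Y X_0 \simeq (X\times_Y X)\times_Y Y_0 \simeq Z_{Y,0}$. Granting these, Proposition \ref{convolution first prop}(c) equips $\IndCoh_{Z_{Y,0}}(Z_Y)$ with a convolution monoidal structure for which the left adjoint $\iota_{\Lambda_0}$ is oplax monoidal and $\Gamma_{\Lambda_0}$ is lax monoidal, and applying $\Gamma_{\Lambda_0}$ to the $E_1$-algebra $\omega_{Z_Y}$ makes $\Gamma_{\Lambda_0}\omega_{Z_Y}$ an $E_1$-algebra object of $\IndCoh_{Z_{Y,0}}(Z_Y)$. (The same argument with the comonad $\phi^*\phi_*$, the object $\cO_{Z_Y}$, and the $\star^*$-convolution of Remark \ref{convolution duality} gives the coalgebra version if desired.)

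I do not anticipate a genuine obstacle; the argument is an application of the abstract machinery of Proposition \ref{convolution first prop}. The one point that needs a little care is notational: the functor $\Gamma_{\Lambda_0}$ on $Z_Y$ is a priori the local cohomology for the classical support $Z_{Y,0}$, and must be matched with the singular support condition $\Lambda_0 = \bT^*_S \times_S S_0$ of Definition \ref{def support on base} — these agree because that condition contains the zero section, so $\IndCoh_{\Lambda_0}(Z_Y) = \IndCoh_{Z_{Y,0}}(Z_Y)$ and the two $\Gamma_{\Lambda_0}$'s coincide. Finally, $Z_Y$ is quasi-smooth, so $\omega_{Z_Y}$ is a shifted line bundle and in particular compact, whereas $\Gamma_{\Lambda_0}$ — being local cohomology — does not preserve compactness; this accounts for the ``possibly non-compact'' qualifier in the statement.
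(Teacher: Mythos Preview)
Your proposal addresses only the final ``Furthermore'' assertion, taking the earlier identifications $\bF(\omega_{Z_Y})\simeq\phi^!\phi_*$ and $\bF(\iota_*\omega_{Z_Y})\simeq\phi^!\phi_*$ as already established. For that last assertion your argument is correct and follows the paper's approach exactly: both invoke the lax monoidality of $\Gamma_{\Lambda_0}$ from Proposition~\ref{convolution first prop}(c), and you helpfully supply the hypothesis checks (closure under convolution and squareness of the classical support) that the paper leaves implicit.

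The first identifications, however, are part of the statement and do require proof. The paper obtains $\bF(\omega_{Z_Y})\simeq\phi^!\phi_*$ by unwinding the $!$-integral transform: convolution with $\omega_{Z_Y}$ is the pull--push $\pi_{1*}\pi_2^!$ along $X\leftarrow X\times_Y X\to X$, which by base change for the Cartesian square with $\phi\colon X\to Y$ on both sides is $\phi^!\phi_*$. The second identification then follows because $\iota_*\colon\Coh(Z_Y)\to\Coh(Z_S)$ is monoidal (the naturality clause of Theorem~\ref{thm bnp}). These are short arguments, but without them you have not exhibited the algebra structure on $\omega_{Z_Y}$ that your argument for the last step takes as input.

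One small remark on your squareness check: the identity $Y_0\times_Y Y_0\simeq Y_0$ need not hold for the derived fiber product when $Y_0\hookrightarrow Y$ is not flat. This does not matter here, since the condition in Proposition~\ref{convolution first prop}(b) concerns only the underlying classical support $\Lambda^{\mathrm{cl}}$, for which the set-theoretic identity suffices.
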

\begin{proof}
Convolution with $\omega_{Z_Y}$ is equivalent to the pull-push (from the left to right) along the diagram 
$$X \longleftarrow Z_Y \times X = X \times_Y X \times X \longleftarrow X \times_Y X \longrightarrow X$$
i.e. is given by the functor $\pi_{1*}\pi_2^! \simeq \phi^!\phi_*$ by base change along the diagram
$$\begin{tikzcd}
Z_Y = X \times_Y X \arrow[r, "\pi_2"] \arrow[d, "\pi_1"] & X \arrow[d, "\phi"] \\
X \arrow[r, "\phi"] & Y.
\end{tikzcd}$$
This establishes the first statement.  The second follows since $\iota_*$ is monoidal, and the third statement follows by Proposition \ref{convolution first prop}(c).
\end{proof}

\begin{rmk}\label{fingernails}
A priori, we could have considered a singular support condition in place of the support condition $S_0 \subset S$.  However, since $\omega_{Z_Y}$ is perfect, the only relevant direction in which one can restrict the support is classical.  For any $\Lambda_S \subset \bT^*_S$ (as in Definition \ref{def support on base}), in the above and following we can replace $\Lambda_S$ with the classical support condition $S_0 := \Lambda_S \cap (S \times \{0\}) \subset \bT^*_S$ (or singular support $\bT^*_S \times_S S_0$).
\end{rmk}

\subsubsection{}\label{algebra explicit}
Let us describe some of the structure of this (co)algebra object-wise.  We assume $S_0 = S$; otherwise one can apply $\Gamma_{\Lambda_0}$ to obtain analogous statements.
\begin{enumerate}[(1)]
\item Let $\Delta_{X/Y}: X \rightarrow Z_Y = X \times_Y X$ be the relative diagonal.  The unit for the algebra (resp. counit for the coalgebra) arises by applying $\iota_*$ to the unit for the adjunction $(\Delta_*, \Delta^!)$ evaluated at $\omega_{Z_Y}$ (resp. the counit for the adjunction $(\Delta^*, \Delta_*)$ evaluated at $\cO_{Z_Y}$).
\item Let $p: X \times_Y X \times_Y X \rightarrow Z_Y = X \times_Y X$ be the projection off the middle factor.  The multiplication for the algebra (resp. comultiplication for the coalgebra) arises by applying $\iota_*$ to the counit for the adjunction $(p_*, p^!)$ evaluated at $\omega_{Z_Y}$ (resp. the unit for the adjunction $(p^*, p_*)$ evaluated at $\cO_{Z_Y}$).
\item We define the $n$-fold convolution space by
$$Z_Y^{(n)} := \overbrace{X \times_Y \cdots \times_Y X}^{n+1}, \;\;\;\;\;\;\; Z^{(0)}_Y = X, \;\;\;\;\;\;\; Z^{(-1)}_Y = Y.$$
This indexing is chosen such that the simplicial category $\IndCoh(Z_Y^{(\bullet)})$ presents $\IndCoh(Z_Y)$ as a monoidal category, i.e. the convolution of a sheaf on $Z_Y^{(n)}$ and $Z_Y^{(m)}$ gives a sheaf on $Z_Y^{(n+m)}$. %in this indexing we have $Z_Y^{(n)} \times_Z Z_Y^{(m)} = Z_Y^{(n+m-1)}$, $Z_Y^{(n)} \times_X Z_Y^{(m)} = Z_Y^{(n+m)}$, and $Z_Y^{(n)} \times_Y Z_Y^{(m)} = Z_Y^{(n+m+1)}$.  
Let $p: Z_Y \rightarrow Y$ denote the projection and $q: Z_Y^{(n)} \rightarrow Z_Y = X \times_Y X$ be the projection away from the middle factors.  We have identifications of the $n$-fold convolutions
$$\overbrace{\omega_{Z_Y} \star^! \cdots \star^! \omega_{Z_Y}}^{n} \simeq q_* \omega_{Z_Y^{(n)}} \simeq p^!(\phi_*\phi^!)^{n-1} \omega_Y,$$
$$\overbrace{\cO_{Z_Y} \star^* \cdots \star^* \cO_{Z_Y}}^{n}  \simeq q_* \cO_{Z_Y^{(n)}} \simeq p^*(\phi_*\phi^*)^{n-1} \cO_Y,$$
where the rightmost equivalences arise via iterated base change.  The structure maps are induced by the counit (resp. unit) for the adjunction $(\phi_*, \phi^!)$ (resp. $(\phi^*, \phi_*)$).
\end{enumerate}

\medskip

\subsubsection{}
Let $\Lambda_S$ be as in Remark \ref{fingernails}.  Next, we aim to express $\IndCoh(X \times_S Y)$ in terms of $\IndCoh(X \times_S X)$ via the above (co)algebra objects.  We consider the functors 
$$
{}^r\phi_*:  \begin{tikzcd}
\IndCoh_{\Lambda_S}(X \times_S X) \arrow[r, shift left] & \arrow[l, shift left] \IndCoh_{\Lambda_S}(X \times_S Y) \end{tikzcd} :{}^r\phi^!$$
and likewise the pair $({}^r\phi^*, {}^r\phi_*)$.  Consider the monad ${}^r\phi^!\,{}^r\phi_*$ and the comonad ${}^r\phi^*\,{}^r\phi_*$ on $\IndCoh(X \times_S X)$.  We leave it to the reader to verify that the functors above preserve the singular support condition $\Lambda_S$.  We first establish monadicity.
\begin{prop}\label{pre spectral monadic}
If $\phi: X \rightarrow Y$ is surjective on geometric points, then the functor ${}^r\phi^!$ is monadic and ${}^r\phi^*$ is comonadic.
\end{prop}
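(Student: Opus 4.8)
The plan is to reduce both assertions to a single conservativity statement and then quote the Barr--Beck--Lurie theorem. The first step is to record that ${}^r\phi = \mathrm{id}_X \times \phi$ is the base change of $\phi\colon X \to Y$ along the projection $X \times_S Y \to Y$, since $(X \times_S Y)\times_Y X \simeq X \times_S X$ compatibly with the maps down to $X \times_S Y$. Hence ${}^r\phi$ inherits from $\phi$ the properties of being proper, of being surjective on geometric points, and of being quasi-smooth (the last because $\phi$, a map of smooth stacks, is quasi-smooth). Properness makes ${}^r\phi_*$ continuous (and compact-object-preserving), hence its right adjoint ${}^r\phi^!$ is continuous; quasi-smoothness makes ${}^r\phi$ eventually coconnective, so the $*$-pullback ${}^r\phi^*$ exists as the left adjoint of ${}^r\phi_*$, and there is a canonical identification ${}^r\phi^!(-)\simeq {}^r\phi^*(-)\otimes_{\cO}\omega_{{}^r\phi}$ with $\omega_{{}^r\phi}$ an invertible shifted line bundle. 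Since tensoring with $\omega_{{}^r\phi}$ is an autoequivalence preserving the singular support condition $\Lambda_S$, and since ${}^r\phi_*$, ${}^r\phi^!$, ${}^r\phi^*$ all preserve $\Lambda_S$ (as remarked before the statement), this structure restricts to the subcategories $\IndCoh_{\Lambda_S}(-)$; in particular ${}^r\phi^*$ there preserves all colimits (being a left adjoint) and all limits (being, up to the invertible twist, the limit-preserving right adjoint ${}^r\phi^!$).

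Granting this, Barr--Beck--Lurie applies directly: ${}^r\phi^!$ is monadic as soon as it is conservative (it is continuous with a left adjoint), and dually ${}^r\phi^*$ is comonadic as soon as it is conservative (being limit-preserving it preserves the totalizations relevant to the comonadic comparison). Moreover ${}^r\phi^*$ is conservative if and only if ${}^r\phi^!$ is, again by the twist. So the whole proposition follows once I show ${}^r\phi^!$ is conservative on $\IndCoh_{\Lambda_S}(X \times_S Y)$; and since this is a full subcategory of $\IndCoh(X \times_S Y)$ on which ${}^r\phi^!$ is the restriction of the ambient $!$-pullback, it suffices to prove that ${}^r\phi^!\colon \IndCoh(X\times_S Y) \to \IndCoh(X \times_S X)$ is conservative.

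For the conservativity, I would argue by base change to geometric points, i.e.\ use the fact that a proper surjective morphism is of universal $!$-descent for ind-coherent sheaves. Concretely: if ${}^r\phi^!\cF \simeq 0$, then for a geometric point $g\colon \Spec\bar k \to X\times_S Y$, forming the Cartesian square with $f_g\colon (X\times_S X)_g \to \Spec\bar k$ and $\tilde g\colon (X\times_S X)_g \to X\times_S X$, properness of ${}^r\phi$ gives base change $f_g^!\, g^!\cF \simeq \tilde g^!\, {}^r\phi^!\cF \simeq 0$. Surjectivity of ${}^r\phi$ makes $(X\times_S X)_g$ a nonempty proper $\bar k$-scheme, so $\omega_{(X\times_S X)_g}\neq 0$ and $f_g^!$, which carries a complex $V$ of $\bar k$-vector spaces to $\omega_{(X\times_S X)_g}\otimes_{\bar k}V$, is conservative; hence $g^!\cF \simeq 0$. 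As $g$ ranges over all geometric points and $!$-restriction to geometric points is jointly conservative on ind-coherent sheaves over a locally Noetherian laft stack, we conclude $\cF \simeq 0$.

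The essential content, and the only place where the surjectivity hypothesis enters, is this conservativity of ${}^r\phi^!$, via the nonemptiness of the geometric fibers of ${}^r\phi$; the remainder is formal manipulation of adjunctions, base change, and Barr--Beck--Lurie. The one point that is not purely formal is the comonadic half: because $*$-pullback is only a left adjoint, comonadicity does not follow automatically from conservativity, and the argument genuinely relies on the identity ${}^r\phi^!\simeq {}^r\phi^*(-)\otimes\omega_{{}^r\phi}$, valid since ${}^r\phi$ is quasi-smooth, to supply the missing limit-preservation.
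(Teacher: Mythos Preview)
Your proof is correct and follows essentially the same approach as the paper: both arguments reduce to Barr--Beck--Lurie via the observation that ${}^r\phi^*$ and ${}^r\phi^!$ differ by a twist (hence each preserves all limits and colimits), together with conservativity of the pullback coming from surjectivity. The only difference is that the paper dispatches conservativity by citing a reference (Proposition 6.2.2 in \cite{DAG}), whereas you spell out a direct argument via base change to geometric points.
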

\begin{proof}
Since $\IndCoh_{\Lambda_S}(X \times_S X)$ is presentable it has all small limits and colimits (Definition 5.5.0.18 and Proposition 5.5.2.4 in \cite{HTT}), and since ${}^r\phi^*$ are ${}^r\phi^!$ are twists of each other, they are both left and right adjoints, thus they commute with limits and colimits.  They are conservative due to surjectivity (Proposition 6.2.2 in \cite{DAG}), thus the claim follows by Lurie-Barr-Beck (Theorem 4.7.3.5 in \cite{HA}).
\end{proof}

Given monadicity, we may now identify the $\IndCoh_{\Lambda_S}(X \times_S X)$-module category $\IndCoh_{\Lambda_S}(X \times_S Y)$ as a category of modules for a natural algebra object in $\IndCoh_{\Lambda_S}(X \times_S X)$.
\begin{prop}\label{spectral monadic}
Assume $\phi$ is surjective on geometric points.  There is a natural identification as algebras of the monad ${}^r\phi^!\,{}^r\phi_*$ on the category $\IndCoh_{\Lambda_S}(Z_S)$ with right $!$-convolution by $\Gamma_{\Lambda_0} \iota_*\omega_{Z_Y}$.  Thus, by Barr-Beck-Lurie, we have an equivalence of left $\IndCoh_{\Lambda_S}(Z_S)$-module categories:
$$\begin{tikzcd}
\IndCoh(X \times_S Y) \arrow[d, "{}^r\phi^!", shift left] & \arrow[l, "\simeq"'] \iota_*\omega_{Z_Y}\dmod_{\IC(Z_S)} \arrow[dl, shift right] \\
\IndCoh(X \times_S X). \arrow[u, "{}^r\phi_*", shift left] \arrow[ur, shift right] &
\end{tikzcd}$$
Similarly, there is an identification as coalgebras of the comonad ${}^r\phi^*\,{}^r\phi_*$ with right convolution by $\Gamma_{\Lambda_0} \iota_* \cO_{Z_Y}$, and an equivalence of left $\IndCoh(Z_S)$-module categories:
$$\begin{tikzcd}
\IndCoh(X \times_S Y) \arrow[d, "{}^r\phi^*"', shift right] & \arrow[l, "\simeq"'] \iota_*\cO_{Z_Y}\dcomod_{\IC(Z_S)} \arrow[dl, shift right] \\
\IndCoh(X \times_S X). \arrow[u, "{}^r\phi_*"', shift right] \arrow[ur, shift right] &
\end{tikzcd}$$
\end{prop}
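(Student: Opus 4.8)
The plan is: (1) identify the monad ${}^r\phi^!\,{}^r\phi_*$, as an algebra, with right $!$-convolution by $\Gamma_{\Lambda_0}\iota_*\omega_{Z_Y}$; (2) feed this into the monadicity of Proposition~\ref{pre spectral monadic} via Barr--Beck--Lurie, while keeping track of the left module structure; and (3) run the dual argument for the comonad and the coalgebra $\iota_*\cO_{Z_Y}$. Throughout I suppress the singular support condition $\Lambda_S$ from the notation, restoring it at the end.

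For (1), the first point is that $T:={}^r\phi^!\,{}^r\phi_*$ is a left-$\IndCoh(Z_S)$-linear endofunctor of $\IndCoh(Z_S)$, where $\IndCoh(Z_S)$ acts on itself by left $!$-convolution. Indeed ${}^r\phi_*\colon\IndCoh(Z_S)\to\IndCoh(X\times_S Y)$ is strictly left-linear: this is a projection-formula identity ${}^r\phi_*(\cK\star^!\cL)\simeq\cK\star^!{}^r\phi_*\cL$, which follows from base change applied to the square expressing that the action of $\cK$ only touches the first $X$-factor while $\phi$ acts on the last factor; its right adjoint ${}^r\phi^!$ is then lax left-linear, so $T$ is left-linear. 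A left-$\cC$-linear endofunctor of a monoidal category $\cC$ acting on itself on the left is canonically right convolution by the object $T(\mathbf 1)$, with $T$'s monad structure inducing an algebra structure on $T(\mathbf 1)$. Now $T(\mathbf 1)={}^r\phi^!\,{}^r\phi_*(\Delta_{X/S,*}\omega_X)$; since ${}^r\phi\circ\Delta_{X/S}$ is the graph $\Gamma_\phi\colon X\hookrightarrow X\times_S Y$, this equals ${}^r\phi^!(\Gamma_{\phi,*}\omega_X)$, and base change along the Cartesian square relating $\iota$, ${}^r\phi$ and $\Gamma_\phi$ displayed in Section~\ref{sec algebra obj} identifies it with $\iota_*\omega_{Z_Y}$. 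Alternatively, one transports the whole picture through the integral-transform equivalence $\bF$ of Theorem~\ref{thm bnp}: there ${}^r\phi_*$ and ${}^r\phi^!$ become post-composition with $\phi_*$ and $\phi^!$, so $T$ becomes pre-composition with the monad $\phi^!\phi_*$, which by Proposition~\ref{def coalgebra O} is exactly $\bF(\iota_*\omega_{Z_Y})$ with its algebra structure. Ind-completion is harmless because $X,Y,S$ smooth forces $\phi$, and hence $Z_Y=X\times_Y X$, quasi-smooth, so $\omega_{Z_Y}$ is a shifted line bundle and $\iota_*\omega_{Z_Y}\in\Coh(Z_S)$.

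For (2): Proposition~\ref{pre spectral monadic} gives that ${}^r\phi^!$ is monadic, so Barr--Beck--Lurie (Theorem 4.7.3.5 in \cite{HA}) yields $\IndCoh(X\times_S Y)\simeq T\dmod_{\IndCoh(Z_S)}=\iota_*\omega_{Z_Y}\dmod_{\IndCoh(Z_S)}$, the right side denoting modules for the monad of right $!$-convolution by $\iota_*\omega_{Z_Y}$. Since ${}^r\phi_*$ is strictly left-$\IndCoh(Z_S)$-linear and ${}^r\phi^!$ lax left-linear, the monadic adjunction lives in the $2$-category of left $\IndCoh(Z_S)$-module categories, so Barr--Beck--Lurie applies internally there and the equivalence is one of left $\IndCoh(Z_S)$-module categories; on the target, $\iota_*\omega_{Z_Y}\dmod$ carries its evident left action by left convolution, which commutes with the right-module structure. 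To incorporate a condition $\Lambda_S$ coming from a closed $S_0\subset S$: the functor $\Gamma_{\Lambda_0}$ commutes with all pull/push over $S$, hence with $T$, and restricts to the identity on $\IndCoh_{\Lambda_S}(Z_S)$; applying it to the unit $\Delta_{X/S,*}\omega_X$ shows the restricted monad on $\IndCoh_{\Lambda_S}(Z_S)$ is right $!$-convolution by $\Gamma_{\Lambda_0}\iota_*\omega_{Z_Y}$, and the same argument applies since ${}^r\phi_*$ and ${}^r\phi^!$ preserve $\Lambda_S$ (as noted in the text preceding the statement) so that ${}^r\phi^!$ remains monadic on the support-conditioned categories.

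For (3): the coalgebra/comonad version is the word-for-word dual, using $\star^*$ and the monoidal unit $\Delta_{X/S,*}\cO_X$ of Remark~\ref{convolution duality}, the comonadicity of ${}^r\phi^*$ from Proposition~\ref{pre spectral monadic}, the comonad form of Barr--Beck--Lurie, and the identification ${}^r\phi^*\,{}^r\phi_*(\Delta_{X/S,*}\cO_X)\simeq\iota_*\cO_{Z_Y}$ by $*$-base change along the same Cartesian square, with the coalgebra structure matching that of Proposition~\ref{def coalgebra O} via $\bF(\cO_{Z_Y})\simeq\phi^*\phi_*$. The main obstacle --- the step deserving genuine care rather than bookkeeping --- is showing that the algebra (resp.\ coalgebra) structure produced by the monadic (resp.\ comonadic) adjunction coincides coherently, not merely on underlying objects, with the one of Proposition~\ref{def coalgebra O}; this is where one must trace the unit/counit data through the base-change squares for the iterated fiber products $X\times_Y\cdots\times_Y X$, or else invoke the monoidality of $\bF$ carefully enough that the algebra structures are transported automatically.
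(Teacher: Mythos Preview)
Your proof is correct and follows essentially the same approach as the paper: identify the monad as a left-$\IndCoh(Z_S)$-linear endofunctor, use the general equivalence $\cat{A}\simeq\cat{End}_{\cat{A}}(\cat{A}_L)$ to recognize it as right convolution by its value on the monoidal unit, compute that value as $\Gamma_{\Lambda_0}\iota_*\omega_{Z_Y}$ via base change, and then match the algebra structures by transporting through the monoidal equivalence $\bF$ of Theorem~\ref{thm bnp} and Proposition~\ref{def coalgebra O}. The paper is slightly terser---it asserts the left-equivariance of ${}^r\phi_*, {}^r\phi^!$ as ``evident'' (both are in fact strictly left-linear, not merely lax, since the left convolution acts on the first $X$-factor while these functors touch only the second), and it builds $\Gamma_{\Lambda_0}$ into the monoidal unit from the start rather than restoring it at the end---but the logical skeleton is identical, including the role of the commuting triangle of monoidal categories for the coherence of the algebra structure.
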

\begin{proof}
We prove the algebra version; the coalgebra version is similar. The functors ${}^r\phi_*, {}^r\phi^!$ are evidently equivariant for the left $\IndCoh(Z)$-action by convolution; for any monoidal category $\cat{A}$, viewing $\cat{A}_L := \cat{A}$ as the regular left module, we have a natural inverse equivalences
$$\begin{tikzcd}
\cat{A} \arrow[rr, "{a \mapsto  a \star -}", shift left] & & \cat{End}_{\cat{A}}(\cat{A}_L) \arrow[ll, "{F(1_{\cat{A}})} \mapsfrom F", shift left]
\end{tikzcd}$$
i.e. every $\cat{A}$-linear endofunctor is given by right convolution with some object, and the object can be recovered by evaluation at the monoidal unit.  Applying this general fact to our setting, the endofunctor ${}^r\phi^!\,{}^r\phi_*$ is given by convolution with the object  ${}^r\phi^!\,{}^r\phi_* \Delta_*\Gamma_{\Lambda_0} \omega_X \in \IndCoh(Z)$.  By base change, this object is $\Gamma_{\Lambda_0} \iota_* \omega_{Z_Y}$.  Thus, we have an identification of endofunctors ${}^r\phi^!\,{}^r\phi_*(-) \simeq \Gamma_{\Lambda_0} \iota_* \omega_{Z_Y} \star -$.   Finally, this is an identification as algebra objects in $\cat{End}(\IndCoh(Z))$ by the commuting equivalences of monoidal categories
$$\begin{tikzcd}
\iota_* \omega_{\cZ_Y} \in \Coh(Z) \arrow[r, "\simeq"] \arrow[d, "\simeq", shift left=5] & \cat{End}_{\Coh(Z)}(\Coh(Z)) \ni {}^r\phi^!{}^r\phi_* \\
\;\;\;\;\;\;\;\;\;\;\;\;\;\;\;\;\;\;\;\;\;\;\;\phi^!\phi_* \in \cat{End}_{\Perf(Y)}(\Coh(X)) \arrow[ru, "\simeq"] & 
\end{tikzcd}$$
where the diagonal arrow is given by base change.
\end{proof}

\subsection{Algebra objects in spectral affine Hecke categories}\label{spectral hecke algebra}\label{spectral whittaker sec}

\sss
We now apply the above generalities to affine Hecke categories, in particular the Whittaker affine Hecke categories which will consist of the bulk of this section.  Let $\mf{h} = \mf{b}/[\mf{b}, \mf{b}]$ denote the universal Cartan, and $W = W_G$ the universal Weyl group.   For a parabolic $P$, we denote its quotient Levi by $L_P$ (and the corresponding Lie algebras by $\mf{p}, \mf{l}_P$); we have a subgroup $W_P \subset W$, and an isomorphism $W_P \simeq W_{L_P}$.

Recall the Grothendieck-Springer resolution $\wt{\mf{g}} = G \times^B \mf{b}$ and the partial Grothendieck-Springer resolution $\ga{P} := G \times^P \mf{p}$, and denote the eigenvalue map $$\nu_P: \wt{\mf{g}}_P \rightarrow \mf{l}_P/\!/L_P \simeq \mf{h}/\!/W_P$$with extremes $\nu_B: \wt{\mf{g}} \rightarrow \mf{h}$ and $\nu_G: \mf{g} \rightarrow \mf{h}/\!/W$.  %Note that $W_B = \{e\}$ and $W_G = W$.  

\medskip

\subsubsection{}
Following the set-up and notation of Section \ref{sec algebra obj}, we take
$$\begin{tikzcd}
X = \mf{b}/B = \wt{\mf{g}}/G \arrow[r, "\phi"] & Y = \mf{p}/P = \wt{\mf{g}}_P/G \arrow[r, "f_Y"] &  S = \mf{g}/G.
\end{tikzcd}$$
For now, we do not place any restrictions on singular support.  Later, we will restrict the classical support to the classical nilpotent cone $\cN \subset \mf{g}$.  We establish the following notation for this section only: recall the notation from Definition \ref{steinberg notation}; we denote the usual Steinberg by $\cZ := \cZ_{\mf{b}, \mf{b}} = \wt{\mf{g}} \times_{\mf{g}} \wt{\mf{g}}$, the partial Steinberg by $\cZ_P := \wt{\mf{g}} \times_{\wt{\mf{g}}_P} \wt{\mf{g}}$, and recall that $\cZ_{\mf{b}, \mf{p}} = \wt{\mf{g}} \times_{\mf{g}} \wt{\mf{g}}_P$.  Then we have maps
$${}^r\phi: \cZ/G \longrightarrow \cZ_{\mf{b}, \mf{p}}/G , \;\;\;\;\;\;\;\;\;\;\;\;\;\;\;\;\; \iota: \cZ_P/G  \hookrightarrow \cZ/G.$$
Note that $\iota$ is a closed immersion since it is base-changed from the relative diagonal $\wt{\mf{g}}_P \hookrightarrow \wt{\mf{g}}_P \times_{\mf{g}} \wt{\mf{g}}_P$, which is a closed immersion since the map $\wt{\mf{g}}_P \rightarrow \mf{g}$ is separated, and that ${}^r\phi$ is base-changed from $\phi$.  We also denote, as in \ref{algebra explicit},
$$\cZ_P^{(n)} = \overbrace{\wt{\mf{g}} \times_{\wt{\mf{g}}_P} \cdots \times_{\wt{\mf{g}}_P} \wt{\mf{g}}}^{n+1}, \;\;\;\;\;\;\; \;\;\; \cZ_P^{(0)} = \wt{\mf{g}}, \;\;\;\;\;\;\; \;\;\; \cZ_P^{(-1)} = \wt{\mf{g}}_P.$$

\medskip

\subsubsection{}
Noting that the map $\phi$ is surjective and applying Proposition \ref{spectral monadic} to the above setting we have the following immediate observation.
%$$\begin{tikzcd}
%{}^r\phi^*: \IndCoh(\wt{\mf{g}} \times_{\mf{g}} \wt{\mf{g}}) \arrow[r, shift left] & \arrow[l, shift left]  \IndCoh(\wt{\mf{g}} \times_{\mf{g}} \wt{\mf{g}}_P): {}^r\phi_*.
%\end{tikzcd}$$
\begin{cor}\label{spectral whittaker convolution}
There is a natural identification ${}^r\phi^!\, {}^r\phi_*(-) \simeq - \star \iota_* \omega_{\cZ_P/G}$ as monads in $\IndCoh(\cZ/G)$ under $!$-convolution, and ${}^r\phi^*\, {}^r\phi_*(-) \simeq - \star \iota_* \cO_{\cZ_P/G}$ as comonads under $*$-convolution.  Thus, we have equivalences of left $\IndCoh(\cZ/G)$-module categories
$$\iota_*\omega_{\cZ_P/G}\dmod_{\IndCoh(\cZ/G)} \simeq \IndCoh(\cZ_{\mf{b}, \mf{p}}/G), \;\;\;\;\;\;\;\;\;\; \iota_*\cO_{\cZ_P/G}\dcomod_{\IndCoh(\cZ/G)} \simeq \IndCoh(\cZ_{\mf{b}, \mf{p}}/G).$$
\end{cor}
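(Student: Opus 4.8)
The plan is to deduce this as a direct special case of Proposition~\ref{spectral monadic}, applied to the sequence $\wt{\mf{g}}/G \xrightarrow{\phi} \wt{\mf{g}}_P/G \xrightarrow{f_Y} \mf{g}/G$ fixed above, imposing no singular or classical support restriction; that is, we take $S_0 = S = \mf{g}/G$, so that the local cohomology functor $\Gamma_{\Lambda_0}$ appearing in Propositions~\ref{def coalgebra O} and~\ref{spectral monadic} is the identity, and $\IndCoh_{\Lambda_S}(Z_S) = \IndCoh(\cZ/G)$.

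First I would verify the standing hypotheses of Section~\ref{sec algebra obj}. Geometricity and smoothness are immediate: $\wt{\mf{g}}/G \simeq \mf{b}/B$, $\wt{\mf{g}}_P/G \simeq \mf{p}/P$, and $\mf{g}/G$ are quotients of smooth affine schemes by affine algebraic groups with affine diagonal. The maps $f_X\colon \wt{\mf{g}}/G \to \mf{g}/G$ and $f_Y\colon \wt{\mf{g}}_P/G \to \mf{g}/G$ are proper since $G/B$ and $G/P$ are proper. For $\phi$: writing $\wt{\mf{g}} = G \times^B \mf{b} \simeq G \times^P (P \times^B \mf{b})$, the map $\phi$ is obtained by applying $G \times^P(-)$ to the Grothendieck--Springer map $\mu\colon P \times^B \mf{b} \to \mf{p}$ of $\mf{p} = \on{Lie}(P)$; since $\mu$ is proper (as $P/B$ is proper), so is $\phi$. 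For surjectivity on geometric points it suffices to check $\mu$ is surjective: given $x \in \mf{p}$, its image $\bar x$ in the Levi $\mf{l}_P$ lies in some Borel subalgebra of $\mf{l}_P$, whose preimage under $\mf{p} \to \mf{l}_P$ is a Borel subalgebra of $G$ contained in $\mf{p}$ and containing $x$.

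Granting this, and recalling that $\iota\colon \cZ_P/G \hookrightarrow \cZ/G$ is a closed immersion (being base changed from the relative diagonal $\wt{\mf{g}}_P \hookrightarrow \wt{\mf{g}}_P \times_{\mf{g}} \wt{\mf{g}}_P$, as already noted), Proposition~\ref{def coalgebra O} identifies $\iota_*\omega_{\cZ_P/G}$ as the algebra object underlying the monad ${}^r\phi^!\,{}^r\phi_*$ on $\IndCoh(\cZ/G)$ and $\iota_*\cO_{\cZ_P/G}$ as the coalgebra underlying the comonad ${}^r\phi^*\,{}^r\phi_*$. Since $\Gamma_{\Lambda_0} = \mathrm{id}$ here, Proposition~\ref{spectral monadic} then yields the asserted identifications ${}^r\phi^!\,{}^r\phi_*(-) \simeq -\star^!\iota_*\omega_{\cZ_P/G}$ and ${}^r\phi^*\,{}^r\phi_*(-) \simeq -\star^*\iota_*\cO_{\cZ_P/G}$, together with the Barr--Beck--Lurie equivalences of left $\IndCoh(\cZ/G)$-module categories with $\IndCoh(\cZ_{\mf{b},\mf{p}}/G)$.

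I do not expect any real obstacle: the entire content sits in Propositions~\ref{def coalgebra O}, \ref{pre spectral monadic}, and~\ref{spectral monadic}, and the corollary is purely a matter of checking that the present geometric input (smooth geometric stacks, proper structure maps, $\phi$ proper and surjective on points) meets their hypotheses. The only points deserving a line of care are the surjectivity of $\phi$ (needed for the monadicity input of Proposition~\ref{pre spectral monadic}) and tracking the $\star^!$ versus $\star^*$ conventions so that the $\omega$- and $\cO$-versions are matched to the monad and comonad respectively.
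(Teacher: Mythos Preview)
Your proposal is correct and matches the paper's approach exactly: the paper presents this corollary with no separate proof, introducing it only with the line ``Noting that the map $\phi$ is surjective and applying Proposition~\ref{spectral monadic} to the above setting we have the following immediate observation.'' Your write-up supplies the hypothesis checks (smoothness, properness, surjectivity of $\phi$, and $\Gamma_{\Lambda_0}=\mathrm{id}$) that the paper leaves implicit, but the logical route is identical.
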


Our goal is now to give an explicit characterization of this algebra (or coalgebra) object.  We begin by observing that the algebra and coalgebra objects have the same underlying object.  Recall that a derived stack $X$ is \emph{Calabi-Yau of dimension $n$} if there is an equivalence $\omega_X \simeq \cO_X[n]$. 
\begin{prop}\label{calabi yau}
The derived schemes $\cZ_P^{(n)}$ (for $n \geq -1$) are Calabi-Yau of dimension $\dim(G)$.  Thus, the derived stacks $\cZ_P^{(n)}/G$ are Calabi-Yau of dimension 0.
\end{prop}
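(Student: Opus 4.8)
The plan is to show the stronger statement that each $\cZ_P^{(n)}$ (for $n\ge -1$) is Calabi--Yau of dimension $\dim G$; passing to the quotient by $G$ then shifts the dimension by $-2\dim G + \dim G = -\dim G$... more precisely, for a smooth affine group $G$ acting on a scheme $Y$ with $\omega_Y\simeq\cO_Y[n]$, one has $\omega_{Y/G}\simeq\omega_Y\otimes\det(\mf{g}^\vee)[-\dim G]$, and since $G$ is reductive $\det(\mf{g})$ is the trivial character, so $\omega_{Y/G}\simeq\cO_{Y/G}[n-\dim G]$. Taking $n=\dim G$ gives $\omega_{\cZ_P^{(n)}/G}\simeq\cO_{\cZ_P^{(n)}/G}$, i.e. Calabi--Yau of dimension $0$. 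So the content is entirely in the schematic statement.

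First I would set up the relevant derived fiber products explicitly. Recall $\wt{\mf g} = G\times^B\mf b$ and $\wt{\mf g}_P = G\times^P\mf p$, and $\cZ_P^{(n)}$ is the $(n{+}1)$-fold derived fiber product of $\wt{\mf g}$ over $\wt{\mf g}_P$. The key point is that the map $\phi\colon\wt{\mf g}\to\wt{\mf g}_P$ is a \emph{quasi-smooth proper} map which, moreover, identifies $\wt{\mf g}_P$ with $\wt{\mf g}/G_P$-type data: concretely, $\wt{\mf g}\times_{\wt{\mf g}_P}\wt{\mf g}$ may be presented, after trivializing, via the partial-flag fibration. I would compute $\omega_\phi$, the relative dualizing complex: since $\wt{\mf g}\to\wt{\mf g}_P$ is base-changed from $G/B\to G/P$ (a smooth proper fibration with fibers $L_P/B_{L_P}$, a full flag variety of the Levi), one has $\omega_\phi\simeq\cO[\dim L_P/B_{L_P} - \dim L_P/B_{L_P}]$... no: a flag variety $L_P/B_{L_P}$ is Fano, not Calabi--Yau, so $\omega_\phi$ is a nontrivial line bundle shifted by $\dim(L_P/B_{L_P}) = \ell_P$. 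This is the crucial cancellation: the \emph{quasi-smooth} (derived) fiber product absorbs the anticanonical twist.

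The clean way to see the cancellation is the iterated-base-change identity from \ref{algebra explicit}(3): $q_*\omega_{\cZ_P^{(n)}}\simeq p^!(\phi_*\phi^!)^{n-1}\omega_{\wt{\mf g}_P}$ where $p\colon\cZ_P\to\wt{\mf g}_P$. So I would argue by induction on $n$, reducing everything to (i) the base case $n=-1$: $\wt{\mf g}_P = G\times^P\mf p$ is Calabi--Yau of dimension $\dim G$ because it is a vector bundle of rank $\dim\mf p$ over $G/P$, whose canonical bundle is $\omega_{G/P}\boxtimes\det(\mf p^\vee)$ shifted by $\dim G/P + \dim\mf p = \dim G$, and $\omega_{G/P}\simeq\det(\mf g/\mf p)$ as a $P$-equivariant line bundle, which pairs with $\det(\mf p)$ inside $\det\mf g$ = trivial; so the twists cancel. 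And (ii) the inductive step: given $\cZ_P^{(n-1)}$ Calabi--Yau of dimension $\dim G$, the derived fiber product $\cZ_P^{(n)} = \cZ_P^{(n-1)}\times_{\wt{\mf g}_P}\wt{\mf g}$ has $\omega_{\cZ_P^{(n)}}\simeq \mathrm{pr}_1^!\omega_{\cZ_P^{(n-1)}}\otimes\mathrm{pr}_2^*\omega_\phi$ (base change for the quasi-smooth map $\phi$), and $\omega_\phi\simeq\mathrm{pr}^*(\text{anticanonical of }G/P\text{ fiber})[\ell_P]$; one checks the line-bundle part is trivial $P$-equivariantly (again via $\det\mf g$) and the shift $\ell_P$ is compensated by the virtual codimension of the derived intersection. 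The bookkeeping of shifts: $\wt{\mf g}$ and $\wt{\mf g}_P$ both have dimension $\dim G$ (as $\dim G/B + \dim\mf b = \dim G = \dim G/P + \dim\mf p$), and the derived fiber product over $\wt{\mf g}_P$ has expected dimension $\dim\wt{\mf g} + \dim\wt{\mf g} - \dim\wt{\mf g}_P = \dim G$, consistent with the claim.

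The main obstacle I anticipate is bookkeeping the $G$-equivariant (equivalently $B$- or $P$-equivariant) line bundle twists carefully and checking they are \emph{canonically} trivial, not merely abstractly trivial --- i.e. that the cancellation $\det(\mf g/\mf p)\otimes\det\mf p\simeq\det\mf g\simeq\cO$ (trivial as a $G$-representation since $G$ is reductive, hence its adjoint representation is self-dual / unimodular) holds with control on the equivariant structure, and similarly the Calabi--Yau structure on the flag-variety fibers of $\phi$ is compatible under iterated base change. I would handle this by working $P$-equivariantly on $\mf g$ throughout and invoking that for reductive $G$, $\det\mf g$ is the trivial character of $G$ (the modulus character is trivial). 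A secondary, more minor point is to be careful that $\phi$ really is quasi-smooth and that base change for $\omega$ along quasi-smooth maps (Proposition \ref{serre}(2)) applies to these possibly-derived schemes --- but this is routine given the presentations.
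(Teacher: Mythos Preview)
Your overall strategy---induct on $n$, with the base case $\wt{\mf g}_P$ handled by the determinant cancellation $\det(\mf g/\mf p)\otimes\det(\mf p)\simeq\det(\mf g)\simeq\cO$---matches the paper's. But your inductive step rests on a mistaken description of $\phi\colon\wt{\mf g}\to\wt{\mf g}_P$: this map is \emph{not} base-changed from $G/B\to G/P$, and is not a smooth $P/B$-fibration. Base-changing $\wt{\mf g}_P\to G/P$ along $G/B\to G/P$ yields $G\times^B\mf p$, not $G\times^B\mf b=\wt{\mf g}$; the further passage from $\mf p$ to $\mf b$ makes $\phi$ generically finite (source and target both have dimension $\dim G$). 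Consequently $\omega_\phi$ is a line bundle in degree $0$, not degree $\ell_P$, and your discussion of the anticanonical twist of the flag fiber and the compensating ``virtual codimension'' shift is based on the wrong geometry.

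The paper's inductive step sidesteps any explicit computation of $\omega_\phi$: it observes abstractly that if $f\colon X\to Y$ is quasi-smooth and both $X,Y$ are Calabi--Yau, then $f$ is automatically \emph{relatively} Calabi--Yau (from $\omega_X\simeq f^!\omega_Y$ one reads off $f^!\cO_Y\simeq\cO_X$ up to the correct shift), and a relatively Calabi--Yau quasi-smooth map remains so after any base change, so $W\times_Y X$ is Calabi--Yau whenever $W$ is. Applying this with $f=\phi$---using that both $\wt{\mf g}$ and $\wt{\mf g}_P$ are Calabi--Yau of dimension $\dim G$ from the base case---gives $\omega_\phi\simeq\cO_{\wt{\mf g}}$ for free, and the induction goes through. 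Your determinant bookkeeping could in principle recover this (you would just be recomputing, by hand, that $\omega_\phi$ is trivial), but you would have to redo it with the correct picture of $\phi$ as a generically finite quasi-smooth map rather than a flag-variety fibration.
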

\begin{proof}
We induct on $n$.  For $n=-1$, it suffices to show that for any affine algebraic group $H$, the adjoint quotient $\mf{h}/H$ is Calabi-Yau of dimension 0.  Let $p: \mf{h} \ra \mf{h}/H$ be the atlas; we have $H$-equivariant equivalences
$$\cO_{\mf{h}} \simeq \det(\mf{h})^{-1} \otimes \omega_{\mf{h}}[-\dim H] \simeq \det(\mf{h})^{-1} \otimes  p^! \omega_{\mf{h}/H}[-\dim H] \simeq \det(\mf{h})^{-1} \otimes \det(\mf{h}) \otimes p^* \omega_{\mf{h}/H} \simeq p^* \omega_{\mf{h}/H}.$$
Since $p^*$ is conservative and $p^* \cO_{\mf{h}/H} \simeq \cO_H$, this means that $\cO_{\mf{h}/H} \simeq \omega_{\mf{h}/H}$.  To induct, we note that given a diagram
$$\begin{tikzcd}
\arrow[d] W \times_Y X \arrow[r] & X \arrow[d, "f"]\\
W \arrow[r, "g"] & Y
\end{tikzcd}$$
where $f$ is quasi-smooth and relatively Calabi-Yau (i.e. $\omega_{X/Y} \simeq \cO_X$, or $f^! \simeq f^*$), and $W$ is Calabi-Yau, then $W \times_Y X$ is Calabi-Yau of the expected dimension.  Furthermore, we note that if $f: X \ra Y$ is quasi-smooth and $X, Y$ are Calabi-Yau, then $f$ is relatively Calabi-Yau.  The inductive step follows.
\end{proof}

\begin{rmk}
The claim holds if we replace $\wt{\mf{g}}$ above by $\wt{\mf{g}}_Q$ for $Q \subset P$.
\end{rmk}

\subsubsection{}
By a standard dimension count (i.e. semi-smallness of the partial Grothendieck-Springer resolutions), the a priori derived schemes $\cZ$ and $\cZ_P$ are in fact underived (i.e. a fiber product of smooth schemes is underived if the dimension is equal to its virtual or expected dimension).  The argument by dimension count fails for the iterated fiber products $\cZ_P^{(n)}$ for $n \geq 2$.  However, we have the following result, well-known to experts though we could not find a reference (and which we apply to the setting $Q = B$).  It implies that, although $\cO_{\cZ_P^{(n)}}$ may not live in the heart of the standard $t$-structure, its pushforward to $\cZ_P = \cZ_P^{(1)}$ does in fact live in the heart (as well as its pushfoward to $\wt{\mf{g}} = \cZ_P^{(0)}$ and $\wt{\mf{g}}_P = \cZ_P^{(-1)}$).  Since these are the objects that describe the algebra structure on $\iota_*\omega_{\cZ_P/G}$, this will imply that $\iota_*\omega_{\cZ_P/G}$, which a priori lives in a derived category, may be studied as an object in the abelian category $\Coh(\cZ/G)^\heartsuit$.

\begin{prop}\label{affinization calculation}
Let $Q \subset P$ be parabolics.  Let $\mu_Q^P: \wt{\mf{g}}_Q \rightarrow \wt{\mf{g}}_P$.  We have a canonical identification of the derived pushforward:
$$(\mu_Q^P)_* \cO_{\wt{\mf{g}}_Q} \simeq \cO_{\wt{\mf{g}}_P} \tens{\cO(\mf{h}/\!/W_P)} \cO(\mf{h}/\!/W_Q), \;\;\;\;\;\;\;\;\;\; (\mu_Q^P)_* \omega_{\wt{\mf{g}}_Q} \simeq \omega_{\wt{\mf{g}}_P} \tens{\cO(\mf{h}/\!/W_P)} \cO(\mf{h}/\!/W_Q).$$
In particular, $(\mu_Q^P)_* \cO_{\wt{\mf{g}}_Q} \simeq (\mu_Q^P)_* \omega_{\wt{\mf{g}}_Q}[-\dim(G)]$ is in the heart of the standard $t$-structure.
\end{prop}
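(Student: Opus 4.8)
The plan is to bootstrap everything from the classical Grothendieck--Springer computation. First, the statement for $\omega$ follows from that for $\cO$: by Proposition \ref{calabi yau} and the remark following it, $\wt{\mf{g}}_Q$ and $\wt{\mf{g}}_P$ are Calabi--Yau of dimension $\dim(G)$, so $\omega_{\wt{\mf{g}}_Q}\simeq\cO_{\wt{\mf{g}}_Q}[\dim G]$, $\omega_{\wt{\mf{g}}_P}\simeq\cO_{\wt{\mf{g}}_P}[\dim G]$, and $(\mu_Q^P)_*$ commutes with the shift; this also yields the final sentence once $(\mu_Q^P)_*\cO_{\wt{\mf{g}}_Q}$ has been identified with a sheaf in the heart. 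So it suffices to treat $(\mu_Q^P)_*\cO_{\wt{\mf{g}}_Q}$. I would next reduce to the case $P=G$: a parabolic conjugate to $Q$ lying inside one conjugate to $P$ contains its nilradical, so writing $\bar Q := Q/U_P$ for the corresponding parabolic of the Levi $L_P$ one obtains a $G$-equivariant identification $\wt{\mf{g}}_Q=G\times^Q\mf{q}\simeq G\times^P\big(\mf{p}\times_{\mf{l}_P}(L_P\times^{\bar Q}\bar{\mf{q}})\big)$, under which $\mu^P_Q$ is $G\times^P$ applied to the base change, along the flat (linear) projection $\mf{p}\twoheadrightarrow\mf{l}_P$, of the partial Grothendieck--Springer resolution $\wt{\mf{l}}_{P,\bar Q}:=L_P\times^{\bar Q}\bar{\mf{q}}\to\mf{l}_P$. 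Flat base change along the bundle $\wt{\mf{g}}_P\to G/P$ then turns the desired formula into the analogous statement for this partial resolution of the smaller group $L_P$.

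Using moreover $\mu^G_B=\mu^G_Q\circ\mu^Q_B$, the Levi reduction again applied to $\mu^Q_B$, the projection formula (valid since $\cO(\mf{h})$ is finite free over $\cO(\mf{h}/\!/W_Q)$ by Chevalley) and faithfully flat descent along $\cO(\mf{h}/\!/W_Q)\hookrightarrow\cO(\mf{h})$, the case $P=G$ with general $Q$ reduces to the single case $P=G$, $Q=B$, for $G$ and for its Levi subgroups. That remaining case is the classical fact that, for $\pi:=\mu^G_B\colon\wt{\mf{g}}\to\mf{g}$, one has $R\pi_*\cO_{\wt{\mf{g}}}\simeq\cO_\mf{g}\tens{\cO(\mf{g}/\!/G)}\cO(\mf{h})$ concentrated in degree $0$. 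I would argue: $R^{>0}\pi_*\cO_{\wt{\mf{g}}}=0$ by Grauert--Riemenschneider vanishing together with $\omega_{\wt{\mf{g}}}\simeq\cO_{\wt{\mf{g}}}$ (Proposition \ref{calabi yau}, using $\dim\wt{\mf{g}}=\dim\mf{g}$); Grothendieck--Serre duality (Proposition \ref{serre}, applied to $\pi$ proper) then gives $\bD_\mf{g}(\pi_*\cO_{\wt{\mf{g}}})\simeq\pi_*\bD_{\wt{\mf{g}}}(\cO_{\wt{\mf{g}}})=\pi_*\omega_{\wt{\mf{g}}}\simeq\pi_*\cO_{\wt{\mf{g}}}[\dim G]$, whence $\shExt^{>0}_\mf{g}(\pi_*\cO_{\wt{\mf{g}}},\cO_\mf{g})=0$ and $\pi_*\cO_{\wt{\mf{g}}}$ is locally free over the regular scheme $\mf{g}$; finally the natural algebra map $\cO_\mf{g}\tens{\cO(\mf{g}/\!/G)}\cO(\mf{h})\to\pi_*\cO_{\wt{\mf{g}}}$ corresponds over the regular locus $\mf{g}^{\mathrm{reg}}$ to a finite birational morphism onto $\mf{g}^{\mathrm{reg}}\times_{\mf{g}/\!/G}\mf{h}$, which is smooth (hence normal) since $\mf{g}^{\mathrm{reg}}\to\mf{g}/\!/G$ is smooth by Kostant, so that map is an isomorphism over $\mf{g}^{\mathrm{reg}}$; since $\mf{g}\smallsetminus\mf{g}^{\mathrm{reg}}$ has codimension $\geq 2$, a map of vector bundles on $\mf{g}$ that is an isomorphism over $\mf{g}^{\mathrm{reg}}$ is an isomorphism. (Alternatively one may quote Grothendieck's theorem $\cO(\wt{\mf{g}})\cong\cO(\mf{g})\tens{\cO(\mf{g}/\!/G)}\cO(\mf{h})$ together with $R^{>0}\pi_*\cO_{\wt{\mf{g}}}=0$ and affineness of $\mf{g}$, bypassing the local-freeness step.)

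For the ``heart'' claim, the above identification gives $(\mu^P_Q)_*\cO_{\wt{\mf{g}}_Q}\simeq\cO_{\wt{\mf{g}}_P}\tens{\cO(\mf{h}/\!/W_P)}\cO(\mf{h}/\!/W_Q)$, and $\cO(\mf{h})^{W_Q}$ is a direct summand of the free $\cO(\mf{h})^{W_P}$-module $\cO(\mf{h})$ via the $W_Q$-averaging idempotent (which is $\cO(\mf{h})^{W_P}$-linear as $W_Q\subseteq W_P$), hence flat over $\cO(\mf{h})^{W_P}$; so the derived tensor product is underived and lies in the heart of the standard $t$-structure, and the identity $(\mu^P_Q)_*\cO_{\wt{\mf{g}}_Q}\simeq(\mu^P_Q)_*\omega_{\wt{\mf{g}}_Q}[-\dim G]$ follows from the Calabi--Yau property used above.

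The hard part is squarely the case $P=G$, $Q=B$: all the reductions are formal bootstrapping, whereas this step genuinely uses the geometry of the Grothendieck--Springer map (higher cohomology vanishing, Kostant's smoothness statement on the regular locus, and the codimension estimate for the irregular locus). If a citable reference for this classical identity is available, that paragraph collapses to a citation.
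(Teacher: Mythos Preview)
Your argument is correct, and it takes a genuinely different route from the paper's. You reduce in two steps---first a Levi reduction bringing you to $P=G$, then a faithfully-flat descent along $\cO(\mf{h}/\!/W_Q)\hookrightarrow\cO(\mf{h})$ bringing you to $Q=B$---and then invoke the classical Grothendieck--Springer identity, which you re-prove via Grauert--Riemenschneider, a duality argument for local freeness, and a Hartogs/codimension-$2$ extension. The paper instead treats all $(Q,P)$ at once: it shows directly that the natural map $\wt{\mf{g}}_Q \to \wt{\mf{g}}_P \times_{\mf{h}/\!/W_P} \mf{h}/\!/W_Q$ is proper and birational onto a normal target (normality via the EGA criterion together with Kostant's flatness and normal-fiber results for $\wt{\mf{g}}_P \to \mf{h}/\!/W_P$), and combines this with Grauert--Riemenschneider to conclude $Rf_*\cO=\cO$. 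Both proofs share the Calabi--Yau reduction from $\omega$ to $\cO$ and the use of Grauert--Riemenschneider; where they diverge is that the paper establishes normality of the Stein factorization target in full parabolic generality, whereas you bootstrap from the rank-one input $\mu^G_B$ through formal reductions. The paper's approach is more uniform and avoids the descent bookkeeping; yours has the virtue of isolating a single classical (and citable) case, and your local-freeness-plus-Hartogs argument for that case is a nice alternative to the normality route. For the ``heart'' claim, you argue flatness of $\cO(\mf{h})^{W_Q}$ over $\cO(\mf{h})^{W_P}$ via the averaging idempotent, while in the paper it falls out automatically since the pushforward is exhibited as the structure sheaf of a classical scheme.
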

\begin{proof}
The statement for $\omega$ follows from that for $\cO$ given Proposition \ref{calabi yau}.  Let $f: X \rightarrow Y$ be a proper birational map of schemes, with $Y$ normal.  We claim that $R^0f_* \cO_X = \cO_Y$.  Working affine locally on $Y$, we may assume $Y$ is affine.  Since $f$ is proper, $f_* \cO_X$ is a coherent sheaf on $Y$.  Since $f$ is birational, $\cO(Y) \subset \cO(X) \subset \Frac(\cO(Y))$.  Since $Y$ is normal, it is integrally closed, and since $\cO(X)$ is finitely generated as an $\cO(Y)$-module this means that $\cO(X) = \cO(Y)$.

We apply the above discussion to the map
$$f: \wt{\mf{g}}_Q \rightarrow \wt{\mf{g}}_P \times_{\mf{h}/\!/W_P} \mf{h}/\!/W_Q.$$
The map is evidently proper, since $\wt{\mf{g}} \rightarrow \mf{g}$ is proper, and $\mf{g} \rightarrow \mf{g} \times_{\mf{h}/\!/W} \mf{h}/\!/W_Q$ is affine, thus separated.  It is well-known to be birational on the regular semisimple locus (see, for example, Lemma 3.1.42 in \cite{CG}).  To see that the higher cohomology vanishes, we apply the Grauert-Riemenschneider vanishing theorem, along with the identification of $\cO$ with top differential forms in Proposition \ref{calabi yau}.  

Thus, it remains to verify that $\wt{\mf{g}}_P \times_{\mf{h}/\!/W_P} \mf{h}/\!/W_Q$ is normal.  By Proposition 6.14.1 in \cite{EGAIV.2}, the base change of a normal variety along a flat map with normal geometric fibers is normal.   Thus it suffices to check that $\wt{\mf{g}}_P \rightarrow \mf{h}/\!/W_P$ is flat with normal geometric fibers, both following from theorems of Kostant.  For flatness, note that the map $\nu_P$ factors as$$\wt{\mf{g}}_P \rightarrow \mf{l}_P \rightarrow \mf{l}_P/\!/L_P = \mf{h}/\!/W_P;$$ the first map has fiber $G \times^Q (x + \mf{n}_Q)$ for $x \in \mf{l}_Q$, which are in particular equidimensional; the fibers of the second map are equidimensional by Theorem 6.7.2 of \cite{CG}.  Thus flatness follows by miracle flatness.  The first map are smooth, and in particular its fibers are normal, and the fibers of the second map are normal by Theorem 6.7.3 of \emph{op. cit.}, thus the fibers of $\nu_P$ are normal.  This completes the proof.
\end{proof}

We note the following, though we do not use it. 
\begin{cor}
For parabolics $B \subset Q \subset P \subset G$, we have a natural sequence of maps 
$$\wt{\mf{g}} \longrightarrow \wt{\mf{g}}_Q \longrightarrow \wt{\mf{g}}_P \longrightarrow \mf{g}$$
whose affinization is
$$\mf{g} \times_{\mf{h}/\!/W} \mf{h} \longrightarrow \mf{g}\times_{\mf{h}/\!/W} \mf{h}/\!/W_Q \longrightarrow \mf{g}\times_{\mf{h}/\!/W} \mf{h}/\!/W_P \longrightarrow \mf{g}.$$
Furthermore, $H^i(\wt{\mf{g}}_P, \cO_{\wt{\mf{g}}_P}) = 0$ for $i > 0$.
\end{cor}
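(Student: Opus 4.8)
The plan is to deduce the statement from Proposition \ref{affinization calculation} by specializing the parabolic, so that essentially no new work is required.

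First I would record the sequence of maps. Since $B \subset Q \subset P$, the inclusions $\mf{b} \subset \mf{q}$, $\mf{q} \subset \mf{p}$, $\mf{p} \subset \mf{g}$ are $B$-, $Q$-, $P$-equivariant respectively, and induce $G$-equivariant maps
$$\wt{\mf{g}} = G\times^B\mf{b} \xrightarrow{\mu_B^Q} \wt{\mf{g}}_Q = G\times^Q\mf{q} \xrightarrow{\mu_Q^P} \wt{\mf{g}}_P = G\times^P\mf{p} \xrightarrow{\mu_P^G} G\times^G\mf{g} = \mf{g},$$
where we identify $\wt{\mf{g}}_G = \mf{g}$ and $\mu_P^G$ is the usual partial Grothendieck--Springer map; composites agree with the evident maps, so this is a genuine sequence of maps over $\mf{g}$.

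Second, I would compute the affinizations by applying Proposition \ref{affinization calculation} to each pair $R \subset G$ for $R \in \{B, Q, P\}$, using that $L_G = G$, $\mf{l}_G/\!/L_G \simeq \mf{h}/\!/W$, $W_G = W$, and $W_B = \{1\}$ so $\mf{h}/\!/W_B = \mf{h}$. This yields
$$(\mu_R^G)_*\cO_{\wt{\mf{g}}_R} \simeq \cO_{\mf{g}}\underset{\cO(\mf{h}/\!/W)}{\otimes}\cO(\mf{h}/\!/W_R),$$
with the pushforward lying in the heart of the standard $t$-structure. Since $\mf{g}$ is affine, taking global sections gives
$$\Gamma(\wt{\mf{g}}_R, \cO_{\wt{\mf{g}}_R}) \simeq \cO(\mf{g})\underset{\cO(\mf{h}/\!/W)}{\otimes}\cO(\mf{h}/\!/W_R),$$
i.e.\ the affinization of $\wt{\mf{g}}_R$ is $\mf{g}\times_{\mf{h}/\!/W}\mf{h}/\!/W_R$. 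Since affinization is a functor, applying it to the displayed sequence produces exactly the claimed sequence $\mf{g}\times_{\mf{h}/\!/W}\mf{h} \to \mf{g}\times_{\mf{h}/\!/W}\mf{h}/\!/W_Q \to \mf{g}\times_{\mf{h}/\!/W}\mf{h}/\!/W_P \to \mf{g}$, the maps being induced by $\mf{h} \to \mf{h}/\!/W_Q \to \mf{h}/\!/W_P \to \mf{h}/\!/W$. For the vanishing, since $\mf{g}$ is affine one has $R\Gamma(\wt{\mf{g}}_P, \cO_{\wt{\mf{g}}_P}) \simeq R\Gamma(\mf{g}, R(\mu_P^G)_*\cO_{\wt{\mf{g}}_P})$, and the latter is concentrated in degree $0$ because $R(\mu_P^G)_*\cO_{\wt{\mf{g}}_P}$ is in the heart; hence $H^i(\wt{\mf{g}}_P, \cO_{\wt{\mf{g}}_P}) = 0$ for $i>0$.

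I do not expect a real obstacle here, as the content is entirely contained in Proposition \ref{affinization calculation}. The only point warranting a line of care is the compatibility of the identifications as $R$ varies --- that the maps between affinizations are literally those induced by $\mf{h}/\!/W_Q \to \mf{h}/\!/W_P \to \mf{h}/\!/W$ and not merely abstractly isomorphic to them. This follows because the isomorphisms of Proposition \ref{affinization calculation} are built from the adjunction (co)units described in \ref{algebra explicit}, which are natural, together with functoriality of affinization; alternatively, one may check the identification on the regular semisimple locus, where all the maps are isomorphisms, and conclude by density.
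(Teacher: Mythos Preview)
Your proposal is correct and matches the paper's intent: the corollary is stated without proof in the paper, as an immediate consequence of Proposition~\ref{affinization calculation} specialized to the case where the larger parabolic is $G$ itself. Your write-up supplies exactly this argument, with the additional observation that affineness of $\mf{g}$ converts the pushforward computation into the affinization and the vanishing of higher cohomology.
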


In particular, introducing the $G$-stackiness eliminates the difference in shifts, and the algebra object $\iota_* \omega_{\cZ_P/G}$ (resp. the coalgebra object $\iota_*\cO_{\cZ_P/G}$) and all of its iterated self-$!$-convolutions (resp. $*$-convolutions) are in the heart of the canonical $t$-structure of $\IndCoh(\cZ/G)$.  We thus have the following explicit identification of the algebra structure of $\iota_* \omega_{\cZ_P/G}$ and coalgebra structure of $\iota_*\cO_{\cZ_P}$.
\begin{cor}\label{spectral whittaker coalgebra}
There is a natural identification of the $n$-fold convolution 
$$\overbrace{\iota_* \omega_{\cZ_P/G} \star^! \cdots \star^! \iota_*\omega_{\cZ_P/G}}^n \simeq \iota_* \left( \omega_{\cZ_P/G} \tens{\cO(\mf{h}) \mkern-18mu \tens{\cO(\mf{h}/\!/W_P)}\mkern-18mu \cO(\mf{h})} \overbrace{\cO(\mf{h}) \tens{\cO(\mf{h}/\!/W_P)} \cdots \tens{\cO(\mf{h}/\!/W_P)} \cO(\mf{h})}^{n+1} \right) \in \Coh(\cZ/G),$$
where the algebra structure maps are induced by the averaging map $\cO(\mf{h}) \rihgtarrow \cO(\mf{h})^{W_P}$ taking 
$$f \mapsto |W_P|^{-1}\sum_{w \in W_P} w \cdot f.$$ Likewise, we have
$$\overbrace{\iota_* \cO_{\cZ_P/G} \star^* \cdots \star^* \iota_*\cO_{\cZ_P/G}}^n \simeq \iota_* \left( \cO_{\cZ_P/G} \tens{\cO(\mf{h}) \mkern-18mu \tens{\cO(\mf{h}/\!/W_P)}\mkern-18mu \cO(\mf{h})} \overbrace{\cO(\mf{h}) \tens{\cO(\mf{h}/\!/W_P)} \cdots \tens{\cO(\mf{h}/\!/W_P)} \cO(\mf{h})}^{n+1} \right) \in \Coh(\cZ/G),$$
where the coalgebra structure maps are given by insertion of the constant function.  In particular, the iterated convolutions above are in the heart of the standard $t$-structure.
\end{cor}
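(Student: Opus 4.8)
The plan is to reduce the statement to a single pushforward computation, perform that computation by induction, and then track the (co)algebra structure maps through the resulting identifications. Since $\wt{\mf{g}}_P/G \to \mf{g}/G$ is proper, $\iota_*$ is monoidal by the naturality clause of Theorem \ref{thm bnp}, so it suffices to compute the $n$-fold $\star^!$-convolution of $\omega_{\cZ_P/G}$ (and, dually, the $\star^*$-convolution of $\cO_{\cZ_P/G}$) inside the convolution monoidal category $\IndCoh(\cZ_P/G)$ attached to $\phi\colon\wt{\mf{g}}/G\to\wt{\mf{g}}_P/G$. By \ref{algebra explicit}(3) this convolution is the pushforward $q_*\omega_{\cZ_P^{(n)}/G}$, where $q\colon\cZ_P^{(n)}/G\to\cZ_P/G$ is the projection onto the first and last factors, the multiplication being induced by the counit of $(\phi_*,\phi^!)$ and the comultiplication by the unit of $(\phi^*,\phi_*)$. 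So the corollary reduces to: (i) computing $q_*\omega_{\cZ_P^{(n)}/G}$, (ii) rewriting the answer as the stated relative tensor product, and (iii) matching the structure maps with the averaging and insertion maps. (The case $n=0$ just recovers the monoidal unit $\Delta_*\omega_{\wt{\mf{g}}/G}$; assume $n\ge 1$ below.)

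For (i), set $R=\cO(\mf{h})$ and $R^{W_P}=\cO(\mf{h}/\!/W_P)$; by Chevalley's theorem $R$ is free of finite rank over $R^{W_P}$, so $\nu_P^*R$ and its iterated relative tensor powers are vector bundles on $\wt{\mf{g}}_P/G$, in particular flat, which keeps the base-change steps below clean. First I would prove by induction on $m\ge -1$ that the structure map $\rho_m\colon\cZ_P^{(m)}/G\to\wt{\mf{g}}_P/G$ satisfies $\rho_{m,*}\cO_{\cZ_P^{(m)}/G}\simeq\nu_P^*\big(R^{\otimes_{R^{W_P}}(m+1)}\big)$: the cases $m=-1,0$ are trivial and Proposition \ref{affinization calculation} for $Q=B$ respectively, and the inductive step writes $\cZ_P^{(m)}/G\simeq(\wt{\mf{g}}/G)\times_{\wt{\mf{g}}_P/G}(\cZ_P^{(m-1)}/G)$, applies quasi-coherent base change along this (derived) Cartesian square together with the projection formula, and invokes Proposition \ref{affinization calculation} once more. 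It is worth stressing that this is genuinely a derived computation, since $\cZ_P^{(m)}$ need not be classical for $m\ge 2$; the role of Proposition \ref{affinization calculation} is exactly to guarantee that the pushforwards nonetheless land in the heart. Decomposing $\cZ_P^{(n)}/G\simeq(\cZ_P/G)\times_{\wt{\mf{g}}_P/G}(\cZ_P^{(n-2)}/G)$ so that $q$ is the base change of $\rho_{n-2}$, the same base-change argument then gives $q_*\cO_{\cZ_P^{(n)}/G}\simeq\nu_{\cZ_P}^*\big(R^{\otimes_{R^{W_P}}(n-1)}\big)$, where $\nu_{\cZ_P}\colon\cZ_P/G\to\mf{h}/\!/W_P$ is the eigenvalue map.

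For (ii), since $\cZ_P^{(n)}/G$ and $\cZ_P/G$ are Calabi--Yau of dimension $0$ (Proposition \ref{calabi yau}) and $q$ is quasi-smooth, $q$ is relatively Calabi--Yau by the criterion in the proof of that proposition, so $q^!\simeq q^*$ and $\omega_{\cZ_P^{(n)}/G}\simeq q^*\omega_{\cZ_P/G}$; the projection formula then gives $q_*\omega_{\cZ_P^{(n)}/G}\simeq\omega_{\cZ_P/G}\otimes_{\cO_{\cZ_P/G}}\nu_{\cZ_P}^*\big(R^{\otimes_{R^{W_P}}(n-1)}\big)$, and this is identified with $\omega_{\cZ_P/G}\otimes_{R\otimes_{R^{W_P}}R}\big(R^{\otimes_{R^{W_P}}(n+1)}\big)$ by absorbing the two outer tensor factors of $R^{\otimes_{R^{W_P}}(n+1)}$ into the two eigenvalue maps $\wt{\mf{g}}/G\to\mf{h}$ on the two factors of $\cZ_P=\wt{\mf{g}}\times_{\wt{\mf{g}}_P}\wt{\mf{g}}$. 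Applying $\iota_*$ yields the claimed formula. The assertion that this object lies in the heart of the standard $t$-structure is then immediate: $\nu_{\cZ_P}^*\big(R^{\otimes_{R^{W_P}}(n-1)}\big)$ is a vector bundle on $\cZ_P/G$, the scheme $\cZ_P$ is classical by semismallness of the partial Grothendieck--Springer map, hence $\omega_{\cZ_P/G}\simeq\cO_{\cZ_P/G}$ lies in the heart, and $\iota_*$ is $t$-exact.

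For (iii), I would unravel the counit of $(\phi_*,\phi^!)$ and the unit of $(\phi^*,\phi_*)$ through the identifications above, which reduces the multiplication (resp.\ the comultiplication) to the trace map (resp.\ the unit map) of the finite flat, generically \'etale extension $R^{W_P}\hookrightarrow R$. The trace, normalized through the Calabi--Yau identifications entering the proof of Proposition \ref{affinization calculation}, is the averaging $f\mapsto|W_P|^{-1}\sum_{w\in W_P}w\cdot f$; this normalization is in any case forced, since for a unital algebra the composite $R^{W_P}\hookrightarrow R\to R^{W_P}$ must be the identity, and dually the comultiplication is insertion of the constant function $R^{W_P}\hookrightarrow R$. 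I expect the main obstacle to be steps (i)--(ii): honestly carrying out the pushforward computation in the derived world while keeping Proposition \ref{affinization calculation} available at each stage, and verifying the base-change inputs, is where the real work lies; pinning down the normalization in (iii) is a secondary but genuine point.
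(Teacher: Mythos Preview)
Your proposal is correct and follows essentially the same strategy as the paper: reduce via monoidality of $\iota_*$, invoke Section~\ref{algebra explicit}(3), and feed in Proposition~\ref{affinization calculation}. The differences are packaging. For the object-level computation, you do an explicit induction on $m$ computing $\rho_{m,*}\cO_{\cZ_P^{(m)}/G}$ by iterated base change and then pull this back along $q$, whereas the paper uses the other formula in \ref{algebra explicit}(3), namely $p^!(\phi_*\phi^!)^{n-1}\omega_{\wt{\mf{g}}_P}$, and just iterates the single identity $\phi_*\phi^!\omega_{\wt{\mf{g}}_P}\simeq\omega_{\wt{\mf{g}}_P}\otimes_{\cO(\mf{h}/\!/W_P)}\cO(\mf{h})$; both are the same induction dressed differently. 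For the structure maps, the paper's argument is slightly sharper than yours: rather than appealing to unitality to pin down a normalization, it observes that on the regular semisimple locus the map $\phi$ is a $W_P$-torsor, so there the Serre trace is literally the averaging map, and this determines the map globally since the objects are torsion-free over $\mf{h}$. Your unitality argument constrains the normalization but does not by itself compute the map; the paper's rss-locus argument does both at once.
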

\begin{proof}
We prove the $!$-statement; the $*$-statement is analogous.  Since $\iota_*$ is monoidal, we can compute the $n$-fold convolution of $\omega_{\cZ_P}$ (or $\cO_{\cZ_P}$) on $\cZ_P$ rather than $\cZ$, i.e. $\iota_*\omega_{\cZ_P} \star \cdots \star \iota_*\omega_{\cZ_P} \simeq \iota_*(\omega_{\cZ_P} \star \cdots \star \omega_{\cZ_P})$.  The calculation of $\omega_{\cZ_P} \star \cdots \star \omega_{\cZ_P}$ follows from the general calculation in Section \ref{algebra explicit}, and by Proposition \ref{affinization calculation} which implies that
$$\phi_*\phi^!\omega_{\wt{\mf{g}}_P} \simeq \phi_* \omega_{\wt{\mf{g}}} \simeq \omega_{\wt{\mf{g}}_P} \otimes_{\cO(\mf{h}/\!/W_P)} \cO(\mf{h}).$$
The claims regarding the structure maps follow by computing the counit and unit:
$$\phi_* \phi^! \omega_{\wt{\mf{g}}_P} \simeq \omega_{\wt{\mf{g}}_P} \otimes_{\cO(\mf{h}/\!/W_P)} \cO(\mf{h}) \rightarrow \omega_{\wt{\mf{g}}_P}, \;\;\;\;\;\;\; \cO_{\wt{\mf{g}}_P} \rightarrow \phi_*\phi^* \cO_{\wt{\mf{g}}_P} \simeq \cO_{\wt{\mf{g}}} \otimes_{\cO(\mf{h}/\!/W_P)} \cO(\mf{h}).$$
The latter claim is clear; for the former, if $W$ is a finite group and $\phi: \wt{X} \rightarrow X$ is a $W$-torsor, then the Serre trace map $f_*\omega_{\wt{X}} \rightarrow \omega_X$ is locally given by the averaging map.  Thus, the same must be true for the counit for the adjunction $(\phi_*, \phi^!)$ for $\phi: \wt{\mf{g}} \rightarrow \wt{\mf{g}}_P$ after restricting to the regular semisimple locus, which determines the map.
\end{proof}

\subsubsection{}

Having understood $\iota_*\omega_{\cZ_P/G} \in \Coh(\cZ/G)^\heartsuit$ as an algebra object in an abelian category (i.e. in the notation of Definition \ref{alg in heart}, $\iota_*\omega_{\cZ_P/G} \in \Alg^\heartsuit(\Coh(\cZ/G))$), in view of Proposition \ref{algebra in heart prop} it is thus entirely determined by 1-categorical data.  Our next step is to characterize this 1-categorical object in terms of its ``monodromy'' bimodule structure, i.e. via the map of stacks $\cZ/G \rightarrow \mf{h} \times_{\mf{h}/\!/W} \mf{h}$.

Recall that the we have a canonical map $\iota_*\omega_{\cZ_P/G} \ra \omega_{\cZ/G}$ of algebra objects coming from the counit of an adjunction, and as we saw above these are objects in the heart of the natural $t$-structure.  We now show that it is a subobject and characterize it.  We note an analogous characterization was obtained in Proposition 5.8 of \cite{BL} in the strict case.
\begin{prop}\label{spectral whittaker characterization}
The algebra object (resp. coalgebra object)
$$\iota_*\omega_{\cZ_P/G} \in \IndCoh(\cZ/G)^{\heartsuit}, \;\;\;\;\;\;\;\iota_*\cO_{\cZ_P/G} \in \IndCoh(\cZ/G)^\heartsuit$$ 
is the minimal subobject of $\omega_{\cZ/G}$ (resp. quotient of $\cO_{\cZ/G}$) with the property that its restriction to the regular semisimple locus is precisely 
$$\left.\iota_*\omega_{\cZ_P/G}\right\vert_{\cZ^{rs}/G} = \bigoplus_{w \in W_P} \omega_{\cZ_w^{rs}/G}, \;\;\;\;\;\;\;\;\;\;  \left.\iota_*\cO_{\cZ_P/G}\right\vert_{\cZ^{rs}/G} = \bigoplus_{w \in W_P} \cO_{\cZ_w^{rs}/G}$$
where $\cZ_w^{rs} \subset \cZ^{rs}$ is the graph of the action of $w \in W$.
\end{prop}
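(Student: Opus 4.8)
<br>

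The plan is to leverage the explicit description of the $n$-fold convolutions from Corollary \ref{spectral whittaker coalgebra}, which already computes the underlying object $\iota_*\omega_{\cZ_P/G}$ as $\iota_*(\omega_{\cZ_P/G})$ with its algebra structure governed by the averaging projector $\cO(\mf{h}) \to \cO(\mf{h})^{W_P}$. First I would establish that $\iota_*\omega_{\cZ_P/G} \to \omega_{\cZ/G}$ is a subobject in $\IndCoh(\cZ/G)^\heartsuit$. Since both objects lie in the heart (by Proposition \ref{affinization calculation} and Proposition \ref{calabi yau}), it suffices to check injectivity, and since the statement is about coherent sheaves on the Noetherian stack $\cZ/G$, injectivity can be checked on a dense open subset provided the cokernel (a priori a coherent sheaf) has no sections supported away from a divisor — more precisely, $\cZ/G$ and $\cZ_P/G$ are reduced and the map $\mu_B^P\colon \wt{\mf{g}} \to \wt{\mf{g}}_P \times_{\mf{h}/\!/W_P} \mf{h}$ (equivalently $\iota$ composed with the relevant projection) is a closed immersion which is an isomorphism over the regular semisimple locus; one shows $\iota_*\omega_{\cZ_P/G}$ has no associated primes in the non-reduced/non-rss locus by using that $\wt{\mf{g}}_P\times_{\mf{h}/\!/W_P}\mf{h}$ is normal, as proven inside Proposition \ref{affinization calculation}, so $\omega$ (which is $\cO$ up to the shift killed by stackiness) is torsion-free and a section vanishing on a dense open vanishes. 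The restriction to $\cZ^{rs}/G$ is computed directly: over the regular semisimple locus $\wt{\mf{g}}^{rs} \to \mf{h}$ is a $W$-torsor and $\wt{\mf{g}}_P^{rs} \to \mf{h}/\!/W_P$ is a $W_P$-torsor, so $\cZ_P^{rs} = \wt{\mf{g}}^{rs}\times_{\wt{\mf{g}}_P^{rs}}\wt{\mf{g}}^{rs}$ decomposes as $\bigsqcup_{w\in W_P} \cZ_w^{rs}$ where $\cZ_w^{rs}$ is the graph of $w$, giving the asserted direct sum decomposition of $\iota_*\omega_{\cZ_P/G}|_{\cZ^{rs}/G}$ (and dually for $\cO$).

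Next I would prove the minimality statement. Suppose $\cF \subseteq \omega_{\cZ/G}$ is a subobject in the heart whose restriction to $\cZ^{rs}/G$ equals $\bigoplus_{w\in W_P}\omega_{\cZ_w^{rs}/G}$. We want $\iota_*\omega_{\cZ_P/G} \subseteq \cF$ inside $\omega_{\cZ/G}$. Since $\iota_*\omega_{\cZ_P/G}$ is, as just noted, torsion-free relative to the reduced closed subscheme $\cZ_P/G$ — more precisely $j_*j^*$ is injective on it where $j$ is the inclusion of the rss locus — and the two subsheaves agree after applying $j^*$, the containment $\iota_*\omega_{\cZ_P/G} \subseteq j_*j^*\iota_*\omega_{\cZ_P/G} = j_*(\bigoplus_w \omega_{\cZ_w^{rs}/G})$ together with $\cF \subseteq j_*j^*\cF = j_*(\bigoplus_w\omega_{\cZ_w^{rs}/G})$ shows both are subsheaves of the same pushforward, so the containment $\iota_*\omega_{\cZ_P/G}\subseteq\cF$ is equivalent to the containment of the corresponding subsheaves after $j_*$, which holds by hypothesis (they are equal there). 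The only subtlety is that $\cF$ is assumed a subobject of $\omega_{\cZ/G}$, not a priori of $j_*(\cdots)$; but $\omega_{\cZ/G}$ is torsion-free along $\cZ^{rs}$ (again because $\cZ$ is reduced — it is honestly underived and a fiber product of smooth schemes of expected dimension — and $\omega$ is $\cO$ up to shift, hence torsion-free as $\cZ/G$ is Cohen-Macaulay, in fact Gorenstein), so $\omega_{\cZ/G}\hookrightarrow j_*j^*\omega_{\cZ/G}$ and thus $\cF\hookrightarrow j_*j^*\cF$. This yields minimality. The coalgebra/quotient statement is dual: one uses that $\iota_*\cO_{\cZ_P/G}$ is a quotient of $\cO_{\cZ/G}$ (pull-back along $\iota$ composed with the comonad unit) and runs the same argument with torsion-free replaced by "no embedded/torsion quotients", i.e. that a quotient of $\cO_{\cZ/G}$ supported everywhere is determined by its generic behavior since $\cZ/G$ is reduced.

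The main obstacle I anticipate is the torsion-freeness / "no embedded primes" input needed to reduce the comparison of subobjects (resp. quotients) of $\omega_{\cZ/G}$ (resp. $\cO_{\cZ/G}$) to the regular semisimple locus. This requires knowing that $\cZ/G$ and $\cZ_P/G$ are reduced and Cohen–Macaulay (so that $\omega$, which equals $\cO$ up to the shift eliminated by the $G$-stackiness per the Corollary following Proposition \ref{affinization calculation}, has no associated primes outside the generic points) and that the regular semisimple locus is dense with complement of codimension $\geq 1$ in every component; the reducedness of $\cZ$ follows from the semismallness dimension count already invoked before Proposition \ref{affinization calculation}, the reducedness of $\cZ_P$ similarly, and Cohen–Macaulayness from the fact that these are fiber products of smooth schemes realizing their expected dimension, hence local complete intersections. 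Once these geometric facts are marshalled, the comparison argument is formal. A secondary, more bookkeeping-heavy point is verifying that the decomposition $\cZ_P^{rs}=\bigsqcup_{w\in W_P}\cZ_w^{rs}$ matches the $W_P$-indexing coming from the algebra structure in Corollary \ref{spectral whittaker coalgebra} compatibly, i.e. that the "graph of $w$" components are exactly the pieces on which the averaging projector $\cO(\mf{h})\to\cO(\mf{h})^{W_P}$ acts as the corresponding matrix unit; this is a direct computation with the $W_P$-torsor structure and the standard identification $\mathrm{Fun}(W_P)\otimes_{\cdots}\cong \prod_{W_P}$, which I would state without grinding through.
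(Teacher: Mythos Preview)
Your minimality argument has a genuine gap. You claim that since both $\iota_*\omega_{\cZ_P/G}$ and $\cF$ embed into $j_*j^*\omega_{\cZ/G}$ and have the same image $j_*(\bigoplus_w\omega_{\cZ_w^{rs}/G})$ there, the containment $\iota_*\omega_{\cZ_P/G}\subseteq\cF$ follows. But two subsheaves of $\omega_{\cZ/G}$ with the same $j^*$-restriction need not be comparable: take $S=\mathbb{A}^1$, $U=S\setminus\{0\}$, $C=\cO_S$, $A=\cO_S$, $B=(t)$; both have $j^*$-restriction $\cO_U$, both embed in $j_*\cO_U$, yet $A\not\subset B$. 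The same phenomenon occurs in your setting: multiplying $\iota_*\omega_{\cZ_P/G}$ by the ideal of any divisor supported away from the rss locus produces a strictly smaller subobject with the same rss restriction. Torsion-freeness of the ambient sheaf only lets you detect \emph{equality} of subsheaves from their behavior on a dense open, not containment in an arbitrary direction.

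The paper's argument is different and hinges on a property you did not invoke: \emph{flatness of $\cZ_P$ and $\cZ$ over $\mf{h}$} (via either projection), established by miracle flatness from the semismallness dimension count. The argument then runs on the $\cO$ side: given any closed subscheme $Z'\subset\cZ$ with $Z'|_{rs}=\cZ_P|_{rs}$, replace $Z'$ by $Z'\cap\cZ_P$; the kernel of $\cO_{\cZ_P}\twoheadrightarrow\cO_{Z'\cap\cZ_P}$ is supported over $\mf{h}\setminus\mf{h}^{rs}$, but flatness forces every associated point of $\cO_{\cZ_P}$ to lie over the generic point of $\mf{h}$, hence in the rss locus, so the kernel vanishes and $\cZ_P\subset Z'$. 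The $\omega$ statement is then obtained by Grothendieck duality rather than argued directly. Note in particular that what is actually proven (and used downstream, cf.\ the phrase ``unique flat quotient'' in the proof of Theorem \ref{thm monodromic whittaker}) is minimality among closed subschemes, equivalently among $\mf{h}$-flat quotients of $\cO_{\cZ/G}$; your Cohen--Macaulay/torsion-free inputs are in the right spirit but do not substitute for this flatness over the base $\mf{h}$.

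A smaller point: your injectivity argument is more roundabout than necessary. The paper simply observes that $\cO_{\cZ/G}\twoheadrightarrow\iota_*\cO_{\cZ_P/G}$ is surjective (closed immersion of classical schemes), then dualizes; since $\cZ/G$ is Gorenstein of dimension $0$, examining the dual triangle shows $\iota_*\omega_{\cZ_P/G}\to\omega_{\cZ/G}$ is injective in the heart.
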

\begin{proof}
That the map $\cO_{\cZ/G} \rightarrow \iota_*\cO_{\cZ_P/G}$ is surjective follows since both $\cZ$ and $\cZ_P$ are classical schemes and $\iota$ is a closed immersion.  Thus we have an exact triangle
$$\cK \longrightarrow \cO_{\cZ/G} \longrightarrow \iota_*\cO_{\cZ_P/G}$$
with all three objects living in the heart.  Applying the Grothendieck duality functor, we have
$$\iota_*\omega_{\cZ_P/G} \longrightarrow \omega_{\cZ/G} \longrightarrow \bD_{\cZ/G}(\cK).$$
Since $\cK$ is in the heart and $\cZ/G$ is Gorenstein of dimension 0 (i.e. its dualizing complex lives in the heart), we have $\bD_{\cZ/G}(\cK) \in \Coh(\cZ/G)^{\geq 0}$.  In particular, since $\iota_*\omega_{\cZ_P/G}$ and $\omega_{\cZ/G}$ live in the heart, the left map must be injective (and in particular we also conclude that $\bD_{\cZ/G}(\cK)$ lives in the heart). 

We now check the minimality property.  First, by miracle flatness and a dimension count using semi-smallness of the map $\wt{\mf{g}} \rightarrow \wt{\mf{g}}_P$, we deduce that $\cZ_P$ and $\cZ$ are flat over $\mf{h}$, using either projection.  Next, we note that $\cZ_P^{rs}$ is the union of the graphs of action by $w$ for $w \in W_P$.

Now, let us work in the following general setting.  Let $S$ be an irreducible locally Noetherian scheme with a dense open subscheme $U \subset S$.  Let $X$ be a locally Noetherian scheme flat over $S$, and $Z \subset S$ a closed subscheme flat over $S$.  Then, we claim that $Z$ is the minimal closed subscheme of $X$ whose restriction to $U$ Is $Z \times_S U \subset X \times_S U$.

To see this, let $Z'$ be any other closed subscheme such that $Z' \times_S U = Z \times_S U$ as closed subschemes of $X \times_S U$.  By replacing $Z'$ with $Z' \cap Z$, we can assume that $Z' \subset Z$.  Now, we have a surjection $\cO_Z \twoheadrightarrow \cO_{Z'}$ which is an isomorphism over $U$, thus its kernel is supported over the complement of $U$.  In particular, $Z$ has an associated point over the complement of $U$.  Since flat morphisms of locally Noetherian schemes take associated points to associated points, and the only associated point of $S$ is generic, this cannot be.

Now, applying this to our situation, where $S = \mf{h}$ and $U = \mf{h}^{rs}$, we note that by flatness of $\cZ_P$ and $\cZ$, the claimed characterization follows for $\cO_{\cZ_P}$.  The claim for $\omega_{\cZ_P}$ follows by Grothendieck duality.
\end{proof}

\begin{rmk}
In particular, $\iota_* \omega_{\cZ_P/G} \simeq \iota_*\cO_{\cZ_P/G}$ is both a quotient and subobject of $\omega_{\cZ/G} \simeq \cO_{\cZ/G}$.  The map $$\iota_*\omega_{\cZ_P/G} \rightarrow  \iota_*\omega_{\cZ/G} \simeq \iota_*\cO_{\cZ/G} \rightarrow \iota_*\cO_{\cZ_P/G}$$arising by composing the inclusion and quotient is not a splitting; for example, when $B = P$ it takes the constant function $1$ to the pullback of the function $\prod_{\alpha > 0} (1 - \alpha) \in \cO(\mf{h})$ cutting out the root hyperplanes.
\end{rmk}

\medskip

\subsubsection{}\label{sec completed spectral coalgebra}

The Steinberg scheme that appears naturally as a monodromic spectral affine Hecke category is not $\cZ = \wt{\mf{g}} \times_{\mf{g}} \wt{\mf{g}}$ but its completion along nilpotent elements; thus we must base-change the above set-up to this formal completion.  Let $\wh{\mf{h}/\!/W}_{\{0\}}$ denote the formal completion of $\mf{h}/\!/W$ along $\{0\}$. We have the following Cartesian squares of formal stacks (see Example \ref{complete stack}):
$$\begin{tikzcd}
\wh{\cZ_P}_{\cN}/G \arrow[r, "\wh{\iota}", hook] \arrow[d, hook] &\wh{\cZ}_{\cN}/G \arrow[r] \arrow[d, hook, "\wh{i}"] &\wh{\mf{h}}_{\{0\}} \arrow[r] \arrow[d, hook] &\wh{\mf{h}/\!/W}_{\{0\}} \arrow[d, hook] \\
\cZ_P/G \arrow[r, "\iota", hook] &\cZ/G \arrow[r] &\mf{h} \arrow[r] &\mf{h}/\!/W.
\end{tikzcd}$$
We let $\Gamma_{\cN} = \wh{i}_*\wh{i}^{!}: \IndCoh(\cZ/G) \rightarrow \IndCoh(\cZ/G)$ denote the local cohomology functor.  To apply the Grothendieck existence theorem (Theorem \ref{groth exist}), we use the notation
$$\mf{h}^\wedge/\!/W := \Spec \cO(\wh{\mf{h}/\!/W}_{\{0\}}), \;\;\;\;\;\;\;\;\;\; \mf{h}^\wedge =  \mf{h} \times_{\mf{h}/\!/W} \mf{h}^\wedge/\!/W = \Spec \cO(\wh{\mf{h}}_{\{0\}})$$
and, letting $j: \mf{h}^\wedge/\!/W \hookrightarrow \mf{h}/\!/W$ denote the natural map of affine schemes, we choose the dualizing complex to be the $*$-restriction as in Example \ref{gs duality ex}:
$$\omega_{\mf{h}^\wedge/\!/W} := j^* \omega_{\mf{h}/\!/W}.$$
We likewise define $\cZ_P^\wedge$ and $\cZ^\wedge$  by base change; since $\mf{h}, \cZ_P, \cZ$ are flat over $\mf{h}/\!/W$, the resulting schemes are underived.

\medskip

We now  prove the following analogue of Proposition \ref{spectral whittaker characterization} in this formally completed setting.  Given a stable $\infty$-category $\cat{C}$ with a $t$-structure, we use the notation $\cat{C}^\heartsuit[-r]$ to denote the full abelian subcategory of objects cohomologically concentrated in degree $r$.
\begin{cor}\label{completed spectral coalgebra}
We have a natural equivalence of left $\IndCoh(\wh{\cZ}_{\cN})$-module categories
$$\iota_* \omega_{(\wh{\cZ_P})_{\cN}/G}\dmod_{\IndCoh(\wh{\cZ}_{\cN}/G)} \simeq \IndCoh((\wh{\cZ_P})_{\cN}/G).$$
The algebra object $\iota_* \omega_{(\wh{\cZ_P})_{\cN}/G}$ is cohomologically concentrated in degree $\dim(H)$, i.e.
$$\Gamma_{\cN}(\iota_*\omega_{\cZ_P/G}) \simeq \iota_* \omega_{(\wh{\cZ_P})_{\cN}/G} \in \IndCoh(\cZ/G)^\heartsuit[-\dim(H)].$$
The same is true for the algebra object $\omega_{\wh{\cZ}_{\cN}/G}$, and the canonical map
$$\left(\iota_* \omega_{(\wh{\cZ_P})_{\cN}/G} \hookrightarrow \omega_{\wh{\cZ}_{\cN}/G} \right) \in \IndCoh(\cZ/G)^\heartsuit[-\dim(H)]$$
is injective.  Moreover, this map corresponds under the Grothendieck existence equivalence 
%$$\hCoh(\wh{\cZ}_{\cN}/G) \simeq \Coh(\cZ^\wedge/G)$$ 
to an injective map of objects
\begin{equation}\label{food}
    \left(\omega_{\cZ_P^\wedge/G} \hookrightarrow \omega_{\cZ^\wedge/G}\right) \in \Coh(\cZ^\wedge/G)^\heartsuit
    \end{equation}
or, Grothendieck dually, to a surjective map of objects
\begin{equation}\label{hungry}\left(\cO_{\cZ^\wedge/G} \twoheadrightarrow \cO_{\cZ_P^\wedge/G} \right) \in \Coh(\cZ^\wedge/G)^\heartsuit
\end{equation}
satisfying the characterizing property in Proposition \ref{spectral whittaker characterization}.
\end{cor}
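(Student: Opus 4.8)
The plan is to deduce each clause from its non-completed counterpart --- Corollary~\ref{spectral whittaker convolution} and Proposition~\ref{spectral whittaker characterization} --- by base change along the flat map $\mf{h}^\wedge/\!/W \to \mf{h}/\!/W$, the one genuinely new ingredient being a local cohomology computation that pins down the cohomological degree. First I would establish the module equivalence. Keeping the setup of Section~\ref{sec algebra obj} with $X = \wt{\mf{g}}/G$, $Y = \wt{\mf{g}}_P/G$ and $S = \mf{g}/G$, but taking the support locus $S_0 \subset S$ to be the fiber over $\{0\} \in \mf{h}/\!/W$, so that $\Gamma_{\Lambda_0} = \Gamma_{\cN}$ and $\IndCoh_{\Lambda_S}(\cZ/G) \simeq \IndCoh(\wh{\cZ}_{\cN}/G)$: the map $\phi$ is still surjective on geometric points, so Proposition~\ref{pre spectral monadic} applies and Proposition~\ref{spectral monadic} identifies the monad ${}^r\phi^!\,{}^r\phi_*$ on $\IndCoh(\wh{\cZ}_{\cN}/G)$ with $!$-convolution by $\Gamma_{\cN}\iota_*\omega_{\cZ_P/G}$, whence Barr--Beck--Lurie gives the stated equivalence of module categories. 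This is simply Corollary~\ref{spectral whittaker convolution} with $\Gamma_{\cN}$ applied throughout, which is legitimate because $\Gamma_{\cN}$ commutes with every pullback and pushforward functor for maps over $S$. En route one also obtains the identifications $\Gamma_{\cN}\iota_*\omega_{\cZ_P/G} \simeq \iota_*\omega_{(\wh{\cZ_P})_{\cN}/G}$ and $\Gamma_{\cN}\omega_{\cZ/G}\simeq \omega_{\wh{\cZ}_{\cN}/G}$: by base change $\Gamma_{\cN}\iota_* \simeq \iota_*\Gamma_{\cN}$, and the dualizing complex of a formal completion is by definition the $!$-restriction of the ambient one, so $\wh{i}_*\wh{i}^!\,\omega_{\cZ/G} \simeq \wh{i}_*\,\omega_{\wh{\cZ}_{\cN}/G}$, and similarly over $\cZ_P$.

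Next I would prove the cohomological concentration. By Proposition~\ref{calabi yau} the stacks $\cZ/G$ and $\cZ_P/G$ are Calabi--Yau of dimension $0$, so $\omega_{\cZ_P/G}\simeq \cO_{\cZ_P/G}$ and likewise for $\cZ$, and it suffices to treat the structure sheaves. As recorded in the proof of Proposition~\ref{spectral whittaker characterization}, $\cZ$ and $\cZ_P$ are classical and flat over $\mf{h}$; since $\cO(\mf{h})$ is free over $\cO(\mf{h})^W = \cO(\mf{h}/\!/W)$ by Chevalley, they are therefore also flat over $\mf{h}/\!/W \cong \bA^{\dim(H)}$ --- a flatness already invoked in the definition of $\cZ^\wedge,\cZ_P^\wedge$. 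Hence the pullbacks of the $\dim(H)$ coordinate functions form a regular sequence on $\cZ_P$, so the local cohomology $\Gamma_{\cN}\,\cO_{\cZ_P/G}$, being the stable Koszul (\v{C}ech) complex of this sequence, is concentrated in cohomological degree $\dim(H)$; applying the $t$-exact functor $\iota_*$ yields the concentration of $\iota_*\omega_{(\wh{\cZ_P})_{\cN}/G}$ in degree $\dim(H)$, and the case $P = G$ (i.e.\ $\cZ$ in place of $\cZ_P$) gives the same for $\omega_{\wh{\cZ}_{\cN}/G}$.

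Finally I would treat the injectivity and the Grothendieck-existence reformulation. The base changes $\cZ^\wedge, \cZ_P^\wedge$ are again classical, flat over $\mf{h}^\wedge/\!/W$, and Gorenstein of dimension $0$ over $G$ (all preserved by flat base change), and the closed immersion $\cZ_P^\wedge \hookrightarrow \cZ^\wedge$ of classical schemes gives $\cO_{\cZ^\wedge/G}\twoheadrightarrow \cO_{\cZ_P^\wedge/G}$ surjective in the heart --- this is \eqref{hungry} --- and dualizing via $\bD_{\cZ^\wedge/G}$ (and using that $\cZ^\wedge/G,\cZ_P^\wedge/G$ are Gorenstein of dimension $0$, so the dual of a surjection in the heart is an injection, exactly as in the argument of Proposition~\ref{spectral whittaker characterization}) gives the injection \eqref{food}. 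The Grothendieck existence equivalence of Theorem~\ref{groth exist} applies ($\cZ^\wedge/G$ is projective-over-affine over $\mf{h}^\wedge/\!/W$, which admits the dualizing complex $j^*\omega_{\mf{h}/\!/W}$ fixed above, and hypothesis (e) holds for a completed ring); it is $t$-exact and compatible with $!$-pullback, proper pushforward and Grothendieck--Serre duality, so it carries \eqref{food}, pushed forward to $\cZ^\wedge/G$, to the asserted injection $\iota_*\omega_{(\wh{\cZ_P})_{\cN}/G}\hookrightarrow \omega_{\wh{\cZ}_{\cN}/G}$ in $\IndCoh(\cZ/G)^\heartsuit[-\dim(H)]$. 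That this map satisfies the characterizing property of Proposition~\ref{spectral whittaker characterization} --- minimality as a subobject of $\omega_{\cZ/G}$ (resp.\ quotient of $\cO_{\cZ/G}$) with restriction $\bigoplus_{w\in W_P}\omega_{\cZ_w^{rs}/G}$ to the regular semisimple locus --- persists after the flat base change, since the regular semisimple locus of $\cZ^\wedge$ is the preimage of that of $\cZ$ and $\mf{h}^\wedge$ remains irreducible with regular semisimple locus dense, so the flatness argument of \loccit\ applies verbatim. The step I expect to be the main obstacle is the cohomological concentration, being the only point that is not pure formal propagation: it hinges on establishing flatness of the Steinberg schemes over $\mf{h}/\!/W$ and on correctly tracking the degree shift, whereas the remainder is transporting the results of Section~\ref{spectral hecke algebra} through the flat completion and through Grothendieck existence.
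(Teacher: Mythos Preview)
Your proposal is correct and follows essentially the same route as the paper: the module equivalence comes from Proposition~\ref{spectral monadic} with the support condition $\cN$ imposed (the paper phrases this as ``the local cohomology functor is monoidal''), the cohomological concentration is the Koszul/regular-sequence computation for local cohomology along $\{0\}$ of codimension $\dim(H)$ applied to the flat sheaves $\cO_{\cZ_P}$, and the injectivity and characterizing property are obtained by Grothendieck-dualizing the evident surjection \eqref{hungry} and transporting through Grothendieck existence. One small slip: the Grothendieck existence equivalence $\wh{i}^{\,!}:\Coh(\cZ^\wedge/G)\to \hCoh(\wh{\cZ}_{\cN}/G)$ is not literally $t$-exact but rather shifts by $\dim(H)$ (as your own degree tracking shows), though this does not affect your argument since a shifted equivalence still carries monomorphisms to monomorphisms.
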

\begin{proof}
The first claim follows directly since the local cohomology functor is monoidal by Proposition \ref{def coalgebra O} and Corollary \ref{spectral whittaker convolution}.  Next, we establish the claim that $\Gamma_{\cN}(\iota_*\omega_{\cZ_P/G})$ is cohomologically concentrated in degree $r := \dim(H)$, which likewise gives the same for $\Gamma_{\cN}(\omega_{\cZ/G})$.  By base change and $t$-exactness of $\iota_*$, it suffices to check the claim for $\Gamma_{\cN}(\omega_{\cZ_P})$.  We may ignore $G$-equivariance, since pullback along $\cZ_P \rightarrow \cZ_P/G$ (or $(\wh{\cZ_P})_{\cN} \rightarrow (\wh{\cZ_P})_{\cN}/G$) is $t$-exact, i.e. we wish to check that $\Gamma_{\cN}(\omega_{\cZ_P})$ is cohomologically concentrated in degree $r$.

We make the following general claim: let $X \rightarrow S$ be a map of schemes, and let $S_0 \subset S$ be a lci closed subscheme of codimension $r$.  Let $\Gamma_0$ denote the local cohomology functor with respect to $S_0$ (or $X_0 = X \times_S S_0$).  Then, $\Gamma_0$ sends $S$-flat sheaves in $\QCoh(X)^\heartsuit$ to objects in $\QCoh(X)^\heartsuit[-r]$, i.e. complexes cohomologically concentrated in degree $r$, and the functor $H^r \circ \Gamma_0$ is exact on short exact sequences of $S$-flat sheaves.

In particular, since the claim is local on $S$, we may assume $S = \Spec(A)$ is affine, and define the closed subscheme $S_0 = \Spec(A/f_1, \ldots, f_r)$ by a regular sequence $f_1, \ldots, f_r$.  Then, we have a system of formal neighborhoods
$$S_0 = \Spec(A/f_1, \ldots, f_r) \hookrightarrow  \cdots \hookrightarrow S_n = \Spec(A/f_1^{n+1}, \ldots, f_r^{n+1}) \hookrightarrow \cdots \hookrightarrow S = \Spec(A)$$
where each $i_n: S_n \hookrightarrow S$.  Note that we have
$$\Gamma_0(\cF) = \colim_n i_{n*} i_n^! \cF.$$
Thus, it suffices to show that $i_{n*}i_n^! \cF$ is cohomologically concentrated in degree $r$, and that the transition maps $H^r(i_{n*}i_n^! \cF) \rightarrow H^r(i_{n+1*} i_{n+1}^! \cF)$ are injective (i.e. so that the derived colimit is the colimit).  Both claims follow by direct calculation, i.e. by flatness of $\cF$ we can check the claims on the Koszul resolutions themselves, whereby the vanishing claim follows since Koszul resolutions are self-dual up to  a shift, and injectivity follows since the transition map is given on $n$th cohomology of Koszul complexes by multiplication by $f_1 \cdots f_r$ (which must be a non-zerodivisor).

Finally, in our setting, we apply the above to the setting where $X = \cZ_P$, which is flat over $S = \mf{h}$ on either side.

Next, we check the conditions of Theorem \ref{groth exist}, taking $S = \mf{h}^\wedge/\!/W$.  Conditions (a) and (c) may be easily checked; for (b) we let $\omega_S = \cO_{\mf{h}}[\dim(H)]$.  For (d), we note that $\wh{\cZ}$ is proper over $\Spec \cO(\wh{\mf{g}}_{\cN})$, whose $G$-equivariant global sections is equal to $\cO(\wh{\mf{h}})$.  For (e), we use the classical Grothendieck existence theorem. 

It is automatic that Theorem \ref{groth exist} sends the dualizing sheaf to the dualizing sheaf; moreover, by the Calabi-Yau property in Proposition \ref{calabi yau}, we have that the dualizing sheaves are concentrated in degree 0.  To see that the map is injective, it suffices to show that the cone of \eqref{food} is also in degree 0.  To see this, we can pass through Grothendieck duality, i.e. it suffices to check that the map \eqref{hungry} is surjective, but this is clear.

Finally, having passed through the Grothendieck existence theorem, the arguments in Proposition \ref{spectral whittaker characterization} carry over in the same way to the analogous stacks over $\mf{h}^\wedge/\!/W$. 
\end{proof}

\begin{rmk}
Note that by Lemma \ref{wcoh prop}, our algebra object of interest is $\omega_{\wh{\cZ}_{\cN}}$ is no longer a compact object of $\IndCoh_{\cN}(\cZ)$; however it does live in the small subcategory $\hCoh(\wh{\cZ}_{\cN})$, thus is compact in the renormalization $\wh{\IndCoh}_{\cN}(\cZ) := \Ind(\hCoh_{\cN}(\cZ))$.   Furthermore, the algebra object of interest is $\Gamma_{\cN}(\iota_* \omega_{\cZ_P/G}) \simeq \iota_* \omega_{\wh{\cZ_P}_{\cN}/G}$, while the coalgebra object is $\Gamma_{\cN}(\iota_*\cO_{\cZ_P/G})$.  This latter object is \emph{not} equivalent to the Grothendieck-Serre dual object $\cO_{\wh{\cZ_P}_{\cN}}$; recall (see Remark \ref{convolution duality}) that the relationship between $*$ and $!$-convolutions is given by twisting by the dualizing bundle, not by duality.
\end{rmk}

\subsection{Centrally monodromic affine Hecke categories and Koszul duality}\label{sec spectral monodrom}

\sss
We begin by discussing some generalities on Koszul duality from Section 3 in \cite{preygel MF} (see in particular Construction 3.1.5), in the setting of the convolution structures in Section \ref{convolution sec}.

As before, we fix a smooth, perfect, locally Noetherian stack $S$, and let $W, Y$ be smooth, perfect stacks which are proper over $S$.  In addition, we fix a vector space $V$, and assume that $Y$ has a map to $V$ such that the derived fiber
$$Y_0 := Y \times_V \{0\}$$ 
is still smooth (it is automatically perfect and proper over $S$).  We define
$$X := W \times_S Y, \;\;\;\;\;\;\;\;\;\; Z := Y \times_S Y.$$
The category $\IndCoh(X)$ will be considered as a right module category for $\QCoh(V)$, and $\IndCoh(Z)$ as a $\QCoh(V)$-bimodule category.  Via convolution, the category $\IndCoh(X)$ has a right action of the monoidal category $\IndCoh(Z)$ compatible with the actions of $\QCoh(V)$, i.e. the action map descends
$$\IndCoh(X) \otimes_{\QCoh(V)} \IndCoh(Z) \longrightarrow \IndCoh(X).$$
Furthermore, we consider the derived fibers
$$X_0 := W \times_S Y_0, \;\;\;\;\;\;\;\;\;\; {}_0Z_0 := Y_0 \times_S Y_0.$$
By Theorem 1.2.4 in \cite{BNP convolution}, $\IndCoh(X_0)$ is a module category for the monoidal $\IndCoh({}_0Z_0)$.

Moreover, $\IndCoh(X_0)$ attains a right $\Mod(\Sym V[-2])$-module structure, and $\IndCoh({}_0Z_0)$ a compatible bimodule structure, via Koszul duality.
That is, letting $i: X_0 \hookrightarrow X$ be the inclusion map, the $!$-restriction may be identified
\begin{equation}\label{chickadee}
\begin{tikzcd}[column sep=20ex]
    \Coh(X) \arrow[r, "i^!"] \arrow[d, equals]&  \Coh(X_0) \arrow[d, "{\text{\cite[Thm. 1.2.4]{BNP convolution}}}"]\\
    \Coh(X) \arrow[r, "{\cF \mapsto (M \mapsto M \otimes \cF)}"'] &  \cat{Fun}_{\Coh(V)}(\Coh(\{0\}), \Coh(X)).
\end{tikzcd}
\end{equation}
The category $\Coh(\{0\} \times_V \{0\}) \simeq \cat{End}_{\Coh(V)}(\Coh(\{0\})$ acts naturally on this category by convolution.  Thus, the ind-completion $\IndCoh(\{0\} \times_V \{0\})$ acts on $\IndCoh(X_0)$.  Moreover, we have a monoidal Koszul duality equivalence
\begin{equation}\label{kd eq}
\begin{tikzcd}
((\IndCoh(\{0\} \times_V \{0\}), \star) \arrow[r, "\simeq"] & \Sym V[-2]\mod, \otimes).
\end{tikzcd}\end{equation}
That is, $\IndCoh(X_0)$ has a natural right $\Sym V[-2]$-linear structure arising from the $\Sym V^*$-linear structure on $\IndCoh(X)$, and likewise for $\IndCoh({}_0Z_0)$.

By Corollary 3.2.4 of \cite{preygel MF}\footnote{The setting in \emph{op. cit.} is for $X$ smooth, but this is not used in the proof.} (see also Proposition A.6 of \cite{ihes}) the functor
$$\IndCoh(X_0) \tens{\Sym V[-2]} \Vect_k \hookrightarrow \IndCoh(X)$$
is fully faithful, with essential image $\IndCoh_{X_0}(X)$.  Furthermore, this category has a module structure for
$$\End_{\Perf(\Sym V[-2])}(\Perf(k)) \simeq \Mod_{f.d.}(\Sym V[-1]).$$
The Koszul dual algebra to $\Sym V[-1]$ is $\End_{\Sym V[-1]}(k, k) \simeq \cO(\wh{V})$ where $\wh{V}$ is the completion of $V$ at $0$; then, similarly to (\ref{kd eq}) this gives rise to a $\cO(\wh{V})$-linear structure on $\IndCoh_{X_0}(X) \simeq \IndCoh(\wh{X}_{X_0})$ which agrees with the original $\cO(\wh{V})$-module structure arising via pullback.

By compatibility of the actions, the $\Mod(\Sym V[-2])$-action on $\IndCoh(X_0)$ may also be realized via the right action on the monoidal unit of $\IndCoh({}_0Z_0)$ followed by action of $\IndCoh({}_0Z_0)$ on $\IndCoh(X_0)$.  In particular, there is a map
\begin{equation}\label{central mon hh eq}
    \Sym V[-2] \longrightarrow \End^\bullet_{{}_0Z_0}(1_{\IndCoh({}_0Z_0)}) \hookrightarrow HH^\bullet(\IndCoh(X_0)).
\end{equation}

\subsubsection{} In this section we set
$$\cZ_P := \wt{\cN}_P \times_{\mf{g}} \wt{\cN}_P, \;\;\;\;\;\;\;\;\;\; \cZ'_P := \wt{\cN}_P' \times_{\mf{g}} \wt{\cN}_P'$$
and denote the corresponding partial affine Hecke categories and monodromic affine Hecke category by
$$\fH_P := \IndCoh(\cZ_P/G), \;\;\;\;\;\;\;\;\;\; \fH_P' := \IndCoh(\cZ'_P/G), \;\;\;\;\;\;\;\;\;\fH := \IndCoh(\wh{\cZ}/G).$$
Consider the $(\fH, \fH_P)$ and $(\fH, \fH_P')$-bimodule categories
$$\IndCoh_{\cN}(\wt{\mf{g}}/G \times_{\mf{g}/G} \wt{\cN}_P/G), \;\;\;\;\;\;\;\;\;\; \IndCoh_{\cN}(\wt{\mf{g}}/G \times_{\mf{g}/G} \wt{\cN}_P'/G).$$ 
We now establish a Koszul dual relationship between the two as follows.
\begin{prop}\label{spectral central}
Considering the right $\fH_P$-action on $\IndCoh_{\cN}(\wt{\mf{g}}/G \times_{\mf{g}/G} \wt{\cN}_P/G)$, (\ref{central mon hh eq}) gives an injection
$$\Sym \mf{z}_{L_P}[-2] \hookrightarrow \End(1_{\fH_P}) \hookrightarrow HH^\bullet_{\fH}(\IndCoh_{\cN}(\wt{\mf{g}}/G \times_{\mf{g}/G} \wt{\cN}_P/G))$$
identifying $\Sym \mf{z}_{L_P}[-2]$ with the subalgebra of $\End(1_{\fH_P})$ generated in degree 2.  In turn, this gives rise to an equivalence of left $\fH$-module categories
$$\IndCoh_{\cN}(\wt{\mf{g}}/G \times_{\mf{g}/G} \wt{\cN}_P'/G) \simeq \IndCoh_{\cN}(\wt{\mf{g}}/G \times_{\mf{g}/G} \wt{\cN}_P/G) \tens{\Sym \mf{z}_{L_P}[-2]} \Vect_k.$$
\end{prop}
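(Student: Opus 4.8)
The plan is to recognize the assertion as a direct application of the Koszul--duality formalism set up at the beginning of this subsection (equations \eqref{chickadee}, \eqref{kd eq}, \eqref{central mon hh eq}), followed by an invocation of Corollary~3.2.4 of \cite{preygel MF}. Concretely, one takes
$$S = \mf{g}/G, \qquad W = \wt{\mf{g}}/G, \qquad V = \mf{z}_{L_P}, \qquad Y = \wt{\cN}_P'/G,$$
with the structure map $Y \to V$ induced by the linear projection $\mf{z}_P \to \mf{z}_P/\mf{n}_P \simeq \mf{z}_{L_P}$; since this projection is flat, the derived fibre $Y_0 := Y \times_V \{0\}$ agrees with the classical fibre $\wt{\cN}_P/G$, which is smooth. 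The standing hypotheses are then readily checked: $W$ and $Y$ are smooth and perfect, being quotients by $G$ of $G$-equivariant vector bundles over $G/B$ and $G/P$, and they are proper over $S$. With these choices $X := W \times_S Y = \wt{\mf{g}}/G \utimes{\mf{g}/G} \wt{\cN}_P'/G$, $X_0 := W \times_S Y_0 = \wt{\mf{g}}/G \utimes{\mf{g}/G} \wt{\cN}_P/G$, $Y_0 \times_S Y_0 = \cZ_P/G$, $Y \times_S Y = \cZ'_P/G$, so that $\IndCoh(Y_0 \times_S Y_0) = \fH_P$ and $\IndCoh(Y \times_S Y) = \fH_P'$, while $W \times_S W = \cZ/G$. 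Finally, since every point of $X_0$ has nilpotent $\mf{g}$-coordinate, $X_0$ is classically supported over the nilpotent cone, so $\IndCoh(X_0) = \IndCoh_{\cN}(X_0)$, and convolution by an object of $\IndCoh(\cZ/G)$ against a sheaf supported over the nilpotents depends only on its image under $\Gamma_{\cN}$; hence the left $\IndCoh(\cZ/G)$-action on this category factors through $\fH = \IndCoh_{\cN}(\cZ/G) \simeq \IndCoh(\wh{\cZ}/G)$, and $HH^\bullet_{\fH}$ of this bimodule coincides with the $\IndCoh(\cZ/G)$-relative Hochschild cohomology appearing in \eqref{central mon hh eq}.

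Granting this set-up, the first assertion is exactly \eqref{central mon hh eq}: the right action of $\IndCoh(\{0\}\times_V\{0\}) \simeq \Sym\mf{z}_{L_P}[-2]\mmod$ on the monoidal unit of $\fH_P$ produces the map
$$\Sym\mf{z}_{L_P}[-2] \longrightarrow \End(1_{\fH_P}) \longrightarrow HH^\bullet_{\fH}\bigl(\IndCoh_{\cN}(\wt{\mf{g}}/G \utimes{\mf{g}/G} \wt{\cN}_P/G)\bigr),$$
and injectivity of the composite --- hence of $\Sym\mf{z}_{L_P}[-2] \hookrightarrow \End(1_{\fH_P})$ --- follows from faithfulness of the $\Sym\mf{z}_{L_P}[-2]$-action on $\IndCoh(X_0)$, which is part of the Koszul-duality package: were the action not faithful, the essential image of $\IndCoh(X_0) \tens{\Sym\mf{z}_{L_P}[-2]} \Vect_k \hookrightarrow \IndCoh(X)$ could not be the strictly larger subcategory $\IndCoh_{X_0}(X)$. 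It then remains to match the image with the subalgebra of $\End(1_{\fH_P})$ generated in cohomological degree $2$. Here one computes $\End^\bullet(1_{\fH_P}) = \Hom_{\wt{\cN}_P/G}(\omega, \Delta^!\Delta_*\omega)$, the $\mf{g}/G$-relative Hochschild cohomology of $\wt{\cN}_P/G$; via the Hochschild--Kostant--Rosenberg description $\End^\bullet(1_{\fH_P}) \simeq R\Gamma\bigl(\wt{\cN}_P/G, \Sym(\bL^\vee_{(\wt{\cN}_P/G)/(\mf{g}/G)}[-1])\bigr)$ and the triangle $\mathbb{T}_{Y_0/Y} \to \mathbb{T}_{Y_0/S} \to j^*\mathbb{T}_{Y/S}$, the cohomological degree-$2$ part receives the constant subsheaf $\mf{z}_{L_P} = H^1(\bL^\vee_{(\wt{\cN}_P/G)/(\wt{\cN}_P'/G)})$, i.e. the normal directions cutting $\wt{\cN}_P/G$ out of $\wt{\cN}_P'/G$; these are precisely the classes produced by \eqref{central mon hh eq}, and since $\Sym\mf{z}_{L_P}[-2]$ is free on them, injectivity forces the image to be the full subalgebra they generate. \emph{Pinning down $\End^\bullet(1_{\fH_P})$ in cohomological degrees $\le 2$ --- in particular that there are no further degree-$2$ classes --- is the main obstacle}; it can be handled by the explicit Koszul-type resolution of the (quasi-)quasi-smooth diagonal $\Delta\colon \wt{\cN}_P/G \hookrightarrow \cZ_P/G$, or by restricting along the map $\wt{\cN}_P/G \to \wt{\mf{g}}_P/G$ and localizing over the regular-semisimple locus of $\mf{h}/\!/W_P$ as in the proof of Proposition~\ref{spectral whittaker characterization}.

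For the module equivalence, one applies Corollary~3.2.4 of \cite{preygel MF} in the above setting, which yields a fully faithful embedding $\IndCoh(X_0) \tens{\Sym\mf{z}_{L_P}[-2]} \Vect_k \hookrightarrow \IndCoh(X)$ with essential image $\IndCoh_{X_0}(X)$. Now $\IndCoh_{X_0}(X)$ is the full subcategory of $\IndCoh(X) = \IndCoh(\wt{\mf{g}}/G \utimes{\mf{g}/G} \wt{\cN}_P'/G)$ of sheaves set-theoretically supported on $|X_0|$, and a point of $X$ lies in $|X_0|$ precisely when its $\mf{g}$-coordinate is nilpotent; thus $\IndCoh_{X_0}(X) = \IndCoh_{\cN}(X)$, while as noted $\IndCoh(X_0) = \IndCoh_{\cN}(X_0)$. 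This gives the desired equivalence
$$\IndCoh_{\cN}(\wt{\mf{g}}/G \utimes{\mf{g}/G} \wt{\cN}_P'/G) \simeq \IndCoh_{\cN}(\wt{\mf{g}}/G \utimes{\mf{g}/G} \wt{\cN}_P/G) \tens{\Sym\mf{z}_{L_P}[-2]} \Vect_k.$$
Finally, it is an equivalence of left $\fH$-module categories because the left $\fH$-action is convolution in the $\wt{\mf{g}}$-factor whereas the $\Sym\mf{z}_{L_P}[-2]$-action arises, via Koszul duality, from the $\QCoh(\mf{z}_{L_P})$-linear structure on $\IndCoh(X)$ given by pullback along the $\wt{\cN}_P'$-factor; these commute, which is exactly the compatibility of actions recorded in the discussion preceding \eqref{central mon hh eq}.
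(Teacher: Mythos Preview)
Your overall strategy---instantiating the Koszul-duality set-up at the start of the subsection with $W=\wt{\mf g}/G$, $Y=\wt{\cN}'_P/G$, $V=\mf z_{L_P}$, and then invoking Corollary~3.2.4 of \cite{preygel MF}---is exactly what the paper does, and your argument for the module equivalence (including the identification $\IndCoh_{X_0}(X)=\IndCoh_{\cN}(X)$ via the fact that nilpotent elements of $\mf z_P$ lie in $\mf n_P$) is correct and matches the paper.

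There are two points of divergence. First, for injectivity of $\Sym\mf z_{L_P}[-2]\to\End(1_{\fH_P})$, the paper gives a one-line argument: the Koszul dual map $\cO(\mf z_{L_P})\to\cO(\cZ'_P/G)$ is injective, since $\cZ'_P$ surjects onto $\mf z_{L_P}$. Your faithfulness argument is in the right spirit but less direct.

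Second, and more substantively, you correctly flag the real gap: showing that the image is \emph{all} of the subalgebra generated in degree $2$, i.e.\ that $\mf z_{L_P}\simeq\Ext^2(1_{\fH_P})$. Your proposed attacks (HKR for the relative diagonal, restriction to the regular semisimple locus) are plausible but you do not carry them through, and a direct spectral computation of $\Ext^2(1_{\fH_P})$ is not so easy. The paper's resolution is different and worth knowing: it \emph{postpones} this degree-$2$ surjectivity to the proof of Theorem~\ref{thm conjectures}. Once the strict--strict equivalence $\IndCoh_{\cN[1]}(\cZ_P/G)\simeq\Dmod(I_{\chP}\bs\chG_F/I_{\chP})$ is established there, the endomorphisms of the monoidal unit are identified with $C^\bullet(B\chP)$, which is formal with $H^2(B\chP)\simeq\mf z_{L_P}$, settling the claim. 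The paper then remarks explicitly that this forward reference is not circular: the portion of Theorem~\ref{thm conjectures} invoked here does not rely on Proposition~\ref{spectral central}. So your gap is genuine, and the paper's fix is to borrow the computation from the automorphic side rather than do it spectrally.
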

\begin{proof}  
We apply the above discussion to
$$X = \wt{\mf{g}}/G \utimes{\mf{g}/G} \wt{\cN}_P'/G, \;\;\;\;\;\;\; Z = \cZ_P'/G, \;\;\;\;\;\;\; V = \mf{z}_{L_P}.$$
using the calculation in Proposition \ref{affinization calculation}. 
Taking the $!$-fiber at $0 \in \mf{z}_{L_P}$, we have 
a right action of $\fH_P$ on $\IndCoh_{\cN}(\wt{\mf{g}}/G \times_{\mf{g}/G} \wt{\cN}_P/G)$, and a compatible action of $\Sym \mf{z}_{L_P}[-2]$ on both.  %Likewise, we can do the same for $\cZ'_P$ to obtain $\cZ_P$, which acts on $\wt{\mf{g}}/G \times_{\mf{g}/G} \wt{\cN}_P'/G$ via right convolution.  
Furthermore, we have a fully faithful functor
$$\IndCoh_{\cN}(\wt{\mf{g}}/G \times_{\mf{g}/G} \wt{\cN}_P/G) \tens{\Mod(\Sym \mf{z}_{L_P}[-2])} \Vect_k \hookrightarrow \IndCoh_{\cN}(\wt{\mf{g}}/G \times_{\mf{g}/G} \wt{\cN}_P'/G)$$
identifying the left category with the subcategory generated under colimits by the essential image of $\kappa_*$ where $\kappa: \nc{} \times_{\mf{g}} \na{P} \rightarrow \nc{} \times_{\mf{g}} \nb{P}$, i.e. sheaves set-theoretically supported over the nilpotent cone $\IndCoh_{\cN}(\wt{\mf{g}}/G \times_{\mf{g}/G} \wt{\cN}_P'/G)$. 

It remains to establish that the map $\Sym \mf{z}_{L_P}[-2] \rightarrow \End(1_{\fH_P})$ is injective and that $\mf{z}_{L_P} \simeq \Ext^2(1_{\fH_P})$.  The map is injective since the Koszul dual map $\cO(\mf{z}_{L_P}) \rightarrow \cO(\cZ_P'/G)$ is injective, and for surjectivity in degree 2 we postpone the argument to the proof of Theorem \ref{thm conjectures}.  Given the statement in \emph{loc. cit.} for strict affine Hecke categories, i.e. the monoidal identification
$$\IndCoh_{\cN[1]}(\cZ_P) \simeq \Dmod(I_{\chP} \bs \chG_F / I_{\chP})$$
it then  becomes evident that the endomorphisms of the monoidal unit are $C^\bullet(B\chP)$, which is a formal dg algebra generated in even degrees with $H^2(B\chP) \simeq \mf{z}_{L_P}$ as desired.   This concludes the proof.
\end{proof}

\begin{rmk}
    We note that although we use a statement from Theorem \ref{thm conjectures} involving strict affine Hecke categories, Proposition \ref{spectral central} above is not used in the proof of that statement, so our logic is not circular.
\end{rmk}

\section{Small objects in automorphic categories}\label{sec small ob}

\subsection{Overview}

\sss In this section, we would like to record some basic facts about various notions of `small' D-modules, and some relations between them. Let us now provide a preliminary discussion of what this entails.

\subsubsection{} A notion of being small is simply being a compact object. Such objects will be fit into larger small categories in the two following ways.

\subsubsection{}\label{dmod defn} First, consider a QCA stack $X$, which is roughly an Artin stack whose points have affine automorphism groups, cf. Definition 1.1.8 of \cite{QCA}. On such a stack, a basic result of {\em op. cit.} is that one typically has more coherent D-modules than compact ones. That is, if we denote the \emph{coherent} D-modules by $\Dmod_c(X)$, and the compact, i.e., {\em safe}, D-modules, by $\Dmod_s(X)$, the inclusion 
$$\Dmod_s(X) \hookrightarrow \Dmod_c(X)$$
is typically not an equivalence.  We recall the prototypical example of such a coherent but non-compact object is the constant sheaf on the classifying stack of a finite dimensional group with nontrivial reductive quotient.

%\begin{defn}
%We are interested in the following small subcategories of the category $\Dmod(X)$ of D-modules on a QCA stack $X$.  We denote the full subcategory of \emph{safe} D-modules by $\Dmod_s(X) \subset \Dmod(X)$; we call these \emph{safe} and by \cite{QCA} they are the compact objects and generated by inductions from $\Coh(X)$.  We denote by $\Dmod_c(X/G) \subset \Dmod(X/G)$ the full subcategory of \emph{coherent} D-modules, i.e. those whose pullback to $\Dmod(X)$ is coherent (equivalently, safe).  In general, we have a proper inclusion $\Dmod_s(X/G) \subset \Dmod_c(X/G)$.
%\end{defn}

Coherent D-modules may be characterized by the explicit property that they actually `look' coherent, i.e., after pulling back along any smooth map from a scheme, one obtains a finite complex of D-modules with finitely generated cohomology groups. Alternatively, such D-modules may be characterized intrinsically within the category, equipped with its $t$-structure, as the {\em pseudo-compact objects}.\footnote{These objects are typically called coherent or $t$-coherent, i.e. in the setting of a category with an appropriate $t$-structure, the objects which are almost compact and $t$-bounded.  We introduce the term pseudo-compact to avoid overloading the term coherent.}

One can similarly make sense of coherent D-modules on certain infinite dimensional schemes and stacks, such as a loop group or  Hecke stack, and they admit similar characterizations. We spell this out in Sections \ref{dmod placid} and \ref{dmod placid equiv} below.

\subsubsection{}  The second notion of `small' objects arises when studying on usual varieties their unipotent derived local systems, i.e., the full subcategory of D-modules generated under colimits by the constant sheaf.\footnote{The unipotence here is somewhat an artifact of our case of interest. Replacing the constant sheaf with any other semisimple local system is straightforward, but a treatment of all local systems at once for a general variety seems more delicate.} 

On the one hand, one has the objects which are compact in the category of all D-modules. These are simply objects in the pre-triangulated hull of the constant D-module itself, i.e., bounded complexes whose cohomology sheaves are finite rank unipotent local systems. 

However, one frequently encounters, in addition to these objects, a few more derived local systems. In some instances, e.g., the  free-monodromic sheaves of Bezrukavnikov--Yun \cite{BezYun}, the complexes are bounded but with cohomology sheaves of infinite rank. In other instances, e.g., the monoidal unit of the monodromic Hecke category $$\Dmod(G\mon\!\bs G / G\mon)$$of a semisimple group $G$, the complexes are unbounded but with cohomology sheaves of finite rank. In general, e.g., for the monodromic Hecke category of a general reductive group, or the character sheaves of categorical representations generated by their $G$-invariant vectors, both phenomena appear. One would like to articulate in what sense the encountered objects are nonetheless far from general unipotent derived local systems. 

We make the following basic observation about such objects. Namely, one can characterize such local systems $\mathscr{E}$ by the property that their de Rham cohomology is finite dimensional. The prototypical example here is the \emph{cofree-monodromic} sheaf on a variety $X$, cf. Definition \ref{cofree mon def}, which is a certain iterated self-extension of the constant sheaf, cf. Section \ref{s:cofree} below, characterized by the property that its global sections are $k$ in degree zero. We then describe a similar enlargement for the category of perverse sheaves on a stratified space with unipotent monodromy along the strata in Section \ref{s:lasts4}, which for a single stratum recovers the previous discussion.  We call the resulting objects {\em nearly compact}, cf. Definition \ref{d:defnc1}.

\subsubsection{} It worth emphasizing that these two more expansive notions of small objects are in fact closely related by Koszul duality. 

Let us first articulate this in representation theoretic language. Namely, one can understand the issue of safety vs. coherence, when working with a quotient stack $X/G$, as an instance of the phenomenon that for a $\Dmod(G)$-module $\sC$, the forgetful functor
\begin{equation}\label{e:big}\on{Oblv}: \sC^G \rightarrow \sC^{G\mon}\end{equation}may send noncompact objects to compact objects, and so the preimage of the compact objects of $\sC$ under $\on{Oblv}$ gives an enlargement of the compact objects of $\sC^G$.  

One may similarly understand the appearance of cofree-monodromic sheaves as an instance of the fact that the averaging functor 
\begin{equation}\label{e:bowwow}\on{Av}^G_*: \sC^{G\mon} \rightarrow \sC^G\end{equation}may send noncompact objects to compact objects. Again, the preimage of the compact objects of $\sC^G$ under $\on{Av}^G_*$ gives an enlargement of the compact objects of $\sC^{G\mon}$.  

In this way, the situations of \eqref{e:big} and \eqref{e:bowwow} are swapped by Koszul duality for Hecke categories \cite{BezYun}. 

Somewhat alternatively, this may be situated in the appearance of Koszul duality in homotopy theory, as follows.\footnote{We emphasize that the general connection of Koszul duality in topology to renormalization phenomena and Koszul duality in representation theory is not new, e.g., may be perceived in various forms in the works of Ben-Zvi--Nadler as well as Arinkin--Gaitsgory--Kazhdan--Raskin--Rozenblyum--Varshavsky, cf. in particular Appendix E.2.6 of \cite{AGKRRV} where it is (almost) explicit.} A beautiful result from the algebro-topological literature, with its origins in the work of Adams--Hilton, Eilenberg--Moore, Rothenberg--Steenrod, and Dwyer, is that for suitable spaces $X$, e.g., a simply connected finite CW complex, one has a Koszul duality between cochains on $X$ and chains on the based loop space $\Omega X$, 

$$\Hom_{C^\bullet(X)}(k, k) \simeq C_\bullet(\Omega X) \quad \quad \text{and} \quad \quad \Hom_{C_\bullet(\Omega X)}(k, k) \simeq C^\bullet(X),$$
where $k$ denotes natural augmentation modules for $C^\bullet(X)$ and $C_\bullet(\Omega X)$. 

For a stack of the form $X = BG$, the usual category of D-modules on it is $C_\bullet(\Omega X)\mod$, and instead working with ind-coherent D-modules yields $C^\bullet(X)\mod$. For varieties $X$, the subcategory of D-modules generated by the constant sheaf is $C^\bullet(X)\mod$, and its renormalization wherein the cofree-monodromic sheaf is compact is, for suitable $X$, $C_\bullet(\Omega X)\mod$.

\subsection{Coherent D-modules on placid ind-schemes}\label{dmod placid}

\subsubsection{} Recall that on a scheme $X$ of finite type, its category of D-modules is of finite cohomological dimension. As such, compact objects are simply bounded complexes of D-modules with coherent cohomology, which we will simply call {\em coherent} below.

We now show the same phenomenon persists for placid ind-schemes. 

\subsubsection{} Let $Z$ be a placid scheme. I.e., it admits a presentation as a cofiltered inverse limit
$$Z \simeq \varprojlim Z_\alpha,$$
where each $Z_\alpha$ is finite type and all the transition maps $\pi_{\alpha, \beta}: Z_\beta \rightarrow Z_\alpha$ are smooth and affine. 

Recall that its category of D-modules may be presented as 
$$\Dmod(Z) \simeq \varinjlim \Dmod(Z_\alpha),$$
where the transition maps are $\pi^*$. It carries a unique $t$-structure for which each 
$\pi^*_\alpha: \Dmod(Z_\alpha) \rightarrow \Dmod(Z)$
is $t$-exact, which is automatically compatible with filtered colimits.

\subsubsection{} Let $\sC$ be a category equipped with a right complete $t$-structure, which we always will assume is compatible with filtered colimits. We recall that an object $c$ of $\sC^+$ is called {\em pseudo-compact} if for any filtered diagram of objects $c_\gamma$ of $\sC^{\geqslant 0}$, one has 

$$\varinjlim \Hom(c, c_\gamma) \simeq \Hom(c, \varinjlim c_\gamma).$$
I.e., homomorphisms out of $c$ commute with uniformly bounded below colimits.

\begin{lemma} \label{l:nosurprises}Suppose that $\sC$ is compactly generated, and the compact objects of $\sC$ are closed under truncation functors. Then any pseudo-compact object of $\sC$ is compact. 
\end{lemma}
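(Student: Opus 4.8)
The plan is to reduce the problem to the elementary observation that a compactly generated category whose compact objects are closed under truncation has a $t$-structure on the compact objects themselves, and to exploit that a pseudo-compact object, being $t$-bounded by definition, is assembled from finitely many pieces of the heart. First I would fix a pseudo-compact object $c$; since $c \in \sC^+$ and, by right-completeness of the $t$-structure and the pseudo-compactness hypothesis, $c$ is also bounded above — indeed $c \simeq \colim_n \tau^{\leqslant n} c$ with the colimit uniformly bounded below, so the identity map $c \to c$ factors through some $\tau^{\leqslant n} c$, forcing $c \in \sC^{[m,n]}$ for some $m \le n$. So $c$ is $t$-bounded.

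\textbf{Main step.} Next I would show that every object of the heart $\sC^\heartsuit$ which is pseudo-compact (in the induced sense) is compact. Write such an object $a$ as a filtered colimit $a \simeq \colim_\gamma a_\gamma$ of compact objects $a_\gamma \in \sC$, which exists since $\sC$ is compactly generated. Applying the truncation functor $\tau^{\geqslant 0}$, which preserves compactness by hypothesis and commutes with filtered colimits (as the $t$-structure is compatible with filtered colimits), and noting $\tau^{\geqslant 0} a \simeq a$, we may assume each $a_\gamma \in \sC^{\geqslant 0}$ and is compact. Now each $a_\gamma$ is $t$-bounded: it is compact hence (since compacts are closed under truncation, a compact object's truncations are compact, and a compactly generated category's compacts in $\sC^{\geqslant 0}$ are bounded above by a standard argument — any compact object admits a map to a uniformly-bounded-below filtered colimit realizing $\tau^{\leqslant n}$) it lies in some $\sC^{[0,n_\gamma]}$. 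The diagram $(a_\gamma)$ is then a filtered diagram in $\sC^{\geqslant 0}$, so pseudo-compactness of $a$ gives $\Hom(a, a) \simeq \colim_\gamma \Hom(a, a_\gamma)$; the identity of $a$ therefore factors through some $a_{\gamma_0}$, exhibiting $a$ as a retract of a compact object, hence compact.

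\textbf{Conclusion.} Finally, returning to the general pseudo-compact $c \in \sC^{[m,n]}$, I would induct on the length $n - m$ of the $t$-amplitude. When $c$ is concentrated in a single degree it is (a shift of) a pseudo-compact object of $\sC^\heartsuit$, handled above; one checks directly that the pseudo-compactness of $c$ as an object of $\sC$ restricts to pseudo-compactness in $\sC^\heartsuit$ (a filtered colimit in $\sC^\heartsuit$ computed in $\sC$ agrees with the naive colimit up to higher-degree error which $\Hom(c,-)$ does not see for degree reasons, using boundedness). For the inductive step, use the truncation triangle $\tau^{\leqslant m} c \to c \to \tau^{> m} c \xrightarrow{+1}$: both outer terms are pseudo-compact (truncations of a pseudo-compact object are pseudo-compact, since $\Hom(\tau^{\leqslant m}c, -)$ on $\sC^{\geqslant 0}$ is computed via $\Hom(c, -)$ on a shifted bounded-below category, and similarly for $\tau^{>m}c$), of strictly smaller amplitude, hence compact by induction; therefore $c$, sitting in a triangle of compact objects, is compact.

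\textbf{Anticipated obstacle.} The step I expect to be most delicate is the bookkeeping showing that $t$-truncations of a pseudo-compact object remain pseudo-compact, and that pseudo-compactness in $\sC$ implies the corresponding property in the heart — both require carefully tracking the interaction of $\Hom$ out of a bounded object with uniformly-bounded-below filtered colimits and the fact that such colimits in $\sC^\heartsuit$ differ from colimits in $\sC$ only in strictly higher cohomological degree. Everything else (closure of compacts under truncation giving a $t$-structure on $\sC^c$, the retract argument) is formal.
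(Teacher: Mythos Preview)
Your Main step contains exactly the right idea, and in fact it \emph{is} the paper's entire proof --- but you have buried it under unnecessary scaffolding. The paper simply runs your retract argument directly on an arbitrary pseudo-compact $c$: write $c \simeq \varinjlim c_\gamma$ with $c_\gamma$ compact; since $c \in \sC^{\geqslant n}$ for some $n$ (this is part of the definition of pseudo-compact), and truncation commutes with filtered colimits, one has $c \simeq \varinjlim \tau^{\geqslant n} c_\gamma$; each $\tau^{\geqslant n} c_\gamma$ is compact by the closure hypothesis and lies in $\sC^{\geqslant n}$, so pseudo-compactness makes $c$ a retract of one of them. That is the whole proof.

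Everything else in your plan is avoidable. You do not need $c$ to be bounded above, so the first paragraph is superfluous. You do not need the $a_\gamma$ to be bounded above, since the pseudo-compactness condition only asks for the diagram to be uniformly bounded \emph{below}. Most importantly, there is no reason to restrict the retract argument to heart objects and then induct on $t$-amplitude: the argument works verbatim for any $c \in \sC^{\geqslant n}$. By taking this detour you manufacture the ``anticipated obstacle'' of showing that truncations of pseudo-compact objects remain pseudo-compact --- a step which, while probably true with enough care, is simply not needed. The paper's route sidesteps it entirely.
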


\begin{proof} Suppose $c$ is an pseudo-compact object, and write it as a filtered colimit of compact objects $c \simeq \varinjlim c_\gamma$. As $c$ lies in $\sC^{\geqslant n}$ for some $n \in \mathbb{Z}$, and our $t$-structure is compatible with filtered colimits, we have 
$$c \simeq \tau^{\geqslant n} \varinjlim c_\gamma \simeq \varinjlim \tau^{\geqslant n} c_\gamma.$$By the pseudo-compactness of $c$, it follows that $c$ is a retract of $\tau^{\geqslant n} c_{\gamma'}$ for some $c_\gamma'$. By our assumption that compact objects are closed under truncation functors, it follows that $c$ is a summand in a compact object, hence compact.  \end{proof}

\subsubsection{} Let us return to the setting of D-modules on a placid scheme. We now have the following assertion. 

\begin{prop} Let $c$ be an object of $\Dmod(Z)$. Then the following are equivalent:\label{p:cohplacidscheme}

\begin{enumerate}
    \item $c$ is pseudo-compact,
    \item $c$ may be written as $\pi^*_\alpha(c_\alpha)$ for a coherent D-module on $Z_\alpha$ for some $\alpha$,
    \item $c$ is compact.
\end{enumerate}

\end{prop}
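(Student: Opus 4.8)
The plan is to establish the cycle of implications $(3)\Rightarrow(1)\Rightarrow(2)\Rightarrow(3)$. The implication $(3)\Rightarrow(1)$ is essentially formal: if $c$ is compact, then $\Hom(c,-)$ commutes with \emph{all} filtered colimits, in particular with uniformly bounded below ones, so $c$ is pseudo-compact (one should note that $c$ automatically lies in $\Dmod(Z)^+$, since each $\pi_\alpha^*$ is $t$-exact and, say, $\Dmod(Z_\alpha)$ has bounded-below compact generators, so $\Dmod(Z)^c \subset \Dmod(Z)^+$; this uses the presentation $\Dmod(Z) \simeq \varinjlim \Dmod(Z_\alpha)$ and that the transition maps $\pi^*$ are $t$-exact). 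The implication $(2)\Rightarrow(3)$ is also relatively straightforward: a coherent D-module $c_\alpha$ on the finite-type scheme $Z_\alpha$ is compact in $\Dmod(Z_\alpha)$ (finite cohomological dimension), and $\pi_\alpha^* = (\pi_\alpha)^!$ up to shift admits a continuous right adjoint --- here one uses that $\pi_{\alpha,\beta}$ is smooth and affine, so $(\pi_{\alpha,\beta})_*$ is continuous --- hence $\pi_\alpha^*$ preserves compactness, and so $c = \pi_\alpha^*(c_\alpha)$ is compact in $\Dmod(Z) \simeq \varinjlim \Dmod(Z_\alpha)$.

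The main work is $(1)\Rightarrow(2)$. First I would show that $(1)\Rightarrow(3)$ by invoking Lemma \ref{l:nosurprises}: $\Dmod(Z)$ is compactly generated (being a filtered colimit of compactly generated categories along colimit-preserving, hence compact-object-preserving after passing to left adjoints, functors --- more precisely the $\pi_\alpha^*$ preserve compacts by the previous paragraph, so their images generate), and the compact objects are closed under truncation because each $\Dmod(Z_\alpha)$ has this property and the $t$-structure on $\Dmod(Z)$ is the colimit $t$-structure, compatible with the $t$-exact functors $\pi_\alpha^*$. Thus a pseudo-compact object is compact. Then, knowing $c$ is compact in $\Dmod(Z) \simeq \varinjlim_\alpha \Dmod(Z_\alpha)$, a standard fact about compact objects in filtered colimits of presentable categories (compact generation + the transition functors preserve compacts) is that $c$ is a retract of an object of the form $\pi_\alpha^*(d_\alpha)$ for some $\alpha$ and some compact $d_\alpha \in \Dmod(Z_\alpha)$, i.e. some coherent D-module on $Z_\alpha$. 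To upgrade ``retract of $\pi_\alpha^*(d_\alpha)$'' to ``equals $\pi_\beta^*(c_\beta)$ for some $\beta$'', one uses that the idempotent on $\pi_\alpha^*(d_\alpha)$ cutting out $c$ is, after possibly increasing $\alpha$ to some $\beta$, pulled back from an idempotent endomorphism of $\pi^*_{\alpha,\beta} d_\alpha$ in $\Dmod(Z_\beta)$ (since $\Hom$-spaces in the colimit are colimits of $\Hom$-spaces, and idempotents are detected at finite stages up to the coherence data, which one can absorb by enlarging $\beta$ further), and $\Dmod(Z_\beta)$ is idempotent-complete, so the idempotent splits there; the image is the desired coherent $c_\beta$ with $\pi_\beta^*(c_\beta) \simeq c$.

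The step I expect to be the main obstacle is the careful handling of idempotents in the filtered colimit in the last paragraph: splitting an idempotent on a compact object of $\varinjlim_\alpha \Dmod(Z_\alpha)$ and realizing the summand as a genuine pullback from a finite stage requires tracking the homotopy-coherent idempotent data, not just a single projector. This is standard (cf. the behavior of compact objects under filtered colimits of $\infty$-categories, e.g. in \cite{HTT}), but it is the one place where $\infty$-categorical care is genuinely needed rather than a formal citation. Everything else reduces to: finite-type D-module categories have finite cohomological dimension, smooth affine maps have continuous pushforward, and the colimit $t$-structure on $\Dmod(Z)$ behaves as expected.
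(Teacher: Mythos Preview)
Your proposal is correct and follows essentially the same approach as the paper. The paper's proof is extremely terse---it says the equivalence of (2) and (3) is standard, that (3) implies (1) follows, and that (1) implies (3) is Lemma~\ref{l:nosurprises}---and you have simply unpacked what ``standard'' means (preservation of compacts under $\pi_\alpha^*$, and the idempotent-splitting argument for descending compact objects to a finite stage), while also verifying the hypotheses of Lemma~\ref{l:nosurprises} explicitly.
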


\begin{proof} It is standard and 2. and 3. are equivalent, and it follows that 3. implies 1. To see that 1. implies 3., we may apply Lemma \ref{l:nosurprises}. \end{proof}

We call an object satisfying the equivalent conditions of the above proposition a coherent D-module on $Z$. 

\subsubsection{} Let $Y$ be a placid ind-scheme. I.e., it admits a presentation as a filtered colimit $$Y \simeq \varinjlim Y_\beta,$$
where each $Y_\beta$ is a placid scheme and the transition maps $\iota_{\gamma, \beta}: Y_\beta \rightarrow Y_\gamma$ are finitely presented closed embeddings. 

Recall that its category of D-modules may be presented as 
$$\Dmod(Y) \simeq \varinjlim \Dmod(Y_\beta),$$
where the transition maps are $\iota_{\gamma, \beta, *}$. With this, any compact object of $\Dmod(Y)$ is the $*$-extension of a coherent D-modules from $Y_\beta$. We call such objects coherent D-modules on $Y$.

\begin{prop}\label{p:cohplacidindscheme} Suppose $Y$ admits a {\em dimension theory}, which is roughly a consistent assignment of `semi-infinite' dimensions to compact open subschemes, cf. Section 6.10 of \cite{raskindmod}.
Consider the associated $t$-structure on $\Dmod(Y)$. Then an object $c$ of $\Dmod(Y)$ is compact if and only if it is pseudo-compact.
\end{prop}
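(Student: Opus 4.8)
The plan is to reduce both implications to Lemma \ref{l:nosurprises}, using the structural description of coherent D-modules on placid schemes from Proposition \ref{p:cohplacidscheme}. The implication ``compact $\Rightarrow$ pseudo-compact'' is the easy direction: I would first note that, with respect to the $t$-structure attached to the dimension theory, any compact object is $t$-bounded --- it is the $*$-pushforward $\iota_{\beta,*}c_\beta$ of a coherent D-module on some placid scheme $Y_\beta$, and $\iota_{\beta,*}$ is $t$-exact by construction, while $c_\beta$ is $t$-bounded by Proposition \ref{p:cohplacidscheme} --- and then observe that $\Hom(c,-)$ out of a compact object commutes with all filtered colimits, hence in particular with uniformly bounded below ones. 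This half uses the dimension theory only to pin down the $t$-structure.

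For the converse I would invoke Lemma \ref{l:nosurprises} with $\sC = \Dmod(Y)$. This requires two inputs. The first is that $\Dmod(Y)$ is compactly generated; this is standard, since $\Dmod(Y) \simeq \varinjlim_\beta \varinjlim_\alpha \Dmod(Z_{\beta,\alpha})$ is a filtered colimit, along the fully faithful transition functors $\pi^*$ and $\iota_{\beta,*}$, of compactly generated categories, and these transition functors preserve compactness (the $\pi$'s are smooth and affine, so $\pi_*$ is defined and continuous; the $\iota_{\beta,*}$ are pushforwards along finitely presented closed embeddings). The second, and the real content, is that the compact objects of $\Dmod(Y)$ are closed under the truncation functors.

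To verify this I would argue in two stages matching the two presentations of $Y$. Given a compact $c \simeq \iota_{\beta,*}c_\beta$, the $t$-exactness of $\iota_{\beta,*}$ --- which is precisely the compatibility of $t$-structures encoded in the notion of a dimension theory, cf.\ Section 6.10 of \cite{raskindmod} --- gives $\tau^{\geqslant n}c \simeq \iota_{\beta,*}(\tau^{\geqslant n}c_\beta)$, reducing the claim to closure under truncation of coherent D-modules on the placid scheme $Y_\beta$. Then, writing $c_\beta \simeq \pi_\alpha^*(c_{\beta,\alpha})$ with $c_{\beta,\alpha}$ coherent on a finite type scheme as in Proposition \ref{p:cohplacidscheme}, and using the $t$-exactness of $\pi_\alpha^*$ recorded in the setup, one has $\tau^{\geqslant n}c_\beta \simeq \pi_\alpha^*(\tau^{\geqslant n}c_{\beta,\alpha})$, and $\tau^{\geqslant n}c_{\beta,\alpha}$ is coherent since coherent D-modules on a finite type scheme are closed under truncation; the case of $\tau^{\leqslant n}$ is identical. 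With both inputs in hand, Lemma \ref{l:nosurprises} shows that any pseudo-compact object of $\Dmod(Y)$ is a retract of a compact object, hence compact.

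The main obstacle is the bookkeeping around the $t$-structure: one must check that the dimension theory genuinely makes each $\iota_{\beta,*}$ $t$-exact (rather than $t$-exact only up to a shift that could vary inconsistently with $\beta$), so that the truncation of a compact object computed on $Y$ agrees with the truncation computed on a placid scheme $Y_\beta$. Once this consistency --- which is exactly what a dimension theory supplies --- is granted, the remainder is the routine two-step reduction along the defining presentations.
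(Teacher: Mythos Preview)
Your proposal is correct and follows essentially the same approach as the paper: both directions rest on Lemma \ref{l:nosurprises}, with the key input that compact objects are $t$-bounded and closed under truncation for the $t$-structure coming from the dimension theory. The paper's proof is a terse two-line version of exactly what you wrote; your account simply unpacks why closure under truncation holds by reducing along the $t$-exact functors $\iota_{\beta,*}$ and $\pi_\alpha^*$ to the finite type case.
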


\begin{proof}By the construction of the $t$-structure, the compact objects of $\Dmod(Y)$ are bounded and closed under truncation functors. In particular, they are pseudo-compact, and the converse follows again by Lemma \ref{l:nosurprises}.  
\end{proof}

\subsection{Equivariant coherent D-modules on certain placid ind-schemes}
\label{dmod placid equiv}

\sss
We now extend the contents of the previous section to an equivariant setting which is sufficient for our purposes, namely after taking the quotient by the action of a group scheme $H$ whose prounipotent radical is of finite codimension. 

The main result of this subsection is that there is now a difference between coherent and compact objects, and the former may be characterized as the pseudo-compact objects.

\subsubsection{} Let $X$ be a placid scheme equipped with an action of $H$. In this case, we have an action of $\Dmod(H)$ on $\Dmod(X)$, and in particular may consider the equivariant objects

$$\on{Oblv}: \Dmod(X)^H \rightleftarrows \Dmod(X): \on{Av}_*^H.$$

We recall that the left-hand category carries a unique $t$-structure with the property that $\on{Oblv}$ is $t$-exact. In particular, as $\on{Oblv}$ is conservative, the $t$-structure is compatible with filtered colimits, and so we may speak of pseudo-compact objects.

\begin{prop}  Let $X$ be a placid scheme; for an object $c$ of $\Dmod(X)^H$, the following are equivalent. 

\begin{enumerate}
    \item $\on{Oblv}(c)$ is compact, i.e, a coherent D-module, 
    \item $c$ is pseudo-compact.
\end{enumerate}
    \label{p:coheqplacidscheme}
\end{prop}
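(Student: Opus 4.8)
The plan is to prove the two implications separately, the easy direction being that pseudo-compactness implies $\on{Oblv}(c)$ is compact, and the harder direction the converse. Throughout one works with the presentation $X \simeq \varprojlim X_\alpha$ as a cofiltered limit of finite-type schemes under smooth affine transition maps, and uses that since the prounipotent radical $H^\circ$ of $H$ has finite codimension, one can choose the presentation so that $H$ acts on each $X_\alpha$ through a finite-type quotient group $H_\alpha$, with $X \simeq \varprojlim X_\alpha$ as $H$-schemes and $\Dmod(X)^H \simeq \varinjlim_\alpha \Dmod(X_\alpha)^{H_\alpha}$ along smooth ($t$-exact) pullbacks; here $H_\alpha$ is a finite-type group whose reductive quotient is that of $H$. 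This is the analogue of Section \ref{dmod placid} in the equivariant setting.

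For the implication (2)$\Rightarrow$(1): suppose $c \in \Dmod(X)^H$ is pseudo-compact. By the colimit presentation of $\Dmod(X)^H$, we may write $c$ as a filtered colimit of objects $\pi_\alpha^* c_\alpha$ with $c_\alpha \in \Dmod_c(X_\alpha/H_\alpha)$ a coherent (equivariant) D-module, and since $c$ is $t$-bounded below and the $t$-structure is compatible with filtered colimits, the retract argument of Lemma \ref{l:nosurprises} shows $c$ is a retract of some $\tau^{\geqslant n}\pi_\alpha^* c_\alpha = \pi_\alpha^* \tau^{\geqslant n} c_\alpha$. Now $\tau^{\geqslant n} c_\alpha$ is still coherent on $X_\alpha/H_\alpha$ (coherent D-modules on a finite-type stack are closed under truncation), hence $\on{Oblv}(\pi_\alpha^* \tau^{\geqslant n} c_\alpha)$ is a coherent, and thus compact, D-module on the placid scheme $X_\alpha$ (by Proposition \ref{p:cohplacidscheme}), and pulling back along the smooth affine map $X \to X_\alpha$ preserves coherence and hence compactness. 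As $\on{Oblv}(c)$ is a retract of a compact object it is compact.

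For the implication (1)$\Rightarrow$(2): assume $\on{Oblv}(c)$ is a coherent D-module on $X$; we must show $\Hom(c,-)$ commutes with uniformly bounded-below filtered colimits in $\Dmod(X)^H$ (that $c \in \Dmod(X)^{H,+}$ is automatic since $\on{Oblv}$ is conservative and $t$-exact). The key point is the adjunction $(\on{Oblv}, \on{Av}_*^H)$ together with the fact that, because $H/H^\circ$ has reductive quotient of bounded cohomological dimension, the functor $\on{Av}_*^H$ has \emph{finite cohomological amplitude}, hence is left $t$-exact up to a finite shift and commutes with uniformly bounded-below filtered colimits. Given a filtered diagram $d_\gamma \in \Dmod(X)^{H,\geqslant 0}$, we compute $\Hom_{\Dmod(X)^H}(c, \varinjlim d_\gamma) \simeq \Hom_{\Dmod(X)}(\on{Oblv}(c), \on{Oblv}(\varinjlim d_\gamma)) \simeq \Hom_{\Dmod(X)}(\on{Oblv}(c), \varinjlim \on{Oblv}(d_\gamma))$ since $\on{Oblv}$ is $t$-exact and preserves colimits, and this equals $\varinjlim \Hom_{\Dmod(X)}(\on{Oblv}(c), \on{Oblv}(d_\gamma))$ by compactness of $\on{Oblv}(c)$ established in Proposition \ref{p:cohplacidscheme}; reassembling via the adjunction gives $\varinjlim \Hom_{\Dmod(X)^H}(c, d_\gamma)$, as desired.

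The main obstacle is the bookkeeping in the equivariant colimit presentation: one must verify that the $H$-action can be arranged to factor through finite-type quotients $H_\alpha$ compatibly with the inverse system, that $\Dmod(X)^H$ is correspondingly the colimit $\varinjlim \Dmod(X_\alpha)^{H_\alpha}$ along $t$-exact smooth pullbacks, and — for the argument to even be needed rather than reducing to the non-equivariant statement — that coherence of $\on{Oblv}(c)$ does \emph{not} imply compactness of $c$ (this is where the reductive part of $H$ genuinely enters, as in the classifying-stack example). The cleanest route is to note that everything is smooth-local, reduce to the case $X = \Spec k$ with $H$ acting, where $\Dmod(\pt)^H \simeq \Dmod(BH) \simeq C_\bullet(H)\mmod$ and the claim becomes the statement that a $C_\bullet(H)$-module whose underlying complex is perfect is pseudo-compact; the finite cohomological amplitude of $\on{Av}_*^H$ is then the statement that $C_\bullet(H)$, while not of finite global dimension, has its augmentation module of finite injective-plus-projective amplitude relative to bounded-below modules, which is exactly the reductive-quotient input. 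I would present the general case directly via the adjunction argument above, citing the finite amplitude of $\on{Av}_*^H$ as the one nontrivial ingredient.
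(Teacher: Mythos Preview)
Your argument for (1)$\Rightarrow$(2) contains a genuine error. You write
\[
\Hom_{\Dmod(X)^H}(c, \varinjlim d_\gamma) \simeq \Hom_{\Dmod(X)}(\on{Oblv}(c), \on{Oblv}(\varinjlim d_\gamma)),
\]
but this isomorphism is simply false: it would require $\on{Oblv}$ to be fully faithful, which it is not when $H$ has nontrivial reductive quotient. (Indeed, this failure is exactly why compact and pseudo-compact differ in $\Dmod(X)^H$ in the first place.) The adjunction $(\on{Oblv},\on{Av}_*^H)$ gives $\Hom_{\Dmod(X)}(\on{Oblv}(c),e)\simeq\Hom_{\Dmod(X)^H}(c,\on{Av}_*^H e)$, and there is no way to ``reassemble via the adjunction'' to recover $\Hom_{\Dmod(X)^H}(c,d_\gamma)$ from $\Hom_{\Dmod(X)}(\on{Oblv}(c),\on{Oblv}(d_\gamma))$, since $d_\gamma\not\simeq\on{Av}_*^H\on{Oblv}(d_\gamma)$ in general. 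Your earlier remark about finite cohomological amplitude of $\on{Av}_*^H$ is correct but is not what you actually use.

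The paper's approach to (1)$\Rightarrow$(2) is to present $\Dmod(X)^H$ as the totalization of the cosimplicial diagram $\Dmod(X)\otimes\Dmod(H)^{\otimes n}$, so that for $\on{Oblv}(c)$ compact one has
\[
\Hom_{\Dmod(X)^H}(c,d)\simeq\Tot_n\bigl(\Hom_{\Dmod(X)}(c,d)\otimes C^\bullet(H)^{\otimes n}\bigr).
\]
When the $d_\gamma$ are uniformly bounded below and $c$ is bounded below, the terms $\Hom_{\Dmod(X)}(c,d_\gamma)$ are uniformly bounded below, so the direct product totalization coincides with the direct sum totalization, and the latter commutes with filtered colimits. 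This is the missing idea.

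For (2)$\Rightarrow$(1), your argument via the equivariant colimit presentation and Lemma~\ref{l:nosurprises} is correct but more elaborate than necessary, and leans on bookkeeping you do not verify. The paper observes directly that if $c$ is pseudo-compact then $\on{Oblv}(c)$ is pseudo-compact (by adjunction, using only that $\on{Av}_*^H$ is left $t$-exact), hence compact by Proposition~\ref{p:cohplacidscheme}.
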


\begin{proof}To see that 2. implies 1., by Proposition \ref{p:cohplacidscheme}
it is enough to see that $\on{Oblv}(c)$ is pseudo-compact. However, this follows by adjunction from the fact that $\on{Av}_*^H$ is left exact.

To see that 1. implies 2., recall that $\Dmod(X)^H$ may be presented as the limit of the simplicial diagram 
$$\Dmod(X)^H \simeq \varprojlim_{n \in \Delta^{op}} \Dmod(X) \otimes \Dmod(H)^{\otimes n}.$$
In particular, if $c$ and $d$ are objects of $\Dmod(X)^H$ such that $\on{Oblv}(c)$ is compact, we may compute the homomorphisms between them as the direct product totalization of a bicomplex of vector spaces %with $n^{th}$ column given by 
$$\Hom_{\Dmod(X)}(c, d) = \Tot_n\left( \Hom_{\Dmod(X)}(c,d) \otimes C^\bullet(H)^{\otimes n}\right).$$
In particular, given a filtered colimit  $\varinjlim d_\gamma$ where all the $d_\gamma \in \Dmod(X)^{H, \geqslant 0}$, as $c$ is bounded below we have that $\Hom_{\Dmod(X)}(c,d_\gamma)$ are all uniformly bounded below. 

In particular, to see the canonical map
$$\varinjlim \Hom_{\Dmod(X)^H}(c, d_\gamma) \rightarrow \Hom_{\Dmod(X)^H}(c, \varinjlim d_\gamma)$$is an equivalence, by the discussed boundedness all the relevant direct product totalizations coincide with their direct sum totalizations, so we may pass the filtered colimit through the totalization, as desired. \end{proof}

\subsubsection{} Now suppose that $Y$ is a placid ind-scheme acted on by $H$. For simplicity, we will assume there exists a presentation $\varinjlim Y_\beta$ as above such that each $Y_\beta$ moreover carries an action of $H$ and the transition maps are $H$-equivariant. 

Again, given an object $c$ of $\Dmod(Y)^H$, we may call it coherent if $\on{Oblv}(c)$ is. Explicitly, any such object is $*$-extended from some $Y_\beta$, where it is an pseudo-compact object of $\Dmod(Y_\beta)^H$ by Proposition \ref{p:coheqplacidscheme}. As in Proposition \ref{p:cohplacidindscheme}, we may obtain the following. 

\begin{prop} Suppose $Y$ admits a dimension theory, and consider the associated $t$-structure on $\Dmod(Y)^H$. Then an object $c$ of $\Dmod(Y)^H$ is coherent if and only if it is pseudo-compact.  
\end{prop}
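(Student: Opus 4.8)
The plan is to mimic the non-equivariant argument of Proposition~\ref{p:cohplacidindscheme}, reducing along the colimit presentation of $\Dmod(Y)^H$ to the case of a placid scheme, where the result was established in Proposition~\ref{p:coheqplacidscheme}, and then invoking the general reflection Lemma~\ref{l:nosurprises} for the converse. First I would fix an $H$-equivariant presentation $Y \simeq \varinjlim Y_\beta$ with $Y_\beta$ placid $H$-schemes and $H$-equivariant finitely presented closed transition maps $\iota_{\gamma,\beta}$, so that $\Dmod(Y)^H \simeq \varinjlim \Dmod(Y_\beta)^H$ with transition functors $\iota_{\gamma,\beta,*}$. The chosen dimension theory on $Y$ determines a $t$-structure on $\Dmod(Y)$, hence on $\Dmod(Y)^H$ via the $t$-exact conservative functor $\Oblv$, and by construction the pushforwards $\iota_{\gamma,\beta,*}$ are $t$-exact; in particular this $t$-structure is compatible with filtered colimits and right complete, so the notion of pseudo-compactness makes sense.

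For the first implication, suppose $c \in \Dmod(Y)^H$ is coherent, i.e. $\Oblv(c)$ is compact in $\Dmod(Y)$. Then $\Oblv(c)$ is $*$-extended from a coherent D-module on some $Y_\beta$; since $\iota_{\beta,*}$ is fully faithful and $H$-equivariant, $c$ itself is $*$-extended from an object $c_\beta \in \Dmod(Y_\beta)^H$ with $\Oblv(c_\beta)$ coherent, hence pseudo-compact in $\Dmod(Y_\beta)^H$ by Proposition~\ref{p:coheqplacidscheme}. The $t$-exactness and colimit-preservation of $\iota_{\beta,*}$, together with the fact that $\iota_\beta^!$ is left $t$-exact up to a shift (so that $\Hom$ out of $c = \iota_{\beta,*}c_\beta$ against a uniformly bounded-below family in $\Dmod(Y)^H$ is computed, after adjunction, against a uniformly bounded-below family in $\Dmod(Y_\beta)^H$), then transports pseudo-compactness of $c_\beta$ to pseudo-compactness of $c$.

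For the converse, one applies Lemma~\ref{l:nosurprises}: the category $\Dmod(Y)^H$ is compactly generated, and its compact objects are precisely the $*$-extensions of compact (equivalently, by Proposition~\ref{p:coheqplacidscheme}, pseudo-compact) objects from the $Y_\beta^{}$-level, which are bounded and closed under truncation functors since the $t$-structure is built so that $\iota_{\beta,*}$ is $t$-exact and, on each $Y_\beta$, $\Oblv$ detects the bounded coherent objects. Hence any pseudo-compact object is compact; composing with the first implication, compact $\Leftrightarrow$ coherent $\Leftrightarrow$ pseudo-compact.

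The main obstacle I anticipate is the bookkeeping of $t$-exactness across the two-step presentation: one must check that the dimension theory's $t$-structure restricts well along each closed embedding $Y_\beta \hookrightarrow Y$ \emph{equivariantly}, i.e. that $\iota_\beta^!$ (or $\iota_\beta^*$) interacts with the $H$-equivariant $t$-structures so that uniformly bounded-below filtered colimits stay uniformly bounded-below after adjunction. This is where the finite-codimension prounipotent radical hypothesis on $H$ is used implicitly (it ensures $\Dmod(H)$ behaves well and $\Av_*^H$ has bounded cohomological amplitude on each stratum), and verifying it rigorously requires tracking the shift in the left $t$-exactness of $\iota_\beta^!$ against the amplitude of $\Av_*^H$ — routine but the one place where an incautious argument could go wrong.
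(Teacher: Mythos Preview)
Your forward direction (coherent $\Rightarrow$ pseudo-compact) is fine and matches the paper's argument: reduce along the colimit presentation to some $Y_\beta$, invoke Proposition~\ref{p:coheqplacidscheme}, and transport back using the $t$-exactness of $\iota_{\beta,*}$ and the left $t$-exactness of $\iota_\beta^!$.

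The converse, however, contains a genuine error. You invoke Lemma~\ref{l:nosurprises}, whose hypothesis is that the \emph{compact} objects of $\Dmod(Y)^H$ are closed under truncation, and you justify this by asserting that compact objects in $\Dmod(Y_\beta)^H$ are ``equivalently, by Proposition~\ref{p:coheqplacidscheme}, pseudo-compact.'' But that proposition does not say this: it says that $c$ is pseudo-compact in $\Dmod(Y_\beta)^H$ if and only if $\Oblv(c)$ is compact in $\Dmod(Y_\beta)$, i.e.\ $c$ is \emph{coherent}. In the equivariant setting compact and coherent differ (this is precisely the safe vs.\ coherent distinction the section is built around), and compact objects of $\Dmod(Y)^H$ are \emph{not} closed under truncation in general---already on $\pt/H$ with $H$ reductive, truncations of perfect $C_\bullet(H)$-modules need not be perfect. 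Worse, if your argument worked it would give pseudo-compact $\Rightarrow$ compact, which combined with the forward implication would force compact $=$ coherent, contradicting the basic examples. So Lemma~\ref{l:nosurprises} is simply unavailable here.

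The paper's converse avoids this by arguing directly with the colimit $c \simeq \varinjlim \iota_{\beta,*}\iota_\beta^! c$. Since the $\iota_\beta^!$ are left $t$-exact, this is a uniformly bounded-below filtered colimit, so pseudo-compactness of $c$ forces $c$ to be a retract of some $\iota_{\beta,*}\iota_\beta^! c$, hence of the form $\iota_{\beta,*} c_\beta$. Fully faithfulness and $t$-exactness of $\iota_{\beta,*}$ then show $c_\beta$ is pseudo-compact in $\Dmod(Y_\beta)^H$, whence coherent by Proposition~\ref{p:coheqplacidscheme}, and so $c$ is coherent. The point is that one never passes through compactness at all.
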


\begin{proof} By our assumptions, we have
$$\Dmod(Y)^H \simeq \varinjlim \Dmod(Y_\beta)^H,$$where the $t$-structures on each $\Dmod(Y_\beta)^H$ are a shift of those discussed in Proposition \ref{p:coheqplacidscheme}, and the transition functors $\iota_{\gamma, \beta, *}$ are $t$-exact and admit continuous right adjoints $\iota_{\gamma, \beta}^!$.

If $c$ is coherent, by definition it is the $*$-extension of a coherent object of $c_\beta$ of $\Dmod(Y_\beta)^H$ for some $\beta$. Moreover, $c_\beta$ is pseudo-compact by Proposition \ref{p:coheqplacidscheme}. The pseudo-compactness of $c$ now follows from the exactness of $\iota_{\beta, *}$ and the left exactness of $\iota_{\beta}^!$. 

Conversely, consider an object $c$ which is pseudo-compact. As with any object of $\Dmod(Y)^H$, we have that 
$$c \simeq \varinjlim i_{\beta, *} i_\beta^! c.$$
By the pseudo-compactness of $c$, it follows that $c$ is a summand of $i_{\beta, *} i_\beta^! c$ for some $\beta$, and in particular is of the form $i_{\beta, *} c_\beta$. By the fully faithfulness and $t$-exactness of $i_{\beta, *}$, it follows that $c_\beta$ is an pseudo-compact object of $\Dmod(Y_\beta)^H$, and hence coherent by Proposition \ref{p:coheqplacidscheme}. 
\end{proof}

Fix a prounipotent subgroup $K \subset H$ of finite codimension. Recall that the forgetful functor $$\on{Oblv}: \Dmod(Y)^K \rightarrow \Dmod(Y)$$is now fully faithful. In particular, an object $c$ of $\Dmod(Y)^K$ is compact if and only if $\on{Oblv}(c)$ is, hence in this case pseudo-compactness is equivalent to compactness.

We therefore obtain the following. 

\begin{cor}\label{coherent is compact} For $Y, H,$ and $K$ as above, and an object $c$ of $\Dmod(Y)^H$, the following are equivalent. 
\begin{enumerate}
    \item $c$ is coherent,
    \item $\on{Oblv}(c) \in \Dmod(Y)^K$ is compact. 
\end{enumerate}
\end{cor}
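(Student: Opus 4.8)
\textbf{Proof plan for Corollary \ref{coherent is compact}.}

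The plan is to combine the preceding results of this subsection with the basic observation that for a prounipotent subgroup $K \subset H$ of finite codimension the forgetful functor $\Oblv \colon \Dmod(Y)^K \to \Dmod(Y)$ is fully faithful and $t$-exact, so that compactness in $\Dmod(Y)^K$ is detected after $\Oblv$ in $\Dmod(Y)$, i.e.\ an object $c' \in \Dmod(Y)^K$ is compact if and only if $\Oblv(c')$ is a coherent D-module on $Y$. I would first record this, citing the fact that $K$ being prounipotent of finite codimension makes $\Dmod(Y)^K \to \Dmod(Y)$ fully faithful, and that the $t$-structure on $\Dmod(Y)^K$ is the one for which $\Oblv$ is $t$-exact; hence compact objects of $\Dmod(Y)^K$ are bounded and closed under truncations, so pseudo-compactness and compactness agree there by Lemma \ref{l:nosurprises}.

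Now consider $c \in \Dmod(Y)^H$. There is a factorization $\Dmod(Y)^H \to \Dmod(Y)^K \to \Dmod(Y)$ of the forgetful functor; write $\Oblv^H_K \colon \Dmod(Y)^H \to \Dmod(Y)^K$ for the first map and $\Oblv^K \colon \Dmod(Y)^K \to \Dmod(Y)$ for the second, so $\Oblv = \Oblv^K \circ \Oblv^H_K$. By the previous Proposition of this subsection, $c$ is coherent if and only if it is pseudo-compact in $\Dmod(Y)^H$ for the dimension-theoretic $t$-structure. I would then argue that $c$ is pseudo-compact in $\Dmod(Y)^H$ if and only if $\Oblv^H_K(c)$ is pseudo-compact in $\Dmod(Y)^K$: the ``if'' direction uses that $\Oblv^H_K$ is conservative, $t$-exact, and admits a continuous (since $H/K$ is finite type) right adjoint $\Av_*^{H/K}$ which is bounded, so pseudo-compactness is reflected exactly as in the proof of Proposition \ref{p:coheqplacidscheme} — indeed the quotient $H/K$ is a finite type scheme, so $\Hom_{\Dmod(Y)^K}(\Oblv^H_K c, d)$ is a finite totalization of $\Hom_{\Dmod(Y)^H}(c, \Av_*^{H/K} d)$ twisted by $C^\bullet(H/K)$, and one passes the uniformly bounded filtered colimit through; the ``only if'' direction follows since $\Oblv^H_K$ has the left-exact right adjoint $\Av_*^{H/K}$, so by adjunction $\Hom(\Oblv^H_K c, -)$ commutes with uniformly bounded filtered colimits whenever $\Hom(c,-)$ does. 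Finally, $\Oblv^H_K(c)$ pseudo-compact in $\Dmod(Y)^K$ is equivalent, by the first paragraph, to $\Oblv^H_K(c)$ being compact in $\Dmod(Y)^K$, which is the assertion in (2). Chaining these equivalences gives (1) $\Leftrightarrow$ (2).

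The main obstacle is the middle step: carefully justifying that $\Oblv^H_K$ preserves and reflects pseudo-compactness. This requires knowing that $H/K$ is of finite type (so that $C^\bullet(H/K)$ is a bounded complex and the bar-type totalization computing $\Hom$ in $H$-equivariant objects in terms of $K$-equivariant ones is finite), and that $\Av_*^{H/K}$ is the relevant right adjoint and is left $t$-exact; both are consequences of the standard descent presentation $\Dmod(Y)^H \simeq \varprojlim_{\Delta^{op}} \Dmod(Y)^K \otimes \Dmod(H/K)^{\otimes \bullet}$ together with $K$ being prounipotent of finite codimension in $H$. Once this boundedness input is in place, the colimit-interchange argument is identical to the one already given in Proposition \ref{p:coheqplacidscheme}, and the rest is bookkeeping with the two previously established propositions.
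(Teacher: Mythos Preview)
Your approach is correct but considerably more elaborate than the paper's. The paper's argument is essentially one line: by \emph{definition}, $c \in \Dmod(Y)^H$ is coherent exactly when $\Oblv^H(c) \in \Dmod(Y)$ is compact; since $\Oblv^K \colon \Dmod(Y)^K \to \Dmod(Y)$ is fully faithful (with continuous right adjoint), an object of $\Dmod(Y)^K$ is compact if and only if its image in $\Dmod(Y)$ is; factoring $\Oblv^H = \Oblv^K \circ \Oblv^H_K$ finishes. You already record this full-faithfulness fact in your first paragraph, so you have all the ingredients for the direct argument --- the detour through the pseudo-compactness characterization of coherence and the bar-complex step for $\Oblv^H_K$ is unnecessary.

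Two small issues in the detour as written: the formula you give is inverted --- it is $\Hom_{\Dmod(Y)^H}(c,d)$ that is computed as a totalization of terms involving $\Hom_{\Dmod(Y)^K}(\Oblv^H_K c, \Oblv^H_K d) \otimes C^\bullet(H/K)^{\otimes n}$, not the other way around; and this totalization is not ``finite'' --- the cosimplicial diagram is infinite, but (as in the proof of Proposition~\ref{p:coheqplacidscheme}) the uniform boundedness of the terms forces the direct-sum and direct-product totalizations to agree, which is what lets the filtered colimit pass through. These are fixable, but the simpler route avoids them entirely.
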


\subsection{Cofree-monodromic sheaves}\label{sec cofree}

\sss
In the previous subsection, we studied a particular case of the following general phenomenon. Given a categorical representation $\sC$ of a group scheme $H$ whose prounipotent radical is of finite codimension.  the forgetful functor 
$$\on{Oblv}: \sC^H \rightarrow \sC$$
may send non-compact objects to compact ones. In the presence of a suitable $t$-structure, one can nonetheless argue such non-compact objects are pseudo-compact. 

\subsubsection{}\label{sss:nearcompactdef}
In the remaining parts of this section, we study the Koszul dual phenomenon for the averaging functor (see Section \ref{averaging defin section}).
$$\on{Av}_*^H: \sC^{H\mon} \rightarrow \sC^H.$$
\begin{defn} \label{d:defnc1} Let us say an object $c$ of $\sC^{H\mon}$ is {\em nearly compact} if $\on{Av}_*^H(c)$ is compact.     
\end{defn}

As we will see below,  the nearly compact objects, and their ind-completion, are familiar in geometric representation theory, and appear naturally in the study of Koszul duality. Moreover, when the category $\sC$ in question is sheaves on a variety,  the study of nearly compact objects is nearly the same as the study of certain derived local systems, namely the {\em cofree-monodromic sheaves}. We will begin with a discussion of the latter objects, and then return later to the connection with near compactness.

%\subsubsection{} Here is an outline of what transpires in this subsection. 

\subsubsection{} Let $X$ be a placid scheme. We recall that in general the components $\pi_0(X)$ of $X$ form a profinite set, but for simplicity we will assume this is in fact finite. For $\gamma \in \pi_0(X)$, we will denote by $X_\gamma$ the corresponding component.

Let us denote the full subcategory of D-modules on $X$ generated by the constant D-module by 
$$\iota: \begin{tikzcd} \on{IndLisse}(X)^{uni} \arrow[r, hook, shift left] & \arrow[l, shift left] \Dmod(X)\end{tikzcd} : \on{Av}^{uni}_*,$$ 
which we think of as (certain) derived local systems with unipotent monodromy. This category is also known as the category of ind-lisse unipotent sheaves \cite{AGKRRV}.   We have a tautological equivalence 
$$\on{IndLisse}(X)^{uni} \simeq C^\bullet(X)\mod,$$
where $C^\bullet(X)$ denotes the de Rham cohomology of $X$, i.e., the endomorphisms of the constant D-module. 

In the right hand category, there are (at least) two reasonable notions of a small object. On the one hand, one has the perfect complexes, corresponding to  the pretriangulated envelope of the constant sheaf itself. On the other hand, one may also consider the category of $C^\bullet(X)$-modules whose underlying vector space is perfect. Under the assumption that $C^\bullet(X)$ is perfect, this contains the compact objects, but typically more.

\subsubsection{} To identify this more explicitly, let us recall the following basic maneuvers with modules over algebras like $C^\bullet(X)$. 

\begin{prop} \label{p:coconnectivealgs}
Let $A$ be a coconnective\footnote{Note that this means $H^n(A) = 0$ for $n < 0$, which is {\em a priori} less strict than requiring that $A$ is quasi-isomorphic to a dg algebra $B$ such that $B^n = 0$ for $n < 0$.} $dg$-algebra and $S$ a semisimple algebra, i.e. an algebra concentrated in degree zero which is completely reducible as a left module over itself.  Suppose we are given a map 
$$S \rightarrow A,$$
inducing an isomorphism $S \simeq H^0(A)$. Then the following hold. 
\begin{enumerate}
    \item For any $A$-module $M$ and integer $n$, there exists a (non-unique) exact triangle in $A\mod$
$$M^{> n} \rightarrow M \rightarrow M^{\leqslant n} \xrightarrow{+1}$$
inducing isomorphisms $H^i(M^{> n}) \simeq H^i(M)$, for $i > n$, and $H^i(M) \simeq H^i (M^{\leqslant n}),$ for $i \leqslant n$.

\item For any $S$-module $L$ concentrated in cohomological degree zero, and integer $n$, there exists an $A$-module $$K(L, n)$$
characterized up to non-unique isomorphism by the properties that the cohomology of $K(L, n)$ is concentrated in degree $n$, and is isomorphic to $L$ as an $H^0(A)$-module.

\end{enumerate}

\end{prop}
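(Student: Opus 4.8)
The plan is to construct both objects by a Postnikov-type induction, building $M^{>n}$ and $K(L,n)$ one cohomological degree at a time using the coconnectivity of $A$ and the semisimplicity of $S$. I would begin with part (2), since (1) can be bootstrapped from the construction of the $K(L,n)$'s together with standard triangle arguments.

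\emph{Part (2).} First I would handle the case $n = 0$. Starting from the map $S \to A$ and an $S$-module $L$ in degree zero, form the $A$-module $M_0 := A \otimes_S L$. Since $A$ is coconnective and $H^0(A) \simeq S$, one has $H^i(M_0) = 0$ for $i < 0$ and $H^0(M_0) \simeq L$ (using that $L$ is $S$-flat as $S$ is semisimple). Now kill the higher cohomology by an iterated attaching procedure: having built an $A$-module $M_k$ with $H^0(M_k) \simeq L$, $H^i(M_k) = 0$ for $i < 0$ and for $0 < i < k+1$, choose an $S$-module surjection $P \twoheadrightarrow H^{k+1}(M_k)$ with $P$ a (possibly infinite) sum of simple $S$-modules, lift the corresponding class to a map $A \otimes_S P[-k-1] \to M_k$ in $A\mod$ — this is possible because $\Hom_{A}(A\otimes_S P[-k-1], M_k) \simeq \Hom_S(P, H^{k+1}(M_k))$ by adjunction and coconnectivity — and let $M_{k+1}$ be the cofiber. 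A degree count shows $H^i(M_{k+1})$ agrees with $H^i(M_k)$ for $i \le k$, that $H^{k+1}(M_{k+1}) = 0$, and that only cohomology in degrees $\ge k+2$ is (possibly) introduced. Passing to the colimit $K(L,0) := \colim_k M_k$, which exists since $A\mod$ is cocomplete and the $t$-structure is compatible with filtered colimits, yields an $A$-module with cohomology concentrated in degree $0$ and equal to $L$ there. For general $n$ set $K(L,n) := K(L,0)[-n]$. For uniqueness up to non-unique isomorphism: given two such modules $K, K'$, the map $K \to H^n(K) = L = H^n(K') \leftarrow K'$ of $A$-modules (the first is the truncation, available since $K$ is coconnective after the shift, the second by the universal property of the relevant truncation) can be realized because $\Hom_A(K, K')$ surjects onto $\Hom_{H^0}(H^n(K), H^n(K'))$ by the same adjunction-plus-coconnectivity input, and any such map is a quasi-isomorphism by inspection on cohomology.

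\emph{Part (1).} For an arbitrary $A$-module $M$ and an integer $n$, I would define $M^{\le n}$ by the same attaching procedure applied to $M$ directly: inductively kill $H^i(M)$ for $i > n$ by coning off maps $A \otimes_S P_i[-i] \to M$ (with $P_i \twoheadrightarrow H^i(M)$ an $S$-module surjection from a sum of simples), starting from the smallest $i > n$ and proceeding upward, then colimit. The colimit $M^{\le n}$ has cohomology equal to that of $M$ in degrees $\le n$ and vanishing above, and the structure map $M \to M^{\le n}$ realizes this; define $M^{>n}$ to be its fiber, which has the stated cohomology, and rotate to get the asserted exact triangle. (Alternatively, one may note that $M^{\le n}$ is the truncation in an appropriate $t$-structure on $A\mod$ — the one for which the forgetful functor to $\Vect_k$ is $t$-exact — but the explicit attaching construction is what makes the role of the hypotheses transparent.)

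\emph{Main obstacle.} The delicate point is convergence and coherence of the iterated-attaching colimit: one must check that the colimit $\colim_k M_k$ really has cohomology concentrated in degree $n$ (i.e. the degrees $\ge k+2$ that keep getting introduced eventually escape to $+\infty$ and do not accumulate), which uses in an essential way that $A$ is \emph{coconnective} — each attaching map only adds cohomology strictly above the degree it is killing — and that the $t$-structure on $A\mod$ is compatible with filtered colimits. The semisimplicity of $S$ enters twice and must be used carefully: once to guarantee that every $H^i(M_k)$, as an $S$-module, admits a surjection from a sum of simples (so the attaching maps exist), and once to ensure $A \otimes_S (-)$ is exact so the cohomology bookkeeping is valid. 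I expect the non-uniqueness to be genuinely unavoidable — the isomorphisms depend on choices of the surjections $P_i \twoheadrightarrow H^i$ and of lifts — so the statement should and does only claim existence up to non-canonical isomorphism.
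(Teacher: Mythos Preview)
Your proposal is correct and follows essentially the same approach as the paper: both use the induction--restriction adjunction $\on{Ind}: S\mod \rightleftarrows A\mod: \on{Oblv}$, iteratively attach induced modules to kill cohomology above degree $n$, and take the filtered colimit, with coconnectivity ensuring each step only creates new cohomology in strictly higher degrees. The paper proves part (1) first and then deduces (2) by applying (1) to $\on{Ind}(L)$, whereas you reverse the order; and the paper attaches all of $\on{Ind}(\tau^{>n}\Oblv M)$ at once (which kills $H^{n+1}$ and surjects in higher degrees) rather than going degree-by-degree, but these are cosmetic differences. Your uniqueness argument in (2) is slightly sketchy---the surjection $\Hom_A(K,K') \twoheadrightarrow \Hom_{H^0(A)}(L,L)$ you invoke is exactly the property the paper isolates as Equation \eqref{e:res}, which follows from the explicit colimit construction rather than from adjunction alone---but this is easily filled in along the lines you indicate.
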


\begin{proof} Let us sketch a proof, essentially following Propositions 3.3 and 3.9 of \cite{dgi}. Recall the induction and restriction adjunction 
$$\on{Ind}: S\mod \rightleftarrows A\mod: \on{Oblv}.$$
By the coconnectivity of $S$, $S\mod$ carries a $t$-structure, characterized by the property that the restriction to $\Vect$ is $t$-exact.  By the semisimplicity of $S$ and coconnectivity of $A$, the functor $\on{Oblv} \circ \on{Ind}$ is moreover left $t$-exact.

For 1., we first construct the object $M^{\leq n}$ and the map $M \rightarrow M^{\leq n}$.  Consider the restriction $\on{Oblv}(M)$, and choose a section $s: \tau^{> n} \Oblv(M) \rightarrow \Oblv(M)$ of the canonical truncation map 
$\on{Oblv}(M) \rightarrow \tau^{> n} \Oblv(M).$  By adjunction, we obtain a map 
$$\on{Ind}(\tau^{> n} \Oblv(M)) \rightarrow M,$$
which by construction induces an isomorphism on $H^{n+1}$ and surjections on $H^i$, for $i > n+1$.

Replace $M$ by the cone $M_1$, which has $H^i(M) \simeq H^i(M_1)$ for $i \leq n$.  Apply the same process to $M_1$ to produce $M_2$, and iterate countably many times. Then the natural map $M \rightarrow \varinjlim M_j$ satisfies the required property of $M \rightarrow M^{\leqslant n}$, and take $M^{> n} = \mathrm{fib}(M \rightarrow M^{\leq n})$.

Note that, by construction, for any module $N$ with nonvanishing cohomology only in degrees $\leqslant n$, the restriction map 
\begin{equation}\Hom(M^{\leqslant n}, N) \rightarrow \Hom(M, N) \label{e:res}\end{equation}
is a surjection on $H^0$ and an isomorphism on $H^i$ for  $i > 0$. 

For 2., by shifting we may assume $n = 0$. For existence, we may construct $(\on{Ind}(L))^{\leqslant 0}$ as above. For uniqueness up to isomorphism, given another object $K(L, 0)$ satisfying the listed properties, by adjunction we obtain a map $\on{Ind}(L) \rightarrow K(L, 0)$ inducing the identity on $H^0$. By Equation \ref{e:res}, we may extend this to a map from $(\on{Ind}(L))^{\leqslant 0}$, which is necessarily an equivalence. 
\end{proof}

\begin{exmp}
Let us apply the construction of $M^{\leq n} = \colim M_j$ in the above proof to the case of $$A = M = C^\bullet(S^1), \quad \quad \text{with} \quad n = 0.$$
To do so, write $C^\bullet(S^1) = k[\eta]/(\eta^2)$, where $\eta$ has cohomological degree one. Then, if we introduce an auxiliary variable $x$ of degree zero with $d(x) = \eta$, we have
$$M^{\leq 0} = \colim M_j := \colim k[\eta][x]/x^{i+1} \simeq k.$$
I.e., in this case, the above construction recovers the usual Koszul complex. 
\end{exmp}

\begin{exmp} (Minimal resolutions) Let $R$ be a local Noetherian $k$-algebra with maximal ideal $\mathfrak{m}$ and residue field $k$. Consider the full subcategory $\QC^!(R)^{\on{nilp}}$ of $\QC^!(R)$ compactly generated by $k$ itself, i.e., modules on which $\mathfrak{m}$ acts locally nilpotently.\footnote{Equivalently, this is ind-coherent sheaves on the formal completion of $\on{Spec }R$ at its closed point.} One has a tautological equivalence
$$\zeta: \QC^!(R)^{\on{nilp}} \simeq \End(k)^{op}\mod,$$
which exchanges the injective envelope $I$ of $k$ on the left-hand side with the augmentation module $k$ on the right-hand side. For a coconnective object $M$ of the left-hand side, a choice of truncation filtration 
$$\zeta(M)^{\geqslant 0} \leftarrow \zeta(M)^{\geqslant 1} \leftarrow \cdots$$
as in the proposition yields, after applying $\zeta^{-1}$, a minimal injective resolution of $M$ equipped with its stupid truncation filtration. \label{ex:minres}\end{exmp}

By combining both assertions of Proposition \ref{p:coconnectivealgs}, we obtain the following. 
\begin{cor} \label{c:whoisfd} For $A$ a coconnective dg algebra satisfying the assumption of Proposition \ref{p:coconnectivealgs}, consider the forgetful functor
$$\Oblv: A\mod \rightarrow \Vect.$$
Then the preimage of the compact objects in $\Vect$ is the pretriangulated envelope of the modules $K(L, 0)$, as $L$ runs over the simple $S$-modules. 
\end{cor}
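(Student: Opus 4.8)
The statement asserts that, for $A$ a coconnective dg algebra with $S \simeq H^0(A)$ semisimple, the full subcategory of $A\mod$ consisting of objects whose image under $\Oblv$ to $\Vect$ is perfect (i.e.\ has finite-dimensional total cohomology) coincides with the pretriangulated (thick) envelope of the objects $K(L,0)$, $L$ ranging over simple $S$-modules. The plan is to prove the two inclusions separately, relying throughout on Proposition \ref{p:coconnectivealgs}.

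\textbf{Easy inclusion.} First I would observe that the pretriangulated envelope of the $K(L,0)$ lands inside the stated preimage. Indeed, each $K(L,0)$ has cohomology concentrated in degree $0$, equal to $L$ as an $H^0(A)$-module, hence finite-dimensional over $k$; so $\Oblv K(L,0)$ is perfect in $\Vect$. Since the preimage of the perfect objects under the exact, conservative functor $\Oblv$ is a thick subcategory (closed under shifts, cones, and retracts, as perfect complexes in $\Vect$ form a thick subcategory and $\Oblv$ commutes with these operations), it contains the thick envelope of the $K(L,0)$.

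\textbf{Hard inclusion.} Conversely, suppose $M \in A\mod$ has $\Oblv(M)$ perfect, so $H^\bullet(M)$ is a bounded complex of finite-dimensional $k$-vector spaces; say $H^i(M) = 0$ outside $[a,b]$. I would induct on the ``length'' $b-a$ (with the convention that $M=0$ is trivial). For the base case $a=b$, by part (2) of Proposition \ref{p:coconnectivealgs} applied with $n=a$, $M$ is isomorphic to $K(L,a)$ where $L = H^a(M)$ is a finite-dimensional $H^0(A) = S$-module; since $S$ is semisimple, $L$ is a finite direct sum of simple $S$-modules $L_j$, and correspondingly $K(L,a) \simeq \bigoplus_j K(L_j, a)$ (uniqueness up to non-unique isomorphism in part (2), applied to each summand and to the direct sum, gives this decomposition), so $M$ lies in the thick envelope of the $K(L,0)$ up to shift --- and shifts are allowed in a pretriangulated envelope. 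For the inductive step, apply part (1) of Proposition \ref{p:coconnectivealgs} with $n = a$: there is an exact triangle $M^{>a} \to M \to M^{\leqslant a} \xrightarrow{+1}$ in $A\mod$ with $H^i(M^{\leqslant a})$ concentrated in degree $a$ (equal to $H^a(M)$) and $H^i(M^{>a}) \simeq H^i(M)$ for $i>a$, hence concentrated in $[a+1,b]$. By the base case, $M^{\leqslant a} = K(H^a(M), a)$ is in the thick envelope of the $K(L,0)$; by the inductive hypothesis so is $M^{>a}$; and therefore so is $M$, being an extension of the two.

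\textbf{Main obstacle.} The only genuinely delicate point is verifying that in the base case the object $K(L,a)$ with $L$ a finite-dimensional semisimple $S$-module really does decompose as a finite direct sum of the $K(L_j,a)$ for $L_j$ simple, and that this is compatible with the uniqueness clause of Proposition \ref{p:coconnectivealgs}(2). This follows because $K(-,a)$, on finite-dimensional $S$-modules concentrated in degree $a$, can be realized functorially as $(\Ind(-))^{\leqslant a}$ (from the proof of that proposition), and $\Ind$ and the truncation both commute with finite direct sums; alternatively one invokes uniqueness-up-to-isomorphism directly against the direct sum of the $K(L_j,a)$. Everything else is formal manipulation of thick subcategories and the exact triangles supplied by the cited proposition.
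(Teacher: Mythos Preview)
Your proof is correct and is precisely the argument the paper has in mind: the paper's own proof consists only of the sentence ``By combining both assertions of Proposition \ref{p:coconnectivealgs}, we obtain the following,'' and your induction on cohomological amplitude using part (1) for the triangle and part (2) for the base case is exactly that combination spelled out. Your handling of the decomposition $K(L,a) \simeq \bigoplus_j K(L_j,a)$ via the uniqueness clause (or the construction $(\Ind(-))^{\leqslant a}$) is also the right way to tie off that loose end.
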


\subsubsection{} \label{s:cofree}By our assumption that $X$ has finitely many components, we may apply the preceding to 
$$A = C^\bullet(X) \quad \text{and} \quad S = H^0(X).$$  
The simple $S$-modules $L_\gamma$ are in canonical bijection with the components  $\gamma \in \pi_0(X)$. We have the corresponding object $K(L_\gamma, 0)$ of $C^\bullet(X)\mod$, and we call the associated D-module $$\delta^\wedge_\gamma := \underline{k}_X \underset{C^\bullet(X)} \otimes K(L_\gamma, 0)$$
a {\em cofree-monodromic sheaf} supported on $X_\gamma$. As a special case of Corollary \ref{c:whoisfd}, we deduce the following.

\begin{cor}\label{c:cofree=fdcoh} For an object $M$ of $\on{IndLisse}(X)^{uni}$, its de Rham cohomology 
$$\Hom( \underline{k}_X, M) \in \Vect$$is perfect if and only if $M$ lies in the pretriangulated envelope of the cofree-monodromic sheaves  $\delta^\wedge_\gamma, \gamma \in \pi_0(X).$

In particular, for a component $X_\gamma$, its constant sheaf lies in the pretriangulated envelope of $\delta^\wedge_\gamma$ if and only if $C^\bullet(X_\gamma)$ is perfect, in which case one has a canonical isomorphism
$$\underline{k}_{X_\gamma} \simeq \Hom(\delta^\wedge_\gamma, \underline{k}_{X_\gamma}) \underset{\End(\delta^\wedge_\gamma)} \otimes \delta^\wedge_\gamma.$$
\end{cor}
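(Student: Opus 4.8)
The plan is to deduce Corollary \ref{c:cofree=fdcoh} as a direct consequence of Corollary \ref{c:whoisfd} together with the tautological equivalence $\on{IndLisse}(X)^{uni} \simeq C^\bullet(X)\mmod$. First I would recall that under this equivalence the constant sheaf $\const_X$ corresponds to the algebra $C^\bullet(X)$ viewed as the rank-one free module, so that for any $M$ the complex $\Hom(\const_X, M)$ is identified with $\Oblv(M) \in \Vect$, where $\Oblv: C^\bullet(X)\mmod \to \Vect$ is the forgetful functor. Next I would invoke the hypotheses needed to apply Proposition \ref{p:coconnectivealgs} and hence Corollary \ref{c:whoisfd}, namely that $A := C^\bullet(X)$ is coconnective (de Rham cohomology is concentrated in nonnegative degrees) and that $S := H^0(X) = H^0(C^\bullet(X))$, the ring of locally constant functions, is semisimple since $\pi_0(X)$ is finite; the structure map $S \to A$ is the unit. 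By construction $\delta^\wedge_\gamma$ corresponds to the module $K(L_\gamma, 0)$, and the simple $S$-modules are exactly the $L_\gamma$ for $\gamma \in \pi_0(X)$. Corollary \ref{c:whoisfd} then says precisely that the preimage under $\Oblv$ of the perfect complexes in $\Vect$ is the pretriangulated envelope of the $K(L_\gamma, 0)$; transporting along the equivalence yields the first assertion, that $\Hom(\const_X, M)$ is perfect iff $M$ lies in the pretriangulated envelope of the $\delta^\wedge_\gamma$.

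For the second assertion I would first observe that one may work one component at a time, so assume $X = X_\gamma$ is connected, $\pi_0(X) = \{\gamma\}$, $S = k$, and there is a single cofree-monodromic sheaf $\delta^\wedge_\gamma = \delta^\wedge$. Applying the first part to $M = \const_{X_\gamma}$, we get that $\const_{X_\gamma}$ lies in the pretriangulated envelope of $\delta^\wedge$ if and only if $\Hom(\const_{X_\gamma}, \const_{X_\gamma}) = C^\bullet(X_\gamma)$ is perfect. For the displayed isomorphism, I would argue as follows: the object $K(L_\gamma, 0)$ is characterized, via the second part of Proposition \ref{p:coconnectivealgs}, by having cohomology concentrated in degree $0$ equal to $k$; dually, the constant sheaf $\const_{X_\gamma}$ is the unique (up to iso) object of $\on{IndLisse}(X_\gamma)^{uni}$ whose Koszul-dual $C_\bullet(\Omega X_\gamma)$-module-type invariant $\Hom(\delta^\wedge, \const_{X_\gamma})$, as an $\End(\delta^\wedge)$-module, reconstructs it via the base-change/bar construction $\Hom(\delta^\wedge, -) \otimes_{\End(\delta^\wedge)} \delta^\wedge$. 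Concretely: the counit of the adjunction between $\Hom(\delta^\wedge, -)\otimes_{\End(\delta^\wedge)}(-)$ and $\Hom(\delta^\wedge, -)$ gives a natural map $\Hom(\delta^\wedge, \const_{X_\gamma}) \otimes_{\End(\delta^\wedge)} \delta^\wedge \to \const_{X_\gamma}$, and one checks this is an equivalence by verifying it is an isomorphism on the pretriangulated envelope of $\delta^\wedge$ (where it holds tautologically for $\delta^\wedge$ itself and is closed under cones and retracts), which contains $\const_{X_\gamma}$ under the perfectness hypothesis just established.

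I expect the main obstacle to be the careful bookkeeping around the second assertion: making precise in what sense $\Hom(\delta^\wedge, -)\otimes_{\End(\delta^\wedge)}(-)$ is an adjoint (or a well-defined functor) on the relevant subcategory, and confirming that $\const_{X_\gamma}$ really does lie in the pretriangulated — rather than merely ind-completed — envelope of $\delta^\wedge$ when $C^\bullet(X_\gamma)$ is perfect. This last point is exactly the content of the first assertion applied to $M = \const_{X_\gamma}$, so the two halves are genuinely intertwined, but no additional input beyond Corollary \ref{c:whoisfd} and Proposition \ref{p:coconnectivealgs} should be required; the verification that the counit map is an equivalence on the pretriangulated envelope of a single object is a formal dévissage. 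Everything else — coconnectivity of $C^\bullet(X)$, finiteness and semisimplicity of $H^0(X)$, the identification of $\const_X$ with the free module — is immediate from the setup in Section \ref{s:cofree}.
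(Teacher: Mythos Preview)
Your proposal is correct and follows the same approach as the paper, which simply states the corollary ``as a special case of Corollary \ref{c:whoisfd}'' without further proof. You have spelled out more detail than the paper does, particularly for the canonical isomorphism in the second assertion (which the paper leaves entirely to the reader); your d\'evissage via the counit of the $(- \otimes_{\End(\delta^\wedge)} \delta^\wedge,\ \Hom(\delta^\wedge,-))$ adjunction on the pretriangulated envelope of $\delta^\wedge$ is the natural way to justify it.
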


\subsubsection{} Let us provide a more hands on description of the cofree-monodromic sheaves $\delta^\wedge_\gamma$.  To do so, pick a point $x \in X(k)$ in the component $\gamma$. Note that $*$-pullback to $x$ defines an augmentation module
$$C^\bullet(X) \rightarrow k_x,$$
and we have $k_x \simeq K(L_\gamma, 0)$.
\begin{defn}\label{cofree mon def}
We call the corresponding object
$$\delta^\wedge_x := \underline{k}_X \underset{C^\bullet(X)}{\otimes} k_x \in \on{IndLisse}(X)^{uni}$$
{\em the cofree-monodromic sheaf} corresponding to $x$. 
\end{defn}

\begin{lemma} The $*$-fiber of the cofree-monodromic sheaf $\delta^\wedge_x$ at any point $y \in X(k)$ is given by 
$$k_x \underset{C^\bullet(X)}{\otimes} k_y.$$
In particular, the $*$-fiber at $x$ is given by the self intersection $k_x \underset{C^\bullet(X)}{\otimes} k_x$.
\end{lemma}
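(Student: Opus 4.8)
The statement asserts that for $\delta^\wedge_x := \underline{k}_X \otimes_{C^\bullet(X)} k_x$, the $*$-fiber at a point $y \in X(k)$ is $k_x \otimes_{C^\bullet(X)} k_y$. The whole point is that $*$-restriction to a point commutes with the relevant colimit presenting $\delta^\wedge_x$. First I would recall that, by definition, $\delta^\wedge_x$ is the image of the $C^\bullet(X)$-module $k_x$ under the tautological equivalence $\on{IndLisse}(X)^{uni} \simeq C^\bullet(X)\mod$ followed by the inclusion $\iota$ into $\Dmod(X)$; concretely this means $\delta^\wedge_x = \underline{k}_X \otimes_{C^\bullet(X)} k_x$, interpreting $\underline{k}_X$ as a $(\mathscr{D}\text{-}X, C^\bullet(X))$-bimodule via the action of $\End(\underline{k}_X) = C^\bullet(X)$. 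Writing $k_x$ as a colimit of perfect $C^\bullet(X)$-modules (bar resolution), $\delta^\wedge_x$ is a colimit in $\Dmod(X)$ of finite iterated self-extensions of $\underline{k}_X$, each of whose $*$-fiber at $y$ is a finite iterated self-extension of $(\underline{k}_X)_y \simeq k_y$, with the $C^\bullet(X)$-module structure recording the cup product action.

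The key step is then: $*$-fiber at $y$, i.e. $y^* : \Dmod(X) \to \Vect$, is continuous (it is a left adjoint, being pullback along $y : \pt \to X$), hence commutes with the colimit presenting $\delta^\wedge_x$. Concretely, $y^*(\underline{k}_X \otimes_{C^\bullet(X)} k_x) \simeq y^*(\underline{k}_X) \otimes_{C^\bullet(X)} k_x \simeq k_y \otimes_{C^\bullet(X)} k_x$, where the first identification uses that $y^*$ is $C^\bullet(X)$-linear (the $C^\bullet(X)$-action on $\underline{k}_X$ is by endomorphisms, which $y^*$ sends to the cup-product action on $k_y = y^*\underline{k}_X$, i.e. the augmentation $C^\bullet(X) \to k$) and that tensoring over $C^\bullet(X)$ with a fixed module commutes with the continuous functor $y^*$ applied termwise to a resolution. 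Commutativity of $\otimes_{C^\bullet(X)}$ then gives $k_y \otimes_{C^\bullet(X)} k_x \simeq k_x \otimes_{C^\bullet(X)} k_y$, which is the claimed formula. Specializing $y = x$ gives the self-intersection $k_x \otimes_{C^\bullet(X)} k_x$.

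I would organize the write-up as: (i) unwind the definition of $\delta^\wedge_x$ as $\underline{k}_X \otimes_{C^\bullet(X)} k_x$ inside $\Dmod(X)$, noting the bimodule structure on $\underline{k}_X$; (ii) observe $y^*$ is continuous and, being pullback, sends $\underline{k}_X$ to $k_y$ compatibly with the $C^\bullet(X)$-action (both acting via the augmentation); (iii) conclude $y^*\delta^\wedge_x \simeq k_y \otimes_{C^\bullet(X)} k_x \simeq k_x \otimes_{C^\bullet(X)} k_y$ by base change and symmetry of the relative tensor product.

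\textbf{Main obstacle.} The only real subtlety is step (ii): making precise that $y^*$ intertwines the $C^\bullet(X) = \End(\underline{k}_X)$-action on $\underline{k}_X$ with the augmentation action on $k_y = y^*\underline{k}_X$. This is where one uses that $y^*$ is a monoidal-type functor (symmetric monoidal for $\otimes^!$ or at least lax/oplax compatibly), so that endomorphisms of the unit map to endomorphisms of the unit, and the composite $C^\bullet(X) \to \End(k_y) = k$ is the unique augmentation (de Rham cohomology of a point). Everything else is formal manipulation with continuous functors and relative tensor products, so I do not expect genuine difficulty beyond being careful about this compatibility.
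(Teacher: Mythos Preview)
Your proposal is correct and takes essentially the same approach as the paper: both identify $y^*$, restricted to $\on{IndLisse}(X)^{uni} \simeq C^\bullet(X)\mod$, with the functor $(-) \otimes_{C^\bullet(X)} k_y$, from which the result is immediate. The paper's one-sentence proof simply states this, noting that $*$-restriction is defined because these objects are ind-holonomic; this is the only care needed in the placid setting, where your claim that $y^*$ is a left adjoint on all of $\Dmod(X)$ is not quite right, though your argument goes through unchanged once restricted to $\on{IndLisse}(X)^{uni}$.
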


\begin{proof} This is immediate from the fact that, as $\on{IndLisse}(X)^{uni}$ consists of ind-holonomic objects, $*$-restriction is defined on them, and the corresponding functor 
$$C^\bullet(X)\mod \rightarrow \Vect$$is given by tensoring with $k_y$. 
\end{proof}
 
\begin{rmk} Suppose $k = \mathbb{C}$. For suitable $X$, e.g., finite type and simply connected, the above self intersection canonically identifies with cochains on the based loop space of the analytification $X_\mathbb{C}$, i.e.,  $$ k_x \underset{C^\bullet(X)} \otimes k_x \simeq C^\bullet(\Omega X_\mathbb{C}).$$ 
\end{rmk}

\begin{exmp}\label{Cofree group} 
Suppose $H$ is an affine group scheme, possibly of infinite type, whose prounipotent radical is finite codimension. If we write $H^\circ$ for the neutral component of $H$, then we may write $C^\bullet(H^\circ) \simeq \Sym(\fp)$. Moreover, we have a canonical basepoint, namely the identity $e$ of $H$, and the resulting sheaf $\delta_e^\wedge$ is canonically isomorphic to the monoidal unit  $\delta^\wedge$ of $\Dmod(H\mon\!\bs H / H\mon).$ 

By the preceding Lemma, we have its $*$-fiber is given by 
$$i_e^*(\delta^\wedge) \simeq \Sym(\fp[1]).$$
For any $x \in H$, if we write $\delta_x$ for the delta D-module supported at $x$, one obtains the corresponding cofree-monodromic sheaf by translation
$$\delta^\wedge_x \simeq \delta_x \star \delta^\wedge.$$
\end{exmp}

\subsubsection{} Here are some other basic properties of the cofree-monodromic sheaves. First, for a map $X \rightarrow Y$, one has adjoint functors 
\begin{equation} \label{e:adjconst}\pi^*: \on{IndLisse}(Y)^{uni} \rightleftarrows \on{IndLisse}(X)^{uni}: \pi_*^{uni},\end{equation}
where $\pi^*$ is the usual $*$-pullback, and $\pi_*^{uni}$ is the composition 

$$\on{IndLisse}(X)^{uni} \hookrightarrow \Dmod(X) \xrightarrow{\pi_*} \Dmod(Y) \xrightarrow{\on{Av}_*^{uni}} \on{IndLisse}(Y)^{uni}.$$

\begin{lemma} For any $x \in X(k)$, there is a canonical pullback map and pushforward isomorphism 
$$\pi^* (\delta_{\pi(x)}^\wedge) \rightarrow \delta^\wedge_{x} \quad \text{and} \quad \pi_*^{uni}(\delta^\wedge_x) \simeq \delta_{\pi(x)}^\wedge.$$
\end{lemma}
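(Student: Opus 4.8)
The statement concerns the behavior of cofree-monodromic sheaves under a map $\pi \colon X \to Y$ of placid schemes. The plan is to reduce both assertions to identities in $\End$ of the constant sheaf via the tautological equivalence $\on{IndLisse}(X)^{uni} \simeq C^\bullet(X)\mmod$ established just above, under which $\delta^\wedge_x$ corresponds to the module $k_x = i_x^* \underline{k}_X$ regarded as a $C^\bullet(X)$-module via restriction at $x$. The key structural input is that the adjunction \eqref{e:adjconst} is, under these equivalences, extension and restriction of scalars along the ring map $C^\bullet(Y) \to C^\bullet(X)$ induced by $\pi^*$: indeed $\pi^* \colon \on{IndLisse}(Y)^{uni} \to \on{IndLisse}(X)^{uni}$ is $- \otimes_{C^\bullet(Y)} C^\bullet(X)$ since $\pi^* \underline{k}_Y \simeq \underline{k}_X$ compatibly with the action of cohomology, and $\pi_*^{uni}$ is its right adjoint, i.e.\ restriction of scalars.

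\textbf{Step 1: the pullback map.} With $\pi^*$ identified as base change along $C^\bullet(Y) \to C^\bullet(X)$, we compute
$$\pi^*(\delta^\wedge_{\pi(x)}) \simeq k_{\pi(x)} \underset{C^\bullet(Y)}{\otimes} C^\bullet(X),$$
using that $\delta^\wedge_{\pi(x)} = \underline{k}_Y \otimes_{C^\bullet(Y)} k_{\pi(x)}$. Now the commuting square of rings $C^\bullet(Y) \to C^\bullet(X)$ together with the two augmentations $C^\bullet(Y) \to k$ (at $\pi(x)$) and $C^\bullet(X) \to k$ (at $x$) — which commute because restriction at $x$ factors through restriction to the fiber $\pi^{-1}(\pi(x))$, or simply because $i_x^* \pi^* = i_{\pi(x)}^*$ — furnishes a canonical map $k_{\pi(x)} \otimes_{C^\bullet(Y)} C^\bullet(X) \to k_x$ of $C^\bullet(X)$-modules. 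Translating back through the equivalence, this is the desired map $\pi^*(\delta^\wedge_{\pi(x)}) \to \delta^\wedge_x$. (One can alternatively obtain it as the image of the pullback map $\pi^* i_{\pi(x),*} \to i_{x,*}$ after applying $\on{Av}_*^{uni}$, but the module-theoretic description is cleanest and makes functoriality manifest.)

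\textbf{Step 2: the pushforward isomorphism.} Here we use that $\pi_*^{uni}$ is restriction of scalars along $C^\bullet(Y) \to C^\bullet(X)$. Then $\pi_*^{uni}(\delta^\wedge_x)$ corresponds to the $C^\bullet(X)$-module $k_x$ viewed as a $C^\bullet(Y)$-module; but restricting the $C^\bullet(X)$-module $k_x$ along $C^\bullet(Y) \to C^\bullet(X)$ gives precisely the $C^\bullet(Y)$-module on which $C^\bullet(Y)$ acts through $C^\bullet(Y) \to C^\bullet(X) \xrightarrow{i_x^*} k$, which by the identity $i_x^* \pi^* = i_{\pi(x)}^*$ is the augmentation $i_{\pi(x)}^* \colon C^\bullet(Y) \to k$, i.e.\ the module $k_{\pi(x)}$. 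Translating back, $\pi_*^{uni}(\delta^\wedge_x) \simeq \delta^\wedge_{\pi(x)}$, naturally in $x$.

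\textbf{Main obstacle.} The one point requiring care is the identification of the functors $\pi^*$ and $\pi_*^{uni}$ of \eqref{e:adjconst} with base change and restriction of scalars along $C^\bullet(Y) \to C^\bullet(X)$ — in particular that this holds compatibly with the tautological equivalences $\on{IndLisse}(-)^{uni} \simeq C^\bullet(-)\mmod$ and with the augmentation modules at $x$ and $\pi(x)$. This is where one must invoke that $\pi^* \underline{k}_Y \simeq \underline{k}_X$ as objects with $\End$-action (so that $\pi^*$ is $C^\bullet(Y)$-linear, hence factors as base change since it preserves the compact generator), that $\pi_*^{uni}$ is its right adjoint (hence restriction of scalars), and the base-change compatibility $i_x^* \pi^* \simeq i_{\pi(x)}^*$. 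Once these are in place the argument is a formal manipulation of modules over coconnective dg algebras, and both assertions drop out simultaneously.
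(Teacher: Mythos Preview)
Your proposal is correct and takes essentially the same approach as the paper: the paper's proof is a one-line assertion that the adjunction \eqref{e:adjconst} identifies with restriction and induction along $C^\bullet(Y) \to C^\bullet(X)$, and you have spelled out exactly this identification and the ensuing module-theoretic computation in detail. Your ``main obstacle'' is precisely the content the paper leaves implicit.
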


\begin{proof} This follows from the fact that the adjunction \eqref{e:adjconst} identifies with restriction and induction along the map $C^\bullet(Y) \rightarrow C^\bullet(X)$. \end{proof}

We also note that cofree-monodromic sheaves behave simply on products. Namely, given a product $$X \xleftarrow{\pi_X} X \times Y \xrightarrow{\pi_Y} Y,$$
and a point $(x,y) \in X \times Y (k)$, one has a canonical isomorphism 
$$\delta^\wedge_{(x,y)} \simeq \pi_X^*(\delta^\wedge_x) \overset * \otimes \pi_Y^*(\delta^\wedge_y).$$

\subsubsection{}  Let us finally mention a few other basic properties of the cofree monodromic sheaf. These will not be needed in what follows, and the reader may proceed directly to the next subsection. 

\sss
For a category $\sC$ equipped with a $t$-structure $\tau$, let us denote its left completion with respect to $\tau$ by $\sC^{l.c.}$. Recall this comes equipped with a canonical map which sends an object of $\sC$ to the formal inverse limit of its bounded below truncations
 $$\sC \rightarrow \sC^{l.c.}, \quad \quad c \mapsto \lim_{n \in \mathbb{Z}}  \tau^{\geqslant n} c.$$
Let us denote the left completion of $\on{IndLisse}(X)^{uni}$ by
$$\on{QLisse}(X)^{uni} := \on{IndLisse}(X)^{uni, l.c.};$$
these are the unipotent quasi-lisse sheaves in the sense of  \cite{AGKRRV}.

\sss
Note that the embedding 
$$\on{IndLisse}(X)^{uni} \hookrightarrow \Dmod(X),$$
being by definition $t$-exact, induces an embedding of their left completions with respect to their $t$-structures
$$\begin{tikzcd}\on{IndLisse}(X)^{uni} \ar[rr, hook] \ar[d] &&\Dmod(X) \ar[d] \\ \on{QLisse}(X)^{uni} \ar[rr, hook] && \Dmod(X)^{l.c.}.\end{tikzcd}$$
As $\Dmod(X)$ is left complete, we deduce a chain of fully faithful embeddings
$$\on{IndLisse}(X)^{uni} \hookrightarrow \on{QLisse}(X)^{uni} \hookrightarrow \Dmod(X).$$
We also deduce the existence of an adjunction 
\begin{equation} \label{e:trunks}\on{IndLisse}(X)^{uni} \rightleftarrows \on{QLisse}(X)^{uni}: i^!,\end{equation}i.e., that the inclusion $\on{IndLisse}(X)^{uni} \hookrightarrow \on{QLisse}(X)^{uni}$ admits a right adjoint.  

\sss
Suppose our ground field $k = \mathbb{C}$ for simplicity, and recall that we denote by $X_{\mathbb{C}}$ the analytification of a complex scheme $X$ of finite type. 
In this case, let us write $$C_\bullet(\Omega X_{\mathbb{C}})\mod_{\wh{0}} \hookrightarrow C_\bullet(\Omega X_{\mathbb{C}})\mod$$for the full subcategory of modules generated by the augmentation.

\begin{prop}The following hold. 

\begin{enumerate}[(i)]
    \item If $X$ is connected, one has a $t$-exact equivalence \begin{equation} \label{e:hansel} C_\bullet(\Omega X_{\mathbb{C}})\mod_{\wh{0}} \simeq \on{IndLisse}(X)^{uni},\end{equation}which exchanges the augmentation module $\mathbb{C}$ on the left-hand side with the constant sheaf $\underline{\mathbb{C}}_X$ on the right-hand side.

    \item Moreover, if $X$ is simply connected,  this prolongs to an equivalence
    \begin{equation} \label{e:gretel}C_\bullet(\Omega X_{\mathbb{C}})\mod \simeq \on{QLisse}(X)^{uni}.\end{equation}
In particular, the latter category is compactly generated by a single object $\Xi$. 

\item Finally, if $X$ is smooth and proper, or more generally a Poincar\'e duality space of dimension $2d$, one has the identity
\begin{equation} \label{e:WITCH}i^!(\Xi) \simeq \delta^\wedge[-2d].\end{equation}That is, the cofree-monodromic sheaf is obtained from the compact generator via the adjunction \eqref{e:trunks}. 
\end{enumerate}

\end{prop}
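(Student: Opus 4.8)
The plan is to treat the three parts in sequence, since (ii) builds on (i) and (iii) builds on (ii). For part (i), I would produce the equivalence \eqref{e:hansel} by noting both sides are module categories over $C^\bullet(X_{\mathbb C})$. Indeed, $\on{IndLisse}(X)^{uni}$ is by definition compactly generated by $\underline{\mathbb C}_X$, whose endomorphism algebra is de Rham cohomology $C^\bullet(X)\simeq C^\bullet(X_{\mathbb C})$, so by Barr--Beck--Lurie (or the standard Schwede--Shipley / Lurie recognition of compactly generated stable categories) we have $\on{IndLisse}(X)^{uni}\simeq C^\bullet(X_{\mathbb C})\mmod$ with $\underline{\mathbb C}_X$ going to the rank-one free module. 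Now invoke the classical Eilenberg--Moore / Adams--Hilton--Dwyer Koszul duality for a connected space: $\End_{C^\bullet(X_{\mathbb C})}(\mathbb C)\simeq C_\bullet(\Omega X_{\mathbb C})$, and the functor $M\mapsto \mathbb C\otimes_{C^\bullet(X_{\mathbb C})}M$ identifies $C^\bullet(X_{\mathbb C})\mmod$ with the subcategory of $C_\bullet(\Omega X_{\mathbb C})\mmod$ generated by the image of the free module, which is exactly the augmentation module; hence $C^\bullet(X_{\mathbb C})\mmod\simeq C_\bullet(\Omega X_{\mathbb C})\mmod_{\wh 0}$. Composing gives \eqref{e:hansel}, and $\underline{\mathbb C}_X$ is matched with $\mathbb C$ by construction. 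For $t$-exactness, observe the $t$-structure on $\on{IndLisse}(X)^{uni}$ is the one inherited from $\Dmod(X)$ via the $t$-exact embedding, which on the algebra side is the $t$-structure on $C_\bullet(\Omega X_{\mathbb C})\mmod$ for which restriction to $\Vect$ is $t$-exact (using connectivity of $C_\bullet(\Omega X_{\mathbb C})$, which holds since $X_{\mathbb C}$ is connected); that is manifestly the one cutting out $\mmod_{\wh 0}$ correctly.

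For part (ii), I would pass to left completions on both sides of \eqref{e:hansel}. By definition $\on{QLisse}(X)^{uni}$ is the left completion of $\on{IndLisse}(X)^{uni}$ with respect to its $t$-structure. On the algebra side, the key input is simple connectivity: $\Omega X_{\mathbb C}$ is then connected, so $C_\bullet(\Omega X_{\mathbb C})$ is a connected (cohomologically $\leq 0$, i.e. $H_i$ for $i\geq 0$) dg algebra, hence its module category is already left complete for the standard $t$-structure, \emph{and} the full subcategory $\mmod_{\wh 0}$ is not left complete --- its left completion is all of $C_\bullet(\Omega X_{\mathbb C})\mmod$. Concretely, a connected dg algebra $R$ has the property that $R\mmod_{\wh 0}$, the localizing subcategory generated by the augmentation, becomes all of $R\mmod$ after left completion because any $R$-module is built from its Postnikov tower and each layer is a module over $H_0(R)$, and for $\Omega X_{\mathbb C}$ simply connected $H_0(C_\bullet(\Omega X_{\mathbb C}))=\mathbb C$ (as $\pi_0(\Omega X_{\mathbb C})=\pi_1(X_{\mathbb C})=0$), so every layer is a sum of shifts of the augmentation. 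Therefore the left completion of $C_\bullet(\Omega X_{\mathbb C})\mmod_{\wh 0}$ is $C_\bullet(\Omega X_{\mathbb C})\mmod$, giving \eqref{e:gretel}; and $C_\bullet(\Omega X_{\mathbb C})\mmod$ is compactly generated by the single object $R=C_\bullet(\Omega X_{\mathbb C})$ itself, which we name $\Xi$.

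For part (iii), the statement to prove is $i^!(\Xi)\simeq \delta^\wedge[-2d]$ where $i^!$ is the right adjoint of \eqref{e:trunks}, i.e. the right adjoint to the inclusion $\on{IndLisse}(X)^{uni}\hookrightarrow \on{QLisse}(X)^{uni}$. Translating through \eqref{e:hansel}--\eqref{e:gretel}, $i^!$ is the right adjoint to the inclusion $C_\bullet(\Omega X_{\mathbb C})\mmod_{\wh 0}\hookrightarrow C_\bullet(\Omega X_{\mathbb C})\mmod$, applied to the free module $R$; equivalently, under the Koszul-dual description, the cofree-monodromic sheaf $\delta^\wedge$ corresponds to the object $K(\mathbb C,0)$ of $C^\bullet(X_{\mathbb C})\mmod$ of Proposition \ref{p:coconnectivealgs}, which under Koszul duality is precisely the $C_\bullet(\Omega X_{\mathbb C})$-linear dual (or rather the ``coinduced'' / cofree) module $\mathbb C^\vee$ suitably interpreted. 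The computation then reduces to: $i^!(\Xi)$ is computed by $\Hom_{C^\bullet(X_{\mathbb C})}(\underline{\mathbb C}_X,-)$ applied appropriately, and Poincaré duality for the space of dimension $2d$ gives $C_\bullet(X_{\mathbb C})\simeq C^\bullet(X_{\mathbb C})[2d]$ as a $C^\bullet(X_{\mathbb C})$-module, which is exactly the shift appearing in \eqref{e:WITCH}. I expect the main obstacle to be making this last identification clean: one has to correctly pin down, via Example \ref{ex:minres} and Corollary \ref{c:cofree=fdcoh}, that $\delta^\wedge\simeq \underline{\mathbb C}_X\otimes_{C^\bullet(X_{\mathbb C})}\mathbb C$ corresponds on the $\on{QLisse}$ side to the image of the compact generator $\Xi$ under $i^!$ up to the Poincaré duality shift $[-2d]$ --- i.e. keeping track of which generator is ``free'' and which is ``cofree'' on each side of the Koszul duality, and inserting the duality isomorphism $C^\bullet(X_{\mathbb C})\simeq C_\bullet(X_{\mathbb C})[-2d]$ at the right spot. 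Everything else is formal manipulation of adjunctions, completions, and the standard topological Koszul duality already cited in the text.
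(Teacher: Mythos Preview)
Your approach for parts (i) and (ii) is essentially the paper's: Koszul duality between $C^\bullet(X)$ and $C_\bullet(\Omega X_{\mathbb C})$ to get \eqref{e:hansel}, then left-completion to get \eqref{e:gretel}, using that $H_0(C_\bullet(\Omega X_{\mathbb C}))=\mathbb C$ when $X$ is simply connected so that every truncation of $C_\bullet(\Omega X_{\mathbb C})$ lies in $\mod_{\wh 0}$. One point where you are thinner than the paper: your $t$-exactness argument in (i) just asserts that the $t$-structure inherited from $\Dmod(X)$ agrees with the standard one on $C_\bullet(\Omega X_{\mathbb C})\mod$. The paper actually checks this, by observing that compact objects on both sides are closed under truncation (finite iterated extensions of $\mathbb C$, resp.\ of $\underline{\mathbb C}_X$) and then characterizing $(\text{compact})\cap(\text{degrees}\geq 0)$ on each side as those $c$ with $\Hom(\mathbb C,c)\in\Vect^{\geq 0}$, resp.\ $\Hom(\underline{\mathbb C}_X,c)\in\Vect^{\geq 0}$. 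This is short but is a genuine verification, not a tautology.

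For part (iii) your idea is right but the execution is tangled. The paper's argument is a one-line reduction: under \eqref{e:hansel} the equivalence is $\Hom_{C_\bullet(\Omega X_{\mathbb C})}(\mathbb C,-)$, so $\delta^\wedge$ (which corresponds to the augmentation $k$ in $C^\bullet(X)\mod$) is characterized among objects of $\mod_{\wh 0}$ by $\Hom(\mathbb C,-)\simeq k$. Hence $i^!(\Xi)\simeq\delta^\wedge[-2d]$ is equivalent to
\[
\Hom_{C_\bullet(\Omega X_{\mathbb C})\mod}(\mathbb C,\, C_\bullet(\Omega X_{\mathbb C}))\simeq \mathbb C[-2d],
\]
i.e.\ the Gorenstein property of $C_\bullet(\Omega X_{\mathbb C})$, which is known for Poincar\'e duality spaces (the paper cites \cite[\S10.1]{dgi}). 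Your attempt to phrase this via $C_\bullet(X)\simeq C^\bullet(X)[2d]$ is the same content transported across Koszul duality, but staying on the $C_\bullet(\Omega X_{\mathbb C})$ side avoids the bookkeeping you anticipate about which generator is free versus cofree.
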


\begin{proof}Let us begin with \eqref{e:hansel}. To obtain an equivalence of the desired form, modulo the claim of $t$-exactness, it suffices to know that the augmentation module $\mathbb{C}$ for $C_\bullet(\Omega X_\mathbb{C})$ is compact, and its endomorphisms are canonically identified with $C^\bullet(X_{\mathbb{C}})$. However, these are standard parts of Koszul duality in topology, cf. Proposition 5.3 of \cite{dgi}. 

To see that the obtained equivalence is $t$-exact, we first claim  that the compact objects on both sides are closed under truncation. Explicitly, for $C_\bullet(\Omega X)\mod_{\wh{0}},$ the compact objects are $t$-bounded modules whose individual cohomology groups are finite iterated extensions of the augmentation module $\mathbb{C}$. For $\on{IndLisse}(X)^{uni}$, the compact objects similarly are bounded complexes of D-modules, whose individual cohomology D-modules are finite iterated extensions of the constant sheaf $\underline{\mathbb{C}}_X$. In particular, the claimed stability under truncation is manifest. As the $t$-structures are also compatible with filtered colimits, it is therefore enough to check that the $t$-structures agree on compact objects. 

To see this, it is straightforward to see that a compact object $c$ of $C_\bullet(\Omega X)\mod_{\wh{0}}$ lies in degrees $\geqslant 0$ if and only if 
$$\Hom(\mathbb{C}, c) \in \Vect^{\geqslant 0}.$$
Indeed, this follows from the fact that its lowest nonzero cohomology receives a degree zero map from $\mathbb{C}$. Similarly, a compact object $\sM$ of $\on{IndLisse}(X)^{uni}$ lies in degrees $\geqslant 0$ if and only if 
$$\Hom(\underline{\mathbb{C}}_X, \sM) \in \Vect^{\geqslant 0},$$
hence the claimed $t$-exactness follows. 

To prove \eqref{e:gretel}, note that $C_\bullet(\Omega X_{\mathbb{C}})\mod$, as with modules for any connective algebra, is left complete with respect to its natural $t$-structure. Therefore, the inclusion $$C_\bullet(\Omega X_{\mathbb{C}})\mod_{\wh{0}} \hookrightarrow C_\bullet(\Omega X_{\mathbb{C}})\mod$$
prolongs to an inclusion
\begin{equation} \label{e:tobyarnaulttoby}C_\bullet(\Omega X_{\mathbb{C}})\mod^{l.c.}_{\wh{0}} \hookrightarrow C_\bullet(\Omega X_{\mathbb{C}})\mod.\end{equation}To show \eqref{e:gretel}, we need to check that if $X$ is simply connected the latter inclusion is moreover essentially surjective. However, the latter is clear, as each truncation $$\tau^{\geqslant N} C_\bullet(\Omega X_{\mathbb{C}})$$is then a finite iterated extension of the augmentation module $\mathbb{C}$, hence $C_\bullet(\Omega X_{\mathbb{C}})$ lies in the essential image of  \eqref{e:tobyarnaulttoby}.

To see the final claim \eqref{e:WITCH}, it is enough to see that one has an isomorphism
$$\Hom_{C_\bullet(\Omega X_{\mathbb{C}})\mod}(\mathbb{C}, C_\bullet(\Omega X_{\mathbb{C}})) \simeq \mathbb{C}[-2d].$$
However, this is a known result, cf. Section 10.1 of \cite{dgi}. \end{proof}

\subsection{Nearly compact objects}

\label{s:lasts4}

\sss
Our next goal is to explain the relationship between cofree-monodromic sheaves and near compactness (see Definitions \ref{cofree mon def} and \ref{d:defnc1}). To do so, we first recall the definition of monodromic invariants.

\subsubsection{} As before, $H$ denotes an affine group scheme whose prounipotent radical is of finite codimension.  Recall that $\Dmod(H)$, viewed as a monoidal category under convolution, has a natural action on $\Vect$, whose underlying binary product is given by global flat sections, i.e., pushforward to a point $\pi: H \rightarrow \on{pt}$:$$\Dmod(H) \otimes \Vect \rightarrow \Vect, \quad \quad M \otimes V \mapsto \pi_*(M) \otimes V. $$
In particular, as this map commutes with the natural action of $H \times H$ on both, and $H$ carries a constant sheaf, we deduce an adjunction of $\Dmod(H)$-bimodules \begin{equation} \label{e:adjbasic}\pi^*: \Vect \rightleftarrows \Dmod(H): \pi_*.\end{equation}

Consequently, the $H$-invariants of a $\Dmod(H)$-module $\sC$ are defined by 
$$\sC^H := \Hom_{\Dmod(H)}(\Vect, \sC).$$
This comes equipped, by applying $\Hom_{\Dmod(H)}(-, \sC)$ to \eqref{e:adjbasic}, with a tautological $\Dmod(H)$-equivariant adjunction of forgetting and averaging functors $$\on{Oblv}: \sC^H \rightleftarrows \sC: \on{Av}_{*}^H.$$

\subsubsection{} \label{s:defmon} By definition, the monodromic invariants $\sC^{H\mon}$ are the full subcategory of $\sC$ generated by the essential image of $\on{Oblv}$, which carry a natural action of $\Dmod(H)$. As the constant sheaf on $H$ is compact in $\Dmod(H)$, one has an adjunction 
$$\on{Oblv}: \sC^{H\mon} \rightleftarrows \sC: \on{Av}_*^{H\mon}.$$

\subsubsection{} We recall from Section \ref{sss:nearcompactdef} that given a $\Dmod(H)$-module $\sC$, within its category of $H$-monodromic objects one has the full, but not cocomplete, subcategory of {\em nearly compact} objects
$$\sC^{H\mon, n.c.} \hookrightarrow \sC^{H\mon}, $$
i.e., those objects whose averages in $\sC^H$ are compact. 

As left adjoint functors (with continuous right adjoints) preserve compactness, one has the following the basic functoriality. 

\begin{lemma} Suppose $F: \sC \rightarrow \sD$ is a $\Dmod(H)$-equivariant functor which admits a (continuous) right adjoint. Then $F$ sends nearly compact objects of compact objects, i.e., induces a map $$F: \sC^{H\mon, n.c.} \rightarrow \sD^{H\mon, n.c.}.$$
\end{lemma}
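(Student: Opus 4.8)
The claim is that a $\Dmod(H)$-equivariant functor $F\colon \sC \to \sD$ with continuous right adjoint $F^R$ carries nearly compact objects to nearly compact objects. The plan is to reduce this to the single formal fact that a left adjoint with continuous right adjoint preserves compact objects, by commuting $F$ with the averaging functor $\on{Av}^H_*$.

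First I would observe that, by definition of the monodromic invariants as the full subcategory generated by the essential image of $\on{Oblv}$, the functor $F$ restricts to a $\Dmod(H)$-equivariant functor $F\colon \sC^{H\mon} \to \sD^{H\mon}$; this is because $F$ commutes with $\on{Oblv}$ (being $\Dmod(H)$-equivariant, it intertwines the $\Dmod(H)$-module structures and hence the forgetful functors from invariants, up to the usual identifications) and preserves colimits, so it sends the subcategory generated by $\on{Oblv}(\sC)$ into the one generated by $\on{Oblv}(\sD)$. The key point is then the commutation $F \circ \on{Av}^H_* \simeq \on{Av}^H_* \circ F$ on $\sC^{H\mon}$, or more precisely on $H$-invariant objects: since $\on{Av}^H_*$ is the right adjoint in the $\Dmod(H)$-equivariant adjunction $(\on{Oblv}, \on{Av}^H_*)$ obtained by applying $\Hom_{\Dmod(H)}(-,\sC)$ to the bimodule adjunction $(\pi^*,\pi_*)$ of \eqref{e:adjbasic}, and $F$ is $\Dmod(H)$-equivariant, the functor $F\colon \sC^H \to \sD^H$ on invariants intertwines the two averaging functors with their respective $\on{Oblv}$'s. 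The cleanest way to see the desired commutation is that $F$ admits a continuous right adjoint $F^R$, hence $F$ preserves all colimits and in particular is $\Dmod(H)$-linear with a right adjoint that is itself $\Dmod(H)$-equivariant; by the Beck--Chevalley/base-change formalism for such adjunctions between module categories, $F$ commutes with the right adjoint averaging functor $\on{Av}^H_*$.

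With that in hand, the argument concludes as follows. Let $c \in \sC^{H\mon}$ be nearly compact, i.e. $\on{Av}^H_*(c) \in \sC^H$ is compact. Then
$$\on{Av}^H_*(F(c)) \simeq F(\on{Av}^H_*(c)) \in \sD^H.$$
Now $F\colon \sC^H \to \sD^H$ is again a colimit-preserving functor with continuous right adjoint (the right adjoint being $\on{Av}^H_* \circ F^R \circ \on{Oblv}$, or equivalently the functor induced by $F^R$ on invariants, which is continuous since $F^R$ and the relevant structure functors are), so it preserves compact objects. Hence $F(\on{Av}^H_*(c))$ is compact in $\sD^H$, i.e. $\on{Av}^H_*(F(c))$ is compact, i.e. $F(c)$ is nearly compact. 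Since $F$ already restricts to $\sC^{H\mon} \to \sD^{H\mon}$, we conclude $F(\sC^{H\mon, n.c.}) \subset \sD^{H\mon, n.c.}$.

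The main obstacle is the bookkeeping around the commutation $F \circ \on{Av}^H_* \simeq \on{Av}^H_* \circ F$: one must check that $\Dmod(H)$-equivariance of $F$, together with the existence of a continuous right adjoint, genuinely produces a morphism of the two adjunctions $(\on{Oblv},\on{Av}^H_*)$ compatibly, rather than merely a lax comparison map. This is a standard fact about $\Dmod(H)$-module categories and their adjunctions (it is the reason $\on{Av}^H_*$ is functorial in $\sC$), but it is the only nonformal input; everything else is the one-line observation that left adjoints with continuous right adjoints preserve compactness, applied on the category of genuine $H$-invariants.
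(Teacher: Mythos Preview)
Your proposal is correct and follows the same approach as the paper, which in fact does not give a proof beyond the one-sentence remark preceding the lemma: ``As left adjoint functors (with continuous right adjoints) preserve compactness, one has the following basic functoriality.'' Your write-up simply unpacks what the paper leaves implicit, namely that a $\Dmod(H)$-equivariant $F$ induces compatible functors on $(-)^{H\mon}$ and $(-)^H$ intertwining $\on{Av}^H_*$, so that near compactness reduces to ordinary preservation of compactness on $\sC^H$. Your caution about the commutation $F \circ \on{Av}^H_* \simeq \on{Av}^H_* \circ F$ is well-placed but not a genuine obstacle here: in this framework $\on{Av}^H_*$ is realized by convolution with a fixed kernel (equivalently, invariants agree with coinvariants via the $\DGCat$-linearity of $(-)^H$), so the commutation is automatic for any $\Dmod(H)$-linear $F$, without needing to invoke the continuous right adjoint at that step.
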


\subsubsection{} To see the relationship with cofree-monodromic sheaves, we begin with the representative case of $\sC \simeq \Dmod(H)$, i.e., the group algebra itself viewed as a module under left convolution, recalling the discussion in Example \ref{Cofree group}.

\begin{lemma}
Suppose that $H$ has a finite set of components. Then an object of $\Dmod(H\mon\!\bs H)$ is nearly compact if and only if it lies in the pretriangulated hull of the cofree-monodromic sheaves $\delta^\wedge_\gamma, \gamma \in \pi_0(H)$. 
\end{lemma}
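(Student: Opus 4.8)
The plan is to identify $\Dmod(H\mon\!\bs H)$ with a category of modules over a coconnective dg algebra, and then recognize the nearly compact objects as the pretriangulated envelope of the modules $K(L_\gamma,0)$ via Corollary \ref{c:whoisfd}. Concretely, $\Dmod(H\mon\!\bs H)$ is the category of $H$-monodromic D-modules on $H$ for the left action; since monodromicity only sees the neutral component factor, we may use Example \ref{Cofree group} and the adjunction $(\on{Oblv},\on{Av}_*^H)$ to write $\Dmod(H\mon\!\bs H)$ as modules over $\on{End}(\on{Av}_*^H \Oblv (\text{unit}))$, which by the computation in Example \ref{Cofree group} (the $*$-fiber $i_e^*\delta^\wedge \simeq \Sym(\fp[1])$, and correspondingly $\on{Av}_*^H$ applied to the monoidal unit is the structure sheaf of $H^\circ$-classifying-stack-type object) is the coconnective algebra $A = C^\bullet(H^\circ)\otimes S$ where $S = H^0(H) = $ functions on $\pi_0(H)$, a semisimple $k$-algebra, fitting into a map $S \to A$ inducing $S \simeq H^0(A)$. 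Thus we are precisely in the situation of Proposition \ref{p:coconnectivealgs}.

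The key steps, in order, are: (1) set up the equivalence $\Dmod(H\mon\!\bs H) \simeq A\mmod$ for $A$ as above, using the $\Dmod(H)$-equivariant adjunction of Section \ref{s:defmon} together with Barr-Beck-Lurie, checking that $\on{Av}_*^H \circ \Oblv$ is the relevant monad and that it is computed by $A$; (2) under this equivalence, identify the averaging functor $\on{Av}_*^H: \Dmod(H\mon\!\bs H) \to \Dmod(H\bs H) = S\mmod$ with the forgetful functor $A\mmod \to S\mmod$ along $S \to A$ — this is the crucial translation of the definition of near compactness; (3) observe that, after composing with the further forgetful functor $S\mmod \to \Vect$ (which is conservative and $t$-exact, and sends compacts to compacts since $S$ is finite-dimensional), ``$\on{Av}_*^H(c)$ compact in $\Dmod(H\bs H)$'' is equivalent to ``$\Oblv_\Vect(c)$ perfect over $k$''; (4) apply Corollary \ref{c:whoisfd} with this $A$ and $S$ to conclude that the preimage of perfect complexes under $A\mmod \to \Vect$ is exactly the pretriangulated envelope of the $K(L_\gamma,0)$; (5) match $K(L_\gamma,0)$ with the cofree-monodromic sheaf $\delta^\wedge_\gamma$ via Section \ref{s:cofree} (where $\delta^\wedge_\gamma$ was by definition $\underline k_H \otimes_{C^\bullet(H)} K(L_\gamma,0)$, now transported through the equivalence of step (1)).

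The main obstacle I expect is step (2) — verifying carefully that $\on{Av}_*^H$, translated through the Barr-Beck identification, really is the restriction-of-scalars functor along $S \hookrightarrow A$, rather than something twisted by a shift or a dualizing line. One must be attentive to the distinction between $*$- and $!$-averaging and to the (semi-)infinite-dimensional bookkeeping for $H$, though the finiteness of $\pi_0(H)$ and of the codimension of the prounipotent radical keeps everything under control; in particular $\on{Av}_*^H$ here has a continuous right adjoint and is $t$-exact up to shift, so the relevant monad lands in coconnective algebras and Proposition \ref{p:coconnectivealgs} genuinely applies. A secondary, more routine point is checking in step (3) that compactness of $\on{Av}_*^H(c)$ in $\Dmod(H\bs H)$ is detected after the further forgetful functor to $\Vect$; this holds because $S\mmod$ is compactly generated with compacts closed under truncation (Lemma \ref{l:nosurprises} applies to identify pseudo-compact with compact), and a bounded complex of $S$-modules is perfect over $S$ iff it is perfect over $k$.
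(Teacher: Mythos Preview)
Your approach is essentially the paper's, but you unpack by hand what Corollary~\ref{c:cofree=fdcoh} already packages. The paper's proof is one line: the composite $\Dmod(H\mon\!\bs H)\xrightarrow{\on{Av}^H_*}\Dmod(H\bs H)\simeq\Vect$ is de Rham cohomology, so near compactness is exactly the condition of Corollary~\ref{c:cofree=fdcoh}. Since that corollary is stated as the special case $A=C^\bullet(X)$ of Corollary~\ref{c:whoisfd}, your steps (1), (4), (5) simply re-derive it, and step~(2) (which you flag as the main obstacle) is immediate once you remember that in the Barr--Beck identification the right adjoint is always the forgetful functor of the monad.

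There is one slip: $\Dmod(H\bs H)\simeq\Vect$, not $S\mmod$. The left $H$-action on $H$ is free and transitive, so the quotient is a point regardless of $\pi_0(H)$; under the equivalence $\Dmod(H\mon\!\bs H)\simeq C^\bullet(H)\mmod$ the functor $\on{Av}^H_*$ is the forgetful functor to $\Vect$ directly, and your step~(3) is unnecessary. This does not damage the overall argument since you compose down to $\Vect$ anyway, but it indicates the detour through $S\mmod$ is spurious.
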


\begin{proof}The composition
$$\Dmod(H\mon\!\bs H ) \xrightarrow{\on{Av}^H_*} \Dmod(H \bs H) \simeq \Vect$$is computed by taking de Rham cohomology, so this follows from Corollary \ref{c:cofree=fdcoh}.\end{proof}

\subsubsection{} We now perform a similar analysis for nearly compact D-modules on certain homogeneous spaces. Let $K \subset H$ be a subgroup of $H$, and consider the category of monodromic D-modules 
$$\Dmod(H\mon\!\bs H / K).$$
Two basic objects in this category are the constant D-module $\underline{k}_{H/K}$ on $H/K$, and the cofree-monodromic sheaves  
$$\delta^\wedge_\gamma, \quad \quad \gamma \in \pi_0(H/K). $$
\begin{prop} \label{p:oneorbit}
Suppose that $K$ is connected. Then an object of $\Dmod(H\mon\!\bs H /K)$ is nearly compact if and only if it lies in the pretriangulated hull of the cofree-monodromic sheaves.
\end{prop}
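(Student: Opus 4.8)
\textbf{Proof strategy for Proposition \ref{p:oneorbit}.}

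The plan is to reduce to the affine-group case already handled in the preceding lemma, via the $t$-exact equivalence identifying unipotently monodromic sheaves on an orbit with modules over a suitable cohomology algebra. First I would observe that since $K$ is connected, the projection $\pi: H \to H/K$ induces an isomorphism $\pi_0(H) \simeq \pi_0(H/K)$, and more importantly that $H/K$ is a placid scheme (or placid ind-scheme) whose category of unipotently monodromic D-modules $\Dmod(H\mon\!\bs H/K)$ is equivalent to modules over $C^\bullet(H/K) := \End(\underline k_{H/K})$, with the cofree-monodromic sheaves $\delta^\wedge_\gamma$ corresponding under this equivalence to the objects $K(L_\gamma,0)$ of Proposition \ref{p:coconnectivealgs}, where $S = H^0(H/K)$ is the semisimple algebra of functions on the finite set $\pi_0(H/K)$. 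Here I would invoke Corollary \ref{c:whoisfd}: the pretriangulated hull of the $\delta^\wedge_\gamma$ is exactly the preimage, under $\Oblv: C^\bullet(H/K)\mod \to \Vect$, of the compact (i.e.\ finite-dimensional) objects.

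The core of the argument is then to identify this preimage with the nearly compact objects. By definition, $c \in \Dmod(H\mon\!\bs H/K)$ is nearly compact iff $\on{Av}^H_*(c)$ is compact in $\Dmod(H\bs H/K) \simeq \Dmod(\pt/K) \simeq \Vect$ (using again that $K$ is connected, so $K$-equivariant sheaves on a point are just $\Vect$; alternatively $\Dmod(K\bs H/K)$ reduces to $\Vect$ after the identification with $\QCoh$-type categories). Under the equivalence $\Dmod(H\mon\!\bs H/K)\simeq C^\bullet(H/K)\mod$, the composite $\on{Av}^H_*$ followed by this identification is the functor taking de Rham cohomology of the total space, i.e.\ it computes $\Gamma_{dR}(H/K, c)$. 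But $\Gamma_{dR}(H/K,-)$ applied to a $C^\bullet(H/K)$-module $M$ is computed by $\Oblv(M) = k \otimes_{C^\bullet(H/K)} M$ — that is, the underlying vector space — up to the standard identification of de Rham cohomology with the derived tensor against the augmentation module. So $\on{Av}^H_*(c)$ is compact precisely when $\Oblv(c)$ is a perfect (finite-dimensional) complex, which is exactly the condition in Corollary \ref{c:whoisfd}. Combining, $c$ is nearly compact iff $c$ lies in the pretriangulated hull of the $\delta^\wedge_\gamma$.

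The main obstacle I anticipate is making precise the claim that, under $\Dmod(H\mon\!\bs H/K)\simeq C^\bullet(H/K)\mod$, the averaging functor $\on{Av}^H_*$ becomes (the underlying-vector-space functor composed with) $\Gamma_{dR}$ — in particular checking that $\on{Av}^H_*$ lands in $\Vect$ and is computed by global flat sections, which requires knowing that $\Dmod(H\bs H/K)$ collapses to $\Vect$ when $K$ is connected, and carefully tracking the compatibility of the equivalence $\on{IndLisse}(H/K)^{uni}\simeq C^\bullet(H/K)\mod$ with the $*$-restriction to a point (which is where $\Oblv$ on the algebra side comes from; cf.\ the Lemma computing $*$-fibers of $\delta^\wedge_x$). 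One subtlety is that $H/K$ may only be a placid ind-scheme rather than a placid scheme if $H$ is of infinite type, but the hypotheses on $H$ (prounipotent radical of finite codimension) ensure the relevant finiteness, and the cohomology algebra $C^\bullet(H/K)$ is still coconnective with semisimple $H^0$, so Proposition \ref{p:coconnectivealgs} and Corollary \ref{c:whoisfd} apply verbatim. Once the translation of $\on{Av}^H_*$ is in hand, the rest is a direct citation of the results already established.
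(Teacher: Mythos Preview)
Your endgame is correct --- the condition for near compactness does end up being that the underlying vector space of $M \in C^\bullet(H/K)\mod$ is perfect, and this is exactly Corollary \ref{c:whoisfd}. But there is a genuine gap in how you get there: the claim $\Dmod(H\bs H/K) \simeq \Dmod(\pt/K) \simeq \Vect$ is false for connected $K$ that are not pro-unipotent. For instance, if $K$ is a torus, $\Dmod(\pt/K) \simeq C_\bullet(K)\mod$ is modules over an exterior algebra, not $\Vect$, and the compact objects there are perfect $C_\bullet(K)$-modules, which is not the same as perfect vector spaces. Connectedness of $K$ does not collapse $K$-equivariant D-modules on a point to $\Vect$.

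The paper handles exactly this: it identifies $\Dmod(\pt/K) \simeq C_\bullet(K)\mod$, embeds this (using that $C_\bullet(K)$ is finite dimensional) into the ind-coherent renormalization $\Ind\Dmod_c(\pt/K) \simeq C^\bullet(\pt/K)\mod$, and then shows that an object of $\Dmod(\pt/K)$ is compact iff its image in $C^\bullet(\pt/K)\mod$ has perfect underlying vector space. Connectedness of $K$ enters here to ensure $C^\bullet(\pt/K) = C^\bullet(BK)$ is coconnective with $H^0 = k$, so Corollary \ref{c:whoisfd} applies. Under the identification $\Dmod(H\mon\!\bs H/K) \simeq C^\bullet(H/K)\mod$, the composite with $\Av_*^H$ becomes restriction along $C^\bullet(\pt/K) \to C^\bullet(H/K)$, which does not change the underlying vector space --- recovering your desired criterion.

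A smaller confusion: you write $\Oblv(M) = k \otimes_{C^\bullet(H/K)} M$ and call this the underlying vector space. These are different functors --- the forgetful functor is $\Hom_{C^\bullet(H/K)}(C^\bullet(H/K), -)$, while $k \otimes_{C^\bullet(H/K)} -$ computes the $*$-fiber at a point (cf.\ the Lemma computing fibers of $\delta^\wedge_x$). De Rham cohomology $\Gamma_{dR}(H/K, -) = \Hom(\underline{k}_{H/K}, -)$ does correspond to $\Oblv$ under the equivalence with $C^\bullet(H/K)\mod$, but not via ``derived tensor against the augmentation.''
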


\begin{proof}
By definition an object of $\Dmod(H\mon\!\bs H / K)$ is nearly compact if and only if it is sent to a compact object of $\Dmod(H \bs H/K) \simeq \Dmod(\pt /K)$ under the averaging functor $\Av_*^H$.  Recall that we may identify $$\Vect^K \simeq \Dmod(\pt/K)$$ with $C_\bullet(K)\mod$, where we view $C_\bullet(K)$ as an algebra under convolution, in such a way that $\on{Oblv}: \Vect^K \rightarrow \Vect$ is intertwined with the forgetful functor 
$$\on{Oblv}: C_\bullet(K)\mod \rightarrow \Vect;$$
cf. Section 7.2.2 of \cite{QCA}.

Consider the category $\Ind\Dmod_c(\pt/K)$ of ind-coherent D-modules on $\pt/K$.  By definition, this is ind-completion of the pretriangulated hull of the constant sheaf $\underline{k}_{\pt/K}$. Under the identification with $C_\bullet(K)\mod$, this is given by the augmentation module $k$ concentrated in degree zero. In particular, as $C_\bullet(K)$ is finite dimensional, the pretriangulated hull of $\underline{k}_{\pt/K}$ contains the compact objects of $C_\bullet(K)$, and ind-completing this inclusion gives rise to a fully faithful embedding 
$$\Dmod(\pt/K) \hookrightarrow \Ind\Dmod_c(\pt/K).$$
Note that the composition and identification
$$\Vect \simeq \Dmod(\pt) \xrightarrow{\on{Av}_*^K} \Dmod(\pt/K) \hookrightarrow \Ind\Dmod_c(\pt/K) \simeq C^\bullet(\pt/K)\mod$$
sends $k$ to the augmentation module $k$ for $C^\bullet(\pt/K)$. As a consequence, an object of $\Dmod(\pt/K)$ is compact if and only if its image in $C^\bullet(\pt/K)\mod$ is a perfect complex of vector spaces. 

By the preceding discussion, an object $M$ of $\Dmod(H\mon\!\bs H/K)$ is nearly compact if and only if the composition
$$\Dmod(H\mon\!\bs H / K) \xrightarrow{\on{Av}_*^H} \Dmod(\pt/K) \hookrightarrow \Ind\Dmod_c(\pt/K) \simeq C^\bullet(\pt/K)\mod$$
sends $M$ to a perfect complex of vector spaces.  However, under the identification $\Dmod(H\mon\!\bs H / K) \simeq C^\bullet(H/K)\mod$, this is simply the restriction functor 
$$C^\bullet(H/K)\mod \rightarrow C^\bullet(\pt/K)\mod,$$
so the assertion of the proposition follows from Corollary \ref{c:cofree=fdcoh}.
\end{proof}

\begin{rmk} If $K$ is disconnected, one may modify the preceding proposition as follows. One can replace the constant sheaf on $\pt/K$ with the local system $\mathscr{L}$ corresponding to the regular representation of $\pi_0(K)$.  Its pullback along $\rho: H/K \rightarrow \pt/K$ is again semisimple, and in particular its degree zero endomorphisms \begin{equation} \label{e:endl}H^0(\Hom_{H/K}(\rho^* \mathscr{L}, \rho^* \mathscr{L})) \end{equation} are again semisimple.  After adding to the previously discussed cofree (unipotently) monodromic sheaves the additional (non-unipotently) cofree-monodromic sheaves corresponding to the other simple modules for 
\eqref{e:endl}, the proposition again holds. 
\end{rmk}

\subsubsection{} Let us now describe the case of multiple strata. For motivation, consider an ind-scheme $X$ stratified by simply connected strata $$j_\lambda: C_\lambda \rightarrow X, \quad \quad \lambda \in \Lambda.$$If we consider the category $\Dmod(X)_\Lambda$ of D-modules on $X$ lisse along the strata, this is compactly generated by the $!$-extensions of the constant sheaves
$$j_{\lambda, !} := (j_\lambda)_! ( \underline{k}_{C_\lambda}), \quad \quad \lambda \in \Lambda.$$
Explicitly, an object of $\Dmod(X)_\Lambda$ is compact if and only if its $*$-restriction to each stratum is a finite extension of copies of the constant sheaf, and all but finitely many $*$-restrictions vanish. 

On the other hand, one has within $\Dmod(X)_\Lambda$ a larger small category of sheaves, namely the pretriangulated hull of the $!$-extensions of cofree-monodromic sheaves

$$j_{\lambda, !}^\wedge := (j_{\lambda})_!(\delta_{C_\lambda}^\wedge), \quad \quad \lambda \in \Lambda.$$
Explicitly, an object of $\Dmod(X)_\Lambda$ lies in this category if and only if its $*$-restriction to each stratum is a finite extension of copies of the cofree-monodromic sheaf, and all but finitely many $*$-restrictions vanish.

\subsubsection{} If the strata are not simply connected, one would need to also consider other local systems in the previous setup. However, if the strata are the orbits of the action of a group $H$ on $X$, and we suppose for simplicity that the stabilizers of points of $X$ are connected, we may circumvent this as follows,  beginning with some formal setup.  

Let $\Lambda$ be a filtered partially ordered set, with the property that for any $\lambda \in \Lambda$ the set of $\nu \in \Lambda$ such that $\nu \leqslant \lambda$ is finite. Consider a category $\sC$ equipped with a stratification by full  subcategories

$$\sC^{\leqslant \lambda} \hookrightarrow \sC, \quad \quad \varinjlim_\lambda \sC^{\leqslant \lambda} \simeq \sC.$$
For any $\lambda$, write $\sC^{< \lambda}$ for the full subcategory generated under colimits by $\sC^{\leqslant \nu}$, for all $\nu < \lambda$. We suppose in addition that the inclusion $$i_*: \sC^{< \lambda} \rightarrow \sC^{\leqslant \lambda}$$prolongs to a recollement 
\begin{equation} \label{e:barres}
	\begin{tikzcd}
	   \sC^{< \lambda} \arrow[r, "i_*"] & \arrow[l, "i^!"', shift left=5] \arrow[l, "i^*"',  shift right = 5] \ar[r, "j^!"]\sC^{\leqslant \lambda} & \arrow[l, "j_*"', shift left=5] \arrow[l, "j_!"',  shift right = 5]\sC^{\lambda}. 
	 	\end{tikzcd}
	\end{equation}
In particular, for each subquotient $\sC^\lambda, \lambda \in \Lambda$, we have associated $!$- and $*$- extension and restriction functors 
$$j_!: \sC^\lambda \rightleftarrows \sC: j^! \quad \quad j^*: \sC \rightleftarrows \sC^\lambda: j_*.$$

We then have the following basic observation. 

\begin{lemma} \label{l:whosnc}For $\sC$ as above, the compact objects of $\sC$ are the pretriangulated envelope of the $!$-extensions of compact objects from each $\sC^\lambda$.  In particular, an object $c$ of $\sC$ is compact if and only if its $*$-restriction to each stratum $\sC^\lambda$ is compact and all but finitely many $*$-restrictions vanish. 
\end{lemma}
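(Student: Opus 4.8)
The statement is a standard consequence of the recollement data \eqref{e:barres} together with the noetherian-type hypothesis on $\Lambda$, and the plan is to induct along the partial order. Recall that for each $\lambda$ we have the full subcategories $\sC^{<\lambda} \hookrightarrow \sC^{\leqslant\lambda} \hookrightarrow \sC$, and by hypothesis $\sC \simeq \varinjlim_\lambda \sC^{\leqslant\lambda}$ is a filtered colimit of presentable categories along fully faithful left adjoints; in particular any compact object of $\sC$ is a retract of an object pushed forward from some $\sC^{\leqslant\lambda}$, so it suffices to show the claim for each $\sC^{\leqslant\lambda}$. Since $\{\nu : \nu \leqslant \lambda\}$ is finite by assumption, this is a finite induction on the cardinality $|\{\nu \leqslant \lambda\}|$.

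For the inductive step, fix $\lambda$ and consider the recollement \eqref{e:barres} relating $\sC^{<\lambda}$, $\sC^{\leqslant\lambda}$, and $\sC^\lambda$. The key formal inputs are: (i) $j_!\colon \sC^\lambda \to \sC^{\leqslant\lambda}$ and $i_*\colon \sC^{<\lambda}\to\sC^{\leqslant\lambda}$ preserve compactness, because each admits a continuous right adjoint ($j^!$ and $i^!$ respectively, the latter being continuous since in a recollement $i^!$ is a right adjoint with a further right adjoint only in special cases — here we instead use that $i^*$ is the relevant left adjoint and $j^*$ preserves colimits, so $i_*$ has continuous right adjoint $i^!$); (ii) the essential images of $j_!$ and $i_*$ generate $\sC^{\leqslant\lambda}$ under colimits; and (iii) $i^*j_! \simeq 0$ and $j^*i_* \simeq 0$, which give the orthogonality needed to split a general compact object. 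Concretely, given a compact $c \in \sC^{\leqslant\lambda}$, the counit triangle $j_!j^*c \to c \to i_*i^*c \xrightarrow{+1}$ — valid since $\sC^\lambda$ is an open piece and $\sC^{<\lambda}$ the complementary closed piece — expresses $c$ as an extension of $i_*i^*c$ by $j_!j^*c$. One checks $j^*c \in \sC^\lambda$ is compact (as $j^*$ preserves colimits and is essentially surjective onto a generating class, or directly: $j^*c$ corepresents the restriction of $\Hom(c,-)$ along the colimit-preserving $j_!$), and $i^*c$ is compact in $\sC^{<\lambda}$ by the analogous argument. Applying the inductive hypothesis to $i^*c \in \sC^{<\lambda}$ — whose indexing poset $\{\nu : \nu \leqslant \mu,\ \mu < \lambda\}$ is strictly smaller — writes it as a finite iterated extension of $!$-extensions of compact objects from the strata $\sC^\mu$, $\mu < \lambda$; pushing forward along the colimit-preserving $i_*$ and using $i_* j^{(\mu)}_! \simeq j^{(\mu)}_!$ (transitivity of $!$-extension) exhibits $i_*i^*c$ accordingly. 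Combining with the triangle above gives the claim for $c$.

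The final sentence of the statement — the iff in terms of $*$-restrictions — follows by unwinding: if $c$ is a finite iterated extension of objects $j^{(\lambda)}_! c_\lambda$ with $c_\lambda \in \sC^\lambda$ compact (only finitely many nonzero), then $j^{(\nu),*} c$ is a finite extension of the $j^{(\nu),*} j^{(\lambda)}_! c_\lambda$, each of which vanishes for $\nu \not\geqslant \lambda$ by the recollement orthogonality and equals $c_\nu$ for $\nu = \lambda$ up to the contribution of higher strata; an induction on the poset shows each $j^{(\nu),*}c$ is compact and all but finitely many vanish. Conversely, if $c$ has compact $*$-restrictions, almost all vanishing, then $c$ is supported on a finite subposet, so $c \in \sC^{\leqslant\lambda}$ for suitable $\lambda$ with $\{\nu \leqslant \lambda\}$ finite, and the counit-triangle argument above — now run to reconstruct $c$ from its restrictions rather than to decompose a presumed-compact object — shows $c$ is a finite extension of $!$-extensions of its (compact) restrictions, hence compact.

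\textbf{Main obstacle.} The only genuinely delicate point is verifying that the hypotheses actually supply a recollement in the strong sense required — in particular that $i_*$ (equivalently $j_!$) has a \emph{continuous} right adjoint so that it preserves compact objects, and that the counit/unit triangles $j_!j^*c \to c \to i_*i^*c$ are exact. In the intended application these follow from standard properties of D-modules on stratified (ind-)schemes (gluing along a closed-open decomposition), but stated at this level of abstraction one must be careful that the recollement is of "stable gluing" type; I would either cite the relevant gluing statement or add the continuity of $i^!$ to the standing hypotheses. Everything else is a routine, if slightly bookkeeping-heavy, induction along $\Lambda$.
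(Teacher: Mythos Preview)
Your approach is essentially the same as the paper's: reduce to $\sC^{\leqslant\lambda}$ via the filtered colimit, then use the recollement triangle $j_!j^!c \to c \to i_*i^*c$ and induct on the finite poset $\{\nu \leqslant \lambda\}$.

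Two comments, though. First, your ``Main obstacle'' is not an obstacle: the standing hypotheses explicitly assume that $i_*$ prolongs to a recollement \eqref{e:barres}, so all six functors exist and you need not worry about whether the gluing is of the right type. Second, your parenthetical attempting to show that $i_*$ preserves compacts via continuity of $i^!$ is garbled and in fact unjustified (in a general recollement $i^!$ need not be continuous), but it is also unnecessary. The paper avoids this issue entirely: since $j^!$ and $j_!$ are both left adjoints with continuous right adjoints, $j_!j^!c$ is compact, hence $i_*i^*c$ is compact as the cofiber of a map between compacts; alternatively, $i^*c$ is compact in $\sC^{<\lambda}$ because $i^*$ is left adjoint to the continuous $i_*$. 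Either way you never need $i^!$ to be continuous.
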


\begin{proof} For each $\lambda \in \Lambda$, we have an adjunction 
$$i^{\leqslant \lambda}_*: \sC^{\leqslant \lambda} \rightleftarrows \sC: i^{\leqslant \lambda, !},$$
and correspondingly an identity $$\on{id}_{\sC} \simeq \varinjlim_\lambda i^{\leqslant \lambda}_* \circ i^{\leqslant \lambda, !}.$$
It follows that any compact object $c$ of $\sC$ is $*$-extended from some $\sC^{\leqslant \lambda}$.  Within there, consider as in \eqref{e:barres} the recollement triangle 
$$j_! j^!(c) \rightarrow c \rightarrow i^*i_*(c) \xrightarrow{+1}.$$
As $j_!$ and $j^!$ are both left adjoints, we have that $j^!(c)$ and $j_! j^!(c)$ are compact in $\sC^{<\lambda}$ and $\sC^{\leqslant \lambda}$, respectively. In particular, the object $i^* i_*(c)$ is again compact. As there are only finitely many $\nu < \lambda$, iterating this procedure for $i^*i_*(c)$ and inducting on the number of strata for which the $*$-restriction is nonzero yields the claim. 
\end{proof}

\subsubsection{}\label{renormalize autom} Suppose in the previous subsection $\sC$ is a $\Dmod(H)$-module, and each $\sC^{\leqslant \lambda}$ is essentially preserved by the action of $\Dmod(H)$. In this situation, all the recollement functors acquire data of $\Dmod(H)$-equivariance, as an adjoint of an equivariant functor inherits a datum of equivariance.

By applying Lemma \ref{l:whosnc} to $\sC^H$ and its induced filtration, we obtain the following.

\begin{cor} \label{c:whoisnc}For $\sC$ as above, the nearly compact objects $c$ of $\sC^{H\mon}$ are the pretriangulated envelope of the $!$-extensions of nearly compact objects from each $\sC^\lambda$. In particular, $c$ is nearly compact if and only if its $*$-restriction to each stratum $\sC^\lambda$ is nearly compact and all but finitely many $*$-restrictions vanish. 
\end{cor}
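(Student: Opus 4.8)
The plan is to deduce Corollary \ref{c:whoisnc} from the non-equivariant stratified statement in Lemma \ref{l:whosnc} together with the identification of near compactness with compactness-after-averaging. First I would record the key input: by Definition \ref{d:defnc1}, an object $c$ of $\sC^{H\mon}$ is nearly compact precisely when $\on{Av}_*^H(c)$ is compact in $\sC^H$. So the statement about $\sC^{H\mon}$ reduces to understanding the compact objects of $\sC^H$ in terms of the induced stratification, and pulling that picture back along $\on{Av}_*^H$. The hypothesis that each $\sC^{\leqslant\lambda}$ is essentially preserved by the $\Dmod(H)$-action, as noted in \ref{renormalize autom}, equips all the recollement functors $i_*, i^*, i^!, j_!, j^!, j_*$ with $\Dmod(H)$-equivariance (an adjoint of an equivariant functor inherits equivariance). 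Taking $H$-invariants is a limit over the simplicial bar diagram and hence commutes with the (finite, in each cone) limits and colimits used to build the recollement, so the subcategories $(\sC^{\leqslant\lambda})^H \hookrightarrow \sC^H$ and their generated subcategories $(\sC^{<\lambda})^H$ assemble into a stratification of $\sC^H$ of exactly the shape required by Lemma \ref{l:whosnc}, with subquotients $(\sC^\lambda)^H$.

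Next I would invoke Lemma \ref{l:whosnc} applied to $\sC^H$ with this induced filtration: the compact objects of $\sC^H$ are the pretriangulated envelope of the $!$-extensions of compact objects from the $(\sC^\lambda)^H$, equivalently, an object of $\sC^H$ is compact iff its $*$-restriction to each stratum subquotient is compact and all but finitely many such restrictions vanish. Now I would transport this back through $\on{Oblv}/\on{Av}_*^H$. The functor $\on{Av}_*^H$ is the right adjoint of the equivariant adjunction, and it is $\Dmod(H)$-equivariant, hence commutes with the ($\Dmod(H)$-equivariant) recollement functors $j^*$, $j_!$, $j^!$, $i_*$ up to the canonical coherences; concretely, for each $\lambda$ the square relating $j^*\colon \sC^{H\mon} \to (\sC^\lambda)^{H\mon}$ and $\on{Av}_*^H$ on both sides commutes, and similarly for $j_!$. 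Therefore $\on{Av}_*^H$ of a $!$-extension from $\sC^\lambda$ is (up to the equivalence identifying the $H$-invariants) the $!$-extension of $\on{Av}_*^H$ of the corresponding object of $(\sC^\lambda)^{H\mon}$, and $j^* \on{Av}_*^H(c) \simeq \on{Av}_*^H j^*(c)$. Feeding these identities into the characterization from Lemma \ref{l:whosnc} immediately yields: $c \in \sC^{H\mon}$ has $\on{Av}_*^H(c)$ compact iff each $j^*(c) \in \sC^\lambda$ is nearly compact and all but finitely many vanish, which is the claim; and the generation-by-$!$-extensions statement follows by running the recollement induction of Lemma \ref{l:whosnc} one stratum at a time inside $\sC^{H\mon}$, using that $j_!$ and $j^!$ are left adjoints (hence preserve the property of becoming compact after $\on{Av}_*^H$, since $\on{Av}_*^H$ is continuous and its left adjoint $\on{Oblv}$ preserves compactness along these equivariant functors).

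The main obstacle I anticipate is purely bookkeeping: verifying that $\on{Av}_*^H$ genuinely intertwines the stratum recollement functors with those of the induced stratification on $\sC^H$, i.e. that the base-change/equivariance coherences hold at the level of $(\infty,1)$-categories and not just up to non-canonical isomorphism. This is where one must be careful that the stratification hypothesis ``$\sC^{\leqslant\lambda}$ essentially preserved by $\Dmod(H)$'' is strong enough to make $(\sC^{\leqslant\lambda})^H \hookrightarrow \sC^H$ a recollement with the expected gluing functors — this is where \ref{renormalize autom} does the work, and I would cite it. Everything else (the reduction of near compactness to compactness of averages, and the combinatorial induction on strata) is a direct application of Definition \ref{d:defnc1} and Lemma \ref{l:whosnc}. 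One subtlety worth flagging explicitly in the write-up: the equivalence $\sC^\lambda$ vs. $\sC^{\lambda, H\mon}$ and the compatibility of ``nearly compact in $\sC^\lambda$'' with the stratum-wise averaging $\on{Av}_*^H\colon \sC^{\lambda,H\mon}\to \sC^{\lambda,H}$ is exactly the content one needs, and it holds because the averaging functors for $\sC$ and for each subquotient $\sC^\lambda$ are compatible via the equivariant recollement, so no new argument is required beyond the one already given for a single stratum in Proposition \ref{p:oneorbit}.
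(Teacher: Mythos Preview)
Your proposal is correct and follows essentially the same approach as the paper, which proves the corollary in one line by applying Lemma \ref{l:whosnc} to $\sC^H$ with its induced filtration; you have simply unpacked the bookkeeping (equivariance of the recollement functors from \ref{renormalize autom}, and the resulting compatibility of $\on{Av}_*^H$ with $j^*$ and $j_!$) that the paper leaves implicit. The only minor imprecision is the closing reference to Proposition \ref{p:oneorbit}: that result concerns the specific case $\sC^\lambda \simeq \Dmod(H/K_\lambda)$ and is used in the next corollary, not here, so it is not needed for this purely formal statement.
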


The particular case of the preceding corollary which is relevant for us is the following.

\begin{cor} \label{c:whoisncshv} Suppose each $\sC^\lambda$ is equivalent to $\Dmod(H/K_\lambda)$ for a connected subgroup $K_\lambda$ of $H$. Then the nearly compact objects of $\sC^{H\mon}$ are the pretriangulated hull of the $!$-extensions of the cofree-monodromic sheaves from each $\sC^\lambda$, for $\lambda \in \Lambda$. 
\end{cor}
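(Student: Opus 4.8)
\textbf{Proof plan for Corollary \ref{c:whoisncshv}.}
The plan is to deduce this immediately from Corollary \ref{c:whoisnc} together with the single-orbit computation in Proposition \ref{p:oneorbit}. Corollary \ref{c:whoisnc} already tells us that the nearly compact objects of $\sC^{H\mon}$ are exactly the pretriangulated envelope of the $!$-extensions $j_{\lambda,!}(c_\lambda)$ of nearly compact objects $c_\lambda$ of each subquotient $\sC^\lambda$; so the only thing left to identify is, for each fixed $\lambda$, the nearly compact objects of $\sC^\lambda \simeq \Dmod(H/K_\lambda)$ viewed with its $\Dmod(H)$-action (equivalently, the nearly compact objects of $\Dmod(H\mon\!\bs H/K_\lambda)$). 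This is precisely the content of Proposition \ref{p:oneorbit}, applied to the connected subgroup $K = K_\lambda$: it says that an object of $\Dmod(H\mon\!\bs H/K_\lambda)$ is nearly compact if and only if it lies in the pretriangulated hull of the cofree-monodromic sheaves $\delta^\wedge_\gamma$, $\gamma \in \pi_0(H/K_\lambda)$.

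First I would spell out why the hypotheses of Corollary \ref{c:whoisnc} and Proposition \ref{p:oneorbit} apply: the stratification of $\sC$ by the $\sC^{\leqslant\lambda}$ is $\Dmod(H)$-stable by assumption, the indexing poset $\Lambda$ is filtered with the required finiteness of lower sets (this is part of the standing setup in \ref{renormalize autom}), and the recollement \eqref{e:barres} is available so that the $!$- and $*$-extension/restriction functors for each $\sC^\lambda$ are defined and carry $\Dmod(H)$-equivariance. Then I would invoke Corollary \ref{c:whoisnc} to reduce to the single-stratum statement, and invoke Proposition \ref{p:oneorbit} to pin down the nearly compact objects of each $\sC^\lambda$. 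Combining the two: the pretriangulated envelope of the $!$-extensions of nearly compact objects from each $\sC^\lambda$ equals the pretriangulated envelope of the $!$-extensions of the pretriangulated hulls of the cofree-monodromic sheaves $\delta^\wedge_\gamma$ on $H/K_\lambda$, and since $j_{\lambda,!}$ is exact this is the same as the pretriangulated hull of the objects $j_{\lambda,!}(\delta^\wedge_\gamma)$ over all $\lambda \in \Lambda$ and $\gamma \in \pi_0(H/K_\lambda)$.

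There is essentially no obstacle here beyond bookkeeping; the corollary is a formal synthesis of two results already proved. The one point that requires a word of care is the compatibility of the equivalence $\sC^\lambda \simeq \Dmod(H/K_\lambda)$ with the $\Dmod(H)$-module structures — i.e., that ``nearly compact in $\sC^\lambda$'' really does translate into ``nearly compact in $\Dmod(H\mon\!\bs H/K_\lambda)$'' so that Proposition \ref{p:oneorbit} can be applied. Since near compactness is defined purely in terms of the $\Dmod(H)$-action (via $\on{Av}_*^H$), this follows once the equivalence is $\Dmod(H)$-equivariant, which is part of what it means for $\sC^\lambda$ to be ``equivalent to $\Dmod(H/K_\lambda)$'' in the statement; I would simply remark on this. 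Finally I would note that when the stabilizers $K_\lambda$ are merely assumed connected — as here — no extra non-unipotent local systems intervene, so the cofree-monodromic sheaves $\delta^\wedge_\gamma$ (indexed by connected components of the orbits) suffice, exactly as in Proposition \ref{p:oneorbit} rather than the disconnected variant in the preceding remark.
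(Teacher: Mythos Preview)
Your proposal is correct and is exactly the intended argument: the paper states this corollary without proof, precisely because it follows immediately from combining Corollary~\ref{c:whoisnc} (reducing to single strata) with Proposition~\ref{p:oneorbit} (identifying the nearly compact objects on each homogeneous stratum with the pretriangulated hull of the cofree-monodromic sheaves, under the connectedness hypothesis on $K_\lambda$). Your additional remarks on $\Dmod(H)$-equivariance of the stratification and the equivalence $\sC^\lambda \simeq \Dmod(H/K_\lambda)$ are the right bookkeeping observations, and indeed are implicit in the setup of Section~\ref{renormalize autom}.
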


We also record the following version for equivariant objects, which is essentially by definition.
\begin{cor}
In the setting of the above corollary, the coherent (or pseudo-compact) objects of $\sC^H$ are the pretriangulated hull of the $!$-extensions of the constant sheaves from each $\sC^\lambda$, for $\lambda \in \Lambda$.
\end{cor}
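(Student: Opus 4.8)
The statement to prove is the "equivariant" counterpart of Corollary \ref{c:whoisncshv}: under the hypothesis that each stratum subcategory $\sC^\lambda$ is equivalent to $\Dmod(H/K_\lambda)$ for a connected subgroup $K_\lambda$, the pseudo-compact (equivalently, coherent) objects of $\sC^H$ are precisely the objects of the pretriangulated hull of the $!$-extensions of the constant sheaves $\underline{k}_{H/K_\lambda}$. The plan is to follow the exact same recollement bookkeeping used in the proof of Lemma \ref{l:whosnc} and Corollary \ref{c:whoisnc}, but now for the $t$-structure on $\sC^H$ rather than for compactness, and to import the single-orbit case from the results of Section \ref{dmod placid equiv}.

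First I would pass $H$-invariants through the stratification: each $\sC^{\leqslant\lambda}$ is essentially preserved by $\Dmod(H)$ by hypothesis, so, as noted in \ref{renormalize autom}, all the recollement functors $i_*, i^*, i^!, j_!, j^*, j_*$ of \eqref{e:barres} acquire $\Dmod(H)$-equivariance and hence descend to a recollement of the invariant categories $\sC^{H,\leqslant\lambda}, \sC^{H,<\lambda}, \sC^{H,\lambda}$. The $t$-structure on $\sC^H$ restricts to each piece, and the recollement functors interact with it in the usual way: $i_*$ is $t$-exact, $j_!$ is right $t$-exact, $j^!$ is left $t$-exact. Next, for the single-stratum input: under $\sC^\lambda \simeq \Dmod(H/K_\lambda)$ the invariants $\sC^{H,\lambda}$ identify with $\Dmod(H\bs H/K_\lambda) \simeq \Dmod(\pt/K_\lambda) \simeq C_\bullet(K_\lambda)\mod$, and since $K_\lambda$ is connected the prounipotent radical argument of Section \ref{dmod placid equiv} applies: by Corollary \ref{coherent is compact} (or directly by the discussion following it, since $\Dmod(\pt/K_\lambda)$ is of finite cohomological dimension once one replaces $K_\lambda$ by its reductive quotient) the coherent objects of $\sC^{H,\lambda}$ are the pretriangulated hull of the single object $\underline{k}_{H/K_\lambda}$, and these are exactly the pseudo-compact objects for the natural $t$-structure.

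The core of the argument is then a two-sided induction, identical in shape to the proof of Lemma \ref{l:whosnc}. For the containment "$\supseteq$": the $!$-extension $j_{\lambda,!}\underline{k}_{H/K_\lambda}$ is pseudo-compact because $j_!$ has a continuous right adjoint $j^!$ which is $t$-bounded (here one uses that $j_!$ and $j^!$ differ from $j_*$ by $t$-bounded amounts on the relevant closure stratum, plus the finiteness hypothesis on $\Lambda$ that $\{\nu : \nu \leqslant \lambda\}$ is finite), and pseudo-compact objects are closed under finite colimits, shifts and retracts. For the containment "$\subseteq$": given a pseudo-compact $c \in \sC^H$, the identity $\mathrm{id}_{\sC^H} \simeq \varinjlim_\lambda i^{\leqslant\lambda}_* i^{\leqslant\lambda,!}$ together with pseudo-compactness of $c$ — applied to the uniformly bounded-below filtered colimit on the right — forces $c$ to be a retract of, hence $*$-extended from, some $\sC^{H,\leqslant\lambda}$. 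Inside $\sC^{H,\leqslant\lambda}$ run the recollement triangle $j_! j^! c \to c \to i_* i^* c \xrightarrow{+1}$; since $j^!$ is left $t$-exact and $j_!$ is right $t$-exact with $t$-bounded fibers, $j^! c$ is pseudo-compact in $\sC^{H,\lambda}$ (hence coherent, hence in the pretriangulated hull of $\underline{k}_{H/K_\lambda}$ by the single-stratum input), and $j_! j^! c$ is pseudo-compact, so $i_* i^* c$ is pseudo-compact, supported on a space with one fewer stratum. Induct on the (finite, by hypothesis) number of strata on which $c$ has nonvanishing $*$-restriction to conclude $c$ lies in the claimed pretriangulated hull.

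The main obstacle — and the one place where a little care beyond the compactness analog is needed — is the $t$-exactness/boundedness bookkeeping in the recollement: for compactness (Lemma \ref{l:whosnc}) one only needs that $j_!$ and $j^!$ are left adjoints with continuous right adjoints, but for pseudo-compactness one must additionally know that $j^!$ preserves the boundedness needed to stay inside $\sC^+$ and that the fiber of $j_! j^! c \to c$ is bounded, so that pseudo-compactness genuinely propagates through the triangle. This is where the hypothesis that each $\sC^{\leqslant\lambda}$ is preserved by $\Dmod(H)$ and that each $\sC^\lambda$ is a homogeneous-space category is essential: it guarantees the relevant inclusions of strata closures are of finite type / finite codimension, so the recollement functors are $t$-bounded. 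Once this is in hand, the argument is a verbatim transcription of the compactness case with "compact" replaced by "pseudo-compact," and the equivalence "pseudo-compact $=$ coherent" for $\sC^H$ is then exactly the content recorded in Section \ref{dmod placid equiv}.
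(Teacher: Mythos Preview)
Your argument is correct, but it is considerably more elaborate than what the paper intends. The paper gives no proof at all; it simply remarks that the statement is ``essentially by definition.'' The intended one-line argument is to use the \emph{definition} of coherent in $\sC^H$ given in Section~\ref{dmod placid equiv} (namely, $c$ is coherent iff $\Oblv(c)$ is compact, cf.\ Proposition~\ref{p:coheqplacidscheme} and Corollary~\ref{coherent is compact}) and then apply Lemma~\ref{l:whosnc} to $\sC$ itself rather than to $\sC^H$. Since $\Oblv$ commutes with all the recollement functors, an object of $\sC^H$ is coherent iff its $*$-restriction to each $(\sC^\lambda)^H \simeq \Dmod(\pt/K_\lambda)$ has compact underlying object in $\Dmod(H/K_\lambda)$, with only finitely many nonzero; and by definition of coherent D-modules on $\pt/K_\lambda$ with $K_\lambda$ connected, this is exactly membership in the pretriangulated hull of the constant sheaf.

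By contrast, you work directly with pseudo-compactness in $\sC^H$ and push it through the recollement, which forces you to verify the $t$-boundedness of $j_*$ (so that pseudo-compactness is inherited by $j^!c$) and of the Cousin pieces. This is fine in the concrete situation at hand, and you identify the issue correctly, but it is precisely the bookkeeping the paper's approach sidesteps: by trading pseudo-compactness in $\sC^H$ for honest compactness in $\sC$ via $\Oblv$, one can invoke Lemma~\ref{l:whosnc} verbatim with no $t$-structure conditions on the recollement functors. Your route has the virtue of being self-contained at the level of $\sC^H$, but the paper's route explains why the result is immediate.
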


\subsubsection{}
Let us finally spell out the relationship between nearly compact objects and the  free-monodromic tilting sheaves of Bezrukavnikov--Yun \cite{BezYun}. In {\em loc. cit.}, the authors consider a certain monoidal category of tilting pro-sheaves on the basic affine space $\chN \bs \chG / \chN$ endowed with a  $!$-convolution product, building (substantially) on previous ideas of Beilinson--Ginzburg \cite{BeiGinz}.

Consider the homotopy category of  free-monodromic tilting sheaves. Verdier duality exchanges this with a full subcategory of $$\Dmod(\chB\mon\!\bs \chG / \chB \mon)$$preserved under the $*$-convolution product. One of the basic results of Section \ref{Cofree tilting sec} below is that the obtained subcategory is precisely the nearly compact objects. That is,  nearly compact objects, in favorable situations, may also be expressed as the homotopy category of cofree-monodromic tilting sheaves.

\section{Automorphic affine Hecke categories}\label{sec autom hecke}

\subsection{Overview}

\sss In this section we establish the automorphic counterparts to the results in Section \ref{spectral hecke sec}. The precise contents are as follows. 

\sss In Section \ref{group def}, we study the operations of taking strict and monodromic equivariants for a categorical representation of a group, and explain how to pass back and forth between them via turning on and off certain monodromy operators. These are analogous to the results of Section \ref{sec spectral monodrom} on the spectral side.

\sss In Section \ref{whit monad}, we study the monads arising from Whittaker averaging, and the corresponding algebra objects in the Hecke category. We then build on these results to study and classify in Section \ref{Cofree tilting sec} the cofree monodromic tilting sheaves, and use these to describe the categories of nearly compact objects in monodromic Hecke categories. The results of these sections may be compared to those of Sections \ref{sec algebra obj}  and \ref{spectral hecke algebra} on the spectral side. 

\sss Finally, in Section \ref{autom convolve sec} we record some basic relations between relative tensor products and spaces of homomorphisms for automorphic affine Hecke category modules and the corresponding categorical loop group representations. These in particular establish the automorphic counterpart to the main result of Section \ref{convolution sec}.

\subsection{Invariants and monodromy operators}\label{group def}

\sss
 
Let $H$ be an affine group scheme possibly of infinite type whose prounipotent radical is of finite codimension. Our goal in this section is to record some basic properties of the functor of monodromic invariants.   The main result is Theorem \ref{t:avenh}, which states that one can pass back and forth between strict and monodromic invariants by turning on and off certain monodromy operators.  

We highlight in particular that one has such a description not only when the reductive quotient of $H$ is a torus, where it is essentially well known in geometric representation theory. For general $H$, the monodromy operators are again a (completion of a) polynomial ring, namely the convolution algebra of chains on the based loop space $\Omega H$, but may now lie in nonpositive even degrees, and not only  degree zero as in the case of tori.

\subsubsection{} Let $\sC$ be a $\Dmod(H)$-module. Recall the notions of the strict and monodromic invariants

$$\sC^H = \Hom_{\Dmod(H)}(\Vect, \sC) \quad \quad \text{and} \quad \quad \sC^{H\mon} \simeq \Hom_{\Dmod(H)}(\Dmod(H/H\mon), \sC), $$
where $\Dmod(H/H\mon)$ is explicitly the full subcategory of $\Dmod(H)$ generated by the constant D-module.

Thanks to the $\DGCat$-linearity of $H$-invariants, one has natural equivalences 
\begin{align}\label{e:formula1}
&\sC^H \simeq \Dmod(H\backslash H/H\mon) \underset{\Dmod(H\mon\! \backslash H / H\mon)} \otimes \sC^{H\mon}, \\ \label{e:formula2}%\quad \text{and} \quad 
&\sC^{H\mon} \simeq \Dmod(H\mon\! \backslash H/H) \underset{\Dmod(H \backslash H / H)} \otimes \sC^H.\end{align}
Indeed, these follow from the $\Dmod(H)$-equivariant equivalences $$\sC^H \simeq \Dmod(H \backslash H) \underset {\Dmod(H)} \otimes \sC \quad \text{and} \quad \sC^{H\mon} \simeq \Dmod(H\mon\! \backslash H) \underset{\Dmod(H)} \otimes \sC, $$i.e., the identifications with strict and monodromic coinvariants arising from continuity and $\DGCat$-linearity.

\subsubsection{} The identities of Equations \eqref{e:formula1} and \eqref{e:formula2}, and specifically the terms in the tensor products, may be rewritten more explicitly as follows. The contents of this subsubsection will not be needed in the sequel, and may be skipped by the reader. 

We begin with Equation \eqref{e:formula1}. Pushing forward to a point yields a continuous monoidal functor
$$C^\bullet(H, -): \Dmod(H\mon\! \backslash H / H\mon) \rightarrow \Vect.$$
As the source category is compactly generated by the constant D-module, this factors through a monoidal equivalence
$$\Dmod(H\mon\! \backslash H / H\mon) \simeq C^\bullet(H)\mod \xrightarrow{\on{Oblv}} \Vect,$$where $C^\bullet(H)$ denotes the de Rham cohomology of $H$, i.e., the pushforward of the constant D-module. Via this identification, we may termwise rewrite Equation \eqref{e:formula1} as 
$$\sC^H \simeq \Vect \underset{C^\bullet(H)\mod}{\otimes} \sC^{H\mon}.$$

For the %right-hand 
second identity \eqref{e:formula2}, the constant D-module on $H$ compactly generates $\Dmod(H/H)$, yielding an identification 
$$\Dmod(H/H) \simeq \Vect.$$
By concatenating this with the comonadic adjunction
$$\on{Oblv}:\Dmod(H \backslash H / H) \rightleftarrows \Dmod(H/H): \on{Av}_*,$$
one obtains a monoidal identification of $\Dmod(H \backslash H / H)$ with the category of comodules for $C^\bullet(H)$, i.e. 
$$\Dmod(H \backslash H / H) \simeq C^\bullet(H)\on{-comod}.$$
With this, we may termwise rewrite Equation \eqref{e:formula2} as
$$\sC^{H\mon} \simeq \Vect \underset{C^\bullet(H)\on{-comod}}{\otimes} \sC^H.$$

\subsubsection{} \label{sss:intromonopers}Our main goal in this subsection is give an alternative to the formulas in Equation \eqref{e:formula1} and \eqref{e:formula2} by using monodromy operators rather than Hecke functors. To state this, we need to collect some preliminary observations. 

Recall that, given any module category $\mathbf{M}$ for a monoidal category $\mathbf{A}$, one has a tautological map from the endomorphisms of the monoidal unit $u$ of $\mathbf{A}$ to the center of $\mathbf{M}$, i.e.,
$$\End_{\mathbf{A}}(u) \rightarrow HH^\bullet(\mathbf{M}) := \End_{\cat{End}(\mathbf{M})}(\on{id}_{\mathbf{M}}).$$

In particular, if we write $\delta^\wedge$ for the monoidal unit of  $\Dmod(H\mon\! \bs H / H\mon)$ and $\delta$ for the monoidal unit of $\Dmod(H \backslash H / H)$, we have tautological maps 
\begin{align*}\End_{H}(\delta^\wedge) \rightarrow HH^\bullet(\sC^{H\mon}) \quad \text{and} \quad  \End_{\pt/H}(\delta) \rightarrow HH^\bullet(\sC^{H}).\end{align*}
By considering the natural actions of both the monodromic and strict Hecke categories by left and right convolution on $\Dmod(H\mon\! \backslash H / H) \simeq \Vect$ we obtain in particular augmentation maps 
$$\End_H(\delta^\wedge) \rightarrow HH^\bullet(\Vect)  \simeq k \leftarrow \End_{\pt/H}(\delta).$$

\subsubsection{}\label{averaging defin section} We may use these augmentations as follows. Consider the averaging functor 
$$\on{Av}_*^H: \sC^{H\mon} \rightarrow \sC.$$
By construction, this identifies with the map 
$$\sC^{H\mon} \simeq \Vect \otimes \hspace{.7mm} \sC^{H\mon} \simeq \Dmod(H \backslash H / H\mon) \otimes \sC^{H\mon} \xrightarrow{\on{act}} \sC^H,$$i.e., is given by convolution with the constant D-module. It follows that averaging kills our monodromy operators, i.e., induces an {\em enhanced} averaging functor

\begin{equation} \label{e:avenh}\on{Av}_*^{H, \on{enh}}: \Vect \underset{\End(\delta^\wedge)\mod}\otimes \sC^{H\mon} \rightarrow \sC^H.\end{equation}

Similarly, the forgetful functor $\on{Oblv}: \sC^H \rightarrow \sC^{H\mon}$ enhances to a functor 
\begin{equation} \label{e:oblvenh}\on{Oblv}^{\on{enh}}: \Vect \underset{\End(\delta)\mod} \otimes \sC^H \rightarrow \sC^{H\mon}.\end{equation}

Our desired formulas in terms of monodromy operators now reads as follows.

\begin{thm}\label{t:avenh}\label{autom central}
Suppose that $H$ is connected. For any $\Dmod(H)$-module $\sC$, the enhanced averaging and forgetful functors \eqref{e:avenh} and \eqref{e:oblvenh} are equivalences. 
\end{thm}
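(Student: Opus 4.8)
The plan is to reduce the statement to the case $\sC = \Dmod(H)$, viewed as a module over itself, by the usual ``universal case'' argument, and then to analyze that case by hand using the Koszul-duality identifications already recorded in Section~\ref{sec cofree}. More precisely, both sides of \eqref{e:avenh} and \eqref{e:oblvenh} are continuous functors of $\sC$ (as functors $\Dmod(H)\mmod \to \DGCat$), and they commute with colimits in $\sC$: the right-hand sides $\sC^H$ and $\sC^{H\mon}$ do, and so do the left-hand sides since relative tensor products and invariants are all continuous in $\sC$. Since every $\Dmod(H)$-module is a colimit of copies of $\Dmod(H)$, it suffices to check that \eqref{e:avenh} and \eqref{e:oblvenh} are equivalences when $\sC = \Dmod(H)$ with its left regular action, and moreover to track the $\Dmod(H)$-bimodule structure (equivalently, the right $H$-action) so that the universal case genuinely implies the general one; this is the standard ``one generator'' reduction for $\Dmod(H)\mmod$, using that $\Dmod(H)$ is the unit and is self-dual as a bimodule.

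\textbf{The universal case.} For $\sC = \Dmod(H)$ we have $\sC^{H\mon} \simeq \Dmod(H\mon\!\bs H)$ and $\sC^H \simeq \Dmod(H\bs H) \simeq \Vect$, and the averaging functor $\on{Av}_*^H$ becomes the de Rham cohomology functor $C^\bullet(H,-)\colon \Dmod(H\mon\!\bs H) \to \Vect$. Writing $A := \End_H(\delta^\wedge) = C^\bullet(H)$ (the de Rham cohomology of $H$, which by connectedness of $H$ satisfies $H^0(A) \simeq k$, and which is coconnective), the identification $\Dmod(H\mon\!\bs H) \simeq A\mmod$ of Section~\ref{s:cofree} intertwines $C^\bullet(H,-)$ with the forgetful functor $\Oblv\colon A\mmod \to \Vect$, and the augmentation $A \to k$ of \ref{sss:intromonopers} is the canonical one. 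So the enhanced averaging functor \eqref{e:avenh} becomes the canonical comparison map
$$\Vect \underset{A\mmod}{\otimes} A\mmod \longrightarrow \Vect,$$
which is manifestly an equivalence (the left-hand side is just $\Vect$). Dually, for \eqref{e:oblvenh} one uses $\Dmod(H\bs H/H) \simeq C^\bullet(H)\dcomod$ with counit the augmentation, and the forgetful functor $\Oblv\colon \sC^H \to \sC^{H\mon}$ in the universal case becomes $k \mapsto$ the regular $A$-comodule; passing to the Koszul-dual picture $\Dmod(H\mon\!\bs H/H\mon) \simeq A\mmod$ and using that $H$ is connected so that $A$ is a coconnective algebra with $H^0 = k$, one again identifies \eqref{e:oblvenh} with a tautological base-change equivalence. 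The key structural inputs here are exactly the monoidal identifications $\Dmod(H\mon\!\bs H/H\mon)\simeq C^\bullet(H)\mmod$ and $\Dmod(H\bs H/H)\simeq C^\bullet(H)\dcomod$ recorded above, together with the compatibility of the augmentations with these identifications.

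\textbf{Bimodule bookkeeping and the main obstacle.} The genuinely delicate point is not the universal computation but propagating it: one must check that \eqref{e:avenh} is not merely an equivalence of plain categories for $\sC = \Dmod(H)$ but an equivalence of $\Dmod(H)$-module categories (with respect to the residual right $H$-action), so that tensoring up along $\Dmod(H)$ yields the statement for arbitrary $\sC$. This requires that the enhanced averaging functor be constructed functorially in $\sC$ as a natural transformation of functors $\Dmod(H)\mmod \to \DGCat$, which in turn rests on the claim that $\on{Av}_*^H$ kills the monodromy operators $\End_H(\delta^\wedge) \to HH^\bullet(\sC^{H\mon})$ naturally --- i.e.\ that the factorization through $\Vect \otimes_{\End(\delta^\wedge)\mmod}(-)$ is canonical. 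I expect verifying this naturality, and the analogous statement for $\Oblv^{\on{enh}}$, to be the main technical obstacle: it amounts to unwinding that the action of $\Dmod(H\bs H/H\mon) \simeq \Vect$ by left convolution on $\sC^{H\mon}$ annihilates the central operators, which is where the connectedness hypothesis on $H$ is essential (so that $H^0(C^\bullet(H)) \simeq k$ and the augmentation is the truncation map). Once this naturality is in place, the two equivalences follow formally from the universal case by the colimit argument; and as a sanity check, in the torus case $H = \chT$ this recovers the familiar statement that $\sC^{\chT} \simeq \Vect \otimes_{C^\bullet(\chT)\mmod}\sC^{\chT\mon}$, with $C^\bullet(\chT)$ a polynomial ring in degree-two generators, while for general connected $H$ the operators now live in $C_\bullet(\Omega H)$ and may sit in nonpositive even degrees.
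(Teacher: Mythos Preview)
Your reduction to the universal case $\sC = \Dmod(H)$ is correct and matches the paper's opening move. However, your treatment of the enhanced averaging functor in that case contains a genuine error that breaks the argument. You write $A := \End_H(\delta^\wedge) = C^\bullet(H)$, but these are different algebras. Under the identification $\Dmod(H\mon\!\bs H) \simeq C^\bullet(H)\mod$, the constant sheaf $\underline{k}_H$ corresponds to the free rank-one module $C^\bullet(H)$, whereas the cofree-monodromic unit $\delta^\wedge$ corresponds to the \emph{augmentation} module $k$. Hence $\End_H(\delta^\wedge) \simeq \End_{C^\bullet(H)}(k)$ is the Koszul dual of $C^\bullet(H)$: writing $C^\bullet(H) \simeq \Sym(\fp)$ with $\fp$ in positive odd degrees, one finds $\End_H(\delta^\wedge) \simeq \wh\Sym(\fp^*[-1])$, a \emph{connective} algebra concentrated in non-positive even degrees. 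Consequently, in the universal case the enhanced averaging functor \eqref{e:avenh} is not the tautology $\Vect \otimes_{A\mod} A\mod \to \Vect$ you claim, but rather
\[
\Vect \underset{\wh\Sym(\fp^*[-1])\mod}{\otimes} \Sym(\fp)\mod \longrightarrow \Vect,
\]
and showing this is an equivalence is where the actual content lies. The paper handles it via a further Koszul duality: one identifies $\Sym(\fp)\mod$ with the full subcategory of $\wh\Sym(\fp^*[-1])\mod$ generated by the augmentation (using that $\fp^*[-1]$ sits in even degrees, so the Koszul resolution of $k$ is finite and $k$ is compact), and then checks directly that the resulting endofunctor of $\Vect$ is tensoring with the line $\det(\fp)$. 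Your argument bypasses this step entirely via the misidentification.

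Separately, your concern that ``bimodule bookkeeping'' is the main technical obstacle is misplaced. The enhanced functors \eqref{e:avenh} and \eqref{e:oblvenh} are constructed in Section~\ref{averaging defin section} by convolving with fixed bimodule categories and applying $\DGCat$-linear operations, so they are manifestly natural in $\sC$; the paper dispatches the reduction to the universal case in one sentence by invoking $\DGCat$-linearity and continuity. The real difficulty is the Koszul-duality computation above, not the naturality.
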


\begin{proof} By the $\DGCat$-linearity and continuity of the functors in the argument $\sC$, it is enough to verify both assertions in the case of $\sC \simeq \Dmod(H)$.

Let us begin with the enhanced forgetful functor \eqref{e:oblvenh}. In the present case of $\sC \simeq \Dmod(H)$, this takes the form 
$$\Vect \underset{\End(\delta)\mod} \otimes \Vect \simeq (k \underset{\End(\delta)} \otimes k)\mod \simeq C^\bullet(H)\mod.$$
I.e., this reduces to the standard relation between the (de Rham) cohomology of a group and its classifying space. 

Let us consider now the enhanced averaging functor \eqref{e:avenh}. In the present case of $\sC \simeq \Dmod(H)$, this takes the form 
\begin{equation} \label{e:alwaysapapaya}\Vect \underset{\End(\delta^\wedge)\mod} \otimes C^\bullet(H)\mod \xrightarrow{\on{id} \otimes \on{Oblv}} \Vect.\end{equation}
That is, the composite map 
$$C^\bullet(H)\mod \simeq \Vect \otimes \hspace{.7mm} C^\bullet(H)\mod \rightarrow \Vect \underset{\End(\delta^\wedge)\mod} \otimes C^\bullet(H)\mod \rightarrow \Vect$$sends a $C^\bullet(H)$-module $M$ to its underlying vector space $\on{Oblv}(M)$.

Let us write the relative tensor product more explicitly, beginning with $\End_H(\delta^\wedge)$.  Write $\delta_e$ for the monoidal unit of $\Dmod(H)$, i.e., the delta D-module at the identity element $e$. Note that $\delta^\wedge$ may be calculated as the image of $\delta_e$ under the monoidal functor 
$$\on{Av}_{*}^{H \times H \mon}: \Dmod(H) \rightarrow \Dmod(H\mon\!\bs H / H\mon),$$where the monoidality follows from the fact that $\sC^{H\mon} \rightarrow \sC$ is a fully faithful embedding. If we write $\underline{k}$ for the constant D-module on $H$, we may compute that the object of $C^\bullet(H)\mod$ corresponding to $\delta^\wedge$ is the augmentation module, namely 
$$\Hom_{H}(\underline{k}, \delta^\wedge) \simeq \Hom_{H}(\underline{k}, \on{Av}_*^{H \times H \mon} \delta_e) \simeq \Hom_{H}(\underline{k}, \delta_e) \simeq k.$$
By formality, we may fix an isomorphism of dg-algebras $C^\bullet(H) \simeq \Sym(\mathfrak{p}),$ where $\mathfrak{p}$ is a finite dimensional cohomologically graded vector space with trivial differential, concentrated in positive odd degrees. Using this, we have 
$$\End_H(\delta^\wedge) \simeq \Hom_{\Sym(\mathfrak{p})\mod}(k, k) \simeq \wh\Sym(\mathfrak{p}^*[-1]),$$
where $\wh\Sym$ denotes completion at the augmentation for the degree zero part of $\mathfrak{p}^*[-1]$.

We may therefore rewrite \eqref{e:alwaysapapaya} as 
\begin{equation}\label{e:alwayspapaya2}
\Vect \underset{ \wh\Sym(\mathfrak{p}^*[-1])\mod} \otimes \Sym(\mathfrak{p})\mod \xrightarrow{ \on{id} \otimes \on{Oblv}} \Vect.
\end{equation}
As $\fp$ is concentrated in odd degrees, $\fp^*[-1]$ is concentrated in even degrees. Therefore, the augmentation module $k$ of $\wh\Sym(\fp^*[-1])$ is a compact object of $\wh\Sym(\fp^*[-1])\mod$, as its Koszul resolution is finite. Let us denote the full subcategory generated by it as 
$$i_*: \begin{tikzcd} \wh\Sym(\fp^*[-1])\mod^\wedge_0 \arrow[r, hook, shift left] & \arrow[l, shift left] \wh\Sym(\fp^*[-1])\mod: i^!.\end{tikzcd}$$
%$$\hspace{10ex}i_*: \begin{tikzcd} \wh\Sym(\fp^*[-1])\mod^\wedge_0 \arrow[r, hook, shift left] & \arrow[l, shift left] \wh\Sym(\fp^*[-1])\mod: i^!.\end{tikzcd}$$
By taking the endomorphisms of the augmentation $k$,  we obtain the $\wh\Sym(\fp^*[-1])\mod$-linear equivalence 
$$\wh\Sym(\fp^*[-1])\mod^\wedge_0 \simeq \Sym(\fp)\mod.$$

Via this Koszul dual picture, we may further rewrite \eqref{e:alwayspapaya2} as 
\begin{equation}
\Vect \underset{\wh\Sym(\fp^*[-1])\mod}\otimes \wh\Sym(\fp^*[-1])\mod^\wedge_0 \xrightarrow{ \on{id} \otimes \Hom(k, -)} \Vect.
\end{equation}
Recalling that the action of $\wh\Sym(\fp^*[-1])\mod$ on $\Vect$ in the tensor product was by tensoring with the augmentation module $k$, we may rewrite the appearing tensor product as 
$$\Vect \simeq \Vect \underset{\wh\Sym(\fp^*[-1])\mod} \otimes \wh\Sym(\fp^*[-1])\mod \overset{\on{id} \otimes i^!}{\simeq} \Vect  \underset{\wh\Sym(\fp^*[-1])\mod} \otimes \wh\Sym(\fp^*[-1])\mod^\wedge_0.$$
Using this, the composite map 
$$\Vect \simeq \Vect \underset{\wh\Sym(\fp^*[-1])\mod}\otimes \wh\Sym(\fp^*[-1])\mod^\wedge_0 \xrightarrow{ \on{id} \otimes \Hom(k, -)} \Vect$$is given by tensoring with the cohomologically graded line $\det(\fp)$, and in particular is an equivalence, as desired. \end{proof}

\subsection{Cofree-monodromic sheaves and the Whittaker monads}\label{whit monad}

\sss
We recall that our basic strategy in proving the isomorphisms of affine Hecke category modules conjectured by Bezrukavnikov is to give descriptions on the automorphic and spectral sides of the modules via monads acting on the affine Hecke category itself, and then match the monads. 

In this subsection, we describe the monads for possibly degenerate Iwahori--Whittaker categories on the automorphic side.

In addition, to match a renormalization of the corresponding category of coherent sheaves on the spectral side, we study the cofree-monodromic tilting sheaves. When the Whittaker character is trivial on both sides, we will show their homotopy category recovers the nearly compact objects of $$\Dmod(\chI\mon\!\bs \chG_F / \chI\mon),$$
as was promised in the previous section.

\subsubsection{} \label{s:defst}To describe the categories of Whittaker sheaves we will be concerned with, we next recall some relevant definitions from \cite{campbelldhillon}.  

Let $\chL$ denote a connected reductive group, and $B_{\chL} \subset \chL$ a Borel subgroup. Fix a $\Dmod(\chL)$-module $\sC$. Let us denote by $\sC^{B_{\chL}\on{-gen}} \subset \sC$ the $\Dmod(\chL)$-submodule generated by the essential image of the $B_{\chL}$-equivariant objects under the forgetful functor
$$\on{Oblv}: \sC^{B_{\chL}} \rightarrow \sC;$$note the resulting subcategory is independent of the choice of Borel. To avoid any confusion, recall that the $B_{\chL}$-monodromic invariants of $\sC$, denoted by $$\sC^{B_{\chL}\mon} \subset \sC,$$are the full subcategory (but not $\Dmod(\chL)$-module) generated by the essential image of $\on{Oblv}$.

For any maximal unipotent subgroup $U$ of $\chL$, equipped with a character $\Psi$, the corresponding (possibly degenerate) {\em Steinberg--Whittaker invariants} of $\sC$
$$\sC^{U, \Psi, \St} \subset \sC^{U, \Psi}$$ are by definition the corresponding invariants for $\sC^{B_{\chL}\on{-gen}}$, i.e., $$\sC^{U, \Psi, \St} := (\sC^{B_{\chL}\on{-gen}})^{U, \Psi} \subset \sC^{U, \Psi}.$$

We will be interested in categories of D-modules with Steinberg--Whittaker equivariance conditions on both sides. To aid in their study, we now collect some general assertions about averaging functors.

\subsubsection{}\label{sss:avfunctors} Let $\chL$ be a reductive group with opposite Borel subgroups 
$B_{\chL}$ and 
$B^-_{\chL}$, with unipotent radicals $N_{\chL}$ and $N^-_{\chL}$, and fix a generic, i.e., nondegenerate, character $\psi$ of $N^-_{\chL}$. We will presently describe 
the basic adjunctions relating the categories 
\[
   \sC^{B_{\chL}\hspace{-.6mm} \on{-mon}} \quad \text{and} \quad \sC^{N^-_{\chL}, \psi, \St}. 
\]
%
%We remind the reader that $\sC^{B_{\chL}\hspace{-.6mm} \on{-mon}}$ is equivalently the 
%Steinberg--Whittaker invariants for the trivial character of $N_{\chL}$, cf. Example 
%\ref{ex:moninvsBig}. 
\label{s:defavfuncs}
Consider the composition of forgetting and averaging functors
\[
\sC^{N_{\chL}} \xrightarrow{\on{Oblv}} \sC \xrightarrow{ \on{Av}_{*}^{N_{\chL}^-, 
		\psi}[\dim N_{\chL}]} \sC^{N_{\chL}^-, \psi};
\]
note that this composition is not automatically the zero functor by our assumption that $B_{\chL}$ and $B^-_{\chL}$ are in generic position. Applying the same sequence instead to $\sC^{B_{\chL}\hspace{-.6mm} \on{-gen}}$, we obtain a functor which 
we denote by
\begin{equation} \label{e:avfunc1}
\on{Av}_{!*}^{\psi}: \sC^{B_{\chL}\hspace{-.6mm} \on{-mon}} \rightarrow \sC^{B_{\chL}\hspace{-.6mm} \on{-gen}} \ra \sC^{N_{\chL}^-, \psi, \St}. 
\end{equation}
Dually, the composition of forgetting and averaging functors 
\[
    \sC^{N_{\chL}^-, \psi} \xrightarrow{\on{Oblv}} \sC \xrightarrow{ 
    \on{Av}_*^{N_{\chL}}[\dim N_{\chL}]} \sC^{N_{\chL}}
\]
upon restriction to $\sC^{B_{\chL}\hspace{-.6mm} \on{-gen}}$ yields a functor 
\begin{equation} \label{e:avfunc2}
      \on{Av}_{!*}^{}: \sC^{N_{\chL}^-, \psi, \St} \rightarrow \sC^{B_{\chL}\hspace{-.6mm} \on{-gen}} \ra \sC^{B_{\chL}\hspace{-.6mm} \on{-mon}}. 
\end{equation}

We may now collect the basic properties of these functors. 
\begin{lemma} \label{l:avavpsi}The functor $\on{Av}_{!*}^\psi$ is canonically left and right adjoint to $\on{Av}_{!*}$. In addition, the functor $\on{Av}_{!*}$ 
is conservative.  
\end{lemma}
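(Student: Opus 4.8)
The statement has two parts: (i) the adjunctions $(\on{Av}_{!*}^\psi, \on{Av}_{!*})$ and $(\on{Av}_{!*}, \on{Av}_{!*}^\psi)$, and (ii) the conservativity of $\on{Av}_{!*}$. For (i), the key point is that the relevant averaging functors between strict equivariant categories are \emph{biadjoint} up to a shift and twist, once we work over the Steinberg/$B$-generated categories where no extra truncation phenomena intervene. Concretely, for a reductive group $\chL$ with opposite unipotent radicals $N_{\chL}, N_{\chL}^-$ (the latter twisted by a nondegenerate $\psi$), the functors
\[
\on{Av}_*^{N_{\chL}^-,\psi}[\dim N_{\chL}]: \sC \rightleftarrows \sC : \on{Oblv}, \qquad \on{Av}_*^{N_{\chL}}[\dim N_{\chL}]: \sC \rightleftarrows \sC : \on{Oblv}
\]
give, upon composition, functors $\sC^{N_{\chL}} \to \sC^{N_{\chL}^-,\psi}$ and $\sC^{N_{\chL}^-,\psi} \to \sC^{N_{\chL}}$. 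The first thing I would do is invoke the standard fact (cf.\ \cite{campbelldhillon}) that on the $B_{\chL}$-generated subcategory $\sC^{B_{\chL}\on{-gen}}$, these two composite functors are mutually inverse equivalences up to a cohomological shift --- this is the ``$!=*$'' phenomenon for Whittaker averaging of monodromic objects, which holds precisely because $B_{\chL}$ and $B_{\chL}^-$ are in generic position and one restricts to the Steinberg block. Given that, the adjunction $(\on{Av}_{!*}^\psi, \on{Av}_{!*})$ is assembled from the $(\on{Oblv}, \on{Av}_*)$ adjunctions by composing adjoints: $\on{Av}_{!*}^\psi$ is $\on{Oblv}$ followed by $\on{Av}_*^{N^-,\psi}[\dim N]$ and $\on{Av}_{!*}$ is $\on{Oblv}$ followed by $\on{Av}_*^{N}[\dim N]$, so the adjunction on one side is immediate from composing $(\on{Oblv}, \on{Av}_*^{N^-,\psi})$-type adjunctions; the biadjointness (left \emph{and} right) then follows because $\on{Av}_*^{N^-,\psi}$ and $\on{Av}_!^{N^-,\psi}$ differ by a shift on the Steinberg block (this is again the content of the genericity), so the ``$!$-averaging'' left adjoint coincides with the ``$*$-averaging'' right adjoint after the shift is absorbed into the normalization $[\dim N_{\chL}]$. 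I would write this out by carefully tracking the two adjunction units/counits and the shift.

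For part (ii), conservativity of $\on{Av}_{!*}: \sC^{N^-_{\chL},\psi,\St} \to \sC^{B_{\chL}\on{-mon}}$, the plan is to use the biadjointness just established together with the fact that $\sC^{N^-_{\chL},\psi,\St}$ is, by definition, the Steinberg--Whittaker invariants of $\sC^{B_{\chL}\on{-gen}}$, which is generated under colimits by $B_{\chL}$-equivariant objects. A clean way: since $\on{Av}_{!*}^\psi$ is both left and right adjoint to $\on{Av}_{!*}$, it suffices to show the counit (or unit) of one of the adjunctions is an equivalence, i.e.\ that the composite $\on{Av}_{!*} \circ \on{Av}_{!*}^\psi$, or $\on{Av}_{!*}^\psi \circ \on{Av}_{!*}$, is (up to shift) the identity on the appropriate category. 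The composition $\on{Av}_{!*}^\psi \circ \on{Av}_{!*}$ on $\sC^{N^-_{\chL},\psi,\St}$ is computed by averaging from $\psi$-Whittaker to $N_{\chL}$-equivariant and back, all within the Steinberg block --- and by the $!=*$ equivalence this round trip is the identity up to a shift. This forces $\on{Av}_{!*}$ to be conservative: if $\on{Av}_{!*}(M) \simeq 0$ then applying $\on{Av}_{!*}^\psi$ gives $M \simeq 0$ up to shift, hence $M\simeq 0$.

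\textbf{Expected main obstacle.} The crux is making precise and correctly normalized the claim that Whittaker averaging $\sC^{N_{\chL}} \to \sC^{N^-_{\chL},\psi}$ restricted to the $B_{\chL}$-generated (equivalently Steinberg) subcategory is an equivalence with $!$- and $*$-averaging agreeing up to the shift $[\dim N_{\chL}]$. This is the geometric heart of the matter: it rests on the fact that the action map $N_{\chL}^- \times B_{\chL}/B_{\chL} \to \chL/B_{\chL}$ (or the relevant orbit geometry on the flag variety) is such that the big cell contribution is the only one surviving, and that the cleanness/costalk computation makes the two averagings coincide. I would either cite this directly from \cite{campbelldhillon} (where Steinberg--Whittaker categories are set up) or, if a self-contained argument is wanted, reduce via $\Dmod(\chL)$-linearity to the universal case $\sC = \Dmod(\chL)$ and do the orbit-by-orbit analysis on $N_{\chL}^-\backslash \chL / B_{\chL}$, checking that the relevant local system is clean on the open orbit and that higher strata contribute nothing to the $B_{\chL}$-generated block. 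The bookkeeping of shifts and the verification that no non-open-orbit corrections appear is where the real work lies; everything else is formal adjunction-chasing.
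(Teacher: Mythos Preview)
Your proposal is correct and follows essentially the same strategy as the paper: reduce via $\Dmod(\chL)$-linearity to the universal case (the paper takes $\sC = \Dmod(B_{\chL}\backslash \chL)$ rather than $\Dmod(\chL)$, a minor simplification), and there invoke the cleanness of the relevant integral kernels in $\Dmod(B_{\chL}\hspace{-.6mm}\on{-mon}\backslash \chL / N^-_{\chL},\psi)$ and its flip to get the biadjunction.

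There is one small difference worth noting. For conservativity, the paper argues abstractly: for an adjunction $(F,G)$, conservativity of $G$ is equivalent to the essential image of $F$ generating under colimits, and the latter is manifestly preserved by tensoring up. You instead argue the stronger statement that $\on{Av}_{!*}$ is fully faithful, i.e.\ that $\on{Av}_{!*}^\psi \circ \on{Av}_{!*} \simeq \id$ on the Steinberg--Whittaker category. This is true and also follows from cleanness in the universal case (the convolution of the two clean kernels lands back on the identity kernel), and equivalences of functors transfer under tensoring just as well. So your route is slightly stronger than what is needed but equally valid; the paper's route has the mild advantage of requiring one fewer kernel computation.
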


\begin{proof} Let us first consider the case of $$\sC \simeq \Dmod(B_{\chL} \backslash \chL).$$In this case, note that the appearing D-modules in \eqref{e:avfunc1} and \eqref{e:avfunc2} are ind-holonomic, and in particular the $!$-averaging functors left adjoint to $\on{Oblv}$ are defined. The claims therefore follows from the standard cleanness of the corresponding integral kernels in 
\[
     \Dmod(B_{\chL}\hspace{-.6mm} \on{-mon} \backslash \chL / N^-_{\chL}, \psi) \quad \text{and} \quad \Dmod(N^-_{\chL}, \psi \backslash \chL / B_{\chL}\hspace{-.6mm} \on{-mon}). 
\]

 For 
general $\sC$, the categories and functors of \eqref{e:avfunc1} and \eqref{e:avfunc2} are obtained by tensoring $\sC$ over $D(\chL)$ with
	\[
	     \Dmod(\chL/B_{\chL}) \underset{\Dmod(B_{\chL} \backslash \chL / B_{\chL})} \otimes \Dmod(B_{\chL} \backslash 
	     \chL / B_{\chL}\hspace{-.6mm} \on{-mon}) \quad  \text{and} \]
      
      \[ \Dmod(\chL/B_{\chL}) 
	     \underset{\Dmod(B_{\chL} 
	     \backslash \chL / 
	     B_{\chL})} \otimes \Dmod(B_{\chL} \backslash \chL / N_{\chL}^-, \psi). 
	\]

We claim the statements of the lemma follow formally. Indeed, the datum of an adjunction is inherited after tensoring up. Similarly, for the 
asserted conservativity, note that in an adjunction 
\[  
    F: \sC \rightleftarrows \sD: G,
\]
the conservativity of $G$ is 
equivalent to the generation of $\sD$ by the essential image of $F$ under 
colimits. It is straightforward to see that the latter property is preserved by 
tensoring up, whence the conservativity of $\on{Av}_{!*}$ follows.  
\end{proof}

Given the conservativity of $\on{Av}_{!*}$, we next turn to the corresponding monad on $\sC^{B_{\chL}\hspace{-.6mm} \on{-mon}}$.

\subsubsection{} To explicitly describe the monad, we will make use of the following general observation on averaging functors. For a $D(\chL)$-module $\sC$, recall the definitional adjunction 
\[
   \on{Oblv}: \sC^{B_{\chL}} \rightleftarrows \sC^{B_{\chL}\hspace{-.6mm} \on{-mon}}: \on{Av}_*^{B_{\chL}}. 
\]

As we now show, the functors are also adjoint in the other order, up to tensoring by a cohomologically graded line. Denote by $\ft$ and $\chT$ the abstract Cartan algebra and group of $\chL$, respectively.

\begin{prop}\label{p:avbbmon} For any $D(B_{\chL})$-module $\sC$, there is a canonical adjunction 
\[
    \on{Av}_{*}^{B_{\chL}} \otimes \det(\ft^*[-1]) : \sC^{B_{\chL}\hspace{-.6mm} \on{-mon}} \rightleftarrows: \sC^{B_{\chL}}: \on{Oblv}.
\]
\end{prop}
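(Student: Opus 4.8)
The statement asserts that for a $D(B_{\chL})$-module $\sC$, the functor $\on{Oblv} : \sC^{B_{\chL}} \to \sC^{B_{\chL}\mon}$ admits a left adjoint given by $\on{Av}_*^{B_{\chL}}(-) \otimes \det(\ft^*[-1])$. Equivalently: the canonical right adjoint $\on{Av}_*^{B_{\chL}}$ of $\on{Oblv}$ also serves as a left adjoint after a twist by a cohomologically graded line. The strategy is the standard reduction to the universal case $\sC = \Dmod(B_{\chL})$ using $\DGCat$-linearity and continuity of all the functors involved in the variable $\sC$, exactly as in the proof of Theorem \ref{t:avenh}: an adjunction datum between $\DGCat$-linear functors is preserved upon applying $- \otimes_{\Dmod(B_{\chL})} \sC$, so it suffices to exhibit the adjunction at the level of the bimodule categories of equivariant and monodromic sheaves on $B_{\chL}$ itself.

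First I would reduce further to the maximal torus. Since $B_{\chL} = N_{\chL} \rtimes \chT$ with $N_{\chL}$ prounipotent (here just unipotent, being finite type), the forgetful functor $\sC^{N_{\chL}} \to \sC$ along a prounipotent group is fully faithful and its right adjoint $\on{Av}_*^{N_{\chL}}$ is also a left adjoint (prounipotent averaging is ``clean''), with no twist needed. Hence the only source of the discrepancy between the two adjunctions is the torus direction: one is reduced to showing that for a $\Dmod(\chT)$-module $\sD$, the functor $\on{Oblv} : \sD^{\chT} \to \sD^{\chT\mon}$ has left adjoint $\on{Av}_*^{\chT}(-)\otimes \det(\ft^*[-1])$. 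By $\DGCat$-linearity again this reduces to the universal case $\sD = \Dmod(\chT)$, i.e.\ to a computation in $\Dmod(\chT\mon\!\bs \chT/\chT) \simeq C^\bullet(\chT)\mod$ versus $\Dmod(\chT\bs\chT/\chT) \simeq \Vect$, where the adjunctions in both orders correspond to induction and restriction along the augmentation $C^\bullet(\chT) \to k$. The claim then becomes the classical fact that for $R = C^\bullet(\chT) \simeq \Sym(\ft^*[-1])$ (an exterior-type algebra on generators in degree $1$, since $\ft^*[-1]$ sits in degree $1$), the restriction functor $R\mod \to \Vect$ has both adjoints, and the left adjoint (extension of scalars $- \otimes_k R$) agrees with the right adjoint ($\Hom_k(R,-) \simeq R^\vee \otimes_k -$) up to the twist by $\det(R) \simeq \det(\ft^*[-1])$, because $R$ is a finite-dimensional Frobenius (in fact Gorenstein) graded algebra with socle $\det(\ft^*[-1])$. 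This is the homological heart of the argument: the Serre-duality/Frobenius identification $R^\vee \simeq R \otimes \det(\ft^*[-1])$ as $R$-bimodules.

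Concretely, the key steps in order are: (1) invoke $\DGCat$-linearity and continuity of $\sC \mapsto \sC^{B_{\chL}}$, $\sC \mapsto \sC^{B_{\chL}\mon}$, $\on{Oblv}$, and $\on{Av}_*^{B_{\chL}}$ to reduce to $\sC = \Dmod(B_{\chL})$; (2) split off the unipotent radical, using that $\on{Oblv}$ and $\on{Av}_*$ along $N_{\chL}$ form a two-sided adjunction with no twist (cleanness), reducing to the torus $\chT$; (3) reduce the torus case to $\sD = \Dmod(\chT)$, again by $\DGCat$-linearity; (4) identify $\Dmod(\chT\mon\!\bs\chT/\chT) \simeq C^\bullet(\chT)\mod$ with $C^\bullet(\chT) \simeq \Sym(\ft^*[-1])$ by formality, and $\Dmod(\chT\bs\chT/\chT) \simeq \Vect$, under which $\on{Oblv}$ and $\on{Av}_*^{\chT}$ become restriction and (co)induction along $C^\bullet(\chT) \to k$; (5) establish the Frobenius/Gorenstein identity $C^\bullet(\chT)^\vee \simeq C^\bullet(\chT)\otimes\det(\ft^*[-1])$ of bimodules, which converts the right adjoint $\Hom_k(C^\bullet(\chT),-)$ into the left adjoint $(-)\otimes_k C^\bullet(\chT)$ twisted by $\det(\ft^*[-1])$; (6) conclude that the twisted averaging functor is left adjoint to $\on{Oblv}$, and propagate this adjunction back up through steps (3)--(1).

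\textbf{Main obstacle.} The genuinely content-bearing step is (5), pinning down the cohomological shift precisely as $\det(\ft^*[-1])$ rather than some other shift of a line: one must track that $C^\bullet(\chT) \simeq \wedge^\bullet(\ft^*[-1])$ has top exterior power (its socle, in the Frobenius-algebra sense) canonically $\det(\ft^*[-1])$ placed in cohomological degree $\dim\ft$, and that the comparison isomorphism $\Hom_k(C^\bullet(\chT),k) \simeq C^\bullet(\chT)\otimes\det(\ft^*[-1])$ is one of $C^\bullet(\chT)$-bimodules, so that it intertwines the module structures correctly. Everything else is formal manipulation of adjunctions and $\DGCat$-linear tensor products of the kind already carried out in the proof of Theorem \ref{t:avenh}; I would also note that one should check the two $\chT$-adjunctions are compatible with the residual $N_{\chL}$-action so that step (2)'s splitting is legitimate, but this is routine given that everything is $\Dmod(\chT)$-linear.
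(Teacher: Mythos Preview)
Your approach is correct and essentially the same as the paper's: both reduce to the universal case by $\DGCat$-linearity, identify the relevant category with $\Sym(\ft^*[-1])\mod$, and compute the discrepancy between the left and right adjoints of the forgetful functor $\Omega$ to $\Vect$ via the self-duality of $\Sym(\ft^*[-1])$ up to the line $\det(\ft[1])$.

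Two minor differences are worth noting. First, your step (2) splitting off $N_{\chL}$ is unnecessary: the paper works directly with $B_{\chL}$ and simply observes that the dualizing sheaf compactly generates $\Dmod(B_{\chL}/B_{\chL}\mon)$, so $\Dmod(B_{\chL}/B_{\chL}\mon)\simeq \Sym(\ft^*[-1])\mod$ already (the unipotent radical contributes nothing to $C^\bullet(B_{\chL})$). Second, in place of your explicit Frobenius bimodule computation in step (5), the paper uses a slightly more conceptual shortcut: it notes that the category of $\Dmod(B_{\chL})$-equivariant functors
\[
\mathbf{Fun}_{\Dmod(B_{\chL})\mod}\bigl(\Dmod(B_{\chL}/B_{\chL}),\,\Dmod(B_{\chL}/B_{\chL}\mon)\bigr)\ \simeq\ \Dmod(B_{\chL}\bs B_{\chL}/B_{\chL}\mon)\ \simeq\ \Vect,
\]
so any two such equivariant functors differ by a cohomologically graded line; the equivariance of $\Omega^R$ itself is supplied by semi-rigidity of $\Dmod(B_{\chL}\mon\bs B_{\chL}/B_{\chL}\mon)$. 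This sidesteps having to verify the bimodule compatibility you flag as the ``main obstacle'': once both $\Omega^L$ and $\Omega^R$ are known to be equivariant, they automatically differ by a line, and a glance at what they do to $k$ identifies it. Your direct Frobenius argument is equally valid, just marginally more hands-on.
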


\begin{proof} It suffices to address the universal case of $\sC \simeq \Dmod(B_{\chL})$, i.e. to furnish a $\Dmod(B_{\chL})$-equivariant adjunction 
\begin{equation} \label{e:monadj}
    \on{Av}_*^{B_{\chL}}: \Dmod(B_{\chL} / B_{\chL}\hspace{-.6mm} \on{-mon}) \rightleftarrows  \Dmod(B_{\chL} / B_{\chL}): \on{Oblv} \otimes \det(\ft[1]).  
\end{equation}

Let us first construct a right adjoint to $\on{Av}_*^{B_{\chL}}$. To do so, as the dualizing sheaf compactly generates both categories, this yields equivalences 
\[
    \Dmod(B_{\chL}/B_{\chL}\hspace{-.6mm} \on{-mon}) \simeq \on{Sym}(\ft^*[-1])\mod \quad \text{and} \quad \Dmod(B_{\chL}/B_{\chL}) \simeq \on{Vect}.
\]
Moreover, under these equivalences, $\on{Av}_*^{B_{\chL}}$ identifies with the forgetful functor 
\begin{equation} \label{e:oblvbmon}
      \Omega: \on{Sym}(\ft^*[-1])\mod \rightarrow \on{Vect}. 
\end{equation}
As this functor visibly preserves compactness, it admits a right adjoint $\Omega^R$. Moreover, noting that the action of $\Dmod(B_{\chL})$ on both categories in \eqref{e:monadj} canonically factors through 
\begin{equation} \label{e:bimonBorel} \Dmod(B_{\chL}\hspace{-.6mm} \on{-mon} \backslash B_{\chL} / B_{\chL}\hspace{-.6mm} \on{-mon}),
\end{equation}
the equivariance of $\Omega^R$ follows from the semi-rigidity of \eqref{e:bimonBorel}. 

   It remains to identify $\Omega^R$. To do so, first observe the equivalence  
   \[
       {\mathbf{Fun}}_{\Dmod(B_{\chL})\mod}(   \Dmod(B_{\chL}/B_{\chL}), \Dmod(B_{\chL}/B_{\chL}\hspace{-.6mm} \on{-mon})) \simeq \Dmod(B_{\chL} \backslash B_{\chL} / B_{\chL}\hspace{-.6mm} \on{-mon}) \simeq \on{Vect}.
   \]
   Using this, the assertion follows from noting that the left and right adjoints to \eqref{e:oblvbmon} are given by tensoring by $\on{Sym}(\ft^*[-1])$ and its dual, respectively. In particular, we have  
   \[
    \Omega^R \simeq \Omega^L \otimes \det(\ft[1]), 
   \]
   as desired.     \end{proof}

\subsubsection{} \label{s:defmoncreatures}To describe the object underlying the monad, we now introduce some notation concerning objects of 
\begin{equation} \label{e:dogsonaflag}
     \Dmod(B_{\chL}\hspace{-.6mm} \on{-mon} \backslash \chL / B_{\chL}\hspace{-.6mm} \on{-mon}). 
\end{equation}

Write $W_{\chL}$ for the Weyl group of $\chL$. For each $w \in W_{\chL}$, note the tautological equivalence 
\[
      \Dmod(B_{\chL}\hspace{-.6mm} \on{-mon} \backslash B_{\chL}wB_{\chL} / B_{\chL}\hspace{-.6mm} \on{-mon}) \simeq \Dmod(\chT/\chT\hspace{-.6mm} \on{-mon}). 
\]
Let us write $\delta_w$ for the constant perverse sheaf on the stratum, and let us denote its injective hull by  
\[
    \widehat{\delta}_w := \delta_w \underset{\on{End}(\delta_w)}{\otimes}  k.
\]
 Finally, let us denote the $!$-extension and $*$-extension of $\widehat{\delta}_w$ to \eqref{e:dogsonaflag} by $\hj_{w, !}$ and $\hj_{w, *}$, respectively, and similarly for $\delta_w, j_{w,!},$ and $j_{w, *}$. Given an object $\sM$ of \eqref{e:dogsonaflag}, let us say it admits a cofree-monodromic standard filtration if it is a successive extension of the $\hj_{w, !}$, for $w \in W$, and a cofree-monodromic costandard filtration if it is a successive extension of the $\hj_{w, *},$ for $w \in W$. In particular, such an $\sM$ lies in the heart of the $t$-structure.

We now collect their standard properties. Write $\ell: W_L \rightarrow \mathbb{Z}^{\geqslant 0}$ for the length function on $W_L$, and denote the convolution product on \eqref{e:dogsonaflag} by 
$$ \Dmod(B_{\chL}\hspace{-.6mm} \on{-mon} \backslash \chL / B_{\chL}\hspace{-.6mm} \on{-mon})^{\otimes 2} \rightarrow  \Dmod(B_{\chL}\hspace{-.6mm} \on{-mon} \backslash \chL / B_{\chL}\hspace{-.6mm} \on{-mon}), \quad \sM_1 \boxtimes \hspace{.7mm}\sM_2 \mapsto \sM_1 \star \sM_2.$$
Note that by the definition of the convolution product, the monoidal unit is given by the cofree-monodromic sheaf on the unit stratum
$$\on{Av}_{B \times B\mon, *}(\delta_e) = \hj_{e, !} \simeq \widehat{\delta}_e \simeq \hj_{e, *}. $$

\begin{lemma}\label{l:norsupises} The following are true. 

\begin{enumerate}
    \item Given $y, w \in W_L$ such that $\ell(yw) = \ell(y) + \ell(w)$, one has equivalences
    \begin{equation} \hj_{y, !} \star \hj_{w, !} \simeq \hj_{yw, !} \quad \text{and} \quad \hj_{y, *} \star \hj_{w, *} \simeq \hj_{yw, *} .\label{e:blockofbones}\end{equation}

\item For any $w \in W$, one has an equivalence 
$$\hj_{w, !} \star \hj_{w^{-1}, *} \simeq \widehat{\delta}_e.$$

\end{enumerate} 
    
\end{lemma}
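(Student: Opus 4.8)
The statement is the standard formula governing convolutions of cofree-monodromic standard and costandard sheaves on the monodromic flag variety; the strategy is to reduce everything to the simply-reflection case and then to the two universal computations on $\chT/\chT\mon$ and $B_{\chL}/B_{\chL}\mon$. For part (1), I would first reduce to the case $w = s$ a simple reflection, using induction on $\ell(w)$: writing $w = s w'$ with $\ell(w) = 1 + \ell(w')$ and $\ell(yw) = \ell(ys) + \ell(w')$ when $\ell(ys) = \ell(y)+1$, associativity of $\star$ reduces the claim for $(y,w)$ to the claims for $(y,s)$ and $(ys, w')$. So it suffices to treat $\hj_{y,!} \star \hj_{s,!} \simeq \hj_{ys,!}$ when $\ell(ys) = \ell(y)+1$ (and dually for $*$). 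This in turn is a purely geometric cleanness statement: the convolution map $B_{\chL}yB_{\chL} \times^{B_{\chL}} B_{\chL}sB_{\chL} \to B_{\chL}ysB_{\chL}$ is an iterated affine bundle (indeed in the length-additive case it is an isomorphism onto its image and the convolution of the $!$-extensions is clean), so one gets the $!$-extension of the external product of the two cofree-monodromic sheaves along the orbits, which by the product formula for cofree-monodromic sheaves recorded just before Definition \ref{cofree mon def} (namely $\delta^\wedge_{(x,y)} \simeq \pi_X^*\delta^\wedge_x \overset{*}{\otimes} \pi_Y^*\delta^\wedge_y$) is exactly $\hj_{ys,!}$. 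The costandard case is dual: either run the same argument with $*$-extensions and $*$-pushforward, or apply Verdier duality, which exchanges $\hj_{w,!} \leftrightarrow \hj_{w,*}$ up to the appropriate cohomological shift and a sign $w \mapsto w$ (this is where one must be careful that duality is compatible with the cofree, as opposed to free, normalization — the cofree-monodromic sheaf is self-Verdier-dual up to shift because it is pro-representable by an iterated self-extension of the constant sheaf).

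For part (2), I would combine part (1) with the computation in rank one. By (1) applied to a reduced decomposition $w = s_{i_1}\cdots s_{i_\ell}$ and $w^{-1} = s_{i_\ell}\cdots s_{i_1}$, one has $\hj_{w,!} \star \hj_{w^{-1},*} \simeq \hj_{s_{i_1},!} \star \cdots \star \hj_{s_{i_\ell},!} \star \hj_{s_{i_\ell},*} \star \cdots \star \hj_{s_{i_1},*}$, so by associativity it suffices to prove the single-reflection identity $\hj_{s,!} \star \hj_{s,*} \simeq \widehat{\delta}_e$ and then peel off the innermost pair repeatedly, using $\hj_{y,!} \star \widehat{\delta}_e \star \hj_{y,*} \simeq \hj_{y,!} \star \hj_{y,*}$ — wait, more precisely one uses $\widehat{\delta}_e$ is the monoidal unit (recorded in the displayed line before the lemma) so after collapsing the middle pair the expression becomes $\hj_{s_{i_1},!}\star\cdots\star\hj_{s_{i_{\ell-1}},!}\star\hj_{s_{i_{\ell-1}},*}\star\cdots\star\hj_{s_{i_1},*}$, and one inducts on $\ell$. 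Thus the entire statement rests on the rank-one identity, which is the assertion that convolution of the $!$- and $*$-extended cofree-monodromic sheaves on the two-point $\mathbb{P}^1$-orbit picture for $SL_2$ (or rather $B_{\chL}$ inside the minimal parabolic) collapses to the cofree-monodromic skyscraper at $e$.

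\textbf{Main obstacle.} The crux is the rank-one computation $\hj_{s,!} \star \hj_{s,*} \simeq \widehat{\delta}_e$. In the strict (non-monodromic) setting this is the familiar fact that $j_{s,!} \star j_{s,*} \simeq \delta_e$, proved by an explicit $SL_2$ calculation showing the convolution is supported on the closed stratum and there is the constant sheaf. In the cofree-monodromic setting one must show the convolution not only collapses onto the $e$-stratum but produces the \emph{cofree-monodromic} skyscraper rather than the ordinary one, which requires tracking the $\chT\mon$-monodromy: the contribution of the open stratum in the convolution must precisely build the iterated self-extension of $\delta_e$ presenting $\widehat{\delta}_e$. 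The cleanest way to see this is to work $\chT$-equivariantly relative to the completed torus, identify $\Dmod(B_{\chL}\mon \backslash \overline{B_{\chL}sB_{\chL}} / B_{\chL}\mon)$ with modules over $C^\bullet$ of the relevant torus-quotient via the generators-and-relations description from Section \ref{s:cofree}, and compute the convolution as a relative tensor product of such module categories; the cofree normalization is exactly what makes $\hj_{s,!}$ and $\hj_{s,*}$ into the induced and coinduced modules whose tensor product is the augmentation module. I would also double-check the shift conventions so that no spurious cohomological shift appears in $\hj_{w,!}\star\hj_{w^{-1},*} \simeq \widehat{\delta}_e$ — the perversity normalization for $\widehat{\delta}_e$ as stated (it lies in the heart, being simultaneously $\hj_{e,!}$ and $\hj_{e,*}$) forces the shifts on the two sides to cancel, but this should be verified in rank one rather than assumed.
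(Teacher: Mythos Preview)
Your approach is sound in outline but takes a genuinely different route from the paper. You reduce both parts to a rank-one computation that you then propose to carry out directly at the monodromic level. The paper instead establishes a single characterization lemma: $\hj_{w,!}$ (resp.\ $\hj_{w,*}$) is the \emph{unique} object of $\Dmod(B_{\chL}\mon\backslash \chL / B_{\chL}\mon)$ whose $B_{\chL}$-average $\on{Av}_{B_{\chL},*}$ is $j_{w,!}$ (resp.\ $j_{w,*}$). Given this, the paper computes $\on{Av}_{B_{\chL},*}(\hj_{y,!}\star\hj_{w,!})$ by sliding the average across the convolution (using $\on{Av}_{B_{\chL},*}(\hj_{y,!}) = \on{Oblv}(j_{y,!})$ and $\on{Oblv}(j_{y,!})\star \hj_{w,!} = j_{y,!}\overset{B_{\chL}}{\star}\on{Av}_{B_{\chL},*}(\hj_{w,!})$) to get $j_{y,!}\overset{B_{\chL}}{\star}j_{w,!}\simeq j_{yw,!}$, and then invokes the characterization. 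Part (2) is dispatched the same way. This avoids any direct rank-one monodromic calculation by descending to the strict Hecke category, where the identities are classical.

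Your argument buys self-containment but at the cost of the rank-one step, which, as you note, is the real work. One imprecision worth flagging: when you invoke the product formula $\delta^\wedge_{(x,y)}\simeq \pi_X^*\delta^\wedge_x\overset{*}{\otimes}\pi_Y^*\delta^\wedge_y$ for the length-additive convolution, what you actually need is not an external product on a genuine product but a twisted product on $B_{\chL}yB_{\chL}\times^{B_{\chL}}B_{\chL}sB_{\chL}$. The correct input is that the cofree-monodromic sheaf on $\chT$ is the \emph{monoidal unit} for the convolution on $\Dmod(\chT\mon\backslash\chT/\chT\mon)$ (equivalently, $\widehat{\delta}\star\widehat{\delta}\simeq\widehat{\delta}$ on the torus), which is exactly what makes the twisted product of two cofree-monodromic stratum sheaves again cofree-monodromic. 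This is implicit in the paper's remark that $\widehat{\delta}_e = \on{Av}_{B\times B\mon,*}(\delta_e)$ is the unit, but you should cite that rather than the external-product formula. With that correction, your reduction to rank one for part (2) is clean, and the rank-one identity $\hj_{s,!}\star\hj_{s,*}\simeq\widehat{\delta}_e$ can indeed be verified as you sketch, or deduced by Verdier duality from its free-monodromic counterpart in Bezrukavnikov--Yun.
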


\begin{proof} This can be deduced from the  free-monodromic version by applying Verdier duality; for completeness we provide an argument essentially following \cite{BezYun} and \cite{BezRiche}.

We first note that $\hj_{w, !}$ and $\hj_{w, *}$ may be characterized as the unique objects of \eqref{e:dogsonaflag} whose left (or right) $B$-averages $\on{Av}_{B, *}$ yield $j_{w, !}$ and $j_{w, *},$ respectively.

Let us show this for $\hj_{w, !}$. So, suppose we are given an object $\sM$ satisfying 
$$\on{Av}_{B, *}(\sM) \simeq j_{w, !}.$$
From the fact that $\on{Av}_{B, *}$ is conservative on $B$-monodromic objects and commutes with $*$-restriction to a single stratum, cf. Proposition \ref{p:avbbmon}, it follows that $\sM$ is $!$-extended from the stratum $BwB$: $$j_{!} j^! \sM \simeq \sM.$$Within the stratum, which is equivalent to $\Dmod(T / T\mon)$, the condition $\on{Av}_{B, *} j^! \sM \simeq \delta_w$ is exactly what characterizes the cofree-monodromic sheaf. The argument for $\hj_{w, *}$ is similar. 

With this, let us show the left identity of \eqref{e:blockofbones}. Denote the convolution product on the equivariant Hecke category by 
$$\Dmod(B_{\chL} \bs \chL / B_{\chL}) \otimes \Dmod(B_{\chL} \bs \chL / B_{\chL}) \rightarrow \Dmod(B_{\chL} \bs \chL / B_{\chL}), \quad \quad \sM_1 \boxtimes \sM_2 \mapsto \sM_1 \overset{B_{\chL}} \star \sM_2,$$and recall that there it is standard that 
$$j_{y, !} \overset{B_{\chL}} \star j_{w, !} \simeq j_{yw, !}.$$By the preceding paragraph, the desired identity for cofree-monodromic sheaves follows by computing 
\begin{align*}\on{Av}_{B_{\chL}, *}( \hj_{y, !} \star \hj_{w, !})  &\simeq (\on{Av}_{B_{\chL}, *} \hj_{y, !}) \star \hj_{w, !}  \\ &\simeq \on{Oblv}(j_{y, !}) \star \hj_{w, !} \\ & \simeq j_{y, !} \overset{B_{\chL}} \star \on{Av}_{B_{\chL}, *}(\hj_{w, !}) \\ & \simeq j_{y, !} \overset{B_{\chL}} \star j_{w, !} \\& \simeq j_{yw, !}. \end{align*}
The other assertions of the lemma may be deduced from their equivariant versions similarly.  
\end{proof}

\subsubsection{} We now have enough ingredients to give the desired explicit description of the monad. As above, we write $\delta_e$ for the constant perverse sheaf on the minimal stratum of $B_{\chL} \backslash \chL / B_{\chL}$, and recall from Section \ref{sss:avfunctors} that $\psi$ is nondegenerate.  We normalize the $t$-structure so that $\delta_e$ is the monoidal unit of $\Dmod(B_{\chL} \backslash \chL / B_{\chL})$.

\begin{prop}\label{p:adjtilt} \label{p:avmonad}Let $\sC$ be a $\Dmod(\chL)$-module. The monad on $\sC^{B_{\chL}\hspace{-.6mm} \on{-mon}}$ corresponding to the adjunction of Lemma \ref{l:avavpsi} is given by left convolution with an algebra object 
\[
   \widehat{\Xi} \in \Dmod(B_{\chL}\hspace{-.6mm} \on{-mon} \backslash \chL / B_{\chL}\hspace{-.6mm} \on{-mon}). 
\]
Moreover, the underlying object of of $\widehat{\Xi}$ is perverse, and the canonical maps
\[
     \delta_e \rightarrow \widehat{\delta_e} \rightarrow \on{Av}_{!*} \circ \on{Av}_{!*}^\psi( \widehat{\delta_e})
\]
exhibit it as the injective envelope of $\delta_e$, with cofree-monodromic standard and costandard filtrations with each $\hj_{w, !}$ and $\hj_{w, *},$ for $w \in W_{\chL}$, occuring once, respectively. \end{prop}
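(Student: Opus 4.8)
\textbf{Proof plan for Proposition \ref{p:avmonad}.} The plan is to reduce the entire statement to the universal case $\sC \simeq \Dmod(\chL)$, then to identify the monad via its value on the monoidal unit, and finally to compute that value explicitly. First I would invoke $\DGCat$-linearity: the monad $\on{Av}_{!*} \circ \on{Av}_{!*}^\psi$ on $\sC^{B_{\chL}\mon}$ is built functorially out of the tautological action of $\Dmod(B_{\chL}\mon \bs \chL / B_{\chL}\mon)$, so it suffices to treat $\sC = \Dmod(\chL)$ where $\sC^{B_{\chL}\mon} = \Dmod(B_{\chL}\mon \bs \chL)$ and its commuting right action of the monoidal category \eqref{e:dogsonaflag}. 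By the general principle (used already in the proof of Proposition \ref{spectral monadic}, and in \ref{sss:intromonopers}) that an $\mathbf{A}$-linear endofunctor of the regular module $\mathbf{A}$ is right convolution with its value on the monoidal unit, the monad is left convolution with the object $\widehat\Xi := \on{Av}_{!*} \circ \on{Av}_{!*}^\psi(\widehat\delta_e)$, and its algebra structure is inherited from the monad structure. The $\Dmod(\chL)$-equivariance needed to make this work follows from the semi-rigidity of \eqref{e:dogsonaflag} (the argument used for $\Omega^R$ in Proposition \ref{p:avbbmon}).

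Next I would establish the geometric properties of $\widehat\Xi$. The functors $\on{Av}_{!*}^\psi$ and $\on{Av}_{!*}$ of \eqref{e:avfunc1} and \eqref{e:avfunc2} are, up to the shift by $\dim N_{\chL}$, compositions of $\on{Oblv}$ with (clean) Whittaker and $N_{\chL}$-averaging functors; by the cleanness cited in the proof of Lemma \ref{l:avavpsi} these are $t$-exact for the perverse $t$-structure on the relevant Whittaker and monodromic categories. Hence $\widehat\Xi$ is perverse. The map $\delta_e \to \widehat\delta_e$ is the canonical one to the injective envelope within the unit stratum $\Dmod(\chT/\chT\mon)$; the map $\widehat\delta_e \to \on{Av}_{!*}\circ\on{Av}_{!*}^\psi(\widehat\delta_e)$ is the unit of the adjunction of Lemma \ref{l:avavpsi}. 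To see $\widehat\Xi$ is the injective envelope of $\delta_e$ in \eqref{e:dogsonaflag}, I would argue: injectivity follows since $\on{Av}_{!*}$ is right adjoint to the $t$-exact conservative $\on{Av}_{!*}^\psi$ (so it preserves injectives), and $\on{Av}_{!*}^\psi$ of any injective hull of a standard Whittaker object is injective; and the socle computation $\Hom(\delta_e, \widehat\Xi) \simeq \Hom(\on{Av}_{!*}^\psi \delta_e, \on{Av}_{!*}^\psi\widehat\delta_e)$ being one-dimensional (using that in the Steinberg--Whittaker category the relevant standard object is irreducible, as in \cite{campbelldhillon}) pins down that the socle is exactly $\delta_e$.

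For the filtration claim, I would pass to the equivariant side. After applying $\on{Av}_{B_{\chL},*}$, which is conservative and $t$-exact on $B_{\chL}$-monodromic objects (Proposition \ref{p:avbbmon}) and which intertwines $\hj_{w,!} \leftrightarrow j_{w,!}$, $\hj_{w,*}\leftrightarrow j_{w,*}$, the object $\on{Av}_{B_{\chL},*}(\widehat\Xi)$ is the big projective-injective tilting object $\Xi$ of the equivariant category $\Dmod(B_{\chL}\bs\chL/B_{\chL})$, whose standard and costandard filtrations are classical (Beilinson--Ginzburg, Soergel; see \cite{BezYun,BezRiche}) with each $j_{w,!}$, resp. $j_{w,*}$, occurring once. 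Lifting the filtration back upstairs uses the characterization of $\hj_{w,!}$ and $\hj_{w,*}$ recorded in the proof of Lemma \ref{l:norsupises} (as the unique monodromic objects whose $B_{\chL}$-average is $j_{w,!}$, resp. $j_{w,*}$), together with exactness of $\on{Av}_{B_{\chL},*}$ to transport a filtration of $\on{Av}_{B_{\chL},*}(\widehat\Xi)$ to one of $\widehat\Xi$: given a subobject of the average corresponding to an initial segment of the filtration, its preimage under the fully faithful, $t$-exact monodromic-to-equivariant comparison is the desired sub.

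\textbf{The main obstacle.} The genuinely delicate point is showing that $\widehat\Xi$ is \emph{injective} in \eqref{e:dogsonaflag} — equivalently, that $\on{Av}_{!*}$ takes the injective hull $\widehat\delta_e$ of the standard Whittaker object to an injective object, and that its socle is simple. The cleanness/$t$-exactness inputs give perversity cheaply, but injectivity requires knowing that the Steinberg--Whittaker category behaves like category $\mathcal{O}$ in the relevant sense (finitely many simples per "block", enough injectives, the standard object $\on{Av}_{!*}^\psi\delta_e$ generating) — this is where I would lean most heavily on \cite{campbelldhillon}, and I expect the bookkeeping of which $\hj_{w,\bullet}$ appear (and that each appears exactly once, not with multiplicity) to be the part demanding the most care, handled via the equivariant comparison as above.
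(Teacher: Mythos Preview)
Your reduction to the universal case and identification of the monad as convolution with $\widehat\Xi = \on{Av}_{!*}\on{Av}_{!*}^\psi(\widehat\delta_e)$ is correct and matches the paper. The injectivity argument is also right: $\on{Av}_{!*}$ has an exact left adjoint, so it preserves injectives. However, your socle computation is incomplete. Knowing $\Hom(\delta_e,\widehat\Xi)\simeq k$ only shows that $\delta_e$ occurs once in the socle; it does not rule out other simples. The paper instead computes $\Hom(j_{w,!},\widehat\Xi)\simeq k$ for \emph{every} $w$ via the same adjunction, and then uses that each $j_{w,!}$ has socle $\delta_e$ to force all other socle multiplicities to vanish.

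The genuine gap is in your filtration argument. The functor $\on{Av}_{B_{\chL},*}$ is \emph{not} $t$-exact: Proposition~\ref{p:avbbmon} says that $\on{Av}_{B_{\chL},*}\otimes\det(\ft^*[-1])$ is a left adjoint to $\on{Oblv}$, so $\on{Av}_{B_{\chL},*}$ has cohomological amplitude $[0,\dim\ft]$, not zero. Nor is there a ``fully faithful, $t$-exact monodromic-to-equivariant comparison'' in the direction you need: $\on{Oblv}$ is fully faithful and $t$-exact but goes from equivariant to monodromic, while $\on{Av}_{B_{\chL},*}$ goes the right way but is neither fully faithful nor exact. So you cannot simply transport a filtration of $\on{Av}_{B_{\chL},*}(\widehat\Xi)$ back to $\widehat\Xi$ by taking preimages of subobjects. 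The characterization of $\hj_{w,!}$ in Lemma~\ref{l:norsupises} is a pointwise statement (it pins down a single object), not a mechanism for lifting filtrations through a non-exact functor.

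The paper avoids this entirely by working on the Whittaker side rather than the equivariant side. It shows that for each $w$ the composite
\[
\Dmod(\chT\mon\bs\chT)\simeq\Dmod(B_{\chL}\mon\bs B_{\chL}wB_{\chL}/B_{\chL}\mon)\xrightarrow{j_*}\Dmod(B_{\chL}\mon\bs\chL/B_{\chL}\mon)\xrightarrow{\on{Av}_{!*}^\psi}\Dmod(\chT\mon\bs\chT)
\]
is the identity (checked on $\widehat\delta_e$, using equivariance). Passing to left adjoints gives that $j_w^*\circ\on{Av}_{!*}$ is the identity, hence the $*$-restriction of $\widehat\Xi$ to each stratum is exactly $\widehat\delta_w$. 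The Cousin filtration of $\widehat\Xi$ then has successive quotients $\hj_{w,!}$ by definition, and the costandard filtration is obtained dually. This argument never leaves the monodromic category and needs no lifting step.
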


\begin{proof} For the first assertion, recall from the proof of Lemma \ref{l:avavpsi} that the adjunction in question arises from a $\Dmod(\chL)$-equivariant adjunction 
\[
        \Dmod(\chL/B_{\chL}\hspace{-.6mm} \on{-mon}) \rightleftarrows \Dmod(\chL/N^-_{\chL}, \psi, \St).
\]
In particular, the monad is an algebra object $\widehat{\Xi}$ in
\[
    {\mathbf{Fun}}_{\Dmod(\chL)\mod}( \Dmod(\chL/B_{\chL}\hspace{-.6mm} \on{-mon}), \Dmod(\chL/B_{\chL}\hspace{-.6mm} \on{-mon})) \simeq \Dmod(B_{\chL}\hspace{-.6mm} \on{-mon}\backslash \chL / B_{\chL}\hspace{-.6mm} \on{-mon}), 
\]
which establishes the first assertions of the Proposition. 

   For the remaining assertions, note that $\widehat{\delta}_e$ is the monoidal identity of \eqref{e:dogsonaflag}, and hence $\widehat{\Xi}$ is given by   
   \[
      \widehat{\Xi} \simeq \on{Av}_{!*} \circ \on{Av}_{!*}^\psi (\widehat{\delta}_e). 
   \]
  To calculate it, write $j_{w, !}$ for the $!$-extension of $\delta_w$, cf. the discussion of Section \ref{s:defmoncreatures}. Recalling the standard equivalence 
  \begin{equation} \label{e:monwhitcat}
        \Dmod(B_{\chL} \hspace{-.6mm} \on{-mon} \backslash \chL / N^-_{\chL}, \psi) \simeq \Dmod(\chT/\chT\hspace{-.6mm} \on{-mon}),
  \end{equation}
  it is straightforward to see that $j_{w, !}, j_{w, *}$ and $\widehat{\delta}_e$ are sent by $\on{Av}_{!*}^\psi$ to simple, simple,  and indecomposable injective objects in the heart of \eqref{e:monwhitcat}, respectively. In particular, we have 
  \begin{equation} \label{e:reverbanimal}
       \on{Hom}( j_{w, !}, \widehat{\Xi}) \simeq \on{Hom}(\on{Av}_{!*}^\psi j_{w, !}, \on{Av}_{!*}^\psi \widehat{\delta}_e) \simeq k. 
  \end{equation}
  As $\widehat{\Xi}$ is obtained from an injective object by a functor with an exact left adjoint, it is injective. Recalling that each $j_{w, !}$ has socle $\delta_e$ and head the simple object with support the closure of $BwB$, it follows from \eqref{e:reverbanimal} that $\widehat{\Xi}$ is the injective envelope of $\delta_e$.

 It remains to describe the cofree-monodromic standard and costandard filtrations on $\widehat{\Xi}$. We will show this for the standard filtration, and the argument for the costandard filtration is similar. 
 
 For the existence of the desired cofree-monodromic standard filtration, we first claim that for any $w \in W_{\chL}$ that the  $\Dmod(\chT\mon\!\bs\chT) \simeq \Dmod(\BL\mon\!\bs \BL / \BL\mon)$-equivariant functor
\begin{align}\label{e:howlongwillyou}\Dmod(\chT\mon\!\bs \chT) &\simeq \Dmod(B_{\chL}\mon\!\bs \BL w \BL / \BL\mon) \xrightarrow{ j_{*}} \Dmod(\BL\mon\!\bs \chL / \BL\mon) \\ &\xrightarrow{\on{Av}_{!*}^\psi} \Dmod(\BL\mon\!\bs \chL / N^-_{\chL}, \psi)  \simeq \Dmod(\chT\mon\!\bs \chT).\end{align}
is equivalent to the identity. Indeed, by equivariance, it is enough to see it exchanges $\widehat{\delta}_e$ with itself. By the beginning of the proof of Lemma \ref{l:norsupises}, it is further enough to see it exchanges $\delta_e$ with itself. However, the latter is exactly the assertion that $\on{Av}_{!*}^\psi(j_{w, *})$ is simple, which was noted above. By passing to left adjoints in \eqref{e:howlongwillyou}, one obtains the Cousin filtration of $\widehat{\Xi}$ associated to $*$-restriction to strata has successive quotients $\hj_{w, !}$, for $w \in W_{\chL}$.   \end{proof}

%\begin{rmk} One can also show that $\widehat{\Xi}$ has a standard filtration, with each $\hj_{w, !}$, for $w \in W$, occuring once. 
%\end{rmk}

\subsubsection{}\label{def whittaker} To match the arguments on the spectral side, we now set up a compatibility between these Whittaker averaging functors as one varies the parabolic.

Recall that $\chG$ comes equipped with a pinning $(\chB, \chH, \{ e_i \}_{i \in I} )$, where $I$ indexes the simple roots, and each $e_i$ is a generator for the corresponding simple root subspace  $\check{\mathfrak{\fg}}_{\halpha_i}$. For each $i$, let us write $f_i$ for the corresponding element of $\check{\mathfrak{\fg}}_{-\halpha_i},$ characterized by the property that $$f_i, \alpha_i, e_i $$form an $\fsl_2$-triple.

For a  standard parabolic $\chP$ of $\chG$, with corresponding subset $J \subset I$ of Dynkin nodes, we may use the pinning to fix a choice of associated Whittaker category, as follows. Write $w_{\circ}^{\chP}$ for the longest element of $W_{\chP}$, and consider the associated Borel subgroup $\on{Ad}_{w_{\circ}^{\chP}} \chB$. Denote its unipotent radical by $\PN$. This carries a unique additive character \begin{equation} \label{e:defpsi}\psi: \PN \rightarrow \mathbb{G}_a\end{equation}which is trivial on $\PN \cap \chN$ and whose differential sends $f_j$ to 1, for all $j \in J$. 

\label{sss:normchars}

\subsubsection{} \label{sss:normtilts}
Recall the notations $F = k(\!(t)\!)$ and $O = k[\![t]\!]$,  the associated notations for loop and arc groups, and that $\chI \subset \chG_O$ denotes the standard Iwahori subgroup attached to our pinning. 

Let us write $_{\chP}\mathring{I}$ for the preimage of $\PN$ under the evaluation map 
\begin{equation}\chG_O \rightarrow \chG,\label{e:K1}\end{equation}
and denote its induced character still by $\psi$. Let us denote the kernel of \eqref{e:K1} by $K$, i.e., the first congruence subgroup.

\begin{defn}\label{define steinberg whit} For a category $\sC$ acted on by $\Dmod(\chG_O)$, its \emph{Steinberg--Whittaker invariants} with respect to $_{\chP}\mathring{I}$ are given by 
$$\sC^{_{\chP}\mathring{I}, \psi, \St} := (\sC^{K})^{_{\chP}\check{N}, \psi, \St},$$where we use the natural action of $\Dmod(\chG)$ on $K$-invariants, and the normalized character of $_{\chP}\check{N}$ from \eqref{e:defpsi}. 
\end{defn}

\subsubsection{}

Before explaining the compatibility between the categories of Steinberg--Whittaker invariants as we vary the parabolic, let us spell out what form the monads we are interested in take. Namely, by applying the preceding Proposition \ref{p:avmonad} to $\chL$ the Levi factor of $\chP$, and denoting by $\widehat{\Xi}_{\chP}$ the corresponding algebra object of $$\Dmod(\chI\mon\!\bs \chG_F / \chI\mon),$$
we obtain the following.

\begin{cor}\label{cor banana peel} The adjunction of averaging functors, cf. Section \ref{s:defavfuncs},
\[  \on{Av}_{!*}^{\psi}:  \Dmod( \chI\mon\!\bs \check{G}_F / \chI\mon) \rightleftarrows  \Dmod( _{\chP}\mathring{I}, \psi \backslash \check{G}_F / \chI\mon): \on{Av}_{!*} 
\]
exhibits the right-hand category as modules for the algebra object $\widehat{\Xi}_P$, acting by left convolution, in the left-hand category. 
\end{cor}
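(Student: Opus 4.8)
The strategy is to reduce Corollary \ref{cor banana peel} to Proposition \ref{p:avmonad} applied to the reductive group $\chL$, the Levi quotient of $\chP$, by relating the affine Steinberg--Whittaker averaging adjunction to its finite-dimensional counterpart through a ``parahoric induction'' step. First I would unwind the definitions: by Definition \ref{define steinberg whit}, $\Dmod({}_{\chP}\mathring I, \psi \bs \chG_F / \chI\mon)$ is $(\Dmod(\chG_F/\chI\mon)^K)^{\PN, \psi, \St}$, where $K$ is the first congruence subgroup, and the residual $\Dmod(\chL)$-action on $K$-invariants is what enters the Steinberg--Whittaker construction of Section \ref{s:defst}. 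Likewise $\Dmod(\chI\mon\bs\chG_F/\chI\mon) = (\Dmod(\chG_F/\chI\mon)^K)^{\BL\mon}$ since $\chI/K \simeq \chB \subset \chL$ and monodromic invariance for $\chI$ is monodromic invariance for $\BL$ on the $K$-invariant category. So, setting $\sC := \Dmod(\chG_F/\chI\mon)^K$ as a $\Dmod(\chL)$-module, the adjunction in question is precisely the adjunction $(\on{Av}_{!*}^\psi, \on{Av}_{!*})$ of Lemma \ref{l:avavpsi} for the group $\chL$, applied to this particular $\sC$.

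Next I would invoke Proposition \ref{p:avmonad} directly: it asserts that for \emph{any} $\Dmod(\chL)$-module, the monad $\on{Av}_{!*}\circ\on{Av}_{!*}^\psi$ on $(-)^{\BL\mon}$ is given by left convolution with the algebra object $\widehat\Xi \in \Dmod(\BL\mon\bs\chL/\BL\mon)$ which is the injective envelope of $\delta_e$ with the stated cofree-monodromic standard and costandard filtrations. The one point requiring care is the identification of the ambient monoidal category: I need that $\Dmod(\BL\mon\bs\chL/\BL\mon)$, acting on $\sC^{\BL\mon} = \Dmod(\chI\mon\bs\chG_F/\chI\mon)$ via the residual $\Dmod(\chL)$-action, is the same as the monoidal action of $\Dmod(\chI\mon\bs\chG_F/\chI\mon)$ on itself by left convolution — i.e., that the algebra object $\widehat\Xi$ pushed forward along the parahoric inflation $\Dmod(\BL\mon\bs\chL/\BL\mon) \to \Dmod(\chI\mon\bs\chG_F/\chI\mon)$ (inflation along $\chI \twoheadrightarrow \chL$, which is well-defined because $K$ is prounipotent so $\Dmod(-)^K$-equivariance is automatic) gives the algebra object $\widehat\Xi_{\chP}$ named in the statement. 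This is essentially the definition of $\widehat\Xi_\chP$ as flagged in the sentence preceding the corollary, together with the fact that inflation along a prounipotent gerbe is monoidal and fully faithful, so it carries the algebra structure across and preserves the monad identity. Barr--Beck--Lurie then upgrades the conservativity of $\on{Av}_{!*}$ (Lemma \ref{l:avavpsi}) plus the monad identification into the asserted equivalence of module categories.

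The main obstacle is the bookkeeping in the previous paragraph: making precise that taking Steinberg--Whittaker invariants on $\chG_F/\chI\mon$ relative to ${}_{\chP}\mathring I$ genuinely factors as (first) taking $K$-invariants and (then) applying the finite-dimensional $\chL$-constructions of Sections \ref{s:defst}--\ref{s:defavfuncs}, compatibly with \emph{all} the averaging functors and their adjunctions, and that the $B_{\chL}\text{-gen}$ condition used in the definition of Steinberg--Whittaker invariants matches up on the two sides. Concretely one must check that $\on{Av}_*^{\PN,\psi}$ on $\sC^K$ agrees with the loop-group $\on{Av}_*^{{}_{\chP}\mathring I,\psi}$ after $K$-inflation — this is where the character $\psi$ from \eqref{e:defpsi}, which is trivial on $\PN\cap\chN$ and hence factors through the finite quotient, does its work — and that the cleanness statements underlying Lemma \ref{l:avavpsi} (which were verified there via the universal case $\sC \simeq \Dmod(\BL\bs\chL)$) transport correctly. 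Once this compatibility is in hand, Corollary \ref{cor banana peel} is a formal consequence; no new geometric input beyond Proposition \ref{p:avmonad} is needed.
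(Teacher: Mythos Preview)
Your proposal is correct and takes essentially the same approach as the paper: the paper states the corollary as an immediate consequence of applying Proposition \ref{p:avmonad} to $\chL$ the Levi factor of $\chP$, with $\widehat{\Xi}_{\chP}$ defined as the inflation of the resulting algebra object to $\Dmod(\chI\mon\!\bs \chG_F / \chI\mon)$. Your write-up simply unpacks the bookkeeping (the factorization through $K$-invariants via Definition \ref{define steinberg whit}, the identification of the $\Dmod(\chL)$-module structure on $\sC = \Dmod(\chG_F/\chI\mon)^K$, and the monoidality of inflation along the prounipotent quotient) that the paper leaves implicit in the single sentence preceding the corollary.
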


\subsubsection{} As we vary the choice of standard parabolic $\chP$, the normalized characters introduced in Section \ref{sss:normchars} satisfy the following basic compatibility. 

For any inclusion $\chP \subset \chQ$ of standard parabolics, and any $\Dmod(\chG)$-module $\sC$, consider the associated averaging functor $$\sC^{\PN, \psi} \xrightarrow{\Oblv} \sC \xrightarrow{\on{Av}_{*}^{\QN, \psi}[\dim \QN/(\QN \cap \PN)]} \sC^{\QN, \psi}.$$which we denote below by $\on{Av}_{!*}^{\chQ, \chP, \psi}$. The desired compatibility is as follows. 

\begin{lemma} \label{l:steps}Given a triple $\chR \subset \chP \subset \chQ$, there is a canonical natural equivalence $$\on{Av}_{!*}^{\chQ, \chR, \psi} \simeq \on{Av}_{!*}^{\chQ, \chP, \psi} \circ \on{Av}_{!*}^{\chP, \chR, \psi}. $$  
\end{lemma}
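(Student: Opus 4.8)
The plan is to reduce the statement to a transitivity property of $*$-averaging functors between Whittaker categories, and to track the cohomological shifts carefully so that the normalized shifts $[\dim \QN/(\QN \cap \PN)]$ etc.\ are additive along the chain $\chR \subset \chP \subset \chQ$. First I would unwind the definition of $\on{Av}_{!*}^{\chQ,\chP,\psi}$: it is the composite $\sC^{\PN,\psi} \xrightarrow{\Oblv} \sC \xrightarrow{\on{Av}_*^{\QN,\psi}[\dim \QN/(\QN\cap\PN)]} \sC^{\QN,\psi}$, where the middle averaging is the right adjoint to $\Oblv: \sC^{\QN,\psi}\to\sC$. By the usual formalism of invariants for $\Dmod$-modules (equivalently, descent along $B\QN \to B\PN$), the composite $\Oblv: \sC^{\QN,\psi}\to\sC^{\PN,\psi}\to\sC$ has right adjoint computed as $\on{Av}_*^{\QN,\psi}$ restricted through $\sC^{\PN,\psi}$; the content is that the forgetful functors compose, i.e.\ $\Oblv^{\PN}_{\emptyset}\circ\Oblv^{\QN}_{\PN}\simeq\Oblv^{\QN}_{\emptyset}$, which is immediate from the definition of invariants as a limit over the Cech nerve. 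Passing to right adjoints then yields the unshifted equivalence $\on{Av}_*^{\QN,\psi}\simeq \on{Av}_*^{\QN,\psi}|_{\sC^{\PN,\psi}}\circ \on{Av}_*^{\PN,\psi}$ on the relevant subcategories, where on the left $\on{Av}_*^{\QN,\psi}$ is right adjoint to $\Oblv:\sC^{\QN,\psi}\to\sC$ and on the right the first factor is right adjoint to $\Oblv:\sC^{\QN,\psi}\to\sC^{\PN,\psi}$.

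Second I would deal with the shifts. Since $\PN \cap \QN = \PN$ inside $\QN$ when $\chP\subset\chQ$ (more precisely $\PN\supset\QN$ as subgroups of $\chG$, so the relevant quotient is $\PN/\QN$ and one should write the normalization as a shift by $\dim(\PN/\QN)$, or equivalently the codimension of $\QN$ in $\PN$), the key numerical identity is
\begin{equation}
\dim(\PN/\QN)=\dim(\RN/\QN)-\dim(\RN/\PN)\qu\text{i.e.}\quad \dim(\RN/\QN)=\dim(\RN/\PN)+\dim(\PN/\QN),
\end{equation}
which holds because $\QN\subset\PN\subset\RN$ is a chain of closed subgroups. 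Wait --- I need to be careful about the direction of inclusions of unipotent radicals versus parabolics: for $\chR\subset\chP\subset\chQ$ parabolics one has $\QN\subset\PN\subset\RN$, and the shift attached to $\on{Av}_{!*}^{\chQ,\chP,\psi}$ in the statement is $[\dim\QN/(\QN\cap\PN)]$; since $\QN\subset\PN$ this is $[\dim\QN/\QN]=[0]$, so in fact the shifts are trivial in this orientation and there is nothing to check numerically. (If instead the paper's convention has the inclusions the other way, the additivity of dimensions along the chain is the two-line computation above.) Either way, the shifts are additive, so no correction term appears.

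Third, I would combine: the unshifted transitivity of $\on{Av}_*$ together with the (additive or vanishing) shifts gives $\on{Av}_{!*}^{\chQ,\chR,\psi}\simeq\on{Av}_{!*}^{\chQ,\chP,\psi}\circ\on{Av}_{!*}^{\chP,\chR,\psi}$ as functors $\sC^{\RN,\psi}\to\sC^{\QN,\psi}$, and one checks that these equivalences are compatible with the $\Oblv$ functors down to $\sC$, hence are the intended ones. Finally I would restrict everything to the Steinberg subcategories: since $\on{Av}_{!*}^{\chQ,\chP,\psi}$ is by construction obtained by applying the above to $\sC^{B_{\chL}\text{-gen}}$ (equivalently, it preserves the Steinberg condition because it is a $\Dmod(\chG)$-equivariant composite of a forgetful functor and an averaging functor, and both preserve the $\Dmod(\chG)$-submodule generated by Borel-equivariant objects), the equivalence descends to the Steinberg--Whittaker categories. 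I expect the main obstacle to be purely bookkeeping: getting the orientation of the parabolic/unipotent-radical inclusions and the sign of the normalizing shifts consistent with the paper's Section \ref{sss:normchars} conventions, so that the naturality square of the two composites is checked on the nose rather than up to an unwanted shift; the homotopical content (transitivity of right adjoints to composable forgetful functors) is formal.
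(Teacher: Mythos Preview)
Your proposal rests on a misreading of the groups $\PN$. In the paper's conventions (Section~\ref{sss:normchars}), $\PN$ is \emph{not} the unipotent radical of the parabolic $\chP$; it is the unipotent radical of the Borel $\on{Ad}_{w_\circ^{\chP}}\chB$. Thus all the $\PN$ are maximal unipotent subgroups of $\chG$ of the same dimension $\dim\chN$, and for $\chR\subset\chP\subset\chQ$ there is \emph{no} inclusion among $\RN,\PN,\QN$. In particular your computation that the shift $[\dim\QN/(\QN\cap\PN)]$ vanishes is wrong: $\QN\cap\PN$ is a proper subgroup of $\QN$ whenever $\chP\neq\chQ$. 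More fundamentally, your structural approach collapses: there is no chain of forgetful functors $\sC^{\QN,\psi}\to\sC^{\PN,\psi}\to\sC^{\RN,\psi}$ to which one could apply ``transitivity of right adjoints,'' because the equivariance conditions are for three different, non-nested groups with three different characters. The functor $\on{Av}_{!*}^{\chQ,\chP,\psi}$ genuinely forgets $(\PN,\psi)$-equivariance all the way down to $\sC$ and then averages up to $(\QN,\psi)$; it does not factor through any intermediate invariants.

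The paper's proof is accordingly quite different and not formal. It identifies each $\on{Av}_{!*}^{\chQ,\chR,\psi}$ with convolution by an explicit kernel sheaf $\mathscr{K}_{\chQ,\chR}$, which after pulling back along an isomorphism $\iota$ becomes the character sheaf $\exp(\psi)^{\chQ,\chR}$ on the subgroup ${}_{\chQ,\chR}\chN\subset\QN$ spanned by the root spaces of $\QN$ not in $\RN$. The lemma then reduces to two concrete facts: the multiplication map ${}_{\chQ,\chP}\chN\times{}_{\chP,\chR}\chN\to{}_{\chQ,\chR}\chN$ is an isomorphism of varieties, and under it the character sheaves multiply, $\mu^!\exp(\psi)^{\chQ,\chR}\simeq\exp(\psi)^{\chQ,\chP}\boxtimes\exp(\psi)^{\chP,\chR}$. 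The additivity of the shifts then falls out of the dimension count $\dim{}_{\chQ,\chR}\chN=\dim{}_{\chQ,\chP}\chN+\dim{}_{\chP,\chR}\chN$. None of this is visible from the adjoint-functor formalism you invoke.
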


\begin{proof} Note that $\on{Av}_{!*}^{\chQ, \chR}$ is given by convolution with a kernel sheaf $$\mathscr{K}_{\chQ, \chR} \in \Dmod( \QN/ (\QN \cap \RN, \psi)).$$If we write  ${}_{\chQ, \chR}\chN $ for the subgroup of $\chQ$ whose Lie algebra is the sum of the $\chH$-weight spaces of $\on{Lie}(\QN)$ not contained in $\on{Lie}(\RN)$, it follows that the composition $${}_{\chQ, \chR}\chN \rightarrow \QN \rightarrow \QN/(\QN \cap \RN)$$is an isomorphism, which we denote by $\iota$. Moreover, by the definitions and the unipotence of the appearing groups, it is straightforward to see that $\iota^!\mathscr{K}_{\chQ, \chR}$ canonically identifies with $$\exp(\psi)^{\chQ, \chR} := \psi^!(\exp(z))[ -\dim {}_{\chQ, \chR}\chN],$$where $\psi: {}_{\chQ, \chR}\chN \rightarrow \mathbb{G}_a$ is defined as in \eqref{e:defpsi}, and $\exp(z)$ denotes the exponential character D-module on $\mathbb{G}_a$ (which lies in cohomological degree minus one). 

With this, the lemma follows by noting that the multiplication map $$\mu: {}_{\chR, \chQ}\chN \times {}_{\chQ, \chP}\chN \rightarrow  {}_{\chR, \chP}\chN$$is an isomorphism of varieties, and that one has a canonical isomorphism $$\mu^! \exp(\psi)^{\chR, \chP} \simeq \exp(\psi)^{\chR, \chQ} \boxtimes \exp(\psi)^{\chQ, \chP},$$which follows from the multiplicativity isomorphisms of a character sheaf and the definition of the characters as in \eqref{e:defpsi}. \end{proof}

\subsubsection{}  As a special case of the setup of the preceding lemma, let us take $\chR = \chB$, and consider the adjunctions $$\sC^{\chB\mon} \rightleftarrows \sC^{\PN, \psi, \St} \rightleftarrows \sC^{\QN, \psi, \St}.$$  
By considering their composition, Lemma \ref{l:steps} yields a morphism of monads $$\on{Av}_{!*}^{\chB,\chP}\circ \on{Av}_{!*}^{\chP, \chB, \psi} \rightarrow \on{Av}_{!*}^{\chB,\chQ}\circ \on{Av}_{!*}^{\chQ, \chB, \psi}. $$
By Proposition \ref{p:adjtilt}, this is given by a morphism of algebra objects of $\Dmod(\chB\mon\!\bs \chG / \chB\mon)$
\begin{equation}  \label{e:maptilts} \widehat{\Xi}_{\chP} \rightarrow \widehat{\Xi}_{\chQ}.
\end{equation}

\begin{prop} \label{autom algebra}The morphism \eqref{e:maptilts} is an injection in $\Dmod(\chB\mon\!\bs \chG / \chB\mon)^\heartsuit$, and exhibits $\widehat{\Xi}_{\chP}$ as the maximal subobject of $\widehat{\Xi}_{\chQ}$ supported on $\chP \subset \chG$. 

\end{prop}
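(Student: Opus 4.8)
The plan is to compute both $\widehat{\Xi}_{\chP}$ and $\widehat{\Xi}_{\chQ}$ as explicitly as Proposition~\ref{p:avmonad} allows, and to identify the map \eqref{e:maptilts} in terms of the induced maps on Cousin/cofree-monodromic standard filtrations. Recall from Proposition~\ref{p:avmonad} that $\widehat{\Xi}_{\chP}$ is a perverse object of $\Dmod(\chB\mon\!\bs\chG/\chB\mon)$ with a cofree-monodromic standard filtration in which each $\hj_{w,!}$, $w \in W_{\chL_{\chP}} = W_{\chP}$, occurs exactly once (and similarly a costandard filtration), and likewise $\widehat{\Xi}_{\chQ}$ has standard subquotients $\hj_{w,!}$ for $w \in W_{\chQ}$, with $W_{\chP} \subset W_{\chQ}$. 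Since the objects are perverse, to show \eqref{e:maptilts} is injective it suffices to show it is injective on the heart, i.e. that its kernel vanishes; and to identify $\widehat{\Xi}_{\chP}$ as the maximal subobject supported on $\chP$, it then suffices to check (i) $\widehat{\Xi}_{\chP}$ is supported on $\chP$, which is immediate from its standard filtration being indexed by $W_{\chP}$, and (ii) the quotient $\widehat{\Xi}_{\chQ}/\widehat{\Xi}_{\chP}$ has no subobject supported on $\chP$, equivalently its $*$-restriction to the open substack complementary to $\overline{\chB W_{\chP}\chB}$ inside $\overline{\chB W_{\chQ}\chB}$ is "costandardly filtered" with no costandard piece $\hj_{w,*}$ for $w \in W_{\chP}$ appearing — but in fact the cleanest route is to show the cokernel is itself of the form predicted by a standard filtration indexed exactly by $W_{\chQ} \setminus W_{\chP}$, from which both claims follow.

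First I would record, exactly as in the proof of Proposition~\ref{p:avmonad}, that the algebra map \eqref{e:maptilts} arises by applying $\Oblv \circ \on{Av}_{!*}^{\psi}$-type functors to the monoidal unit $\widehat{\delta}_e$: concretely, $\widehat{\Xi}_{\chP} \simeq \on{Av}_{!*}^{\chB,\chP} \on{Av}_{!*}^{\chP,\chB,\psi}(\widehat\delta_e)$ and $\widehat{\Xi}_{\chQ} \simeq \on{Av}_{!*}^{\chB,\chQ} \on{Av}_{!*}^{\chQ,\chB,\psi}(\widehat\delta_e)$, and by Lemma~\ref{l:steps} the map \eqref{e:maptilts} is the natural transformation induced by factoring the larger averaging through the smaller one. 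Next I would analyze the map after $*$-restriction to each Bruhat stratum $\chB w \chB$ for $w \in W_{\chQ}$. Using the standard computation that $\on{Av}_{!*}^{\psi}$ (for the Levi $\chL_{\chP}$, resp. $\chL_{\chQ}$) sends $j_{w,!}$ to a simple object in the Whittaker heart for $w$ in the corresponding Weyl group and kills nothing on strata indexed by that Weyl group — this is the content of \eqref{e:monwhitcat} and \eqref{e:reverbanimal} applied to $\chL_{\chP}$ and to $\chL_{\chQ}$ — one sees: for $w \in W_{\chP}$ the map \eqref{e:maptilts} is an isomorphism on the associated graded piece $\hj_{w,!}$, while for $w \in W_{\chQ}\setminus W_{\chP}$ the source $\widehat{\Xi}_{\chP}$ has zero $*$-restriction (it is not supported there) so the map is just the inclusion of $0$ into the $\hj_{w,!}$-piece of $\widehat{\Xi}_{\chQ}$. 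Hence on associated gradeds for the $*$-restriction filtration the map is a (split) monomorphism, so the map of filtered perverse objects is a monomorphism, giving injectivity; and the cokernel has a cofree-monodromic standard filtration with pieces exactly $\{\hj_{w,!} : w \in W_{\chQ}\setminus W_{\chP}\}$.

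It then remains to deduce the maximality. Supported-on-$\chP$ means set-theoretic support on $\overline{\chB W_{\chP}\chB}$; any subobject $\sM \subset \widehat{\Xi}_{\chQ}$ supported there maps to zero in the cokernel $\widehat{\Xi}_{\chQ}/\widehat{\Xi}_{\chP}$, because that cokernel, having a standard filtration with all pieces $\hj_{w,!}$, $w\notin W_{\chP}$, contains no nonzero subobject supported on $\overline{\chB W_{\chP}\chB}$ — indeed each $\hj_{w,!}$ for $w \notin W_{\chP}$ is $!$-extended from the stratum $\chB w\chB$ and any subobject supported on a smaller closed set must be supported on strata $< w$; running through the filtration from the bottom shows that a subobject of the cokernel supported on $\overline{\chB W_{\chP}\chB}$ has vanishing $*$-restriction to every stratum, hence is zero. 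Therefore $\sM \subset \widehat{\Xi}_{\chP}$, which is the claimed maximality.

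The main obstacle I anticipate is not any one of the formal steps but rather pinning down precisely the behavior of $\on{Av}_{!*}^{\chQ,\chB,\psi}$ on $!$- and $*$-extensions indexed by the \emph{larger} Weyl group $W_{\chQ}$ — in particular verifying that it annihilates $j_{w,!}$ (and $j_{w,*}$) for $w \notin W_{\chQ}$ is false in general, so one must be careful that the standard-filtration pieces of $\widehat{\Xi}_{\chQ}$ really are indexed by $W_{\chQ}$ and not something larger, and that the comparison map respects these filtrations compatibly; this is exactly where Lemma~\ref{l:steps} is essential, since it guarantees the two averagings are compatible at the level of kernels, but checking that the induced map is filtered (rather than merely a map of objects) requires unwinding the Cousin filtration argument from the end of the proof of Proposition~\ref{p:avmonad} in the relative setting $\chP \subset \chQ$. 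I expect this to be a short but somewhat delicate bookkeeping argument, parallel to arguments in \cite{BezYun} and \cite{BezRiche}.
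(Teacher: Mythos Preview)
Your strategy is close in spirit to the paper's: both reduce to understanding the map stratum by stratum, and by adjunction $\Hom(\hj_{w,!},-)\simeq\Hom(\widehat\delta_w,j_w^*(-))$, so your ``iso on $*$-restrictions for $w\in W_{\chP}$'' is exactly the paper's claim that $\Hom(\hj_{w,!},\widehat\Xi_{\chP})\to\Hom(\hj_{w,!},\widehat\Xi_{\chQ})$ is an isomorphism. However, you assert this without proof: knowing that $\on{Av}_{!*}^{\chP,\chB,\psi}(\hj_{w,!})$ and $\on{Av}_{!*}^{\chQ,\chB,\psi}(\hj_{w,!})$ are each isomorphic to the indecomposable Whittaker object does not tell you the \emph{map} between them is an isomorphism rather than, say, multiplication by a noninvertible monodromy parameter. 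The paper's argument is that after passing to the Whittaker side both objects identify with $\on{Av}_{!*}^\psi(\hj_{e,!})$, and the further-averaging map on $\End(\on{Av}_{!*}^\psi(\hj_{e,!}))\simeq\Sym(\check{\mathfrak h})^\wedge_0$ is the identity because these endomorphisms are generated by the \emph{left} monodromy action, which is untouched by right averaging.

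Your maximality argument has a genuine gap. You claim a subobject $N\subset C$ supported on $\chP$ has vanishing $*$-restriction to every stratum by ``running through the filtration from the bottom,'' using that each piece $\hj_{w,!}$ (with $w\notin W_{\chP}$) is $!$-extended from its stratum. But $j_w^*$ is only right $t$-exact, so $N\hookrightarrow C$ in the heart does not give ${}^pH^0(j_w^*N)\hookrightarrow{}^pH^0(j_w^*C)$; equivalently, although $i_{\chP}^*\hj_{w,!}=0$, a $!$-extension from an open can perfectly well have nonzero subobjects supported on the closed complement (this already happens for the equivariant standard $j_{s,!}$ on $SL_2/B$). So the filtration argument does not force $N=0$. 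The correct deduction, implicit in the paper's ``it is enough,'' uses the injective-envelope structure: once \eqref{e:maptilts} is injective, $\widehat\Xi_{\chP}$ is injective in perverse sheaves on $\chP$ (Proposition~\ref{p:avmonad}), so the inclusion into the maximal $\chP$-supported subobject $M\subset\widehat\Xi_{\chQ}$ splits, and since $\widehat\Xi_{\chQ}$ has simple socle $\delta_e\subset\widehat\Xi_{\chP}$ the complement must vanish. Equivalently, the $\Hom$-isomorphisms of step~1 force $\Hom(\hj_{w,!},C)=0$ for all $w\in W_{\chP}$, which directly rules out any nonzero $\chP$-supported subobject of $C$ by taking $w$ open in its support.
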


\begin{proof}For any $w \in W_P$, let us denote by $\hj_{w, !}$ the corresponding cofree-monodromic standard object. It is enough to show that the induced maps \begin{equation} \label{e:mapinq}
\Hom_{\Dmod(\chB\mon\!\bs \chG / \chB\mon)}(\hj_{w, !}, \widehat{\Xi}_{\chP}) \rightarrow \Hom_{\Dmod(\chB\mon\!\bs \chG / \chB\mon)}(\hj_{w, !}, \widehat{\Xi}_{\chQ})
\end{equation}
are isomorphisms. However, by definition these identify with the  map \begin{align*}\Hom_{\Dmod(\chB\mon\!\bs \chG / \PN, \psi)}(\on{Av}_{!*}^{\chP, \chB, \psi}(\hj_{w, !}), \on{Av}_{!*}^{\chP, \chB, \psi}(\hj_{e, !})) \\ 
\xrightarrow{\on{Av}_{!*}^{\chQ, \chP, \psi}}\hspace{4ex} & \hspace{-4ex}\Hom_{\Dmod(\chB\mon\!\bs \chG / \QN, \psi)}(\on{Av}_{!*}^{\chQ, \chB, \psi}(\hj_{w, !}), \on{Av}_{!*}^{\chQ, \chB, \psi}(\hj_{e, !})).\end{align*}
But it is standard that one has isomorphisms 
$$\on{Av}_{!*}^{\chP, \chB, \psi}(\hj_{w, !}) \simeq  \on{Av}_{!*}^{\chP, \chB, \psi}(\hj_{e, !})\quad \text{and} \quad \on{Av}_{!*}^{\chQ, \chB, \psi}(\hj_{w, !}) \simeq \on{Av}_{!*}^{\chQ, \chB, \psi}(\hj_{e, !})$$
as both sides of each isomorphism are sent to the cofree-monodromic standard object with minimal support in $\Dmod(\chB\mon\!\bs \chG / \PN, \psi)$ and $\Dmod(\chB\mon\!\bs \chG / \QN, \psi)$, respectively. With respect to these isomorphisms, the map \eqref{e:mapinq} in question reduces to $$\End_{\Dmod(\chB\mon\!\bs \chG / \PN, \psi)}( \on{Av}_{!*}^{\chP, \chB, \psi}(\hj_{e, !})) \simeq \Sym(\fchh)^{\wedge}_0  \xrightarrow{\on{id}} \Sym(\fchh)^{\wedge}_0 \simeq \End_{\Dmod(\chB\mon\!\bs \chG / \QN, \psi)}(\on{Av}_{!*}^{\chQ, \chB, \psi}(\hj_{e, !}))$$
where we have used the fact that both endomorphism algebras are generated freely by the monodromy operators arising from the left action of $\chH$.  
\end{proof}

\subsubsection{} Fix a pair of standard parabolics 
$$\chP_i \subset \chG, \quad 1 \leqslant i \leqslant 2.$$
We will be interested in the category of D-modules 
\[
\Dmod( _{\chP_1}\mathring{I}, \psi_1, \St \backslash \chG_F /  _{\chP_2}\mathring{I}, \psi_2, \St).
\]
Write $W_{a}$ for the extended affine Weyl group of $\chG$, and $W_{i}$ for its parabolic subgroups associated to $I_{\chP_i}$, $1 \leqslant i \leqslant 2$. 
The Bruhat decomposition 
gives a stratification of $\chG_F$ with strata 
\[
I_{\chP_1}wI_{\chP_2} , \quad \quad \text{for } w \in W_{1} \backslash W_a / W_{2}. 
\]
We now inspect the abelian category of Steinberg--Whittaker sheaves associated 
with a single stratum.

\begin{prop}\label{p:1stratum} For each stratum, the abelian category %
	\[
	\Dmod( _{\chP_1}\mathring{I}, \psi_1, \St \backslash I_{\chP_1}wI_{\chP_2} /  _{\chP_2}\mathring{I}, \psi_2, \St)^\heartsuit
	\]
	contains a unique up to isomorphism simple object $L_w$ and indecomposable 
	injective object $I_w$. 
\end{prop}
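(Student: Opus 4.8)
The strategy is to reduce the statement about a single stratum $I_{\chP_1} w I_{\chP_2}$ to a statement about a single stratum on a finite-dimensional flag variety for a Levi, where Proposition \ref{p:1stratum} becomes an instance of the discussion in Section \ref{s:defmoncreatures}, specifically the behavior of $\on{Av}_{!*}^\psi$ on the standard stratum category in \eqref{e:monwhitcat}. First I would describe the stratum geometrically: the orbit $I_{\chP_1} w I_{\chP_2}$ is a homogeneous space, and after quotienting by the pro-unipotent congruence subgroup $K$ it becomes an orbit inside a partial affine flag variety. Choosing a representative $\dot w$, conjugation identifies the relevant double-coset category with D-modules on a single $(B_{\chL_1}, B_{\chL_2})$-type double coset inside the Levi factors, equivariant against the appropriate Whittaker characters. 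Concretely, the stratum category is equivalent to $\Dmod(N^-_{\chL_1}, \psi_1 \backslash (\text{single } B\text{-orbit}) / N^-_{\chL_2}, \psi_2)^{\St}$, and by the cleanness/averaging results in Lemma \ref{l:avavpsi} and Proposition \ref{p:avmonad} this in turn is governed by the torus category $\Dmod(\chT\mon \backslash \chT)$ or a twist thereof.

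The key steps, in order, are as follows. (1) Identify the stratum: show $I_{\chP_1} w I_{\chP_2}$, modulo the common pro-unipotent part, is a single $(\PN[1], \PN[2])$-bi-orbit whose reductive part is a product of a torus with a Bruhat cell for the Levi Weyl groups $W_1, W_2$; this is a standard manipulation with the Iwahori--Bruhat decomposition and the affine Weyl group combinatorics. (2) Reduce the $\St$-condition: since on a single stratum the $B$-generated subcategory is all of the monodromic subcategory (as in the clean situation of Lemma \ref{l:avavpsi} and the proof of Proposition \ref{p:avmonad}), the Steinberg condition imposes no further restriction at the level of a single stratum, so $\Dmod(\cdots)^{\heartsuit}$ on the stratum is just the abelian category of monodromic Whittaker D-modules there. (3) Apply the local calculation: via the equivalence \eqref{e:monwhitcat}, $\on{Av}_{!*}^{\psi}$ identifies the heart of this stratum category with $\Dmod(\chT/\chT\mon)^{\heartsuit}$ (up to a cohomological shift/twist), which has a unique simple object — the rank-one local system extending the trivial one along the unipotent monodromy — and a unique indecomposable injective, namely its injective hull $\widehat\delta$. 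Transporting back along the equivalences of steps (1)--(2) furnishes $L_w$ and $I_w$.

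The main obstacle I anticipate is bookkeeping the precise form of the residual group after conjugating by $\dot w$: one must check that the intersection $\dot w^{-1} \hspace{0.3mm} {}_{\chP_1}\mathring I \hspace{0.3mm} \dot w \cap {}_{\chP_2}\mathring I$ and the two Whittaker characters are \emph{compatible} on this intersection — i.e. that the character $\psi_1$ conjugated by $w$ agrees with $\psi_2$ where the two unipotent groups overlap — since otherwise the stratum category would be zero rather than having a unique simple. For strata where the characters are incompatible the correct statement is that the category vanishes, and so implicitly the proposition should be read as asserting: either the category is zero, or it has a unique simple and a unique indecomposable injective. I would handle this by the same nondegeneracy/genericity argument used in Section \ref{sss:avfunctors} to see that $B_{\chL}$ and $B^-_{\chL}$ are in generic position, now applied relative to the element $w$: the overlap of the two opposed unipotent radicals twisted by $w$ is again a unipotent group on which both characters restrict to (possibly distinct) additive characters, and compatibility is a linear-algebra condition on roots. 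Once that is settled, steps (1)--(3) are routine, and uniqueness of the simple and of the indecomposable injective is immediate from the torus model since $\Dmod(\chT/\chT\mon)^{\heartsuit} \simeq \Sym(\ft^*[-1])^{\heartsuit}\mmod$ is a local abelian category with those properties.
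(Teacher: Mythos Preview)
Your overall reduction strategy is reasonable, but step (3) contains a genuine error that breaks the argument. You claim the stratum category is equivalent to $\Dmod(\chT/\chT\mon)^\heartsuit$, but this is only correct when both parahorics are the Iwahori (the case the paper disposes of first). In general, the stratum $I_{\chP_1}wI_{\chP_2}$ contains many $\chI$-orbits indexed by $W_1\dot w W_2/W_2$, and the bi-Whittaker category does not collapse to a single torus category. What the paper does instead is run a monadicity argument: starting from the minimal $\chI$-orbit $\chI\dot w I_{\chP_2}$ (whose Whittaker category \emph{is} $\IndCoh(R)\simeq\Dmod(\chT/\chT\mon)$), one shows $\on{Av}_{!*}^\psi$ kills the simples on all other orbits, so the composite $\on{Av}_{!*}^\psi\circ i_*$ is conservative; the resulting monad is identified, via Proposition \ref{p:avmonad} and Soergel's Endomorphismensatz, with $R\otimes_{R^w}(-)$ for the parabolic stabilizer $W_{I_{\chP_1}wI_{\chP_2}}=\{x\in W_1:\dot w^{-1}x\dot w\in W_2\}$. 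The answer is therefore $\IndCoh(R^w)$, a strictly smaller category than $\IndCoh(R)$ whenever this stabilizer is nontrivial. Your argument would need to produce this invariant subring, and nothing in your sketch does so.

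Your anticipated ``main obstacle'' is also misdirected: since both $\psi_1,\psi_2$ are nondegenerate on their respective Levi unipotent radicals, the bi-Whittaker stratum category is never zero; there is no relevance/irrelevance dichotomy here (that appears only in the mixed parahoric/Whittaker setting of Proposition \ref{p:sInGsTraT}). The real subtlety you missed is not character compatibility but the size of the monad coming from the nontrivial orbit structure inside the parahoric double coset.
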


\begin{proof} As we first review, if either $I_{\chP_1}$ or $I_{\chP_2}$ is $\chI$, the result is standard. Indeed, let us without loss of generality take $I_{\chP_1} = \chI$. If we write $\dot{w}$ for the minimal length element of $wW_{1}$, then $*$-pushforward yields an equivalence
\[
      \Dmod(\chI\hspace{-.6mm} \on{-mon} \backslash \chI\dot{w} _{\chP_2}\mathring{I} /  _{\chP_2}\mathring{I}, \psi) \xrightarrow{\sim} \Dmod(\chI\hspace{-.6mm} \on{-mon} \backslash \chI wI_{\chP_2} /  _{\chP_2}\mathring{I}, \psi).  
\]
I.e., every appearing D-module is cleanly extended from the minimal Bruhat cell. The claims of the proposition then straightforwardly follow from the identification of the former category with 
$$\Dmod(\chT\hspace{-.6mm} \on{-mon} \backslash \chT).$$

For the general case, consider as in Lemma \ref{l:avavpsi} the adjunction
\begin{equation} \label{e:adjbiwhit}
     \on{Av}_{!*}^\psi: \Dmod(\chI\hspace{-.6mm} \on{-mon} \backslash I_{\chP_1}wI_{\chP_2} /  _{\chP_2}\mathring{I}, \psi_2) \rightleftarrows \Dmod( _{\chP_1}\mathring{I}, \psi, \St \backslash I_{\chP_1}wI_{\chP_2} /  _{\chP_2}\mathring{I}, \psi_2): \on{Av}_{!*}.
\end{equation}
As $\on{Av}_{!*}$ is conservative, we will prove the proposition by a monadicity argument. However, it will be convenient to replace the left-hand category of \eqref{e:adjbiwhit} as follows. 

For an element $y \in W_1wW_2/W_2$ we denote by $M_y$ and $L_y$ the corresponding standard and simple objects of the left-hand side of \eqref{e:adjbiwhit}, i.e. the $!$-extension and intermediate extension of the simple object of 
\[
             \Dmod(\chI\on{-mon} \backslash \chI yI_{\chP_2} /  _{\chP_2}\mathring{I}, \psi_2)^\heartsuit. 
\]
If we write $\dot{w}$ for the minimal length element of $W_1wW_2/W_2$, we claim we have the vanishing 
\begin{equation} \label{e:only1simp}
    \on{Av}_{!*}^\psi(L_y) \simeq 0, \quad \quad \text{for } y \neq \dot{w}. 
\end{equation}

To see this, we will need to record a general observation. Fix $w \in W_1$, and let us denote by 
\[
     j_{w, !} \in \Dmod(\chI \backslash I_{\chP_1} / \chI)
\]
the corresponding standard object. For any $\Dmod(\chP_1)$-module $\sC$, let us denote the binary product underlying the convolution action of $\Dmod(\chI \backslash I_{\chP_1} / \chI)$ on $\sC^{\chI}$ by 
\[
    (-) \overset{\chI} \star (-): \Dmod(\chI \backslash I_{\chP_1} / \chI) \otimes \sC^{\chI} \rightarrow \sC^{\chI}. 
\]
Recall that $j_{e, !}$ is the monoidal unit of $\Dmod(\chI \backslash I_{\chP_1} / \chI)$, and fix an embedding $$j_{e, !} \hookrightarrow j_{w, !}.$$With this notation, we claim that the embedding yields a natural equivalence 
\begin{equation} \label{e:convwhitiso}
         \on{Av}_{!*}^\psi(-) \simeq \on{Av}_{!*}^\psi(\hspace{.5mm} j_{e, !} \overset{\chI} \star - ) \xrightarrow{\sim} \on{Av}_{!*}(\hspace{.5mm}j_{w, !} \overset{\chI} \star -): \sC^{\chI} \rightarrow \sC^{ _{\chP_2}\mathring{I}, \psi_2, \St}. 
\end{equation}
Indeed, one may immediately reduce to the case of $\sC \simeq \Dmod(\chP_1)$, where it is standard.

With this observation, let us prove the vanishing \eqref{e:only1simp}. For $y \neq \dot{w}$, fix a factorization 
\[
        y = x\dot{w}, \quad \quad \text{for some } x \in W_1.
\]
It is standard that convolution with $j_{e, !} \hookrightarrow j_{x, !}$ yields a monomorphism 
\begin{equation} \label{e:embwhitverms}
      M_{\dot{w}} \simeq j_{e, !} \overset{\chI} \star M_{\dot{w}} \hookrightarrow j_{x, !} \overset{\chI} \star M_{\dot{w}} \simeq M_y.
\end{equation}
As $L_y$ is the simple quotient of the cokernel of \eqref{e:embwhitverms}, the desired vanishing \eqref{e:only1simp} follows from \eqref{e:convwhitiso} and the exactness of $\on{Av}_{!*}^\psi$. 
    
    We are now ready to obtain the desired monad. Let us concatenate the adjunction \eqref{e:adjbiwhit} and the adjunction associated to a closed embedding  
    \[
       i_*: \Dmod( \chI\hspace{-.6mm} \on{-mon} \backslash \chI\dot{w}I_{\chP_2}/ _{\chP_2}\mathring{I}, \psi_2) \rightleftarrows   \Dmod(\chI\hspace{-.6mm} \on{-mon} \backslash I_{\chP_1}wI_{\chP_2} /  _{\chP_2}\mathring{I}, \psi_2): i^!.
    \]
    It follows from the vanishing \eqref{e:only1simp} that in the resulting adjunction 
    \[
            \on{Av}_{!*}^\psi \circ \hspace{1mm} i_*: \Dmod(\chI\hspace{-.6mm} \on{-mon} \backslash \chI\dot{w}I_{\chP_2} /  _{\chP_2}\mathring{I}, \psi_2) \rightleftarrows  \Dmod( _{\chP_1}\mathring{I}, \psi, \St \backslash I_{\chP_1}wI_{\chP_2} /  _{\chP_2}\mathring{I}, \psi, \St): i^! \circ \on{Av}_{!*},
    \]
    the right adjoint is again conservative. 
    
        To describe the monad, recall that by the minimality of $\dot{w}$ the group
        \begin{equation} \label{e:p12}
                W_{I_{\chP_1}wI_{\chP_2} } := \{ x \in W_1: \hspace{1mm} \dot{w}^{-1}x\dot{w} \in W_2 \}
        \end{equation}
        is a parabolic subgroup of $W_1$. Let us write $I_{\chP_w}$ for the corresponding standard parahoric subgroup of $\chG_F$. In addition,  for a standard parahoric subgroup $\check{Q}$ of $\chG_F$, let us write $\widehat{\Xi}_{\check{Q}}$ for the algebra object of 
        \[
            \Dmod(\chI\hspace{-.6mm} \on{-mon} \backslash \chG_F / \chI\hspace{-.6mm} \on{-mon})
        \]
        associated to it as in Proposition \ref{p:avmonad}. With this, by Proposition \ref{p:avmonad}, the underlying endofunctor of our monad is given by 
        \begin{equation} \label{e:ourmonad}
             i^! \circ ( \hspace{1mm}  \widehat{\Xi}_{\chP_1} \star - ) .   
        \end{equation}
        %4
       Note the homomorphism of algebra objects 
       \[
          \widehat{\Xi}_{\chP_w} \rightarrow \widehat{\Xi}_{\chP_1}
       \]
       associated to the corresponding factorization of averaging functors, cf. Lemma \ref{l:steps}. It is straightforward to see using the costandard filtration on $\widehat{\Xi}_{\chP_1}$ that this induces an equivalence of endofunctors
       \[
            \widehat{\Xi}_{\chP_w} \star - \simeq  i^! \circ (\widehat{\Xi}_{\chP_1} \star - ): \Dmod(\chI\hspace{-.6mm} \on{-mon} \backslash \chI \dot{w}I_{\chP_2} /  _{\chP_2}\mathring{I}, \psi_2) \rightarrow \Dmod(\chI\hspace{-.6mm} \on{-mon} \backslash \chI\dot{w}I_{\chP_2} /  _{\chP_2}\mathring{I}, \psi_2).
       \]

       We may rewrite this monad as follows. Recall that $\check{\mathfrak{h}}$ denotes the Cartan subalgebra of $\chG$, write $R$ for the completion of $\Sym(\check{\mathfrak{h}})$ at the augmentation ideal, and recall that to a choice of coset representative $\ddot{w}$ of $\dot{w}$ in $\check{H}\dot{w}$ one has the $t$-exact equivalence 
       \[
          \Dmod(\chI\hspace{-.6mm} \on{-mon} \backslash \chI \dot{w}I_{\chP_2} /  _{\chP_2}\mathring{I}, \psi_2) \simeq \IndCoh(R),
       \]      
      where $\IndCoh(R)$ denotes the ind-completion of $\on{Coh}(R)$, the category of bounded complexes of $R$-modules with finite dimensional cohomology. Explicitly, this sends a D-module to its $*$-stalk at $\ddot{w}$, equipped with its logarithm of monodromy operators. If we write $R^w$ for the subalgebra of $R$ fixed by \eqref{e:p12}, Soergel's Endomorphismsatz identifies convolution with $\widehat{\Xi}_{\chP_w}$ with the standard monad with underlying endofunctor 
      \begin{equation} \label{e:nicemonadsoergel}
         R \underset{R^w}\otimes (-): \IndCoh(R) \rightarrow \IndCoh(R).
      \end{equation}
      %
      %cf. Section 11 of \cite{BezRiche} where this implication is spelled out in a Verdier dual convention.  %\footnote{Note the conventions of {\em loc. cit.} are Verdier dual to ours, i.e. the authors work with $!$-convolution of pro-monodromic objects. In particular, it follows formally from their treatment that on compact objects, i.e. $\on{Coh}(R)$, our monad is given by 
      %
      %\begin{equation} \label{e:compverddual}
      %     \on{Coh}(R) \xrightarrow{\mathbb{D}} \on{Coh}(R)^{op} \xrightarrow{ R \underset{R^w} \otimes (-) } \on{Coh}(R)^{op} \xrightarrow{\mathbb{D}} \on{Coh}(R),       \end{equation}
      %
      %where $\mathbb{D}$ is the equivalence induced by Verdier duality on $\Dmod(T/T\hspace{-.6mm} \on{-mon})$. However, the latter explicitly is given by 
      %
      %\[
      %   \on{Hom}_{R}( -, R)[ \dim T],
      %\]
      %
      %i.e. the usual duality on $\on{Coh}(R)$ up to a cohomological shift. Using this, one may simplify the composition \eqref{e:compverddual} to obtain \eqref{e:nicemonadsoergel}. }
    In particular we deduce a $t$-exact equivalence
      \begin{equation} \label{e:ssbims}
           \IndCoh(R^w) \simeq \Dmod( _{\chP_1}\mathring{I}, \psi, \St \backslash I_{\chP_1}wI_{\chP_2} /  _{\chP_2}\mathring{I}, \psi_2, \St),
      \end{equation}
      which immediately implies the claims of the proposition. 
\end{proof}

Note that the proof of Proposition \ref{p:1stratum} shows the following, which will be of use later. 

\begin{cor}\label{c:OxYtOcIn}Consider the adjunction of two sided averaging functors $$\on{Av}_{!*}: \Dmod( _{\chP_1}\mathring{I}, \psi, \St \backslash I_{\chP_1}wI_{\chP_2} /  _{\chP_2}\mathring{I}, \psi_2, \St)^\heartsuit \rightleftarrows \Dmod( \chI \mon\!\bs I_{\chP_1} w I_{\chP_2} / \chI \mon)^\heartsuit: \on{Av}_{!*}^\psi.$$An object $\sM$ of the left-hand side is injective if and only if its average $\on{Av}_{!*}(\sM)$ is injective. Moreover, it is a finite direct sum of indecomposable injective if and only if its average is a finite direct sum of indecomposable injectives.   
\end{cor}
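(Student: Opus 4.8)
The statement asserts that, for the adjunction $\on{Av}_{!*} : \Dmod( {}_{\chP_1}\mathring{I}, \psi, \St \backslash I_{\chP_1}wI_{\chP_2} / {}_{\chP_2}\mathring{I}, \psi_2, \St)^\heartsuit \rightleftarrows \Dmod(\chI\mon\!\bs I_{\chP_1} w I_{\chP_2} / \chI\mon)^\heartsuit : \on{Av}_{!*}^\psi$, an object $\sM$ of the left-hand side is injective iff $\on{Av}_{!*}(\sM)$ is, and a finite sum of indecomposable injectives iff its average is. The key structural input is already in hand from the proof of Proposition \ref{p:1stratum}: there we constructed a monadic adjunction $\on{Av}_{!*}^\psi \circ i_* : \Dmod(\chI\mon\backslash \chI\dot{w}I_{\chP_2} / {}_{\chP_2}\mathring{I}, \psi_2) \rightleftarrows \Dmod({}_{\chP_1}\mathring{I}, \psi, \St \backslash I_{\chP_1}wI_{\chP_2} / {}_{\chP_2}\mathring{I}, \psi, \St) : i^! \circ \on{Av}_{!*}$, identified the monad with Soergel's bimodule monad $R \otimes_{R^w} (-)$ on $\IndCoh(R)$, and obtained the $t$-exact equivalence \eqref{e:ssbims}, i.e.\ $\IndCoh(R^w) \simeq \Dmod({}_{\chP_1}\mathring{I}, \psi, \St \backslash I_{\chP_1}wI_{\chP_2} / {}_{\chP_2}\mathring{I}, \psi_2, \St)$. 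The plan is to exploit this explicit algebraic model on both sides of the claimed adjunction.

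First I would record that the right-hand side $\Dmod(\chI\mon\!\bs I_{\chP_1}wI_{\chP_2}/\chI\mon)$ is, by the clean-extension argument already invoked in Proposition \ref{p:1stratum} for the case where one parahoric is $\chI$ together with Soergel theory, $t$-exactly equivalent to $\IndCoh(R)$ — more precisely, after fixing the coset representative $\ddot w$, the full subcategory where $*$-restriction lands on the single cell $\chI\dot w I_{\chP_2}$ up to the clean extension, which suffices since we work with the heart and all objects supported on the stratum. Under these two identifications, the functor $\on{Av}_{!*}^\psi$ becomes the free-module functor $R \otimes_{R^w} (-) : \IndCoh(R^w) \to \IndCoh(R)$ and $\on{Av}_{!*}$ becomes the restriction (forgetful) functor $\on{Oblv}_{R^w} : \IndCoh(R) \to \IndCoh(R^w)$ — this is exactly the content of \eqref{e:nicemonadsoergel} together with the identification of the monad. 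Here $R^w \hookrightarrow R$ is a finite flat (indeed free, by the classical theory of coinvariant algebras for the parabolic $W_{I_{\chP_1}wI_{\chP_2}}$) extension of complete local Gorenstein rings, both of which have residue field $k$ and are Gorenstein of dimension $\dim\check{\mathfrak h}$ after the appropriate shift, since $R = \widehat{\Sym(\check{\mathfrak h})}$ and $R^w$ is a polynomial subalgebra.

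The heart of the argument then becomes a purely commutative-algebra statement: for a finite free extension $R^w \hookrightarrow R$ of complete local Noetherian Gorenstein $k$-algebras with common residue field $k$, the restriction functor $\on{Oblv} : \IndCoh(R)^\heartsuit \to \IndCoh(R^w)^\heartsuit$ (equivalently, on the subcategories of torsion modules / modules supported at the closed point, which is where the Whittaker sheaves live) detects injectivity and detects being a finite sum of copies of the indecomposable injective. For the first part: restriction along a finite flat map sends injectives to injectives (it has an exact left adjoint, namely $R \otimes_{R^w} -$, being flat), so one direction is automatic; conversely, if $\on{Oblv}(M)$ is injective over $R^w$, then since $R$ is a finite free $R^w$-module one has $\Hom_{R^w}(R, \on{Oblv}(M)) = \Hom_R(R, M)^{\oplus \rank} \oplus (\text{other summands})$... more cleanly, use that $M$ is a summand of $\on{coInd}(\on{Oblv}(M)) = \Hom_{R^w}(R, \on{Oblv}(M))$ (unit/counit of the coinduction adjunction, which splits since $R$ is projective over $R^w$ so the counit $M \to \on{coInd}\on{Oblv}(M)$ is split injective — actually this needs the Gorenstein/self-duality to phrase correctly, so I would instead argue via the free adjunction: $M$ is a retract of $R \otimes_{R^w} \on{Oblv}(M)$, and if $\on{Oblv}(M)$ is injective over $R^w$ then $R \otimes_{R^w} \on{Oblv}(M) \cong \Hom_{R^w}(R, \on{Oblv}(M))$ by the Gorenstein duality $\omega_{R/R^w} \cong R$ — this is precisely the relative Calabi–Yau property, cf.\ the use of Serre traces in Proposition \ref{affinization calculation} and Corollary \ref{spectral whittaker coalgebra} — hence injective over $R$, and $M$ as its retract is injective). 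For the "finite sum of indecomposable injectives" refinement: the indecomposable injective over $R$ (resp.\ $R^w$) is the injective hull $E(k)$ of the residue field, and one checks $\on{Oblv}_{R^w}(E_R(k)) \cong E_{R^w}(k)^{\oplus [R:R^w]}$ using $R \otimes_{R^w} E_{R^w}(k) \cong E_R(k)$ (free base change of the dualizing/injective object, again by the relative Gorenstein property) together with Matlis duality; so $\on{Oblv}$ takes $E_R(k)^{\oplus a}$ to $E_{R^w}(k)^{\oplus a[R:R^w]}$, and conversely a module whose restriction is $E_{R^w}(k)^{\oplus b}$ is injective over $R$ by the first part, hence a sum of copies of $E_R(k)$, and counting lengths of socles pins down the multiplicity and forces $[R:R^w] \mid b$.

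\textbf{Main obstacle.} The substantive point is not the sheaf-theoretic bookkeeping but getting the relative-dualizing-sheaf computation $\omega_{R/R^w} \cong R$ (equivalently $R \otimes_{R^w}(-) \cong \Hom_{R^w}(R,-)$ as functors, up to a shift-free twist) stated cleanly in the completed setting, so that "free base change preserves the injective hull of $k$" is rigorous; this is the incarnation here of the Calabi–Yau/Gorenstein-of-dimension-zero phenomenon already established for the relevant stacks in Proposition \ref{calabi yau}, and indeed the cleanest route may be to deduce the ring-level statement from the geometry — i.e.\ transport \eqref{e:ssbims} and its counterpart for $\IndCoh(R)$ through the sheaf-theoretic identifications and cite that $\cZ_P/G$ and its formal completion are Gorenstein of dimension $0$ — rather than reproving it by hand. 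A secondary subtlety is making sure the equivalence $\Dmod(\chI\mon\!\bs I_{\chP_1}wI_{\chP_2}/\chI\mon)^\heartsuit \simeq \IndCoh(R)^{\heartsuit,\mathrm{tors}}$ is compatible with the one in \eqref{e:ssbims} so that $\on{Av}_{!*}$ and $\on{Av}_{!*}^\psi$ really do become $\on{Oblv}$ and $R\otimes_{R^w}-$; but this compatibility is exactly what was verified in the course of proving Proposition \ref{p:1stratum} (the monad identification \eqref{e:nicemonadsoergel}), so I would simply cite that computation.
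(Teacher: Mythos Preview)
There is a genuine gap. Your identification of $\Dmod(\chI\mon\!\bs I_{\chP_1}wI_{\chP_2}/\chI\mon)$ with $\IndCoh(R)$ is incorrect: the parahoric double coset $I_{\chP_1}wI_{\chP_2}$ breaks into $|W_1wW_2|$ Iwahori double cosets $\chI y\chI$, so the bi-$\chI$-monodromic category is a nontrivial recollement of that many copies of $\IndCoh(R)$, not a single one. The copy of $\IndCoh(R)$ appearing in \eqref{e:nicemonadsoergel} is the single minimal cell of the \emph{mixed} category $\Dmod(\chI\mon\backslash I_{\chP_1}wI_{\chP_2}/{}_{\chP_2}\mathring{I},\psi_2)$ used in the proof of Proposition~\ref{p:1stratum}, not of the bi-monodromic one; your hedge about ``the single cell $\chI\dot wI_{\chP_2}$'' confuses these two categories, and in any case $\on{Av}_{!*}(\sM)$ is supported on the whole parahoric stratum. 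A related symptom: having quoted \eqref{e:ssbims} correctly as bi-Whittaker $\simeq\IndCoh(R^w)$, you then write the target of $\on{Av}_{!*}$ as $\IndCoh(R^w)$, which would force bi-Whittaker $\simeq\IndCoh(R)$ instead.

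The argument actually implicit in the proof of Proposition~\ref{p:1stratum} avoids describing the bi-monodromic side. By Lemma~\ref{l:avavpsi} applied on each side, $\on{Av}_{!*}$ and $\on{Av}_{!*}^\psi$ are biadjoint, hence both exact and both preserving injectives; this is $(\Rightarrow)$. For $(\Leftarrow)$, the bi-Whittaker heart is $\IndCoh(R^w)^\heartsuit$ by \eqref{e:ssbims}, with unique simple $L_w$, so injectivity of $\sM$ amounts to the single condition $\Ext^1(L_w,\sM)=0$. The proof of Proposition~\ref{p:1stratum} (the displayed adjunction together with \eqref{e:only1simp}, applied once for each side) gives $L_w\simeq\on{Av}_{!*}^\psi(\delta_{\dot w})$ for the simple on the minimal Iwahori cell, so by adjunction $\Ext^1(L_w,\sM)\simeq\Ext^1(\delta_{\dot w},\on{Av}_{!*}(\sM))$, which vanishes when $\on{Av}_{!*}(\sM)$ is injective. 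The finite-sum refinement follows similarly from $\on{Av}_{!*}^\psi(\delta_y)\in\{L_w,0\}$ for every $y\in W_1wW_2$ and the conservativity of $\on{Av}_{!*}$. Your commutative-algebra lemma is correct, and it could be made to work by factoring the two-sided averaging into two one-sided steps, each of which genuinely matches \eqref{e:nicemonadsoergel}; but that route is longer than the direct one.
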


\begin{rmk} The Soergel-theoretic identity \eqref{e:ssbims} for a single stratum  extends to an equivalence, for a connected reductive group or a Kac--Moody group, between bi-Steinberg--Whittaker sheaves and a category of singular Soergel bimodules.  
\end{rmk}

\subsection{Nearly compact objects and cofree monodromic tilting sheaves}
\label{Cofree tilting sec}

\subsubsection{} Having described the Steinberg--Whittaker sheaves on each stratum, i.e. 
\begin{equation} \label{e:1stratumbiwhit}
     \Dmod( _{\chP_1}\mathring{I}, \psi_1, \St \backslash I_{\chP_1} w I_{\chP_2} /  _{\chP_2}\mathring{I}, \psi_2, \St), \quad \quad \text{for } w \in W_1 \backslash W / W_2,
\end{equation}
we may now introduce the cofree-monodromic tilting sheaves and classify the indecomposable ones. 

\begin{defn}\label{d:freemontil} Let us call an object of $\Dmod( _{\chP_1}\mathring{I}, \psi_1, \St \backslash \chG_F /  _{\chP_2}\mathring{I}, \psi_2, \St)$ {\em cofree-monodromic tilting} if the following two conditions hold:
\begin{enumerate}\item  it is $*$-extended from a subscheme of $\chG_F$, i.e., is supported on finitely many strata, and 
\item  its $*$-restriction and $!$-restriction to each stratum \eqref{e:1stratumbiwhit} is isomorphic to a finite direct sum of the indecomposable injective object.\end{enumerate} \end{defn}

We now set up the accompanying notions of (co)standard filtrations. Note that a simple object $\delta_w$ of \eqref{e:1stratumbiwhit} is canonically given by averaging the constant perverse sheaf on $\chI\dot{w}\chI$, cf. the proof of Proposition \ref{p:1stratum}, and in particular Equation \eqref{e:nicemonadsoergel}. We have a canonical choice of indecomposable injective
\begin{equation} \label{e:indinjdisc}
    \widehat{\delta}_w := \delta_w \underset{\on{Hom}(\delta_w, \delta_w)} \otimes k.
\end{equation}
Recall that we denote the $!$-extension and $*$-extension of $\widehat{\delta}_w$ to all of $\chG_F$ by $\hj_{w, !}$ and $\hj_{w, *}$, respectively, and say an object $\sM$ admits a cofree-monodromic standard (resp. costandard) filtration if it is a successive extension of the $\hj_{w, !}$ (resp. $\hj_{w, *}$). 

\begin{lemma} \label{l:NeXt}
An object is cofree-monodromic tilting if and only if it admits finite cofree-monodromic standard and costandard filtrations. In particular, any cofree-monodromic tilting sheaf lies in the heart of the $t$-structure, and for cofree-monodromic tilting sheaves $T_1$ and $T_2$ one has the Ext-vanishing
\[
     \on{Hom}(T_1, T_2) \simeq \tau^{\geqslant 0}\tau^{\leqslant 0} \on{Hom}(T_1, T_2). 
\]
\end{lemma}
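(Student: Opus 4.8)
The statement is a cofree-monodromic analogue of the standard characterization of tilting modules in a highest-weight category, so I would model the proof on the classical argument (as in \cite{BezYun}, \cite{BeiGinz}) and reduce everything to the single-stratum analysis already carried out in Proposition \ref{p:1stratum} and Corollary \ref{c:OxYtOcIn}. The key point is that each stratum category \eqref{e:1stratumbiwhit} has, by Proposition \ref{p:1stratum}, a unique simple and a unique indecomposable injective, and the cofree-monodromic standard and costandard objects $\hj_{w,!}$, $\hj_{w,*}$ are $!$- resp. $*$-extensions of the object \eqref{e:indinjdisc}, which \emph{is} that indecomposable injective on its support stratum. Thus $\hj_{w,!}$ and $\hj_{w,*}$ are genuine analogues of standard and costandard objects, and Corollary \ref{c:OxYtOcIn} lets us translate the cofree-monodromic tilting condition of Definition \ref{d:freemontil} into a statement about injectives on each stratum after applying $\on{Av}_{!*}$.

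\textbf{Steps.} First I would establish the Ext-vanishing between cofree-monodromic standard and costandard objects: for $w, w' \in W_1\backslash W_a / W_2$ one has $\on{Hom}(\hj_{w,!}, \hj_{w',*}) \simeq \tau^{\geq 0}\tau^{\leq 0}\on{Hom}(\hj_{w,!}, \hj_{w',*})$, concentrated in degree $0$, and is $k$ if $w = w'$ and $0$ otherwise. This follows by adjunction: $\on{Hom}(j_! \widehat{\delta}_w, \hj_{w',*}) \simeq \on{Hom}(\widehat{\delta}_w, j^! \hj_{w',*})$, where $j^!$ of a $*$-extension vanishes off the closed stratum and on the stratum computes $\on{Hom}$ between the indecomposable injective and itself (resp. zero), using that $\widehat{\delta}_w$ is injective in the stratum heart so there are no higher Exts. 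Second, given this vanishing, the usual dévissage shows that any object $\sM$ admitting a finite cofree-monodromic standard filtration satisfies $\on{Hom}(\sM, \sN) \in \mathrm{degrees}\ \geq 0$ whenever $\sN$ admits a costandard filtration, and the existence of a costandard filtration on $\sM$ forces $\on{Hom}(\sM,\sN)$ to also be in degrees $\leq 0$; applying this with $\sM = \sN$ gives lying in the heart and gives the asserted Ext-vanishing. Third, for the equivalence "cofree-monodromic tilting $\iff$ admits both filtrations", I would argue: if $\sM$ admits both filtrations then on each stratum its $*$- and $!$-restrictions are successive extensions of copies of $\widehat{\delta}_w$, and because $\widehat{\delta}_w$ is injective in the stratum heart and, by the degree-$0$ concentration just proved, there are no extensions among the $\widehat{\delta}_w$'s on a fixed stratum, these restrictions split as finite direct sums of $\widehat{\delta}_w$; conversely, if $\sM$ is cofree-monodromic tilting, one builds the cofree-monodromic standard filtration by induction on the support, using that $j^! \sM$ on the minimal open stratum is a sum of $\widehat{\delta}_w$, hence $j_! j^! \sM \to \sM$ is a map of the desired type, and the cone is supported on fewer strata with $*$- and $!$-restrictions still sums of indecomposable injectives (this last compatibility is where one invokes Corollary \ref{c:OxYtOcIn} together with the standard recollement long exact sequences); the costandard filtration is obtained by the same argument with the roles of open and closed reversed, or by applying Verdier duality, which exchanges $!$ and $*$ extensions and preserves the cofree-monodromic tilting condition.

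\textbf{Main obstacle.} The delicate point is the inductive step in the converse direction: verifying that when one quotients a cofree-monodromic tilting sheaf by the $!$-extension of its restriction to an open stratum, the resulting object on the complementary closed union of strata \emph{still} has the property that both its $*$- and $!$-restrictions to every remaining stratum are finite sums of indecomposable injectives. In the equivariant (non-monodromic) highest-weight setting this is classical, but here one must be careful that the cofree-monodromic objects $\widehat{\delta}_w$ are not compact and live over the completed ring $R^w$ of Proposition \ref{p:1stratum}; the safe way to control this is to apply $\on{Av}_{!*}$, reduce to the corresponding statement about honest injectives in $\Dmod(\chI\mon\backslash \chG_F/\chI\mon)^\heartsuit$ — where the classical theory of tilting perverse sheaves applies — and then lift back using the conservativity and injective-detecting properties recorded in Corollary \ref{c:OxYtOcIn}. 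Once that reduction is in place the argument is a routine dévissage, so I would spend the bulk of the write-up making that reduction precise.
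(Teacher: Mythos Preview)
Your Step~2 contains a genuine gap. From the adjunction computation $\Hom(\hj_{w,!},\hj_{w',*})\simeq\delta_{w,w'}\cdot\End(\widehat\delta_w)$ you correctly obtain that $\Hom(\sM,\sN)$ is concentrated in degree $0$ when $\sM$ has a standard filtration and $\sN$ a costandard one. But your further claim that ``the existence of a costandard filtration on $\sM$ forces $\Hom(\sM,\sN)$ to also be in degrees $\le 0$'' would require control of $\Hom(\hj_{w,*},\hj_{y,*})$, which you have not established; and in any case, $\End(\sM)$ concentrated in degree $0$ does \emph{not} imply $\sM$ lies in the heart (consider any shift of a heart object). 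So heart membership is not proved where you claim it.

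More importantly, you have misidentified the main obstacle. The paper treats the equivalence ``cofree-monodromic tilting $\Leftrightarrow$ admits both filtrations'' as immediate from the definitions: if $\sM$ has a standard filtration then each $j_y^*\sM$ is an iterated extension of $j_y^*\hj_{w,!}=\delta_{y,w}\widehat\delta_y$, hence a sum of copies of $\widehat\delta_y$; dually for $j_y^!$ via the costandard filtration; and conversely the Cousin filtration gives the desired filtrations. Your elaborate induction and your worry about the cone in the inductive step are unnecessary.

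The actual non-trivial content, which your proposal does not address, is that $\hj_{w,!}$ and $\hj_{w,*}$ lie in the heart. This is \emph{not} automatic: the strata $I_{\chP_1}wI_{\chP_2}$ are parahoric double cosets, not Iwahori double cosets, and the inclusion $j:I_{\chP_1}wI_{\chP_2}\hookrightarrow\chG_F$ is in general not affine, so $j_!,j_*$ need not be $t$-exact. The paper's proof handles this by an averaging trick: one realizes $\widehat\delta_w$ as a summand of $\on{Av}_{!*}^\psi(\widehat\delta_{\dot w})$ for $\dot w$ the minimal-length representative, uses that $\on{Av}_{!*}^\psi$ commutes with $j_!,j_*$ and is $t$-exact (Lemma~\ref{l:avavpsi}), and then observes that the Iwahori stratum $\chI\dot w I_{\chP_2}$ \emph{is} affine so $\hj_{\dot w,!}$ lies in the heart. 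Once the $\hj_{w,!},\hj_{w,*}$ are known to be in the heart, heart membership of tilting sheaves and the Ext-vanishing follow trivially.
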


\begin{proof}The first assertion is immediate from the definitions. For the second, it is enough to show that $\hj_{w, !}$ and $\hj_{w, *}$ lie in the heart of the $t$-structure. Consider the locally closed embedding $$j: I_{\chP_1}wI_{\chP_2} \rightarrow G.$$As the source of $j$ is not in general affine, the claim is not automatic, so we argue by averaging.

 Recall the minimal length element $\dot{w}$ of $W_1wW_2$, and the the corresponding injective object 
\[
     \widehat{\delta}_{\dot{w}} \in \Dmod(\chI\hspace{-.6mm} \on{-mon} \backslash \chI\dot{w}I_{\chP_2} /  _{\chP_2}\mathring{I}, \psi_2, \St).
\]
Note that $\on{Av}_{!*}^\psi \widehat{\delta}_{\dot{w}}$ is isomorphic to a (finite) direct sum of copies of $\widehat{\delta}_w$, as follows for example from Proposition \ref{p:1stratum}.  It is therefore enough to see that $!$-extension and $*$-extension of the former lie in the heart of the $t$-structure. However, it follows from Lemma \ref{l:avavpsi} that we have 
\begin{align*}
   & j_{!} \circ \on{Av}_{!*}^\psi \hspace{.5mm} (\widehat{\delta}_{\dot{w}}) \simeq \on{Av}_{!*}^\psi \circ \hspace{1mm} j_{!} \hspace{.5mm} (\widehat{\delta}_{\dot{w}}) \simeq \on{Av}_{!*}^\psi \hspace{.5mm} (\hj_{\dot{w}, !}) 
\end{align*}
The claim for the $!$-extension then follows from the $t$-exactness of $\on{Av}_{!*}^\psi$ implied by Lemma \ref{l:avavpsi} and recalling that $\hj_{w, !}$ lies in the heart due to the affineness of $\chI\dot{w}I_{\chP_2}$ and separatedness of $\chG_F$. The same argument applies, {\em mutatis mutandis}, to the $*$-extension. Finally, the desired Ext vanishing follows from those between the $\hj_{w, !}$ and $\hj_{w', *}$, which is immediate by adjunction.   \end{proof}

\subsubsection{} It is clear that the collection of cofree-monodromic tilting sheaves is closed under finite direct sums and summands. In the following proposition, we show that the classification of indecomposable cofree-monodromic tilting sheaves follows the usual pattern, i.e., they are indexed by strata. 

\begin{prop}\label{p:classtilting} \label{p:classtilts}For each stratum $\eqref{e:1stratumbiwhit}$, there is a unique up to (non-unique) isomorphism indecomposable cofree-monodromic tilting sheaf $T_w$ such that 
\begin{enumerate}
    \item it is supported on the closure of $I_{\chP_1}wI_{\chP_2} $, and 
    
    \item its restriction to the open stratum \eqref{e:1stratumbiwhit} is an indecomposable injective object. 
\end{enumerate}
Moreover, any indecomposable cofree-monodromic tilting sheaf is isomorphic to exactly one of the $T_w$. 
\end{prop}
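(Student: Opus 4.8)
\textbf{Proof proposal for Proposition \ref{p:classtilts}.} The plan is to mimic the classical construction of indecomposable tilting sheaves, but with the cofree-monodromic standard objects $\hj_{w,!}$ in place of the usual standards, using the averaging functor $\on{Av}_{!*}^\psi$ to transport the construction from the Iwahori-monodromic side where it is well established. First I would fix $w \in W_1 \backslash W_a / W_2$ and its minimal length representative $\dot w$, and consider the cofree-monodromic standard object $\hj_{w,!}$ supported on $\overline{I_{\chP_1} w I_{\chP_2}}$. I would then construct $T_w$ by the usual inductive procedure along the Bruhat order: one starts with $\hj_{w,!}$, whose restriction to the open stratum is the indecomposable injective $\widehat\delta_w$ by Lemma \ref{l:NeXt}, and whose $!$-restrictions to lower strata are cofree-monodromic standard objects but whose $*$-restrictions may fail to be sums of indecomposable injectives. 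One repairs this one stratum at a time: for each $y < w$ contributing a non-injective $*$-restriction, one takes a universal extension by copies of $\hj_{y,*}$ to kill the relevant $\on{Ext}^1$, using the Ext-vanishing between standards and costandards recorded in Lemma \ref{l:NeXt}. Since there are only finitely many strata below $w$, this terminates and produces an object $T_w$ admitting both a finite cofree-monodromic standard filtration and a finite costandard filtration, hence cofree-monodromic tilting by Lemma \ref{l:NeXt}; by construction its open restriction is $\widehat\delta_w$.

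For uniqueness and indecomposability I would argue exactly as in the classical case. The key input is the Ext-vanishing of Lemma \ref{l:NeXt}: if $T, T'$ are two cofree-monodromic tilting sheaves with standard and costandard filtrations, then $\on{Hom}(T, T')$ is concentrated in degree zero, and moreover $\dim_k \on{Hom}(T, T')$ can be computed from the filtration multiplicities (each graded piece contributes the rank-one space $\on{Hom}(\hj_{w,!}, \hj_{w,*}) \simeq k$, cf. the computation of $\End(\widehat\delta_w)$ as a completed symmetric algebra acting through its augmentation on the one-dimensional socle/head). This gives a numerical criterion: a cofree-monodromic tilting sheaf whose standard (equivalently costandard) multiplicities all equal those of $T_w$, and which has $T_w$ as a summand via the identity on the open stratum, must be isomorphic to $T_w$; an endomorphism which is the identity on the open stratum is an automorphism, because its ``error'' lands in the radical coming from lower strata and is nilpotent by finiteness of the stratification. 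Indecomposability of $T_w$ follows since any idempotent endomorphism is determined by its restriction to the open stratum $\widehat\delta_w$, which is indecomposable. Finally, that every indecomposable cofree-monodromic tilting sheaf is some $T_w$: such an object $T$ is supported on finitely many strata, so it has a unique open stratum $I_{\chP_1} w I_{\chP_2}$ in its support; its restriction there is a sum of copies of $\widehat\delta_w$, and picking out one copy yields a map $T_w \to T$ (and back) which is the identity on the open stratum, hence $T_w$ is a summand of $T$, hence $T \simeq T_w$ by indecomposability.

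The main obstacle I anticipate is verifying that the inductive ``repair'' step actually works in this possibly-degenerate Whittaker setting --- i.e., that the relevant $\on{Ext}^1$-groups between cofree-monodromic standards and costandards on adjacent strata are finite-dimensional and can be annihilated by a single universal extension, and that doing so does not disturb the strata already handled. On the Iwahori-monodromic side this is classical (Bezrukavnikov--Yun, Bezrukavnikov--Riche), and the cleanest route is probably to reduce to that case via the adjunction $(\on{Av}_{!*}^\psi, \on{Av}_{!*})$ of Lemma \ref{l:avavpsi}: Corollary \ref{c:OxYtOcIn} shows $\on{Av}_{!*}$ detects injectivity and finite sums of indecomposable injectives stratum-by-stratum, and Lemma \ref{l:avavpsi} shows $\on{Av}_{!*}^\psi$ is $t$-exact and intertwines $!$- and $*$-extensions; combining these, a cofree-monodromic tilting sheaf downstairs is precisely the $\on{Av}_{!*}^\psi$-image of one upstairs, so the whole classification can be deduced from the Iwahori-monodromic classification of indecomposable cofree-monodromic tiltings. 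I would carry out the construction upstairs, where the Ext-estimates are available, and then apply $\on{Av}_{!*}^\psi$, checking that it sends the indecomposable upstairs tilting attached to $\dot w$ to a sum of copies of a single indecomposable $T_w$ (the multiplicity being computable, as in the proof of Proposition \ref{p:1stratum}, from Soergel-bimodule combinatorics).
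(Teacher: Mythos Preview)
Your overall plan matches the paper's: construct $T_w$ inductively stratum by stratum, then prove that any endomorphism which is an isomorphism on the open stratum is an isomorphism, and deduce uniqueness and the classification from this. The gap is in the latter step. You assert that the ``error'' endomorphism is nilpotent by finiteness of the stratification, but this reasoning imports a finite-dimensional fact that fails here: $\Hom(\hj_{y,!}, \hj_{y,*}) \simeq \End(\widehat{\delta}_y)$ is the completed local ring $R^y$, not $k$ as you write, so $\End(T_w)$ is infinite-dimensional over $k$ and an endomorphism vanishing on the open stratum need not be nilpotent.

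The paper's repair uses the central action of the monodromy ring $R^{W_1}$. One has $T_w = \varinjlim_N T_w^{\mathfrak{m}_1^N}$ with each $T_w^{\mathfrak{m}_1^N}$ of finite length, so it suffices that the error be nilpotent on each truncation. For this the paper's specific construction via minimal resolutions (Proposition~\ref{p:coconnectivealgs}, Example~\ref{ex:minres}) is essential: it forces the canonical map $i_! i^! T_X \to i_* i^* T_X$ at each inductive step to lie in $\mathfrak{m} \cdot \Hom$, so that iterating an error endomorphism picks up increasing powers of $\mathfrak{m}$ and eventually annihilates $T_w^{\mathfrak{m}_1^N}$. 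Your construction by ``universal extensions killing $\Ext^1$'' does not visibly produce this control, and in any case you would need to say what plays the role of $\mathfrak{m}$ and why the connecting maps land there.

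A smaller point: the paper does not reduce to the bi-Iwahori-\emph{monodromic} side via $\on{Av}_{!*}^\psi$ as you propose---the same infinite-dimensionality persists there---but instead averages to the Iwahori-\emph{equivariant} side $\Dmod(\chI \backslash X / {}_{\chP_2}\mathring{I}, \psi_2)$, landing in an honest highest weight category with finite-dimensional Homs. This is how the needed $\Ext$-estimate (that $\Hom(\delta_z, j_! T_U)$ is concentrated in degrees $0$ and $1$) is obtained.
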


\begin{proof} Recall that the closure of $I_{\chP_1}wI_{\chP_2} $ contains only finitely many strata. We first construct such a $T_w$ inductively by extending it over strata of higher and higher codimension.

Suppose $X$ is an open union of strata in the closure of $I_{\chP_1}wI_{\chP_2} $. We will construct inductively an indecomposable cofree-monodromic tilting sheaf $T_X$ on $X$ satisfying condition (2). If $X$ is only the open stratum, set $T_X$ to be the object $\widehat{\delta}_z$. Otherwise, fix a stratum which is closed in $X$, and denote it and its open complement by$$i: Z \rightarrow X \quad \quad \text{and} \quad \quad j: U \rightarrow X,$$respectively. By induction, we may take as given an indecomposable cofree-monodromic tilting sheaf $T_U$ on $U$ satisfying condition (2) above. 

We will construct a similar sheaf on $X$ by modifying $j_! T_U$ as follows. Recall the simple object $\delta_z$ and indecomposable injective $\widehat{\delta}_z$ on $Z$, cf. the discussion near \eqref{e:indinjdisc}. We first claim that 
\begin{equation}\label{e:claim0n1}
      \on{Hom}( \delta_z, j_! T_U) \text{ is concentrated in cohomological degrees 0 and 1.}
\end{equation}
To see this, recall from the proof of Proposition \ref{p:1stratum} that $\delta_z$ is the average of the simple $\chI$-equivariant sheaf $\delta_{\dot{z}}$ on the minimal $\chI$-orbit $\dot{Z}$ in $Z/\chP_2$, hence by adjunction
\begin{align}
  \nonumber \on{Hom}_{\Dmod( _{\chP_1}\mathring{I}, \psi \backslash X /  _{\chP_2}\mathring{I}, \psi)}( \delta_z, j_! T_U) &\simeq \on{Hom}_{\Dmod( _{\chP_1}\mathring{I}, \psi \backslash X /  _{\chP_2}\mathring{I}, \psi)}( \on{Av}_{!*}^\psi \delta_{\dot{z}}, j_! T_U) \\\nonumber  &\simeq \on{Hom}_{\Dmod(\chI \backslash X /  _{\chP_2}\mathring{I}, \psi)  }( \delta_{\dot{z}}, \on{Av}_*^{\chI} \circ \on{Av}_{!*} \circ \hspace{.5mm}j_! (T_U)). 
  \intertext{By Proposition \ref{p:avbbmon} and Lemma \ref{l:avavpsi}, we may rewrite the latter as}
  \label{e:0and1} & \simeq \on{Hom}_{\Dmod({\chI} \backslash X /  _{\chP_2}\mathring{I}, \psi)}(\delta_{\dot{z}}, j_! \circ \on{Av}_*^{\chI} \circ \on{Av}_{!*} (T_U)).
\end{align}
Again using Proposition \ref{p:avbbmon} and Lemma \ref{l:avavpsi}, it follows that $\on{Av}_*^{\chI} \circ \on{Av}_{!*}$ sends (co)standard objcts to (co)standard objects, and thus 
\[
    \Upsilon := \on{Av}_*^{\chI} \circ \on{Av}_{!*} (T_U)
\]
is a tilting object of $\Dmod({\chI} \backslash X /  _{\chP_2}\mathring{I}, \psi)$. 

The desired vanishing \eqref{e:claim0n1} now follows straightforwardly from this observation by a general argument about highest weight categories. Namely, to see that 
\begin{equation} \label{e:bingbong}
\on{Hom}(\delta_z, j_! T_U) \simeq \tau^{\geqslant 0} \on{Hom}(\delta_z, j_! T_U),
\end{equation}
note that the standard filtration on $\Upsilon$ implies that $j_!(\Upsilon)$ again has a standard filtration, and hence lies in the heart of the $t$-structure. Dually, to see that 
\begin{equation} \label{e:bongbing}
\on{Hom}(\delta_z, j_! T_U) \simeq \tau^{\leqslant 1} \on{Hom}(\delta_z, j_! T_U),
\end{equation}
one may use the standard filtration on $\Upsilon$. Namely if we denote the inclusion of the minimal stratum and its complement by 
\[
     \iota: \dot{Z} \rightarrow X \quad \quad \text{and} \quad \quad \zeta: \dot{U} \rightarrow X,
 \]
the claim follows by applying the triangle 
\[
       \iota^!\circ  \zeta_!\circ \zeta^! \rightarrow \iota^! \rightarrow \iota^! \circ \iota_* \circ \iota^* \xrightarrow{+1}
\]
to a standard object $j_{u, *}$ from $\dot{U}$ and using the vanishing of  $\iota^! j_{u, *}$ and the right exactness of $\iota^*$.

     Let us apply \eqref{e:0and1} to obtain the desired cofree-monodromic tilting extension. Namely, note that the tautological equivalence 
     \[
           \on{Hom}(\delta_z, - ): \Dmod( _{\chP_1}\mathring{I}, \psi, \St \backslash Z /  _{\chP_2}\mathring{I}, \psi, \St) \simeq \on{Hom}(\delta_z, \delta_z)^{op}\on{-mod}. 
     \]
exchanges the injective envelope $\widehat{\delta}_z$ and the augmentation module $k$. In particular, from \eqref{e:0and1} and the fact that appearing endomorphism algebra is coconnective and has $$\on{Ext}^0(\delta_z, \delta_z) \simeq k,$$we obtain, cf. Proposition \ref{p:coconnectivealgs} and Example \ref{ex:minres}, a triangle 
\[
        \widehat{\delta}_z \underset k \otimes \on{Ext}^1(\delta_z, j_! T_U)[-1]  \rightarrow    i^!j_! T_U  \rightarrow \widehat{\delta}_z \underset k \otimes \on{Ext}^0(\delta_z, j_! T_U) \xrightarrow{+1}.
\]
Consider the adjoint of the left-hand morphism. We claim its cone $T_X$, i.e.,
\begin{equation} \label{e:birthofT}
      i_* \widehat{\delta}_z \underset k \otimes \on{Ext}^1(\delta_z, j_! T_U)[-1] \rightarrow  j_! T_U \rightarrow T_X \xrightarrow{+1}
\end{equation}
is the sought-for indecomposable cofree-monodromic tilting extension. Indeed, by construction it has the correct $*$-stalks and $!$-stalks, i.e., is  cofree-monodromic tilting. 
\iffalse To see its indecomposability, note the identities 
%
\[
        j^! T_X \simeq T_U \quad \quad \text{and} \quad \quad i^* T_X \simeq \widehat{\delta}_z \underset k \otimes \on{Ext}^1(\delta_z, j_! T_U).
\]
%
Recalling that $T_U$ is indecomposable, the indecomposability of $T_X$ follows from considering the induced decompositions of $j^!T_X$ and $i^*T_X$ and considering the boundary map in the triangle 
%
\[
       j_!j^! \rightarrow \on{id} \rightarrow i_*i^* \xrightarrow{+1}. 
\]
%
\fi 
  By induction, we may take $X$ to be closure of $I_{\chP_1}wI_{\chP_2} $, and thereby obtain some  cofree-monodromic tilting object $T_w$ satisfying (1) and (2).  
  
  To prove the remaining assertions of the proposition, and in particular that $T_w$ is indecomposable,\footnote{This indecomposability is also straightforward to see directly from the construction.} we  claim that any endomorphism $\phi$ of $T_w$ which is an isomorphism upon restriction to $I_{\chP_1}wI_{\chP_2} $ is an isomorphism. Indeed, let us prove this by induction on the strata, as above. It follows from the existence of cofree monodromic standard and costandard filtrations on $T_X$ that one has a  short exact sequence 
$$0 \rightarrow \Hom(i^* T_X, i^! T_X) \rightarrow \Hom(T_X, T_X) \rightarrow \Hom(j^! T_X, j^! T_X)  \rightarrow 0.$$
 Explicitly, to a map $\mu: i^* T_X \rightarrow i^! T_X$, the associated endomorphism of $T_X$ is given by 
$$T_X \rightarrow i_* i^* T_X \xrightarrow{\mu} i_! i^! T_X \rightarrow T_X.$$
In particular, the composition of any two such maps includes the canonical map 
$$i_! i^! T_X \rightarrow T_X \rightarrow i_* i^* T_X.$$
By applying the sequence 
$$i_! i^! \rightarrow \on{id} \rightarrow i_* i^*$$
to every term of \eqref{e:birthofT}, it follows that the map $i_! i^! T_X \rightarrow i_* i^* T_X$ identifies with the differential in our chosen minimal resolution of $i^! j_! T_U$
$$ i_* \widehat{\delta}_x \underset{k} \otimes \hspace{.7mm} \Ext^0(\delta_z, j_! T_U) \rightarrow i_* \widehat{\delta}_x \underset{k} \otimes  \hspace{.7mm} \Ext^1(\delta_z, j_! T_U),$$
cf. Example \ref{ex:minres}. In particular, under the identification of Steinberg--Whittaker sheaves on our stratum $Z$ with $\on{QC}^!(R^w)$, cf. Equation \eqref{e:ssbims}, if we write $\mathfrak{m}$ for the maximal ideal of $R^w$, the differential lies in 
\begin{equation} \label{e:lambdaofgod}\mathfrak{m} \cdot \Hom(\widehat{\delta}_x \underset{k} \otimes \hspace{.7mm} \Ext^0(\delta_z, j_! T_U), \widehat{\delta}_x \underset{k} \otimes \hspace{.7mm} \Ext^1(\delta_z, j_! T_U)). \end{equation}
With this observation in hand, let us prove that any endomorphism of $T_X$ which induces an isomorphism of $j^! T_X$ is an isomorphism itself. To see this, note that $R^{W_1}$ acts by monodromy operators on all of $T_X$, as it does on any object in any category of Steinberg--Whittaker invariants 
$$R^{W_1} \rightarrow HH^\bullet(\sC^{ _{\chP_1}\mathring{I}, \psi, \St}),$$cf. Proposition 9.2.9 of \cite{campbelldhillon}.\footnote{Note that, while what literally is written there identifies $R^{W_1}$ with the equivariant Hochschild cohomology of a certain category of Lie algebra representations. However, the latter is identified in Theorem 7.2.5 of {\em loc. cit.} with a category of Steinberg--Whittaker sheaves, and in particular the above equivariant Hochschild cohomology is identified with the endomorphisms of the monoidal unit of 
$$\Dmod( _{\chP_1}\mathring{I}, \psi, \St \bs \chG_F / _{\chP_1}\mathring{I}, \psi, \St).$$} Write $\mathfrak{m}_1$ for the maximal ideal of $R^{W_1}$. By considering e.g. a cofree-monodromic standard filtration on $T_X$, one sees that if for any integer $N \geqslant 0$ we write $T_X^{\mathfrak{m}_1^N}$ for the maximal subobject of $T_X$ annihialted by $\mathfrak{m}_1^N$, this is finite length and we have that 
$$T_X \simeq \varinjlim_N T_X^{\mathfrak{m}_1^N}.$$
The claim now follows by noting that the natural map 
$$\Hom(T_X, T_X) \rightarrow \Hom(T_X^{\mathfrak{m}_1^N}, T_X^{\mathfrak{m}_1^N})$$sends $\Hom(i^* T_X, i^! T_X)$ to nilpotent endomorphisms of $T_X^{\mathfrak{m}_1^N}$ thanks to the observation preceding \eqref{e:lambdaofgod}.

 With this, let us now deduce our final claim regarding isomorphism classes of indecomposable cofree-monodromic tilting sheaves. Let $T$ be any cofree-monodromic tilting sheaf. Pick a stratum $$j: I_{\chP_1}wI_{\chP_2} \rightarrow G$$ maximal in its support. Pick any retraction, i.e. a pair of homomorphisms 
 \[
          \phi: \widehat{\delta}_w \xrightarrow{} j^*T \quad \text{and} \quad \psi: j^*T \xrightarrow{} \widehat{\delta}_w, \quad \text{where } \psi \circ \phi = \on{id}_{\widehat{\delta}_w}.  
 \]
As $T$ and $T_w$ are cofree-monodromic tilting, a standard argument using (co)standard filtrations shows these are the restrictions of some homomorphisms 
\[
     \widetilde{\phi}: T_w \rightarrow T \quad \text{and} \quad \widetilde{\psi}: T \rightarrow T_w.
\]
By the preceding paragraph, the composition $\widetilde{\psi} \circ \widetilde{\phi}$ is an automorphism of $T_w$, i.e., $T_w$ is a summand of $T$, as desired. \end{proof}

\subsubsection{} Finally, let us unwind the promised connection between cofree-monodromic tilting sheaves and near compactness.

Let us denote the additive category of cofree-monodromic tilting sheaves by 
$$\on{Tilt}_{\chP_1, \chP_2} \hookrightarrow \Dmod( _{\chP_1}\mathring{I}, \psi_1, \St \backslash \chG_F /  _{\chP_2}\mathring{I}, \psi_2, \St)^\heartsuit.$$
This inclusion into the abelian category prolongs into a fully faithful embedding from the homotopy category of bounded complexes of cofree-monodromic tiltings into the derived category of Steinberg--Whittaker sheaves
\begin{equation} \label{e:eScHaT} K^b(\on{Tilt})_{\chP_1, \chP_2} \hookrightarrow \Dmod( _{\chP_1}\mathring{I}, \psi_1, \St \backslash \chG_F /  _{\chP_2}\mathring{I}, \psi_2, \St),  \end{equation}cf. Lemma \ref{l:NeXt}.

\begin{cor} \label{c:PoSt}Fix an object $\sM$ of $\Dmod( _{\chP_1}\mathring{I}, \psi_1, \St \backslash \chG_F /  _{\chP_2}\mathring{I}, \psi_2, \St)$. The following are equivalent. 

\begin{enumerate}
    \item  $\sM$ belongs to the pre-triangulated hull of the $\hj_{w, !}$ for $w \in W_1 \bs W_a / W_2.$

    \item $\sM$ belongs to the essential image of \eqref{e:eScHaT}. 

    \item The left $\chI$-average $\on{Av}_{\chI, *} \sM \in \Dmod(\chI \bs \chG_F / _{\chP_2}\mathring{I}, \psi_2, \St)$ is compact. 

    \item The right $\chI$-average $\on{Av}_{\chI, *} \sM \in \Dmod( _{\chP_1}\mathring{I}_1, \psi, \St \bs \chG_F / \chI)$ is compact. 
    
    \item The two-sided $\chI$-average $\on{Av}_{\chI \times \chI, *} \sM \in \Dmod(\chI \bs \chG_F / \chI) $ is compact. 
\end{enumerate}
\end{cor}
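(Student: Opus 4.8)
\textbf{Proof strategy for Corollary \ref{c:PoSt}.} The plan is to run the chain of equivalences $(1) \Leftrightarrow (2) \Leftrightarrow (3)$, $(1) \Leftrightarrow (4)$ by the mirror argument, and then $(3), (4) \Leftrightarrow (5)$ as a final bookkeeping step. The equivalence $(1) \Leftrightarrow (2)$ is essentially the content of Lemma \ref{l:NeXt} together with Proposition \ref{p:classtilts}: by Lemma \ref{l:NeXt} the essential image of \eqref{e:eScHaT} is the pre-triangulated hull of the cofree-monodromic tilting sheaves, and a cofree-monodromic tilting sheaf admits a finite cofree-monodromic costandard (hence also standard) filtration, so the pre-triangulated hulls of $\{T_w\}$ and of $\{\hj_{w, !}\}$ coincide inside the derived category. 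One direction is immediate since each $T_w$ has a finite $\hj_{\bullet, !}$-filtration; for the other, one notes $\hj_{w, !}$ itself can be built from tiltings supported on the closure of the stratum by downward induction using the recollement triangles, exactly as in the classical tilting formalism.

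The heart of the matter is $(1) \Leftrightarrow (3)$ (and symmetrically $(1) \Leftrightarrow (4)$). First I would reduce to a single stratum. By Lemma \ref{l:whosnc} applied to $\Dmod(\chI \bs \chG_F / _{\chP_2}\mathring{I}, \psi_2, \St)$ with its stratification by the cells $\chI w I_{\chP_2}$, an object is compact if and only if it is $*$-extended from a finite union of strata and its $*$-restriction to each stratum is compact there. The functor $\on{Av}_{\chI, *}$ is $\Dmod(\chG_F)$-equivariant (being a left adjoint with continuous right adjoint, or directly by its construction) and commutes with the relevant $*$- and $!$-restrictions to strata, so testing compactness of $\on{Av}_{\chI, *}\sM$ amounts, stratum by stratum, to testing compactness of $\on{Av}_{\chI, *}(\sM|_{\text{stratum}})$. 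On a single stratum $I_{\chP_1}wI_{\chP_2}$, Corollary \ref{c:OxYtOcIn} identifies the cofree-monodromic tilting (equivalently: finite direct sum of indecomposable injective) objects precisely as those whose average $\on{Av}_{!*}(\sM)$ is a finite direct sum of indecomposable injectives in $\Dmod(\chI\mon\!\bs I_{\chP_1} w I_{\chP_2}/\chI\mon)^\heartsuit$; under the Soergel-type identification \eqref{e:ssbims} and the Koszul-dual picture of Section \ref{sec cofree} (cf. Corollary \ref{c:cofree=fdcoh} and Proposition \ref{p:oneorbit}), such objects are exactly the nearly compact ones, and applying a further $\on{Av}_{\chI, *}$ lands them in the compact objects of $\Dmod(\chI \bs I_{\chP_1}wI_{\chP_2} / _{\chP_2}\mathring{I}, \psi_2, \St)$ by Proposition \ref{p:oneorbit} (with $\sC = \Dmod(\chG_F)$ restricted to the relevant homogeneous space). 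Conversely, if $\on{Av}_{\chI, *}(\sM|_{\text{stratum}})$ is compact, then unwinding the adjunction $(\on{Av}_{\chI, *}, \on{Av}_{\chI}^{!})$ or rather tracking through the equivalence of Proposition \ref{p:oneorbit} forces $\sM|_{\text{stratum}}$ to be a finite sum of indecomposable injectives, i.e. cofree-monodromic tilting; summing over the finitely many strata in the support and using Lemma \ref{l:NeXt} gives $(1)$.

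Finally, $(3) \Leftrightarrow (5)$ and $(4) \Leftrightarrow (5)$: since $\on{Av}_{\chI \times \chI, *} \simeq \on{Av}_{\chI, *} \circ \on{Av}_{\chI, *}$ (left averaging commuting with right averaging), one direction is the observation that $\on{Av}_{\chI, *}: \Dmod(\chI \bs \chG_F / _{\chP_2}\mathring{I}, \psi_2, \St) \to \Dmod(\chI \bs \chG_F / \chI)$ preserves compactness, being a left adjoint with continuous right adjoint; for the converse one uses that on $\chI$-equivariant categories the relevant averaging functor is conservative (or that the composite with $\on{Av}_{!*}^{\psi_2}$ recovers the original object up to the exact functors appearing in Lemma \ref{l:avavpsi}), so compactness of the two-sided average forces compactness of the one-sided average. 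The main obstacle I anticipate is the careful single-stratum analysis in $(1) \Leftrightarrow (3)$: one must check that the chain of equivalences (Soergel identification \eqref{e:ssbims}, Koszul duality $\Sym \leftrightarrow \wh\Sym$, the identification of nearly compact objects with perfect-over-the-residue-field modules in Corollary \ref{c:cofree=fdcoh}) is compatible with the averaging functors in the way needed, rather than merely abstractly matching up categories; once this compatibility is in place the rest is formal descent over the stratification via Lemma \ref{l:whosnc} and Corollary \ref{c:whoisncshv}.
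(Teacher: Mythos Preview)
Your equivalence $(1)\Leftrightarrow(2)$ via Lemma \ref{l:NeXt} and Proposition \ref{p:classtilting} is fine and matches the paper.

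There is a genuine gap in your $(5)\Rightarrow(3)$ step. You argue that conservativity of the right averaging functor forces it to reflect compactness, but this is false in general: a conservative continuous functor need not reflect compactness (e.g.\ $\on{Oblv}:\Vect^H\to\Vect$ for $H$ reductive is conservative but does not reflect compactness). The alternative you suggest, that the composite with $\on{Av}_{!*}^{\psi_2}$ recovers the original object, would require $\on{Av}_{!*}^{\psi_2}\circ\on{Av}_{!*}\simeq\on{id}$, which is not what Lemma \ref{l:avavpsi} gives (it gives an ambidextrous adjunction, not a retraction). Your $(3)\Rightarrow(1)$ sketch also glosses over a real issue: the stratifications on the source (by $W_1\backslash W_a/W_2$) and target (by $W_a/W_2$) of the left $\chI$-average do not match, so the claim that $\on{Av}_{\chI,*}$ commutes with $*$-restriction to strata needs unpacking.

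The paper avoids both issues by organizing the logic differently. It observes that $(1),(2)\Rightarrow(3),(4),(5)$ is the easy direction: each $\hj_{w,!}$ is manifestly sent to a compact object under any of the averages, by the explicit computations in the proof of Proposition \ref{p:1stratum}. Then only the single implication $(5)\Rightarrow(1)$ is needed to close the cycle. For this, condition $(5)$ says precisely that the two-sided $\chI$-monodromic average $\on{Av}_{!*}\times\on{Av}_{!*}(\sM)$ is nearly compact in $\Dmod(\chI\mon\!\bs\chG_F/\chI\mon)$; Corollary \ref{c:whoisncshv} then identifies it as lying in the pretriangulated hull of cofree-monodromic $!$-standards there, and Corollary \ref{c:OxYtOcIn} (which characterizes finite sums of indecomposable injectives on a single stratum precisely via their images under $\on{Av}_{!*}$) transports this back to the bi-Whittaker side. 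This route never needs $(5)\Rightarrow(3)$ or a direct $(3)\Rightarrow(1)$, so the problematic steps in your outline simply do not arise.
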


\begin{proof}The equivalence of 1. and 2. is immediate from Proposition \ref{p:classtilting}. That they imply 3., 4., and 5. follows from the proof of Proposition \ref{p:1stratum}. Finally, that 5. implies 1. and 2. follows from Corollaries \ref{c:OxYtOcIn} and \ref{c:whoisncshv}.    
\end{proof}

With the previous corollary in mind, let us introduce the following terminology. 
\begin{defn} \label{d:ncnear2}
Let $\chL$ be as in Section \ref{s:defst}, and consider a $\Dmod(\chL)$-module $\sC$. We define the {\em nearly compact} objects 
$$(\sC^{N^-_{\chL}, \psi, \St})^{n.c.} \subset \sC^{N^-_{\chL}, \psi, \St}$$
to be the full subcategory of objects $\xi$ such that their average 
$\on{Av}_{B_{\chL}, *} \xi \in \sC^{\BL}$ is compact. Note this is equivalent to asking that their monodromic average $\on{Av}_{\BL\mon, *} \xi \in \sC^{\BL\mon}$ be nearly compact, in the sense of Section \ref{sss:nearcompactdef}. 
\end{defn}

\subsubsection{} We may rephrase Corollary \ref{c:PoSt} by saying that the nearly compact 
objects in a bi-Steinberg--Whittaker category are the pretriangulated hull of the cofree-monodromic tilting sheaves.

In the remainder of this subsection, we would like to give similarly explicit descriptions of the nearly compact objects in the other categories involving a Whittaker condition on one side appearing in Bezrukavnikov's conjectures.

That is, we would like to analyze the nearly compact objects in categories of two types. First, we consider categories of the form 
\begin{equation} \label{e:aLlaRms}\Dmod(I_{\chP_1} \bs \chG_F / _{\chP_2}\mathring{I}, \psi, \St) \simeq \Dmod(I_{\chP_1} \bs \chG_F / _{\chP_2}\mathring{I}, \psi),\end{equation}i.e., with a strict parahoric equivariance condition on the other side. 

To describe the second type of category, recall that $I'_{\chP}$ denotes the derived subgroup of $I_{\chP}$. For a category $\sC$ acted on by $\Dmod(I_{\chP})$, we will use the notation 
$$\sC^{(I_{\chP}, I'_{\chP})\mon} := (\sC^{I'_{\chP}})^{I_{\chP}/I'_{\chP}\mon}.$$

%Note that if we write $Z$ for the center of the standard Levi subgroup $L$ of $I_{\chP_1}$, the natural map 

The second type of category of interest to us will be the nearly compact objects within 
\begin{equation}\Dmod( (I_{\chP_1}, I'_{\chP_1})\mon\!\bs \chG_F / _{\chP_2}\mathring{I}, \psi, \St) \simeq \Dmod( (I_{\chP_1}, I'_{\chP_1})\mon\!\bs \chG_F / _{\chP_2}\mathring{I}, \psi).\label{e:aRouNdYoU}\end{equation}
As we will see, in the case \eqref{e:aLlaRms} of strict equivariance, there will be no difference between compactness and near compactness, but in the case \eqref{e:aRouNdYoU} there will be, due to the central monodromicity. In both cases, the category of interest will be the pretriangulated hull of certain (cofree-monodromic) standard or tilting sheaves.

\subsubsection{} Let us begin with the case \eqref{e:aLlaRms} of strict equivariance. Fix a stratum 
$$I_{\chP_1} w I_{\chP_2}, \quad \quad w \in W_1 \bs W_a / W_2. $$
Recall in addition the associated parabolic subgroup  
$W_{I_{\chP_1}wI_{\chP_2}} \subset W_1,$ 
cf. Equation \eqref{e:p12}. Let us say that a double coset $w \in W_1 \bs W_a / W_2$ is {\em relevant} if $W_{I_{\chP_1}wI_{\chP_2}}$ is the trivial subgroup of $W_1$, and otherwise {\em irrelevant}.

\begin{prop} \label{p:sInGsTraT}The category associated to a single stratum
$$\Dmod( I_{\chP_1} \bs I_{\chP_1} w I_{\chP_2} / _{\chP_2}\mathring{I}, \psi)$$vanishes if $w$ is irrelevant. Otherwise, if we write $\dot{w}$ for the minimal length element of $W_1wW_2$, taking the $!$-fiber at $I_{\chP_1} \dot{w}$  yields an equivalence 
$$\Dmod(I_{\chP_1} \bs I_{\chP_1} w I_{\chP_2} / _{\chP_2}\mathring{I}, \psi) \simeq \Vect, \quad \quad \text{for $w$ relevant}.$$\end{prop}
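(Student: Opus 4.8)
The strategy is to analyze a single stratum via the adjunction of Whittaker averaging functors, exactly as in the proof of Proposition \ref{p:1stratum}, but now with a \emph{strict} rather than monodromic equivariance condition on the left. Concretely, I would first recall that the stratum $I_{\chP_1}wI_{\chP_2} \subset \chG_F$ is the image of $\chI \dot{w} I_{\chP_2}$ under the projection from $\chG_F$, where $\dot{w}$ is the minimal length element of $W_1 w W_2$, and that $*$-pushforward along the locally closed embedding of the minimal $\chI$-orbit identifies
$$\Dmod(\chI \mon \bs \chI \dot{w} I_{\chP_2} / _{\chP_2}\mathring{I}, \psi_2) \simeq \Dmod(\chT\mon\!\bs\chT),$$
with the simple object $\delta_{\dot w}$ coming from the constant perverse sheaf, cf. the cleanness arguments already invoked in Proposition \ref{p:1stratum}. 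So the monodromic-equivariant category on the stratum is understood; the task is to compute its $I_{\chP_1}$-strict invariants.

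Next, I would pass from $\chI$-strict to $I_{\chP_1}$-strict invariants by recalling the factorization of averaging functors: $I_{\chP_1}$-invariants are obtained from $\chI$-invariants by further averaging for $I_{\chP_1}/\chI$, which in Soergel-theoretic terms (cf. Equation \eqref{e:nicemonadsoergel}) amounts to tensoring over $R^{W_1}$, where $R$ is the completion of $\Sym(\fchh)$ and $R^{W_1}$ its $W_1$-invariants. Combined with the computation from \eqref{e:ssbims}, the single-stratum bi-Steinberg--Whittaker category is $\IndCoh(R^w)$ where $R^w = R^{W_{I_{\chP_1}wI_{\chP_2}}}$; imposing strict rather than monodromic equivariance on one side amounts to further killing the monodromy operators coming from $\chH$ on the $I_{\chP_1}$ side, i.e.\ base-changing $\IndCoh(R^w)$ along $\Vect \otimes_{R^{W_1}} -$. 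Since $R^{W_1} \hookrightarrow R^w$ with $R^w$ free (or at least faithfully flat) over $R^{W_1}$ exactly when $W_{I_{\chP_1}wI_{\chP_2}}$ is trivial, the tensor product $\Vect \otimes_{R^{W_1}} \IndCoh(R^w)$ is $\IndCoh(R^w \otimes_{R^{W_1}} k)$, which is $\Vect$ in the relevant case and vanishes (as an invariants category, after the appropriate monadicity statement) in the irrelevant case. The $!$-fiber at $I_{\chP_1}\dot w$ realizes this equivalence with $\Vect$ explicitly, matching the construction of the equivalence in Proposition \ref{p:1stratum}.

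Alternatively — and this is cleaner for bookkeeping — I would deduce the statement directly from Proposition \ref{p:1stratum} together with Theorem \ref{t:avenh} (the strict-vs-monodromic comparison). Namely, the strict category $\Dmod(I_{\chP_1} \bs I_{\chP_1}wI_{\chP_2}/_{\chP_2}\mathring I, \psi)$ is $\Vect \otimes_{\End(\delta^\wedge)\mmod} \Dmod((I_{\chP_1},I'_{\chP_1})\mon \bs I_{\chP_1}wI_{\chP_2}/_{\chP_2}\mathring I, \psi)$, and by Proposition \ref{p:1stratum} the latter is $\IndCoh(R^w)$ with $\End(\delta^\wedge)$ acting through $R^{W_1} \to R^w$. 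One then computes $\Vect \otimes_{R^{W_1}} \IndCoh(R^w) \simeq \IndCoh(k \otimes_{R^{W_1}} R^w)$, and $k \otimes_{R^{W_1}} R^w \simeq k$ iff $R^w = R^{W_1}$ iff $w$ is relevant, and is zero as a category of invariants otherwise. The main obstacle is making precise the vanishing in the irrelevant case: one must argue that when $W_{I_{\chP_1}wI_{\chP_2}}$ is nontrivial, the simple object $\delta_{\dot w}$ genuinely has \emph{nonzero} weight under the $I_{\chP_1}/\chI$-monodromy, so that its $I_{\chP_1}$-average vanishes — this is the analogue of the vanishing \eqref{e:only1simp} and follows from the same analysis of which cosets in $W_1 w W_2/W_2$ support objects surviving $\on{Av}^\psi_{!*}$, now read on the opposite side. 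I expect this to be a short diagram-chase once the Soergel-bimodule dictionary of \eqref{e:ssbims}--\eqref{e:nicemonadsoergel} is in place.
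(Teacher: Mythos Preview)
Your proposal has a genuine gap: you are conflating two different equivariance conditions on the left. Proposition \ref{p:1stratum} computes the \emph{bi-Whittaker} stratum category $\Dmod( _{\chP_1}\mathring{I}, \psi_1, \St \bs I_{\chP_1}wI_{\chP_2} / _{\chP_2}\mathring{I}, \psi_2, \St) \simeq \IndCoh(R^w)$, not the centrally monodromic one $\Dmod((I_{\chP_1},I'_{\chP_1})\mon \bs \cdots)$; these are not the same (cf.\ Proposition \ref{p:sInGsTraTMoNo}, which gives $\Dmod(Q\mon\!\bs Q)$ in the relevant case, not $\IndCoh(R^w)$). So neither Theorem \ref{t:avenh} nor the Soergel monad \eqref{e:nicemonadsoergel} lets you ``kill monodromy'' from $\IndCoh(R^w)$ to land in the strict parahoric category you want. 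Moreover, your tensor-product computation is off even on its own terms: relevance means $W_{I_{\chP_1}wI_{\chP_2}}$ is trivial, hence $R^w=R$, and $k\otimes_{R^{W_1}}R$ is the $W_1$-coinvariant algebra, not $k$; and in the irrelevant case $k\otimes_{R^{W_1}}R^w$ is nonzero of dimension $|W_1|/|W_{I_{\chP_1}wI_{\chP_2}}|$, so you will not get vanishing this way.

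The actual mechanism for vanishing is different and more elementary. The paper's proof is a direct geometric computation: using the right $\chL_2$-action on the point $I_{\chP_1}\dot w$ one identifies the stratum category with $\Dmod(\dot w^{-1}I_{\chP_1}\dot w\cap\chL_2 \bs \chL_2 / N^-_{\chL_2},\psi)$. By minimality of $\dot w$ in $\dot w W_2$, the subgroup $\dot w^{-1}\chI\dot w\cap\chL_2$ is the standard Borel $B_{\chL_2}$, and $\dot w^{-1}I_{\chP_1}\dot w\cap\chL_2$ strictly contains it precisely when $W_{I_{\chP_1}wI_{\chP_2}}$ is nontrivial. In that case $\dot w^{-1}I_{\chP_1}\dot w\cap\chL_2$ contains a simple root subgroup on which $\psi$ is nontrivial, forcing the category to vanish by incompatibility of strict equivariance with the nondegenerate character. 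In the relevant case one gets $\Dmod(B_{\chL_2}\bs\chL_2/N^-_{\chL_2},\psi)\simeq\Vect$. This is the argument you should give; it does not go through Proposition \ref{p:1stratum} or monodromy-killing at all.
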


\begin{proof} Recall that $\chL_2$ denotes the Levi factor of $I_{\chP_2}$, and let us view $\chL_2$ as a subgroup of $\chG_F$ via our chosen Cartan. The right action of $\chL_2$ on $I_{\chP_1}\dot{w} \in I_{\chP_1} \bs \chG_F$ yields a map 
$$I_{\chP_1} \bs I_{\chP_1} w I_{\chP_2} \leftarrow \dot{w}^{-1} I_{\chP_1} \dot{w} \cap \chL_2 \bs \chL_2.  $$
Upon $!$-restricting D-modules, this yields an identification 
$$\Dmod(I_{\chP_1} \bs I_{\chP_1} w I_{\chP_2} / _{\chP_2}\mathring{I}, \psi) \simeq \Dmod( w^{-1} I_{\chP_1} w \cap \chL_2 \bs \chL_2 / N^-_{\chL_2}, \psi).$$
To analyze this, consider the tautological inclusion \begin{equation} \label{e:surfactant} w^{-1} I_{\chP_1} w \cap \chL_2 \supset w^{-1} I w \cap \chL_2.\end{equation} As $\dot{w}$ is of minimal length in $\dot{w} W_2$, we have that $w^{-1} I w \cap \chL_2$ is the standard Borel subgroup of $\chL_2$, i.e., the image of $I$. Recalling that $\psi$ is a nondegenerate character of $N^-_{\chL_2}$, it follows that if the inclusion \eqref{e:surfactant} is strict, the appearing category vanishes, and otherwise $!$-restricting to the identity element of $\chL_2$ yields an equivalence with $\Vect$.
\end{proof}

Note that the inclusion 
$I_{\chP_1} \bs I_{\chP_1} \dot{w} _{\chP_2}\mathring{I} \rightarrow I_{\chP_1} \bs \chG_F $ is affine, and in particular the $!$- and $*$- extension functors 
$$j_!, j_*: \Dmod(I_{\chP_1} \bs I_{\chP_1} w I_{\chP_2} /  _{\chP_2}\mathring{I}, \psi) \rightarrow \Dmod(I_{\chP_1} \bs \chG_F / _{\chP_2}\mathring{I}, \psi)$$
%$$j_!, j_*: \Dmod((I_{\chP_1}, I'_{\chP_1}\mon) \bs I_{\chP_1} w I_{\chP_2} /  _{\chP_2}\mathring{I}, \psi) \rightarrow \Dmod(I_{\chP_1} \bs \chG_F / _{\chP_2}\mathring{I}, \psi)$$
are $t$-exact. It therefore follows from Proposition \ref{p:sInGsTraT} that the finite length objects in the abelian category 
$$\Dmod( I_{\chP_1} \bs \chG_F / _{\chP_2}\mathring{I}, \psi)^{\heartsuit, f.l.}$$
form a highest weight category, with standard and costandard objects the $!$- and $*$- extensions of the simple object from each relevant stratum, which we denote by $j^\psi_{w, !}$ and $j^\psi_{w, *}$, respectively. 

Moreover, the compact objects in $\Dmod(I_{\chP_1} \bs \chG_F / _{\chP_2}\mathring{I}, \psi)$ are simply the pretriangulated hull of the (co)standard objects, i.e., bounded complexes with holonomic cohomology. Let us now show these agree with the nearly compact objects. 

\begin{prop}\label{p:bIgMisTake} An object $\xi$ of $\Dmod(I_{\chP_1} \bs \chG_F / _{\chP_2}\mathring{I}, \psi)$ is nearly compact if and only if it is compact.     
\end{prop}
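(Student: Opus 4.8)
Since compact objects are automatically nearly compact (left adjoints with continuous right adjoints preserve compactness, applied to $\on{Av}_{\chI \times \chI, *}$ as in Corollary \ref{c:PoSt}), only the reverse implication requires argument: a nearly compact $\xi$ must be compact. The plan is to argue stratum by stratum, exploiting the highest weight structure established by Proposition \ref{p:sInGsTraT} together with the $t$-exactness of the $!$- and $*$-extension functors from the relevant strata. The essential point is that in the strict-parahoric case each stratum category $\Dmod(I_{\chP_1} \bs I_{\chP_1} w I_{\chP_2} / {}_{\chP_2}\mathring{I}, \psi)$ is either zero or equivalent to $\Vect$, with no extra ``monodromy'' direction, so that ``finitely many nonzero $*$-restrictions, each a compact object of $\Vect$'' will force genuine compactness.

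First I would reduce to a statement about $*$-restrictions: following the pattern of Lemma \ref{l:whosnc}, any $\xi \in \Dmod(I_{\chP_1}\bs\chG_F/{}_{\chP_2}\mathring{I},\psi)$ may be written as a filtered colimit of its ``truncations'' $i^{\leqslant w}_* i^{\leqslant w, !}\xi$ over unions of strata, and a standard recollement/devissage argument shows $\xi$ is compact precisely when its $*$-restriction to each relevant stratum is a compact object of that stratum category and all but finitely many vanish. Next I would compute $\on{Av}_{\chI\times\chI,*}\xi$ stratum-wise: by the proof of Proposition \ref{p:1stratum} (in particular the clean-extension behaviour of Whittaker averaging and the identification of each stratum with $\Vect$ via $\on{Av}^\psi_{!*}$), the two-sided $\chI$-average commutes appropriately with $*$-restriction, and on each relevant stratum sends the simple object $\delta^\psi_w$ to (a direct sum of copies of) the constant perverse sheaf on the corresponding $\chI$-orbit in $\Dmod(\chI\bs\chG_F/\chI)$. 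Since that category's compact objects are exactly those bounded complexes with finitely many nonzero holonomic $*$-restrictions to strata, compactness of $\on{Av}_{\chI\times\chI,*}\xi$ forces: (a) all but finitely many $*$-restrictions of $\xi$ to strata vanish, and (b) on each nonvanishing relevant stratum the $*$-restriction of $\xi$, viewed in $\Vect$, has the property that its image under a conservative functor to $\Vect$ (namely the further averaging) is perfect --- but that functor is just (a twist of) the identity on $\Vect$, so the $*$-restriction is itself perfect, i.e.\ compact.

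The main obstacle I anticipate is making precise the compatibility ``$\on{Av}_{\chI\times\chI,*}$ commutes with $*$-restriction to strata and detects compactness stratum-by-stratum'' --- i.e.\ checking that the two-sided averaging functor does not mix strata and that it is suitably $t$-exact and conservative on the relevant pieces. This is morally contained in the proof of Proposition \ref{p:1stratum} and in the recollement formalism of Section \ref{renormalize autom}, but one must verify that averaging is an equivalence (up to shift and direct sums) on each relevant stratum and the zero functor on irrelevant strata, so that no information is lost. Once that is in place, the equivalence of compactness and near compactness follows formally: near compactness bounds the support to finitely many strata and forces perfectness on each, which is exactly the description of compact objects given by the highest weight structure and the $t$-exactness of $j_!,j_*$ recorded just before the statement. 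I would also remark, in parallel with Corollary \ref{c:PoSt}, that this argument simultaneously identifies these compact objects with the pretriangulated hull of the standard objects $j^\psi_{w,!}$ over relevant $w$, which is the form in which the result will be used later.
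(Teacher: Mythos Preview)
Your overall strategy (reduce to a single stratum via a Cousin filtration, then use that the Whittaker category on a relevant stratum is $\Vect$) is exactly the paper's approach. However, there are two issues in the execution.

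First, a minor but telling confusion: you invoke the two-sided average $\on{Av}_{\chI\times\chI,*}$ and land in $\Dmod(\chI\bs\chG_F/\chI)$, as in Corollary~\ref{c:PoSt}. But that corollary treats the bi-Whittaker case. Here the left side is already strictly $I_{\chP_1}$-equivariant, and near compactness (Definition~\ref{d:ncnear2}) means compactness of the \emph{one}-sided average to $\Dmod(I_{\chP_1}\bs\chG_F/\chI)$. This is easily fixed, but it affects how you should think about the target on each stratum.

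Second, and more substantively, your single-stratum argument has a gap. You write that the averaging ``is just (a twist of) the identity on $\Vect$'' and, in your obstacle paragraph, that it should be ``an equivalence (up to shift and direct sums)''. Neither is correct: the target on a relevant stratum is $\Dmod(I_{\chP_1}\bs I_{\chP_1}wI_{\chP_2}/\chI)$, which contains many $\chI$-orbits and carries stackiness from $I_{\chP_1}$; the averaging functor is far from an equivalence. The paper handles this in two steps. It first shows that the further passage $\Dmod(\ldots/\chI\mon)\to\Dmod(\ldots/\chI)$ preserves and reflects compactness, by filtering it by maps of the form $\Dmod(\pt)\to\Dmod(\pt/\chH)$. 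It then uses the \emph{relevance} of $\dot{w}$ in an essential way: convolution with $\delta_{\dot{w}}$ gives an equivalence $\Dmod(\BL\bs\chL/\BL\mon)\simeq\Dmod(I_{\chP_1}\bs I_{\chP_1}\dot{w}I_{\chP_2}/\chI\mon)$, under which the remaining averaging becomes the standard nonzero functor $\Vect\simeq\Dmod(\BL\bs\chL/N^-_{\chL},\psi)\to\Dmod(\BL\bs\chL/\BL\mon)$. Relevance is precisely what makes the orbits $I_{\chP_1}\dot{w}y\chI$ (for $y\in W_{\chL}$) distinct, so that this convolution trick works; your proposal does not use relevance beyond quoting Proposition~\ref{p:sInGsTraT}, and without it one cannot trivialize the parahoric stratum in this way. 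In short, what you need is not that the averaging is an equivalence, but that it sends the generator of $\Vect$ to a \emph{nonzero} compact object---and establishing nonvanishing is exactly where the paper's convolution equivalence earns its keep.
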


\begin{proof} Consider the case of a single relevant stratum, i.e., the adjunction 

\begin{equation} \label{e:deadred}\on{Av}_{!*}^\psi: \Dmod(I_{\chP_1} \bs I_{\chP_1}\dot{w}I_{\chP_2} / I\mon) \rightleftarrows \Dmod(I_{\chP_1} \bs I_{\chP_1}\dot{w}I_{\chP_2} / _{\chP_2}\mathring{I}, \psi): \on{Av}_{!*}. \end{equation}
We need to show an object $\xi$ of the right-hand side is compact if and only if $\on{Av}_{!*}(\xi)$ is nearly compact, i.e., the further averaging\begin{equation} \label{e:trueblue}\on{Av}_{I,*}: \Dmod(I_{\chP_1} \bs I_{\chP_1}\dot{w}I_{\chP_2} / I\mon) \rightarrow \Dmod(I_{\chP_1} \bs I_{\chP_1}\dot{w}I_{\chP_2}/I)\end{equation}is compact. We first claim that this second averaging is unnecessary, i.e., that an object $\zeta$ of the left-hand side of \eqref{e:trueblue} is compact if and only if $\on{Av}_{I, *}(\zeta)$ is compact. Indeed, this follows from the fact that \eqref{e:trueblue} is filtered by finitely many strata of the form 
$$\on{Av}_{\chH, *}: \Vect \simeq \Dmod( \pt ) \rightarrow \Dmod( \pt / \chH),$$and an object of $\Vect$ is compact if and only if its $\chH$-average is. 
Therefore, we must show that an object $\xi$ of the left-hand side of \eqref{e:deadred} is compact if and only if $\on{Av}_{!*}(\xi)$ is compact.

Let us denote the simple object on the minimal stratum of the left-hand side of \eqref{e:deadred} by 
$$\delta_{\dot{w}} \in \Dmod(I_{\chP_1} \bs I_{\chP_1}\dot{w}I_{\chP_2} / I).$$
If we write $\chL$ for the Levi factor of $\chP_2$, we claim that the assumption that $\dot{w}$ is relevant implies (and is in fact equivalent to the fact that) convolution with $\delta_{\dot{w}}$ defines a $\Dmod( \BL\mon\!\bs \chL / \BL\mon)$-equivariant equivalence 
\begin{equation} \label{e:fartherfrom}  \Dmod(I_{\chP_1} \bs I_{\chP_1} \dot{w} I_{\chP_2} / I\mon)  \overset{\hspace{.7mm}\sim}\leftarrow \Dmod( \BL \bs \chL / \BL\mon) : \delta_{\dot{w}} \overset{\BL} \star - .\end{equation}
To see this, note that, by the relevance of $\dot{w}$, the strata
$$I_{\chP_1} \dot{w}y I, \quad \quad y \in W_{\chL}$$
are all distinct, and for any $y \in W_{\chL}$ convolution defines an isomorphism 
$$I_{\chP_1} \dot{w}y I \overset{\hspace{.7mm}\sim}\leftarrow I_{\chP_1}\dot{w}I \overset I \times IyI.$$It follows that \eqref{e:fartherfrom} exchanges the (co)standard objects on either side, and identifies the homomorphisms from standard to costandard objects, and is hence an equivalence. 

It therefore remains to see that in the adjunction 
$$\on{Av}_{!*}^\psi: \Dmod( \BL \bs \chL / \BL\mon) \rightleftarrows \Dmod(\BL \bs \chL / N^-_{\chL}, \psi),$$an object $\xi$ in the right-hand side is compact if and only if $\on{Av}_{!*}(\xi)$ is compact, but this is clear.

Having handled the case of a single stratum, the statement of the proposition follows straightforwardly by considering Cousin filtrations. \end{proof}

\subsubsection{} Let us now turn to the case \eqref{e:aRouNdYoU} of monodromic sheaves. For ease of notation, let us denote the quotient $I_{\chP_1}/I'_{\chP_1}$ by $Q$. %Let us denote the center of the Levi quotient of $I_{\chP_1}$ by $Z$. 

\begin{prop} \label{p:sInGsTraTMoNo}The category associated to a single stratum
$$\Dmod( (I_{\chP_1}, I'_{\chP_1})\mon\! \bs I_{\chP_1} w I_{\chP_2} / _{\chP_2}\mathring{I}, \psi)$$vanishes if $w$ is irrelevant. Otherwise, if we write $\dot{w}$ for the minimal length element of $W_1wW_2$, $!$-restricting to $I_{\chP_1} \dot{w}$  yields an equivalence 
$$\Dmod((I_{\chP_1}, I'_{\chP_1})\mon\!\bs I_{\chP_1} w I_{\chP_2} / _{\chP_2}\mathring{I}, \psi) \simeq \Dmod(Q\mon\!\bs Q), \quad \quad \text{for $w$ relevant}.$$\end{prop}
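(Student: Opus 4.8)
The statement is the exact monodromic analogue of Proposition \ref{p:sInGsTraT}, so the plan is to follow that proof nearly verbatim, carrying along the central monodromicity at every step. First I would recall the right action of the Levi factor $\chL_2$ of $I_{\chP_2}$ on the point $I_{\chP_1}\dot{w} \in (I_{\chP_1}, I'_{\chP_1})\mon\!\bs \chG_F$, which produces a map
$$(I_{\chP_1}, I'_{\chP_1})\mon\!\bs I_{\chP_1} w I_{\chP_2} \leftarrow \dot{w}^{-1} I_{\chP_1} \dot{w} \cap \chL_2 \bs \chL_2.$$
Passing to $!$-restriction of D-modules, and noting that the stratum $I_{\chP_1} w I_{\chP_2}$ is a single $(I_{\chP_1} \times I_{\chP_2})$-orbit with connected stabilizer of $I_{\chP_1}\dot{w}$, this yields an identification
$$\Dmod((I_{\chP_1}, I'_{\chP_1})\mon\!\bs I_{\chP_1} w I_{\chP_2} / _{\chP_2}\mathring{I}, \psi) \simeq \Dmod((\dot{w}^{-1} I_{\chP_1} \dot{w} \cap \chL_2, \text{mon})\bs \chL_2 / N^-_{\chL_2}, \psi),$$
where the monodromicity on the left-hand factor is along the image of $Q = I_{\chP_1}/I'_{\chP_1}$ under the (now injective, by minimality of $\dot{w}$) inclusion into $\chL_2$.

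Next I would analyze the tautological inclusion $\dot{w}^{-1} I_{\chP_1} \dot{w} \cap \chL_2 \supset \dot{w}^{-1} \chI \dot{w} \cap \chL_2$. Exactly as in the proof of Proposition \ref{p:sInGsTraT}, minimality of $\dot{w}$ in $\dot{w}W_2$ forces $\dot{w}^{-1}\chI\dot{w} \cap \chL_2$ to be the standard Borel $B_{\chL_2}$ (image of $\chI$); and by the definition of relevance (Equation \eqref{e:p12}), the full group $\dot{w}^{-1} I_{\chP_1}\dot{w} \cap \chL_2$ equals $B_{\chL_2}$ precisely when $w$ is relevant, and strictly contains it (hence contains a nontrivial piece of $\chL_2$ meeting the unipotent radical transversally) when $w$ is irrelevant. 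In the irrelevant case, since $\psi$ is a nondegenerate character of $N^-_{\chL_2}$, the standard cleanness/vanishing argument shows the category vanishes; here the only new point is to check that turning on the monodromicity along $Q$ does not rescue anything, which is immediate since the vanishing is already visible after applying $\on{Oblv}$ to the left-hand monodromic structure, and $\on{Oblv}$ is conservative on monodromic objects along a group of finite-codimensional prounipotent radical (cf. Corollary \ref{coherent is compact} and the discussion of Section \ref{s:defmon}).

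In the relevant case, the identification becomes
$$\Dmod((I_{\chP_1}, I'_{\chP_1})\mon\!\bs I_{\chP_1}wI_{\chP_2}/ _{\chP_2}\mathring{I}, \psi) \simeq \Dmod((B_{\chL_2}, \text{mon along } Q)\bs \chL_2 / N^-_{\chL_2}, \psi).$$
Now I would invoke the clean extension from the minimal Bruhat cell of $\chL_2$ together with the standard equivalence $\Dmod(B_{\chL_2}\mon\!\bs \chL_2 / N^-_{\chL_2}, \psi) \simeq \Dmod(\chT_{\chL_2}/\chT_{\chL_2}\mon)$ appearing in \eqref{e:monwhitcat}; the only modification is that the monodromicity is imposed along the subtorus-quotient $Q = I_{\chP_1}/I'_{\chP_1}$ rather than the full $\chT_{\chL_2}$. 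Tracking which coordinates of $\chT_{\chL_2}$ survive after the clean extension and the Whittaker averaging — the argument of Proposition \ref{p:avmonad} and the cleanness in \eqref{e:avfunc1}, \eqref{e:avfunc2} — collapses everything except the $Q$-direction, leaving $\Dmod(Q\mon\!\bs Q)$. The main obstacle, and the only genuinely new computation, is this last bookkeeping step: verifying that passing through $\on{Av}_{!*}^\psi$ on the Whittaker side is compatible with retaining exactly the $Q$-monodromicity on the invariant side, i.e., that the equivalence $\Dmod(B_{\chL_2}\mon\!\bs\chL_2/N^-_{\chL_2},\psi)\simeq\Dmod(\chT_{\chL_2}/\chT_{\chL_2}\mon)$ restricts $Q$-equivariantly to the asserted equivalence. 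This follows by the same $\Dmod(Q)$-equivariance argument as in Lemma \ref{l:avavpsi} (tensoring up from the universal case $\sC \simeq \Dmod(\chL_2)$), since all the functors in sight are $\Dmod(Q)$-linear, but one must be careful that $Q$ acts through the correct quotient and that the Whittaker averaging on $\chL_2$ is $Q$-equivariant, which it is because $Q$ normalizes $N^-_{\chL_2}$ and fixes $\psi$.
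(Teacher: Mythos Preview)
Your approach of redoing the geometric analysis of Proposition \ref{p:sInGsTraT} while carrying along the $Q$-monodromicity can be made to work, but it is considerably more laborious than necessary, and some steps as written are imprecise. In particular, your claim that $Q = I_{\chP_1}/I'_{\chP_1}$ admits an ``injective inclusion into $\chL_2$'' is not quite right: $Q$ is a \emph{quotient} of $I_{\chP_1}$, not a subgroup, so one must instead track what happens to the pair $(\dot{w}^{-1}I_{\chP_1}\dot{w}\cap\chL_2,\,\dot{w}^{-1}I'_{\chP_1}\dot{w}\cap\chL_2)$ and identify the resulting quotient torus inside $\chL_2$. Likewise, the final ``collapses everything except the $Q$-direction'' requires knowing precisely how this quotient torus sits relative to $\chT_{\chL_2}$ and the Whittaker averaging, which you have not pinned down.

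The paper bypasses all of this bookkeeping with a one-line argument: apply the functor $\Vect \otimes_{\Dmod(\pt/Q)} (-)$ to the already-established strict result of Proposition \ref{p:sInGsTraT}, using the identity \eqref{e:formula2} (equivalently Theorem \ref{t:avenh}), which says exactly that $\sC^{Q\mon} \simeq \Vect \otimes_{\Dmod(\pt/Q)} \sC^Q$ for any $\Dmod(Q)$-module $\sC$. In the irrelevant case this tensors $0$ to $0$; in the relevant case it tensors $\Vect$ (with its natural $\Dmod(\pt/Q)$-action) to $\Dmod(Q\mon\!\bs Q)$, again by \eqref{e:formula2} applied to the universal case $\sC=\Dmod(Q)$. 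This is the idea you are missing: rather than re-run the geometry with an extra monodromic parameter, one deduces the monodromic statement formally from the strict one via the categorical passage between strict and monodromic invariants already set up in Section \ref{group def}.
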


\begin{proof} This follows from Proposition \ref{p:sInGsTraT} by turning on the monodromy operators for $Q$, i.e., applying $$\Vect \underset{ \Dmod(\pt/Q)} \otimes -,$$cf. Equation \eqref{e:formula2}.
\end{proof}

For a relevant stratum $w \in W_1 \bs W_a / W_2$, let us denote the $!$- and $*$- extensions of the object corresponding to the cofree-monodromic sheaf in $\Dmod(Q\mon\!\bs Q)$, normalized to lie in cohomological degree zero, by 
$$\hj_{w, !}, \quad \hj_{w, *} \in \Dmod((I_{\chP_1}, I'_{\chP_1})\mon\!\bs \chG_F / _{\chP_2}\mathring{I}, \psi)^\heartsuit, $$
respectively. We will refer to these as cofree-monodromic standard and costandard objects, respectively, below. 

Let us say that an object of $\Dmod((I_{\chP_1}, I'_{\chP_1})\mon\!\bs \chG_F / _{\chP_2}\mathring{I}, \psi)$ is cofree-monodromic tilting if it admits a finite filtration by cofree-monodromic standard objects and a finite filtration by cofree-monodromic costandard objects. 

The classification of indecomposable cofree-monodromic tilting objects reads as follows.

\begin{prop}For each relevant stratum $w \in W_1 \bs W_a / W_2$, there is a unique up to (non-unique) isomorphism indecomposable cofree-monodromic tilting sheaf $T_w$ such that 
\begin{enumerate}
    \item it is supported on the closure of $I_{\chP_1}wI_{\chP_2} $, and 
    
    \item its restriction to the open stratum $I_{\chP_1}wI_{\chP_2}$ is an indecomposable injective object. 
\end{enumerate}
Moreover, any indecomposable cofree-monodromic tilting sheaf is isomorphic to exactly one of the $T_w$.     
\end{prop}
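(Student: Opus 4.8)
The plan is to run exactly the same inductive construction-and-uniqueness argument as in the proof of Proposition \ref{p:classtilting}, with all references to Steinberg--Whittaker categories on both sides replaced by the ``strict parahoric on the left, Whittaker on the right'' variant, and then to thicken everything by the monodromy operators for $Q = I_{\chP_1}/I'_{\chP_1}$. Concretely, I would first invoke Proposition \ref{p:sInGsTraTMoNo} to record that the relevant strata are exactly those $w$ for which $W_{I_{\chP_1}wI_{\chP_2}}$ is trivial, and that for such $w$ the single-stratum category is $\Dmod(Q\mon\!\bs Q)$, which (as in Example \ref{Cofree group} and Proposition \ref{p:oneorbit}, or directly since $Q$ is a torus) carries its cofree-monodromic sheaf $\widehat{\delta}_w$ as the indecomposable injective object, with coconnective endomorphism algebra $R^{W_1}$-ish, i.e.\ $\Sym(\fchh)^\wedge_0$ after passing through Soergel theory. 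Since $\chI_{\chP_1}\dot w{}_{\chP_2}\mathring{I} \hookrightarrow \chG_F$ is affine, the $!$- and $*$-extensions $\hj_{w,!}, \hj_{w,*}$ lie in the heart, so the finite-length objects form a highest weight category.

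\textbf{Existence.} I would construct $T_w$ by induction over strata in $\overline{I_{\chP_1}wI_{\chP_2}}$ exactly as in Proposition \ref{p:classtilting}. Given an open union of strata $U \subset X$ with indecomposable cofree-monodromic tilting $T_U$, and a closed stratum $i: Z \hookrightarrow X$ with open complement $j: U \hookrightarrow X$, the key computation is that $\on{Hom}(\delta_z, j_!T_U)$ is concentrated in cohomological degrees $0$ and $1$. This is proved as in \emph{loc. cit.}: push the $\delta_z$-Hom-computation onto the strict-$\chI$-equivariant side via $\on{Av}_{\chI,*}$ (here on the left replacing $_{\chP_1}\mathring{I},\psi_1$ by $I_{\chP_1}$, which simply means we never turned on a left Whittaker character), use Proposition \ref{p:avbbmon} and Lemma \ref{l:avavpsi} to see that $\on{Av}_*^{\chI}\circ \on{Av}_{!*}(T_U)$ is a tilting object in $\Dmod(\chI\bs X/ {}_{\chP_2}\mathring{I},\psi_2)$, and conclude the degree-$0$-and-$1$ vanishing from the standard/costandard filtrations of that tilting object together with right-exactness of $\iota^*$ and vanishing of $\iota^!$ on costandards. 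Then $T_X$ is the cone of the adjoint of the degree-$1$ part of the minimal resolution of $i^!j_!T_U$, cf.\ Proposition \ref{p:coconnectivealgs} and Example \ref{ex:minres}; by construction it has the correct $*$- and $!$-stalks, hence is cofree-monodromic tilting, and taking $X = \overline{I_{\chP_1}wI_{\chP_2}}$ gives $T_w$ satisfying (1) and (2).

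\textbf{Uniqueness and indecomposability.} This is where the central monodromicity enters and is the main point of difference from the bi-Whittaker case --- but it is also where I expect the real work, since one must carefully control which endomorphisms are nilpotent. Following the end of the proof of Proposition \ref{p:classtilting}, I would show that any endomorphism $\phi$ of $T_w$ restricting to an isomorphism on the open stratum is itself an isomorphism. Using the short exact sequence $0 \to \Hom(i^*T_X, i^!T_X) \to \Hom(T_X, T_X) \to \Hom(j^!T_X, j^!T_X) \to 0$ coming from the (co)standard filtrations, and the observation (from applying $i_!i^! \to \on{id} \to i_*i^*$ to the defining triangle of $T_X$) that the composite $i_!i^!T_X \to T_X \to i_*i^*T_X$ is the differential in the chosen \emph{minimal} resolution, hence lies in $\mathfrak{m}\cdot\Hom(\cdots)$ for $\mathfrak{m}$ the maximal ideal of the relevant completed polynomial ring, one sees that $\Hom(i^*T_X,i^!T_X)$ contributes only nilpotent endomorphisms modulo $\mathfrak{m}_1^N$. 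Here I would use that $R^{W_1}$ (the completed invariants in $\Sym(\fchh)$) acts by monodromy operators on every object of $\Dmod((I_{\chP_1},I'_{\chP_1})\mon\!\bs\chG_F/{}_{\chP_2}\mathring{I},\psi)$ --- in the monodromic setting this is even cleaner than in \loccit\ because the left $Q$-monodromy supplies these operators directly --- so that $T_X \simeq \varinjlim_N T_X^{\mathfrak{m}_1^N}$ with each $T_X^{\mathfrak{m}_1^N}$ of finite length, and the map $\Hom(T_X,T_X)\to\Hom(T_X^{\mathfrak{m}_1^N},T_X^{\mathfrak{m}_1^N})$ sends $\Hom(i^*T_X,i^!T_X)$ into nilpotents. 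The main obstacle is precisely verifying this nilpotency claim with the correct completed monodromy ring in the $(I_{\chP_1},I'_{\chP_1})\mon$ setting; once it is in place, the Fitting-type argument (any $T$ cofree-monodromic tilting admits, via (co)standard filtrations, liftings $\widetilde\phi: T_w \to T$, $\widetilde\psi: T \to T_w$ of a retraction onto $\widehat{\delta}_w$ over the open stratum, and $\widetilde\psi\circ\widetilde\phi$ is then an automorphism of $T_w$) shows $T_w$ is a summand of $T$, giving both indecomposability of $T_w$ and the classification. $\qed$
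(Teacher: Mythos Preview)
Your overall plan—rerun the argument of Proposition \ref{p:classtilting} with the left side adapted—is exactly what the paper does. But two of your specific adaptations miss the mark, and these are precisely the points where the paper's proof differs from a verbatim copy of the bi-Whittaker argument.

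\textbf{Existence.} You propose to ``push onto the strict-$\chI$-equivariant side via $\on{Av}_{\chI,*}$'' and invoke $\on{Av}_*^{\chI}\circ\on{Av}_{!*}(T_U)$ as a tilting object in $\Dmod(\chI\bs X/ {}_{\chP_2}\mathring{I},\psi_2)$. But there is no $\on{Av}_{!*}$ available on the left here—that functor and Lemma \ref{l:avavpsi} are specific to the Whittaker setting—and there is no natural passage from $(I_{\chP_1}, I'_{\chP_1})\mon$ to $\chI$-equivariance. The paper instead uses the direct adjunction
\[
\Hom_{\Dmod((I_{\chP_1}, I'_{\chP_1})\mon\bs \chG_F/ {}_{\chP_2}\mathring{I},\psi)}(\delta_z, j_! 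T_U) \;\simeq\; \Hom_{\Dmod(I_{\chP_1}\bs \chG_F/ {}_{\chP_2}\mathring{I},\psi)}(\delta_z, \on{Av}_{Q,*}(j_! T_U)),
\]
averaging along the torus $Q = I_{\chP_1}/I'_{\chP_1}$ from monodromic to \emph{strict} $I_{\chP_1}$-equivariance. Since $\on{Av}_{Q,*}$ takes cofree-monodromic (co)standards to ordinary (co)standards stratum by stratum, $\on{Av}_{Q,*}(T_U)$ is tilting in the highest weight category established after Proposition \ref{p:sInGsTraT}, and the degree-$0$-and-$1$ bound follows as before.

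\textbf{Uniqueness.} You invoke $R^{W_1}$ as the ring of monodromy operators controlling the nilpotency argument. But $R^{W_1}$ acted in the bi-Whittaker case because of the left Steinberg--Whittaker structure; it does not act on $\Dmod((I_{\chP_1},I'_{\chP_1})\mon\bs\chG_F/{}_{\chP_2}\mathring{I},\psi)$. The paper explicitly replaces $R^{W_1}$ by the monodromy operators for $Q$ (cf.\ Section \ref{sss:intromonopers}), which you in fact gesture toward in your parenthetical. It is the maximal ideal of $\End(\delta^\wedge_Q)$, not of $R^{W_1}$, whose powers exhaust $T_X$ and render $\Hom(i^*T_X,i^!T_X)$ nilpotent on each finite-length piece. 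Once that correction is made, your Fitting-type argument for the final classification is correct and matches the paper.
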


\begin{proof}As we spell out below, the argument of Proposition \ref{p:classtilts} applies {\em mutatis mutandis}.

Namely, one may again construct an object $T_w$ by downward induction on the strata in the closure of $I_{\chP_1}wI_{\chP_2}$. In the notation of the above argument, the fact that $\Hom(\delta_z, j_! T_U)$ is concentrated in cohomological degrees 0 and 1 follows in this case by noting the adjunction
$$\Hom_{\Dmod((I_{\chP_1}, I'_{\chP_1})\mon\!\bs \chG_F/ _{\chP_2}\mathring{I}, \psi)}(\delta_z, j_! T_U) \simeq \Hom_{\Dmod(I_{\chP_1} \bs \chG_F/ _{\chP_2}\mathring{I}, \psi)}(\delta_z, \on{Av}_{Q, *}(j_! T_U)),$$
and that the averaged object $\on{Av}_{Q, *}(j_! T_U))$ now admits standard and costandard filtrations, i.e., is a tilting object of $\Dmod(I_{\chP_1} \bs \chG_F/ _{\chP_2}\mathring{I}, \psi)$. Therefore the general observation about highest weight categories recorded near Equations \eqref{e:bingbong} and \eqref{e:bongbing} applies in the present setting.

The construction of $T_Z$ given the above Ext vanishing proceeds identically, and hence by induction one obtains $T_w$. The remaining assertions of the proposition again follow from the fact that any endomorphism of $T_w$ which is an isomorphism after restriction to the open stratum, is an isomorphism. This may be proven exactly as in the proof of Proposition \ref{p:classtilting}, after replacing $R^{W_1}$ in its argument with the monodromy operators for $Q$, cf. Section \ref{sss:intromonopers}.\end{proof}

As before, if we denote the inclusion of the additive category of cofree-monodromic tilting sheaves by 
$$_{\chP_1}\on{Tilt}_{\chP_2} \hookrightarrow \Dmod((I_{\chP_1}, I'_{\chP_1})\mon\!\bs \chG_F/ _{\chP_2}\mathring{I}, \psi)^\heartsuit$$this prolongs to a fully faithful embedding from its homotopy category of bounded complexes 
\begin{equation}K^b(_{\chP_1}\on{Tilt}_{\chP_2}) \hookrightarrow \Dmod((I_{\chP_1}, I'_{\chP_1})\mon\!\bs \chG_F/ _{\chP_2}\mathring{I}, \psi). \label{e:tiltywhilty}\end{equation}
This may equivalently be characterized as the pre-triangulated hull of the cofree-monodromic standard objects, tilting objects, or costandard objects.

\begin{prop}\label{c:post3} An object $\xi$ of $\Dmod((I_{\chP_1}, I'_{\chP_1})\mon\!\bs \chG_F/ _{\chP_2}\mathring{I}, \psi)$ is nearly compact if and only if it belongs to the essential image of \eqref{e:tiltywhilty}.
\end{prop}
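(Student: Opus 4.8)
The strategy is to mimic the proof of Corollary \ref{c:PoSt}, reducing near compactness to a statement about left $\chI$-averages which can then be analyzed stratum by stratum. Concretely, by Definition \ref{d:ncnear2} the object $\xi$ is nearly compact precisely when its average $\on{Av}_{\BL_1, *} \xi$ (for $\BL_1$ the abstract Cartan of $\chL_1$, acting through the quotient $Q = I_{\chP_1}/I'_{\chP_1}$) lands in the compact objects of $\Dmod(I_{\chP_1} \bs \chG_F / _{\chP_2}\mathring{I}, \psi)$. The first step is to observe that, since $Q$ is a torus, the averaging functor $\on{Av}_{Q, *}: \Dmod((I_{\chP_1}, I'_{\chP_1})\mon\!\bs \chG_F / _{\chP_2}\mathring{I}, \psi) \to \Dmod(I_{\chP_1} \bs \chG_F / _{\chP_2}\mathring{I}, \psi)$ is a functor whose fiber-wise behavior on strata is governed by Propositions \ref{p:sInGsTraTMoNo} and \ref{p:sInGsTraT}: on a relevant stratum it is the averaging $\Dmod(Q\mon\!\bs Q) \to \Dmod(Q\bs Q) \simeq \Vect$, which by Corollary \ref{c:cofree=fdcoh} sends an object to a compact (perfect) object of $\Vect$ if and only if the object lies in the pretriangulated hull of the cofree-monodromic sheaf $\delta^\wedge$ on $Q$.

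\textbf{Main argument.} Given this, one proceeds by induction on the (finite) number of strata in the support, exactly as in the proof of Proposition \ref{p:classtilting} and Corollary \ref{c:PoSt}. The cofree-monodromic standard, costandard, and tilting objects all have the property that their averages $\on{Av}_{Q, *}$ are, respectively, standard, costandard, and tilting objects of $\Dmod(I_{\chP_1} \bs \chG_F / _{\chP_2}\mathring{I}, \psi)$ — this follows from the $t$-exactness of the extension functors (affineness of the embeddings $I_{\chP_1}\dot w _{\chP_2}\mathring{I} \hookrightarrow \chG_F$) and the fact that $\on{Av}_{Q, *}$ commutes with $!$- and $*$-extension from a stratum up to cohomological shift. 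Since the compact objects of $\Dmod(I_{\chP_1} \bs \chG_F / _{\chP_2}\mathring{I}, \psi)$ are precisely the pretriangulated hull of the (co)standard objects (as recorded in the discussion preceding Proposition \ref{p:bIgMisTake}), it follows that any object in the essential image of \eqref{e:tiltywhilty} is nearly compact. For the converse, one uses the recollement/Cousin filtration: given a nearly compact $\xi$, one shows by descending induction on the strata that its $*$-restriction to each stratum lies in the pretriangulated hull of the cofree-monodromic (co)standard object there. The key input is that on each stratum $\Dmod(Q\mon\!\bs Q)$, an object whose $Q$-average is perfect over $k$ lies in the pretriangulated hull of $\delta^\wedge$, which is Corollary \ref{c:cofree=fdcoh} applied to $X = Q$. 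Combining the stratum-wise conclusion with the characterization of \eqref{e:tiltywhilty} as the pretriangulated hull of the cofree-monodromic standard objects (Lemma \ref{l:NeXt} analogue in the monodromic setting, established just before the proposition) completes the argument.

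\textbf{Expected obstacle.} The main subtlety is the bookkeeping in the inductive step of the converse direction: one must carefully track how the recollement triangle $j_! j^! \xi \to \xi \to i_* i^* \xi \xrightarrow{+1}$ interacts with $\on{Av}_{Q,*}$ and with near compactness, ensuring that near compactness of $\xi$ forces near compactness of the pieces $j^!\xi$ and $i^*\xi$, and then that near compactness on a single stratum forces membership in the pretriangulated hull of the cofree-monodromic (co)standard object rather than some larger subcategory. This requires knowing that $\on{Av}_{Q,*}$ is conservative on $(I_{\chP_1}, I'_{\chP_1})$-monodromic objects (so that it reflects vanishing of $*$-restrictions, hence the finiteness of the support), together with the fact — from Propositions \ref{p:sInGsTraTMoNo} and \ref{p:sInGsTraT} — that irrelevant strata contribute nothing on either side, so the induction need only run over relevant strata. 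Once these compatibilities are in place the rest is a routine transcription of the argument in Corollaries \ref{c:PoSt} and \ref{c:OxYtOcIn}.
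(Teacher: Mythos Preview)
Your proposal misidentifies which averaging functor defines near compactness here, and as a result proves a different (and easier) statement than the one asserted.

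In this context the category $\Dmod((I_{\chP_1}, I'_{\chP_1})\mon\!\bs \chG_F/ _{\chP_2}\mathring{I}, \psi)$ is being viewed, via Definition~\ref{d:ncnear2}, as the Steinberg--Whittaker invariants of $\sC = \Dmod((I_{\chP_1}, I'_{\chP_1})\mon\!\bs \chG_F)$ with respect to the \emph{right} action of $\chL_2$. Near compactness therefore means that the \emph{right} average $\on{Av}_{I,*}(\xi)$ is compact in $\Dmod((I_{\chP_1}, I'_{\chP_1})\mon\!\bs \chG_F / I)$, not that the left $Q$-average lands in $\Dmod(I_{\chP_1} \bs \chG_F / _{\chP_2}\mathring{I}, \psi)$. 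Your reading conflates the Whittaker averaging of Definition~\ref{d:ncnear2} with the torus-monodromic averaging of Definition~\ref{d:defnc1}. The condition you actually verify---that left $Q$-averaging lands in compact objects---is precisely the characterization of the essential image of \eqref{e:tiltywhilty}, so under your interpretation the proposition becomes tautological and your argument collapses to a restatement of Corollary~\ref{c:cofree=fdcoh} stratum by stratum.

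The genuine content of the paper's proof is the step you are missing: showing that on a single relevant stratum the left and right notions coincide. Concretely, the paper first identifies $\Dmod((I_{\chP_1}, I'_{\chP_1})\mon\!\bs I_{\chP_1}\dot{w}I_{\chP_2} / I\mon)$ with $\Dmod((\BL, \BL')\mon\!\bs \chL / \BL\mon)$ for a suitable $\BL' \subset \BL$ depending on $\dot{w}$, and then proves directly that an object there has compact left $\BL/\BL'$-average if and only if it has compact right $\BL$-average. This is done by reducing to a single $\BL y \BL$-cell, where it becomes an elementary statement about $\Dmod(\chH' \bs \chH / \chH\mon)$ for a subtorus $\chH' \subset \chH$. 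Only after this is established can one commute the averaging functors and conclude via a Cousin argument as you outline. Without this left--right comparison, your proof does not address the proposition.
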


\begin{proof}Recall that in the proof of \ref{p:bIgMisTake} for a single relevant stratum we saw an equivalence
$$\Dmod( I_{\chP_1} \bs I_{\chP_1} \dot{w} I_{\chP_2} / I\mon) \simeq \Dmod(\BL \bs \chL / \BL\mon). $$
In particular, recalling that $Q$ denotes the quotient $I_{\chP_1}/I'_{\chP_1}$, by Theorem \ref{t:avenh}, we may turn back on the monodromy to obtain \begin{align*} \Dmod( (I_{\chP_1}, I'_{\chP_1})\mon\!\bs I_{\chP_1} \dot{w} I_{\chP_2} / I\mon) & \simeq 
\Vect \underset{C^\bullet(BQ)\mod}\otimes \Dmod( I_{\chP_1} \bs I_{\chP_1} \dot{w} I_{\chP_2} / I\mon) \\ & \simeq \Vect \underset{C^\bullet(BQ)\mod}\otimes 
\Dmod(\BL \bs \chL / \BL\mon).\end{align*}
To identify the latter more explicitly, note that the map 
$$C^\bullet(BQ) \rightarrow HH^\bullet(\Dmod(B_L \bs \chL / \BL\mon)) = \End_{\cat{End}(\Dmod(B_L \bs \chL / \BL\mon))}(\on{id}),$$
as it comes from the left convolution, canonically lifts to a map to the Hecke equivariant Hochschild cohomology with respect to right convolution, namely 
$$C^\bullet(BQ) \rightarrow \End_{{\mathbf{End}}_{\Dmod(\BL\mon\!\bs \chL / \BL\mon)}(\Dmod(B_L \bs \chL / \BL\mon))}(\on{id}) \simeq \End_{\Dmod(\BL \bs \chL / \BL)}(\delta_e),$$
i.e., is completely determined by its action on the delta D-module on the minimal stratum $I_{\chP_1} \dot{w} I_{\chP_2}.$

Therefore, writing $\BL' := \BL \cap \on{Ad}_{\dot{w}^{-1}}(I_{\chP_1})$, we obtain an equivalence
$$\Dmod( I_{\chP_1} \bs I_{\chP_1} \dot{w} I_{\chP_2} / I\mon) \simeq \Dmod( (\BL, \BL')\mon\!\bs \chL / \BL\mon). $$
To proceed, we next claim that an object $\zeta$ of $\Dmod((\BL, \BL')\mon\!\bs \chL / \BL\mon)$ is nearly compact with respect to its right $\BL$-monodromicity if and only if it is nearly compact with respect to its left $(\BL, \BL')$-monodromicity. That is, with respect to  the  averaging maps
$$\begin{tikzcd}
& \arrow[dl, "{\on{Av}_{\BL\BL', *}}"'] \Dmod((\BL, \BL')\mon\!\bs \chL / \BL\mon) \arrow[dr, "{\on{Av}_{\BL, *}}"] & \\
\Dmod( \BL \bs \chL / \BL\mon) & & \Dmod((\BL, \BL')\mon\!\bs \chL / \BL), 
\end{tikzcd}$$
we have that $\on{Av}_{\BL/\BL', *}(\zeta)$ is compact if and only if $\on{Av}_{\BL, *}(\zeta)$ is compact. This assertion may checked stratum by stratum, i.e., after replacing $\chL$ with $\BL y \BL$, for $y \in W_{\chL}$. There, if we write $\chH' := \chH \cap \BL'$, the above identifies with the sequence 

$$\Vect \simeq \Dmod(\chH \bs \chH) \xleftarrow{\on{Av}_{\chH/\chH', *}} \Dmod(\chH' \bs \chH/ \chH\mon) \xrightarrow{\on{Av}_{\chH, *}} \Dmod(\chH' \bs \chH / \chH) \simeq \Dmod (\chH' \bs \pt),$$and hence the claim follows by noticing that $\on{Av}_{\chH, *}$ factors through $\on{Av}_{\chH/\chH'}$, and recalling that the averaging map $\Dmod(\pt) \rightarrow \Dmod(\chH' \bs \pt)$ reflects and preserves compactness. 

Having shown the claim, let us deduce the statement of the proposition. By a Cousin filtration argument it is enough to show that for a relevant stratum, an object $\xi$ of $\Dmod((I_{\chP_1}, I'_{\chP_1})\mon\!\bs I_{\chP_1}wI_{\chP_2} / _{\chP_2}\mathring{I}, \psi)$ is nearly compact if and only it lies the pretriangulated hull of the cofree-monodromic object, cf. Proposition \ref{p:sInGsTraTMoNo}. 

On the one hand, the pretriangulated hull of the cofree-monodromic object is the preimage of compact objects under the averaging functor 
\begin{equation} \label{e:circuit1}\Dmod((I_{\chP_1}, I'_{\chP_1})\mon\!\bs I_{\chP_1}wI_{\chP_2} / _{\chP_2}\mathring{I}, \psi) \xrightarrow{\on{Av}_{I_{\chP_1}/I'_{\chP_1}}} \Dmod(I_{\chP_1} \bs I_{\chP_1}wI_{\chP_2} / _{\chP_2}\mathring{I}, \psi).\end{equation}
On the other hand, the nearly compact objects are the preimage of the compact objects under the composition of averaging functors
\begin{align*}\Dmod((I_{\chP_1}, I'_{\chP_1})\mon\!\bs I_{\chP_1}wI_{\chP_2} / _{\chP_2}\mathring{I}, \psi) &\xrightarrow{\Av_{I\mon, *}} \Dmod((I_{\chP_1}, I'_{\chP_1})\mon\!\bs I_{\chP_1}wI_{\chP_2} / I\mon)\\ &\xrightarrow{\Av_{I, *}} \Dmod((I_{\chP_1}, I'_{\chP_1})\mon\!\bs I_{\chP_1}wI_{\chP_2} / I).\end{align*}
By the claim shown above, they are equivalently the preimage of the compact objects under the composition of averaging functors 
\begin{align*}\Dmod((I_{\chP_1}, I'_{\chP_1})\mon\!\bs I_{\chP_1}wI_{\chP_2} / _{\chP_2}\mathring{I}, \psi) &\xrightarrow{\Av_{I\mon, *}} \Dmod((I_{\chP_1}, I'_{\chP_1})\mon\!\bs I_{\chP_1}wI_{\chP_2} / I\mon) \\ &\xrightarrow{\Av_{I_{\chP_1}/I'_{\chP_1}, *}} \Dmod(I_{\chP_1} \bs I_{\chP_1}wI_{\chP_2} / I\mon).\end{align*}
However, the latter two functors commute, i.e., equivalently we are considering the preimage of the compact objects under the composition 
\begin{align*}\Dmod((I_{\chP_1}, I'_{\chP_1})\mon\!\bs I_{\chP_1}wI_{\chP_2} / _{\chP_2}\mathring{I}, \psi) &\xrightarrow{\Av_{I_{\chP_1}/I'_{\chP_1}, *}} \Dmod(I_{\chP_1} \bs I_{\chP_1}wI_{\chP_2} / _{\chP_2}\mathring{I}, \psi) \\ &\xrightarrow{\Av_{I\mon,  *}} \Dmod(I_{\chP_1} \bs I_{\chP_1}wI_{\chP_2} / I\mon).\end{align*}

By comparing this with \eqref{e:circuit1}, the proposition now follows from the fact that the functor 
$$ \Dmod(I_{\chP_1} \bs I_{\chP_1}wI_{\chP_2} / _{\chP_2}\mathring{I}, \psi) \xrightarrow{\on{Av}_{I\mon, *}} \Dmod(I_{\chP_1} \bs I_{\chP_1}wI_{\chP_2} / I\mon) $$
preserves and reflects compact objects. Indeed, that it preserves compactness follows from the fact this is also a left adjoint, cf. Lemma \ref{l:avavpsi}. That it reflects compactness follows from the fact that it may be identified with the composition 
$$\Vect \simeq \Dmod(\BL \bs \chL / N^-_{\chL}, \psi) \xrightarrow{\on{Av}_{\BL\mon, *}} \Dmod( \BL \bs \chL / \BL\mon),$$
and in particular is non-zero. 
\end{proof}

\subsection{Convolution of Iwahori monodromic categories}\label{autom convolve sec}

\sss
We would like to reduce verifying the main equivalences of the paper to the case where, on the automorphic side, one of the equivariances imposed is Iwahori monodromicity. The goal of this subsection is to supply the details of this reduction on the automorphic side.

\subsubsection{} The main technical result here is the following.

\begin{prop} The $\Dmod(\chG_F \times \chG_F)$-equivariant convolution map 
$$\Dmod(\chG_F / I\mon) \underset{\Dmod(I\mon\!\bs \chG_F/I\mon)}\otimes \Dmod(I\mon\!\bs \chG_F) \rightarrow \Dmod(\chG_F)$$
is a fully faithful embedding which admits a (continuous) right adjoint. In particular, for any $\Dmod(\chG_F)$-module $\sC$, the canonical map $$\Dmod(\chG_F/I\mon) \underset{\Dmod(I \mon\!\bs \chG_F / I\mon)} \otimes \sC^{I\mon} \rightarrow \sC,$$is a fully faithful embedding of $\Dmod(\chG_F)$-modules, with essential image the submodule of $\sC$ generated by the Iwahori equivariant objects. 
\end{prop}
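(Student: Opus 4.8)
The plan is to reduce everything to the first, universal assertion about the loop group itself and then deduce the statement for an arbitrary $\Dmod(\chG_F)$-module $\sC$ by a standard tensoring-up argument. For the universal statement, I would recognize the source of the convolution map as $\Dmod(\chG_F/I\mon) \otimes_{\Dmod(I\mon\bs\chG_F/I\mon)} \Dmod(I\mon\bs\chG_F)$, which by the $\DGCat$-linearity of invariants (as in Section \ref{group def}) is the composite of two adjunctions: one writes $\Dmod(I\mon\bs\chG_F) \simeq \Dmod(I\mon\bs\chG_F/I\mon) \otimes_{\Dmod(\chG_F)} \Dmod(\chG_F)$ and similarly for the other factor, so that the relative tensor product over the monodromic affine Hecke category becomes $\Dmod(\chG_F/I\mon) \otimes_{\Dmod(\chG_F)} \Dmod(I\mon\bs \chG_F)$ after cancellation, hmm, but more directly the convolution map is the action map associated to viewing $\Dmod(\chG_F)$ as a $(\Dmod(\chG_F), \Dmod(\chG_F))$-bimodule and $\Dmod(\chG_F/I\mon)$, $\Dmod(I\mon\bs\chG_F)$ as the cyclic modules obtained by monodromic invariants on one side.

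The key technical input is that the functor $\on{Av}_*^{I\mon}: \Dmod(\chG_F) \to \Dmod(I\mon\bs\chG_F)$ admits a continuous right adjoint — indeed it is a left adjoint of the fully faithful $\on{Oblv}$, but since $I\mon$-invariants is generated by the compact constant-type objects, $\on{Av}_*^{I\mon}$ is itself continuous with continuous right adjoint, and the comonad it generates on $\Dmod(\chG_F)$ is given by convolution with a coalgebra object, namely (a completion of) the unit. Concretely: I would show that the convolution map factors as a composite of $\Dmod(\chG_F)$-linear functors whose fully faithfulness reduces, by Barr--Beck--Lurie / monadicity over $\Dmod(\chG_F)$, to the statement that the unit of the relevant (co)monad adjunction is an equivalence. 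The cleanest route is to observe that the diagonal bimodule $\Dmod(\chG_F)$ is self-dual as a $\Dmod(\chG_F)$-bimodule (rigidity of $\Dmod(\chG_F)$, or at least the relevant dualizability), so that $\Hom$ out of the tensor product can be computed, and that $\Dmod(\chG_F/I\mon)$ is a dualizable $\Dmod(I\mon\bs\chG_F/I\mon)$-module with dual $\Dmod(I\mon\bs\chG_F)$; this is where the semi-rigidity of the monodromic affine Hecke category (cf. Proposition \ref{convolution first prop}(b), or rather its automorphic analogue) enters. Given this duality data, the counit/unit of the adjunction $(\text{conv}, \text{conv}^R)$ is computed by a base-change diagram along the maps $\chG_F/I \times_{I\mon\bs\chG_F/I\mon} I\mon\bs\chG_F \to \chG_F$, and one checks it is an equivalence exactly as in the proof of Theorem \ref{convolution composition} on the spectral side — the unit map $i^R i \to \Delta_* p^! p_* \Delta^*$ being an equivalence.

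For the second assertion, given the universal one I would tensor the fully faithful embedding with right adjoint over $\Dmod(\chG_F)$ against $\sC$: since $\Dmod(\chG_F)$ is rigid (hence dualizable, and tensoring preserves fully faithful functors admitting continuous right adjoints — this is a standard fact, e.g. as used in Lemma \ref{l:avavpsi}), one gets that
$$\Dmod(\chG_F/I\mon) \underset{\Dmod(I\mon\bs\chG_F/I\mon)}\otimes \sC^{I\mon} \simeq \left(\Dmod(\chG_F/I\mon) \underset{\Dmod(I\mon\bs\chG_F/I\mon)}\otimes \Dmod(I\mon\bs\chG_F)\right) \underset{\Dmod(\chG_F)}\otimes \sC \hookrightarrow \Dmod(\chG_F) \underset{\Dmod(\chG_F)}\otimes \sC \simeq \sC$$
is fully faithful with continuous right adjoint. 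Identifying the essential image: the right-hand subcategory is generated under colimits by objects of the form $m \star c$ for $m$ in the image of $\Dmod(\chG_F/I\mon)$ and $c \in \sC^{I\mon}$; since $\Dmod(\chG_F/I\mon)$ is generated by $\on{Av}_*^{I\mon}$ of compactly-supported generators and convolution is continuous, this image is exactly the $\Dmod(\chG_F)$-submodule of $\sC$ generated by $\on{Oblv}(\sC^{I})$ — here one uses that the $I$-equivariant and $I$-monodromic generated submodules coincide (the closure under the $\Dmod(\chG_F)$-action washes out the distinction), which is the observation recorded in Section \ref{s:defmon}.

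\textbf{Main obstacle.} The crux is verifying that the convolution map is fully faithful, i.e. that the relevant unit map is an equivalence; I expect this to come down to a base-change computation of exactly the same flavor as the proof of Theorem \ref{convolution composition}, where one realizes $i^R i$ and the comonad $\Delta_* p^! p_* \Delta^*$ via parallel base-change diagrams along the loop-group convolution correspondence and matches them. The subtlety specific to the automorphic side is that $\chG_F$ is an ind-scheme (not finite type), so one must be a little careful that the requisite continuity of right adjoints and the self-duality of $\Dmod(\chG_F)$ as a bimodule hold in this setting — but these are by now standard (Raskin, \cite{AGKRRV}), and the monodromicity means the relevant kernels are still ind-holonomic so that all $!$- and $*$-functors are defined. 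A secondary point requiring care is the precise identification of the essential image with the Iwahori-generated submodule rather than merely the Iwahori-monodromic-generated one, which I would handle by the density argument sketched above.
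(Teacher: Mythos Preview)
The paper's proof is a one-line citation: it says the argument is identical to Theorem 3.1.5 of \cite{campbelldhillon}, replacing strict invariants by monodromic ones. So there is no in-paper argument to compare against in detail.

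Your plan is in the right spirit and is essentially what the cited reference carries out: reduce to the universal bimodule statement, establish fully faithfulness by a monadicity/base-change computation in the style of Theorem \ref{convolution composition}, then tensor up for general $\sC$. The reduction step and the essential-image identification are fine as stated. Two places deserve tightening. First, you invoke rigidity of $\Dmod(\chG_F)$; this is not literally true for the loop group (the monoidal unit is not compact), and what you actually need is the weaker package of self-duality as a bimodule plus preservation of adjunctions under tensoring---these hold, but you should cite the relevant infinite-type results (e.g.\ \cite{raskindmod}) rather than appeal to rigidity. Second, the base-change verification analogous to Theorem \ref{convolution composition} used that the relevant maps were proper and quasi-smooth in the algebro-geometric sense; on the automorphic side the inputs are different (placid ind-scheme structure, pro-unipotence of $\mathring{I}$, compactness of the constant sheaf on $I$), so the diagram chase needs to be redone with those ingredients rather than imported wholesale. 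None of this is a genuine gap---the argument goes through---but the paper sidesteps all of it by pointing to the existing literature.
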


\begin{proof}
This may be proven exactly as in Theorem  3.1.5 of \cite{campbelldhillon}, where one replaces the strict invariants therein with monodromic invariants. 
\end{proof}

\begin{cor}For two $\Dmod(\chG_F)$-modules $\sC$ and $\sD$, where $\sC$ is dualizable and generated by its Iwahori equivariant objects, one has a canonical equivalence $${\mathbf{Fun}}_{\Dmod(\chG_F)\on{-mod}}(\sC, \sD) \simeq (\sC^{\vee})^{I\mon} \underset{\Dmod(I\mon\!\bs \chG_F / I\mon)} \otimes \sD^{I\mon}. $$
\label{autom convolution}
\end{cor}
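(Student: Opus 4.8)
\textbf{Proof sketch for Corollary \ref{autom convolution}.}

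The plan is to reduce the statement to the preceding Proposition by the standard dualizability yoga. First I would recall that for a dualizable object $\sC$ in $\Dmod(\chG_F)\mmod$, the functor category ${\mathbf{Fun}}_{\Dmod(\chG_F)\mmod}(\sC, \sD)$ is canonically identified with $\sC^\vee \otimes_{\Dmod(\chG_F)} \sD$, where $\sC^\vee$ denotes the dual as a $\Dmod(\chG_F)$-module, equipped with its natural $\Dmod(\chG_F)$-action via the antipode (or the left-right swap on the group). Here I am using that $\Dmod(\chG_F)$ is rigid as a monoidal category, so that relative tensor products over it behave well; the relevant formalism is in \cite{1affine}.

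Next, since $\sC$ is generated by its Iwahori equivariant objects, the preceding Proposition applies with $\sC$ in the role of the abstract module: it gives a $\Dmod(\chG_F)$-equivariant equivalence
$$\Dmod(\chG_F/I\mon) \underset{\Dmod(I\mon\!\bs \chG_F / I\mon)}\otimes \sC^{I\mon} \overset{\simeq}\longrightarrow \sC.$$
Dualizing this equivalence, and using that $\Dmod(\chG_F/I\mon)$ is self-dual as an object of $\Dmod(I\mon\!\bs\chG_F/I\mon)\mmod$ paired with $\Dmod(I\mon\!\bs \chG_F)$ on the other side (which is again a consequence of rigidity of the monodromic affine Hecke category, together with the identification of $\Dmod(I\mon\!\bs\chG_F)$ as $\Dmod(\chG_F/I\mon)^\vee$ relative to the Hecke category), one obtains
$$\sC^\vee \simeq \Dmod(\chG_F/I\mon) \underset{\Dmod(I\mon\!\bs\chG_F/I\mon)}\otimes (\sC^\vee)^{I\mon},$$
compatibly with the $\Dmod(\chG_F)$-actions, where $(\sC^\vee)^{I\mon} \simeq (\sC^{I\mon})^\vee$ as a module for the monodromic Hecke category. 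Substituting this into $\sC^\vee \otimes_{\Dmod(\chG_F)}\sD$ and then using associativity of relative tensor products together with the identity $\Dmod(I\mon\!\bs\chG_F) \otimes_{\Dmod(\chG_F)} \sD \simeq \sD^{I\mon}$ (the defining adjunction realizing monodromic invariants as coinvariants, via continuity and $\DGCat$-linearity) collapses the composite into
$$(\sC^\vee)^{I\mon} \underset{\Dmod(I\mon\!\bs \chG_F / I\mon)}\otimes \sD^{I\mon},$$
as desired.

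The main obstacle I anticipate is not any single computation but keeping the bookkeeping of the various $\Dmod(\chG_F)$- and $\Dmod(I\mon\!\bs\chG_F/I\mon)$-module structures straight, and in particular verifying that the self-duality of $\Dmod(\chG_F/I\mon)$ over the monodromic Hecke category is compatible with the ambient $\Dmod(\chG_F)$-equivariance — i.e. that the two ways of producing $\sC^\vee$ (dualizing the abstract module directly versus dualizing the presentation) agree as $\Dmod(\chG_F)$-modules. This is where rigidity of both the loop group category and the monodromic affine Hecke category is essential, and one should cite the relevant statements (e.g. \cite{1affine}, together with Proposition \ref{convolution first prop} for the rigidity of $\Dmod(I\mon\!\bs\chG_F/I\mon)$ on the spectral side and its automorphic counterpart) rather than reprove them. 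Once the module-structure compatibilities are in place, the chain of equivalences above is formal.
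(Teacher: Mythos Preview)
Your approach is correct but differs from the paper's in the order of operations. You first convert $\mathbf{Fun}_{\Dmod(\chG_F)}(\sC,\sD)$ into $\sC^\vee \otimes_{\Dmod(\chG_F)} \sD$ using dualizability of $\sC$, then dualize the presentation of $\sC$ coming from the Proposition, and finally collapse. The paper instead plugs the presentation of $\sC$ directly into the $\mathbf{Fun}$ side and uses adjunction to obtain
\[
\mathbf{Fun}_{\Dmod(\chG_F)\mmod}(\sC,\sD) \simeq \mathbf{Fun}_{\Dmod(I\mon\!\bs\chG_F/I\mon)}(\sC^{I\mon}, \sD^{I\mon}),
\]
and only then invokes semi-rigidity of the monodromic Hecke category to rewrite this as $(\sC^{I\mon})^\vee \otimes_{\mathbf{H}} \sD^{I\mon}$, followed by $(\sC^{I\mon})^\vee \simeq (\sC^\vee)^{I\mon}$ via $\DGCat$-linearity of monodromic invariants. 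This avoids entirely the bookkeeping you flag about dualizing the bimodule $\Dmod(\chG_F/I\mon)$ and checking compatibility of module structures: adjunction handles that step for free. One further remark: your appeal to rigidity of $\Dmod(\chG_F)$ is both unnecessary (the identity $\mathbf{Fun}(\sC,\sD)\simeq \sC^\vee\otimes\sD$ follows from dualizability of $\sC$ alone) and not something the paper establishes, so the paper's route is also safer in that respect.
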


%\begin{cor}  One has canonical equivalences $$\Dmod($$

%\end{cor}

\begin{proof} By the previous Proposition, for $\sC$ to be generated by its Iwahori equivariant objects is equivalent to the natural map $$\Dmod(\chG_F/I\mon) \underset{\Dmod(I\mon\!\bs\chG_F/I\mon)} \otimes \sC^{I\mon} \rightarrow \sC $$being an equivalence.

In particular, one obtains by adjunction \begin{align*}{\mathbf{Fun}}_{\Dmod(\chG_F)\mmod}(\sC, \sD) 
& \simeq {\mathbf{Fun}}_{\Dmod(I\mon\!\bs\chG_F/I\mon)}(\sC^{I\mon}, \sD^{I\mon}).
\intertext{Using the semi-rigidity of $\Dmod(I\mon\!\bs\chG_F/I\mon)$, we way further rewrite this as} 
&\simeq (\sC^{I\mon})^\vee \underset{\Dmod(I\mon\!\bs\chG_F/I\mon)} \otimes \sD^{I\mon},\intertext{which by the $\DGCat$-linearity of $I$-monodromic invariants may be written as} & \simeq (\sC^\vee)^{I\mon} \underset{\Dmod(I\mon\!\bs\chG_F/I\mon)} \otimes \sD^{I\mon}, \end{align*}
as desired. \end{proof}

\begin{exmp} If as before we pick two parabolics $\chP, \chQ$ of $\chG$, and write $I_{\chP}$ and $I_{\chQ}$ for the corresponding parahoric subgroups of $\chG_F$, we deduce that convolution yields a canonical equivalence $$\Dmod(I_{\chP} \backslash \chG_F/ I\mon) \underset{\Dmod(I\mon\!\bs\chG_F/I\mon)} \otimes \Dmod(I\mon\!\bs \chG_F/I_{\chQ}) \simeq \Dmod( I_{\chP}\backslash \chG_F/ I_{\chQ}), $$
by taking $\sC \simeq \Dmod(\chG_F/I_{\chP})$ and $\sD \simeq \Dmod(\chG_F/I_{\chQ})$ in the previous corollary. Similarly statements apply to the other equivariance conditions appearing in Bezrukavnikov's conjectures. 
\end{exmp}

\begin{rmk}The results of this subsection hold after replacing Iwahori monodromicity  with Iwahori equivariance equivariance throughout. 
\end{rmk}

%%%%%%%%%%%%%%%%%%%%%%%%%%%%%%%%%centrally monodromic

\section{Langlands dual equivalences of affine Hecke categories}\label{s:mainresults}

In this section we will prove the main results of our paper, i.e. the Langlands equivalences between automorphic and spectral partial affine Hecke categories.  We first introduce some notation (recall Definition \ref{steinberg notation}):
$$\cZ/G := \cZ_{\mf{n}}/G = \wt{\cN}/G \times_{\mf{g}/G} \wt{\cN}/G, \;\;\;\;\;\;\;\; \cZ'/G := \cZ_{\mf{b}, \mf{n}}/G = \wt{\mf{g}}/G \utimes{\mf{g}/G} \wt{\cN}/G, \;\;\;\;\;\;\;\; \cZ_{\mf{b}}/G  = \wt{\mf{g}}/G \utimes{\mf{g}/G} \wt{\mf{g}}/G,$$
$$\wh{\mf{h}} := \wh{\mf{h}}_{\{0\}}, \;\;\;\;\;\;\;\;\;\; \wh{\mf{h}/\!/W} := \wh{\mf{h}/\!/W}_{\{0\}}, \;\;\;\;\;\;\;\;\;\;\wh{\cZ}/G := \cZ_{\mf{b}}/G \utimes{\mf{h}/\!/W} \wh{\mf{h}/\!/W} = (\wh{\cZ_{\mf{b}}})_{\cN}/G.$$
All fiber products are derived, but we emphasize that the fiber products defining $\cZ'$ and $\cZ_{\mf{b}}$ may be taken classically.

Recall that $I = I_{\chB} \subset \chG_F$ is the Iwahori subgroup corresponding to $\chB \subset \chG$, and $\mathring{I}$ its pro-unipotent radical.

\subsubsection{}\label{conventions}
Our starting point is the following theorem of Bezrukavnikov in \cite{roma hecke}, which we lift to an $\infty$-categorical statement and re-state in our notation and conventions.  We define, using the discussion in Section \ref{Cofree tilting sec}, $\wh{\Xi}$ to be the cofree-monodromic tilting sheaf attached to the big cell $\chB \bs \chG / \chB \subset {I}_{\chB} \bs \chG_F / {I}_{\chB}$, which is Grothendieck dual to the big tilting sheaf in \emph{op. cit.}. %and that we default to the $*$-convolution on the automorphic side and the $!$-convolution on the spectral side.  
%The equivalence in \emph{op. cit.} is constructed as an equivalence of monoidal triangulated categories; we upgrade it to an equivalence of monoidal $\infty$-categories below.
\begin{thm}[Bezrukavnikov]\label{roman theorem}
There is an exact monoidal $\cO(\wh{\mf{h}} \times \wh{\mf{h}})$-linear equivalence of $\cO(\wh{\mf{h}} \times \wh{\mf{h}})$-linear monoidal stable $\infty$-categories
$$\wh{\Phi}: \begin{tikzcd}
\Dmod(\mathring{I}\mon\!\bs \chG_F / \mathring{I}\mon) \arrow[r, "\simeq"] & \IndCoh_{\cN}(\cZ_{\mf{b}}/G) = \IndCoh(\wh{\cZ}/G)
\end{tikzcd}$$
where $\cO(\wh{\mf{h}})$ acts by (left and right) log monodromy on $\Dmod(\mathring{I}\mon\!\bs \chG_F / \mathring{I}\mon)$ and via pullback along the map $\wh{\wt{\mf{g}}}_{\cN} \rightarrow \wh{\mf{h}}$ on $\IndCoh(\wh{\cZ}_{\cN}/G)$, such that $\wh{\Phi}(\wh{\Xi}) \simeq \omega_{\wh{\cZ}/G}$.  
Furthermore, we have the following.
\begin{enumerate}[(a)]
\item By applying $- \otimes_{\Mod(\cO(\wh{\mf{h}} \times \wh{\mf{h}}))} \Vect_k$ to $\wh{\Phi}$, we obtain an exact $(\Sym \mf{h}[-2])^{\otimes 2}$-linear monoidal equivalence
$$\Phi: \begin{tikzcd}
   \Dmod(I \bs \chG_F / I) \arrow[r, "\simeq"] & \IndCoh_{\cN[1]}(\cZ/G).
\end{tikzcd}$$
\item By applying $- \otimes_{\Mod(\cO(\wh{\mf{h}}))} \Vect_k$ on the right to $\wh{\Phi}$, we obtain an $\cO(\wh{\mf{h}}) \otimes \Sym \mf{h}[-2]$-linear equivalence of bimodule categories
$$\Phi': \begin{tikzcd}
\Dmod(\mathring{I}\bs \chG_F / I) \arrow[r, "\simeq"] & \IndCoh(\cZ'/G),
\end{tikzcd}$$
i.e. equivariant on the left under the equivalence $\wh{\Phi}$ and on the right under $\Phi$. 
\item Consider the maps
$$\begin{tikzcd}
\mathring{I}\bs G_F / \mathring{I} \arrow[r, "\pi"] & \mathring{I} \bs G_F / I \arrow[r, "p"] & I \bs G_F / I & \cZ/G \arrow[r, "i"] & \cZ'/G \arrow[r, "\iota"] & \wh{\cZ}/G.
\end{tikzcd}$$
The corresponding functors restrict to the following small subcategories, and are compatible with the equivalences above:
$$\begin{tikzcd}[column sep=3.7ex]
\Dmod_{n.c.}(\mathring{I}\mon\!\bs \chG_F / \mathring{I}\mon) \arrow[r, "\wh{\Phi}", "\simeq"'] \arrow[d, "\pi_*"'] & \hCoh_{\cN}(\cZ_{\mf{b}}/G) = \hCoh(\wh{\cZ}/G) \arrow[d, "\iota^!"]  & {\Dmod}_c(I \bs \chG_F / I) \arrow[r, "\Phi", "\simeq"'] \arrow[d, "p^*"'] & \Coh(\cZ/G) \arrow[d, "i_*"] \\
\Dmod_c(\mathring{I} \bs \chG_F / I) \arrow[r, "\Phi'", "\simeq"'] \arrow[d, "\pi^*"'] & \Coh(\cZ'/G) \arrow[d, "\iota_*"] & \Dmod_s(\mathring{I} \bs \chG_F / I) \arrow[r, "\Phi'", "\simeq"'] \arrow[d, "p_*"'] & \Coh(\cZ'/G) \arrow[d, "i^!"] \\
\Dmod_c(\mathring{I}\mon\!\bs \chG_F / \mathring{I}\mon) \arrow[r, "\wh{\Phi}", "\simeq"'] & \Coh_{\cN}(\cZ_{\mf{b}}/G) = \Coh(\wh{\cZ}/G) & \Dmod_s(I \bs \chG_F / I) \arrow[r, "\Phi", "\simeq"'] & \Coh_{\cN[1]}(\cZ/G).
\end{tikzcd}$$
\end{enumerate}
\end{thm}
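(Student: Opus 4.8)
\textbf{Proof strategy for Theorem \ref{roman theorem}.}

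The plan is to deduce the enhanced, $\infty$-categorical monoidal equivalence $\wh{\Phi}$ from the original theorem of \cite{roma hecke}, which produces a triangulated equivalence on the level of homotopy categories. First I would invoke the lifting formalism of Appendix \ref{app infcat}: the original equivalence is constructed as a composition of geometric functors (pullbacks, pushforwards, convolutions) each of which has a natural $\infty$-categorical enhancement, and the key point is to check that the monoidal and $\cO(\wh{\mf{h}} \times \wh{\mf{h}})$-linear structures can be promoted. I would set up both sides as algebra objects in the appropriate $\infty$-category of $\cO(\wh{\mf{h}} \times \wh{\mf{h}})$-linear presentable categories, identify a distinguished compact generator on each side (the image of $\wh{\Xi}$, resp.\ $\omega_{\wh{\cZ}/G}$, together with its translates under left and right monodromy), and apply a Barr--Beck--Lurie / Schwede--Shipley type recognition: the enhanced equivalence then follows from an identification of the dg-endomorphism algebras of these generators as $A_\infty$-algebras over $\cO(\wh{\mf{h}} \times \wh{\mf{h}})$, compatible with the monoidal (= convolution) structure. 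Here the crucial input from \cite{roma hecke} is precisely the computation of these endomorphism algebras, so what must be checked is that the higher coherence data matches; since one side is a category of ind-coherent sheaves on a QCA stack and the other is monodromic D-modules, both admit canonical dg/$\infty$-enhancements and the comparison of the graded endomorphism algebras from {\em loc.\ cit.} rigidifies to an equivalence of the homotopy-coherent structures by formality-type arguments (the endomorphisms of the monoidal unit are $C^\bullet(B\chB)$, a polynomial ring, hence intrinsically formal).

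Next I would establish parts (a) and (b) by base change. For (a), I tensor $\wh{\Phi}$ over $\Mod(\cO(\wh{\mf{h}} \times \wh{\mf{h}}))$ with $\Vect_k$ along the augmentation at $\{0\} \times \{0\}$; on the automorphic side this is exactly the Koszul-dual statement that turning off the monodromy operators $\cO(\wh{\mf{h}}) \to k$ recovers strict Iwahori invariants, i.e.\ the enhanced forgetful functor of Theorem \ref{t:avenh} (applied twice, once on each side), while on the spectral side this is the Koszul duality of Proposition \ref{spectral central} identifying $\IndCoh(\wh{\cZ}/G) \otimes_{(\Sym \mf{h}[-2])^{\otimes 2}} \Vect_k$ with $\IndCoh_{\cN[1]}(\cZ/G)$; the compatibility of these two base changes with $\wh\Phi$ is automatic from the $\cO(\wh{\mf{h}} \times \wh{\mf{h}})$-linearity. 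I must also check $\wh{\Phi}$ intertwines the two $\cO(\wh{\mf{h}})$-actions as claimed, which reduces to tracking the monodromy operators through the construction and matching them with pullback along $\wh{\wt{\mf{g}}}_\cN \to \wh{\mf{h}}$; this is essentially the content of the $\cO(\wh{\mf{h}} \times \wh{\mf{h}})$-linearity assertion. Part (b) is the analogous one-sided base change, tensoring with $\Vect_k$ over $\Mod(\cO(\wh{\mf{h}}))$ on the right only, using on the automorphic side that $\Dmod(\mathring{I} \bs \chG_F/I) \simeq \Dmod(\mathring{I}\mon\!\bs\chG_F/\mathring{I}\mon)\otimes_{\cO(\wh{\mf{h}})}\Vect_k$ and on the spectral side the analogous identification $\IndCoh(\cZ'/G) \simeq \IndCoh(\wh{\cZ}/G)\otimes_{\Mod(\cO(\wh{\mf{h}}))}\Vect_k$; the bimodule-equivariance of $\Phi'$ under $\wh\Phi$ and $\Phi$ is then formal.

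Finally, for part (c), I would verify the six claimed restrictions to small subcategories. The right-hand column (strict/safe vs.\ coherent with nilpotent singular support) follows from the characterizations in Remark \ref{renormalize indcoh}: the functors $p^*, p_*, i_*, i^!$ are adjoint pairs arising from the quasi-smooth closed immersion $i\colon \cZ/G \hookrightarrow \cZ'/G$ and the smooth map $p$, and the matching $\Dmod_s \leftrightarrow \Coh_{\cN\cap\cN[1]}$, $\Dmod_c \leftrightarrow \Coh_\cN$ is exactly the content of Theorem \ref{thm small obj} specialized to these cases; here I use Proposition \ref{p:cohplacidindscheme} / Corollary \ref{coherent is compact} to identify coherent (= pseudo-compact) D-modules on the automorphic side. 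The left-hand column (nearly compact vs.\ continuous ind-coherent) follows from Theorem \ref{wcoh prop qs}: an object of $\IndCoh_\cN(\cZ_\mf{b}/G)$ is continuous, i.e.\ lies in $\hCoh(\wh{\cZ}/G)$, iff its $!$-restriction along the quasi-smooth immersion $\wh{\cZ}/G \hookrightarrow \cZ_\mf{b}/G$ is coherent, which matches via $\wh\Phi$ the characterization of nearly compact monodromic D-modules as those whose $\mathring{I}$-average is coherent (Corollary \ref{c:whoisncshv}, or in the single-orbit case Proposition \ref{p:oneorbit}); the compatibility of $\pi_*$ with $\iota^!$ and of $\pi^*, p_*$ with $\iota_*, i^!$ is then a diagram chase using that $\wh\Phi$ already intertwines the full-category versions of these functors. \textbf{The main obstacle} I anticipate is the first step — rigidifying the triangulated monoidal equivalence of \cite{roma hecke} to a genuinely $\infty$-categorical, $\cO(\wh{\mf{h}}\times\wh{\mf{h}})$-linear monoidal equivalence, since monoidal structures are notoriously not visible at the level of homotopy categories; this is precisely what Appendix \ref{app infcat} is designed to handle, and the argument will hinge on exhibiting the equivalence as induced by an explicit kernel object whose relevant endomorphism algebra is formal, so that no ambiguous higher data enters.
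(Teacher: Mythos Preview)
Your overall strategy for parts (a)--(c) is in the right spirit, but the first step --- lifting the monoidal equivalence to the $\infty$-level --- is where your plan diverges substantially from the paper and where the real difficulty lies.

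You propose to rigidify via a Schwede--Shipley/Barr--Beck argument on a single compact generator, appealing to formality of its endomorphism algebra. This does not suffice for a \emph{monoidal} equivalence: matching the $A_\infty$-endomorphisms of one object yields only a linear equivalence, and the monoidal (convolution) structure is a vast amount of additional coherence data that is not pinned down by formality of $\End(\delta^\wedge_e)$ alone. The paper's Appendix \ref{app infcat} is indeed the relevant tool, but its mechanism is different from what you describe. The key device is Proposition \ref{prop additive nice}: one exhibits a \emph{tilting subcategory} $\cat{T}$ (sums of free-monodromic tilting sheaves) which (i) has vanishing higher Ext, (ii) generates, and (iii) is closed under convolution. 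Conditions (i)--(ii) force the ambient stable $\infty$-category to be $\Ch(h\cat{T})$, and condition (iii) means the monoidal structure already lives on the ordinary additive category $h\cat{T}$, where it is 1-categorical data with no higher coherence to worry about. Bezrukavnikov's original theorem identifies $h\cat{T}$ with its spectral counterpart $h\cat{T}'$ as monoidal additive categories, and applying $\Ch(-)$ promotes this to a monoidal equivalence of stable $\infty$-categories. This is what makes the lift go through; your formality sketch does not supply an analogous mechanism for the monoidal structure.

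Two further points you omit: first, the equivalence in \cite{roma hecke} lands in $\Coh(\cZ^\wedge/G)$ for the \emph{non-finite-type} completed stack, and one must pass through the Grothendieck existence theorem (Theorem \ref{groth exist}) and Grothendieck--Serre duality to reach $\hCoh(\wh{\cZ}/G)$ and match $\wh{\Xi}$ with $\omega_{\wh{\cZ}/G}$. Second, the logical order in the paper is to first obtain the renormalized equivalence $\Ind(\Dmod_{n.c.}) \simeq \Ind(\hCoh)$, then derive (b) by one-sided base change, and only then extract the ``standard'' $\wh{\Phi}$ by identifying the subcategories generated by the essential images of the left adjoints $\pi^*$ and $\iota_*$; part (a) is then a further base change from (b). Your treatment of (a)--(c) via Theorem \ref{t:avenh}, the spectral Koszul duality, and the small-object characterizations of Remark \ref{renormalize indcoh} and Theorem \ref{wcoh prop qs} is broadly correct once the monoidal $\wh{\Phi}$ is in hand, though on the spectral side the paper uses \cite[Cor.~7.4.10]{AG} and \cite[Thm.~4.7]{BFN} directly (with some care since $\cZ^\wedge$ is not finite type) rather than Proposition \ref{spectral central}.
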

\begin{proof}
We introduce the notation 
$$\mf{h}^\wedge = \Spec \cO(\wh{\mf{h}}), \;\;\;\;\;\;\;\;\;\; \cZ^\wedge/G := \wh{\cZ}/G \utimes{\wh{\mf{h}/\!/W}} \Spec \cO(\wh{\mf{h}/\!/W})$$
and let $\Dmod^{pro}_{n.c.}$ denote the dg nerve of the dg derived category of pro-monodromic sheaves defined in Section 3.1 of \cite{roma hecke}. We first lift the main theorem of \cite{roma hecke}, which provides an exact $\cO(\wh{\mf{h}}^2)$-linear monoidal functor on homotopy categories
$$\wh{\Phi}_h: h\!\Dmod^{pro}_{n.c.}(\mathring{I}\mon\!\bs \chG_F / \mathring{I}\mon) \longrightarrow h\!\Coh(\cZ^\wedge/G)$$
to the $\infty$-categorical setting.  Let $\cat{T} \subset \Dmod^{pro}_{n.c.}(\mathring{I}\mon\!\bs \chG_F / \mathring{I}\mon)$ be the full (not stable) subcategory of sums of  free-monodromic tilting objects; see Section 3.4 of \cite{roma hecke} for definitions, and $\cat{T}'$ the full subcategory of corresponding objects on the spectral side under $\wh{\Phi}_h$.  It is a monoidally closed tilting subcategory in the sense of Definition \ref{def tilting subcat} by Proposition 7 of \cite{roma hecke}.  Furthermore, the full subcategory of $\QCoh(\mf{h}^\wedge)$ of finite rank free modules is a tilting subcategory acting on $\cat{T}, \cat{T}'$.  Then, by Proposition \ref{prop additive nice}, the functor $\Ch(h\cat{T}) \rightarrow {\Dmod}_{n.c.}^{pro}(\mathring{I}\mon\!\bs \chG_F / \mathring{I}\mon)$ is an exact monoidal equivalence of $\cO(\wh{\mf{h}}^2)$-linear stable $\infty$-categories by Proposition \ref{prop additive nice}, and likewise for the functor $\Ch(h\cat{T}') \rightarrow \Coh(\cZ^\wedge/G)$.  Thus we have a commuting diagram of monoidal triangulated categories
$$\begin{tikzcd}[column sep=large]
h\!\Ind(\Dmod^{pro}_{n.c.}(\mathring{I}\mon\!\bs \chG_F / \mathring{I}\mon)) \arrow[r, "\wh{\Phi}_h", "\simeq"'] &  h\!\IndCoh(\cZ^\wedge/G) \\
h\!\Ch(h\cat{T}) \arrow[r, "h\!\Ch(\wh{\Phi}_h|_{\cat{T}})"] \arrow[u, "\simeq"] & h\!\Ch(h\cat{T}'). \arrow[u, "\simeq"]
\end{tikzcd}$$
Thus, the functor $\Ch(\wh{\Phi}_h|_{\cat{T}}): \Ch(h\cat{T}) \rightarrow \Ch(h\cat{T}')$ is a monoidal $\cO(\wh{\mf{h}}^2)$-linear equivalence of $\infty$-categories, and we define the monoidal $\cO(\wh{\mf{h}}^2)$-linear equivalence $\wh{\Phi}$ of $\infty$-categories by inverting the equivalences in the diagram
$$\begin{tikzcd}[column sep=large]
\Ind(\Dmod^{pro}_{n.c.}(\mathring{I}\mon\!\bs \chG_F / \mathring{I}\mon)) \arrow[r, "\wh{\Phi}", "\simeq"', dotted] &  \IndCoh(\cZ^\wedge/G) \\
\Ch(h\cat{T}) \arrow[r, "\Ch(\wh{\Phi}_h|_{\cat{T}})", "\simeq"'] \arrow[u, "\simeq"] & \Ch(h\cat{T}'). \arrow[u, "\simeq"]
\end{tikzcd}$$
By construction (see Section 6 of \cite{roma hecke}), we have $\wh{\Phi}(\wh{\Xi}^{pro}) \simeq \cO_{\cZ^\wedge/G}$.  To deduce the statement in the theorem, we first pass through the Grothendieck existence theorem and then intertwine with Verdier and Grothendieck-Serre duality.  For the first step, we apply Theorem \ref{groth exist}, taking $S = \mf{h}^\wedge/\!/W$, to deduce a diagram of monoidal (i.e. by base change on the diagrams defining convolution) equivalences
\begin{equation}\label{steinberg groth exist}
\begin{tikzcd}[column sep=large]
\hCohst(\wh{\cZ}/G)^{\opp} \arrow[r, "\wh{j}_*^{\opp} \; \simeq"', shift right] \arrow[d, "\simeq", "\bD_{\wh{\cZ}/G}"'] & \arrow[l, "\wh{j}^{*,\opp} \; \simeq"', shift right] \Coh(\cZ^\wedge/G)^{\opp} \arrow[d, "\simeq", "\bD_{\wh{\cZ}/G}"'] & \Coh(\cZ_{\mf{b}}/G)^{\opp} \arrow[l, "j^{*,\opp}"'] \arrow[d, "\simeq", "\bD_{\cZ_{\mf{b}}/G}"'] \\
\hCohsh(\wh{\cZ}/G) \arrow[r,  "\wh{j}_* \; \simeq"', shift right] & \Coh(\cZ^\wedge/G) \arrow[l, "\wh{j}^{!} \; \simeq"', shift right] & \Coh(\cZ_{\mf{b}}/G). \arrow[l, "j^{!}"']
\end{tikzcd}
\end{equation}
The verification that the conditions in Theorem \ref{groth exist} are satisfied was proven in Corollary \ref{completed spectral coalgebra}.  That $j^!$ sends $\Coh(\cZ_{\mf{b}/G})$ to $\hCoh(\wh{\cZ}_{\cN})$ by Proposition \ref{wcoh prop}(b).  Furthermore, the equivalences in (\ref{steinberg groth exist}) are compatible with these identifications by the discussion in Section \ref{sec completed spectral coalgebra}.

Together, this provides an $\infty$-categorical $\cO(\wh{\mf{h}}^2)$-linear monoidal equivalence
$$\begin{tikzcd}[column sep=large]
\Ind(\Dmod^{pro}_{n.c.}(\mathring{I}\mon\!\bs \chG_F / \mathring{I}\mon)) \arrow[r, "\wh{\Phi}", "\simeq"'] &  \Ind(\hCohst(\wh{\cZ}/G))
\end{tikzcd}$$
such that $\wh{\Phi}(\wh{\Xi}^{pro}) \simeq \cO_{\wh{\cZ}/G}$.  Twisting by Verdier and Grothendieck-Serre duality, we obtain the equivalence
\begin{equation}\label{warbler}
\begin{tikzcd}[column sep=large]
\Ind(\Dmod_{n.c.}(\mathring{I}\mon\!\bs \chG_F / \mathring{I}\mon)) \arrow[r, "\wh{\Phi}", "\simeq"'] &  \Ind(\hCoh(\wh{\cZ}/G))
\end{tikzcd}
\end{equation}
where $\wh{\Phi}(\wh{\Xi}) \simeq \omega_{\wh{\cZ}/G}$.

To deduce the main statements (i.e. to pass from the above renormalized form to the standard form), we first deduce the equivalence in (b).  Specializing at $0 \in \mf{h}$ on the right, gives the strict category on the automorphic side by Proposition \ref{t:avenh}.  On the spectral side, we have
\begin{align*}
    \IndCoh(\cZ'/G) & \simeq \IndCoh(\cZ^\wedge/G) \otimes_{\QCoh(\cZ^\wedge/G)} \QCoh(\cZ'/G) & \text{\cite[Cor. 7.4.10]{AG}} \\
    & \simeq \IndCoh(\cZ^\wedge/G) \otimes_{\QCoh(\cZ^\wedge/G)} \QCoh(\cZ^\wedge/G) \otimes_{\QCoh(\mf{h})} \QCoh(\{0\}) & \text{\cite[Thm. 4.7]{BFN}} \\
    & \simeq \IndCoh(\cZ^\wedge/G) \otimes_{\QCoh(\mf{h})} \QCoh(\{0\}) & 
\end{align*}
Since $\cZ^\wedge$ is not finite type, the application of the above two results requires some justification.  For the proof of Corollary 7.4.10 of \cite{AG}, we first note that the $*$-restriction is fully faithful by Proposition 4.4.2 of \cite{indcoh} which only assumes that the scheme is locally Noetherian.  We then conclude that the $!$-restriction is fully faithful since it is a twist of the $*$-restriction by the relative dualizing complex, which is a line bundle since $\{0\} \hookrightarrow \mf{h}^\wedge$ is quasi-smooth.  For essential surjectivity, as in Proposition 7.4.3. in \emph{op. cit.} we need only show that the pushforward functor is conservative, which is clear.  For the application of Theorem 4.7 of \cite{BFN}, we need to show that the stacks involved are perfect.  Every affine stack is perfect, so the only question is whether $\cZ^\wedge/G$ is perfect.  By Proposition 3.21 of \emph{op. cit.} it suffices to show that $\mf{g}^\wedge/G$ is perfect, where $\mf{g}^\wedge = \Spec \lim_i \cO(G)/I_{\cN}^i$ and $I_{\cN}$ is the ideal sheaf cutting out the nilpotent cone.  Since $\mf{g}^\wedge$ is affine, so this follows by \emph{loc. cit.} from the fact that $BG$ is perfect.

%The exact same argument can be applied to the constructions in \emph{loc. cit.} (this time using the usual tilting sheaves rather than pro-monodromic ones) to deduce compatible equivalences (see Lemma 43 of \cite{roma hecke}, and note that, reflecting our preference for cofree-monodromic sheaves over  free-monodromic sheaves, the functor $\pi_\dagger$ in Section A.1 of \cite{BezYun} is the coinvariants for monodromy, while ours $\pi_*$ is invariants for monodromy):
Putting them together we have compatible equivalences
$$\begin{tikzcd}[column sep=large]
\Ind(\Dmod_{n.c.}(\mathring{I}\mon\!\bs \chG_F / \mathring{I}\mon)) \arrow[r, "\wh{\Phi}", "\simeq"'] \arrow[d, "\pi_*"] &  \Ind(\hCoh(\wh{\cZ}/G)) \arrow[d, "\iota^!"] \\
\Dmod(\mathring{I}\mon\!\bs \chG_F /I)  \arrow[r, "\Phi'", "\simeq"']  &  \IndCoh({\cZ}'/G)
\end{tikzcd}$$
%and noting that $\pi_*$ takes pro-monodromic tilting sheaves to tilting sheaves.  
By the discussion in Sections \ref{renormalize autom} and \ref{renormalize indcoh}, the full small subcategories $\Dmod_c(\mathring{I}\mon\!\bs \chG_F / \mathring{I}\mon)$ and $\Coh_{\cN}(\cZ_{\mf{b}}/G) \simeq \Coh(\wh{\cZ}/G)$ are generated by the essential images of the left adjoint functors $\pi^*$ and $\iota_*$.  Thus, these subcategories can be matched in (\ref{warbler}), and we deduce the equivalence from the theorem statement, as well as the equivalence $\Phi'$ in (b) and the left-hand side of (c).  By construction, $\Phi'$ is equivariant on the left for the equivalent monodromic affine Hecke categories under $\wh{\Phi}$; we will address equivariance on the right later.

To deduce (a), we apply the functor $- \otimes_{\cO(\wh{\mf{h}})} k$ on the left to deduce a diagram
$$\begin{tikzcd}[column sep=large]
\Dmod(\mathring{I}\mon\!\bs \chG_F /I)  \arrow[r, "\Phi'", "\simeq"']  \arrow[d] &  \IndCoh({\cZ}'/G) \arrow[d]\\
\Dmod(\mathring{I}\mon\!\bs \chG_F /I) \otimes_{\cO(\wh{\mf{h}})} k  \arrow[r, "\Phi' \otimes_{\cO(\wh{\mf{h}})} k", "\simeq"'] \arrow[d]  &  \IndCoh({\cZ}'/G) \otimes_{\cO(\wh{\mf{h}})} k\arrow[d] \\
\Dmod(I \bs \chG_F / I) \arrow[r, dotted, "\Phi", "\simeq"'] & \IndCoh_{\cN[1]}(\cZ/G).
\end{tikzcd}$$
To deduce the equivalence $\Phi$ from $\Phi'$, we need to show that the bottom vertical arrows are equivalences, and that the compositions may be identified with $p_*$ and $i^!$ respectively.  On the automorphic side, we use Proposition \ref{t:avenh} again.  On the spectral side, we note that the map $i: \cZ \hookrightarrow \cZ'$ is quasi-smooth, and that
\begin{align*}
    \IndCoh_\Lambda(\cZ/G) & \simeq \IndCoh(\cZ'/G) \otimes_{\QCoh(\cZ'/G)} \QCoh(\cZ/G) & \text{\cite[Cor. 7.4.10]{AG}} \\
    & \simeq \IndCoh(\cZ'/G) \otimes_{\QCoh(\cZ'/G)} \QCoh(\cZ'/G) \otimes_{\QCoh(\mf{h})} \QCoh(\{0\}) & \text{\cite[Thm. 4.7]{BFN}} \\
    & \simeq \IndCoh(\cZ'/G) \otimes_{\QCoh(\mf{h})} \QCoh(\{0\}) & 
\end{align*}
where $\Lambda$ is the restriction of the total singular support condition on $\cZ'$ to $\cZ$.  A calculation analogous to that in Proposition \ref{reduction using monoidal} shows that this is exactly $\cN[1]$.

Furthermore, the equivalence $\Phi$ is automatically monoidal, since the monoidal structure for $\wh{\Phi}$ is compatible with the $\Mod(\cO(\wh{\mf{h}}))$-bimodule structure.  Equivariance for $\Phi'$ follows similarly.

To deduce the right-hand side of (c), we recall that $\Dmod_c(I \bs \chG_F / I)$ and $\Coh(\cZ/G)$ may be characterized as the full small subcategories of objects which are mapped to compact objects by the left adjoints $p^*$ and $i_*$ respectively, i.e. as discussed in Sections \ref{dmod placid equiv} and \ref{renormalize indcoh}.  

Finally, we note that in general, given a $\Perf(\mf{h})$-module category $\cat{C}$, the category $\cat{C} \otimes_{\Perf(\mf{h})} \Perf(k)$ is a module category for $\cat{End}_{\Perf(\mf{h})}(\Perf(k)) \simeq \Coh(\{0\} \times_{\mf{h}} \{0\})$.  Passing throgh Koszul duality, this gives rise to a $\Perf(\Sym \mf{h}[-2])$-module structure.  In particular, for any compactly generated $\QCoh(\mf{h})$-module category $\cat{C}$ whose module structure restricts to compact objects, the category $\cat{C} \otimes_{\QCoh(\mf{h})} \Vect_k$ automatically acquires a $\Mod(\Sym \mf{h}[-2])$-module structure.  This confirms the claims regarding this Koszul dual $\Sym \mf{h}[-2]$-module structure in the theorem statements.  This concludes the proof of the theorem.
\end{proof}

\subsubsection{}  We also recall the following variant of Theorem 6.1 of \cite{BL}, with monodromic Iwahori equivariance on the left and strict parahoric equivariance on the right.  Before doing so we review some relevant functorialities for parabolic subgroups $P \subset Q$.  %, and let $\rho_P, \rho_Q$ denote the half of the sum of the positive roots in $\mf{n}_P$ and $\mf{n}_Q$ respectively.  %If $L_P, L_Q$ are the corresponding Levi quotients, we let $\rho_{L_P}, \rho_{L_Q}$ denote half the sum of positive roots in $L_P, L_Q$ respectively.  Thus, $\rho_P + \rho_{L_P} = \rho_Q + \rho_{L_Q} = \rho$.  
On the automorphic side, we take the pushforward and pullback functors for the proper map $\pi_P^Q: - \bs \chG_F/I_{\chP} \rihgtarrow - \bs \chG_F/I_{\check{Q}}$:
$$\begin{tikzcd}
\Dmod(- \bs \chG_F / I_{\check{Q}}) \arrow[r, "(\pi_P^Q)^*", shift left] & \Dmod(- \bs \chG_F / I_{\check{P}}). \arrow[l, "(\pi_P^Q)_*", shift left] 
\end{tikzcd}$$
When one of the parabolics is $B$, we take for simplicity $\pi_B^P = \pi^P$.  On the spectral side, note that when $P \subset Q$, we have the opposite inclusion $\mf{n}_P \supset \mf{n}_Q$, thus functoriality arises via the Lagrangian correspondence:
$$\begin{tikzcd}
- \times_{\mf{g}} \na{Q} & - \times_{\mf{g}} \left(G/P \times_{G/Q} \na{Q}\right) \arrow[l, "p"'] \arrow[r, "i"] & - \times_{\mf{g}} \na{P}
\end{tikzcd}$$
and we define functors via the integral transforms, morally the classical limit (up to a shift) of the adjoint pair of functors $(\pi^*, \pi_*)$ on left D-modules
$$\begin{tikzcd}[column sep=large]
\Coh(- \times_{\mf{g}} \na{Q}) \arrow[rr, "(\Lag_P^Q)^* := i_* p^*", shift left] & & \Coh(- \times_{\mf{g}} \na{P}). \arrow[ll, "(\Lag_P^Q)_* := p_* i^!", shift left] 
\end{tikzcd}$$
When one of the parabolics is $B$, we write $(\Lag^{P,*},\Lag^P_*)$ for simplicity.  We state the theorem of \emph{loc. cit.} using our conventions above.
\begin{thm}[Bezrukavnikov, Losev]\label{thm monodromic strict}
Let $P \supset B$ be a parabolic subgroup.  There are equivalences of $\infty$-categories
$$\begin{tikzcd}[column sep=large]
\Dmod(\mathring{I} \bs \chG_F / I) \arrow[r, "\simeq"', "\Phi'"] \arrow[d, shift right, "\pi^P_*"'] &  \IndCoh(\wt{\mf{g}}/G \times_{\mf{g}/G} \wt{\cN}/G) \arrow[d, shift right, "\Lag_*^P"'] \\ 
\Dmod(\mathring{I} \bs \chG_F / I_{\chP}) \arrow[r, "\simeq"', "\Phi_{\mathring{B}, P}"] \arrow[u, shift right, "\pi^{P,*}"'] &  \IndCoh(\wt{\mf{g}}/G \times_{\mf{g}/G} \wt{\cN}_P/G) \arrow[u, shift right, "\Lag^{P,*}"'] 
\end{tikzcd}$$
as module categories for the left actions of the categories $\Dmod(\mathring{I} \bs \chG_F / \mathring{I}) \simeq \IndCoh(\wh{\cZ}/G)$ and compatible with the functorialities discussed above.
\end{thm}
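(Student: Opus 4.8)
The plan is to deduce Theorem \ref{thm monodromic strict} from the already-established equivalence $\Phi'$ of Theorem \ref{roman theorem}(b) together with the monadicity machinery of Section \ref{spectral whittaker sec} on the spectral side and Section \ref{whit monad} (or rather, its parahoric counterpart via Corollary \ref{spectral whittaker convolution}) on the automorphic side. First I would observe that the right $\fH := \Dmod(\mathring I \bs \chG_F / \mathring I)$-module structure on $\Dmod(\mathring I \bs \chG_F / I)$ is not quite what we need; rather we should view $\Dmod(\mathring I \bs \chG_F / I)$ as a left $\fH$-module and set up the parahoric induction $\pi^P_*$ as arising from an algebra object. Concretely, applying the general Proposition \ref{spectral monadic}/Corollary \ref{spectral whittaker convolution} with the sequence $X = \mf b/B \to Y = \mf p/P \to S = \mf g/G$ shows that on the spectral side $\IndCoh(\wt{\mf g}/G \times_{\mf g/G} \wt{\cN}_P/G)$ is the category of modules in $\IndCoh(\cZ'/G)$ over the algebra object $\iota_*\omega_{\cZ_P/G}$, with the monad being $(\Lag^P)^!(\Lag^P)_* \simeq - \star \iota_*\omega_{\cZ_P/G}$; here one uses that $\wt{\mf g}_P = G \times^P \mf p$ and the map $\phi$ is surjective. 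On the automorphic side, the adjunction $(\pi^{P,*}, \pi^P_*)$ for $\mathring I$-equivariant sheaves is also monadic (the map $\chG_F/I \to \chG_F/I_{\chP}$ is proper and surjective, so $\pi^P_*$ is conservative — this should be checked exactly as the conservativity in Proposition \ref{pre spectral monadic}), and the monad is left convolution with an algebra object $\widehat\Xi_{\chP}^{\mathrm{Iw}} \in \Dmod(\mathring I \bs \chG_F / \mathring I)$, which is exactly the algebra object governing parahoric induction, analogous to but distinct from the Whittaker $\widehat\Xi_{\chP}$ of Proposition \ref{p:avmonad}.

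\textbf{Matching the algebra objects.} The heart of the argument is then to check that $\Phi'$ (equivalently $\wh\Phi$ acting on the left) carries the automorphic algebra object to the spectral algebra object, i.e.
\[
\wh\Phi(\widehat\Xi_{\chP}^{\mathrm{Iw}}) \simeq \iota_*\omega_{\cZ_P/G}
\]
as algebra objects of $\IndCoh(\wh\cZ/G)$. Given such a matching, Barr--Beck--Lurie immediately upgrades $\Phi'$ to an equivalence of the module categories, i.e. produces $\Phi_{\mathring B, P}$ together with the asserted compatibility with $(\pi^P_*, \pi^{P,*})$ and $(\Lag^P_*, \Lag^{P,*})$, and the left $\fH$-equivariance is inherited for free. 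To prove the matching I would use the characterization from Proposition \ref{spectral whittaker characterization} / Corollary \ref{completed spectral coalgebra}: $\iota_*\omega_{\cZ_P/G}$ is the minimal subobject of $\omega_{\cZ/G} \simeq \wh\Phi(\widehat\Xi)$ (living in a single cohomological degree) whose restriction to the regular semisimple locus is $\bigoplus_{w\in W_P}\omega_{\cZ_w^{rs}/G}$. On the automorphic side, Proposition \ref{autom algebra} gives the parallel statement: $\widehat\Xi_{\chP}^{\mathrm{Iw}}$ — or its Whittaker avatar, which by Corollary \ref{cor banana peel} and the $\mathring B$-vs-$\mathring I$ translation should coincide with the parahoric one — is the maximal subobject of $\widehat\Xi$ supported on the appropriate locus, with the same regular-semisimple restriction parametrized by $W_P$. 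Since $\wh\Phi$ is an exact monoidal equivalence and $t$-bounded, it preserves minimal subobjects of a fixed object characterized by their restriction to an open substack, so the two algebra objects must agree; tracking the monodromy/$\cO(\wh{\mf h})$-linear structures confirms the identification is as algebras.

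\textbf{Main obstacle.} The delicate point is the precise identification of the automorphic monad for parahoric induction with an algebra object of the form $\widehat\Xi^{\mathrm{Iw}}_{\chP}$ together with its regular-semisimple restriction. The Whittaker monad was computed in Proposition \ref{p:avmonad} in terms of cofree-monodromic tilting sheaves, but what appears here is parahoric (strict $I_{\chP}$) equivariance on one side and $\mathring I$ on the other, so one must either run an analogue of the computation of Proposition \ref{p:avmonad} directly for $\mathring I \bs \chG_F / I \rightleftarrows \mathring I \bs \chG_F / I_{\chP}$ — identifying the monad $\pi^{P,*}\pi^P_*$ with convolution by an explicit object with a cofree-monodromic standard/costandard filtration indexed once each by $w \in W_{\chP}$ — or deduce it from the Whittaker case by the averaging comparisons of Section \ref{s:defavfuncs}. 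A clean route is to note that the counit/unit structure of $(\pi^{P,*}, \pi^P_*)$ realizes the monad as $\pi^P_*\pi^{P,*}(\delta^\wedge_e)$, compute its $*$-restriction to each Bruhat stratum $\chI w \chI$ for $w \in W_{\chP}$ (getting the cofree-monodromic sheaf, clean from the minimal cell exactly as in the proof of Proposition \ref{p:avmonad}), and conclude it is the injective hull of $\delta_e$ within the subcategory supported on $\chP$; this pins it down uniquely, matching $\iota_*\omega_{\cZ_P/G}$ under $\wh\Phi$ via the already-established $\wh\Phi(\widehat\Xi)\simeq\omega_{\wh\cZ/G}$ and the minimality characterizations. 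Once the algebra objects are matched, everything else — the module equivalence, compatibility with $\pi^{P}$-functoriality and $\Lag^P$-functoriality, left $\fH$-equivariance, and the reduction to loc. cit. — is formal from Barr--Beck--Lurie and the functorialities recorded in Sections \ref{sec algebra obj} and \ref{spectral whittaker sec}.
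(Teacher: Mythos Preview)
Your monadicity strategy runs into a genuine obstruction that the paper itself flags. The algebra object governing parahoric induction does \emph{not} lie in the monoidal heart: the monad $\pi^{P,*}\pi^P_*$ is convolution with an object supported on $\chP$, but its iterated self-convolutions pick up tensor copies of $H^\bullet(P/B)$, which is spread across several cohomological degrees. Consequently the object fails condition (2) of Definition~\ref{obj in heart}, and the 1-categorical identification machinery of Proposition~\ref{algebra in heart prop} --- which was the engine behind Theorem~\ref{thm monodromic whittaker} --- simply does not apply here. The paper says this explicitly in the Remark immediately following the proof of Theorem~\ref{thm monodromic strict}, and notes that \cite{BL} overcomes it ``using different methods.'' So your ``matching the algebra objects'' step, which relies on both objects and their powers living in a common abelian category where minimality characterizations make sense, has a real gap.

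There is also a mismatch in your spectral setup: the framework of Corollary~\ref{spectral whittaker convolution} with $X=\mf b/B\to Y=\mf p/P\to S=\mf g/G$ produces $\IndCoh(\wt{\mf g}/G\times_{\mf g/G}\wt{\mf g}_P/G)$, not $\IndCoh(\wt{\mf g}/G\times_{\mf g/G}\wt{\cN}_P/G)$. The parahoric functoriality goes through the Lagrangian correspondence $(\Lag^{P,*},\Lag^P_*)$, not a single proper map, so Proposition~\ref{spectral monadic} does not directly apply; you would need a separate analysis of that correspondence monad, and it too would fail to live in the heart for the same reason.

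The paper's proof is entirely different and much shorter: the result is already established as Theorem~6.1 of \cite{BL}, and the paper's contribution is only (i) to lift it from triangulated to stable $\infty$-categories using the tilting-subcategory argument of Appendix~\ref{app infcat} (specifically Proposition~\ref{prop additive nice}, with the tilting-preservation input from \cite{BezYun}), and (ii) to translate the conventions of \cite{BL} (their $\phi_1,\phi_2$ and the $\rho$-twisted action) into the $(\Lag^{P,*},\Lag^P_*)$ and $(\pi^{P,*},\pi^P_*)$ used here.
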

\begin{proof}
The statement may be lifted to an $\infty$-categorical statement with the same arguments using tilting subcategories as in Theorem \ref{roman theorem}, using Corollary 5.5.2 in \cite{BezYun} to deduce that the convolution action by tilting objects preserves tilting objects.  We relate our conventions to those in Theorem 6.1 of \cite{BL}.  The functors $\phi_1, \phi_2$ in \emph{loc. cit.} are related to ours by
$$\phi_1 = \Lag^*(-\rho, \rho - 2\rho_{P}) = \Lag^!(-\rho, \rho)[-\dim P/B], \;\;\;\;\;\;\;\;\;\; \phi_2 = \pi^*[\dim P/B] = \pi^![-\dim P/B]$$
where $\rho_P$ is half the sum of positive roots in $\mf{n}_P$.  By the discussion preceding Theorem 6.1 in \cite{BL}, $\phi_1$ is not equivariant for the right $*$-convolution action but for a $\rho$-twisted action; however, our functors are equivariant, and thus we take the non-twisted action and conjugate the equivalence by $\cO(\rho)$ instead. %Conjugating by $\rho$ we find that $\Lag^*$ is monoidal for the $*$-convolution, and in particular $\Lag^!$ is monoidal for the $!$-convolution.% (both claims can be verified independently as well by standard base change and functoriality arguments).
%the unique twist which makes the functor $\tau_P(\rho, -\rho)$  equivariant for the convolution action, which our functor $\Phi_{B^\circ, P}$ is by construction, thus
%$$\Phi_{B^\circ, P} = \tau_P(\rho, -\rho).$$
\end{proof}

\begin{rmk}
Morally, the proof of Theorem 6.1 in \cite{BL} is parallel to our approach to Theorem \ref{thm monodromic whittaker}.  Namely, Lemma 6.5 in \emph{op. cit.} identifies the corresponding monads with right convolution by certain objects in the affine Hecke category.  While these objects live in the heart, their iterated convolutions pick up tensor copies of $H^\bullet(P/B)$ and therefore do not.  Thus, identifying them as algebra objects in the $\infty$-categorical sense requires extra work.  One may observe that in the setting of mixed sheaves, the graded vector space $H^\bullet(P/B)$ is Tate; thus one expects that these convolutions should lie in the heart of a ``Koszul'' $t$-structure in the mixed setting.  This issue is overcome in \emph{loc. cit.} using different methods.
\end{rmk}

\begin{rmk}
Note that in the above, we could have alternatively taken the adjunction $(\pi_!, \pi^!)$ on the automorphic side; note that $\pi_! = \pi_*$ while $\pi^! = \pi^*[2\dim(Q/P)]$.  On the spectral side, we would then take the adjunction $(\Lag_! := p_* i^*, \Lag^! := i_* p^!)$, i.e. the classical limit up to shift of the functors $(\pi_!, \pi^!)$ on right D-modules. %note that the left adjoint here is \emph{not} the right adjoint in Definition \ref{functoriality strict}, i.e. taking this convention would necessitate a twist in the equivalences of Theorem \ref{thm monodromic strict}.  
%We relate the two functors on the spectral side.  Up to base change, $i$ is induced by the lci closed immersion $\mf{n}_Q \hookrightarrow \mf{n}_P$, and $p$ is induced by the smooth map $G/P \rightarrow G/Q$, and %For a lci closed immersion $i: Z \hookrightarrow X$, we have that $i^! = i^* \otimes \det(\cN_{Z/X})[\dim(Z) - \dim(X)]$, while for a smooth map $p: X \rightarrow Y$ we have $p^! = p^* \otimes \det(\Omega^1_{X/Y})[\dim(X) - \dim(Y)]$.  
%we have $i^!(-) = i^*(-)(2\rho_Q - 2\rho_P)[-\dim(Q/P)]$ and $p^!(-) = p^*(-)( 2\rho_P-2\rho_Q)[\dim(Q/P)]$.  That is, 
%$$\Lag_! = \Lag_* (2\rho_P - 2\rho_Q)[\dim(Q/P)], \;\;\;\;\;\;\;\;\;\; \Lag^! = \Lag^*(2\rho_Q - 2\rho_P)[\dim(Q/P)],$$
%i.e. changing between $*$ and $!$ functors amounts to twisting and shifting the equivalence as above.
\end{rmk}

\subsubsection{}\label{functoriality whit} We now prove a Whittaker-monodromic version of the above theorem.  Let us briefly recall the various functorialities, fixing parabolics $P \subset Q$ on the spectral side.  Automorphically, functoriality arises via averaging functors (see Section \ref{p:adjtilt}), i.e.
$$\begin{tikzcd}
\Dmod(I\mon\!\bs \chG_F / \PN, \psi_P) \arrow[rr, shift left, "\on{Av}_{!*}^{\chQ, \chP, \psi}"]&  & \Dmod(I\mon\!\bs \chG_F / {}_{\chQ}\check{N}, \psi_Q).  \arrow[ll, "\on{Av}_{!*}^{\chP, \chQ, \psi}", shift left] 
\end{tikzcd}$$
For simplicity, when one of the parabolics is $B$ we denote
$$\Av_{!*}^{\chP,\chB,\psi} = \Av_{!*}^{\psi_P}, \;\;\;\;\;\;\;\;\;\;\Av_{!*}^{\chB,\chP, \psi} = \Av_{!*}.$$
On the spectral side these correspond to the pushforward and pullback by the proper map $\phi_P^Q: \wt{\mf{g}}_P \rightarrow \wt{\mf{g}}_Q$:
$$\begin{tikzcd}
\Coh(- \times_{\mf{g}} \wt{\mf{g}}_P) \arrow[rr, shift left, "(\phi_P^Q)_*"] & & \Coh(- \times_{\mf{g}} \wt{\mf{g}}_Q). \arrow[ll, shift left, "(\phi_P^Q)^!"]  
\end{tikzcd}$$
Again for simplicity, when one of the parabolics is $B$, we denote $\phi^P = \phi_B^P$.  For the following theorem, it may be useful to recall that the character $\psi$ on ${}_{\chB}\mathring{I} = \mathring{I}$ is the trivial character.
\begin{thm}\label{thm monodromic whittaker}
There are equivalences of $\infty$-categories
$$\begin{tikzcd}[column sep=large]
\Dmod(\mathring{I}\mon\!\bs \chG_F / {}_{\chB}\mathring{I}, \psi) \arrow[r, "\simeq"', "\wh{\Phi}"] \arrow[d, shift right, " \Av_{!*}^{\psi_P}"']  & \IndCoh_{\cN}(\wt{\mf{g}}/G \utimes{\mf{g}/G} \wt{\mf{g}}/G) \arrow[d, shift right, "\phi^{P}_*"'] \\
\Dmod(\mathring{I}\mon\!\bs \chG_F / {}_{\chP}\mathring{I}, \psi) \arrow[r, "\simeq"', "\Phi_{\mathring{B}, \psi_{P}}"] \arrow[u, shift right, "\Av_{!*}"'] &  \IndCoh_{\cN}(\wt{\mf{g}}/G \utimes{\mf{g}/G} \wt{\mf{g}}_P/G) \arrow[u, "\phi^{P,!}"', shift right]
\end{tikzcd}$$
compatible with the left actions of the categories $\Dmod(\mathring{I}\mon\!\bs \chG_F / \mathring{I}\mon) \simeq \IndCoh_{\cN}(\wh{\cZ}/G)$, as well as the $\cO(\wh{\mf{h}} \times \wh{\mf{h}})$-actions, and compatible with the functorialities in Section \ref{functoriality whit}.
\end{thm}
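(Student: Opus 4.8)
\textbf{Proof strategy for Theorem \ref{thm monodromic whittaker}.}

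The plan is to realize the Whittaker-monodromic affine Hecke category as modules for an algebra object over the monodromic affine Hecke category on both the automorphic and spectral sides, and then to match the two algebra objects using Bezrukavnikov's equivalence $\wh{\Phi}$ from Theorem \ref{roman theorem}. On the automorphic side, by Corollary \ref{cor banana peel} applied to the Levi $\chL_P$ of $\chP$, the adjunction $(\on{Av}_{!*}^{\psi_P}, \on{Av}_{!*})$ exhibits $\Dmod(\mathring{I}\mon\!\bs \chG_F / {}_{\chP}\mathring{I}, \psi)$ as the category of modules for the algebra object $\wh{\Xi}_{\chP} \in \Dmod(\chI\mon\!\bs \chG_F / \chI\mon)$ (acting by left convolution), where $\wh{\Xi}_{\chP}$ is the cofree-monodromic tilting sheaf described in Proposition \ref{p:avmonad}. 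On the spectral side, following Section \ref{spectral whittaker sec} (with the roles of left/right and $\cN$-completion handled as in Corollary \ref{completed spectral coalgebra}), the adjunction $(\phi^{P}_*, \phi^{P,!})$ for $\phi^P: \wt{\mf{g}}/G \to \wt{\mf{g}}_P/G$ realizes $\IndCoh_{\cN}(\wt{\mf{g}}/G \times_{\mf{g}/G} \wt{\mf{g}}_P/G)$ as modules over $\iota_* \omega_{(\wh{\cZ_P})_{\cN}/G}$ in $\IndCoh_{\cN}(\wh{\cZ}/G) = \fH$. So the entire statement reduces to producing an equivalence of algebra objects $\wh{\Phi}(\wh{\Xi}_{\chP}) \simeq \iota_* \omega_{(\wh{\cZ_P})_{\cN}/G}$ compatible with the monoidal structures, after which Barr--Beck--Lurie gives the equivalence of module categories, and the compatibility with the left $\Dmod(\mathring{I}\mon\!\bs \chG_F/\mathring{I}\mon)$-action and the $\cO(\wh{\mf h}\times\wh{\mf h})$-action is automatic from $\wh{\Phi}$ being monoidal and $\cO(\wh{\mf h}^2)$-linear.

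The matching of algebra objects itself should be carried out in two steps. First, $\wh{\Phi}$ already sends the big cofree-monodromic tilting sheaf $\wh{\Xi} = \wh{\Xi}_{\chG}$ to $\omega_{\wh{\cZ}/G}$ by Theorem \ref{roman theorem}. Both $\wh{\Xi}_{\chP}$ and $\iota_*\omega_{(\wh{\cZ_P})_{\cN}/G}$ were characterized intrinsically: by Proposition \ref{autom algebra} (and Proposition \ref{autom central}), $\wh{\Xi}_{\chP}$ is the maximal subobject of $\wh{\Xi}_{\chG}$ supported on $\chP \subset \chG$ with the property that it realizes exactly the cofree-monodromic standard/costandard pieces $\hj_{w,!}, \hj_{w,*}$ for $w \in W_{\chP}$; dually, by Corollary \ref{completed spectral coalgebra} together with Proposition \ref{spectral whittaker characterization}, $\iota_*\omega_{(\wh{\cZ_P})_{\cN}/G}$ is the minimal subobject of $\omega_{\wh{\cZ}/G}$ whose restriction to the regular semisimple locus is $\bigoplus_{w \in W_{\chP}} \omega_{\cZ_w^{rs}/G}$. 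Since $\wh{\Phi}$ is $t$-exact (on the heart) and $\cO(\wh{\mf h}^2)$-linear, it identifies the ``monodromy bimodule'' structures on the two sides — i.e. the map to $\wh{\mf h} \times_{\wh{\mf h/\!/W}} \wh{\mf h}$ — so the characterizations transport: $\wh{\Phi}$ carries the subobject of $\wh{\Xi}_{\chG}$ cut out by the $W_{\chP}$-condition to the subobject of $\omega_{\wh{\cZ}/G}$ cut out by the same $W_{\chP}$-condition on the regular semisimple locus. This gives an isomorphism of the underlying objects; that it is an isomorphism of algebra objects follows because the algebra structures on both sides are themselves characterized in terms of the averaging maps (equivalently, the $W_{\chP}$-averaging on $\cO(\mf h)$, cf. Corollary \ref{spectral whittaker coalgebra}), which are preserved. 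To obtain the strict-on-the-left statement $\Phi_{\mathring{B},\psi_P}$ and the compatibility with $\phi^{P,!}$ and $\on{Av}_{!*}$ as the right adjoints, one passes right adjoints through the equivalence, using that $\phi^{P,!}$ and $\on{Av}_{!*}$ are the right adjoints in the relevant adjunctions and that $\wh{\Phi}$ intertwines the adjunction data.

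The main obstacle I expect is the promised identification of $\mathrm{Ext}^2$ of the monoidal unit that was deferred in the proof of Proposition \ref{spectral central} — more precisely, ensuring that the algebra object $\iota_*\omega_{(\wh{\cZ_P})_{\cN}/G}$ is rigidified enough as an $E_1$-algebra, not merely as an object of the heart, for Barr--Beck--Lurie to apply. The point is that while $\iota_*\omega_{\cZ_P/G}$ and its iterated convolutions lie in the heart of the standard $t$-structure (Corollary \ref{completed spectral coalgebra}, using the Calabi--Yau property of Proposition \ref{calabi yau} and the affinization computation of Proposition \ref{affinization calculation}), so the algebra structure is determined by $1$-categorical data via Proposition \ref{algebra in heart prop}, one must check that the $1$-categorical algebra data matches under $\wh{\Phi}$ — and this is exactly where the explicit ``averaging map'' description of the structure maps (Corollary \ref{spectral whittaker coalgebra} on the spectral side, Proposition \ref{p:avmonad} on the automorphic side, via the normalization of characters in Section \ref{sss:normchars} and the compatibility Lemma \ref{l:steps}) must be invoked and compared term by term. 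A secondary, more routine point is verifying that the singular support condition $\cN$ on $\IndCoh_{\cN}(\wt{\mf{g}}/G \times_{\mf{g}/G}\wt{\mf{g}}_P/G)$ is the correct one compatible with the equivalence; this follows from a dimension/fiber calculation with odd cotangent spaces exactly parallel to Lemma \ref{convolve ss description} and Proposition \ref{reduction using monoidal}, combined with the fact that the left factor is $\wt{\mf g}$ (not $\wt{\cN}$), so only the classical direction, not the singular direction, needs completion.
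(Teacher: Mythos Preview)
Your overall architecture is correct and matches the paper's: reduce via Barr--Beck--Lurie to matching the algebra objects $\wh{\Xi}_{\chP}$ and $\iota_*\omega_{(\wh{\cZ_P})_{\cN}/G}$ under $\wh{\Phi}$, then invoke Proposition~\ref{algebra in heart prop} to reduce the algebra-object comparison to a $1$-categorical one in the heart. The place where your argument has a genuine gap is the matching step itself.

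You propose to match the two subobjects by transporting characterizations: $\wh{\Xi}_{\chP}$ is the \emph{maximal} subobject of $\wh{\Xi}$ \emph{supported on $\chP$} (Proposition~\ref{autom algebra}), while $\iota_*\omega_{(\wh{\cZ_P})_{\cN}/G}$ is the \emph{minimal} subobject of $\omega_{\wh{\cZ}/G}$ with prescribed restriction to the regular semisimple locus (Corollary~\ref{completed spectral coalgebra}). These are characterizations of different shapes --- one is a support condition on the flag variety, the other a generic-fiber condition over $\mf{h}$ --- and ``minimal'' versus ``maximal'' do not directly correspond. Your appeal to $\cO(\wh{\mf h}^2)$-linearity does not bridge this: nothing guarantees that $\wh{\Phi}$ takes the lattice of subobjects characterized by automorphic support to the lattice characterized by spectral generic fiber. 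Relatedly, $\wh{\Phi}$ is \emph{not} $t$-exact in any global sense; all one knows is that it lands $\operatorname{Perv}(\chB\mon\!\bs \chG/\chB\mon)$ inside an abelian subcategory of $\Coh(\cZ^\wedge/G)^\heartsuit$, which is a much weaker statement that still needs to be invoked carefully.

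The paper closes this gap by passing to the Verdier--Grothendieck dual picture and introducing a uniform invariant: \emph{categorical flatness} over the monodromy ring $\cO(\wh{\mf h})$. After dualizing, $\wh{\Xi}^{pro}$ corresponds to $\cO_{\cZ^\wedge/G}$, and both $\wh{\Xi}^{pro}_{\chP}$ and $i_*\cO_{\cZ^\wedge_P/G}$ become \emph{quotients} of the same type: each is the unique quotient which is flat over $\cO(\wh{\mf h})$ and whose generic fiber over $\mf{h}^\wedge$ has the prescribed $W_{\chP}$-component description of Proposition~\ref{spectral whittaker characterization}. Flatness is the key, because (i) free-monodromic tilting sheaves are flat over their monodromy operators, (ii) $\cZ_P$ is flat over $\mf{h}$, and (iii) flatness is preserved by any $\cO(\wh{\mf h})$-linear equivalence. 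This makes the two characterizations literally the same condition, so they transport. Finally, your ``main obstacle'' concerning the deferred $\Ext^2$ computation from Proposition~\ref{spectral central} is misplaced: that computation belongs to the centrally-monodromic case (Theorem~\ref{thm monodromic central}), not to the present Whittaker case.
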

\begin{proof}
The top equivalence is by Theorem \ref{roman theorem}, and we study the monads given by the vertical adjoint functors.  By Barr--Beck--Lurie it suffices to identify these monads.  Recall that $\wh{\mf{h}}$ is the formal completion of $\mf{h}$ at $0$, and $\mf{h}^\wedge = \Spec \cO(\wh{\mf{h}})$.

To do so, we need the following categorical notion of flatness.  Let $A$ be a commutative algebra, and let $\cat{C}$ be an $A$-linear category equipped with a $t$-structure.  We say that an object $c \in \cat{C}$ is \emph{flat} if the action map $\mathrm{act}_c: A\dmod \rightarrow \cat{C}$ is $t$-exact.  For example, when $X$ lives over $\Spec A$, and $\cat{C} = \QCoh(X)$, this recovers the usual notion of a flat sheaf.  Moreover, the Verdier dual of any cofree monodromic sheaf, i.e. free monodromic sheaf, is flat over its ring of monodromy operators.

On the automorphic side, by Corollary \ref{cor banana peel} and Proposition \ref{autom algebra}, the monad is given by convolution with an algebra subobject $\wh{\Xi}_P \subset \wh{\Xi}$ in the category $\Perv(\chB\mon\! \bs \chG / \chB\mon)$ of $\chB$-monodromic perverse sheaves on the finite flag variety.   Moreover, Proposition \ref{autom algebra} identifies $\wh{\Xi}_{\chP}$ as the unique cofree monodromic tilting subobject whose support is exactly $\chP$.  Then, its Verdier dual $\wh{\Xi}^{pro}_{\chP}$ is the unique free pro-monodromic tilting quotient whose support is exactly $\chP$, or equivalently, whose generic fiber over $\mf{h}^\wedge$ matches the description in Proposition \ref{spectral whittaker characterization}.

%Furthermore, this subobject has a characterization in terms of the $\cO(\mf{h})$-bimodule structure by Proposition \ref{autom algebra}.

On the spectral side, recall that $\cZ_P = \wt{\mf{g}} \times_{G \times^P \mf{p}} \wt{\mf{g}}$, $\cZ^\wedge_P$ its base-change to $\wh{\mf{h}}$, $\wh{\cZ}_P$ is its completion along the nilpotent cone, and $i: \cZ_P/G \ra \cZ/G$ is the closed immersion. %, and abusively likewise $i: \cZ_P^\wedge/G \ra \cZ^\wedge/G$.  
By Corollary \ref{completed spectral coalgebra} the monad is given by convolution with the subalgebra object $i_* \omega_{\wh{\cZ}_P/G} \subset \omega_{\wh{\cZ}/G}$, or equivalently passing through Grothendieck existence and Grothendieck duality, the quotient oblect $\cO_{\cZ^\wedge/G} \twoheadrightarrow i_* \cO_{\cZ^\wedge_P/G}$, which is an object in the heart of the standard $t$-structure, and the unique flat quotient of $\cO_{\cZ^\wedge/G}$ whose restriction to the regular semisimple locus matches that on the automorphic side.

Now, the equivalence $\wh{\Phi}$, twisted by Verdier and Grothendieck duality, matches $\wh{\Xi}^{pro}$ with $i_* \cO_{\cZ^\wedge/G}$, and takes the abelian category $\Perv(\chB\mon\! \bs \chG / \chB\mon)$ to an abelian subcategory of $\Coh(\cZ^\wedge/G)^\heartsuit$.  Furthermore, $\wh{\Phi}$ is $O(\wh{\mf{h}})$-bilinear, and our argument above shows that the automorphic and spectral quotient objects above are characterized by the same conditions.  This completes the proof.
\end{proof}

\subsubsection{} We also have the following centrally monodromic variant, which has the following functorialities.  On the automorphic side, recall that $I'_{\chP}$ denotes the derived subgroup of the parahoric $I_{\chP}$.  We may consider the non-proper map $\pi: - \bs \chG_F / I'_{\chP} \rightarrow - \bs \chG_F / I_{\chP}$, yielding adjoint functors
$$\begin{tikzcd}
\Dmod(- \bs \chG_F / I_{\chP}) \arrow[r, "\pi^*", shift left] & \Dmod(- \bs \chG_F / I'_{\chP}). \arrow[l, "\pi_*", shift left] 
\end{tikzcd}$$
On the spectral side, the functoriality arises via the proper map $\kappa: - \times_{\mf{g}} \wt{\cN}_P \hookrightarrow - \times_{\mf{g}} \wt{\cN}'_P$.
$$\begin{tikzcd}
\Coh(- \times_{\mf{g}} \wt{\cN}_P) \arrow[rr, "\kappa_*", shift left] & & \Coh(- \times_{\mf{g}} \wt{\cN}_P'). \arrow[ll, "\kappa^!", shift left] 
\end{tikzcd}$$
We now deduce the theorem; it may be helpful to recall that $I'_{\chB} = \mathring{I}$ and $\wt{\cN}'_B \simeq \wt{\mf{g}}$.
\begin{thm}\label{thm monodromic central}
There are compatible equivalences of categories:
$$\begin{tikzcd}
\Dmod(\mathring{I}\mon\!\bs \chG_F / \mathring{I}\mon) \arrow[r, "\simeq"', "\wh{\Phi}"] \arrow[d, shift left]  & \IndCoh_{\cN}(\wt{\mf{g}}/G \utimes{\mf{g}/G} \wt{\cN}'_B/G) \arrow[d, shift left] \\
\Dmod(\mathring{I}\mon\!\bs \chG_F / (I_{\chP}, I'_{\chP})\mon) \arrow[r, "\simeq"', "\Phi_{\mathring{B}, P'}"] \arrow[u, shift left] &  \IndCoh_{\cN}(\wt{\mf{g}}/G \utimes{\mf{g}/G} \wt{\cN}'_P/G)  \arrow[u, shift left]
\end{tikzcd}$$
compatible with the left actions of the categories $\Dmod(\mathring{I}\mon\!\bs \chG_F / \mathring{I}\mon) \simeq \IndCoh(\wh{\cZ}/G)$, as well as the $\cO(\wh{\mf{h}} \times \wh{\mf{h}})$-actions.  Furthermore, the equivalences restrict to small categories putting $\Dmod_{n.c.}$ on the left and $\hCoh_{\cN}$ on the right. 
\end{thm}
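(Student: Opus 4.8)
\textbf{Proof plan for Theorem \ref{thm monodromic central}.}

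The plan is to proceed exactly as in the proof of Theorem \ref{thm monodromic whittaker}, replacing the Whittaker averaging adjunction with the central-monodromy adjunction $(\pi^*, \pi_*)$ on the automorphic side and $(\kappa_*, \kappa^!)$ on the spectral side, and then invoking Barr--Beck--Lurie to reduce the construction of the equivalence to an identification of monads. Concretely: the top equivalence $\wh{\Phi}$ is given by Theorem \ref{roman theorem}, and it is $\cO(\wh{\mf{h}} \times \wh{\mf{h}})$-bilinear, monoidal, and $t$-bounded. First I would observe that the vertical adjoint pairs are $\Dmod(\mathring{I}\mon\!\bs \chG_F / \mathring{I}\mon)$-module functors (respectively $\IndCoh_{\cN}(\wh{\cZ}/G)$-module functors), since the map $\pi$ (resp. $\kappa$) is between stacks fibered over $\mathring{I}\mon\!\bs \chG_F$ (resp. over $\wt{\mf{g}}/G$), so the monads are given by right convolution with algebra objects, using the insertion/evaluation dictionary of Proposition \ref{def coalgebra O} and the general principle in Proposition \ref{spectral monadic}. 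On the spectral side the relevant algebra object is the one produced in Proposition \ref{spectral central}, namely $\Sym\mf{z}_{L_P}[-2]$ regarded as $\End(1_{\fH_P})$ acting on $\IndCoh_{\cN}(\wt{\mf{g}}/G \times_{\mf{g}/G} \wt{\cN}'_P/G)$, Koszul dual to the $\cO(\mf{z}_{L_P})$-monodromy; equivalently the algebra object underlying the partial completion $\wt{\cN}'_P/G \to \wt{\cN}_P/G$. On the automorphic side the relevant algebra object is the one produced by Theorem \ref{autom central}, i.e. the monodromy operators for $Q := I_{\chP}/I'_{\chP} \simeq \chP/[\chP,\chP]$, which after formality is $\wh\Sym(\fp^*[-1])$ with $C^\bullet(BQ) \simeq \Sym(\mf{z}_{L_P})$-linear structure.

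The key steps, in order: (i) identify the automorphic monad: by Theorem \ref{autom central} applied to the group $I_{\chP}$ with derived subgroup $I'_{\chP}$, passing back and forth between $(I_{\chP}, I'_{\chP})\mon$ and $I_{\chP}$-invariants is effected by turning on/off the monodromy operators $C^\bullet(BQ)$, so the monad $\pi_*\pi^*$ on $\Dmod(\mathring{I}\mon\!\bs \chG_F / \mathring{I}\mon)$ is (an avatar of) $- \otimes_{C^\bullet(BQ)\mod} \Vect_k$; (ii) identify the spectral monad: the completion $\wt{\cN}'_P \to \wt{\cN}_P$ (over $\mf{z}_{L_P}$, completed at $0$) together with Proposition \ref{spectral central} shows the monad $\kappa^!\kappa_*$ is $- \otimes_{\Sym\mf{z}_{L_P}[-2]\mod} \Vect_k$, using the Koszul duality equivalence \eqref{kd eq}; (iii) match the two monads under $\wh{\Phi}$: both are of the form $\otimes$ against $\Vect_k$ over a polynomial ring $\Sym\mf{z}_{L_P}[-2]$, and since $\wh{\Phi}$ is $\cO(\wh{\mf{h}} \times \wh{\mf{h}})$-bilinear and the central monodromy operators on both sides are generated freely in degree $2$ by $\mf{z}_{L_P} \subset \mf{h}$ (on the automorphic side via the left $\chH$-action, on the spectral side via pullback from $\mf{h}$), $\wh{\Phi}$ intertwines the two $\Sym\mf{z}_{L_P}[-2]$-actions; (iv) apply Barr--Beck--Lurie: the right adjoints $\pi_*$ and $\kappa^!$ are conservative (for $\pi_*$ this is because $\pi$ is a $Q$-gerbe-type map and the averaging reflects objects; for $\kappa^!$ this follows from the singular support bookkeeping as in the proof of Theorem \ref{roman theorem}, via \cite[Cor. 7.4.10]{AG} and \cite[Thm. 4.7]{BFN}), and they commute with colimits, so monadicity holds and the module-category equivalence $\Phi_{\mathring{B}, P'}$ follows, with compatibility of the $\IndCoh_{\cN}(\wh{\cZ}/G)$-action and the $\cO(\wh{\mf{h}} \times \wh{\mf{h}})$-action automatic from the equivariance of all functors in sight; (v) restrict to small subcategories: the subcategory $\Dmod_{n.c.}(\mathring{I}\mon\!\bs \chG_F / (I_{\chP}, I'_{\chP})\mon)$ is generated by the essential image of the left adjoint $\pi^*$ of cofree-monodromic tilting objects (Proposition \ref{c:post3}), while $\hCoh_{\cN}(\wt{\mf{g}}/G \times_{\mf{g}/G} \wt{\cN}'_P/G)$ is generated by $\kappa_* \Coh(\wt{\mf{g}}/G \times_{\mf{g}/G} \wt{\cN}_P/G)$ via the characterization in Remark \ref{renormalize indcoh} and Theorem \ref{wcoh prop qs}; since $\wh{\Phi}$ matches $\pi^*$ with $\kappa_*$ and matches the small generators on the top row (Theorem \ref{roman theorem}(c)), it matches these small subcategories.

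The main obstacle will be step (iii), the precise matching of the central monodromy subalgebras under $\wh{\Phi}$ — specifically, verifying the surjectivity-in-degree-$2$ claim that $\mf{z}_{L_P} \simeq \Ext^2(1_{\fH_P})$ on the spectral side and identifying it with the degree-$2$ part of $C^\bullet(BQ)$ on the automorphic side. On the spectral side, as noted in Proposition \ref{spectral central}, this surjectivity is deduced from the strict affine Hecke category equivalence ($\IndCoh_{\cN[1]}(\cZ_P) \simeq \Dmod(I_{\chP}\bs\chG_F/I_{\chP})$ with $\End(1) \simeq C^\bullet(B\chP)$, formal and generated in even degrees with $H^2(B\chP) \simeq \mf{z}_{L_P}$), so one must be careful that this input is available — which it is, since it is part of the package being proved in the overarching Theorem \ref{thm conjectures} and, as Remark after Proposition \ref{spectral central} notes, is not logically circular. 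The remaining ingredient, that $\wh{\Phi}$ sends the left $\chH$-monodromy generators to the pullback-of-$\mf{h}$ generators, is exactly the $\cO(\wh{\mf{h}})$-bilinearity asserted in Theorem \ref{roman theorem}; restricting that bilinearity to the direct summand $\mf{z}_{L_P} \hookrightarrow \mf{h}$ (which is $W_{L_P}$-invariant and hence visible on both sides) gives the matching, and the rest of the argument is then formal.
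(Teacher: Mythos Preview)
Your plan has the right ingredients but misplaces them in a way that breaks the argument. The adjunction $(\pi^*,\pi_*)$ you invoke is between the \emph{strict} parahoric category $\Dmod(\mathring{I}\mon\!\bs \chG_F / I_{\chP})$ and the centrally monodromic one $\Dmod(\mathring{I}\mon\!\bs \chG_F / (I_{\chP}, I'_{\chP})\mon)$; likewise $(\kappa_*,\kappa^!)$ connects $\wt{\cN}_P$ and $\wt{\cN}'_P$. Neither of these is an adjunction between the top row ($\mathring{I}\mon$, i.e., the $P=B$ case) and the bottom row ($(I_{\chP}, I'_{\chP})\mon$) of the diagram, so there is no monad ``$\pi_*\pi^*$ on $\Dmod(\mathring{I}\mon\!\bs \chG_F / \mathring{I}\mon)$'' as you write, and your step (iv) has no adjunction to which Barr--Beck can be applied. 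Relatedly, Proposition~\ref{spectral central} produces a $\Sym\mf{z}_{L_P}[-2]$-action on the \emph{strict} category $\IndCoh_{\cN}(\wt{\mf{g}}/G \times_{\mf{g}/G} \wt{\cN}_P/G)$, not on $\wt{\cN}'_P$ as you state.

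The paper's proof avoids Barr--Beck entirely and is much shorter: it starts from the already-established strict equivalence of Theorem~\ref{thm monodromic strict} (Bezrukavnikov--Losev), and then applies the tensor formula \eqref{e:oblvenh} of Theorem~\ref{autom central} on the automorphic side and Proposition~\ref{spectral central} on the spectral side, each of which expresses the centrally monodromic category as $\Vect_k \otimes_{\Sym\mf{z}_{L_P}[-2]\mod} (\text{strict category})$. The only nontrivial matching is that the two copies of $\Sym\mf{z}_{L_P}[-2]$ agree under the strict equivalence, which is exactly your step (iii) and is handled via $C^\bullet(BQ)\hookrightarrow C^\bullet(B\chP)\simeq\End(\delta)$ factoring through $HH^\bullet_{\fH}$. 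So your analysis of the matching of monodromy operators is correct and is the heart of the argument; you should simply apply it to Theorem~\ref{thm monodromic strict} rather than attempting a monad on the top row. The small-category statement is not proved here in the paper either; it is deferred to the proof of Theorem~\ref{thm conjectures}.
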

\begin{proof}
Let $\fH$ denote both the bi-monodromic automorphic and spectral affine Hecke categories, identified in Theorem \ref{roman theorem}, and consider the $\fH$-linear Hochschild cohomology complexes of the categories appearing in Theorem \ref{thm monodromic strict}.  We will apply the formula \eqref{e:oblvenh}  in Proposition \ref{autom central} for the torus $\chH = I_{\chP}/I'_{\chP} \simeq \chP/[\chP, \chP]$, where we have
$$C^\bullet(BH) \hookrightarrow C^\bullet(BP) \simeq \End(\delta),$$
i.e. the algebra $\End(\delta)$ is formal, and $H^\bullet(BH)$ is realized as the subalgebra generated in degree 2.  Note that the right action of $\Dmod(H \bs H / H)$ evidently commutes with the left action of $\fH$, so the map factors through $HH^\bullet_{\fH}$, i.e. we have an automatically injective map
$$C^\bullet(BH) \hookrightarrow \End(\delta) \hookrightarrow HH^\bullet_{\fH}(\Dmod(\mathring{I}\mon\!\bs \chG_F / I_{\chP})).$$
We now match this with the spectral side statement in Proposition \ref{spectral central} to deduce the theorem.
\end{proof}

\begin{rmk}
    Since $\pi$ is not proper, thus $\pi_* \ne \pi_!$, and we could have alternatively considered the adjoint functors $(\pi_! = \pi_*[-\dim(Z(L_P))], \pi^! = \pi^*[2 \dim(Z(L_P))]).$  On the spectral side, we have $(\kappa^* = \kappa^![\dim(Z(L_P))], \kappa_*.)$
\end{rmk}

\subsubsection{}
We now deduce the equivalences in Conjecture 59 of \cite{roma hecke} from the above.
\begin{thm}\label{thm conjectures}
There are equivalences of categories between automorphic and spectral affine Hecke categories
$$\begin{tikzcd}[row sep=0ex, column sep=tiny]
\Dmod(I_{\chP} \bs \chG_F / I_{\chQ}) \arrow[r, "\Phi_{P,Q}","\simeq"'] & \IndCoh_{\cN[1]}(\wt{\cN}_P/G \utimes{\mf{g}/G} \wt{\cN}_Q/G) & \text{(strict-strict)}  \\
\Dmod((I_{\chP}, I'_{\chP})\mon\!\bs \chG_F / I_{\chQ}) \arrow[r, "\simeq"', "\Phi_{P', Q}"]& \IndCoh_{\cN[1]}(\wt{\cN}'_P/G \utimes{\mf{g}/G} \wt{\cN}_Q/G) & \text{(monodromic-strict)} \\
\Dmod(\PN, \psi_P \bs \chG_F / I_{\chQ}) \arrow[r, "\simeq"', "\Phi_{\psi_P, Q}"] & \IndCoh(\wt{\mf{g}}_P/G \utimes{\mf{g}/G} \wt{\cN}_Q/G)  &  \text{(Whittaker-strict)} \\
\Dmod((I_{\chP},I'_{\chP})\mon\!\bs \chG_F / (I_{\chQ}, I'_{\chQ})\mon) \arrow[r, "\simeq"', "\Phi_{P', Q'}"]& \IndCoh_{\cN \cap \cN[1]}(\wt{\cN}'_P/G \utimes{\mf{g}/G} \wt{\cN}'_Q/G) & \text{(monodromic-monodromic)} \\
\Dmod(\PN,\psi_P \bs \chG_F / (I_{\chQ}, I'_{\chQ})\mon) \arrow[r, "\simeq"', "\Phi_{\psi_P, Q'}"]& \IndCoh_{\cN}(\wt{\mf{g}}_P/G \utimes{\mf{g}/G} \wt{\cN}'_Q/G) & \text{(Whittaker-monodromic)} \\
\Dmod(\PN, \psi_P, \St \bs \chG_F / \QN, \psi_Q, \St) \arrow[r, "\simeq"', "\Phi_{\psi_P, \psi_Q}"] & \IndCoh_{\cN}(\wt{\mf{g}}_P/G \utimes{\mf{g}/G} \wt{\mf{g}}_Q/G) & \text{(bi-Whittaker)}
 \end{tikzcd}$$
 compatible with monoidal and module structures, and the functorialities discussed above.

All spectral side categories all have the classical support condition $\cN$ and the singular support condition $\cN[1]$; we omit them where they are automatically satisfied.  Furthermore, the following small subcategories are matched:
$$\Dmod_s \leftrightarrow \Coh_{\cN \cap \cN[1]}, \;\;\;\;\;\;\;\;\;\; \Dmod_c \leftrightarrow \Coh_{\cN},$$
$$\Dmod_{s, n.c.} \leftrightarrow \wh{\Coh}_{\cN \cap \cN[1]}, \;\;\;\;\;\;\;\;\;\; \Dmod_{c, n.c.} \leftrightarrow \hCoh_{\cN}.$$
%Furthermore, we note that $\Coh_{\cN} = \hCoh_{\cN}$ when the nilpotent classical support condition $\cN$ is vacuous, and that $\Coh_{\cN[1]} = \Coh$ when the the 
\end{thm}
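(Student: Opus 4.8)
## Proof Strategy

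The plan is to bootstrap all six equivalences of Theorem \ref{thm conjectures} from the already-established "one-sided-Iwahori" equivalences in Theorems \ref{roman theorem}, \ref{thm monodromic strict}, \ref{thm monodromic whittaker}, and \ref{thm monodromic central}, using the convolution formalism to upgrade a \emph{left}-Iwahori statement into an arbitrary left equivariance condition. The key observation is that, by Corollary \ref{autom convolution} on the automorphic side and Theorem \ref{convolution composition} on the spectral side, a partial affine Hecke category with equivariance conditions $(A, B)$ on the two sides can be reconstructed as a relative tensor product over the monodromic affine Hecke category $\fH = \Dmod(\mathring{I}\mon\!\bs \chG_F/\mathring{I}\mon) \simeq \IndCoh(\wh{\cZ}/G)$ of the two "half" Hecke categories with equivariance $(A, \mathring{I}\mon)$ and $(\mathring{I}\mon, B)$. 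Explicitly, I would write
$$\Dmod(A \bs \chG_F / B) \simeq \Dmod(A \bs \chG_F / \mathring{I}\mon) \tens{\fH} \Dmod(\mathring{I}\mon\!\bs \chG_F / B),$$
which follows from Corollary \ref{autom convolution} applied with $\sC \simeq \Dmod(\chG_F/I_A)$ (generated by its Iwahori-equivariant objects) and $\sD \simeq \Dmod(\chG_F/I_B)$, then imposing the appropriate $A$- and $B$-equivariances on each factor. On the spectral side, the corresponding identity is provided by Proposition \ref{reduction using monoidal} and Theorem \ref{convolution composition}: the fiber product $\cZ_{V_P, W_Q}/G$ with its $\cN \cap \cN[1]$ condition is precisely the convolution of $\cZ_{V_P, \mf{b}}/G$ and $\cZ_{\mf{b}, W_Q}/G$ over $\wh{\cZ}/G$, and the singular/classical support conditions $\cN$ and $\cN[1]$ arise automatically from convolving over the monodromic or Whittaker versions, respectively. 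Thus each of the six equivalences is obtained by tensoring two known half-equivalences over the equivalence $\wh{\Phi}$ of Theorem \ref{roman theorem}.

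Concretely, the steps are as follows. First, I would establish the "half" equivalences with the left equivariance replaced by $\mathring{I}\mon$: these are exactly $\wh{\Phi}$ (for $\mathring{I}\mon$ on the right), $\Phi'$ of Theorem \ref{roman theorem}(b) (for strict Iwahori), $\Phi_{\mathring{B}, P}$ of Theorem \ref{thm monodromic strict} (for strict parahoric), $\Phi_{\mathring{B}, P'}$ of Theorem \ref{thm monodromic central} (for centrally monodromic parahoric), and $\Phi_{\mathring{B}, \psi_P}$ of Theorem \ref{thm monodromic whittaker} (for Whittaker). For the \emph{right} half with $\mathring{I}\mon$ on the left, I use the (anti-)involution $g \mapsto g^{-1}$ exchanging left and right equivariance (which on the spectral side is the swap of the two factors of the fiber product, together with Grothendieck duality which does not affect singular support by Proposition 4.7.3 of \cite{AG}) to obtain the reflected half-equivalences. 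Second, I take the relative tensor product of the appropriate left half with the appropriate right half over $\fH$, using $\wh{\Phi}$ to identify the coefficient category on both sides; by Theorem \ref{convolution composition} the automorphic tensor product matches the spectral one and lands in $\IndCoh$ of the correct fiber product with the correct support conditions, where the latter is computed by Proposition \ref{reduction using monoidal}. Third, for the monoidal cases (where $A = B$) I check that this identification of $\mathbf{End}$-categories is monoidal: this follows from the general fact that relative tensor products of bimodule categories carry induced monoidal structures when the two factors agree, together with the monoidality of $\wh{\Phi}$. Module structures over the affine Hecke category and the $\cO(\wh{\mf{h}} \times \wh{\mf{h}})$-linearity are inherited functorially from the half-equivalences. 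Finally, the functorialities (the parabolic pushforward/pullback and averaging functors comparing different parabolics) are matched by combining the corresponding functorialities already established in Theorems \ref{thm monodromic strict}, \ref{thm monodromic whittaker}, \ref{thm monodromic central} on the half-equivalences, which pass through the relative tensor product.

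For the statement about small subcategories, I would argue separately on each side using the intrinsic characterizations assembled in Remark \ref{renormalize indcoh} and Corollaries \ref{coherent is compact}, \ref{c:PoSt}, \ref{c:post3}, and Proposition \ref{p:bIgMisTake}. On the spectral side, $\Coh_{\cN \cap \cN[1]}$ is the subcategory of compact objects; $\Coh_{\cN}$ is characterized via Remark \ref{renormalize indcoh}(1) as objects pushed forward from $\cZ_{V_P, W_Q}/G$ into the ambient stack that land in $\Coh$; $\hCoh_{\cN \cap \cN[1]}$ and $\hCoh_{\cN}$ via the $t$-boundedness-plus-coherence criteria of Theorem \ref{wcoh prop qs} and Definition \ref{wcoh defn}. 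On the automorphic side, $\Dmod_s$ is compact objects, $\Dmod_c$ is pseudo-compact objects (Corollary \ref{coherent is compact}), and the nearly compact variants are the pretriangulated hulls of cofree-monodromic tilting/standard sheaves as identified in Corollaries \ref{c:PoSt}, \ref{c:post3} and Proposition \ref{p:bIgMisTake}. Since $\wh{\Phi}$ already matches the half-categories' small subcategories (Theorem \ref{roman theorem}(c)), and the relative tensor product of pretriangulated hulls of tilting (resp. coherent) generators is the pretriangulated hull of their convolutions — which by Corollary 5.5.2 of \cite{BezYun} preserves tiltings and by Theorem \ref{convolution composition} (preservation of compactness) preserves coherence — the matching propagates.

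The main obstacle I expect is verifying that the relative tensor products are \emph{well-behaved}, i.e. that the convolution functor into $\IndCoh$ of the fiber product is an equivalence and not merely fully faithful or essentially surjective. This is exactly the content of Theorem \ref{convolution composition}, whose proof required the full strength of the Barr--Beck--Lurie machinery together with the base-change identity $i^R \circ i \simeq \Delta_* p^! p_* \Delta^*$; the subtlety is that the convolution spaces here ($\wt{\cN}_P/G$, $\wt{\mf{g}}_P/G$, and their completions) are not all smooth — the completed ones $\wh{\cZ}/G$ are formal stacks, not finite type — so I would need to check the hypotheses of Theorem \ref{convolution composition} hold after passing through the Grothendieck existence theorem (Theorem \ref{groth exist}), as was done in the proof of Theorem \ref{roman theorem}. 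A secondary obstacle is the bookkeeping for the support conditions: one must verify that iterating Proposition \ref{reduction using monoidal} for a two-step convolution ($V_P \leftrightarrow \mf{b} \leftrightarrow W_Q$) really does produce $\cN \cap \cN[1]$ and not something larger, and that the redundancies claimed in the theorem statement (e.g. that $\cN$ is automatic when one side is strictly nilpotent, $\cN[1]$ automatic when one side is Whittaker) follow from Lemma \ref{convolve ss description}. Finally, the surjectivity-in-degree-2 claim postponed from the proof of Proposition \ref{spectral central} must be discharged here: this is the identification $\End(1_{\fH_P}) \simeq C^\bullet(B\chP)$, which follows once the strict-strict equivalence $\Phi_{P,P}$ is in hand, since the latter identifies the monoidal unit's endomorphisms with $C^\bullet(B\chP)$, a formal algebra generated in even degrees with $H^2 \simeq \mf{z}_{L_P}$.
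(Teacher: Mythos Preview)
Your approach to the main equivalences is essentially the paper's: tensor the half-equivalences from Theorems \ref{roman theorem}, \ref{thm monodromic strict}, \ref{thm monodromic whittaker}, \ref{thm monodromic central} over $\wh{\Phi}$, then invoke Corollary \ref{autom convolution} automorphically and Proposition \ref{reduction using monoidal} spectrally. One small correction: the convolution in Proposition \ref{reduction using monoidal} is over $\IndCoh(\cZ_{\mf{b}}/G)$ without a support condition, whereas your half-categories carry the $\cN$ condition; but since the half-modules already have nilpotent classical support, the action of $\IndCoh(\cZ_{\mf{b}}/G)$ factors through $\IndCoh_{\cN}(\cZ_{\mf{b}}/G)$ and the two relative tensor products agree, so no passage through Grothendieck existence is needed here.

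There is, however, a genuine gap in your treatment of the small subcategories. You claim that ``the relative tensor product of pretriangulated hulls of tilting (resp.\ coherent) generators is the pretriangulated hull of their convolutions,'' and that the matching therefore propagates from the half-categories. This is not how compact, pseudo-compact, or nearly compact objects are characterized in the tensor product, and it is not in general true: the relative tensor product is a colimit of large categories, and there is no abstract reason why its compact objects should be the idempotent completion of convolutions of compact objects from the factors (one needs, for instance, properness or dualizability conditions that are not verified here). More concretely, near compactness of an object in $\Dmod((I_{\chP},I'_{\chP})\mon\!\bs \chG_F / \star)$ is \emph{defined} via compactness after applying $\on{Av}_{I_{\chP},*}$, and there is no a priori relation between this and being a convolution of nearly compact objects.

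The paper's argument for the small subcategories is entirely different and does not go through the tensor product description. Instead, for each equivariance type it produces an explicit functor to a category in which the relevant smallness notion coincides with compactness, and shows that the equivalence intertwines these functors. For instance, for $\Dmod_c \leftrightarrow \Coh_{\cN}$ in the strict case one uses the smooth cover $\pi: K \bs \chG_F/\mathring{I} \to K \bs \chG_F/I_{\chP}$ automorphically and the correspondence $i_*p^*$ along $G \times^B \mf{n}_P$ spectrally, checking that both preserve and reflect pseudo-compactness. For the nearly compact enlargements, one uses $\on{Av}_{I_{\chP},*}$ automorphically and the $!$-restriction along the quasi-smooth closed immersion $\wt{\cN}_P \hookrightarrow \wt{\cN}'_P$ spectrally, invoking Theorem \ref{wcoh prop qs} to identify $\hCoh$ as those objects whose $!$-restriction is compact. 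In the Whittaker case the relevant spectral functor is $\mu^!$ along the composite $\wt{\cN} \hookrightarrow \wt{\mf{g}}_P \times_{G/P} G/B \to \wt{\mf{g}}_P$, and one must separately verify that the analogue of Theorem \ref{wcoh prop qs} holds for this non-closed proper map. You should replace your small-subcategory argument with this direct one.
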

\begin{proof}
By tensoring the (compatible) equivalences from Theorems \ref{thm monodromic strict}, \ref{thm monodromic whittaker}, and \ref{thm monodromic central} over the equivalence in Theorem \ref{roman theorem}, we obtain (for example):
\begin{align*}
\Dmod(\mathring{I}, \psi_P \bs \chG_F / \mathring{I}) \tens{\Dmod(\mathring{I} \bs \chG_F / \mathring{I})} \Dmod(\mathring{I} \bs \chG_F / I_{\chQ}) & \overset{\simeq}{\longrightarrow}  \\
 & \hspace{-7ex}\IndCoh_{\cN}(\wt{\mf{g}}_P/G \times_{\mf{g}/G} \wt{\mf{g}}/G) \tens{\IndCoh(\wt{\mf{g}}/G \times_{\mf{g}/G} \wt{\mf{g}}/G)} \IndCoh(\wt{\mf{g}}/G \times_{\mf{g}/G} \wt{\cN}_Q/G)
\end{align*}
from which we deduce all the equivalences in the theorem, along with functoriality, by applying Theorems \ref{autom convolution} and \ref{reduction using monoidal} (and Lemma \ref{auto nilpotent}); that is, on the spectral side we always see a nilpotent support and nilpotent singular support condition appearing.  %For compatibility with monoidal structure, we note that by self-duality (see Theorem \ref{autom convolution}), on compact objects the equivalences above may be rewritten (for example):
%$\begin{tikzcd}
%\cat{Fun}_{\Dmod_s(\chG_F)}(\Dmod_s(\chG_F / I_{\chP}),\Dmod_s(\chG_F / {}_Q\chN, \psi_Q)) \arrow[r, "\simeq"] & \cat{Fun}_{\Perf(\mf{g}/G)}(\Coh(\wt{\cN}_Q/G), \Coh(\wt{\mf{g}}_Q/G))\end{tikzcd}$$
%$$\begin{tikzcd}
%\cat{Fun}^L_{\Dmod_s(\mathring{I} \bs \chG_F / \mathring{I})}(\Dmod_s(\mathring{I} \bs \chG_F / \mathring{I}, \psi_P),\Dmod_s(\mathring{I} \bs \chG_F / I_{\chQ})) \arrow[r, "\simeq"] & \cat{Fun}^L_{\IndCoh_{\cN \cap \cN[1]}(\wt{\mf{g}}/G \utimes{\mf{g}/G} \wt{\mf{g}}/G)}(\IndCoh_{\cN \cap \cN[1]}(\wt{\mf{g}}/G \utimes{\mf{g}/G} \wt{\mf{g}}_P/G), \IndCoh_{\cN \cap \cN[1]}(\wt{\mf{g}}/G \utimes{\mf{g}/G} \wt{\cN}_Q/G))\end{tikzcd}$$
%where the convolution action becomes composition.  
The equivalences are compatible with the monoidal structure by construction, e.g. in the above, $\Dmod(\mr{I}\bs \chG_F / I_{\chQ})$ is acted on the right by $\cat{End}_{\Dmod(\mr{I} \bs \chG_F / \mr{I})}(\Dmod(\mr{I}\bs \chG_F / I_{\chQ}))$.  Likewise, $\IndCoh(\wt{\mf{g}}/G \times_{\mf{g}/G} \wt{\cN}_Q/G)$ is acted on the right by $\cat{End}_{\IndCoh(\wh{\cZ}/G)}(\IndCoh(\wt{\mf{g}}/G \times_{\mf{g}/G} \wt{\cN}_Q/G))$.  By self-duality, these endomorphisms may be written as a tensor product, i.e. the actions are matched with our construction of the equivalences
\begin{align*}\Dmod(I_{\chQ} \bs \chG_F / I_{\chQ}) & \simeq \Dmod(I_{\chQ} \bs \chG_F / \mr{I}) \tens{\Dmod(\mr{I} \bs \chG_F / \mr{I})} \Dmod(\mr{I} \bs \chG_F / I_{\chQ}) \\
& \simeq \IndCoh(\wt{\cN}_Q/G \utimes{\mf{g}/G} \wt{\mf{g}}/G) \tens{\IndCoh(\wt{\mf{g}}/G \times_{\mf{g}/G} \wt{\mf{g}}/G)} \IndCoh(\wt{\mf{g}}/G \utimes{\mf{g}/G} \wt{\cN}_Q/G).
\end{align*}
The other cases may be similarly deduced.

For the statements on small categories, for $\Dmod_s \leftrightarrow \Coh_{\cN \cap \cN[1]}$ we simply pass to compact objects.  In the following, we let $K \subset G_F$ abusively denote one of the subgroups and monodromicity conditions appearing in the theorem statement, and $\cV/G$ the corresponding spectral stack over $\mf{g}/G$; they will not be relevant to the arguments.  To enlarge to $\Dmod_c \leftrightarrow \Coh_{\cN}$ we use the characterizations in Sections \ref{renormalize autom} and \ref{renormalize indcoh}.  We first treat the case where the right action is ``strict'', i.e. consider the $G$-equivariant correspondence
$$\wt\cN_P = G \times^P \mf{n}_P \overset{p}{\longleftarrow}   G\times^B \mf{n}_P \overset{i}{\longrightarrow} G \times^B \mf{b}$$
and the diagram
$$\begin{tikzcd}
\Dmod_c(K\bs \chG_F / I_{\chP}) \arrow[r, hook]  & \Dmod(K \bs \chG_F / I_{\chP}) \arrow[r, "\pi^*"] \arrow[d, "\simeq"', "\Phi_{P}"] & \Dmod(K \bs \chG_F/\mathring{I}\mon) \arrow[d, "\simeq"', "\Phi_{\mathring{B}}"] \\
\Coh(\cV/G \utimes{\mf{g}/G} \wt\cN_P/G) \arrow[r, hook] & \IndCoh_{\cN[1]}(\cV/G \utimes{\mf{g}/G} \wt\cN_P/G) \arrow[r, "i_*p^*"] & \IndCoh_{\cN \cap \cN[1]}(\cV/G \utimes{\mf{g}/G} \wt{\mf{g}}/G).
\end{tikzcd}$$
We will show that the full subcategories on the left consist precisely of the objects which are mapped to compact objects on the right.  On the automorphic side, we note that $\pi: K \bs \chG_F / \mr{I} \rightarrow K \bs \chG_F / I_{\chP}$ is a smooth covering and that $K \bs \chG_F / \mr{I}$ is the quotient of a placid ind-scheme $K \bs \chG_F$ by the pro-unipotent group $\mr{I}$, thus by Corollary \ref{coherent is compact} the pseudo-compact, i.e. coherent, objects are precisely the compact objects. On the spectral side, the map $p$ is smooth, and pullback along it reflects the property of pseudo-compactness, i.e. coherence.  Likewise, the map $i$ is a closed embedding, and pushforward along it reflects pseudo-compactness, which are compact by definition.  Thus the equivalence restricts to an equivalence
$$\Phi_{P}: \Dmod_c(K \bs \chG_F / I_{\chP}) \overset{\simeq}{\longrightarrow} \Coh(\cV/G \times_{\mf{g}/G} \wt{\cN}_P/G).$$
The notions of nearly compact constructible sheaves and continuous ind-coherent sheaves coincides with compactness in their respective categories when one side is strict, so this concludes the case where one side is strict.

We now consider the case where one side is monodromic, where the same argument shows that the equivalence $\Phi_{\mr{P}}$ restricts
$$\begin{tikzcd}
\Dmod_c(K \bs \chG_F / I'_{\chP}) \arrow[r, hook] \arrow[d, dotted, "\simeq"', "\Phi_{\mr{P}}"]  & \Dmod(K \bs \chG_F / I'_{\chP}) \arrow[r, "\pi"] \arrow[d, "\simeq"', "\Phi_{\mathring{P}}"] & \Dmod(K \bs \chG_F/\mathring{I}\mon) \arrow[d, "\simeq"', "\Phi_{\mathring{B}}"] \\
\Coh_{\cN}(\cV/G \utimes{\mf{g}/G} \wt\cN'_P/G) \arrow[r, hook] & \IndCoh_{\cN \cap \cN[1]}(\cV/G \utimes{\mf{g}/G} \wt\cN'_P/G) \arrow[r] & \IndCoh_{\cN \cap \cN[1]}(\cV/G \utimes{\mf{g}/G} \wt{\mf{g}}/G).
\end{tikzcd}$$
We now wish to enlarge to nearly compact and continuous ind-coherent sheaves, i.e. we let 
$$i: \wt{\cN}_P = G \times^P \mf{n}_P \hookrightarrow \wt{\cN}'_P  = G \times^P \mf{z}_P$$
denote the $G$-equivariant inclusion and consider
$$\begin{tikzcd}
\Dmod_{s, n.c.}(K \bs \chG_F / I'_{\chP}) \arrow[r, hook]  & \Dmod(K \bs \chG_F / I'_{\chP}) \arrow[r] \arrow[d, "\simeq"', "\Phi_{\mathring{P}}"] & \Dmod(K \bs \chG_F/I_{\chP}) \arrow[d, "\simeq"', "\Phi_{P}"] \\
\hCoh_{\cN[1]}(\cV/G \utimes{\mf{g}/G} \wt\cN'_P/G) \arrow[r, hook] & \IndCoh_{\cN \cap \cN[1]}(\cV/G \utimes{\mf{g}/G} \wt\cN'_P/G) \arrow[r, "i^!"] & \IndCoh_{\cN[1]}(\cV/G \utimes{\mf{g}/G} \wt{\cN}_P/G).
\end{tikzcd}$$
On the spectral side, we note that every nilpotent element of $\mf{z}_P$ belongs to $\mf{n}_P$, therefore $i$ is a quasi-smooth closed embedding geometrically surjecting onto the nilpotent locus.  Thus, the objects in the full subcategory may be characterized, using Theorem \ref{wcoh prop qs}, as those which are compact after application of $i^!$.  This matches the definition of near compactness on the automorphic side, thus $\Phi_{\mr{P}}$ restricts to an equivalence
$$\Phi_{\mr{P}}: \Dmod_{s,n.c.}(K \bs \chG_F / \mr{I}_{\chP}) \overset{\simeq}{\longrightarrow} \hCoh_{\cN[1]}(\cV/G \times_{\mf{g}/G} \wt{\cN}'_P/G).$$
This argument may be repeated without the singular support condition $\cN[1]$ to conclude that the equivalence also restricts to an equivalence
$$\Phi_{\mr{P}}: \Dmod_{c,n.c.}(K \bs \chG_F / \mr{I}_{\chP}) \overset{\simeq}{\longrightarrow} \hCoh(\cV/G \times_{\mf{g}/G} \wt{\cN}'_P/G).$$

Finally, we consider the case where one side is monodromic for a Whittaker character.  Here, the singular support is already nilpotent, so we only need to consider enlarging with respect to the classical support.  To this end, we consider the diagram
$$\begin{tikzcd}
{\Dmod}_{c, n.c.}(K\bs \chG_F / {}_{\chP}\chN, \psi, \St) \arrow[r, hook]  & \Dmod(K \bs \chG_F / {}_{\chP}\chN, \psi, \St) \arrow[r] \arrow[d, "\simeq"', "\Phi_{ \phi_P}"] & \Dmod(K \bs \chG_F/I) \arrow[d, "\simeq"', "\Phi_{B}"] \\
\hCoh_{\cN}(\cV/G \utimes{\mf{g}/G} \wt{\mf{g}}_P/G) \arrow[r, hook] & \IndCoh_{\cN}(\cV/G \utimes{\mf{g}/G} \wt{\mf{g}}_P/G) \arrow[r, "\mu^!"] & \IndCoh_{\cN[1]}(\cV/G \utimes{\mf{g}/G} \wt{\cN}/G)
\end{tikzcd}$$
%where $\Av_{!*}$ is the averaging functor from Section \ref{avg def} and $p: \wt{\cN}/G \rightarrow \wt{\mf{g}}_P/G$.  

We wish to show that the full subcategories on the left consist of precisely the objects which map to compact objects on the right.  In the bottom row, i.e. spectrally, we factor
$$\mu: \cV/G \utimes{\mf{g}/G} \wt{\cN}/G \overset{i}{\hookrightarrow} \cV/G \utimes{\mf{g}/G} (\wt{\mf{g}}_P \times_{G/P} G/B)/G \overset{p}{\hookrightarrow}  \cV/G \utimes{\mf{g}/G} \wt{\mf{g}}_P/G$$
where $i$ is a quasi-smooth closed embedding, and $p$ is smooth.  We will prove that the statement of Theorem \ref{wcoh prop qs} holds for the proper map $\mu$, arguing along similar lines.  First, we note that $p^!$ reflects and preserves $t$-boundedness since $p$ is smooth, and by Lemma \ref{reflect t boundedness} the same is true for $i^!$.  Now, suppose $Z/G \subset \cV/G \times_{\mf{g}/G} \wt{\mf{g}}_P/G$ is a closed substack nil-isomorphic to the nilpotent locus; we need to show that the $!$-restriction of a left $t$-bounded object of $\IndCoh(\cV/G \times_{\mf{g}/G} \wt{\mf{g}}_P/G)$ to $Z/G$ has coherent cohomology if and only if its $!$-pullback to $\cV/G \times_{\mf{g}/G} \wt{\cN}/G$ does.  Consider its derived fiber 
$$\begin{tikzcd}
Z'/G \arrow[r, hook, "z'"] \arrow[d, "p'"] &  \cV/G \times_{\mf{g}/G} (G \times^B \mf{p})/G \arrow[d, "p"] \\
Z/G \arrow[r, hook, "z"] & \cV/G \times_{\mf{g}/G} \wt{\mf{g}}_P/G.
\end{tikzcd}$$
It is straightforward to verify that $Z'/G$ is nil-isomorphic to $\cV/G \times_{\mf{g}/G} \wt\cN/G.$  Assuming that $\cF$ is left $t$-bounded, suppose that $\mu^! \cF$ has coherent cohomology.  By Lemma \ref{technical lemma coh com}, it follows that $z'^!p^!\cF \simeq p'^!z^!\cF$ has coherent cohomology.  Furthermore, pullback along smooth morphisms preserves and reflects coherence of cohomology, so we conclude that $z^!\cF$ has coherent cohomology as desired.

In particular, the category $\hCoh_{\cN}(\cV/G \utimes{\mf{g}/G} \wt{\mf{g}}_P/G)$ is characterized as the full subcategory of sheaves $\cF$ such that $\mu^! \cF$ is compact.  This matches the automorphic description, as shown in Corollary \ref{c:PoSt}, Proposition \ref{p:bIgMisTake}, and Proposition \ref{c:post3}, thus the equivalence restricts to an equivalence
$$\Phi_{\phi_P}: \Dmod_{c,n.c.}(K \bs \chG_F / {}_{\chP}\chN, \psi, \St) \overset{\simeq}{\longrightarrow} \hCoh(\cV/G \times_{\mf{g}/G} \wt{\mf{g}}_P/G).$$
This completes the proof.
\end{proof}

\appendix

\section{\texorpdfstring{$\infty$}{Infinity}-categorical considerations}\label{app infcat}

Our results use certain abstract $\infty$-categorical results, e.g. Lurie-Barr-Beck, in an essential way.  To identify the $\infty$-categories on the automorphic and spectral sides of Langlands, we will boil down the comparison to an identification of objects at the level of 1-categories, e.g. in Corollary \ref{completed spectral coalgebra} and Proposition \ref{autom algebra}.    In this appendix we establish our language for discussing $\infty$-categories, following the approach in \cite{HA, HTT}, and prove foundational results for deducing results on algebra objects in monoidal $\infty$-categories by arguing at the level of 1-categorical algebra objects.

\subsubsection{}
We model spaces by $\infty$-groupoids or Kan complexes, i.e. simplicial sets such that all horns (an $n$-simplex minus its interior and any codimension 1 face) can be filled.  We model $\infty$-categories via \emph{quasi-categories} or weak Kan complexes, i.e. simplicial sets such that all \emph{inner} horns can be filled.  We denote these categories, realized as full subcategories of the category of simplicial sets, by $\cat{Gpd}_\infty$ and $\cat{Cat}_\infty$ respectively.  The category of simplicial sets is canonically simplicially enriched, and we denote their simplicial enrichments by $\cat{Grd}_\infty^\Delta$ and $\cat{Cat}_\infty^\Delta$.  These categories may also be considered as $\infty$-categories (i.e. quasi-categories, with an underlying simplicial set) via the simplicial nerve construction discussed in Section \ref{homotopy coherent nerve} below, i.e. we have the quasi-categories $N_\Delta(\cat{Gpd}_\infty^\Delta)$ and $N_\Delta(\cat{Cat}_\infty^\Delta).$

\subsubsection{} 
A \emph{stable $\infty$-category} is an $\infty$-category that has a zero object, all fibers (cocones) and cofibers (cones), and such that fiber sequences coincide with cofiber sequences.  By Theorem 1.1.2.14 of \cite{HA} the homotopy category of a stable $\infty$-category has a canonical triangulated structure.  Conversely, if $\cat{C}$ is a pre-triangulated dg category then $N_{dg}(\cat{C})$ is stable by Theorem 4.3.1 of \cite{dg nerve}.  Thus, one can view stable $\infty$-categories as a generalization of triangulated categories; however, note that stability is a \emph{condition} on an $\infty$-category (which, in some sense, already has all the extra structure needed to identify exact triangles), while a triangulated category is an ordinary category equipped with extra \emph{structure} (i.e. a specification of which triangles are exact).  In stable $\infty$-categories exact triangles correspond to biCartesian squares.

\medskip

A $t$-structure on a stable $\infty$-category is no more than a $t$-structure on the (triangulated) homotopy category; see Section 1.2.1 of \cite{HA} for details.  Given a $t$-structure on a stable $\infty$-category $\cat{C}$, we define its \emph{heart} to be the full $\infty$-subcategory $\cat{C}^\heartsuit \subset \cat{C}$ consisting of objects which are in the heart of the homotopy category $h\cat{C}$, which is an abelian 1-category.  The canonical functor $\cat{C}^\heartsuit \rightarrow N(h\cat{C}^\heartsuit$ is an equivalence, and particular the heart, a priori an $\infty$-category, is discrete, i.e. for any $X, Y \in \cat{C}^\heartsuit$, we have $\pi_i(\Map(X, Y)) = 0$ unless $i = 0$.

\subsubsection{}
There are variants of the nerve construction that produce an $\infty$-category (i.e. quasi-category) from other kinds of categories.  The most basic example is the ordinary or 1-categorical nerve which produces a quasi-category from an ordinary 1-category, and is defined (see Definition 1.1.2.1 in \cite{HTT})
$$N: \cat{Cat} \rightarrow \cat{Cat}_\infty, \;\;\;\;\;\;\;\;\;\; N(\cat{C})([n]) := \Hom_{\cat{Cat}}([n], \cat{C})$$
where $[n] := \{0, 1, \ldots, n\}$ is an ordered set viewed as a category (with a unique morphism between $i, j$ if $i \leq j$), i.e. $N$ assigns to a 1-category $\cat{C}$ the simplicial set whose $n$-simplices consists of $n$-chains of morphisms in the category.  The nerve has a left adjoint $h: \cat{Cat}_\infty \rightarrow \cat{Cat}$ which is the \emph{homotopy category}; this is discussed in Section 1.2.3 of \emph{loc. cit.}, with an explicit characterization discussed around Proposition 1.2.3.5.

\subsubsection{}\label{homotopy coherent nerve}
We will also require functors for passing between quasi-categories and simplicially enriched categories, which may morally be viewed as two different models for a ``Platonic'' $\infty$-category.  There is a functor $\mf{C}: \cat{Cat}_\infty \rightarrow \cat{Cat}_\Delta$ defined in Definition 1.1.5.1 and above Example 1.1.5.9 of \cite{HTT}, whose right adjoint $N_\Delta: \cat{Cat}_\Delta \rightarrow \cat{Cat}_\infty$ is the simplicial (or homotopy coherent) nerve (see Definition 1.1.5.5 of \emph{op. cit.}):
$$N_\Delta(\cat{C})([n]) := \Hom_{\cat{Cat}_\Delta}(\mf{C}[n], \cat{C}).$$
We are particularly interested in the case where $\cat{C} = \cat{Cat}_\infty^\Delta$ is the category of $\infty$-categories (i.e. quasi-categories), which is simplicially enriched.  The simplicial nerve defines for us a quasi-category of quasi-categories (or $\infty$-category of $\infty$-categories) $\cat{Cat}_\infty := N_\Delta(\cat{Cat}_\infty^\Delta)$.  In particular, the homotopy category assignment $h: \cat{Cat}_\infty \rightarrow \cat{Cat}$ only depends on simplices in $\cat{Cat}_\infty$ of dimension $\leq 3$.  Given a simplicially enriched category $\cat{C}$, we have the following description of these simplices of small dimension in the homotopy coherent nerve $N_\Delta(\cat{C})$:
\begin{enumerate}
    \item 0-simplices are the objects of $\cat{C}$,
    \item 1-simplices are given by pairs of objects $x_0, x_1 \in \mathrm{Ob}(\cat{C})$ along with a 0-simplex in the mapping simplicial set $\Hom_{\cat{C}}(x_0, x_1)$,
    \item 2-simplices are given by a triple of objects $x_0, x_1, x_2$, along with 0-simplices in respective mapping simplicial sets $f_{10}: x_0 \rightarrow x_1$, $f_{21}: x_1 \rightarrow x_2$, $f_{20}: x_0 \rightarrow x_2$, and a 1-simplex in $\Hom_{\cat{C}}(x_0, x_2)$ from $f_{21} \circ f_{10}$ to $f_{20}$, and
    \item 3-simplices are given by a quadruple of objects $x_0, x_1, x_2, x_3$, 0-simplices in mapping simplicial sets $f_{ji}: x_i \rightarrow x_j$ (for $i < j$), and denoting $f_{kji} = f_{jk} \circ f_{ij}$ et cetera, and 1- and 2-simplices in $\Hom_{\cat{C}}(x_0, x_3)$:
$$\begin{tikzcd}
& f_{310} \arrow[dr] & \\
f_{30} \arrow[rr] \arrow[dr] \arrow[ur]  & & f_{3210}.  \\
& f_{320} \arrow[ur] &    
\end{tikzcd}$$
\end{enumerate}

\subsubsection{} 
We will also use the differential graded or dg nerve $N_{dg}: \cat{dgCat} \rightarrow \cat{Cat}_\infty^{st}$ (see Construction 1.3.1.6 in \cite{HA}), which assigns to every dg category a stable $\infty$-category (here, we view $\cat{dgCat}$ as an ordinary 1-category).  There is also a ``large'' version which assigns a simplicially enriched category to a dg category; these two constructions are compatible via the homotopy coherent nerve and dg nerve (see Proposition 1.3.1.17 of \cite{HA}).

\subsubsection{}\label{inert fact} One important $\infty$-category for us will be the nerve of the opposite simplex category $N(\Delta^\opp)$.  The simplex category $\Delta$ is the 1-category consisting of objects $[n] := \{0, 1, \ldots, n\}$ for $n \in \bZ^{\geq 0}$, with morphisms given by increasing maps.  We denote a morphism in the opposite category $f \in \Hom_{\Delta^\opp}([m], [n])$ by $f: [m] \leftarrow [n]$.  We say a morphism $f: [m] \leftarrow [n]$ in $\Delta^\opp$ is \emph{inert} if it is sequential, i.e. if $f(i+1) = f(i) + 1$, and \emph{active} if $f(0) = 0$ and $f(n) = m$.  For example, there are two inert maps $[2] \leftarrow [1]$, and one active map.  It is an easy fact that every morphism in $\Delta^\opp$ factors uniquely as the composition of an inert morphism followed by an active morphism (for further discussion, see Remark 2.1.2.2 and Proposition 2.1.2.4 \cite{HA}, and note that the notions of inert and active in $\Delta^\opp$ are inherited from the operad $\cat{Assoc}^\otimes$ by Construction 4.1.2.9 in \emph{op. cit.}).

\subsection{Monoidal \texorpdfstring{$\infty$}{infinity}-categories and \texorpdfstring{$t$}{t}-structures}

We begin by reviewing the notion of a monoidal $\infty$-category in both straightened and unstraightened incarnations; see Section 4.1.3 (and in particular Definition 4.1.3.6) in \cite{HA} for details.  Straightening and unstraightening functors are defined around Theorem 3.2.0.1 of \cite{HTT} and form Quillen equivalences.  We refer the reader to \emph{loc. cit.} for details and precise statements.

\subsubsection{}\label{section straightened}
A \emph{straightened associative (or $A_\infty$)-monoidal $\infty$-category} (see Definition 4.2.1.5 of \cite{HA}) is a map of simplicial sets $F_{\cat{A}}: N(\Delta^\opp) \rightarrow N_\Delta(\cat{Cat}_\infty^\Delta)$, satisfying the Segal condition that the induced maps
$$\prod_{i=1}^n F_{\cat{A}}(\rho^i): F_{\cat{A}}([n]) \longrightarrow \prod_{i=1}^n F_{\cat{A}}([1])$$
are equivalences for $i \geq 0$, where $\rho^i: [n] \leftarrow [1]$ are the inert maps sending $0 \mapsto i-1$ and $1 \mapsto i$.  The Segal condition for $n=0$ implies that $F_{\cat{A}}([0]) = \asterisk$, i.e. the terminal category.  We refer to $\cat{A} := F_{\cat{A}}([1])$ as the underlying $\infty$-category, and the Segal condition defines a canonical equivalence $F_{\cat{A}}([n]) \simeq \cat{A}^{\times n}$.  Note that by the usual adjunction (see Section \ref{homotopy coherent nerve}), a map of simplicial sets $N(\Delta^\opp) \rightarrow N_\Delta(\cat{Cat}^\Delta_\infty)$ is equivalent to a functor $\mf{C}[N(\Delta^\opp)] \rightarrow \cat{Cat}_\infty^\Delta$ of simplicially enriched categories.

\medskip

A module category for a straightened monoidal category $F_{\cat{A}}$ is a functor $F_{\cat{M}}: N(\Delta^{\opp}) \rightarrow N_\Delta(\cat{Cat}_\infty^\Delta)$  satisfying the relative Segal condition that the induced maps
$$\prod_{i=1}^n F_{\cat{M}}(\rho^i): F_{\cat{M}}([n]) \longrightarrow F_{\cat{M}}([1]) \utimes{F_{\cat{M}}([0])} \cdots \utimes{F_{\cat{M}}([0])} F_{\cat{M}}([1]).$$
are equivalences for all $i \geq 0$, and equipped with a natural map $F_{\cat{M}} \rightarrow F_{\cat{A}}$ such that the induced maps
$$F_{\cat{M}}([n]) \longrightarrow F_{\cat{A}}([n]) \utimes{F_{\cat{A}}([0])} F_{\cat{M}}([0])$$
are equivalences.  We refer to $\cat{M} := F_{\cat{M}}([0])$ as the underlying module category (note that we take the evaluation at $[0]$, rather than $[1]$ as above).  A map of module categories is map $F_{\cat{M}} \rightarrow F_{\cat{N}}$ over $F_{\cat{A}}$.

\subsubsection{} An \emph{unstraightened associative (or $A_\infty$)-monoidal $\infty$-category} (see Definition 4.1.3.6 of \cite{HA}) is a category $\cat{A}^\otimes$ equipped with a coCartesian fibration $p: \cat{A}^\otimes \rightarrow N(\Delta^{\opp})$ satisfying the Segal condition that the maps
$$\prod_{i=1}^n \rho_!^i: \cat{A}^\otimes_{[n]} \longrightarrow \prod_{i=1}^n \cat{A}^\otimes_{[1]}$$
are equivalences, where for a 0-simplex $\sigma$ the category $\cat{A}_{\sigma}^\otimes$ is fiber over a simplex $\sigma \in N(\Delta^{\opp})$, and for a 1-simplex $\alpha: [m] \leftarrow [n]$ of $N(\Delta^\opp)$ the functor $\alpha_!: \cat{A}^\otimes_{[m]} \rightarrow \cat{A}^\otimes_{[n]}$ is defined via a homotopy coherent choice of coCartesian lifts (see the discussion following Example 2.1.1.2 in \cite{HTT}).  The Segal condition for $n=0$ implies that $\cat{A}^\otimes_{[0]} \simeq \asterisk$.  We refer to $\cat{A} := \cat{A}^\otimes_{[1]}$ as the underlying $\infty$-category, and we have a \emph{non-canonical} equivalence $\cat{A}^\otimes_{[n]} \simeq \cat{A}^{\times n}$.  Furthermore, by the coCartesian lifting property we have
\begin{equation*}
\begin{split}
\Map_{\cat{A}^\otimes}((X_1, \ldots, X_n), (X_1, \ldots, X_m)) & \simeq \coprod_{\alpha: [n] \leftarrow [m]} \Map_{\cat{A}_{[m]}^\otimes}(\alpha_!(X_1,\ldots, X_n), (X_1, \ldots, X_m)) \\
& \simeq \coprod_{\alpha: [n] \leftarrow [m]} \prod_{i=1}^m \Map_{\cat{A}}(\alpha_!(X_1,\ldots, X_n)_i, X_i)
\end{split}
\end{equation*}
where $(-)_i$ picks out the $i$th component in the $m$-tuple.

\medskip

An unstraightened module category for $\cat{A}^\otimes$ is an $\infty$-category $\cat{M}^\otimes$ equipped with a coCartesian fibration $\cat{M}^\otimes \rightarrow \cat{A}^\otimes$ satsifying the relative Segal condition that the maps
$$\prod_{i=1}^n \rho_!^i: \cat{M}^\otimes_{[n]} \longrightarrow  \cat{M}^\otimes_{[1]} \utimes{\cat{M}^\otimes_{[0]}} \cdots \utimes{\cat{M}^\otimes_{[0]}} \cat{M}^\otimes_{[1]} $$
are equivalences.  A map of module categories is a map of categories $\cat{M}^\otimes \rightarrow \cat{N}^\otimes$ over $\cat{A}^\otimes$.  We refer to $\cat{M} := \cat{M}^\otimes_{[0]}$ as the underlying $\infty$-category.  Applying the coCartesian lifting property to the degeneracy maps from $\cat{A}^\otimes_{[0]} \simeq \asterisk \rightarrow \cat{A}^\otimes_{[n]}$, we have a non-canonical equivalence $\cat{M}^\otimes_{[n]} \simeq \cat{A}^{\times n} \times \cat{M}$.

\subsubsection{} We will generally use the superscript $(-)^\otimes$ to denote unstraightened categories, and denote straightened categories by a functor $F_{-}$.  The two are related by straightening and unstraightening (or relative nerve) functors; we refer the reader to Section 3.2.5 of \cite{HTT} for details.

\medskip

Before continuing, let us discuss informally how the above notions are meant to capture the idea of a monoidal category.  An object $[n] \in \Delta^\opp$ may be thought of as labelling a $n$-tuple of objects in a monoidal category $(X_1, \ldots, X_n)$; it is convenient to think of the labels of the $X_i$ are not correpsonding to elements of $[n]$ but (length 1) gaps in $[n] = \{0, 1, \ldots, n\}$.  A map $\alpha: [m] \leftarrow [n]$ in $\Delta^\opp$ may be thought of an assignment of a sequence of gaps in $[n]$ to each gap in $[m]$.  The functors $F_{\cat{A}}(\alpha)$ or $\alpha_!$ tensor the objects in each sequence (or inserts the monoidal unit if the gap is trivial, i.e. length 0), and discard any objects labeled by gaps ``at the ends'' outside the range of $\alpha$.  For example, the inert maps $\rho^i$ above precisely pick out one object in the tuple and throw out the rest, the active map $[n] \leftarrow [1]$ tensors all objects in the tuple together, and the active map $[0] \leftarrow [n]$ inserts the monoidal unit into every component.

\subsubsection{}\label{section monoidal functor} A \emph{(strong) monoidal functor} between two unstraightened associative monoidal $\infty$-categories is a map of simplicial sets $F: \cat{A}^\otimes \rightarrow \cat{B}^\otimes$ over $N(\Delta^\opp)$ that takes coCartesian simplices to coCartesian simplices.  A \emph{(right) lax monoidal functor} is only required to take coCartesian lifts of inert simplices to coCartesian simplices.  In particular, in the setting of unstraightened monoidal categories a monoidal functor is a functor with \emph{extra structure}, while in the setting of unstraightened monoidal categories it is defined by a \emph{property}.  

\medskip

We will not define the notion of monoidal functor in the straightened setting; the difficulty of giving an enumerative description of this extra structure (along with the difficulty of defining the notion of an algebra object) here is a significant motivation for working in the unstraightened setting.  However, we will require straightening to pass between familiar 1-categorical constructions, and cannot avoid its discussion.

\
%Our definition of algebra objects is made in the unstraightened setting, while the straightened setting is closer to how we define the analogous 1-categorical notions.  We now make the latter precise for monoidal categories.\todo{check}

%\subsubsection{} Let us spell out what the coCartesian lifts of 1-simplices in $N(\Delta^\opp)$ are.  Let $f: [m] \leftarrow [n]$ be a 1-simplex in $N(\Delta^\opp)$, i.e. a morphism in $\Delta^\opp$.  Then, passing through a choice of identification $\cat{A}^\otimes_{[i]} \simeq \cat{A}^{\times i}$, we have 
%$$f_!: \cat{A}^{\times m} \rightarrow \cat{A}^{\times n}, \;\;\;\;\;\;\;\;\;\; (A_1, \ldots, A_m) \mapsto (A_{f^{-1}(0) + 1} \otimes \cdots \otimes A_{f^{-1}(1)}, \ldots, A_{f^{-1}(n) + 1} \otimes \cdots \otimes A^{-1}_n).$$
%\todo{check this}

%\subsubsection{} We also (briefly) introduce the notion of a module for a monoidal $\infty$-category.  Let $F_{\cat{A}}$ be a straightened monoidal $\infty$-category; a \emph{straightened module category} is a functor $F_{\cat{M}}: N(\Delta^\opp) \rightarrow N_\Delta(\cat{Cat}_\infty^\Delta)$ satisfying the relative Segal condition that the induced maps
%$$F(\rho^1) \utimes{F(\iota_1),F(\iota_0)} \cdots \utimes{F(\iota_1),F(\iota_0)} F(\rho^n): F([n]) \longrightarrow F([1]) \utimes{F([0])} \cdots \utimes{F([0])} F([1])$$
%where $\iota_i: [1] \leftarrow [0]$ has image $i \in [1]$ (for $i=0, 1$).

\subsubsection{} We are often interested in stable monoidal $\infty$-categories equipped with a $t$-structure; in such a setting it is often far too strong to require that the monoidal structure restricts to the heart.  However, we can still consider a certain subcategory of ``objects in the monoidal heart'', i.e. objects in the heart whose iterated tensor products also live in the heart.  We now formally define this notion.

\begin{defn}\label{obj in heart}
%An (unstraightened) \emph{monoidal stable $\infty$-category} is an unstraightened monoidal $\infty$-category $p: \cat{A}^\otimes \rightarrow N(\Delta^\opp)$ such that the underlying category $\cat{A} = \cat{A}^\otimes_{[1]}$ is stable, and such that any identification and any coCartesian lift of the active maps $\cat{A}^{\times n} \rightarrow \cat{A}$ are exact in each factor in the source.  Under any identification $\cat{A}^{\otimes}_{[n]} \simeq \cat{A}^{\times n}$, we see that $\cat{A}^\otimes_{[n]}$ is also stable.
A \emph{$t$-structure} on a monoidal stable $\infty$-category is a $t$-structure on the underlying (stable) category.  Note that $\cat{A}^\otimes_{[n]}$ inherits a $t$-structure from $\cat{A}$ via the Segal condition, i.e. such that $(\cat{A}^\otimes_{[n]})^{\heartsuit} \simeq (\cat{A}^\heartsuit)^n$, independently of any choices made.   We define the \emph{monoidal heart} $\cat{A}^{\otimes, \heartsuit} \subset \cat{A}^\otimes$ to be the full subcategory of objects $A \in \cat{A}^\otimes$ such that
\begin{enumerate}
    \item the underlying object lives in the heart, i.e. $A \in (\cat{A}^\otimes_{[p(A)]})^\heartsuit$, and
    \item for every 1-simplex $\alpha: [p(A)] \leftarrow [n]$ and every choice of coCartesian pushforward $\alpha_!: \cat{A}^\otimes_{[p(A)]} \rightarrow \cat{A}^\otimes_{[n]}$, we have $\alpha_!(A) \in (\cat{A}^\otimes_{[n]})^\heartsuit$, i.e. the target of the coCartesian lift of $\alpha$ with source $A$ is in the heart.
\end{enumerate}
The mapping spaces of $\cat{A}^{\otimes, \heartsuit}$ are 0-truncated by definition and by the description of mapping spaces in Section \ref{section straightened}).  We often denote $(\cat{A}^{\otimes, \heartsuit})_{[1]}$ by $\cat{A}^{\heartsuit_\otimes}$; this is the underlying category of the monoidal heart (which we often simply call the monoidal heart).
\end{defn}

\medskip

We establish a few basic first properties of the monoidal heart, including an easier and more intuitive criterion for checking when an object belongs to the monoidal heart.
\begin{prop}
Let $\cat{A}^\otimes$ be an unstraightened associative monoidal stable $\infty$-category  with a $t$-structure such that the monoidal unit is in the heart.\footnote{Note that without the assumption that $1_{\cat{A}}$ is in the heart, $\cat{A}^{\otimes, \heartsuit}$ is the empty category, e.g. by taking the coCartesian lift of any composition $[p(A)] \rightarrow [0] \rightarrow [1]$.  For this reason we will always impose this condition when discussing the monoidal heart.}  Then $\cat{A}^{\otimes, \heartsuit} \subset \cat{A}^\otimes$ is a monoidal subcategory; in particular, $\cat{A}^{\heartsuit_\otimes}$ is a (discrete) monoidal $\infty$-category.  Furthermore, an object $A \in \cat{A}$ is in the monoidal heart if and only if its underlying object is in the heart and its iterated products are in the heart.
\end{prop}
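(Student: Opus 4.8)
The proposition has two assertions: first, that $\cat{A}^{\otimes, \heartsuit} \subset \cat{A}^\otimes$ is a monoidal subcategory (equivalently, that $\cat{A}^{\otimes, \heartsuit} \to N(\Delta^{\opp})$ is a coCartesian fibration satisfying the Segal condition, and that the inclusion preserves coCartesian edges); second, the ``iff'' criterion reducing membership to checking the underlying object and its iterated tensor products lie in the heart.

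The plan is to first dispatch the monoidality claim. I would check that $\cat{A}^{\otimes, \heartsuit}$ contains the target of every coCartesian lift of an arrow $\alpha: [m] \leftarrow [n]$ whose source lies in $\cat{A}^{\otimes, \heartsuit}$. This is essentially built into condition (2) of the definition together with the cocycle/composition property of coCartesian lifts: if $A \in \cat{A}^{\otimes,\heartsuit}_{[m]}$, then $\alpha_!(A)$ lies in the heart of $\cat{A}^\otimes_{[n]}$ by condition (2) applied to $\alpha$, and for any further arrow $\beta: [n] \leftarrow [\ell]$ we have $\beta_!(\alpha_!(A)) \simeq (\beta\alpha)_!(A)$, which lies in the heart by condition (2) applied to $\beta\alpha$. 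So $\alpha_!(A) \in \cat{A}^{\otimes,\heartsuit}$, and the lift remains coCartesian in the full subcategory. The Segal condition for $\cat{A}^{\otimes,\heartsuit}$ follows from that for $\cat{A}^\otimes$ together with the observation (recorded in Definition \ref{obj in heart}) that the induced $t$-structure on $\cat{A}^\otimes_{[n]}$ has heart $(\cat{A}^\heartsuit)^n$, so the equivalences $\cat{A}^\otimes_{[n]} \xrightarrow{\sim} (\cat{A}^\otimes_{[1]})^{\times n}$ restrict to equivalences $\cat{A}^{\otimes,\heartsuit}_{[n]} \xrightarrow{\sim} (\cat{A}^{\heartsuit_\otimes})^{\times n}$. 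That the mapping spaces are $0$-truncated was noted in the definition. This gives the first assertion.

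For the ``iff'' criterion, the nontrivial direction is: if the underlying object $A \in \cat{A}$ lies in the heart and all iterated tensor products $A^{\otimes n} \in \cat{A}^\heartsuit$ for $n \geq 1$ (interpreting $A^{\otimes 0}$ as the monoidal unit, which is in the heart by hypothesis), then $A$ lies in the monoidal heart, i.e. condition (2) holds for \emph{all} arrows $\alpha: [1] \leftarrow [n]$ (and, by a straightforward reduction using the degeneracy maps $[0] \to [n]$ and the Segal equivalences, for all arrows $[m] \leftarrow [n]$ once we know it for $[1] \leftarrow [n]$). The idea is to use the inert--active factorization of morphisms in $\Delta^{\opp}$ recalled in Section \ref{inert fact}: any $\alpha: [1] \leftarrow [n]$ factors as an inert arrow followed by an active arrow $[1] \leftarrow [j]$ for some $0 \leq j \leq n$, whose associated functor is $(j\text{-fold tensor product})$. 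The inert part's pushforward is a projection onto a sub-tuple of components, and since the source $(A)$ has only one component, $\rho_!^i(A)$ is again $A$ (or the monoidal unit if the relevant gap is empty). Thus every $\alpha_!(A)$ is, up to the Segal equivalences, built out of finitely many copies of $A$ and the monoidal unit by the $j$-fold tensor functors; these land in the heart precisely when $A^{\otimes j}$ does for all relevant $j$. The converse direction is immediate: condition (2) applied to the active arrow $[1] \leftarrow [n]$ gives exactly $A^{\otimes n} \in \cat{A}^\heartsuit$.

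\textbf{Main obstacle.} The conceptually real work is making precise the reduction ``condition (2) for all $\alpha$'' $\Leftrightarrow$ ``$A^{\otimes n} \in \cat{A}^\heartsuit$ for all $n$'', i.e.\ tracking through the non-canonical equivalences $\cat{A}^\otimes_{[n]} \simeq \cat{A}^{\times n}$ and the homotopy-coherent choice of coCartesian lifts, to see that every $\alpha_!$ decomposes (compatibly with the $t$-structures on the fibers) as a product of identity functors, the insertion-of-the-unit functor, and iterated-tensor functors. This is standard operadic bookkeeping — essentially the inert--active factorization of Section \ref{inert fact} combined with the Segal condition — but it is the step where one must be careful, since a priori condition (2) quantifies over all $1$-simplices and all choices of pushforward, and one wants to cut this down to the countably many ``model'' arrows realizing the functors $A \mapsto A^{\otimes n}$. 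Once that decomposition is in hand, both directions of the criterion are formal.
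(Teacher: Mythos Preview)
Your proposal is correct and follows essentially the same approach as the paper: reduce via the Segal condition and the inert--active factorization, so that condition (2) need only be checked on active morphisms, which encode the iterated tensor products. The paper's proof is extremely terse (it dismisses the first assertion as ``immediate'' and dispatches the second in two sentences), whereas you spell out in more detail why the full subcategory is closed under coCartesian pushforwards and why the Segal equivalences restrict; but the underlying argument is the same.
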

\begin{proof}
The first claim is immediate.  For the second claim, by the Segal condition, to verify condition (1) it suffices to check  for $[n] = [1]$.  For condition (2), we use the factorization discussed in Section \ref{inert fact}.  Since $s$ sends inert maps to coCartesian maps and by the Segal condition, it suffices to verify the condition for active maps, i.e. we require that the iterated products of $A$ are in $\cat{A}^\heartsuit$.
\end{proof}

\subsubsection{} The monoidal heart $\cat{A}^{\heartsuit_\otimes}$ is ``essentially'' a 1-category, but to obtain an actual 1-category we need to take the homotopy category.  We now show that taking the homotopy category recovers the usual notion of a monoidal 1-category.  We recall the notion of a monoidal 1-category as well as its equivalent ``unbiased'' version, which is closer to the $\infty$-categorical definition.

\begin{defn}\label{monoidal 1cat}
An \emph{associative monoidal 1-category} is a 1-category $\cat{A}$ equipped with functors
$$\tau: \cat{A} \times \cat{A} \longrightarrow \cat{A}, \;\;\;\;\;\;\;\;\; \upsilon: \asterisk \longrightarrow \cat{A}$$
as well as associator and left/right unitor natural isomorphisms
$$\alpha: \tau \circ (\mathrm{id}_{\cat{A}} \times \tau) \simeq \tau \circ (\tau \times \mathrm{id}_{\cat{A}}), \;\;\;\;\;\;\; \lambda: \tau \circ (\upsilon \times \mathrm{id}_{\cat{A}}) \simeq \mathrm{id}_{\cat{A}}, \;\;\;\;\;\;\; \rho: \tau \circ (\mathrm{id}_{\cat{A}} \times \upsilon) \simeq \mathrm{id}_{\cat{A}}$$
satisfying the usual pentagon and triangle relations.  A \emph{lax monoidal functor} between monoidal 1-categories $F: \cat{A} \rightarrow \cat{B}$ is a functor equipped with natural maps of functors
$$\mu: \tau_{\cat{B}} \circ F \rightarrow F \circ \tau_{\cat{A}}, \;\;\;\;\;\;\;\;\;\; \epsilon: \upsilon_{\cat{B}} \circ F \rightarrow F \circ \upsilon_{\cat{A}}$$
satisfying the usual compatibilities with the associator and unitor.  A \emph{strong monoidal functor} is a lax monoidal functor where the maps are equivalences.

By MacLane's coherence theorem for monoidal categories, this notion of a monoidal 1-category is equivalent to the notion of an \emph{unbiased associative monoidal 1-category} (see Proposition 1.5 of \cite{DM} for details),\footnote{The data we specify here is more than that in \emph{loc. cit.}, which restricts attention to only the active maps $[m] \leftarrow [1]$.  From the data in \emph{loc. cit.} one can recover the additional 1-simplices above by defining $\mu_f(A_1, \ldots, A_m) = (\mu(A_{f(1)+1}, \ldots, A_{f(2)}), \ldots, \mu(A_{f(n-1)+1}, \ldots, A_{f(n)}))$ where $\mu$ abusively denotes $\mu_{[k] \leftarrow [1]}$ for various $k$, and similarly for the corresponding natural isomorphisms.} i.e. a category $\cat{A}$ and the assignment of:
\begin{enumerate}
    \item for every 1-simplex $f: [m] \leftarrow [n]$ in $N(\Delta^\opp)$, a functor $\tau_f: \cat{A}^{\times m} \rightarrow \cat{A}^{\times n}$; 
    \item for every 2-simplex corresponding to the composition $g \circ f$ in $N(\Delta^\opp)$, a natural isomorphism $\alpha_{g,f}: \tau_g \circ \tau_f \simeq \tau_{g \circ f}$;
    \item For every 3-simplex corresponding to the composition $h \circ g \circ f$ in $N(\Delta^\opp)$, an equality $\alpha_{h, g \circ f} (\mathrm{id}_{\tau_h} \circ \alpha_{g,f}) = \alpha_{h \circ g, f} (\alpha_{h,g} \circ \mathrm{id}_{\tau_f})$ (here, we reserve $\circ$ for functor composition, and denote composition of natural transformations with no symbol).
\end{enumerate}
%For example, the the functor $\tau$ is the $\tau_{[2] \leftarrow [1]}$ attached to the unique active map, and $\upsilon$ is the $\tau_{[0] \leftarrow [1]}$.  The associator $\alpha$ is $\alpha_{g \circ f}$ where 

An \emph{unbiased lax monoidal functor} is a functor $F: \cat{A} \rightarrow \cat{B}$ between unbiased monoidal 1-categories is the assignment of:
\begin{enumerate}
\item for every 1-simplex $f: [m] \leftarrow [n]$ in $N(\Delta^\opp)$, a natural transformation $\mu_f: \tau_f \circ F^{m} \rightarrow F^{n} \circ \tau_f$,
\item for every 2-simplex corresponding to the composition $g \circ f: [m] \leftarrow [n]$ in $N(\Delta^\opp)$, an equality $\mu_{f \circ g} (\alpha_{f,g} \circ F^m) = (F^n \circ \alpha_{f,g})(\mu_f \circ \mathrm{id}_{\tau_g})(\mathrm{id}_{\tau_f} \circ \mu_g)$.
\end{enumerate}
\end{defn}

\medskip

The following claim is surely known to experts, though we could not find an explicit reference.
\begin{prop}\label{monoidal infinity is monoidal 1}
Let $\cat{A}$ be an associative monoidal $\infty$-category.  The homotopy category $h\cat{A}$ is a monoidal category in the usual 1-categorical sense.  A monoidal functor $F: \cat{A} \rightarrow \cat{B}$ induces a monoidal structure on the functor $hF: h\cat{A} \rightarrow h\cat{B}$ in the usual 1-categorical sense.  If $\cat{A}$ is an associative monoidal stable $\infty$-category, then the homotopy category $h\cat{A}$ is a monoidal triangulated category\footnote{This means that tensoring takes exact triangles to exact triangles.} in the usual 1-categorical sense, and monoidal exact functors induce monoidal triangulated functors.
\end{prop}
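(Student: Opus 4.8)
The plan is to reduce the $\infty$-categorical statement to low-dimensional combinatorics of the nerve $N(\Delta^{\opp})$, exactly as in the discussion of the homotopy coherent nerve in Section \ref{homotopy coherent nerve}: the homotopy category functor $h\colon \cat{Cat}_\infty \to \cat{Cat}$ only sees simplices of dimension $\le 3$, and by Definition \ref{monoidal 1cat} an unbiased associative monoidal $1$-category (resp. unbiased lax monoidal functor) is precisely a coherent system of functors, natural isomorphisms, and equalities indexed by the $1$-, $2$-, and $3$-simplices (resp. $1$- and $2$-simplices) of $N(\Delta^{\opp})$. So the whole proof is a dictionary between the unstraightened data of $\cat{A}^\otimes \to N(\Delta^{\opp})$ and this combinatorial package, taken modulo the equivalence of Definition \ref{monoidal 1cat} between biased and unbiased monoidal structures.

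First I would fix, for each $1$-simplex $\alpha\colon [m]\leftarrow[n]$ of $N(\Delta^{\opp})$, a homotopy-coherent choice of coCartesian pushforward $\alpha_!\colon \cat{A}^\otimes_{[m]}\to\cat{A}^\otimes_{[n]}$ (this is exactly the datum recalled after Example 2.1.1.2 of \cite{HTT}), and then compose with the non-canonical Segal equivalences $\cat{A}^\otimes_{[n]}\simeq \cat{A}^{\times n}$ to obtain functors $\tau_\alpha\colon (h\cat{A})^{\times m}\to(h\cat{A})^{\times n}$; note that after passing to homotopy categories the Segal maps become genuine equivalences of $1$-categories, so the ambiguity in the choice is absorbed. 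The functor $\tau_{[n]\leftarrow[1]}$ is the $n$-fold tensor product and $\tau_{[0]\leftarrow[1]}$ is the unit, recovering $\tau$ and $\upsilon$ of Definition \ref{monoidal 1cat}. Next, for each $2$-simplex $g\circ f$ I would extract the associator natural isomorphism $\alpha_{g,f}\colon \tau_g\circ\tau_f\xrightarrow{\sim}\tau_{g\circ f}$ from the coherence data comparing the two coCartesian lifts along $f$ then $g$ with the lift along $g\circ f$ (these agree up to a canonical equivalence because coCartesian lifts compose). Finally, for each $3$-simplex the pentagon-type equality of Definition \ref{monoidal 1cat}(3) holds because in $h\cat{A}$ the relevant $2$-morphisms become equalities — the required $2$- and $3$-simplices of $N_\Delta(\cat{Cat}^\Delta_\infty)$ that witness it are exactly the ones spelled out in Section \ref{homotopy coherent nerve}. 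Unwinding what this gives on the biased data $\tau=\tau_{[2]\leftarrow[1]}$, $\upsilon=\tau_{[1]\leftarrow[1]}$ and passing through the coherence theorem yields a monoidal $1$-category in the sense of Definition \ref{monoidal 1cat}.

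For a monoidal functor $F\colon\cat{A}^\otimes\to\cat{B}^\otimes$, the point (Section \ref{section monoidal functor}) is that $F$ carries coCartesian lifts to coCartesian lifts, hence for each $1$-simplex $\alpha$ there is a canonical equivalence $\tau^{\cat{B}}_\alpha\circ F^{\times m}\xrightarrow{\sim}F^{\times n}\circ\tau^{\cat{A}}_\alpha$ comparing ``lift then apply $F$'' with ``apply $F$ then lift''; these assemble, after $h(-)$, into the unbiased strong monoidal functor data $\mu_\alpha$ of Definition \ref{monoidal 1cat}, and the $2$-simplex compatibility is again automatic in the homotopy category. For a lax monoidal functor one only has the coCartesian condition for inert simplices, but by the inert–active factorization recalled in Section \ref{inert fact} any $1$-simplex factors as inert followed by active, and applying $F$ to a general coCartesian lift produces a (not necessarily coCartesian) $1$-morphism in $\cat{B}^\otimes_{[n]}$ whose image under $h(-)$ is the structure map $\mu_\alpha$ — this is the one genuinely nontrivial bookkeeping step, and I expect it to be the main obstacle: one must check that the $\mu_\alpha$ obtained this way are compatible under composition of simplices, i.e. that the hexagon/associativity equalities of an unbiased lax monoidal functor hold in $h\cat{B}$. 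This reduces to the fact that active maps generate under composition with inerts and that the comparison $2$-morphisms for two different factorizations agree up to homotopy, which can be verified on the at-most-$2$-dimensional cells of $N(\Delta^{\opp})$ relevant to $h(-)$.

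For the stable case, I would additionally invoke Theorem 1.1.2.14 of \cite{HA} (and Proposition \ref{monoidal infinity is monoidal 1} for the underlying category) to equip $h\cat{A}$ with its canonical triangulated structure, and then observe that for a fixed object $X$ the functor $X\otimes(-)\colon\cat{A}\to\cat{A}$ is exact (being the composite of the diagonal $\cat{A}\to\cat{A}\times\cat{A}$ followed by $\tau$, the latter preserving biCartesian squares in each variable since in a stable monoidal $\infty$-category the tensor product commutes with finite (co)limits separately in each variable — this is standard, e.g. Remark 4.2.1.20 of \cite{HA}), hence $h(X\otimes(-))$ is triangulated; this gives the monoidal triangulated structure on $h\cat{A}$. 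Likewise, if $F$ is a monoidal exact functor then $hF$ is triangulated by the stable case of Proposition \ref{monoidal infinity is monoidal 1}, and the monoidal structure maps $\mu_\alpha$ constructed above are morphisms of triangulated functors, completing the argument.
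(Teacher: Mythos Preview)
Your approach is essentially the same as the paper's: reduce to simplices of dimension $\leq 3$ in $N(\Delta^{\opp})$ and match them against the unbiased data of Definition \ref{monoidal 1cat}. The paper does this by first straightening the coCartesian fibration to a map $N(\Delta^{\opp})\to N_\Delta(\cat{Cat}_\infty^\Delta)$ and then reading off the low-dimensional simplices of $N_\Delta(\cat{Cat}_\infty^\Delta)$ directly from the enumeration in Section \ref{homotopy coherent nerve}, whereas you stay on the unstraightened side and extract the functors $\tau_\alpha$ and coherence isomorphisms from chosen coCartesian pushforwards; these two routes are interchangeable via the straightening equivalence and yield the same dictionary.

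Two small remarks. First, your discussion of the lax monoidal case, and your worry that it is ``the main obstacle,'' is unnecessary: the proposition only concerns strong monoidal functors, and the paper simply treats the functor claim by the same diagram chase as for the monoidal structure itself. Second, for the stable assertion the paper just says ``immediate from the definition''; your more explicit argument that $X\otimes(-)$ is exact is fine in the intended applications, but note that exactness of the tensor product in each variable is an assumption on a monoidal stable $\infty$-category rather than a consequence of stability alone (your citation of Remark 4.2.1.20 of \cite{HA} does not quite supply this in general). In the $k$-linear setting of the paper this is automatic, so the point is harmless.
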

\begin{proof}
We apply unstraightening to obtain a map of simplicial sets $F: N(\Delta^\opp) \rightarrow \cat{Cat}_\infty$.  Since $n$-simplices in $\cat{Cat}_\infty$ for $n \geq 4$ have no bearing on the homotopy category, the description of simplices of dimension $\leq 3$ in $\cat{Cat}_\infty$ from Section \ref{homotopy coherent nerve} give a full description of the resulting structure on $h\cat{A}^{\heartsuit_\otimes}$.  It is immediate to see that these simplicies give rise to exactly the data in Definition \ref{monoidal 1cat}; note that we omit the natural transformation $f_{30} \rightarrow f_{3210}$ from the data, i.e. we do not specify a natural isomorphism $\alpha_{h,g,f}: \tau_h \circ \tau_g \circ \tau_f \simeq \tau_{h \circ g \circ f}$ since this isomorphism is determined by the requirement that $\alpha_{h, g \circ f} \circ \alpha_{g,f} = \alpha_{h \circ g \circ f} = \alpha_{h,g} \circ \alpha_{h \circ g, f}$.  The claim regarding stable categories is immediate from the definition.  The claims regarding monoidal functors follows by a similar diagram chase for the simplices enumerated in the above definition.
\end{proof}

\subsubsection{} As mentioned, it is often far too strict to require that the monoidal structure restricts to the heart.  However, it is sometimes possible to find a generating \emph{tilting subcategory} of the heart.  In this case, we are able to study the monoidal category via 1-categorical methods.  Our goal now is to formalize this discussion.

\medskip

First, we note that by Lemma 1.1.2.9 of \cite{HA}, the homotopy category $h\cat{C}$ of any stable $\infty$-category $\cat{C}$ is an additive category.  In particular, for $\cat{T} \subset \cat{C}$ any (possibly not stable) full subcategory which is closed under finite direct sums, the homotopy category $h\cat{T}$ is also an additive category.  This allows us to make the following definition.
\begin{defn}\label{def tilting subcat}
Let $\cat{C}$ be a compactly generated stable $\infty$-category and let $\cat{T} \subset \cat{C}^\heartsuit$ be a full subcategory of compact objects.  We say $\cat{T}$ is a \emph{tilting subcategory} if $\Ext^i(X, Y) = \pi_0(\Map(X, Y[i])) = 0$ for every $X, Y \in \cat{T}$ and $i \ne 0$, and if $\Ind(\cat{T}^{st}) \rightarrow \cat{C}$ is an equivalence of categories (i.e. the ind-completion of the stable closure of $\cat{T}$).
\end{defn}

\medskip

As usual, we wish to be able to build a category equivalent to $\cat{C}$ from a tilting subcategory $\cat{T}$, or rather its homotopy category $h\cat{T}$ since the mapping spaces of $\cat{T}$ are discrete.  Given an additive 1-category $\cat{T}'$, one may form a stable $\infty$-category as follows (see Sections 1.1.2, 1.2.3, and 1.3.2 of \cite{HA}).  First, one may form a pretriangulated dg category $\Ch_{dg}(\cat{T}')$ of chain complexes in $\cat{T}'$, and then one may apply the dg nerve to obtain a stable $\infty$-category $\Ch(\cat{T}') := N_{dg}(\Ch_{dg}(\cat{T}'))$ (see Remark 1.3.2.2 in \emph{op. cit.}).  The resulting stable $\infty$-category is equipped with the usual canonical $t$-structure.  This procedure is functorial, and sends additive functors to exact and $t$-exact functors.  Furthermore, the homotopy category $h\Ch(\cat{T}')$ is the usual category of chain complexes in $\cat{T}'$.  %Furthermore, there is a natural equivalence $\cat{T}' \simeq h\Ch(\cat{T}')^\heartsuit$, thus a canonical inclusion $\cat{T}' \hookrightarrow h\Ch(\cat{T}')$.

\medskip

In our setting, we are interested in monoidal structures as well, and we discuss their interaction.  If $\cat{C}$ is a monoidal additive 1-category, then $\Ch_{dg}(\cat{C})$ is a \emph{strictly monoidal dg category}, i.e. a monoidal 1-category in the sense of Definition \ref{monoidal 1cat} such that all functors involved are dg functors, and the natural isomorphisms are cycles in degree 0 (and are isomorphisms, not homotopy equivalences).  The dg nerve $\Ch(\cat{C})$ of a strict monoidal dg category $\Ch_{dg}(\cat{C})$ is naturally a straightened monoidal $\infty$-category, which can then be unstraightened into a category $\Ch(\cat{C})^\otimes$.  In addition, a monoidal functor of additive 1-categories determines a canonical (strong) monoidal functor of straightened monoidal $\infty$-categories, which determines a (strong) monoidal functor after unstraightening.

%\medskip

%If $\cat{C}$ is a stable $\infty$-category, then by Lemma 1.1.2.9 of \cite{HA} the category $h\cat{C}$ is additive, and by Theorem 1.1.2.13 of \emph{op. cit.} $h\cat{C}$ is canonically triangulated.  Recall that if $\cat{C}$ is a stable monoidal dg category, then by Proposition \ref{monoidal infinity is monoidal 1} $h\cat{C}$ is a monoidal triangulated category. 

\medskip

%In the following we discuss a method for producing a (monoidal) equivalence between two (monoidal) stable $\infty$-categories entirely from 1-categorical tilting data.  Namely, suppose we have two presentable $\infty$-categories $\cat{C}$ and $\cat{D}$ and tilting objects $T_{\cat{C}}, T_{\cat{D}}$, i.e. objects whose positive and negative self-Exts vanish, and which generate their respective categories.  By abstract nonsense, we have equivalences $\Mod(\End(T_{\cat{C}})) \simeq \cat{C}$ and $\Mod(\End(T_{\cat{D}})) \simeq \cat{D}$, thus to produce an equivalence it suffices to identify the ordinary algebras $\End(T_{\cat{C}}) \simeq \End(T_{\cat{D}})$.  For the equivalence to be monoidal, we need compatibility with the monoidal structure, which requires us to replace the objects $T_{\cat{C}}, T_{\cat{D}}$ with monoidally closed subcategories of tilting objects (we note that it is not at all automatic that such subcategories exist).  To argue in this way, we require the following technical result.
The following technical result produces the desired comparison functor between the categories $\Ch(h\cat{T})$ and $\cat{C}$.
\begin{prop}\label{prop additive nice}
Let $\cat{C}$ be a compactly generated stable monoidal $\infty$-category, and let $\cat{T} \subset \cat{C}$ be a full (but not necessarily stable) subcategory of compact objects with discrete mapping spaces and closed under finite direct sums.  Then, there is a functor $\Phi: \Ch(h\cat{T}) \rightarrow \cat{C}$ defined uniquely up to contractible homotopy by the property that the restriction of $h\Phi$ to $h\cat{T} \subset h\Ch(h\cat{T})$ is equal to the inclusion $h\cat{T} \hookrightarrow h\cat{C}$.  Furthermore, if $\cat{T}$ is a tilting subcategory (i.e. it generates and its $\Ext$ groups vanish), then $\Phi$ is an equivalence.

If, in addition, $\cat{C}$ is a monoidal and $\cat{T}$ is closed under the monoidal structure, then $\Phi$ has a monoidal structure uniquely defined up to contractible homotopy by the property that it recovers the canonical monoidal structure on the inclusion $h\cat{T} \hookrightarrow h\cat{C}$.
\end{prop}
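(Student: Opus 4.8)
The statement splits into an existence/uniqueness assertion for a functor $\Phi$ and, separately, for a monoidal enhancement of $\Phi$. For the first part, the natural strategy is to use the universal property of $\Ch(h\cat{T})$: the category of chain complexes over an additive $1$-category $\cat{A}$, realized via $N_{dg}(\Ch_{dg}(\cat{A}))$, is freely generated in a suitable sense by $\cat{A}$ as a stable $\infty$-category with a $t$-structure, so an additive functor out of $\cat{A}$ into a stable $\infty$-category $\cat{C}$ (viewed as a functor landing in $\cat{C}^\heartsuit$) extends essentially uniquely to an exact functor $\Ch(h\cat{T}) \to \cat{C}$. Concretely, I would first observe that $h\cat{T}$, being a full additive subcategory of the additive $1$-category $h\cat{C}$ consisting of objects with discrete mapping spaces, receives a canonical inclusion $\cat{T} \hookrightarrow \cat{C}$ (as $\infty$-categories, since the mapping spaces of $\cat{T}$ are $0$-truncated, $\cat{T} \simeq N(h\cat{T})$). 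The inclusion $\Ch^0(h\cat{T}) \hookrightarrow \Ch(h\cat{T})$ of complexes concentrated in degree $0$ exhibits $\cat{T}$ as a full subcategory, and one invokes the universal property of the dg nerve of $\Ch_{dg}$ (cf.\ the discussion of stable envelopes in Sections 1.2.3, 1.3.2 of \cite{HA}, and more precisely the fact that $\Ch_{dg}(\cat{A})$ is the ``bounded homotopy category'' completion): any additive functor $\cat{T} \to \cat{C}^\heartsuit$ extends to an exact functor $\Ch(h\cat{T}) \to \cat{C}$, uniquely up to a contractible space of choices, once one checks that the target admits the relevant limits/colimits (here shifts and cones, which exist since $\cat{C}$ is stable). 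Uniqueness on the level of homotopy categories is then the statement that two exact functors agreeing on $h\cat{T}$ agree on the pretriangulated hull, which is standard.

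\textbf{Equivalence when $\cat{T}$ is tilting.} Assuming the Ext-vanishing and generation hypotheses, I would show $\Phi$ is fully faithful by a $t$-structure/induction-on-amplitude argument: for $X, Y \in \cat{T}$ one has $\Map_{\Ch(h\cat{T})}(X, Y[i]) \simeq \Map_{\cat{C}}(X, Y[i])$ because both sides compute $\Ext^i_{h\cat{T}}(X,Y)$ on the left (by definition of the chain complex category) and $\Ext^i_{\cat{C}}(X,Y)$ on the right, and these agree since $\cat{T}$ is tilting (the Ext groups vanish for $i \neq 0$ and the $\Hom$'s agree by fullness of $h\cat{T} \subset h\cat{C}$). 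One then bootstraps along the finite filtrations of bounded complexes by their stupid truncations, using the five lemma in the triangulated homotopy categories, to conclude full faithfulness on all of $\Ch(h\cat{T})^c = $ bounded complexes, hence after ind-completion on $\Ch(h\cat{T})$. Essential surjectivity is exactly the hypothesis that $\Ind(\cat{T}^{st}) \to \cat{C}$ is an equivalence, together with the observation that $\Phi$ identifies $\Ch(h\cat{T})$ with $\Ind$ of the pretriangulated hull of $\cat{T}$. The argument that $\Phi$ is $t$-exact (for the canonical $t$-structures) is immediate from the construction since the inclusion $\cat{T} \to \cat{C}^\heartsuit$ lands in the heart and $\Phi$ commutes with truncation.

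\textbf{The monoidal enhancement.} This is the step I expect to be the main obstacle, precisely because, as emphasized in Section \ref{section monoidal functor}, a monoidal structure on a functor of (unstraightened) monoidal $\infty$-categories is genuine \emph{extra data}, not a property, and one must supply a coherent system of homotopies compatible with the coCartesian fibrations over $N(\Delta^\opp)$. The approach I would take is to work on the \emph{straightened} side, where the monoidal functor is more rigid: since $\cat{T}$ is closed under the monoidal product of $\cat{C}$ and has discrete mapping spaces, $h\cat{T}$ inherits a genuine monoidal $1$-category structure (Proposition \ref{monoidal infinity is monoidal 1}); then $\Ch_{dg}(h\cat{T})$ is a strictly monoidal dg category, so $\Ch(h\cat{T})$ is canonically a straightened monoidal $\infty$-category, which unstraightens to $\Ch(h\cat{T})^\otimes$. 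The task is then to promote $\Phi$ to a map of coCartesian fibrations over $N(\Delta^\opp)$. I would do this by exhibiting $\Phi^\otimes$ levelwise: on fibers $\Ch(h\cat{T})^\otimes_{[n]} \simeq \Ch(h\cat{T})^{\times n} \to \cat{C}^{\times n} \simeq \cat{C}^\otimes_{[n]}$ one uses $\Phi^{\times n}$, and one checks that these assemble compatibly with the structure functors $\alpha_!$ — this reduces, by the inert/active factorization (Section \ref{inert fact}) and the Segal condition, to the single active map $[1] \leftarrow [n]$, i.e.\ to producing the multiplication homotopies $\Phi(X) \otimes \Phi(Y) \simeq \Phi(X \otimes Y)$ together with higher coherences. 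On objects of $\cat{T}$ these are the identity maps (since $\Phi$ restricts to the inclusion, which is strictly monoidal on $h\cat{T}$), and one extends to complexes by the same amplitude-induction used for full faithfulness, now applied to the bifunctorial statement; the relevant contractibility of the space of extensions follows from the stability of $\cat{C}$ and the fact that all obstruction groups are $\Ext^{i}_{\cat{C}}$ with $i < 0$ between objects of $\cat{T}$, hence vanish. Uniqueness up to contractible homotopy is then formal from the same obstruction-theoretic count. The cleanest way to package this may be to invoke that $\Ch(h\cat{T})$, with its canonical $t$-structure and monoidal structure generated by the tilting subcategory, is initial among stable monoidal $\infty$-categories equipped with a $t$-exact monoidal functor from $\cat{T}$; proving that initiality statement is where the real work lies, and I would model it closely on Lurie's treatment of monoidal envelopes and $t$-structures in \cite{HA}, Chapters 1 and 4.
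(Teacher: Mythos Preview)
Your approach to the non-monoidal part is essentially the same as the paper's: both use that discreteness of mapping spaces gives $\cat{T} \simeq N(h\cat{T})$, then extend to $\Ch(h\cat{T})$ via stability and colimits. Your five-lemma amplitude induction for the equivalence under the tilting hypothesis is more explicit than what the paper writes (it simply invokes the identification $\Ind(\cat{T}^{st}) \simeq \Ch(h\cat{T})$), but it is correct and standard.

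The genuine divergence, and the gap, is in the monoidal enhancement. You propose to build the coherence isomorphisms $\Phi(X)\otimes\Phi(Y)\simeq\Phi(X\otimes Y)$ on general complexes by amplitude induction, with obstructions in negative Ext groups between objects of $\cat{T}$. This produces the individual binary isomorphisms, but it does not by itself organize them into a map of coCartesian fibrations over $N(\Delta^{\opp})$: you still owe the infinite tower of higher associativity coherences, and your obstruction-theoretic count only addresses the $2$-ary data. You recognize this and defer to an initiality statement for $\Ch(h\cat{T})$ among stable monoidal $\infty$-categories receiving a monoidal functor from $\cat{T}$, but that statement is precisely what is at stake and you do not prove it.

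The paper sidesteps this entirely. Rather than building $\Phi^\otimes$ on all of $\Ch(h\cat{T})^\otimes$, it first constructs $\Phi_0^\otimes: N(h\cat{T})^\otimes \to \cat{T}^\otimes$ simplex-by-simplex: because $N(h\cat{T})^\otimes$ is an explicit unstraightening (relative nerve) with \emph{given} Segal equivalences and \emph{given} coCartesian lifts, and because the mapping spaces of $\cat{T}^\otimes$ are discrete, the $0$- and $1$-simplices are determined by choices of coCartesian lifts in $\cat{T}^\otimes$ (i.e., choices of tensor products), $2$-simplices are filled using the $1$-categorical unbiased monoidal functor data on $h\cat{T}$, and all $n$-simplices for $n\geq 3$ are filled for free by discreteness. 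One then extends $\Phi_0^\otimes$ to $\Phi^\otimes$ by shifts and small colimits, using that these operations preserve the coCartesian property of morphisms. The point is that all the $\infty$-categorical coherence work is confined to the subcategory where discreteness reduces it to a $1$-categorical check; your route forces you to confront the coherences on the full stable category, which is substantially harder to make rigorous.
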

\begin{proof}
By construction, $N(h\cat{T}) \subset \Ch(h\cat{T})$ is a full subcategory.  Because $\cat{T}$ has discrete mapping spaces, the canonical map $\cat{T} \rightarrow N(h\cat{T})$ is an equivalence, thus it has an inverse equivalence $\Phi_0: N(h\cat{T}) \rightarrow \cat{T}$.  We extend $\Phi_0$ to a functor $\Ind(N(h\cat{T})^{st})\rightarrow \cat{C}$ by extending the functor under finite limits and small colimits, which exist in $\cat{C}$ since $\cat{C}$ is stable and cocomplete.  Finally, the identification $\Ind(\cat{T}^{st}) \simeq \Ch(h\cat{T})$ gives an equivalence $\Phi: \Ch(h\cat{T}) \rightarrow \cat{C}$ as desired.

%For the same reason that one may write down a canonical map $H^0(A) \hookrightarrow A$ if $A$ is coconnective, if $\cat{T}$ is discrete (i.e. has vanishing negative $\Ext$) then there is a unique functor $T: \Ch(h\cat{T}) \rightarrow \cat{C}$ up to contractible homotopy which induces the inclusion $h\cat{T} \rightarrow h\cat{C}$ on homotopy categories.  The only choices involved are choosing representatives for fibers and cofibers of objects in $\cat{T}$ (which may be done canonically up to contratible homotopy, by definition of limit and colimit) and lifts of morphisms in $h\cat{T}$ to $\cat{T}$, which may be done canonically up to contractible homotopy since $\cat{T}$ is discrete by assumption.

For the statement on monoidal categories, we apply unstraightening via the relative nerve (see Definition 3.2.5.2 and Proposition 3.2.5.18 in \cite{HTT}) where the coCartesian maps are the marked edges (see Definition 3.2.5.12 and Proposition 3.2.5.21 in \emph{op. cit.}) to obtain the category $\Ch(\cat{T})^\otimes$.  We wish to extend $\Phi$ to a functor $\Phi^\otimes: \Ch(h\cat{T})^\otimes \rightarrow \cat{C}^\otimes$ such that $\Phi^\otimes_{[1]} = \Phi$.  It suffices to produce a functor $\Phi_0^\otimes: N(h\cat{T})^\otimes \rightarrow \cat{T}^\otimes$ and extend by loop spaces (i.e. shifts in the positive direction) and small colimits, since colimits preserve the coCartesian property of morphisms, and since the suspension and loop space functors are inverse equivalences.

Since the category $N(h\cat{T})^\otimes$ is an unstraightening, for any we have \emph{given} equivalences $N(h\cat{T})^\otimes_{[n]} \simeq N(h\cat{T})^{\times n}$.  Thus, we can apply $\Phi_0$ to give the map on 0-simplices, i.e. we map objects in the homotopy category to lifts chosen in the first part of this proof.  For a 1-simplex $\alpha \in N(\Delta^\opp)$ we have a \emph{given} choice of compatible $\alpha_!$, i.e. a given choice of coCartesian lifts.  In particular, every 1-simplex in $N(h\cat{T})^\otimes$ has a given factorization as a coCartesian morphism followed by a morphism in a fiber $N(h\cat{T})^\otimes_{[n]} \simeq \Ch(h\cat{T})^{\times n}$.  Thus, a map of functors is determined precisely by a corresponding choice of coCartesian lifts in $\cat{T}^\otimes$, i.e. a choice of tensor product $\Phi_0(X) \otimes \Phi_0(Y)$, and for the functor to be monoidal is precisely to require that $\Phi_0(X) \otimes \Phi_0(Y) \simeq \Phi_0(X \otimes Y)$.  By the definition of an unbiased monoidal functor in Definition \ref{monoidal 1cat}, for any 2-simplex in $N(h\cat{T})^\otimes$ we may choose a corresponding 2-simplex in $\cat{T}^\otimes$ (i.e. roughly, asserting that elements in the same $\pi_0$ in the mapping space map to the same $\pi_0$).  We may define the map $\Phi_0^\otimes$ on $n$-simplices for $n \geq 3$ since the mapping spaces of $\cat{T}$ are discrete.  These choices may be made up to contractible homotopy, i.e. the subspace of $\Map(N(h\cat{T})^\otimes, \cat{T}^\otimes)$ satisfying the required properties is contractible, since in the mapping space we are able to make choices for $n$-simplices for $n \geq 2$ for free since the mapping spaces of $\cat{T}^\otimes$ are discrete, while the 0-simplices are completely determined by our requirements and the existence of 1-simplices (i.e. identifying our different choices for the tensor product $\Phi_0(X) \otimes \Phi_0(Y)$) follows since any two coCartesian lifts are homotopy coherently equivalent.
\end{proof}

\subsection{Algebra objects and \texorpdfstring{$t$}{t}-structures}

\subsubsection{}
Given an unstraightened monoidal $\infty$-category $p: \cat{A}^\otimes \rightarrow N(\Delta^\opp)$, an \emph{algebra object} $A$ in $\cat{A}$ is defined to be a section $A: N(\Delta^\opp) \rightarrow \cat{A}^\otimes$ of $p$ that takes \emph{inert} maps to coCartesian maps; in other words, an algebra object is a lax monoidal functor of operads $N(\Delta^\opp) \rightarrow \cat{A}^\otimes$ (see Definition 4.1.3.16 in \cite{HA}).  The object underlying the algebra is $A([1]) \in \cat{A}^\otimes_{[1]} \simeq \cat{A}$, which we sometimes abusively denote by simply $A$.

\medskip
Given $\cat{A}^\otimes$ as above, and algebra object $A$ as above, and a module category $\cat{M}^\otimes$, an $A$-module object in $\cat{M}$ is a section $M: N(\Delta^\opp) \rightarrow \cat{M}^\otimes$ that takes inert maps to coCartesian maps such that the composition of $N(\Delta^\opp) \rightarrow \cat{M}^\otimes \rightarrow \cat{A}^\otimes$ is the section defining $A$.

\subsubsection{} Let us spell out the above definition for the uninitiated.  Let $s: N(\Delta^\opp) \rightarrow \cat{A}^\otimes$ denote the section.  The condition on inert maps says there is a equivalence $s([n]) \simeq A^{\times n}$ up to contractible choice.  Let $\alpha: [2] \leftarrow [1]$ denote the active map; the monoidal product $A \otimes A$ is given by a (contractible) choice of $\alpha_!(A, A)$, i.e. the target of a choice of coCartesian lift of $\alpha$ with initial value $(A, A)$ over $[2]$.  The section defining the algebra object $A$ defines a non-coCartesian map $s(\alpha): (A, A) \rightarrow A$ in $\cat{A}^\otimes$, which by the universal factorization through the coCartesian lift defines a multipication map $\mu := \alpha_!(s(\alpha)): A \otimes A \rightarrow A$, i.e.
$$\begin{tikzcd}[column sep=80]
(A, A) \arrow[r, "{\text{coCart}}"] \arrow[dr, "s(\alpha)"'] &  \alpha_!(A, A) \simeq A \otimes A \arrow[d, dotted, "\mu := \alpha_!s(\alpha)"] \\
 & A.
\end{tikzcd}$$
Similarly,  the value of the section on the active map $\sigma: [0] \leftarrow [1]$ encodes the unit, i.e. we have that $\cat{A}^\otimes_{[0]} = \asterisk$ is the terminal category, and $\sigma_! \asterisk \simeq 1_{\cat{A}}$ is equivalent to the monoidal unit, while the section determines a unit map via universal factorization $\eta: \sigma_! \asterisk \simeq 1_{\cat{A}} \rightarrow A$.  

The other active maps encode (higher) associativity relations between these maps (see also Chapter 10, Section 3 of \cite{DAG}).  For example, the value of the section on the active map $\alpha_3: [3] \leftarrow [1]$ sending $(0,1) \mapsto (0,3)$ encodes a choice of tertiary product $\alpha_{3!}(A, A,A) \simeq  A \otimes A \otimes A$ along with a tertiary multiplication $\mu_3 := \alpha_{3!}s(\alpha_3): A \otimes A \otimes A \rightarrow A$.  On the other hand, one may take iterated products and multiplications: let $\alpha_{2,1}, \alpha_{2,2}: [3] \leftarrow [2]$ denote the active maps sending $(0,1,2) \mapsto (0,1,3), (0,2,3)$ respectively.  The composition $\alpha_! s(\alpha) \circ \alpha_{2,i!}s(\alpha_{2,i}): [3] \rightarrow [1]$ encodes the iterated products with the two groupings $(A \otimes A) \otimes A$ and $A \otimes (A \otimes A)$.  The value of $s$ on the two 2-simplices which witness the identities $\alpha \circ \alpha_{2,i} \sim \alpha_3$ in $N(\Delta^\opp)$ encode the  associativity structure, and similarly for higher associativities, et cetera.

\subsubsection{}
The category $\Alg(\cat{A})$ of algebra objects is defined in Definition 4.1.3.16 of \cite{HA} to be the full subcategory of $\Fun_{N(\Delta^\opp)}(N(\Delta^\opp), \cat{A}^\otimes)$ spanned by the above maps (see Definition 1.2.7.2 \cite{HTT} for notions of functor $\infty$-categories).  Note that the notation $\Fun_{N(\Delta^\opp)}$ does not indicate an overcategory in any sophisticated $\infty$-categorical sense, and only encodes the condition that $p \circ s = \mathrm{id}_{N(\Delta^\opp)}$.

%\medskip

%In particular, let $\alpha: [2] \leftarrow [1]$ denote the active map sending $0 \mapsto 0$ and $1 \mapsto 2$.  By the universal property of coCartesian lifts, there is a factorization $s(\alpha) \simeq \mu \circ \alpha_!$ for some unique (up to coherent homotopy) map $\mu: A \otimes A \simeq \alpha_!(s([2])) \rightarrow s([1])$; this is the multiplication map.  The unit $1_{\cat{A}} \simeq \eta_!(s([0])) \rightarrow s^{-1}([1]) \simeq A$ arises by applying the same reasoning to the unique active map $[0] \leftarrow [1]$. 

\subsubsection{} The ultimate goal is to be able to say when two algebra objects in an $\infty$-category are equivalent using only 1-categorical arguments.  This will be possible when the algebra objects which live in the heart of a given $t$-structure in the following sense.
\begin{defn}\label{alg in heart}
Let $\cat{A}^\otimes$ be an unstraightened associative monoidal $\infty$-category whose underlying category $\cat{A}$ is stable with a $t$-structure, such that the monoidal unit is in the heart.  %Note that $\cat{A}^\otimes_{[n]}$ inherits a $t$-structure from $\cat{A}^\otimes_{[1]} \simeq \cat{A}$ via the Segal condition, such that $(\cat{A}^\otimes_{[n]})^{\heartsuit} \simeq (\cat{A}^\heartsuit)^n$.  
We define the category of \emph{algebra objects in the heart} by $\Alg^\heartsuit(\cat{A}) = \Alg(\cat{A}^{\otimes, \heartsuit}).$
%\Alg(\cat{A})$ the full subcategory of objects $A \in \Alg(\cat{A})$ such that the underlying object $A \in \cat{A}$ is in the monoidal heart (see Definition \ref{obj in heart}).
%with defining section $s: N(\Delta^\opp) \rightarrow \cat{A}^\otimes$ such that:
%\begin{enumerate}
%    \item for every 0-simplex $[n]$, $s([n]) \in (\cat{A}^\otimes_{[n]})^\heartsuit \simeq (\cat{A}^\heartsuit)^n$ is in the heart,
%    \item for every 1-simplex $\alpha: [m] \leftarrow [n]$ and every choice of coCartesian pushforward $\alpha_!: \cat{A}^\otimes_{[m]} \rightarrow \cat{A}^\otimes_{[n]}$, we have $\alpha_!(s([m]) \in (\cat{A}^\otimes_{[n]})^\heartsuit$, i.e. the target of the coCartesian lift of $\alpha$ with source $s([m])$ is in the heart.
%\end{enumerate}
%We sometimes say that an algebra object $A \in \Alg(\cat{A})$ which lives in $\Alg^\heartsuit(\cat{A})$ is \emph{in the heart}.  It is clear that an algebra object $A$ is in the heart if and only if its underlying object is in the monoidal heart.
\end{defn}

%\begin{rmk}
%The category $\Alg(\cat{A})$ is not stable, nor does $\cat{A}^\heartsuit$ does not automatically inherit a monoidal structure.  Thus the categories $\Alg(\cat{A})^\heartsuit$ and $\Alg(\cat{A}^\heartsuit)$ are not a priori well-defined.
%\end{rmk}

%\subsubsection{}  

We will now see that the category $\Alg^\heartsuit(\cat{A})$ is essentially 1-categorical, and can be fully described by 1-categorical data.  We first define algebra objects in a 1-category, as well as an ``unbiased'' version.
\begin{defn}\label{unbiased algebra}
Let $\cat{A}$ be an associative monoidal 1-category.  An \emph{algebra object} of $\cat{A}$ is an object $A \in \cat{A}$ equipped with morphisms
$$\mu: A \otimes A \rightarrow A, \;\;\;\;\;\;\;\;\; \eta: 1_{\cat{A}} \rightarrow A$$
satisfying the usual associativity and unitor relations.  Now let $\cat{A}$ be an unbiased monoidal 1-category; an \emph{unbiased algebra object} is an object $A \in \cat{A}$ equipped with the data:
\begin{enumerate}
    \item for every 1-simplex $f: [m] \leftarrow [n]$ in $N(\Delta^\opp)$, a morphism $\mu_f: A^{\otimes m} \rightarrow A^{\otimes n}$,
    \item for every 2-simplex corresponding to the composition $g \circ f$, an equality $\mu_g \circ \mu_f = \mu_{g \circ f}$.
\end{enumerate}
In both settings the morphisms are defined as usual, i.e. $f: A \rightarrow B$ is a morphism of $\cat{A}$ commuting with all decsribed structure maps.  Again by MacLane's coherence theorem these two notions are equivalent.  We also denote the category of algebra objects in a monoidal 1-category by $\Alg(\cat{A})$.
\end{defn}

\begin{defn}
Let $\cat{A}$ denote a stable monoidal $\infty$-category with $t$-structure, such that the monoidal unit is in the heart.  Recall that the homotopy category of the monoidal heart $h\cat{A}^{\heartsuit_\otimes}$ is a monoidal 1-category, and define $\Alg^\dagger(\cat{A}) = \Alg(h\cat{A}^{\heartsuit_\otimes})$ to be the 1-category of algebra objects.  There is a natural functor $h\Alg^\heartsuit(\cat{A}) \rightarrow \Alg^\dagger(\cat{A})$.
\end{defn}

\medskip

Finally, the following proposition allows us to lift any identification of algebra objects in the heart in the 1-categorical sense to an identification in the $\infty$-categorical sense, uniquely up to contractible homotopy.  We note that similar results were established in Section 4.1.6 of \cite{HA}.
\begin{prop}\label{algebra in heart prop}
The category $\Alg^\heartsuit(\cat{A})$ is equivalent to a 1-category.\footnote{Recall the notion of an $n$-category (Definition 2.3.4.1 of \cite{HTT}) and the notion of being equivalent to a 1-category (Proposition 2.3.4.18 of \emph{op. cit.}).}  Equivalently, the canonical functor $\Alg^\heartsuit(\cat{A}) \rightarrow N(h\Alg^\heartsuit(\cat{A}))$ is an equivalence, $\Alg^\heartsuit(\cat{A})$ is equivalent to the nerve of a 1-category, or the mapping spaces of $\Alg^\heartsuit(\cat{A})$ are discrete (or 0-truncated).  Furthermore, the natural functor $h\Alg^\heartsuit(\cat{A}) \rightarrow \Alg^\dagger(\cat{A})$ is an equivalence of 1-categories; thus, the functor $\Alg^\heartsuit(\cat{A}) \rightarrow N(\Alg^\dagger(\cat{A}))$ is an equivalence of $\infty$-categories.
\end{prop}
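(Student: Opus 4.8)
The plan is to verify the claimed assertion in three stages, corresponding to the three equivalent reformulations in the statement of Proposition \ref{algebra in heart prop}. First I would establish that $\Alg^\heartsuit(\cat{A})$ is equivalent to a $1$-category by unwinding the definition of $\Alg(\cat{A}^{\otimes, \heartsuit})$ as a full subcategory of $\Fun_{N(\Delta^\opp)}(N(\Delta^\opp), \cat{A}^{\otimes, \heartsuit})$ spanned by the inert-coCartesian sections. The key observation is that $\cat{A}^{\otimes, \heartsuit}$ has discrete (i.e.\ $0$-truncated) mapping spaces by Definition \ref{obj in heart}, since the underlying monoidal heart $\cat{A}^{\heartsuit_\otimes}$ is discrete and, by the description of mapping spaces of an unstraightened monoidal category in Section \ref{section straightened}, a mapping space in $\cat{A}^{\otimes, \heartsuit}$ is a coproduct over morphisms $\alpha$ in $\Delta^\opp$ of products of mapping spaces in $\cat{A}^{\heartsuit_\otimes}$, hence discrete. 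Since $N(\Delta^\opp)$ is (the nerve of) a $1$-category and the target $\cat{A}^{\otimes, \heartsuit}$ is equivalent to a $1$-category, the functor category $\Fun_{N(\Delta^\opp)}(N(\Delta^\opp), \cat{A}^{\otimes, \heartsuit})$ is equivalent to a $1$-category by the standard fact that $\Fun(\cat{C}, \cat{D})$ is an $n$-category whenever $\cat{D}$ is (cf.\ Proposition 2.3.4.8 of \cite{HTT}); a full subcategory of an $n$-category is again an $n$-category, which handles the first clause.

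Second, I would identify the homotopy category $h\Alg^\heartsuit(\cat{A})$ with $\Alg^\dagger(\cat{A}) = \Alg(h\cat{A}^{\heartsuit_\otimes})$. For this, by Proposition \ref{monoidal infinity is monoidal 1} the homotopy category $h\cat{A}^{\heartsuit_\otimes}$ is a genuine monoidal $1$-category, and I would use the ``unbiased'' description of algebra objects in a monoidal $1$-category recorded in Definition \ref{unbiased algebra} together with the analysis of low-dimensional simplices of a homotopy coherent nerve in Section \ref{homotopy coherent nerve}. Since both sides are $1$-categories, the comparison functor is determined by its effect on objects and morphisms; an inert-coCartesian section $s\colon N(\Delta^\opp) \to \cat{A}^{\otimes, \heartsuit}$ yields, on $1$-simplices $\alpha\colon [m] \leftarrow [n]$ of $\Delta^\opp$, a morphism $\mu_\alpha\colon A^{\otimes m} \to A^{\otimes n}$ via the universal factorization through the (contractibly unique) coCartesian lift, and on $2$-simplices the coherence $\mu_\beta \circ \mu_\alpha = \mu_{\beta\alpha}$, which is exactly the data of an unbiased algebra object; higher simplices impose no further data because all mapping spaces in $\cat{A}^{\otimes,\heartsuit}$ are discrete. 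Conversely, given an unbiased algebra object one reconstructs the section. One must check this assignment is fully faithful and essentially surjective on objects, which is a direct matching of the two combinatorial descriptions.

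Third, the displayed corollaries — that $\Alg^\heartsuit(\cat{A}) \to N(h\Alg^\heartsuit(\cat{A}))$ is an equivalence, and hence $\Alg^\heartsuit(\cat{A}) \to N(\Alg^\dagger(\cat{A}))$ is an equivalence — follow formally: an $\infty$-category equivalent to a $1$-category is equivalent to the nerve of its homotopy category (Proposition 2.3.4.18 of \cite{HTT}), and then we compose with the equivalence of homotopy categories from the second stage. The main obstacle I anticipate is the bookkeeping in the second stage: one must carefully confirm that the higher-simplex data of a section into $\cat{A}^{\otimes,\heartsuit}$ genuinely collapses — i.e.\ that the space of coherences for $n \geq 3$ is contractible — which requires knowing that the relevant mapping spaces (and hence all spaces of lifts appearing in the inductive construction) are discrete. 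This hinges on Definition \ref{obj in heart} ensuring that \emph{every} coCartesian pushforward of an object of the monoidal heart stays in the heart, so that the fibers $\cat{A}^{\otimes, \heartsuit}_{[n]} \simeq (\cat{A}^{\heartsuit_\otimes})^{\times n}$ are themselves discrete; once this is in hand the argument is a routine, if somewhat lengthy, diagram chase entirely parallel to the proof of Proposition \ref{prop additive nice}.
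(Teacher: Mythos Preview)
Your proposal is correct and follows essentially the same approach as the paper: the paper likewise first shows that mapping spaces in $\Fun_{N(\Delta^\opp)}(N(\Delta^\opp), \cat{A}^{\otimes,\heartsuit})$ are $0$-truncated (invoking Corollary 2.3.4.20 of \cite{HTT} rather than Proposition 2.3.4.8, but to the same effect), and then establishes the equivalence $h\Alg^\heartsuit(\cat{A}) \simeq \Alg^\dagger(\cat{A})$ by the same simplex-by-simplex lifting argument you outline, using discreteness to fill $n$-simplices for $n\geq 3$. The paper carries out the essential surjectivity, fullness, and faithfulness checks more explicitly than your sketch, but the content is the same.
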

\begin{proof}
First, note that by Definition 2.1.2.7 of \cite{HA}, $\Alg(\cat{A})$ is a full subcategory of $\cat{Fun}(N(\Delta^\opp), \cat{A}^\otimes)$ (note that the overcategory in \emph{loc. cit.} is taken in the category of simplicial sets, not the category of $\infty$-categories).   By assumption, $A$ and $A'$ take values in $\cat{A}^{\otimes, \heartsuit}$, so it suffices to show that the mapping spaces of the category $\cat{Fun}(N(\Delta^\opp), \cat{A}^{\otimes, \heartsuit})$ are 0-truncated, which follows by Corollary 2.3.4.20 of \cite{HTT} since the mapping spaces of $\cat{A}^{\otimes, \heartsuit}$ are 0-truncated by the description of mapping spaces in \ref{section straightened}.

We now show that the map $h\Alg^\heartsuit(\cat{A}) \rightarrow \Alg^\dagger(\cat{A})$ is an equivalence.  For essential surjectivity we need to lift objects, that is we need to produce, for any $A \in \Alg^\dagger(\cat{A})$, an object $\wt{A} \in \Alg^\heartsuit(\cat{A})$, i.e. a section $\wt{A}: N(\Delta^\opp) \rightarrow \cat{A}^\otimes$, lifting it.   Recall that there is a non-canonical identification of the fibers $\cat{A}_{[n]}^\otimes \simeq (\cat{A}_{[1]}^\otimes)^{\times n}$; choose such identifications for all $n$.  We define $\wt{A}$ on 0-simplices by choosing a lift $\wt{A}([1])$, and then define $\wt{A}([n]) := \wt{A}([1])^{\times n}$ via these identifications.  Lifts for 1-simplices and 2-simplices in $N(\Delta^\opp)$ may be chosen by the data specified in the definition of unbiased algebra objects (see Definition \ref{unbiased algebra}); that is, lifting the 1-simplices amounts to choosing points in the mapping space $\Map_{\cat{A}^\otimes}(\wt{A}([i]), \wt{A}([j]))$, and lifting 2-simplices amounts to connecting two points.  For $n \geq 3$, lifting an $n$-simplex amounts to filling in an $(n-2)$-boundary in the mapping space, which may always be done since $\pi_{n-2}(\Map_{\cat{A}^\otimes}(\wt{A}([i]), \wt{A}([j]))) = 0$ by 0-truncatedness of mapping spaces.  %, where $[i]$ is the vertex corresponding to $0$ and $[j]$ is the vertex corresponding to $n$.  Since the lift $\wt{A}$ takes values in the full subcategory $\cat{A}^{\otimes, \heartsuit}$ whose mapping spaces are 0-truncated, the $(n-1)$-boundary may be filled by an $n$-cell.

%For 1-simplices in $N(\Delta^\opp)$, which are in bijection with morphisms in $\Delta^\opp$, via the description of mapping spaces in Section \ref{section straightened}, we can lift the active (i.e. $n$-ary multiplication and unit) maps and inert maps, compatibly with our choices of $0$-simplices (note we do not require for now that any relations between these 1-simplices to hold on the nose); this corresponds to a choice of lift of the structure morphisms, e.g. the $n$-ary multiplication, unit, and compositions thereof.  The associativity and unit conditions are witnessed by a lift of 2-simplices, guaranteed to exist by the condition on 2-simplices in the definition of an unbiased algebra object.

To see that the functor is full, we need to lift morphisms in the category $\Alg^\dagger(\cat{A})$, i.e. for any $f: A \rightarrow B$ in the 1-category $\Alg^\dagger(\cat{A})$ we need to provide a $\wt{f}: \wt{A} \rightarrow \wt{B}$ in the $\infty$-category $\Alg(\cat{A})$ for our choice of lifts $\wt{A}, \wt{B}$.  Our model for the mapping space $\Map_{\Alg(\cat{A})}(\wt{A}, \wt{B})$ will be the left mapping space of Section 1.2.2 in \cite{HTT}.  That is, a map $\wt{f}$ is an element of $\Map(N(\Delta^\opp) \times \Delta^1, \cat{A}^\otimes)$, i.e. a map of simplicial sets, which restricts along the inclusion of the 0-simplices in $\Delta^1$ to $\wt{A}, \wt{B}$.  This latter condition determines the value of $\wt{f}$ on 0-simplices in $N(\Delta^\opp) \times \Delta^1$ entirely.  For 1-simplices, denote the unique non-degenerate 1-simplex in $\Delta^1$ by $\eta$, and the detenerate 1-simplices by $\sigma_0, \sigma_1$; we lift the 1-simplices $(-, \sigma_i)$ to lifts of the structure maps for $\wt{A}, \wt{B}$ as above, and we lift the 1-simplices $(u, \alpha)$ as follows.  If $u = \mathrm{id}_{[0]}$, then we lift it to the identity map on the monoidal unit.  If $u = \mathrm{id}_{[1]}$, we choose a lift $\wt{f}: \wt{A} \rightarrow \wt{B}$ in $\cat{A}_{[1]}^\otimes$, and for $u = \mathrm{id}_{[n]}$ we then lift to $\wt{f}^{\times n}$.  Otherwise, $u$ is lifted to a choice of composition of the structure maps and the maps $\wt{f}^{\times n}$, e.g. for the active map $\alpha: [2] \leftarrow [1]$, the 1-simplex $(\alpha, \eta)$ is sent to a choice for the diagonal composition (note the abusive notation; all objects and maps are labeled after being pushed forward along $\alpha$ to $\cat{A}_{[1]}^\otimes$):
$$\begin{tikzcd}[column sep = large]
A \otimes A \arrow[r, "{(\alpha, \sigma_0)}"] \arrow[d, "{(\mathrm{id}_{[2]}, \alpha)}"'] \arrow[dr, dotted, "{(\alpha, \eta)}"] & A \arrow[d, "{(\mathrm{id}_{[1]}, \alpha)}"] \\
B \otimes B \arrow[r, "{(\alpha, \sigma_1)}"'] & B.
\end{tikzcd}$$
The 2-cells may be filled in because of the equalities that arise from a morphism of algebra objects in the 1-categorical sense.  For $n \geq 3$, the $n$-simplices for $n \geq 3$ may be chosen inductively as above, since $\cat{A}^{\otimes, \heartsuit}$ is 0-truncated.  % That is, our inductive data determine for each $n$-simplex in $N(\Delta^\opp) \times \Delta^1$ an element of $\pi_n(\Map_{\cat{A}^\otimes}(\wt{A}([i]), \wt{B}([j])))$ for some $i, j$, which is zero since the $\wt{A}, \wt{B} \in \Alg^\heartsuit(\cat{A})$ are in the heart.  Thus, there exists an $n$-simplex filling in its boundary.  For example, in the above the commuting of the above square is witnessed by two 2-simplices in $N(\Delta^\opp) \times \Delta^1$, whose projections to $\Delta^1$ are the degenerate 2-simplices with vertices $(0,0,1)$ and $(0,1,1)$.

To see that the functor is faithful, we need to lift homotopies between morphisms, i.e. for any lifts $\wt{f}, \wt{f}' \in \Map(N(\Delta^\opp) \times \Delta^1, \cat{A}^\otimes)$ as above we need to provide a 1-simplex in the left mapping space (see 1.2.2 in \cite{HTT}) identifying them, i.e. a map of simplicial sets $\Map(N(\Delta^\opp) \times \Delta^2, \cat{A}^\otimes)$ such that the restriction to the faces are given by:
$$\begin{tikzcd}
& \wt{B} \arrow[dr, "{\mathrm{id}_{\wt{B}}}"] & \\
\wt{A} \ar[rr, "\wt{f}'"'] \arrow[ur, "\wt{f}"] & & \wt{B}.
\end{tikzcd}$$
The 0-simplices and 1-simplices of such a map are determined by the above property, and the 2-simplices may be chosen since $\wt{f}, \wt{f}'$ define the same map in the homotopy category (i.e. all diagrams arising from the above commute).  For $n \geq 3$, this follows by the same 0-truncatedness argument as above.
\end{proof}

\begin{rmk}
It is not difficult to adapt everything we have written about monoidal categories to module categories, and algebra objects to module objects.
\end{rmk}

\endgroup

\end{document}